\definecolor{orange}{rgb}{1,0.25,0}
\definecolor{darkgreen}{rgb}{0.0, .5, 0.13}
\newcommand{\vertiii}[1]{{\left\vert\kern-0.25ex\left\vert\kern-0.25ex\left\vert #1 
		\right\vert\kern-0.25ex\right\vert\kern-0.25ex\right\vert}}
\renewcommand{\i}{{\rm i}}
\newcommand{\CC}{\mathbb{C}}
\newcommand{\NN}{\mathbb{N}}
\newcommand{\RR}{\mathbb{R}}
\newcommand{\ZZ}{\mathbb{Z}}
\newcommand\stwoscale{\overset{_2}{\rightarrow}} 
\theoremstyle{plain}
\newtheorem{theorem}{Theorem}[section]
\newtheorem{lemma}[theorem]{Lemma}
\newtheorem{proposition}[theorem]{Proposition}
\newtheorem{corollary}[theorem]{Corollary}
\theoremstyle{definition}
\newtheorem{definition}[theorem]{Definition}
\theoremstyle{plain}
\newtheorem{remark}[theorem]{Remark}
\numberwithin{equation}{section}
\newcommand\ep{\varepsilon}
\newcommand{\Ep}{\epsilon}
\title{\vspace{-1.8cm}  
Quantitative multiscale operator-type approximations for asymptotically degenerating spectral problems



 
}
\author[1]{\vspace{-.2cm}Shane Cooper}
\author[1]{Ilia V. Kamotski}
\author[1,2]{Valery P. Smyshlyaev\vspace{-.3cm}}
\affil[1]{\footnotesize Department of Mathematics, University College London, Gordon Street, London WC1E 6BT, UK.}
\affil[2]{\footnotesize Corresponding author. Email: v.smyshlyaev@ucl.ac.uk}
\renewcommand{\t}{\theta}
\renewcommand{\l}{\langle}
\renewcommand{\r}{\rangle}
\newcommand{\N}{\mathcal{N}_\t}
\newcommand{\M}{\mathcal{M}_\t}
\newcommand{\Vs}{V^\star}
\DeclareMathOperator{\ourN}{\mathtt{N}}
\newcommand{\be}{\begin{equation}}
\newcommand{\ee}{\end{equation}}
\begin{document}

\maketitle
\begin{abstract}

This work aims at developing 
approximations of two-scale type for 
a wide class of 
problems whose leading-order 
behaviour remains intrinsically 
two-scale (and hence when the 
classical homogenisation fails), 
and at obtaining tight 
operator-type error estimates for 
such approximations. 
We study 
an abstract family of asymptotically degenerating variational formulations, which 
are natural generalisations of those 
emerging upon application of  
Floquet-Bloch-Gelfand transform to 
high-contrast elliptic PDEs with 
periodic coefficients.   
A hierarchy of approximations is constructed 
with uniform 
error estimates under certain 
assumptions, 
satisfied by 
specific examples. 
We provide  approximations for the resolvents 
in terms of 
a 
`bivariate' operator which is 
an abstract generalisation of 
the two-scale limit operators.  
The resulting self-adjoint approximation 
is the bivariate operator's resolvent sandwiched 
by a 
connecting operator and its adjoint. 
For a broad class of periodic PDE 
problems, 
the connecting operator
is expressible in terms of 
a two-scale interpolation operator which is a 
two-scale modification 
of the classical Whittaker-Shannon interpolation. 
An explicit  description of the limit spectrum in the abstract setting is provided, and 
error estimates on the discrepancy between 
the original and limit spectra are established. 
For a key model example of high-contrast isolated periodic inclusions, we obtain new results on the rate of uniform 
convergence of the Floquet-Bloch spectra and of other spectral characteristics to those of 
the two-scale limit operator with 
 an explicitly characterised spectrum 
displaying band-gap opening 
near the inclusions' resonances. 
The 
general results are 
illustrated by numerous other 
examples, going beyond high-contrast PDEs,  
for which several new results are 
obtained. 

\end{abstract}

{\bf Keywords:} Asymptotically degenerating problems, two-scale operator approximations, spectral theory methods 


\section{Introduction}

\subsection{Motivation, background and discussion of main results for a model problem}

One of the main broader motivations for the present study comes from the desire for construction of tractable but accurate 
uniform operator approximations of two-scale type, with quantifiably 
small errors, for a broad class of mathematical models for which the leading-order 
asymptotic behaviour remains intrinsically two-scale (and hence where the 
classical homogenisation fails). 
An interest in such models comes from the fact that they are 
often capable of displaying certain non-standard and unusual physical effects in an asymptotically explicit way, which often clarifies the nature and the microscopic mechanism of the observable macroscopic effects. 

One class of such models includes two-scale Partial Differential Equations (PDEs) with high-contrast coefficients, or in other words asymptotically degenerating PDEs. 
Consider as an example a simple non-dimensionalised 
model of time-harmonic wave propagation in an $\ep$-periodic medium described by 
\begin{equation} 
\label{th-hc}
\nabla\cdot 
\big(a_{\ep,\delta}(x) \nabla u_\ep\big)  \,\,+\,\,  \rho\,\omega^2 u_\ep \,\,=\,\, 0, \quad \, 
x=(x_1,x_2,...,x_n)^T\,\in\mathbb{R}^n, \,\, n\ge 2.  
	\end{equation}
Here $\omega>0$ is angular frequency, ``density'' $\rho$ is (for simplicity) a positive constant, and 
``stiffness'' $a_{\ep,\delta}$ has a small value $\delta$ in the $\ep$-periodic isolated soft inclusions, and a fixed value normalised to unity 
in the connected stiff matrix. 
(I.e. 
 $a_{\ep,\delta}(x)=a_\delta\left(x/\ep\right)$ where  $a_\delta(y)=1-\chi(y)+\delta\chi(y)$ and $\chi$ is the characteristic function of a set of well-separated  
inclusions $1$-periodic with respect to each variable $y_j$, $j=1,2,...,n$.) 
\vspace{.08in} 

When both $\ep$ and $\delta$ are small, a critical 
scaling\footnote{
For the chosen wave propagation model \eqref{th-hc} 
with well separated periodic inclusions, the subcritical scaling $\ep^2\ll\delta\lesssim 1$ corresponds 
to classical homogenisation (which upon a rescaling corresponds to Bloch waves at low frequencies $\omega$, 
cf. Remark \ref{class-hom-validity}). 
The criticality at $\delta\sim\ep^2$ is 
caused by the frequency 
increasing 
and becoming comparable to the inclusions' eigenfrequencies. 
For the elliptic rather than spectral problems, 
i.e. when $\omega=0$ in \eqref{th-hc}, 
for recent new results on 
range of validity of (classical) homogenisation for broad classes of high-contrast {\it stochastic} models 
and with a different nature of the criticality 
due to percolation type effects 
see 
\cite{ArmKuu24}. 
} 
appears 
to be $\delta\sim\ep^2$, which in the context of the wave propagation model \eqref{th-hc} is 
a ``micro-resonant'' scaling: frequencies producing order-one wavelengths in the matrix material would produce order-$\ep$ wavelengths in the inclusion material i.e. 
those comparable with the inclusions' size.
In other words, such frequencies are comparable to the resonant frequencies of the inclusions, which (upon rescaling) appear to be 
the eigenvalues of the inclusions' Dirichlet Laplacian. 
This is reflected in the formal two-scale asymptotics of certain Bloch wave solutions to \eqref{th-hc}: 
$u_\ep(x)\sim u_0(x,x/\ep)$, where $u_0(x,y)=u(x)+v(x,y)$ 
with $v$ 1-periodic and supported in the inclusions in $y$; 
i.e. $u_0$ 
 is a 
function of 
only macroscopic variable $x$ in the matrix but is also a 
function of the microscopic variable $y$ in the resonating inclusions. 
That leads to a ``non-classical'' (in the sense of an asymptotic regime where the classical homogenisation limits fail) 
limit system for $u_0(x,y)$, which remains intrinsically two-scale. 
It is precisely this 
coupling of the micro and macro-scales in the limit system that explicitly 
displays such effects as band-gap opening near the resonances, as was probably first formally observed in a similar context in \cite{AurBonn85,Aur94}  
and 
was made rigorous by proving a convergence of the spectrum (although with an unknown rate) in \cite{HeLi} and \cite{Zhi2000,Zhi2005}. 
The $\delta\sim\ep^2$ scaling plays a similarly key role in the so-called double porosity type diffusion models, for related earlier studies see e.g. 
\cite{FeKh80,Ar90,Pa91,All,Sa99}. 
\vspace{.05in}

Continued intensive interest in 
such kind of asymptotically degenerating models is largely due to the fact that the 
related two-scale approximations indeed possess a wealth of interesting and unusual properties and effects. 
Without attempting here any comprehensive review, 
for an incomplete list of related 
works we mention 
\cite{Br,ChSZh06,VPS,Bell} for spatial non-locality and various 
macroscopic coupling effects for high-contrast and highly-anisotropic fiber-like or more general inter-connected structures (rather than just 
isolated inclusions); 
\cite{BoFe} for relations to artificial magnetism and negative materials; 
\cite{Av08,Ammari2009,Co,ZhiPa,Lipton2022} for (partial) band-gaps for high-contrast and partially degenerating elastic inclusions; 
\cite{ChCo-Maxw2015,BBF2017,Lipton2022b} for analogous effects in high-contrast electromagnetic media; 
\cite{BaKaSm} for ``doubly high contrast'' models; 
\cite{cherd,IVKVPS18} for localised modes due to defects in high-contrast periodic and \cite{ChCV23} stochastic  media; 
\cite{IVKVPS19} for band-gaps due to resonances in high-contrast elastic beam lattice materials 
and \cite{ChErKi,ChKiVeZu23} for dispersive effects in high-contrast micro-resonant media. 
Similar effects are observed in 
geometries containing split ring resonators, see e.g. \cite{LiptSchweiz2018}, 
and in 
thin 
structures, see e.g. recent publication 
\cite{
Ch-V-Z-2023}. 
For a recent review of a class of high contrast locally resonant materials from point of view of metamaterial 
modelling see e.g. 
\cite{ADHLY22} and further references therein. 
\vspace{.05in} 

What is in common in many 
of the above models 
is that the leading-order limit asymptotic behaviour remains 
two-scale, which is precisely the source of the diverse effects of interest. 
For a typical scenario, crudely, 
the exact solution $u_\ep(x)$ due to an input $F(x)$ is oscillatory for 
small $\ep$ and is approximated  by $u_0(x,x/\ep)$ 
where $u_0(x,y)$ is a solution of related two-scale limit problem with generally a two-scale associated input $f_0(x,y)$.  
One way of providing a rigorous relation between the original 
and the (two-scale) limit problems is via 
 the language of two-scale convergence, developed in \cite{Ng,All}. Namely, 
two-scale convergence as $\ep\to 0$ of inputs $F_\ep(x)$, denoted $F_\ep(x)\stwoscale f_0(x,y)$, typically  
implies that of the solutions: $u_\ep(x)\stwoscale u_0(x,y)$ where $u_0(x,y)$ is the solution of the two-scale limit problem 
with input $f_0(x,y)$. 
\vspace{.05in}

{\it Quantifying} 
such a convergence however, in a manner {\it uniform} with respect to all possible inputs $F(x)$, 
arguably 
requires applying and developing fundamentally new {
operator} tools, and this is one of the broader ultimate aims of 
the present work. 
Indeed, for any small but fixed $\ep>0$, for all the inputs $F(x)$ with associated solutions $u_\ep(x)$, one needs 
uniformly approximating 
the latter in terms of solutions $u(x,y)$ of the two-scale limit problem with appropriately chosen two-scale inputs  
$f(x,y)$. In other words one needs first recasting, in a suitable way, any $F(x)$ as a two-scale function $f(x,y)$ i.e. to 
construct a map or a connecting operator $\mathcal{J}_\ep: \, F(x)\mapsto f(x,y)$ with suitable properties. 
After the resulting two-scale limit problem is solved, its solution $u_0(x,y)$ has to be converted 
appropriately back into an accurate enough approximation $u_\ep^{\rm appr}(x)$ 
to the exact solution $u_\ep(x)$. 
For maintaining certain key 
properties of the resulting approximating operator 
(typically the self-adjointness, which would allow to use powerful tools of spectral theory of 
self-adjoint operators in Hilbert spaces), the latter step has 
to be 
achieved 
by an 
operator $\mathcal{J}_\ep^*$ adjoint to $\mathcal{J}_\ep$: $u_\ep^{\rm appr}(x)= \left(\mathcal{J}_\ep^*u_0\right)(x)$. 
If, additionally, $\mathcal{J}_\ep$ is chosen so that it possesses
certain asymptotic unitarity 
properties, this 
appears crucial for {\it quantifying a spectral convergence} 
 of the original operator to the two-scale limit one. 
In a nutshell, the present work aims at performing 
all that, for a wide class of problems (of operator resolvent and spectral type) 
with a two-scale type limit  asymptotic behaviour. 
\vspace{.04in}

To illustrate our 
general approach in more precise terms, let us return to the key motivating model \eqref{th-hc} 
with high-contrast periodic inclusions and highlight 
the new results which we are able to obtain for this 
specific example (Section \ref{e.dp}), putting them into a perspective. 
Problem \eqref{th-hc} is mathematically a spectral problem for unbounded non-negative self-adjoint operator 
$\mathcal{L}_\ep u=\,-\,\nabla\cdot\big(A_{\ep}(x)\nabla u\,\big)$ 
in Hilbert space $L^2\left(\RR^n\right)$, 
with $\ep$-periodic coefficients and 
with spectral parameter $\lambda=\rho\,\omega^2$.
(Here we regard $A_\ep(x):=a_{\ep,\,\ep^2}(x)$ i.e. we set for simplicity 
$\delta=\ep^2$.) 
Applying a rescaling or change of variables $y=x/\ep$, operator $\mathcal{L}_\ep$ is 
spectrally equivalent to 
$\mathcal{B}_\delta u=\,-\,\nabla_y\cdot\big(B_{\delta}(y)\nabla_y u\,\big)$ where 
$B_\delta(y)=\delta^{-1}a_\delta(y)$ is $1$-periodic function equal $1$ in the inclusions and 
$\delta^{-1}$ in the matrix. Its spectrum was analysed in \cite{HeLi} which proved via variational 
arguments applied to Floquet direct fiber-integral decomposition of $\mathcal{B}_\delta$ 
the convergence (with unspecified rate as $\delta\rightarrow 0$) of the Floquet-Bloch spectrum to the limit one, with the gaps 
opening at the (typical) resonances. 
\vspace{.04in} 

In \cite{Zhi2000} and \cite{Zhi2005}, operator $\mathcal{L}_\ep$ and its spectrum (for bounded domains and the whole 
space respectively) were analysed directly, via advanced 
therein techniques of two-scale resolvent operator convergence. 
Indeed, it appears that a 
 key for the analysis of the operator $\mathcal{L}_\ep$ and  
of its asymptotic 
properties as $\ep\to 0$ lies in analysing the related resolvent problem  
\begin{equation} 
\label{hom}
\big(\mathcal{L}_\ep \,+\,I\,\big)\,   u_\ep\, =\,
	-\,\,\nabla\cdot\big(A_{\ep} \nabla u_\ep\big)  \,\,+\,\,   u_\ep\,\, =\,\, F, \ \,\,\,\, F\in \,L^2\left(\RR^n\right). 
\end{equation}
It was shown in \cite{Zhi2005} that operator $\mathcal{L}_\ep$ has certain two-scale operator $\mathcal{L}_0$ as its limit in the 
sense of a (strong) {\it two-scale resolvent convergence}. 
Namely, if the right-hand sides $F=F_\ep(x)$ (strongly)  two-scale 
converge to $f_0(x,y)\in L^2(\mathbb{R}^n\times\square)$, 
where $\square$ is the $y$-periodicity cell (the unit size cube centred at the origin), 
then the solutions $u_\ep$ (strongly) two-scale converge to the solution $u_0(x,y)$ of the following two-scale (pseudo-)resolvent 
limit problem: 
\begin{equation} 
\label{hom-2sc}
	\mathcal{L}_0u_0  \,\,+\,\,   u_0\,\, =\,\, \mathcal{P}f_0. 
\end{equation}
In \eqref{hom-2sc} the two-scale limit operator $\mathcal{L}_0$ is non-negative self-adjoint in a ``bigger'' two-scale Hilbert space 
$\mathbb{H}_0$ which is 
a closed subspace of $L^2\left(\mathbb{R}^n\times \square\right)$, and 
 $\mathcal{P}$ is the orthogonal projection on  $\mathbb{H}_0$. 
So the above can 
be viewed as a strong two-scale convergence of the resolvent operators 
$\mathcal{R}_\ep=\left(\mathcal{L}_\ep+I\right)^{-1}$ to 
two-scale ``pseudo-resolvent'' 
$\mathcal{R}_0\mathcal{P}=\left(\mathcal{L}_0+I\right)^{-1}\mathcal{P}$: 
for any 
 $F_\ep\stwoscale f_0$, $\mathcal{R}_\ep F_\ep\stwoscale \mathcal{R}_0\mathcal{P}f_0$, however with no 
information on a rate of such convergence. 

The spectrum of the above two-scale limit operator $\mathcal{L}_0$ is explicitly described, and 
generally has a band-gap structure with infinitely many gaps opening at the resonances. 
It was then further 
shown in \cite{Zhi2005} that the above two-scale resolvent convergence, 
together with certain additional properties of two-scale 
compactness, implies a 
convergence of the spectra however again with an unknown 
rate. 
Robust estimates on the rate of convergence of the spectra near ``typical" resonances, 
explicit in terms of the inclusions' shapes and geometry, 
 were obtained in \cite{Lipton2017} via decomposition of 
quasi-periodic and ``electrostatic'' solution operators with
a subsequent analysis of related  resonances by tools of layer potential theory.
\vspace{.03in}

For {\it 
quantifying} the above operator convergence, i.e. establishing its 
rate, one 
needs obtaining for 
the solution $u_\ep(x)$ of the resolvent problem \eqref{hom} 
asymptotic approximation in terms of solutions $u_0(x,y)$ to the two-scale limit 
problem \eqref{hom-2sc} which would be {\it uniformly accurate} for all 
$F\in L^2\left(\mathbb{R}^n\right)$.  
As argued above (and made more precise in Remark \ref{ShannVsUnf}), any such approximation  
has to contain a 
tool like the above discussed 
connecting operator $\mathcal{J}_\ep$ 
for recasting any input $F(x)$ as that for the 
two-scale limit problem, which is generally a two-scale function $f(x,y)$. 
We will see that the general approach developed by us in the present work, 
and in particular  generic resolvent-type estimates in Theorem \ref{thm.bivariate}, 
when specialised to  \eqref{hom}   
imply  that the resolvent $\mathcal{R}_\ep$ is well approximated by 
$\mathcal{R}_0\mathcal{P}$ sandwiched by such 
an ($L^2$-isometric and ``asymptotically unitary") operator 
$\mathcal{J}_\ep: L^2\left(\RR^n\right)\rightarrow L^2\left(\RR^n\times\square\right)$ and its 
adjoint $\mathcal{J}_\ep^*$. 
Namely, new two-scale resolvent estimate 
\eqref{dpcompe3} holds, which 
can in a sense be viewed as that on the {\it rate} of an operator two-scale resolvent convergence and reads 
\begin{equation}
\label{2scresest}
\big\Vert\, \,\mathcal{R}_\ep F\,\,-\,\,\mathcal{J}_\ep^*\mathcal{R}_0\mathcal{P}\mathcal{J}_\ep F\,\big\Vert_{L^2\left(\mathbb{R}^n\right)}
\,\,\,\le\,\,\,C\,\ep\,\big\Vert\,F\,
\big\Vert_{L^2\left(\mathbb{R}^n\right)}, \ \ \ \ \forall F\in L^2\left(\mathbb{R}^n\right), 
\end{equation}
with a constant $C$ independent of both $\ep$ and $F$. 
Importantly, the approximating operator $\mathcal{J}_\ep^*\mathcal{R}_0\mathcal{P}\mathcal{J}_\ep$ in \eqref{2scresest}
 remains self-adjoint, 
and due to the asymptotic unitarity of $\mathcal{J}_\ep$ its 
spectrum can be shown to converge 
with a desired rate 
to that of the 
the two-scale limit (pseudo-)resolvent $\mathcal{R}_0\mathcal{P}$.  
More precisely, the $L^2$-isometry of $\mathcal{J}_\ep$ provided by the construction in Theorem \ref{p.unitaryequiv} 
 in combination with one more application of our generic scheme in Theorem \ref{t.collectivespec}, leads to 
a generic result on the rate of convergence of the spectra (estimate \eqref{specestgen} of Theorem \ref{bivariate.spec}). 
For the present example, the latter specialises 
to a new 
error estimate 
 \eqref{specest} on the uniform rate of convergence of the 
Floquet-Bloch spectrum of $\mathcal{L}_\ep$ 
(equivalently 
estimate \eqref{HempLien-est} for the above discussed operator 
$\mathcal{B}_\delta$ from \cite{HeLi}) 
to that explicitly described 
of the two-scale limit operator $\mathcal{L}_0$ 
and on the associated band gaps near the resonances, 
Theorem \ref{EVestDP} and Corollary \ref{HempLien-rate}. 
Our general spectral approach implies 
also certain explicit two-scale approximations with uniform estimates on the convergence rates for 
the Floquet-Bloch dispersion relations and for the related 
eigenfunctions (Bloch waves),   Remark \ref{EFestDP}. 
These 
estimates appear to 
also provide new results on the uniform asymptotics of the integrated density of states (Corollary \ref{denstates}), 
which may arguably 
be a more natural quantitative measure of the spectral convergence. 
They also provide 
some tight estimates 
on ranges of validity and failure of classical 
homogenisation's spectral approximations 
on approaching the critical scaling for the contrast, 
Remark \ref{class-hom-validity}. 
Our general approach allows obtaining  for high-contrast problems like \eqref{hom} not only $L^2$-type error estimates 
akin to \eqref{2scresest}, but 
also two-scale type approximations 
(with a slightly more subtle two-scale connecting operator $\mathcal{G}_\ep$ instead of $\mathcal{J}_\ep$, 
see Remark \ref{H1-2slp}) 
with estimates of an  
``asymptotically degenerate $H^1$'' (energy) type with an appropriate corrector: see abstract estimate \eqref{IKfinal3} of Theorem \ref{thm.IKunifest2} and its specialisation \eqref{dp.H1est} to the 
discussed example\footnote{After publication of previous arxiv versions of the present work, \cite{BonDueGlo25} 
 obtained some new quantitative (as well as qualitative) 
results for {\it stochastic} high-contrast models of type \eqref{hom}, with some interesting observations 
pertinent also to the present periodic scenario. 
In particular, Remark 1.7(b) of \cite{BonDueGlo25} 
develops an approximation in terms of an {\it $\ep$-dependent} operator (rather than the $\ep$-independent 
two-scale limit operator 
$\mathcal{L}_0$) capable of delivering $H^1$-type estimates 
akin to \eqref{dp.H1est}. Those approximations' main advantage is that they appear adjustable 
to the random setting. However, even in the periodic case, making such (further) approximations expressible in terms of $\mathcal{L}_0$ 
and ultimately 
applicable for obtaining error estimates on the limit spectrum would still require a number of non-trivial steps. For the latter, one way would be essentially to follow again through the key steps of 
our general approach in 
Sections \ref{section:discV}--\ref{s:resolv} of the present paper, although possibly 
in a somewhat simplified manner.}. 
\vspace{.05in} 


The two-scale connecting operator $\mathcal{J}_\ep$ appears in estimate \eqref{dpcompe3} of Theorem \ref{thm.2scOpRes} 
as a specialisation 
of our more general approach to periodic PDE (resolvent) problems,  which leads to its 
explicit construction 
as a composition of a problem-specific 
$L^2\left(\mathbb{R}^n\times\square\right)$-unitary ``translation operator'' $T_\ep$ with 
a generic ``two-scale interpolation operator'' $\mathcal{I}_\ep$: $\mathcal{J}_\ep=\,T_\ep\,\mathcal{I}_\ep$. 
Operator $T_\ep$ is specialisation of an abstract transfer operators $\mathcal{E}_\t$ (Lemma \ref{propeth}) 
and is usually 
naturally identified from the problem. It plays 
an important 
role of accounting 
for the actual asymptotic degeneracies,  
thereby ultimately assuring the desired quantifiable accuracy of the approximation. 
Indeed, the asymptotic degeneracies tend to 
translate into certain inherited degeneracies 
in 
the two-scale limit problem, 
and for the above model example \eqref{th-hc} of isolated degenerating inclusions 
this issue is dealt with by 
$\left(T_\ep f\right)(x,y)=f(x+\ep y, y)$ for $y$ in the inclusion. 
The generic operator 
$\mathcal{I}_\ep:L^2\left(\mathbb{R}^n\right)\to L^2\left(\mathbb{R}^n\times\square\right)$ 
plays the 
role of recasting any input $F(x)$ as a two-scale function $(\mathcal{I}_\ep F)(x, y)$. 
It was introduced, in an equivalent form, in \cite{Well2009} under the name of 
``periodic two-scale transform'' as a convenient tool for establishing various 
two-scale convergence 
results in periodic homogenisation. In our context its additional power appears to be that, in combination 
with the above translation operator $T_\ep$, it comes as a specialisation of a generic approach 
and most crucially is capable of 
{\it quantifying} the two-scale convergence in the sense of 
providing two-scale type operator approximations with tight error bounds. 
\vspace{.05in}

Operator $\mathcal{I}_\ep$ is a composition of a scaled Floquet-Bloch-Gelfand transform, an extension operator and 
a scaled (``semiclassical'') inverse Fourier 
transform, see \eqref{2ScInterp}, implying it is not only an ``asymptotically unitary'' $L^2$-isometry but 
has all kinds of desirable properties. 
In particular, 
$(\mathcal{I}_\ep F)(x, x/\ep)=
F(x)$ where 
$f(x,y):=(\mathcal{I}_\ep F)(x, y)$ is understood as 
$\square$-periodically extended on $\mathbb{R}^n$, 
and so $\mathcal{I}_\ep$ 
appears to be a ``two-scale interpolation operator''. 
Namely, for a fixed $\ep>0$ and $y\in\square$, for any $x$ with ``phase'' $y$ i.e.
  $x=\ep y+\ep m$ with an integer vector $m\in\mathbb{Z}^n$, $f(x,y)=F(x)$. This means that, for a given $y$, 
	$f(x,y)$ simply reads off the values of $F$ at 
	points $x$ with the phase $y$, but for other $x$ interpolates appropriately between those $\ep\,\square$-periodic points. 
Operator $\mathcal{I}_\ep$ has an explicit representation \eqref{7.54-1}, 
	which 
	appears to be a 
	two-scale analogue of classical
Whittaker-Shannon interpolation formula from signal processing, see Remark \ref{RemShann}. 
In fact, it is its two-scale nature which makes $\mathcal{I}_\ep$ an ($L^2$-isometric) {\it operator}, rather than just a ``formula'' classically. 
If the right hand side $F$ is itself a two-scale function, i.e. 
$F_\ep(x)=\Phi(x,x/\ep)$ where $\Phi(x,y)$ is 
$\square$-periodic in $y$, and its Fourier transform in $x$ is uniformly compactly supported with respect to $y$, 
then a natural two-scale analogue of classical Whittaker-Kotelnikov-Nyquist-Shannon sampling theorem implies that 
for sufficiently small $\ep$ simply $\big(\mathcal{I}_\ep F_\ep\big)(x,y)=\Phi(x, y)$, Remark \ref{RemShann}. 
Operators $\mathcal{I}_\ep$ and $\mathcal{J}_\ep$
 appear to resemble in some respects the periodic unfolding operator, see e.g. \cite{CDG}, but significantly 
differ from it 
in certain 
key aspects which are particularly important precisely for a wide class of degenerating models with a genuinely two-scale limit asymptotic behaviour.  
As we argue, our $\mathcal{J}_\ep$ based on $\mathcal{I}_\ep$ is the most natural connecting operator 
for such general classes of models (Remark \ref{ShannVsUnf}). 
\subsection{The approach and 
generalisations}
More generally, as we will see in this work too, asymptotic behaviour of a wide class of resolvent problems like \eqref{hom} plays crucial role  for properties of the related spectral problems as $\ep\to 0$. 
Moreover, the resolvent problems generally 
also hold keys for analysing related (two-scale) evolution problems, both parabolic and hyperbolic, cf. e.g. \cite{ZhP07,Pas05,IVKVPS13} for related results although only for two-scale convergences without rates. 
This all 
motivates efforts on establishing 
tractable but accurate leading-order 
operator approximations for a wider class of asymptotically degenerating problems akin to \eqref{hom}, 
with tight error estimates for their solutions for small $\ep$ uniformly with respect to $F$ (in various norms). 
\vspace{.05in}

In the context of non-degenerate PDEs or classical homogenisation (e.g. for problem \eqref{hom} corresponding to a fixed $\delta>0$ in 
\eqref{th-hc}), error estimates for the approximation given by the related homogenised equations are by now well-known, and various approaches exist to establish them. We shall 
not 
attempt to provide here a review of all these 
methods  
except to mention one 
of particular relevance to this work: the so-called {\it spectral method}, see 
for example \cite{Zh89,CoVa97,BiSu,ZhSpectr}, and also \cite{DGR23} and further references therein for 
recent developments of variants of the spectral approach for long-time homogenisation problems in both periodic and stochastic settings. 
The core of the spectral method in the context of periodic 
problems like \eqref{hom} is in applying a rescaled Floquet-Bloch-Gelfand 
transform and as a result reducing  \eqref{hom} to an equivalent family of problems on the periodicity cell (torus) 
$\square$, parametrised by 
quasiperiodicity variable (``quasimomentum'') $\t$ varying within the dual cell $\square^*=2\pi\square$. 
The latter problems have to be then meaningfully approximated, asymptotically for small $\ep$, uniformly with respect to $\t$.  
\vspace{.05in}

Degenerate problems however, like the above high-contrast problem \eqref{hom} with $\delta=\ep^2$, represent fundamentally new challenges 
for their treatment, including using the spectral method, 
precisely due to the fact that the classical homogenisation fails and the limit problem 
remains intrinsically two-scale.  
Within the spectral approach, 
this challenge is manifested in 
the associated 
 non-negative singular forms $a_\t$, see \eqref{pr} below, 
vanishing for every  $\t$ at 
non-trivial (in fact 
infinite-dimensional) subspaces $V_\t$ 
(near which the solutions $u_{\ep,\t}$ generally tend to be). 
To overcome these new challenges, we develop an approach which we 
believe bears 
fundamental novelties and allows to obtain new results for much wider classes of examples, both 
non-degenerate and degenerate,  see Section \ref{sec:examples}. 
Namely, 
as clarified below, we perform a robust uniform asymptotic analysis of families of generic 
variational problems \eqref{pr} 
and of corresponding self-adjoint operators near 
typical discontinuity points of $V_\t$ with respect to 
the parameter $\t$. 
This ultimately allows, in particular, to naturally arrive at 
a self-adjoint approximation to the exact solution operator in terms of abstract analogues of resolvent of the two-scale limit operator 
and of abstract connecting operators $A_\ep$ (Theorem \ref{thm.bivariate}). 
For the particular classes of two-scale periodic problems, the latter  
 appears to specialise to the above discussed two-scale connecting operators $\mathcal{J}_\ep$. 
The abstract 
nature of our approach implies that it can potentially be used 
even when the Floquet-Bloch transform may be not applicable or relevant 
(with one example in this direction given in Section \ref{sec:concpert}). 
\vspace{.08in} 

There has been some progress recently in obtaining approximations with uniform operator error estimates 
specifically for the high-contrast resolvent problems like \eqref{hom} 
with $\delta\sim\ep^2$.  
A leading-order approximation with $L^2$ error estimates 
in terms of an $\ep$-dependent two-scale type operator $\mathcal{L}_\ep^0$ 
(rather than the actual $\ep$-independent two-scale limit operator $\mathcal{L}_0$)
was established in \cite{ChCo}, 
using the spectral method as a basis. 
However 
the techniques employed therein appear insufficient for example for obtaining estimates on the rate of convergence of the spectrum 
to that of $\mathcal{L}_0$; moreover the methods developed in \cite{ChCo} are 
problem specific and not readily generalisable. 
 In  one-spatial dimension 
high-contrast 
models, leading-order approximations with error estimates were obtained in \cite{ChChCo} and \cite{ChKi} by different approaches. 
In \cite{ChErKi} and \cite{ChKiVeZu23}, in particular, approximations with improved error estimates, although still in terms of 
some $\ep$-dependent operators, were obtained for the scalar 
model \eqref{hom} and analogous linear elastic systems respectively via an 
asymptotic analysis of Dirichlet-to-Neumann maps under appropriate 
regularity assumptions on the 
coefficients and on the boundary of the inclusions. 
We re-emphasise here that upon applying our  general method to the above key example of multi-dimensional high-contrast model (Section \ref{e.dp}) we are able to obtain new operator estimates between the initial resolvent problem and that for a novel approximation via the ($\ep$-independent) two-scale limit operator $\mathcal{L}_0$, as well as the new results on the rates of uniform convergence of the spectra and of other spectral characteristics, 
all with no need for any regularity restrictions on the coefficients and less restrictive regularity of the inclusion's boundary. 
\vspace{.05in}




One of the 
wider aims of this article however is 
to demonstrate that a large class of problems of the above mentioned type (as well as many others) are all examples of  one particular 
generic  abstract family of asymptotically degenerating variational problems. As such, their leading-order asymptotics 
has a common structure reflecting the fact that these 
are all particular instances of an asymptotic 
approximation for that general variational problem. In this article we derive, under a range of abstract assumptions, a 
hierarchy of the leading-order asymptotics for this abstract problem with error estimates. We then specify the underlying abstract objects to provide  asymptotics (with operator-type error estimates) for various specific problems of interest.
\vspace{.05in}

As a way to motivate the general problem we  recall that the starting point in the spectral method, used in the above-mentioned 
$\ep$-periodic PDE setting 
\eqref{hom} where $A_\ep(x)=a_\delta(x/\ep)$,  
is following. 
Apply the rescaling $x \mapsto \ep y$ and then the Floquet-Bloch-Gelfand transform (see Section \ref{e.class}) to arrive at the family of problems on Sobolev 
space $H^1_{per}(\square)$ of $\square$-periodic functions, 
parametrised by the quasi-periodicity variable 
$\t$ on the dual cell $\square^*=[-\pi.\pi]^n$:  
\begin{equation}
	\left\{ \ \begin{aligned} & \text{For each $\t \in \square^*=[-\pi,\pi]^n$, find $u_{\ep,\theta} \in H^1_{per}(\square)$ such that} \\
		&	-\ep^{-2} \big( \nabla + \i \theta \big) \cdot a_\delta(y) \big(\nabla + \i \theta\big)  u_{\ep,\t}  \,\,+\,\,  u_{\ep, \t}\,\,=\,\, f, 
	\end{aligned}
	\right.\label{hom2}
\end{equation}
where $f$ and $u_{\ep,\t}$ are the transforms of (rescaled) $F$ and $u_\ep$ respectively. 
Next, we observe that the equivalent weak formulation of problem \eqref{hom2} is of the following more abstract variational form: 
\begin{equation}
	\left\{ \ \begin{aligned} & \text{For each $\ep >0$ and  $\t \in \Theta$, find $u_{\ep,\theta} \in H$ such that} \\
		&	\ep^{-2}
		a_\t\left(u_{\ep,\theta} ,\,\tilde u \right) \,\,+\,\, b_\t(u_{\ep, \theta},\tilde u) \,\,=\,\, \l  f,\tilde u\r,  \quad \forall \tilde u \in H,
	\end{aligned}
	\right.\label{pr}
\end{equation}
where $\Theta \subset \mathbb{R}^n$ is compact, $H$ is a complex Hilbert space, $f$ a bounded antilinear functional on  $H$, $a_\t$ and $b_\t$ are non-negative bounded sesquilinear forms such that $a_\t + b_\t$ is a family of uniformly equivalent inner products  on $H$, and  $a_\t$ is Lipschitz-continuous in $\t$ (see Section \ref{sec:pf} for the precise details). Notice that in many of our motivating examples the `singular' forms  
$a_\t$  have non-trivial degeneracy subspaces $V_\t = \left\{ u \in H \,\, | \,\, a_\t[u]:=
a_\t(u,u) =0\right\}$, 
that is why one can refer to such variational problems as { 
asymptotically  degenerating}.

Analysing the high-contrast problem \eqref{hom} stated in the transformed variational form \eqref{pr}, we observe that there are very 
few generic features of this abstract family of asymptotically degenerating variational problems for which 
two-scale type approximations with uniform error estimates can be constructed. 
This naturally leads to the 
idea of investigating \eqref{pr} under minimal abstract assumptions, thereby hoping to cover a 
wider class of interesting problems. 
This all is what ultimately allows us to successfully analyse and obtain new results for a wide class of asymptotically degenerate problems 
with relative ease. 
Indeed, we observe that 
the formulation \eqref{pr} does not just cover the above classical or high-contrast type settings \eqref{hom2} (for $\delta>0$ fixed and $\delta=\ep^2$ respectively, with corresponding $a_\t$, $b_\t$ and $V_\t$), but also a much wider class of interesting problems. 
In particular, 
it 
 includes 
models as diverse as the following. 
(In the list below, with reference to the examples in Section \ref{sec:examples}, 
we highlight 
some key features of these diverse models and of 
specific results we are able to obtain for those by applying our general method.) 
\vspace{.04in}

-- 
`Inverted' high-contrast problem (with stiff periodic inclusions in a soft matrix,  
Section \ref{e:idp}), with resulting approximation (accompanied by operator 
error bounds) of a different nature: 
by an infinite contrast `rigid inclusions' model 
rather than a two-scale one.  

-- 
Inclusions with `weakly bonded' imperfect interfaces (rather than with a high contrast), Section \ref{sec:impint}, where Hilbert space $H$ is not anymore 
$H^1_{\rm per}(\square)$ due to the interface discontinuities. 
The limit problem appears to be a coupled ``two-phase'' macroscopic (rather than a more general two-scale) one,  
 and displays a band gap at a single inclusion resonance, which all is accompanied by the tight 
 error bounds.

-- Problems with concentrated perturbations, which example in  Section \ref{sec:concpert} we expect to be particularly instructive for 
demonstrating the additional powers and potential brought in by the generality of the abstract approach. Indeed, in this case 
$\t$ is 
{\it not} anymore 
the Floquet-Bloch quasi-periodicity parameter, i.e. $\Theta\ne\square^*$ and contains an additional component accounting for the 
parameter of 
concentration $\delta$, which allows to obtain approximations with estimates 
uniform also with respect to $\delta$. 
This example is also instructive for demonstrating applicability of our more general approach even when the singular form $a_\t$ 
is not smooth in $\t$ at the underlying key degeneracy point ($\t_0=0$). 


-- Linear elasticity system with  inclusions which are `partially degenerating', i.e. whose elasticity tensor asymptotically degenerates on only some of 
its components (Section \ref{e.pdelast}).  

-- Schr\"{o}dinger operators 
with a strong periodic magnetic field (Section \ref{magnschrod}), with resulting 
shifted quasimomentum point $\t_0\ne 0$ of 
discontinuity 
of $V_\t$. 

-- Differential-difference equations (Section \ref{sec:nonloc}), with a genuinely non-quadratic (and not even polynomial) dependence of $a_\t$ on $\t$ due to the non-locality of the model. 

 -- Difference equations (Section \ref{sec:nonloc2}), where in the related two-scale limit problem the ``homogenised form'' $a^h_\t$ appears to 
specify an infinite-dimensional operator rather than just a (finite-dimensional) matrix.
\vspace{.04in} 

Further examples which are not covered in the present work but also fall into the abstract framework of \eqref{pr} include: 
a wide class of partially degenerating high-contrast PDE systems
(cf. \cite{IVKVPS13} where similar wide classes were studied although with results only on two-scale convergence rather 
than with any error estimates); 
 homogenisation problems on periodic quantum graphs and their generalisations, cf. e.g. \cite{IVKVPS19};  
problems in thin domains; 
problems on discrete periodic lattices; 
some higher-order differential and pseudo-differential operators. 
We emphasise here that we do not generally require in \eqref{pr} the forms $a_\t$ and $b_\t$ to  be generated by differential operators, nor do  we require $\t$ to 
necessarily be the  Floquet-Bloch parameter (cf. again the concentrated perturbation example of Section \ref{sec:concpert})  or even for $H$ to be a function space. This suggests possible far-reaching consequences of the present approach that 
can go  even further beyond the scope of the examples outlined  above. 


\subsection{Structure of the article and of the main results}

Let us now describe in a more specific way the structure of the article, the main technical ideas, and the main results. 
In Section \ref{sec:pf} we formulate the abstract problem and introduce our 
main assumption \eqref{KA} 
that can be regarded as a variant 
of a  
spectral gap 
condition. Namely, for every $\t\in\Theta$ the singular form $a_\t$ in \eqref{pr} is coercive 
(although generally 
non-uniformly in $\t$) on the orthogonal complement $W_\t$ of its null-space $V_\t$.  This condition is a far-reaching generalisation of a `key assumption' introduced in \cite{IVKVPS13}, found to be important in establishing the two-scale  convergence to two-scale homogenisation limits for a general class of partially degenerating elliptic PDE systems of type \eqref{hom} in general domains. 

In Section \ref{s:uniforma}, we show that if  the null-space $V_\t$ is Lipschitz continuous in 
$\t$ then $a_\t$ is uniformly coercive   
in $\t$ on $W_\t$,  
and as a result  the leading-order approximation  simply comes from  `projecting'  problem \eqref{pr} onto $V_\t$, 
see Theorem \ref{thm:contV}. This simple result not only forms the basis for further investigation, but 
appears applicable to certain physically relevant models, for example,  the stiff inclusion or { inverted } high contrast model of Example \ref{e:idp}  and in the study of certain polarisations of electromagnetic waves in photonic crystal fibers, cf. \cite{Cothesis,CoKaSmPCF}. 

In Section \ref{section:discV} we study the case of discontinuous $V_\t$. 
  This situation is typical in  
	examples  such as the above both 
	classical and 
high-contrast problems of type \eqref{hom}, and  corresponds to loss of the $\t$-uniformity of the spectral gap. 
This requires 
 a much more subtle asymptotic analysis 
near related 
singular points, 
ensuring 
certain 
almost-orthogonality via operator $\N$ which is in a sense an abstract version of the classical 
corrector, see \eqref{IliaN}.  
As a result, 
  in particular, in Theorem \ref{thm1.all} we construct a leading-order approximation to 
	\eqref{pr}  when the null-space $V_\t$ possesses an isolated singularity (say at 
	$\t=0$) that is removable in the following sense:  there exists a subspace $V_\star$ such that 
$	
	V^\star_\t  = \left\{ \begin{array}{cc}
		V_\t, & \t \neq 0, \\ V_\star, & \t = 0
	\end{array} \right.  
$ is Lipschitz continuous, see \eqref{contVs}. 
The resulting approximate problem \eqref{coupledbest} is on a ``sum'' of $V^\star_\t$ and a  
`defect subspace' $Z$ (describing the discontinuity gap between $V_\star$ and $V_0$), with  
$\M z=z+\N z$.   
The results of Section \ref{section:discV} are found 
useful for some 
applications, see e.g. the example with 
concentrated perturbations of Section \ref{sec:concpert}.
\vspace{.02in}

Approximating problem \eqref{coupledbest} 
is simpler than \eqref{pr}, 
but still depends on both $\ep$ and $\t$. In Section 
\ref{sec.2dif}, we provide a further approximation with even simpler self-similar $\ep$ and $\t$ dependencies via their 
ratio $\t/\ep=:\xi$. This is done by approximating 
 the forms $a_\t$ and $b_\t$ for small $\t$, which can be performed under  
additional 
$\t$-quadratic degeneracy condition for the spectral gap \eqref{distance} and 
mild regularity assumptions \eqref{H4} and \eqref{H5} on $a_\t$ and $b_\t$ at $\t=0$, 
that are readily observed in many (even if not all, cf. again the example in Section \ref{sec:concpert} mentioned above) 
examples. 

This leads us to one of our main abstract results, Theorem \ref{thm.IKunifest2}, that provides uniform approximations (in abstract analogues of both $L^2$ and $H^1$ ``energy'' norms) to the two-parameter solution $u_{\ep,\t}$ of \eqref{pr} in terms of solutions to a one-parameter 
family of variational problems 
\eqref{IKz3prob88} 
on the fixed smaller space $V_0=V_\star\dot{+}Z$ 
with sesquilinear forms 
$a^{\rm h}_\xi \,+\, b_0$, $\xi=\t/\ep \in \RR^n$. 
An important additional feature however, which plays a key role specifically for genuinely degenerate problems i.e. those with non-trivial $V_\t^\star$ (as is the case in high-contrast problems of type \eqref{hom} 
 with $\delta=\ep^2$ but 
{\it not} in classical homogenisation problems like \eqref{hom} with a fixed $\delta>0$), is the emergence both on the right-hand side of the approximating problem \eqref{IKz3prob88} and in the approximations of Theorem \ref{thm.IKunifest2} of an isometric ``transfer operator'' 
$\mathcal{E}_\t:V_\star\to V_\t^\star$. 
Operator $\mathcal{E}_\t$ 
accounts for $\t$-dependence of the ``regular'' form $b_\t$ restricted 
on $V_\t^\star$, and can be naturally identified in most of the relevant examples,  
with its existence generally assured by Lemma \ref{propeth}. 
It is the presence of $\mathcal{E}_\t$ which necessitates the above discussed translation operator $T_\ep$ in two-scale periodic problems. 
The non-negative abstract ``homogenised'' form $a^{\rm h}_\xi\left(z,\tilde z\right)$ 
is defined by \eqref{defhom.form} in terms of an abstract ``linearised'' corrector $N_\t$ solving \eqref{cell:prob2}, 
acts on the even smaller 
defect subspace  
$Z$, 
is a quadratic form  
in $\xi$ and is non-degenerate on $Z$. 
Form $a_\xi^{\rm h}$ appears to  generalise that for (symbols of) 
the homogenised operator 
in classical homogenisation problems, and $Z$ 
is found to be finite-dimensional under a stronger version \eqref{KA2} of the spectral gap condition \eqref{KA} which typically holds in many practical examples, see Section \ref{sec.newKA}. 
(Notice however a simple example of a difference equation in Section \ref{sec:nonloc2}, where \eqref{KA2} does not hold. 
Nevertheless \eqref{KA} still holds, and our general 
scheme is  applicable and despite infinite-dimensional $Z$ yields explicit approximations with error estimates, 
Theorem \ref{thm.2scOpResnonloc}.) 
\vspace{.1in} 

The significance of the dependence of (left hand side of) the approximating problem \eqref{IKz3prob88} only  on single parameter $\xi=\t/\ep$ manifests itself  in operator and spectral results of Section \ref{s:resolv} where 
approximations in terms of an abstract version of a `two-scale' limit operator, with principal symbol  $a^{\rm h}_\xi$,  are constructed.  
The section  
focuses on the associated abstract spectral problems in an ambient Hilbert space $\mathcal{H}\supset H$: 
\begin{equation}
	\left\{ \ \begin{aligned} & \text{For each $\ep >0$ and  $\t \in \Theta, \,\,\,$ find $\lambda_{\ep,\t}\in [0,\infty)\,$ with 
	$\,u_{\ep,\theta} \in H\backslash \{0\}$ such that} \\
		&	\ep^{-2}\,
		a_\t\left(u_{\ep,\theta},\,\tilde u \right) \,\,+\,\, b_\t\left(u_{\ep, \theta},\,\tilde u\right) \,\,=\,\, 
		\lambda_{\ep,\t}\,\, d_\t\left(u_{\ep,\t},\,\tilde u\right)  \quad \, \forall \tilde u \in H. 
	\end{aligned}
	\right.\label{prspec}
\end{equation}
Here 
$d_\t$ is an inner product in $\mathcal{H}$ and a 
compact sesquilinear form on $H$ (which is dense in $\mathcal{H}$) 
and such that $b_\t-d_\t$ is nonnegative on $H$. 
Then \eqref{prspec} is a spectral problem for associated 
positive self-adjoint operator $\mathcal{L}_{\ep,\t}$ in $\mathcal{H}$, 
with a spectrum ${\rm Sp}\,\mathcal{L}_{\ep,\t}$. 
We then show 
that 
condition \eqref{H6} of a unitary extensibility of the transfer operators $\mathcal{E}_\t$,  
that is typically observed in examples, implies that 
the spectrum of $\mathcal{L}_{\ep,\t}$ is uniformly approximated by that of 
operator $\mathbb{L}_{\t/\ep}$ generated by the `homogenised' form $\mathbb{S}_{\t/\ep}=a^{\rm h}_{\t/\ep}+b_0$, 
Theorems \ref{p.unitaryequiv} and \ref{ikthm2}. 
We then establish, by controlling ${\rm Sp}\,\mathbb{L}_{\xi}$ for large $\xi$, that the ``collective spectrum'' 
(the closure of the union of the spectra $\bigcup_{\t \in \Theta} {\rm Sp}\,\mathcal{L}_{\ep,\t}$)  
converges in appropriate sense with rate 
$\ep$, Theorem \ref{t.collectivespec} and Corollary \ref{c.collspec}. 
Moreover, in Theorem \ref{thm.bivariate} we approximate the 
inverse $\mathcal{L}_{\ep,\t}^{-1}$ 
in terms  of certain self-adjoint `bivariate'  operator $\mathcal{L}$, which is an abstract version of two-scale limit operator 
 and is related to form  $\mathbb{S}_\xi$ via  
(inverse) 
Fourier transform (i.e. $\xi \,\mapsto\, -\,\i\,\nabla_x$). 
An important role is played in the approximation 
by the already mentioned 
abstract $L^2$-isometric connecting operator $A_\ep$, which accounts in particular  for the effect of the above transfer operator $\mathcal{E}_\t$. 
Operator $A_\ep$ serves as an abstract prototype of the above discussed two-scale connecting operator $\mathcal{J}_\ep$ 
in the periodic PDE problems. 
We then show 
(Theorem \ref{bivariate.spec}) 
that the limit spectrum coincides with the spectrum of $\mathcal{L}$, and hence the collective spectrum 
converges to that of $\mathcal{L}$ with rate $\ep$. 
The key bivariate operator $\mathcal{L}$, specified by form \eqref{Q}, can be viewed as a second-order constant-coefficient differential operator acting in the Bochner space 
$L^2\left(\RR^n ;\, \overline{V_0} \right)$, where $\overline{V_0}$ is the closure of $V_0=V_\star\dot{+}Z$ in $\mathcal{H}$. 
It can be seen to generalise  the two-scale 
limit operators for various 
high-contrast models. 
The spectrum of the abstract bivariate operator $\mathcal{L}$ is 
explicitly 
characterised (Theorem \ref{bivariate.spec}) 
in terms of 
eigenvalues of certain operators on $\overline{V_0}$ and $\overline{V_\star}$, \eqref{valthm}--\eqref{valthm2}, 
or equivalently by \eqref{spLbeta} in terms of an 
operator-valued function $\beta(\lambda)$, \eqref{betaform}--\eqref{6.27-2}, 
generalising in some way the scalar Zhikov's $\beta$-function introduced in \cite{Zhi2000,Zhi2005} for the 
model high-contrast problem \eqref{hom}. This is turn provides an asymptotic characterisation, with error estimates, for gaps in the collective spectrum,  
which in particular leads  to new estimates for the gaps in the Floquet-Bloch spectrum in various specific examples 
(Section \ref{sec:examples}). 
\vspace{.08in} 

Extensive Section \ref{sec:examples}, already briefly reviewed above, aims at demonstrating the power and versatility of our abstract results by 
applying them 
to a diverse set of physically motivated examples. 
We thereby 
obtain a number of 
new results for various high-contrast and some other asymptotically 
 degenerating problems. Each of the problems is picked not only for their wider relevance, but also to demonstrate a particular feature and breadth of the article's main assumptions and results. 
While in some examples we go into fine details for demonstrating the full power of the developed general methods, 
in others we 
do not pursue a maximal generality 
but do quite the opposite: try to 
present a simpler example displaying a particular feature and effect leading to a specific result. 
\vspace{.08in} 

We hope that 
the proposed approach, in particular the 
generality of the abstract scheme and versatile features of the emerging accompanying tools,  
could have a significant potential for 
wide-ranging further developments and applications 
beyond those discussed here. 
Moreover, the presented general 
scheme retains sufficient flexibility for adapting it to various further models outside periodic high-contrast two-scale spectral homogenisation, in particular allowing to relax further the assumptions on the 
abstract analog $\Theta$ of the dual cell $\square^*$ and on the key spectral gap condition \eqref{KA}. 
Some of these avenues are already being pursued, and will be reported elsewhere. 

\section{Abstract problem formulation}
\label{sec:pf}
An abstract setup for the general class of problems under consideration in this article is as follows. 
Let  $H$ be a separable complex Hilbert space with a family of 
non-negative sesquilinear forms\footnote{
For a non-negative sesquilinear form 
$\mathfrak{b}:H\times H\to\CC$, $\mathfrak{b}[u]:= \mathfrak{b}(u, u)$ is non-negative real $\forall u\in H$. 
Then $\mathfrak{b}$ is complex-Hermitian, 
with Cauchy-Schwarz and triangle inequalities held, i.e. $\mathfrak{b}(u,\tilde u) = \overline{\mathfrak{b}(\tilde u,u)}$, 
$|\mathfrak{b}(u,\tilde u)|\le \mathfrak{b}^{1/2}[u]\,\mathfrak{b}^{1/2}[\tilde u]$, 
$\mathfrak{b}^{1/2}[u+\tilde u]\le \mathfrak{b}^{1/2}[u] + \mathfrak{b}^{1/2}[\tilde u], 
 \ \forall\, u,\tilde u \in H$. 
(Here $\mathfrak{b}^{1/2}[u]:=\left(\mathfrak{b}[u]\right)^{1/2}$.) 
We shall also occasionally use simple implications, ``squared'' triangle inequalities: 
$\mathfrak{b}[u_1+u_2]\le 2\,\mathfrak{b}[u_1] + 2\,\mathfrak{b}[u_2]$, 
$\mathfrak{b}[u_1+u_2+u_3]\le 3\,\mathfrak{b}[u_1] + 3\,\mathfrak{b}[u_2]+3\,\mathfrak{b}[u_3]$, $\forall\,u_1$, $u_2$, $u_3\in H$. 
}
$a_\t$ and $b_\t$ 
parametrised by $\t$ varying in a compact subset $\Theta$ of $\RR^n$, 
$n\geq 1$. 
We assume throughout that 
\begin{equation}\label{astructure}
(u,\tilde{u})_\t \,\,:=\,\, a_\t(u,\tilde{u}) + b_\t(u,\tilde{u}), \qquad u,\tilde{u} \in H,
\end{equation} 
form a family of uniformly equivalent 
inner products on $H$, i.e. for 
the norms  $\| u \|_\t : = (u,u)_\t^{1/2}$ 
\begin{equation}
\label{as.b1}
\text{there exists 
$K > 0$ such that} \ \|u\|_{\t_1} \,\,\le\,\, K_{}\, \|u\|_{\t_2}, \quad \forall u \in H,\ \forall\, \t_1,\t_2 \in \Theta.
\end{equation}
Furthermore, we assume that the forms $a_\t$ are Lipschitz continuous with respect to $\t$ in the following sense: there exists   $L_{a} > 0$ such that
\begin{align}
\label{ass.alip}
&\big| a_{\t_1}(u,\tilde{u}) - a_{\t_2}(u,\tilde{u}) \big| \,\,\le\,\, L_{a} \big| \t_1 - \t_2 \big|\, {\|u\|_{\t_1}}{\|\tilde{u}\|_{\t_1}}, \quad \forall u,\tilde{u} \in H,\,\,\, \forall \t_1,\t_2 \in \Theta.
\end{align}
We consider a general class of problems reducible to the following common abstract variational form. 
For any given  $0<\ep <1 $, $\t \in \Theta$, and $f \in H^*$, 
\begin{equation}
\label{p1}
\left\{ \ \begin{aligned} & \text{find $u_{\ep,\theta} \in H$ such that} \\
& \ep^{-2} a_\t\left(u_{\ep,\theta} ,\tilde{u}\right) \,\,+\,\, b_\t \left(u_{\ep,\theta},\tilde{u}\right) \,\,\,=\,\,\, \l f,\tilde{u}\r, \quad \forall \tilde{u} \in H.  
\end{aligned}
\right.
\end{equation}
 Here $H^*$ is the 
space of anti-linear continuous functionals on $H$ and $\l\cdot,\cdot\r$ denotes 
the duality pairing. 
For any fixed $\ep>0$, $\t \in \Theta$,
\begin{equation}\label{Aform}
A_{\ep,\t}(\cdot,\cdot) \,\,: =\,\, \ep^{-2} a_\t(\cdot,\cdot) \,+\, b_\t(\cdot,\cdot) 
\end{equation} is an equivalent inner product for $H$, and therefore problem \eqref{p1}  is well-posed. 
Our 
aim is to establish asymptotic approximations  of the solution $u_{\ep,\t}$ with respect to small $\ep$ that are uniform in an appropriate 
sense in both $\t$ and $f$. 

For each $\t$, we introduce the set of degeneracy or the kernel of the ``singular'' form $a_\t$ 
\begin{equation}
\label{spaceV}
V_\theta \,\,: =\,\, \big\{ v \in H \,\,\, \big| \,\,\, a_\t[v] = a_\t(v,v) = 0 \big\},
\end{equation}
denoting henceforth,  for a sesquilinear form $\mathfrak{b}$,   
$\mathfrak{b}[v]:=\mathfrak{b}(v,v)$. 
Notice that,  
as $a_\t$ is non-negative, 
\begin{equation}\label{Vkersesa}
a_\t(v,u) \,=\, a_\t(u,v)\, =\, 0, \quad \forall v \in V_\t,\,\,\, \forall u \in H.
\end{equation}
The boundedness of $a_\t$ and 
\eqref{Vkersesa} imply that $V_\t$ is a closed linear subspace of $H$. 
Let  $W_\theta$, another closed linear subspace of $H$, be the orthogonal complement of $V_\theta$ in $H$ with respect to the inner product $(\cdot,\cdot)_\t$: 
\begin{equation}
W_\t\,\,:=\,\,\big\{w\in H\,\,\, \big|\,\,\, (w,v)_\t=0, \,\, \forall v\in V_\t\big\}. 
\label{2.6-w}
\end{equation} 
The main assumption of the article is the following pointwise in $\t$ (spectral) {\it  gap condition:}
\begin{equation}\tag{H1}
\label{KA}
\ \begin{aligned}
& \text{$\forall \,\t \in \Theta$, $\,\, \exists\, \nu_\theta>0$ such that  $\forall w \in W_\theta$ the inequality 
$a_\t [w] \,\,\ge\,\, \nu_\t\, \|w\|_\t^2\,\,$ holds}.
\end{aligned} 
\end{equation}
We emphasise that the above gap condition is generally {\it non-uniform}: in fact, in most of the interesting examples (Section \ref{sec:examples}),
 $\inf_{\t\in\Theta}\,\nu_\t=0$. 
\begin{remark}
\label{spgap}
To see why \eqref{KA} can be interpreted a spectral gap condition, notice that 
for every $\t\in\Theta$ the form $a_\t$ defines a non-negative bounded self-adjoint operator in $H$ with say  
inner product $(\cdot,\cdot)_\t$. Condition \eqref{KA} together with \eqref{astructure} implies that the 
spectrum of this operator is contained in 
$\{0\}\cup [\nu_\t,1]$, in particular if both $V_\t$ and $W_\t$ are nontrivial then $(0,\nu_\t)$ is in the gap of the spectrum. 
\end{remark}
\begin{remark}
		\label{r.oldkafromnewka}  
		In a wide class of examples (see Section \ref{sec:examples}) one can 
		verify that the following further strengthening 
		(see Proposition \ref{prop.kaequiv}) of condition \eqref{KA} holds. 
		There exists $C >0 $ and a  non-negative sesquilinear form $c$, $\|\cdot\|_\t$-compact (see Section \ref{sec.newKA} for the precise definition) for all $\t\in \Theta$, such that
		\begin{equation}\tag{H1$^\prime$}
		\label{KA2.1}
		\|w\|_\t^2 \,\,\,\le\,\, C a_\t[w]\,\, +\,\, c[w], \quad \forall w \in W_\t, \; \forall\, \t \in \Theta.
		\end{equation}
In particular, we will see that  \eqref{KA2.1} is self-evident in the context of classical homogenisation problems, although 
already requires employing certain extension theorems for ``non-classical'' high-contrast models. 
In Section \ref{sec.newKA}, we shall see that \eqref{KA2.1} does not only imply 
\eqref{KA}\footnote{In fact, for implying \eqref{KA}, \eqref{KA2.1} can be slightly weakened by allowing both $C$ and $c$ to 
depend on $\theta$, although in our examples those appear $\t$-independent.} 
but has other important implications. 
Condition \eqref{KA2.1} can be re-stated as 
the forms $a_\t$ being 
(uniformly) coercive on $W_\t$ plus compact.
\end{remark}
\section{The case of a continuous $V_\theta$ (uniform spectral gap)}
\label{s:uniforma}
As we shall see, the asymptotics of the solution to \eqref{p1} crucially depends on certain continuity properties of 
the degeneracy subspace $V_\t$ with respect to $\t$. We begin with the  simple case of the spectral gap $\nu_\t$ being uniform in $\t$ and then we shall characterise this condition in terms of the continuity of $V_\t$. 
\subsection{The case of a $\t$-uniform  gap}
For a fixed $\t$ and small $\ep$, the solution $u_{\ep,\t}$ to \eqref{p1} is expected to be close to the 
null-space 
$V_\t$ of $a_\t$. So it is natural 
to seek an 
approximation 
by first restricting \eqref{p1} to $V_\t$, i.e. ($\ep$-independent) $v_\t\in V_\t$ 
solving 
\begin{equation}
\label{thmcontv.vprob}	b_\t\left(v_\theta,\tilde{v}\right) \,\,=\,\, \l f\,,\tilde{v}\r, \quad \forall \tilde{v} \in V_\theta.
\end{equation}
Then 
the ``error'' $w_{\ep,\t}:=u_{\ep,\t}-v_\t$ is orthogonal to $V_\t$ with respect to $A_{\ep,\t}$ and hence also 
with respect to $(\cdot,\cdot)_\t$,  cf. \eqref{Aform},  \eqref{astructure} and \eqref{Vkersesa}. 
In other words, $w_{\ep,\t}\in W_\t$ and because of the orthogonality 
\begin{equation}
\label{wprob} 
A_{\ep,\t}\left(w_{\ep,\t}\, , \widetilde{w}\right) \,=\, 
\ep^{-2}a_\t\left(w_{\ep,\t} \,, \widetilde{w}\right) \,+\,	b_\t\left(w_{\ep,\theta}\,,\widetilde{w}\right) 
\,\,=\,\, \l f\,,\widetilde{w}\r, \quad \forall \widetilde{w} \in W_\theta.
\end{equation}
Now, in a standard way, setting in \eqref{wprob}  $\widetilde{w}=w_{\ep,\t}$ and recalling the {spectral gap condition} \eqref{KA}, 
\[
A_{\ep,\t}[w_{\ep,\t}]\,=\, \l f,w_{\ep,\t}\r\,\,\le\,\, \| f\|_{* \t}\,\|w_{\ep,\t}\|_\t\,\,\le\,\, 
\| f\|_{* \t}\, \nu_\t^{-1/2}a_\t^{1/2}[w_{\ep,\t}]\,\,\le\,\,  \|f\|_{* \t}\,\, \nu_\t^{-1/2}\ep\, A_{\ep,\t}^{1/2}[w_{\ep,\t}]\,, 
\]
where
\begin{equation}
\label{fstar}
\| f \|_{* \t} \,\,\,: =\,\, \sup_{u \in H \backslash \{0 \}} \frac{| \l f,u \r |}{{\|u\|_\t}}\,.
\end{equation} 
As a result, for the approximation error $w_{\ep,\t}=u_{\ep,\t}-v_\t$, 
\begin{equation}\label{1}
A_{\ep,\t}[w_{\ep,\t}]\,=\,\ep^{-2} a_\t[w_{\ep,\theta}] + b_\t[w _{\ep,\theta}] \,\,\le\,\,    \ep^2\, \nu_\t^{-1}\| f\|_{* \t}^2, \quad \ \ \forall \ep>0\,. 
\end{equation}
 Moreover, another application of \eqref{KA} and \eqref{1} gives  
\begin{equation}\label{1-2}
 \|w _{\ep,\theta}\|_\t^2\,\,\le\,\, \nu_\t^{-1}\,a_\t[w _{\ep,\theta}]\,\,\le\,\, \nu_\t^{-1}\ep^2 A_{\ep,\t}[w _{\ep,\theta}]\,\,
 \le  \,\,  \ep^4\, \nu_\t^{-2}\,\| f\|_{* \t}^2\,.
\end{equation} 
If the spectral gap is uniform in $\t$, 
regarding \eqref{thmcontv.vprob} as an approximate problem,  
\eqref{1} and \eqref{1-2} immediately provide  the following simple error estimates. 
\begin{theorem}\label{thm:contV}
Assume that 
\begin{equation}
\label{bddspec}
\text{there exists } 
\nu >0\ \text{ such that }\ a_\t[w]\,\ge\,\,\nu \|w\|_\t^2, \quad \forall w \in W_\t, \, \forall \t \in \Theta.
\end{equation}
 Then 
 for $u_{\ep,\theta} \in H$ the  solution to \eqref{p1} and   $v_\theta\in V_\t$ the solution to \eqref{thmcontv.vprob}, 
\begin{align}
\label{errorcontinuouscase}
\ep^{-2} a_\t\left[u_{\ep,\theta} - v_\theta\right] \,+\, b_\t\left[u_{\ep,\theta} - v_\theta\right]  \,\,\,\le\,\,\,    \ep^2\, \nu^{-1}\,\| f\|_{* \t}^2, \\ 
\label{errorcontinuouscase2}
\left\| u_{\ep,\theta} - v_\theta \right\|^2_\t  \,\,\,\le\,\,\,    \ep^4\, \nu^{-2}\,\| f\|^2_{* \t}.
\end{align}
\end{theorem}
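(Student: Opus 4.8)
The plan is to carry out the standard energy (Céa-type) estimate that is already assembled in the lines preceding the theorem, and simply observe that under the \emph{uniform} gap hypothesis \eqref{bddspec} the $\t$-dependent constant $\nu_\t$ appearing in \eqref{1} and \eqref{1-2} may be replaced everywhere by the single constant $\nu$, which is exactly what makes the estimates uniform in $\t$.

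First I would record that the reduced problem \eqref{thmcontv.vprob} is well-posed: by \eqref{spaceV} the form $a_\t$ vanishes on $V_\t$, so on the closed subspace $V_\t$ the form $b_\t$ coincides with the inner product $(\cdot,\cdot)_\t$ of \eqref{astructure}; hence $b_\t$ is an equivalent inner product on $V_\t$ and the Riesz--Fréchet theorem gives a unique $v_\t\in V_\t$. Next I would locate the error $w_{\ep,\t}:=u_{\ep,\t}-v_\t$. Testing \eqref{p1} with $\tilde u=v\in V_\t$ and using \eqref{Vkersesa} (so the $a_\t$-term drops) gives $b_\t(u_{\ep,\t},v)=\l f,v\r=b_\t(v_\t,v)$ for all $v\in V_\t$, i.e. $b_\t(w_{\ep,\t},v)=0$; since $b_\t$ and $(\cdot,\cdot)_\t$ agree on $V_\t$, the error is $(\cdot,\cdot)_\t$-orthogonal to $V_\t$, so $w_{\ep,\t}\in W_\t$ by \eqref{2.6-w}. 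Subtracting \eqref{thmcontv.vprob} from \eqref{p1} restricted to test functions in $W_\t$ then yields \eqref{wprob}.

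Then I would set $\tilde w=w_{\ep,\t}$ in \eqref{wprob}, apply Cauchy--Schwarz with $\|\cdot\|_\t$ and the dual norm \eqref{fstar}, and invoke \eqref{bddspec} in the form $\|w_{\ep,\t}\|_\t\le \nu^{-1/2}a_\t^{1/2}[w_{\ep,\t}]\le \nu^{-1/2}\ep\,A_{\ep,\t}^{1/2}[w_{\ep,\t}]$, using that $\ep^{-2}a_\t[w_{\ep,\t}]\le A_{\ep,\t}[w_{\ep,\t}]$ from \eqref{Aform}. Dividing through by $A_{\ep,\t}^{1/2}[w_{\ep,\t}]$ gives $A_{\ep,\t}[w_{\ep,\t}]\le \ep^2\nu^{-1}\|f\|_{*\t}^2$, which is \eqref{errorcontinuouscase}. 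For \eqref{errorcontinuouscase2} I would apply \eqref{bddspec} once more, $\|w_{\ep,\t}\|_\t^2\le \nu^{-1}a_\t[w_{\ep,\t}]\le \nu^{-1}\ep^2 A_{\ep,\t}[w_{\ep,\t}]$, and substitute the bound just obtained.

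I do not expect any genuine obstacle: this is the classical energy argument, and the only step that needs a sentence of care is verifying $w_{\ep,\t}\in W_\t$ so that the gap inequality is applicable — and this has effectively been done already in deriving \eqref{1} and \eqref{1-2}. The point of the theorem is purely the uniformity: replacing $\nu_\t$ by $\nu$ turns the pointwise bounds \eqref{1}, \eqref{1-2} into estimates with a constant independent of $\t\in\Theta$ (and of $f$), so that \eqref{thmcontv.vprob} qualifies as a legitimate leading-order approximation of \eqref{p1} uniformly in $\t$.
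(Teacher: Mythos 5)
Your argument is correct and follows the same route as the paper: reduce to the error $w_{\ep,\t}=u_{\ep,\t}-v_\t\in W_\t$, test \eqref{wprob} with $\tilde w=w_{\ep,\t}$, and apply the uniform gap \eqref{bddspec} once for \eqref{errorcontinuouscase} and twice for \eqref{errorcontinuouscase2}, which is exactly what the paper assembles in \eqref{1}--\eqref{1-2} and then restates with $\nu_\t$ replaced by $\nu$. One small imprecision in your wording: what you really use for $W_\t$-orthogonality is that $(u,v)_\t=b_\t(u,v)$ for any $u\in H$ and $v\in V_\t$ (because $a_\t(u,v)=0$ by \eqref{Vkersesa}), not merely that $b_\t$ and $(\cdot,\cdot)_\t$ agree on $V_\t\times V_\t$ --- but the logic you carry out is exactly this and is sound.
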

\begin{remark}\label{rem3.2}
Theorem \ref{thm:contV} clearly holds also `locally', i.e. with $\Theta$ replaced by any of its subsets $\Theta'$ such that 
assumption \eqref{bddspec} is satisfied only on $\Theta'$ rather than on the whole of $\Theta$. 
The theorem and its proof remain valid for all $\ep>0$ (i.e. not only for $0<\ep<1$, as assumed above). 
\end{remark}
\begin{remark}
	\label{r.normfindependentoft}
Note that while the right-hand-sides of \eqref{errorcontinuouscase} and \eqref{errorcontinuouscase2} formally depend on $\t$, 
this dependence is easily removed by \eqref{as.b1} :
$
\| f \|_{* \t_1} \le {K} \| f \|_{* \t_2}, \ \forall \t_1,\t_2 \in \Theta.
$ 
Therefore \eqref{errorcontinuouscase} and \eqref{errorcontinuouscase2} provide desired error estimates for small 
$\ep$, which are uniform in both $\t$ and $f$. 
\end{remark}
\subsection{A characterisation of  forms $a_\t$ with uniform gap condition}
\label{sec.contV}
In applications, the direct verification of \eqref{bddspec} can be complicated. An equivalent but often 
easier to verify condition relies on a notion of continuity of the degeneracy subspace $V_\t$ in $\t$ that we shall introduce now. 
Namely, we say that $V_\t$ is Lipschitz continuous with respect to $\t$ on $\Theta$ if  
\begin{equation}
\label{VtLip}
\exists L_V >0\ \text{ such that }\ 
\forall\, \t_1,\t_2 \in \Theta, \ \forall v_{1} \in V_{\t_1}, \quad 
\inf_{v_{2} \in V_{\t_2}} \left\|v_{1} - v_{2}\right\|_{\t_2} \,\,\le\,\, L_V \left| \t_1 - \t_2 \right|\, \|v_{1}\|_{\t_1}.
\end{equation}

As $\inf_{v_2 \in V_{\t_2}} \| v_1 - v_2\|_{\t_2} = 
\left\|P_{W_{\t_2}} v_{1}\right\|_{\t_2}$, where $P_{W_\t} : H \rightarrow W_\t$ is the orthogonal projection on $W_\t$ with respect to $( \cdot,\cdot)_\t$,  the inequality in \eqref{VtLip} is 
equivalent to 
\begin{equation}
\label{VtLip2}
\left\|P_{W_{\t_2}} v_{1}\right\|_{\t_2} \,\,\le\,\, L_V | \t_1 - \t_2 |\, \|v_1\|_{\t_1}, \quad \forall v_{1} \in V_{\t_1},  \ \forall\, \t_1,\t_2 \in \Theta.
\end{equation} 
The following result establishing, under assumption \eqref{KA},  the equivalence of the gap uniformity 
property \eqref{bddspec} and of the $V_\t$ continuity 
property \eqref{VtLip} holds\footnote{An intuition behind is that the $\t$-continuity property \eqref{ass.alip} of $a_\t$ implies certain regular behaviour 
 of the related spectra, cf. 
Remark \ref{spgap}. So, as long as the spectral gap remains uniformly positive, the zero eigenspace $V_\t$ can vary with $\t$ only continuously, while if the uniformity is 
violated on $\t$ approaching a point $\t_0$ this can be only be due to an instant addition of a non-trivial subspace to $V_\t$ at $\t=\t_0$.}.
\begin{theorem}\label{thm.contVequiv} Assume \eqref{KA}. Then  \eqref{bddspec} holds if and only if  \eqref{VtLip} holds.
\end{theorem}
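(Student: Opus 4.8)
\textbf{Proof plan for Theorem \ref{thm.contVequiv}.}

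The plan is to prove the two implications separately, with both directions building on the coercivity estimate already implicit in \eqref{KA} and on the Lipschitz continuity \eqref{ass.alip} of $a_\t$, together with the uniform equivalence of norms \eqref{as.b1}.

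First I would show that \eqref{bddspec} implies \eqref{VtLip}. Fix $\t_1,\t_2\in\Theta$ and $v_1\in V_{\t_1}$, so $a_{\t_1}[v_1]=0$. The quantity to bound is $\|P_{W_{\t_2}}v_1\|_{\t_2}$ by \eqref{VtLip2}. Applying \eqref{bddspec} with $\t=\t_2$ and $w=P_{W_{\t_2}}v_1\in W_{\t_2}$ gives $\nu\|P_{W_{\t_2}}v_1\|_{\t_2}^2\le a_{\t_2}[P_{W_{\t_2}}v_1]$. The idea then is that $a_{\t_2}$ restricted to $V_{\t_2}$ vanishes, so writing $v_1 = P_{V_{\t_2}}v_1 + P_{W_{\t_2}}v_1$ and using that $a_{\t_2}(P_{V_{\t_2}}v_1,\cdot)=0$ by \eqref{Vkersesa}, one gets $a_{\t_2}[P_{W_{\t_2}}v_1]=a_{\t_2}(v_1,P_{W_{\t_2}}v_1)$. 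Now replace $a_{\t_2}$ by $a_{\t_1}$ at the cost of the Lipschitz term: $a_{\t_2}(v_1,P_{W_{\t_2}}v_1) = a_{\t_1}(v_1,P_{W_{\t_2}}v_1) + (a_{\t_2}-a_{\t_1})(v_1,P_{W_{\t_2}}v_1)$, and the first term is $0$ since $v_1\in V_{\t_1}$. Bounding the second via \eqref{ass.alip} by $L_a|\t_1-\t_2|\,\|v_1\|_{\t_1}\|P_{W_{\t_2}}v_1\|_{\t_1}$ and then converting $\|\cdot\|_{\t_1}$ to $\|\cdot\|_{\t_2}$ on the last factor by \eqref{as.b1}, one obtains $\nu\|P_{W_{\t_2}}v_1\|_{\t_2}^2 \le K L_a|\t_1-\t_2|\,\|v_1\|_{\t_1}\|P_{W_{\t_2}}v_1\|_{\t_2}$, and dividing gives \eqref{VtLip2} with $L_V = KL_a/\nu$.

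Next I would prove the converse: \eqref{VtLip} together with \eqref{KA} implies \eqref{bddspec}. The strategy is a contradiction/compactness argument exploiting that $\Theta$ is compact. Suppose \eqref{bddspec} fails; then there is a sequence $\t_k\in\Theta$ with $\nu_{\t_k}\to 0$, where $\nu_{\t_k}$ denotes the best constant in \eqref{KA} at $\t_k$, i.e. $\nu_{\t_k} = \inf\{a_{\t_k}[w] : w\in W_{\t_k},\ \|w\|_{\t_k}=1\}$. Since $\Theta$ is compact, pass to a subsequence with $\t_k\to\t_0\in\Theta$; by \eqref{KA} at $\t_0$ we have $\nu_{\t_0}>0$. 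For each $k$ pick $w_k\in W_{\t_k}$ with $\|w_k\|_{\t_k}=1$ and $a_{\t_k}[w_k]\le 2\nu_{\t_k}\to 0$; then also $a_{\t_0}[w_k]\to 0$ by \eqref{ass.alip} and \eqref{as.b1}. The key point is then to show that $w_k$ is asymptotically in $W_{\t_0}$, which will contradict the gap at $\t_0$: decompose $w_k = P_{V_{\t_0}}w_k + P_{W_{\t_0}}w_k$; from $a_{\t_0}[w_k]\to 0$ and the gap at $\t_0$ one gets $\|P_{W_{\t_0}}w_k\|_{\t_0}\to 0$, so $\|P_{V_{\t_0}}w_k\|_{\t_0}\to 1$. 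But $P_{V_{\t_0}}w_k\in V_{\t_0}$, and here I would use the $V_\t$-continuity \eqref{VtLip} (with the roles $\t_1=\t_0$, $\t_2=\t_k$, applied to $v_1 = P_{V_{\t_0}}w_k$) to conclude $\|P_{W_{\t_k}}P_{V_{\t_0}}w_k\|_{\t_k}\le L_V|\t_0-\t_k|\,\|P_{V_{\t_0}}w_k\|_{\t_0}\to 0$. Since $w_k\in W_{\t_k}$, we have $(w_k,P_{V_{\t_0}}w_k)_{\t_k} = (w_k,P_{W_{\t_k}}P_{V_{\t_0}}w_k)_{\t_k}$, whose modulus is at most $\|w_k\|_{\t_k}\|P_{W_{\t_k}}P_{V_{\t_0}}w_k\|_{\t_k}\to 0$; combined with $\|P_{V_{\t_0}}w_k - w_k\|_{\t_0} = \|P_{W_{\t_0}}w_k\|_{\t_0}\to 0$ and norm-equivalence to transfer between $\|\cdot\|_{\t_0}$ and $\|\cdot\|_{\t_k}$, this forces $\|P_{V_{\t_0}}w_k\|_{\t_k}^2 = (P_{V_{\t_0}}w_k, P_{V_{\t_0}}w_k)_{\t_k}\to 0$, contradicting $\|P_{V_{\t_0}}w_k\|_{\t_0}\to 1$. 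Hence \eqref{bddspec} must hold.

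The main obstacle I anticipate is the bookkeeping in the converse direction: making the three-way interplay between the $\t_k$-inner product, the $\t_0$-inner product, and the projections onto $V_{\t_0}$ versus $W_{\t_k}$ fully rigorous, in particular carefully tracking all the norm-equivalence constants from \eqref{as.b1} and the $\t$-continuity error from \eqref{ass.alip} so that the limits are genuinely controlled uniformly. The forward direction is essentially a one-line computation once the right terms are identified; it is the compactness argument, and especially the step showing $w_k$ is almost orthogonal to $V_{\t_0}$ in the $\t_k$-metric, that carries the real content.
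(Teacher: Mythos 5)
Your proposal is correct and follows essentially the same route as the paper: the forward direction is the same one-line computation combining \eqref{bddspec}, \eqref{Vkersesa}, \eqref{ass.alip} and \eqref{as.b1}, and the converse is the same compactness/contradiction argument — take $\t_k\to\t_0$ with gap constants degenerating, pick unit-norm $w_k\in W_{\t_k}$ with $a_{\t_k}[w_k]\to 0$, decompose along $V_{\t_0}\oplus W_{\t_0}$, and use the gap at $\t_0$ together with \eqref{VtLip2} to kill both pieces. The only cosmetic difference is that the paper reaches the contradiction by writing $1=\|w_n\|^2_{\t_n}=(P_{V_{\t_0}}w_n,w_n)_{\t_n}+(P_{W_{\t_0}}w_n,w_n)_{\t_n}$ and showing both summands vanish directly, whereas you first establish $\|P_{V_{\t_0}}w_k\|_{\t_0}\to 1$ and then contradict it by showing $\|P_{V_{\t_0}}w_k\|_{\t_k}\to 0$; the underlying estimates are identical.
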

\begin{proof}
			\emph{ Proof of \eqref{bddspec} $\hspace{-5pt}\implies\hspace{-5pt}$ \eqref{VtLip}}. 
Let $\t_1, \t_2 \in \Theta$ and 
$v_1 \in V_{\t_1}$. By \eqref{bddspec}, 
\eqref{Vkersesa} 
and \eqref{ass.alip} we obtain  
\[
\begin{aligned}
\| P_{W_{\t_2}} v_1 \|_{\t_2}^2\,\, & \le\,\,\, \nu^{-1} a_{\t_2} [P_{W_{\t_2}} v_1]\,\,=\,\, \nu^{-1} a_{\t_2} \left( P_{W_{\t_2}}v_1, v_1\right) \,\,=\,\, 
\nu^{-1}  \Big( a_{\t_2} \left( P_{W_{\t_2}}v_1, v_1\right) -  a_{\t_1} \left( P_{W_{\t_2}}v_1, v_1\right) \Big) \\
&\le\,\,\, \nu^{-1} L_a |\t_1 - \t_2 | \| P_{W_{\t_2}} v_1 \|_{\t_1}\| v_1 \|_{\t_1}.
\end{aligned}
\]
Hence, after an application of \eqref{as.b1}, \eqref{VtLip2} holds with $L_V =  \nu^{-1} L_aK$ and 
therefore so does \eqref{VtLip}. 

{\emph{Proof of \eqref{VtLip} $\hspace{-5pt}\implies\hspace{-5pt}$	\eqref{bddspec}}}. 
Suppose \eqref{bddspec} does not hold. Then there exists a convergent sequence $\t_n \in \Theta$ with limit $\t_0\in\Theta$, and a sequence $w_n \in W_{\t_n}$ such that $\| w_n \|_{\t_n} =1$ and $\lim_{n}a_{\t_n}[w_n] =0$. Now 
\begin{equation}\label{6520e}
1 \,=\,   \left\| w_n \right\|_{\t_n}^2 \,\,=\,\,  \left( P_{V_{\t_0} } w_n , w_n \right)_{\t_n}\,+\,\left( P_{W_{\t_0} } w_n , w_n \right)_{\t_n},
\end{equation}
where $P_{V_{\t_0}}$ is the orthogonal projector on $V_{\t_0}$ with respect to 
$(\cdot,\cdot)_{\t_0}$. For the contradiction, we will show that both terms on the right of \eqref{6520e} 
converge to zero. 
By \eqref{as.b1}
\[
\left| ( P_{W_{\t_0} } w_n , w_n )_{\t_n}\right| \,\,\le\,\, \left\| P_{W_{\t_0}} w_n \right\|_{\t_n} \left\| w_n \right\|_{\t_n}\,\, =\,\, 
\left\| P_{W_{\t_0}} w_n \right\|_{\t_n} \,\,\le\,\, K\,\left\| P_{W_{\t_0}} w_n \right\|_{\t_0},
\]
and we claim that $\lim_n \| P_{W_{\t_0}} w_n \|_{\t_0} =0$. Indeed, by  \eqref{KA} for $\t=\t_0$, \eqref{Vkersesa} and \eqref{ass.alip}, 
\[
\begin{aligned}
\left\| P_{W_{\t_0}} w_n \right\|_{\t_0}^2\,\,\, & \le\,\, \nu_0^{-1}\, a_{\t_0}\left[ P_{W_{\t_0}} w_n\right] \,\,=\,\, \nu_0^{-1} a_{\t_0}\left[ w_n\right]  
\,\,\le\,\, \nu_0^{-1} a_{\t_n}\left[ w_n\right]  \,+\, \nu_0^{-1} L_a\, |\t_n - \t_0 |\, \left\| w_n \right\|_{\t_n}^2 \\
&=\,\,  \nu_0^{-1} a_{\t_n}\left[ w_n\right]  \,+\, \nu_0^{-1} L_a\, \left|\t_n - \t_0 \right|\,\to\,0\, \mbox{ as } n\to\infty.
\end{aligned}
\]
Thus the last  term in \eqref{6520e} converges to zero. On the other hand, by 
\eqref{VtLip2},  
\[
\left|\left( P_{V_{\t_0}} w_n ,w_n\right)_{\t_n}\right|\,\,\,  
=\,\,
\left|\left(  P_{W_{\t_n}} P_{V_{\t_0}} w_n ,w_n\right)_{\t_n} \right| \,\,\le\,\, 
\left\|P_{W_{\t_n}} P_{V_{\t_0}} w_n\right\|_{\t_n} \,\, 
\le\,\, L_V |\t_n - \t_0|\,\left\|P_{V_{\t_0}} w_n \right\|_{\t_0}.
\]
Clearly, $\left\|P_{V_{\t_0}} w_n \right\|_{\t_0}  \le   \left\| w_n \right\|_{\t_0} \le K \left\| w_n \right\|_{\t_n}=K$, and therefore the first term on the right hand side of \eqref{6520e} also converges to zero. Whence, we arrive at the contradiction in \eqref{6520e}, and so \eqref{bddspec} holds.
%
\end{proof}
\begin{remark}\label{rem.merelycont}
The above proof demonstrates that 
Theorem \ref{thm.contVequiv} remains valid 
if we merely require both $a_\t$ and $V_\t$ to be say 
H\"{o}lder continuous (rather than Lipschitz continuous) in $\t$, with appropriate 
modification of \eqref{ass.alip} and \eqref{VtLip}. 
Also, a `local' analogue of Theorem \ref{thm.contVequiv} clearly holds, i.e. when in both \eqref{bddspec} and  \eqref{VtLip} $\Theta$ is replaced by its closed subset $\Theta'$. 
\end{remark}
\section{The case of discontinuous $V_\theta$ (non-uniform gap)}
\label{section:discV}
Typically, in applications (Section \ref{sec:examples}) the spaces $V_\t$  violate \eqref{VtLip} and have  isolated discontinuities. 
By Theorem \ref{thm.contVequiv}, near those discontinuities the gap constants $\nu_\t$ necessarily degenerate, 
and as a result Theorem \ref{thm:contV} becomes inapplicable and the approximations of \eqref{thmcontv.vprob} cease being uniformly accurate. 
Fortunately refined approximations,  
providing the desired accuracy,
 are possible near those discontinuity points. 
Henceforth, we consider this situation and begin with an analysis in the neighbourhood of a given 
point $\t_0\in \Theta$, 
without loss of generality\footnote{
The analysis and results in this section, that are local in nature, 
extend in a straightforward manner to the case when the discontinuity set  is an arbitrary set of isolated 
points. 
} 
 $\t_0=0$. 
\subsection{Local estimates}
\label{sect4.1}
While Theorem \ref{thm:contV} may be 
not anymore 
applicable, the 
key idea behind remains so. 
 The essence of the theorem  
was in first identifying 
$\ep$-independent subspaces $W_\t$ of $H$ on which $a_\t$ are uniformly coercive and then 
restricting 
problems \eqref{p1} to the 
orthogonal complements $V_\t$ of $W_\t$ with respect to $A_{\ep,\t}$. 
Regarding 
the former, we observe that as 
a consequence of \eqref{KA} held at $\t=\theta_0=0$ together with the continuity of $a_\t$ due to 
\eqref{ass.alip}, 
$a_\t$ remains uniformly coercive on $W_0$ 
in a small enough neighbourhood of $\theta_0$:
\begin{proposition}
	\label{Wt2coerc}  
Assume \eqref{KA}.
Then
	\begin{equation}
	\label{C2}
	\frac{\nu_0}{2K^2}  \big\| w_0 \big\|_\t^2\,\,\, \le\,\,\, a_\t \left[w_0\right], \quad \forall w_0 \in  W_0, 
	\ \forall \t  \in \Theta\ \text{such that}\ \, |\theta|\,\le\, \tfrac{1}{2}\nu_0L_a^{-1}.
	\end{equation}
\end{proposition}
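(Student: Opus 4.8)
The plan is to prove the coercivity estimate \eqref{C2} by a direct argument combining the pointwise gap condition \eqref{KA} at $\t=0$ with the Lipschitz continuity \eqref{ass.alip} and the uniform equivalence of norms \eqref{as.b1}. Fix $\t\in\Theta$ with $|\t|\le\tfrac12\nu_0L_a^{-1}$ and take an arbitrary $w_0\in W_0$. The starting point is to control $a_\t[w_0]$ from below by $a_0[w_0]$ using \eqref{ass.alip}: namely
\[
a_\t[w_0]\,\ge\, a_0[w_0]\,-\,\big|a_\t[w_0]-a_0[w_0]\big|\,\ge\, a_0[w_0]\,-\,L_a|\t|\,\|w_0\|_0^2.
\]
Since $w_0\in W_0$, the gap condition \eqref{KA} at $\t=0$ gives $a_0[w_0]\ge\nu_0\|w_0\|_0^2$, so the right-hand side is at least $(\nu_0-L_a|\t|)\|w_0\|_0^2$. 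By the assumed smallness of $|\t|$ we have $L_a|\t|\le\tfrac12\nu_0$, hence $a_\t[w_0]\ge\tfrac12\nu_0\|w_0\|_0^2$.

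It then remains to replace the norm $\|w_0\|_0$ by $\|w_0\|_\t$ on the right. This is exactly where \eqref{as.b1} enters: $\|w_0\|_\t\le K\|w_0\|_0$, i.e. $\|w_0\|_0^2\ge K^{-2}\|w_0\|_\t^2$. Substituting this into the previous bound yields
\[
a_\t[w_0]\,\ge\,\frac{\nu_0}{2}\,\|w_0\|_0^2\,\ge\,\frac{\nu_0}{2K^2}\,\|w_0\|_\t^2,
\]
which is precisely \eqref{C2}. Since $w_0\in W_0$ and $\t$ were arbitrary subject to the stated constraint, this completes the argument.

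There is no real obstacle here — the proof is a short chain of three inequalities — but the one point that needs a little care is the direction of the norm inequality \eqref{as.b1}: one must apply it as $\|w_0\|_\t\le K\|w_0\|_0$ (not the reverse), which is the form in which it is stated, and one must also note that \eqref{ass.alip} is invoked with $\t_1=\t$, $\t_2=0$ and the factor $\|w_0\|_{\t_1}=\|w_0\|_\t$; but then a further application of \eqref{as.b1} (in the form $\|w_0\|_\t\le K\|w_0\|_0$) is needed to express that factor through $\|w_0\|_0^2$, which is harmless and only affects the constant. One should simply absorb the resulting constants so that the stated clean value $\nu_0/(2K^2)$ is obtained; the smallness threshold $|\t|\le\tfrac12\nu_0L_a^{-1}$ is dictated by requiring $L_a|\t|\le\tfrac12\nu_0$ after the single use of \eqref{ass.alip} with the $\|w_0\|_0^2$ normalisation, which is the cleanest bookkeeping.
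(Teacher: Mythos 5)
Your proof is correct and is essentially the paper's argument: apply \eqref{KA} at $\t=0$, use the Lipschitz bound \eqref{ass.alip} to compare $a_0$ and $a_\t$ on $W_0$, absorb the perturbation using the smallness of $|\t|$, and finish with \eqref{as.b1}. One small remark on your closing aside: your displayed chain correctly bounds $|a_0[w_0]-a_\t[w_0]|$ by $L_a|\t|\,\|w_0\|_0^2$, which corresponds to taking $\t_1=0$, $\t_2=\t$ in \eqref{ass.alip} (so that the factor on the right is $\|w_0\|_{\t_1}^2=\|w_0\|_0^2$); taking $\t_1=\t$, $\t_2=0$ as you suggest there would produce $\|w_0\|_\t^2$ and, after a further use of \eqref{as.b1}, an extra factor $K^2$ that would force the smaller threshold $|\t|\le\tfrac12\nu_0 L_a^{-1}K^{-2}$, so the $\t_1=0$ choice is the one that yields the stated constants cleanly.
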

\begin{proof}
For $w_0\in W_0$, 
as follows from  \eqref{KA}, 
\eqref{ass.alip}  
	and $ L_a  |\t| < \tfrac{1}{2}\nu_0$, 
	\[
	\nu_0 \left\| w_0 \right\|_0^2  \,\,\,\le\,\, a_0[w_0] \,\,\le \,\, a_\t\left[w_0\right] \,+ \,
	L_a|\t|\left\| w_0\right\|_0^2\,\,
	\le\,\, a_\t[w_0]\,+ \,\,
	\tfrac{1}{2}\, \nu_0 \left\| w_0\right\|_0^2\,,
	\]
	which implies 
	\begin{equation}
	\label{coercv0t}
	\tfrac{1}{2}\nu_0 \| w_0 \|_0^2  \,\, \le\,\, a_\t[w_0], \quad \forall w_0\in W_0. 
	\end{equation} 
The latter,  along with \eqref{as.b1}, implies \eqref{C2}.
\end{proof}	
Turning now to the orthogonality issue, 
recall that, at $\theta=0$, $V_0$ is the orthogonal complement of $W_0$ with respect to $A_{\ep,\t}$. However for $\t\neq 0$, in general, 
$V_0$ is not anymore orthogonal to $W_0$. 
Nevertheless, it is possible to partially rectify this as follows. The idea is, for small enough 
$\theta\neq 0$, to ``correct'' $V_0$ slightly to maintain the desired orthogonality only 
to the main order in small $\ep$, i.e. with regards to the singular   
part $a_\t$  
of $A_{\ep,\t}$. 
To that end, given $v_0\in V_0$, seek a ``corrector'' $\N v_0 \in W_0$ such that for $\M v_0:=v_0+\N v_0$, 
\begin{equation}\label{IliaN2}
a_\t\left( \M  v_0 , w_0 \right) \,\,=\,\, 0,  \qquad \forall v_0 \in V_0, \ \ \forall w_0 \in W_0,
\end{equation}
i.e. so that $\M V_0$ and $W_0$ are ``orthogonal with respect to $a_\t$''. 
Equivalently, we seek  
$\N v_0 \in W_0$ solving 
\begin{equation}\label{IliaN}
a_\t\left( \N  v_0 , \widetilde w_0 \right) \,\,= \,\,-\,\, a_\t\left(v_0,\widetilde w_0\right), \quad \forall\, \widetilde w_0 \in W_0. 
\end{equation}
Problem \eqref{IliaN} can be viewed as 
an abstract analog of the cell problem. It is well-posed for $|\theta|\leq \tfrac{1}{2} \nu_0 L_a^{-1}$ by Proposition \ref{Wt2coerc}, 
and determines a linear map $\N  : V_0 \rightarrow W_0$.  
Show that the following estimate holds: 
\begin{equation}\label{Nbound}
\left\| \N v_0\right\|_0\,\,\, \le\,\,\, 2{L_a}{\nu_0}^{-1} |\t|\, \left\| v_0\right\|_0, \quad \forall v_0 \in V_0, \ \    |\theta|\,\le\, \frac{1}{2} \nu_0\, L_a^{-1}. 
\end{equation} 
Indeed, 
from   \eqref{coercv0t}, \eqref{IliaN},  \eqref{ass.alip} and \eqref{Vkersesa}, 
\[
\tfrac{\nu_0}{2} \left\| \N v_0 \right\|_0^2 \,\le\, a_\t\big[\N v_0\big] \,=\, -\, a_\t\left(v_0,\,\N v_0\right) 
\,\le\, \big|a_0\left(v_0,\,\N v_0\right) \big|\, +\, L_a |\t| \left\|  v_0\right\|_0\, \left\| \N v_0\right\|_0 \,=\,  L_a |\t|\, \left\| v_0\right\|_0 \,\left\| \N v_0\right\|_0.
\]
Also, since 
$\N V_0 \subseteq W_0$, it readily follows that  $H = \M V_0 \dot{+}W_0$, i.e. $H$ is a direct 
sum 
 of $\M V_0$ and $W_0$. 
Further, from the orthogonality of $V_0$ and $W_0$, and \eqref{Nbound}, 
\begin{equation}\label{Mpositive}
\left\| v_0\right\|_0 \,\,\le\,\, \left\| \M v_0\right\|_0\,\,\le\,\,2\,\left\| v_0\right\|_0, \quad \forall\, v_0 \in V_0, 
\ \    |\theta|\,\le\, \frac{1}{2} \nu_0 L_a^{-1}. 
\end{equation}
It follows from \eqref{Mpositive} that $\M V_0$ is  closed in $H$. 
This all 
allow us, for sufficiently small $\t$, to construct a desirable approximation to the solution $u_{\ep,\t}$ of variational problem \eqref{p1} by restricting it 
to $\M V_0$ which is ``almost orthogonal'' to $W_0$. 
Indeed, the following subtle modification of Theorem \ref{thm:contV} holds.
\begin{theorem}
	\label{lmm1}
	Let \eqref{KA} hold,  $f \in H^*$, 
	$\t \in \Theta$, $|\t| <  \nu_0/ (2L_a )$. 
	Let $u_{\ep,\t}$ solve \eqref{p1}, 
	and $v_0 \in V_0$ solve
	\begin{equation}
	\label{w1prob0}
	\ep^{-2} a_\t\big(  \M v_0 \,,\, \M \tilde{v} \big) \,\,+\,\, b_\t\left( \M v_0, \M \tilde{v}\right) \,\,=\,\, 
	\left\langle f,\, \M \tilde{v} \right\rangle, \quad \forall \tilde{v} \in V_0.
	\end{equation}
	Then problem \eqref{w1prob0} is well-posed, and the following error estimates hold:
	\begin{align}
	\label{ik43000}
	\ep^{-2}a_\t\big[u_{\ep,\theta} - \M v_0 \big]\,\,+\,\,	
	b_\t\big[ u_{\ep,\theta} -  \M v_0 \big]\,\,\, \le \,\,\,   8K^2 \nu_0^{-1}  \ep^2\,\|f\|_{*\t}^2, \\
	\label{s4ep2bd}
	b_\t\big[ u_{\ep,\theta} - \M v_0 \big]\,\,\,\le\,\,\, 16K^4 \nu_0^{-2}  \ep^4\,\|f\|_{*\t}^2.
	\end{align}
\end{theorem}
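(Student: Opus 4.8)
The plan is to re-run the argument behind Theorem~\ref{thm:contV}, with the degeneracy space $V_\t$ replaced by the corrected subspace $\M V_0$ and the exact $A_{\ep,\t}$-orthogonality of the error replaced by an approximate one. All the structural facts needed are already in hand: $\M V_0$ is closed with $H=\M V_0\dot{+}W_0$; by Proposition~\ref{Wt2coerc} the form $a_\t$ is coercive on $W_0$ with the $\t$-uniform constant $\nu_0/(2K^2)$ throughout the relevant range $|\t|<\nu_0/(2L_a)$; by the very construction of $\N$ the splitting $H=\M V_0\dot{+}W_0$ is $a_\t$-orthogonal, see \eqref{IliaN2}; and, since $\ep<1$, the form $A_{\ep,\t}$ dominates $(\cdot,\cdot)_\t$, so $\|u\|_\t\le A_{\ep,\t}^{1/2}[u]\le\ep^{-1}\|u\|_\t$ for all $u\in H$. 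Well-posedness of \eqref{w1prob0} is then immediate: I would regard it as a variational problem on the complete subspace $V_0\subset H$ for the Hermitian form $(v_0,\tilde v)\mapsto A_{\ep,\t}(\M v_0,\M\tilde v)$, whose boundedness and (for each fixed $\ep$) coercivity on $V_0$ follow from the above domination inequalities together with \eqref{as.b1} and the two-sided bound \eqref{Mpositive}, and whose right-hand side $\tilde v\mapsto\l f,\M\tilde v\r$ is bounded for the same reasons, so that Lax--Milgram applies.

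Next I would set $w_{\ep,\t}:=u_{\ep,\t}-\M v_0$, test \eqref{p1} with $\tilde u=\M\tilde v$, $\tilde v\in V_0$, and subtract \eqref{w1prob0}, obtaining the Galerkin-type orthogonality $A_{\ep,\t}(w_{\ep,\t},\M\tilde v)=0$ for all $\tilde v\in V_0$; with $\tilde v=v_0$ and Hermiticity of $A_{\ep,\t}$ this yields the energy identity $A_{\ep,\t}[w_{\ep,\t}]=\l f,w_{\ep,\t}\r\le\|f\|_{*\t}\,\|w_{\ep,\t}\|_\t$. The core of the proof is then the bound $\|w_{\ep,\t}\|_\t^2\le C\,\nu_0^{-1}K^2\,a_\t[w_{\ep,\t}]$. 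To establish it, decompose $w_{\ep,\t}=\M z+w$ with $z\in V_0$, $w\in W_0$ (unique, by the direct sum). Since the splitting is $a_\t$-orthogonal, $a_\t[w_{\ep,\t}]=a_\t[\M z]+a_\t[w]\ge a_\t[w]\ge\tfrac{\nu_0}{2K^2}\|w\|_\t^2$ by Proposition~\ref{Wt2coerc}. For the $\M z$-component, take $\tilde v=z$ in the orthogonality and use $a_\t(w,\M z)=0$ (again \eqref{IliaN2}) to get $A_{\ep,\t}[\M z]=-\,b_\t(w,\M z)\le b_\t^{1/2}[w]\,b_\t^{1/2}[\M z]\le\|w\|_\t\,\|\M z\|_\t\le\|w\|_\t\,A_{\ep,\t}^{1/2}[\M z]$, whence $\|\M z\|_\t^2\le A_{\ep,\t}[\M z]\le\|w\|_\t^2$; the triangle inequality then gives $\|w_{\ep,\t}\|_\t\le 2\|w\|_\t$, and the claimed bound follows.

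To finish, I would use $A_{\ep,\t}\ge\ep^{-2}a_\t$ to turn the core bound into $\|w_{\ep,\t}\|_\t^2\le C\,\nu_0^{-1}K^2\,\ep^2\,A_{\ep,\t}[w_{\ep,\t}]$, substitute this into the energy identity and cancel one power of $A_{\ep,\t}^{1/2}[w_{\ep,\t}]$; since $A_{\ep,\t}[w_{\ep,\t}]=\ep^{-2}a_\t[w_{\ep,\t}]+b_\t[w_{\ep,\t}]$, this is precisely \eqref{ik43000}. Then $b_\t[w_{\ep,\t}]\le\|w_{\ep,\t}\|_\t^2\le C\,\nu_0^{-1}K^2\,\ep^2\,A_{\ep,\t}[w_{\ep,\t}]$ combined with \eqref{ik43000} supplies the extra factor $\ep^2$ and hence \eqref{s4ep2bd}. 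I expect the only genuinely delicate point to be the estimate $\|\M z\|_\t\le\|w\|_\t$ controlling the spurious $\M V_0$-component of the error: it is exactly there that the defining property of the corrector $\N$ (annihilating the $a_\t$ cross terms) and the restriction $\ep<1$ (so that $A_{\ep,\t}$ controls $(\cdot,\cdot)_\t$) are used simultaneously; everything else is routine bookkeeping, and I would not chase the precise numerical constants $8K^2$ and $16K^4$ in \eqref{ik43000}--\eqref{s4ep2bd} until the above inequalities are tightened.
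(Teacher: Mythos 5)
Your argument is structurally the same as the paper's: same corrected subspace $\M V_0$, same decomposition of the error into a $\M V_0$-component and a $W_0$-component, same coercivity input from Proposition~\ref{Wt2coerc}, and the same use of the $a_\t$-orthogonality~\eqref{IliaN2}. Your observation that $A_{\ep,\t}[w_{\ep,\t}]=\l f,w_{\ep,\t}\r$ follows directly from Galerkin orthogonality plus Hermiticity is a small simplification of the paper's bookkeeping and reaches the same place; your bound $\|\M z\|_\t\le\|w\|_\t$ (hence $\|w_{\ep,\t}\|_\t\le2\|w\|_\t$) is likewise fine, and this route reproduces the constant $8K^2\nu_0^{-1}$ in~\eqref{ik43000} exactly.

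The one place your chain is genuinely coarser is~\eqref{s4ep2bd}: you pass through $b_\t[w_{\ep,\t}]\le\|w_{\ep,\t}\|_\t^2\le 4\|w\|_\t^2$, which lands you at $64K^4\nu_0^{-2}$ rather than the stated $16K^4\nu_0^{-2}$. The paper avoids the extra factor of~$4$ by proving the sharper $b_\t[w_{\ep,\t}]\le b_\t[w]\le\|w\|_\t^2$ directly: from the $A_{\ep,\t}$-orthogonality of $w_{\ep,\t}$ to $\M V_0$ and the Pythagorean identity $A_{\ep,\t}[w]=A_{\ep,\t}[w_{\ep,\t}]+A_{\ep,\t}[\M z]$ one has $A_{\ep,\t}[w_{\ep,\t}]\le A_{\ep,\t}[w]$, while $a_\t$-orthogonality gives $a_\t[w]\le a_\t[w_{\ep,\t}]$; subtracting $\ep^{-2}$ times the second from the first yields $b_\t[w_{\ep,\t}]\le b_\t[w]$. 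Inserting this into $\|w\|_\t^2\le 2K^2\nu_0^{-1}\ep^2 A_{\ep,\t}[w_{\ep,\t}]$ and then~\eqref{ik43000} recovers $16K^4\nu_0^{-2}$. So your proposal needs this one tightening to match the stated constants; everything else is correct.
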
  
\begin{proof}
The well-posedness of \eqref{w1prob0} follows 
from its left-hand side specifying an equivalent inner product on $V_0$, as implied by \eqref{Mpositive}. 
For the difference $r : = u_{\ep,\t} - \M v_0$, the left-hand-side of \eqref{ik43000} equals $A_{\ep,\t}[r]$ (see \eqref{Aform}) and expanding this out (and dropping the subscripts in notation) gives $A[r] = A\left(u_{\ep,\t},\, r\right) - A\left(\M v_0 , r\right)$. 
Note $u_{\ep,\t} = \M v + w$ for some unique $v \in V_0$  and $w \in W_0$, and hence $r = \M v_r  + w$ where $v_r := v - v_0 \in V_0$, and so 
\[
A[r]\,\, = \,\, A\left(u_{\ep,\t},\, r\right) \,-\, A\left(\M v_0 ,\, \M v_r\right) \,-\, A\left(\M v_0, w\right).
\]
Now,  \eqref{p1} gives $A\left(u_{\ep,\t},r\right) = \langle f, r\rangle $, \eqref{w1prob0} gives $A\left(\M v_0 ,\M v_r\right) = \left\langle f, \M v_r \right\rangle $,  
and the 
almost-orthogonality due to \eqref{IliaN2} implies
 $ A\left(\M v_0, w\right)  = b_\t\left(\M v_0, w\right)$. Therefore,    
\begin{flalign*}
A[r]\, & \,=\,\,  \langle f, r \rangle \,-\, \left\l f, \M v_r\right\r -  b_\t(\M v_0, w)  \,\,=\,\, 
\langle f,  w \rangle \,-\, b_\t\left(\M v_0,  w\right) \,\,\le\,\, \Big( \| f\|_{*\t} \,+\, b_\t^{1/2}\left[ \M v_0\right] \Big) \|  w\|_\t,
\end{flalign*}
via \eqref{fstar}, 
Cauchy-Schwarz inequality, and 
\eqref{astructure}. Setting  $\tilde{v} = v_0$ in  \eqref{w1prob0} and recalling that $\ep <1$ implies 
$b_\t \left[\M v_0 \right] \le\, \left\| \M v_0\right\|^2_\t\, \le 
A\left[\M v_0\right] \,\le\, \| f\|_{*\t}\,\left\| \M v_0\right\|_\t \le 
\| f\|^2_{*\t}$ 
and as a result $ A[r] \le 2 \| f\|_{*\t} \|  w\|_\t$. 
So for proving \eqref{ik43000} one needs to bound $\| w\|_\t$ in terms of $\ep A^{1/2}[r]$. 
Now  \eqref{C2} gives $\tfrac{\nu_0}{2K^2} \|  w \|_\t^2 \le a_\t[ w]$, and noticing that  $a_\t(\M v_r,w) = 0$ 
by \eqref{IliaN2} and $r =\M v_r + w$ implies 
\begin{equation}\label{IliaN4.1}
 a_\t[w]\,\, \le\,\, a_\t[r]. 
\end{equation}
Therefore, $
\tfrac{\nu_0}{2K^2} \|  w \|_\t^2   \le  a_\t[r]  \le \ep^2 A[r], 
$ and as a result 
\begin{equation}\label{IliaN5}
\|  w\|_\t^2 \,\,\,\le\,\,\,  {2K^2}{\nu_0}^{-1} \ep^2 A[r], 
\end{equation}
implying \eqref{ik43000}.  \\
It remains to prove  \eqref{s4ep2bd}, whose left-hand-side equals $b_\t[ r]$. Then, from \eqref{IliaN5} and \eqref{ik43000} it suffices to show that 
\begin{equation}\label{IliaN6}
b_\t[r ]\,\,\le\,\, \| w\|_\t^2\,.
\end{equation} 
Since \eqref{w1prob0} is a restriction of \eqref{p1} to $\M V_0$, $r$ is orthogonal to $\M V_0$ with respect to $A$, 
i.e. $A\left(r,\M \tilde{v}\right) =0$ for any $\tilde{v} \in V_0$. 
Consequently, since $w = r - \M v_r$, one infers $A[r] \le A[w]$. This inequality along with \eqref{IliaN4.1} yields $b_\t[r] \le b_\t [w]$ implying 
\eqref{IliaN6}. 
The proof is complete. 
 \end{proof}
We finish this subsection with a comparison between Theorem \ref{lmm1} and Theorem \ref{thm:contV}.
 In the continuous case we restrict variational problem \eqref{p1} to the subspace $V_\t$, but  for the general (possibly discontinuous) case 
this may be not anymore sufficient for maintaining the same order of the approximation's accuracy, and 
we restrict instead  (locally near $\theta=\theta_0=0$) to $\M V_0$. 
 From this
 observation one may  expect that $V_\t$ is a subset of  $\M V_0$.  
Indeed, the following proposition holds. 
\begin{proposition}
\label{lemVtsubMV0} 
Assume \eqref{KA}. Let $\t \in \Theta$, $\,|\t| \le\, \nu_0 / (2L_a)$. Then
\begin{gather}\label{250520e2}
V_\t \,\,\subseteq\,\, \M V_0.   \\
\text{In fact, 
} \hspace{\textwidth} \nonumber \\
v_\t \,\,=\,\, \M P_{V_0} v_\t, \quad \forall v_\t \in V_\t.\label{250520e1}
\end{gather}
\end{proposition}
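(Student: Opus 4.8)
The plan is to prove \eqref{250520e1} directly — since $\mathcal{M}P_{V_0}v_\t \in \mathcal{M}V_0$ by definition of the map $\M$, the inclusion \eqref{250520e2} follows immediately once \eqref{250520e1} is established. So fix $v_\t \in V_\t$ and write its $(\cdot,\cdot)_0$-orthogonal decomposition $v_\t = P_{V_0}v_\t + P_{W_0}v_\t =: v_0 + w_0$, with $v_0 \in V_0$, $w_0 \in W_0$. The goal is to show $w_0 = \mathcal{N}v_0$, equivalently that $w_0$ solves the cell problem \eqref{IliaN}: $a_\t(w_0,\widetilde w_0) = -\,a_\t(v_0,\widetilde w_0)$ for all $\widetilde w_0 \in W_0$. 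By Proposition \ref{Wt2coerc} this problem is well-posed for $|\t|\le \nu_0/(2L_a)$, so $\mathcal{N}v_0$ is the \emph{unique} element of $W_0$ satisfying it, and it suffices to verify $w_0$ itself does.

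The key computation is short: for any $\widetilde w_0 \in W_0 \subseteq H$, since $v_\t \in V_\t$ and $a_\t$ is non-negative, \eqref{Vkersesa} gives $a_\t(v_\t,\widetilde w_0) = 0$. Substituting $v_\t = v_0 + w_0$ and using bilinearity,
\begin{equation*}
0 \,=\, a_\t(v_\t,\widetilde w_0) \,=\, a_\t(v_0,\widetilde w_0) \,+\, a_\t(w_0,\widetilde w_0), \qquad \forall\, \widetilde w_0 \in W_0,
\end{equation*}
which is exactly \eqref{IliaN}. Hence $w_0 = \mathcal{N}v_0 = \mathcal{N}P_{V_0}v_\t$, and therefore $v_\t = v_0 + w_0 = P_{V_0}v_\t + \mathcal{N}P_{V_0}v_\t = \mathcal{M}P_{V_0}v_\t$, which is \eqref{250520e1}. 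Then \eqref{250520e2} is immediate since the right-hand side lies in $\mathcal{M}V_0$ by the definition $\mathcal{M}v_0 = v_0 + \mathcal{N}v_0$.

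There is essentially no serious obstacle here — the statement is really a structural observation rather than a hard estimate. The only point requiring a word of care is the well-posedness of \eqref{IliaN} (hence the uniqueness of $\mathcal{N}v_0$), which is precisely where the hypothesis $|\t|\le \nu_0/(2L_a)$ enters via Proposition \ref{Wt2coerc}; without uniqueness one could not conclude that the particular $w_0$ coming from the $(\cdot,\cdot)_0$-projection coincides with $\mathcal{N}v_0$. One should also note explicitly that $P_{V_0}$ and $P_{W_0}$ here are the orthogonal projections with respect to $(\cdot,\cdot)_0$ (equivalently $A_{\ep,0}$), matching the convention under which $\mathcal{N}$ maps into $W_0$ and $V_0 \perp W_0$ in $(\cdot,\cdot)_0$; this is what makes the decomposition $v_\t = v_0 + w_0$ land in the right summands. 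With that bookkeeping in place the argument is complete.
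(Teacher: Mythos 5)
Your proof is correct and follows essentially the same route as the paper's: you orthogonally decompose $v_\t = P_{V_0}v_\t + P_{W_0}v_\t$ in $(\cdot,\cdot)_0$, observe via \eqref{Vkersesa} that $a_\t(v_\t,\widetilde w_0)=0$ forces $P_{W_0}v_\t$ to satisfy \eqref{IliaN} with $v_0=P_{V_0}v_\t$, and conclude by uniqueness of the corrector. Your explicit remark that the restriction $|\t|\le\nu_0/(2L_a)$ is what guarantees well-posedness (hence uniqueness) of \eqref{IliaN} is exactly the right point of care and matches the paper's framing.
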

\begin{proof}
For \eqref{250520e2}, for any fixed $v_\t\in V_\t$ we need to find $v_0\in V_0$ such that $v_\t=\M v_0=v_0+\N v_0$. 
As $\N v_0\in W_0$, necessarily, $v_0=P_{V_0} v_\t$. Hence, for both \eqref{250520e2} and \eqref{250520e1}, it remains 
to show that $v_\t-P_{V_0} v_\t=\N P_{V_0} v_\t$. Clearly $v_\t-P_{V_0} v_\t=P_{W_0} v_\t\in W_0$, and also 
via \eqref{Vkersesa} 
\[
\,a_\t\big(v_\t-P_{V_0} v_\t,\, \widetilde w_0\big)\,\,=\,\,-\,\,
a_\t\left(P_{V_0} v_\t, \widetilde w_0\right), 
\quad \forall \widetilde w_0\in W_0. 
\]
Hence, by \eqref{IliaN}, $v_\t-P_{V_0} v_\t  
=\N P_{V_0} v_\t$, which completes the proof. 
\end{proof}
\begin{remark}\label{rem.s3.1}
	Observe that the proofs in this subsection only require that \eqref{KA} holds at 
	$\t=0$, that \eqref{ass.alip} holds for $\t_1=0$, 
	that \eqref{as.b1} holds in a neighbourhood of $\t =0$, 
	and (with appropriate change of the exact constants) that $W_0$ is 
	the orthogonal complement with respect to any equivalent inner product. 
\end{remark}
\subsection{On the class of $V_\t$ with  a removable singularity}\label{ssRSing}
{ So far we have made no assumptions on the nature of the singularity of $V_\t$, and so 
	one could apply Theorem \ref{lmm1} 
	for any singularity (and even for any non-singular point). 
However, in a large class of relevant examples 
(Section \ref{sec:examples}), the singularities of $V_\t$ 
are ``removable'' ones as defined below. 
As we will see, this additional feature allows to simplify the approximating problem \eqref{w1prob0} 
further by restricting the singular form $a_\t$ from $V_0$ to its ``defect subspace'' $Z$ 
accounting for the discontinuity gap in $V_\t$ at $\t=0$. 
In the remainder of the article we mostly focus on developing Theorem \ref{lmm1} further  for such singularities. 
Namely, we assume that $\t_0=0$ is not an isolated point 
of $\Theta$ and 
$V_\t$ has a {\it removable singularity} at $\t_0$ in the following sense:
there exists a closed subspace $V_\star
$ of $H$ and constant $L_\star\ge0$ such that (cf. \eqref{VtLip} )
\begin{equation}\tag{H2}
\label{contVs}
\Vs_\t \,\,:=\, \left\{ \begin{array}{cc}
V_\t & \t \neq 0, \\[.5em] V_\star & \t = 0,
\end{array} \right.   \quad \text{satisfies} \quad  
\inf_{v_2 \in \Vs_{\t_2}} \left\| v_1 - v_2 \right\|_{\t_2} \,\,\le\,\, L_\star | \t_1 - \t_2 |\, 
\left\| v_1 \right\|_{\t_1}, \quad \forall v_1 \in \Vs_{\t_1}, \ \, \forall\, \t_1, \t_2 \in \Theta,
\end{equation} 
or equivalently satisfies
\begin{equation}
\label{contVs2}
\| P_{W^*_{\theta_2}}v_1  \|_{\t_2} \,\,\le\,\, L_\star | \t_1 - \t_2 |\, \left\| v_1 \right\|_{\t_1}, \quad \forall\, v_1 \in \Vs_{\t_1}, \ \, \forall\, \t_1,\, \t_2 \in \Theta, 
\end{equation}  
where $W^*_{\theta}$ is the orthogonal complement of $V^*_\theta$ in $H$ with respect to $(\cdot,\cdot)_\theta$. 
Note that \eqref{contVs} formally includes 
the case without singularity when $V_\star=V_0$. 
\begin{remark}\label{constV}
	For a wide class of examples (cf. Section \ref{sec:examples}) $V_\theta$  is independent of $\theta$ away from the discontinuity, i.e. $V_\theta = V$ for $\theta \neq 0$. In this situation \eqref{contVs} trivially holds with $V_\star = V$ and $L_\star = 0$.
\end{remark}
First 
observe that $
V_\star \subset V_0.
$
Indeed, for $v_\star\in V_\star$ and $\t\in\Theta$, $\t\neq 0$ and $\t\to 0$, by \eqref{ass.alip}, 
\eqref{astructure} and  \eqref{contVs2},  
\[
a_0[v_\star] \,=\, \lim_{\t \rightarrow 0} a_\t[v_\star] \,=\, \lim_{\t \rightarrow 0} a_\t\left[P_{W_\t^\star} v_\star\right] \,\,\le\,\,
  \lim_{\t \rightarrow 0}\, \left\|P_{W_\t^\star} v_\star\right\|_\t^2 \,\,\le\,\, \lim_{\t \rightarrow 0} L_\star^2  |\t|^2  \left\|  v_\star\right\|_0^2 \,\,=\,\, 0.
\]
A key role in our subsequent constructions will be played by a defect subspace $Z$ of $V_0$, characterising the 
discontinuity gap between $V_\star$ and $V_0$. Namely, let $Z$ be 
a closed linear subspace of $V_0$, such that 
\begin{gather}\label{spaceZ}
V_0 \,\,=\,\, V_\star\, \dot{+}\,Z, \\
\textrm{with the direct sum obeying a transversality condition: for some constant $0\leq K_Z<1$, } \hspace{\textwidth} \nonumber\\ 
\label{VZorth}
\big|(v_\star, z)_0 \big| \,\,\le\,\, K_Z \left\| v_\star \right\|_0 \left\| z\right\|_0, \qquad \forall v_\star \in V_\star, \,\, \forall z \in Z, \\ 
\textrm{or equivalently} \hspace{\textwidth} \nonumber\\ 
\label{VZorth2}
{\left(1-K_Z^2\right)}^{1/2}  \left\| v_\star\right\|_0  \,\,\le\,\, \left\| v_\star +z \right\|_0, \qquad \forall v_\star \in V_\star, \,\, \forall z \in Z.
\end{gather} 
\begin{remark}
\label{zorth}
Note that such $Z$ always exist, 
in particular $Z$  could  be the orthogonal complement of $V_\star$ in the Hilbert space $\big(V_0,\,(\cdot,\cdot)_0\big)$, 
in which case $K_Z=0$. 
In the regular case  $V_\star=V_0$, trivially $Z=\{0\}$. 
\end{remark}
Now we are ready to provide an alternative representation of $\M V_0$ in terms of  $V_\t^\star$ and $Z$, which is useful for a further simplification of the 
approximating problem \eqref{w1prob0} as the singular form $a_\t$ vanishes on $V_\t^\star$.  
The following technical 
lemma, important for our consequent constructions, holds. 

\begin{lemma}\label{EquivdefcontVs}
	Assume \eqref{KA} and \eqref{contVs}. If  $ \t\in \Theta$ satisfies $KL_\star |\t| < \tfrac{1}{3}\left(1- K_Z\right)$ then 
\begin{gather}\label{V+ZCoercive}
\big\| v_\t^\star \,+\, z \,+\, w_0  \big\|_0^2 \,\,\,\ge\,\, \frac{1-K_Z}{3}
\Big( \big\| v_\t^\star \big\|_0^2 \,\,+\, \big\|  z\big\|_0^2 \,\,+\, \big\| w_0\big\|_0^2 \Big) \,, \quad \forall v_\t^\star \in V_\t^\star,\,\, \forall z\in Z, \,\, \forall w_0 \in W_0\,;\\ 
\text{and if additionally $| \t | <   \nu_0/(2L_a)$ then } \hspace{\linewidth} \nonumber \\
	\label{frame}
	\M V_0 \,\,=\,\, V_\t^\star \,\,\,\dot{+}\,\, \M Z.  
	\end{gather}
\end{lemma}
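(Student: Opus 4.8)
The plan is to prove \eqref{V+ZCoercive} first by a direct expansion of the norm using the two transversality-type estimates available — the hypothesis $KL_\star|\t| < \tfrac13(1-K_Z)$ controlling the near-orthogonality of $V_\t^\star$ to $V_0$ (via \eqref{contVs2}), and the fixed transversality \eqref{VZorth}/\eqref{VZorth2} of the splitting $V_0 = V_\star \dot{+} Z$ — together with the exact orthogonality of $V_0$ and $W_0$ with respect to $(\cdot,\cdot)_0$. Concretely, I would write $v_\t^\star = v_\star + \zeta + \omega_0$ for its components in $V_\star$, $Z$ and $W_0$ (so $v_\star + \zeta = P_{V_0} v_\t^\star$ and $\omega_0 = P_{W_0} v_\t^\star = P_{W_0} P_{W_0^\star}v_\t^\star$... actually $\omega_0 \in W_0$), estimating $\|\omega_0\|_0 \le \|P_{W_0^\star\cap\text{(stuff)}}\|$ — cleaner: since $\omega_0 \in W_0$ and $W_0 \subseteq$ the orthogonal complement relationships, one bounds $\|\omega_0\|_0 \le \|P_{W_\t^\star} v_\t^\star\|_\t \cdot$const $\le KL_\star|\t|\,\|v_\t^\star\|$-type. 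The point is that the ``bad'' part of $v_\t^\star$ landing in $W_0$ has norm $\lesssim KL_\star|\t|\|v_\t^\star\|_0$, hence the small parameter makes the cross terms negligible, and the fixed gap $1-K_Z$ survives with the factor $1/3$ coming from absorbing three such cross-term contributions (this is why the ``squared triangle inequalities with factor 3'' are flagged in the footnote). A careful bookkeeping of the expansion $\|v_\t^\star + z + w_0\|_0^2$, completing squares / using Cauchy–Schwarz on each cross term with weight chosen so the $\tfrac13$ constant emerges, gives \eqref{V+ZCoercive}.

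For \eqref{frame}, the plan is to show mutual inclusion. The inclusion $V_\t^\star \dot{+}\M Z \subseteq \M V_0$ follows because $Z \subseteq V_0$ gives $\M Z \subseteq \M V_0$, and $V_\t^\star \subseteq \M V_0$ holds by Proposition \ref{lemVtsubMV0} (applied with $V_\star \subseteq V_0$, noting $V_\t^\star$ equals $V_\t$ for $\t\neq0$ and $V_\star\subseteq V_0$ for $\t=0$) — so any $v_\t^\star = \M P_{V_0} v_\t^\star$. For the reverse inclusion, take $\M v_0 \in \M V_0$; decompose $v_0 = v_\star + z$ per \eqref{spaceZ}, so $\M v_0 = \M v_\star + \M z$ by linearity of $\M$ (equivalently of $\N$), and I need $\M v_\star \in V_\t^\star$. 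By \eqref{IliaN2}, $\M v_\star$ is $a_\t$-orthogonal to $W_0$; I would argue $\M v_\star \in V_\t^\star$, i.e. $a_\t[\M v_\star] = 0$: indeed $a_\t[\M v_\star] = a_\t(\M v_\star, v_\star + \N v_\star) = a_\t(\M v_\star, v_\star)$ since $\N v_\star \in W_0$ kills the other term, and then one needs $a_\t(\M v_\star, v_\star)$ to vanish too — this should come from the removable-singularity structure, namely that $v_\star$ is ``essentially'' in $V_\t^\star$ up to a $W_\t^\star$-correction of size $L_\star|\t|$, combined with \eqref{IliaN} characterizing $\N$. Also the sum $V_\t^\star \dot{+}\M Z$ must be shown direct, which is where \eqref{V+ZCoercive} is used: it prevents nontrivial overlap since any common element would force all three components (in $V_\t^\star$, $Z$, $W_0$-part of $\M Z$) to be small multiples of each other, contradicting the uniform lower bound.

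The main obstacle I anticipate is the exact verification that $\M v_\star \in V_\t^\star$ (i.e. that $\N$ maps $V_\star$ into the ``right'' correction so that $a_\t$ truly annihilates $\M v_\star$, not just annihilates it against $W_0$). This requires carefully relating the abstract corrector equation \eqref{IliaN}, defined via orthogonality to $W_0$, to the removable-singularity Lipschitz bound \eqref{contVs2}, which is phrased via projections onto $W_\t^\star$ — two different orthogonal complements. The likely resolution is to show $\N v_\star = P_{W_0} \big(\text{the } V_\t^\star\text{-representative of } v_\star\big) - $... i.e. to exhibit $w_\star \in V_\t^\star$ with $v_\star = w_\star + P_{W_\t^\star}v_\star$-type decomposition and check $\N v_\star = P_{W_0} w_\star$ solves \eqref{IliaN}, using that $a_\t$ vanishes on $V_\t^\star$. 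This dovetails with the proof technique already displayed in Proposition \ref{lemVtsubMV0} (where $v_\t - P_{V_0}v_\t = \N P_{V_0} v_\t$ was verified directly from \eqref{IliaN} and \eqref{Vkersesa}), so I would model the argument on that, replacing $V_\t$ by $V_\t^\star$ and tracking the extra $O(L_\star|\t|)$ terms which, for $\t$ in the stated smallness range, land in the well-posedness regime of \eqref{IliaN}.
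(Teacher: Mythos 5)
Your plan for \eqref{V+ZCoercive} is essentially viable but slightly off-target in its bookkeeping. The paper does \emph{not} decompose $v_\t^\star$ into three pieces living in $V_\star$, $Z$, $W_0$; it splits $v_\t^\star = P_{V_\star}v_\t^\star + P_{W_0^\star}v_\t^\star$ (a two-way decomposition into $V_\star$ and its $(\cdot,\cdot)_0$-orthogonal complement $W_0^\star$), which is exactly the splitting that \eqref{contVs2} controls: $\|P_{W_0^\star}v_\t^\star\|_0 \le KL_\star|\t|\,\|v_\t^\star\|_0$. Writing $\|v_\t^\star+z+w_0\|_0^2 \ge \tfrac12\|P_{V_\star}v_\t^\star+z+w_0\|_0^2 - \|P_{W_0^\star}v_\t^\star\|_0^2$, then applying the $\kappa_0:=1-K_Z$ lower bound \eqref{kappa0} to the first term and absorbing the second via the hypothesis on $|\t|$, delivers the constant $\tfrac{1-K_Z}{3}$ cleanly. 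Your three-way split would require you to estimate each of the $V_\star$-, $Z$- and $W_0$-parts of $v_\t^\star$ separately, which is possible but strictly more cumbersome, and you yourself leave that step unresolved.

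For \eqref{frame} there is a genuine gap: the central claim you want to make, that $\M v_\star \in V_\t^\star$ for $v_\star \in V_\star$, is \emph{false} in general. From Proposition \ref{lemVtsubMV0}, $\M$ sends $P_{V_0}V_\t$ bijectively onto $V_\t$; but $P_{V_0}V_\t^\star \neq V_\star$ when $\t\neq 0$, so $\M V_\star$ is not $V_\t^\star$. Concretely, $a_\t[\M v_\star] = a_\t[v_\star]-a_\t[\N v_\star]$ (compute using \eqref{IliaN2}, \eqref{IliaN}), and this is an $O(|\t|^2)$ quantity that does not vanish; equivalently, the $\M Z$-component of $\M v_\star$ in the direct sum $V_\t^\star\dot{+}\M Z$ is generically nonzero, because the $V_0$-projection of $P_{V_\t^\star}v_\star$ has a nontrivial $Z$-part. (In the high-contrast example of Section \ref{e.dp}, $\N v_\star=0$ so the claim happens to hold, which may be what misled you; in the imperfect-interface example of Section \ref{sec:impint}, with $V_\star={\rm Span}\{\chi_B\}$, one already has $\N\chi_B\neq 0$ and $\M\chi_B\notin V_\t^\star$.) The paper sidesteps this entirely: it shows the direct sum $V_\t^\star\dot{+}\M Z$ is a closed subspace of $\M V_0$ (closedness via \eqref{V+ZCoercive} with $w_0 = \N z$, plus \eqref{Mpositive}), and then argues by contradiction that it cannot be a proper subspace --- any $\M v_0$ orthogonal to it is also orthogonal to $u := P_{V_\t^\star}v_\star + \M z = \M v_0 - \M P_{V_0}P_{W_\t^\star}v_\star$, so $\|v_0\|_0 \le \|\M P_{V_0}P_{W_\t^\star}v_\star\|_0 \lesssim KL_\star|\t|\,\|v_0\|_0$, forcing $v_0=0$. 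You would need to replace your pointwise-decomposition argument with something of this flavour; as it stands the reverse-inclusion step of your proof would fail.
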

\begin{proof}[Proof of \eqref{V+ZCoercive}]
Set $\kappa_0 := 1-K_Z$, so $0<\kappa_0\leq 1$. Assumption \eqref{VZorth} and 
the orthogonality of $W_0$ and 
$V_0 = V_\star \dot{+}Z$ imply  
\begin{equation}\label{kappa0}
 \big\|v_\star+z+w_0\big\|_0^2\,\,\ge\,\, \kappa_0\Big(\|v_\star\|_0^2\,\,+\,\|z\|_0^2\,\,+\, \| w_0 \|_0^2\Big), \quad  \forall v_\star \in V_\star,\, \,\forall z\in Z, \, \,\forall w_0 \in W_0.
\end{equation} 
Thus, for any $v_\t^\star \in V_\t^\star,\,  z\in Z$ and $ w_0 \in W_0$, with $W_*:=W_0^*$, 
\begin{flalign*}
 \big\|v_\t^\star+z+w_0\big\|_0^2\,\,\,&\ge\,\, \tfrac{1}{2}\big\| P_{V_\star}v_\t^\star+z+w_0\big\|_0^2\,-
\big\|P_{W_\star}v_\t^\star  \big\|_0^2 \,\,\ge\,\, \tfrac{\kappa_0}{2}\left( \big\|  P_{V_\star}v_\t^\star\big\|_0^2\,+\big\|z\big\|_0^2\,+\big\|w_0\big\|_0^2\right)\,-\,
\big\|P_{W_\star}v_\t^\star  \big\|_0^2\\
 &=\,\,\tfrac{\kappa_0}{2} \Big(\|v_\t^\star\|_0^2\,+\|z\|_0^2 \,+ \| w_0\|_0^2\Big)\,-\,
\big(\tfrac{\kappa_0}{2}\,+\,1\big)\big\|P_{W_\star}v_\t^\star  \big\|_0^2\,.
\end{flalign*}
Now, \eqref{contVs2}, 
\eqref{as.b1}, and the assumption on $|\t|$ give  
\[
\left(\tfrac{\kappa_0}{2}+1\right)\left\| P_{W_\star}v_\t^\star\right\|_0^2  \,\,\,\,\le\,\,\,\, 
\left(\tfrac{\kappa_0}{2}+1\right)\big(KL_\star|\t|\big)^2\, \left\| v_\t^\star\right\|_0^2\,\,\,\,\le\,\,\,\, 
\left(\tfrac{\kappa_0}{2}+1\right)\left(\tfrac{\kappa_0}{3}\right)^2\,\left\| v_\t^\star\right\|_0^2 \,\,\,\,\le\,\,\, \tfrac{1}{6}\kappa_0\, \left\| v_\t^\star\right\|_0^2,
\]
and \eqref{V+ZCoercive} follows.

{\it Proof of \eqref{frame}}. The inclusion $V_\t^\star \subseteq V_\t$ and  \eqref{250520e2} show $V_\t^\star + \M Z \subseteq \M V_0$. Furthermore, \eqref{V+ZCoercive} for $w_0 = \mathcal{N}_\t z$ together with the closedness of $\M Z$ (following e.g. 
from \eqref{Mpositive} ) 
 implies that this sum is a direct sum and closed. 

	It remains to show that $V_\t^\star \,\dot{+}\, \M Z$ is not a proper subset of $\M V_0$.  If it were, there would exist a non-zero $v_0 =v_\star + z$, $v_\star \in V_\star$, $z \in Z$, such that $\M v_0 $ is orthogonal (with respect to $(\cdot,\cdot)_0$) to  $V_\t^\star\, \dot{+}\, \M Z$. 
Seeking a contradiction to this orthogonality, a natural choice of an element of $V_\t^\star \,\dot{+}\, \M Z$ expected to be close to 
$\M v_0 =\M v_\star + \M z$ is $u=P_{V_\t^\star} v_\star + \M z$. Since   \eqref{250520e1} gives
$P_{V_\t^\star}  v_\star  =\M P_{V_0}P_{V_\t^\star}  v_\star=\M P_{V_0}v_\star-\M P_{V_0}P_{W_\t^\star}  v_\star= 
\M v_\star-\M P_{V_0}P_{W_\t^\star}  v_\star$ (as $v_\star\in V_0$), 
we conclude that $\M v_0 $ is orthogonal to $u=\M v_\star-\M P_{V_0}P_{W_\t^\star}  v_\star +\M z=
\M v_0-\M  P_{V_0}P_{W_\t^\star}v_\star$.
Now, by \eqref{Mpositive} and the latter orthogonality, 
\[
\left\| v_0 \right\|_0 \,\,\le\,\, \left\| \M v_0 \right\|_0  \,\,\le\,\, \left\| \M v_0- u\right\|_0
\,\,=\,\,  \left\| \M P_{V_0} P_{W_\t^\star} v_\star\right\|_0.
\]
Further, by the second inequality in  \eqref{Mpositive}, 
the properties of $P_{V_0}$, \eqref{as.b1}  and \eqref{contVs2}, 
\[
\left\| \M P_{V_0}P_{W_\t^\star}  v_\star \right\|_0  \,\,\le\,\, 2\, \left\| P_{V_0} P_{W_\t^\star}  v_\star \right\|_0 \,\,\le\,\,  
2\, \left\| P_{W_\t^\star}  v_\star \right\|_0
\,\,\le\,\, 2KL_\star |\t|\,  \left\| v_\star \right\|_0. 
\]
Now \eqref{VZorth2} gives $\left\|v_\star\right\|_0 \le \left(1- K_Z^2\right)^{-1/2} \left\| v_0 \right\|_0$, 
and hence 
  $\left\| v_0 \right\|_0 \le 2 K L_\star |\t|\,  \left(1-K_Z^2\right)^{-1/2} \left\| v_0 \right\|_0$. 
This along with the assumed restriction on $\t$ and the inequality 
$\left(1-K_Z^2\right)^{-1/2} \le \left(1-K_Z\right)^{-1}$ lead to $\left\|v_0\right\|_0 =0$, which is a contradiction.
\end{proof}
We now present a global approximation  for the case of $V_\t$ with a removable singularity at $\t=0$. Let 
\begin{equation}\label{r0}
\text{ $r_0 =\nu_0 / (2L_a)$ if $L_\star = 0$ or $r_0 = \min\Big\{\,\nu_0 / \left(2L_a\right)\,,\, \left(1-K_Z\right)/ \left(3KL_\star\right)\,\Big\}$ otherwise,}
\end{equation}
and fix positive $r_1 \le r_0$. The direct sum representation \eqref{frame} 
and Theorem \ref{lmm1} show that, for $|\t | < r_1$, the solution $u_{\ep,\t}$  to \eqref{p1} is approximated in terms of the solution $v_0$ of the simplified problem \eqref{w1prob0} by 
$\mathcal{M}_\t v_0=v_{\ep,\t} \,+\, \mathcal{M}_\t z_{\ep,\t}$, with unique   
$v_{\ep,\t}\in V_\t^\star$ and $z_{\ep,\t}\in Z$. Recalling \eqref{Vkersesa}, we conclude that 
$\left(v_{\ep,\t}, \,z_{\ep,\t}\right)$ is the unique solution to 
\begin{equation}\label{coupledbest}
\ep^{-2} a_\t\big(  \M\, z_{\ep,\t},\, \M \tilde{z}\big) \,\,+\,\, b_\t\big(v_{\ep,\t} + \M\, z_{\ep,\t},\, \tilde{v} + \M \tilde{z}\big) \,\,=\,\, 
\big\langle f,\, \tilde{v} + \M \tilde{z}\, \big\rangle, \quad \forall \tilde{v} \in V^\star_\t,\, \,\forall \tilde{z}\in Z. 
\end{equation}
Furthermore, the solution $v_{\t}$ to \eqref{thmcontv.vprob} approximates $u_{\ep,\t}$ outside this neighbourhood of the origin. Indeed \eqref{contVs} implies that $V_\t$ is Lipschitz continuous on the compact set $\Theta_{r_1} = \{ \t \in \Theta \, : \,  | \t | \ge r_1 \}$. Therefore Theorem \ref{thm.contVequiv} 
(applied for $\Theta$ replaced by $\Theta_{r_1}$, cf. Remark \ref{rem.merelycont}) implies that the 
assumption \eqref{bddspec} 
of Theorem \ref{thm:contV} holds on $\Theta_{r_1}$ with a  positive constant
$
\nu(r_1) = \inf_{\t \in \Theta_{r_1}} \nu_\t.
$ More precisely, we have proved the following result:
\begin{theorem}\label{thm1.all}
	Assume \eqref{KA}--\eqref{contVs} and let $0 < r_1 \le r_0$ (see \eqref{r0}). Consider  $f \in H^*$,	$u_{\ep,\t}$ the solution to   \eqref{p1}  and the approximation 
 	\[
 u_{\ep,\t}^{(0)} : = \left\{ \begin{array}{ll}
 v_{\ep,\t} + \M\, z_{\ep,\t} & |\t| < r_1, \\
 v_{\t} & |\t| \ge r_1,
 \end{array} \right.
 \] 
where 
 the pair $\left(v_{\ep,\t},\,  z_{\ep,\t}\right)\in V_\t^\star \times Z$ solves \eqref{coupledbest} and 
$v_\t \in V_\t=V_\t^\star$ solves \eqref{thmcontv.vprob}.
Then, the following error estimates hold for all $\t\in\Theta$ and $0<\ep<1$:
	\begin{align}
	\label{ik430}
	\ep^{-2}a_\t\left[u_{\ep,\theta} - u_{\ep,\t}^{(0)}\right]\,\,+\,\,	
	b_\t\left[ u_{\ep,\theta} -u_{\ep,\t}^{(0)}\right] \,\,\,\le \,\,\,   C_1\ep^2\,\|f\|_{*\t}^2, \qquad C_1= \max\left\{8K^2 \nu_0^{-1},\, 1/\nu(r_1)\right\},\\
	\label{ik430.ep2}
	b_\t\left[ u_{\ep,\theta} -  u_{\ep,\t}^{(0)}\right] \,\,\,\le \,\,\,   C_2\ep^4\|f\|_{*\t}^2, \qquad C_2= \max\left\{16K^4 \nu_0^{-2},\,1/\nu^2(r_1)\right\}.
	\end{align}
\end{theorem}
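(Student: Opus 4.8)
The plan is to combine the two regimes $|\t|<r_1$ and $|\t|\ge r_1$, each of which has already been handled by one of the earlier results, and merely to check that the constants combine correctly.

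\medskip

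\emph{Step 1: the region $|\t|\ge r_1$.} Here I would invoke \eqref{contVs}, which tells us that on the compact set $\Theta_{r_1}=\{\t\in\Theta\,:\,|\t|\ge r_1\}$ the family $V_\t=V_\t^\star$ is Lipschitz continuous in $\t$ (with constant $L_\star$, since $0$ has been excluded). Theorem \ref{thm.contVequiv}, in its local form (Remark \ref{rem.merelycont}), then yields the uniform gap condition \eqref{bddspec} on $\Theta_{r_1}$ with the positive constant $\nu(r_1)=\inf_{\t\in\Theta_{r_1}}\nu_\t$; positivity of this infimum is exactly the content of the implication \eqref{VtLip}$\implies$\eqref{bddspec} applied on $\Theta_{r_1}$. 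With \eqref{bddspec} in hand on $\Theta_{r_1}$, Theorem \ref{thm:contV} (in the local form of Remark \ref{rem3.2}) gives, for these $\t$ and the approximation $u_{\ep,\t}^{(0)}=v_\t$ solving \eqref{thmcontv.vprob}, the bounds $\ep^{-2}a_\t[u_{\ep,\t}-v_\t]+b_\t[u_{\ep,\t}-v_\t]\le \ep^2\nu(r_1)^{-1}\|f\|_{*\t}^2$ and $\|u_{\ep,\t}-v_\t\|_\t^2\le\ep^4\nu(r_1)^{-2}\|f\|_{*\t}^2$; the second in particular bounds $b_\t[u_{\ep,\t}-v_\t]$ from above by $\ep^4\nu(r_1)^{-2}\|f\|_{*\t}^2$ since $b_\t\le\|\cdot\|_\t^2$.

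\medskip

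\emph{Step 2: the region $|\t|<r_1$.} Here the approximation is $u_{\ep,\t}^{(0)}=v_{\ep,\t}+\M z_{\ep,\t}=\M v_0$, where $v_0=v_{\ep,\t}+z_{\ep,\t}\in V_0=V_\star\dot+Z$ is, by \eqref{spaceZ} and Lemma \ref{EquivdefcontVs} (the direct sum \eqref{frame}), precisely the solution of \eqref{w1prob0} rewritten coordinate-wise as \eqref{coupledbest} — here I should note that $a_\t$ vanishes on $V_\t^\star$ by \eqref{Vkersesa}, which is why the $\ep^{-2}a_\t$ term in \eqref{coupledbest} only sees $\M z_{\ep,\t}$. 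Since $r_1\le r_0\le \nu_0/(2L_a)$ (recall \eqref{r0}), the hypotheses of Theorem \ref{lmm1} are met, and it delivers exactly \eqref{ik43000} and \eqref{s4ep2bd}, i.e. $\ep^{-2}a_\t[u_{\ep,\t}-\M v_0]+b_\t[u_{\ep,\t}-\M v_0]\le 8K^2\nu_0^{-1}\ep^2\|f\|_{*\t}^2$ and $b_\t[u_{\ep,\t}-\M v_0]\le 16K^4\nu_0^{-2}\ep^4\|f\|_{*\t}^2$.

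\medskip

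\emph{Step 3: assembling the constants.} Taking in each of the two bounds the maximum of the two relevant coefficients across the two regimes gives the claimed $C_1=\max\{8K^2\nu_0^{-1},\,1/\nu(r_1)\}$ and $C_2=\max\{16K^4\nu_0^{-2},\,1/\nu^2(r_1)\}$, valid for every $\t\in\Theta$ and $0<\ep<1$. The only genuinely non-routine point — and thus the main obstacle — is Step 1's claim that excluding a neighbourhood of the singularity restores a uniform positive gap; but this is not new work, it is precisely Theorem \ref{thm.contVequiv} applied on the compact set $\Theta_{r_1}$ together with the observation that \eqref{contVs} furnishes the Lipschitz continuity of $V_\t$ there. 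Everything else is a direct citation of Theorems \ref{thm:contV} and \ref{lmm1} plus the structural identifications \eqref{frame}, \eqref{coupledbest}. Hence the proof is essentially a bookkeeping argument, and indeed the excerpt's own phrasing ("More precisely, we have proved the following result") signals that all the substance is already in place.
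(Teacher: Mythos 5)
Your argument reproduces the paper's own reasoning essentially verbatim: Theorem \ref{lmm1} together with the direct-sum identification \eqref{frame} for $|\t|<r_1$, and Theorem \ref{thm.contVequiv} (locally) plus Theorem \ref{thm:contV} for $|\t|\ge r_1$, with the constants assembled by taking maxima. One small slip worth correcting: since $v_{\ep,\t}\in V_\t^\star\neq V_\star$ for $\t\neq 0$, the element $v_{\ep,\t}+z_{\ep,\t}$ is generally not the solution $v_0\in V_0=V_\star\dot{+}Z$ of \eqref{w1prob0}; rather one has $\M v_0=v_{\ep,\t}+\M z_{\ep,\t}$ via \eqref{frame}, which is all that is actually needed for the estimate.
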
 }
We emphasise that the ``inner'' approximate problem \eqref{coupledbest} provides a significant further 
simplification compared to \eqref{w1prob0}, as its singular part $a_\t$ is now a form on the defect subspace $Z$ only. 
\section{Further refinements of Theorem \ref{thm1.all} }
\label{sec.2dif}
The results of Section \ref{section:discV} approximate the original problem by simpler ones, for example Theorem \ref{thm1.all} reduces problem \eqref{p1} on $H$ to those 
on the generally smaller subspaces $V_\t^\star$ and $Z$. However, the price to pay 
is the more complicated dependence on the parameter $\t$, in particular through the operator $\M$. The purpose of this section is to simplify the approximating problems further, especially their 
dependence on $\t$, as much as possible under certain readily verifiable additional assumptions. 
This ultimately allows to approximate the original problem by one which in turn leads in Section \ref{s:resolv} to construction of an abstract version of a two-scale limit operator, possessing certain important properties as is  
illustrated by new results for a number of examples in Section \ref{sec:examples}. 

{
We begin by noting that \eqref{V+ZCoercive} implies that $V_\t^\star$ and $Z$ form a closed direct sum in $H$ for small enough $\t$. Furthermore, one can see that $b_\t$ generates an equivalent norm on $V_\t^\star \,\dot{+}\,Z$. Indeed, since  $Z \subseteq V_0$, we use   \eqref{ass.alip} and  \eqref{V+ZCoercive} to obtain 
$a_\t[z] \,\le\, \tfrac{3L_a}{1-K_Z} |\t|\, \left\|  v_\t^\star + z\right\|_0^2$, and consequently, 
via \eqref{astructure} and \eqref{as.b1}, 
\begin{equation}\label{btnormonV+Z}
\left\| v_\t^\star + z\right\|_\t^2 \,\,\le\,\, 2\, b_\t\left[v_\t^\star + z\right], \quad \forall v_\t^\star \in V_\t^\star, \,\,\forall z \in Z,\, \ \forall \t \in \Theta, \,\,\,
|\t| \,\le\, r_1\,  :=\, \min \left\{ r_0,\,  \tfrac{1-K_Z}{6K^2L_a}\, \right\}, 
\end{equation}
with $r_0$ given by \eqref{r0}. 
\subsection{Case of quadratically degenerating spectral gap width}\label{sec.quadnu}
Here, we additionally assume that the spectral gap degenerates near $\t_0=0$ at most quadratically. 
Namely, 
there exists  $\gamma >0$ such that, for   $\nu_\t$ defined in \eqref{KA}, 
\begin{equation}
\tag{H3}\label{distance}
	\nu_{\t} \,\,\ge\,\, \gamma\, |\t|^2, \quad \forall\, \t \in \Theta, \quad \mbox{i.e. } \ a_\t[w]\,\,\ge\,\,\gamma\,|\t|^2\,\|w\|_\t^2, \quad \forall w\in W_\t, \,\,\forall\, \t \in \Theta. 
\end{equation}
This is a generalisation of the 
property of 
quadratic degeneracy of the spectral gap in classical homogenisation, 
inherited by numerous asymptotically degenerating non-classical models (Section \ref{sec:examples}). 
Condition \eqref{distance} allows 
us to control the degeneracy of $a_\t\left[\M\,\, \cdot\,\right]$ on $Z$, and as a result to 
further simplify approximate problem \eqref{coupledbest} by removing $\M$ 
(i.e replacing it by the identity operator) in both 
$b_\t$ and $f$ terms (but not in the $a_\t$ term). 
\begin{proposition}\label{p.nondegZ}
	Assume \eqref{KA}--\eqref{distance} and $\t \in \Theta$, $|\t| \le r_0$, for  $r_0$  as in 
	Theorem \ref{thm1.all}, 
	see \eqref{r0}. Then 
\begin{equation}\label{150620.e1}
a_\t\left[\M z\right] \,\,\ge\,\,  \nu_\star|\t|^2\, \left\| z \right\|_0^2, \quad \forall z \in Z, \quad 
\mbox{ with } \, \nu_\star \,=\,  \tfrac{\gamma (1-K_Z)}{3K^2}.
\end{equation}
\end{proposition}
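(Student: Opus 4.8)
The plan is to combine the near-orthogonality encoded in $\M$ with the quadratic gap condition \eqref{distance}. First I would note that, since $a_\t$ annihilates $V_\t$ by \eqref{Vkersesa}, decomposing $\M z = P_{V_\t}\M z + P_{W_\t}\M z$ with respect to $(\cdot,\cdot)_\t$ gives $a_\t[\M z] = a_\t\left[P_{W_\t}\M z\right]$, and then \eqref{distance} (applied at the current $\t$) yields $a_\t[\M z] \ge \gamma |\t|^2 \left\| P_{W_\t}\M z\right\|_\t^2$. Thus the whole statement reduces to the lower bound $\left\| P_{W_\t}\M z\right\|_\t \ge c\,\|z\|_0$ with $c^2 = (1-K_Z)/(3K^2)$, after which the constant $\nu_\star = \gamma(1-K_Z)/(3K^2)$ falls out immediately.

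To obtain that bound I would first switch norms: since $\left\| P_{W_\t}\M z\right\|_\t = \inf_{v_\t \in V_\t}\left\|\M z - v_\t\right\|_\t$ and \eqref{as.b1} gives $\left\| u\right\|_0 \le K\left\| u\right\|_\t$ for every $u\in H$, we get $\left\| P_{W_\t}\M z\right\|_\t \ge K^{-1}\inf_{v_\t\in V_\t}\left\|\M z - v_\t\right\|_0$. For $\t\neq 0$ we have $V_\t = V^\star_\t$ and may write $\M z - v_\t = (-v_\t) + z + \N z$, which is precisely a three-term sum of the form $v^\star_\t + z + w_0$ with $-v_\t\in V^\star_\t$, $z\in Z\subseteq V_0$ and $\N z\in W_0$. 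The restriction $|\t|<r_0$ from \eqref{r0} guarantees both that $\N$ is well-defined (via Proposition \ref{Wt2coerc} and problem \eqref{IliaN}) and that $KL_\star|\t|<\tfrac13(1-K_Z)$, so the coercivity estimate \eqref{V+ZCoercive} applies and gives $\left\|\M z - v_\t\right\|_0^2 \ge \tfrac{1-K_Z}{3}\left(\|v_\t\|_0^2 + \|z\|_0^2 + \|\N z\|_0^2\right) \ge \tfrac{1-K_Z}{3}\|z\|_0^2$. Since this holds for every $v_\t\in V_\t$, the infimum inherits the bound, whence $\left\| P_{W_\t}\M z\right\|_\t^2 \ge K^{-2}\tfrac{1-K_Z}{3}\|z\|_0^2$; chaining with the displayed consequence of \eqref{distance} yields $a_\t[\M z]\ge \gamma|\t|^2 K^{-2}\tfrac{1-K_Z}{3}\|z\|_0^2 = \nu_\star|\t|^2\|z\|_0^2$. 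The case $\t=0$ is trivial, since then \eqref{IliaN} together with \eqref{Vkersesa} forces $\N z = 0$, so $\M z = z\in Z\subseteq V_0$, $a_0[z]=0$, and the asserted inequality reads $0\ge 0$.

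There is no serious obstacle here; the argument is essentially an assembly of facts already established in Sections \ref{section:discV} and \ref{sec.2dif}. The only points needing a little care are the \emph{direction} of the norm comparison in \eqref{as.b1} (we use $\|\cdot\|_0\le K\|\cdot\|_\t$ on the vector $\M z - v_\t$, so that the $\|\cdot\|_0$-based estimate \eqref{V+ZCoercive} can feed a $\|\cdot\|_\t$-based conclusion), and verifying that the single smallness threshold $|\t|<r_0$ simultaneously covers the hypothesis $KL_\star|\t|<\tfrac13(1-K_Z)$ of \eqref{V+ZCoercive} (immediate from \eqref{r0} when $L_\star>0$, and vacuous when $L_\star=0$) and the well-posedness condition $|\t|\le\tfrac12\nu_0L_a^{-1}$ for the abstract cell problem \eqref{IliaN}.
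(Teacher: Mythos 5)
Your proof is correct and follows essentially the same route as the paper's: both reduce $a_\t[\M z]$ to $a_\t[P_{W_\t}\M z]$, apply the quadratic gap condition \eqref{distance}, pass from $\|\cdot\|_\t$ to $\|\cdot\|_0$ via \eqref{as.b1}, and invoke the three-term coercivity \eqref{V+ZCoercive} on the decomposition $-v_\t + z + \N z$. The only cosmetic difference is that you phrase the key step as an infimum over $V_\t$ whereas the paper takes the explicit choice $v_\t^\star = -P_{V_\t^\star}\M z$.
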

\begin{proof}
 It is enough to consider the case $\t \neq 0$. 	Then $W_\t = W_\t^\star$ and \eqref{KA} implies 
$
a_\t\left[\M z\right] = a_\t\left[P_{W^\star_\t}\M z\right] \ge 
 \nu_\t \left\| P_{W^\star_\t}\M z\right\|_\t^2.
$ Moreover,     \eqref{V+ZCoercive} for $w_0 = \N z$ and $v_\t^\star = -\,P_{V_\t^\star} \M z$ implies 
$
\left\| P_{W^\star_\t}\M z\right\|_0^2 \ge \tfrac{1-K_Z}{3} \| z \|_0^2.
$ 
Now \eqref{150620.e1} readily follows upon recalling \eqref{as.b1} and \eqref{distance}.
\end{proof}
\begin{corollary} By \eqref{as.b1}, \eqref{Nbound} and \eqref{150620.e1} one has 
\begin{equation} \label{Zcoerc}
\| \N  {z}\|_\t^2 \,\,\le\,\, \kappa_1^2 a_\t[\M {z}], \quad \forall {z} \in Z, \quad
\kappa_1\,=\,  2\, {K L_a} \nu_0^{-1} \nu_{\star}^{-1/2}.
\end{equation}
\end{corollary}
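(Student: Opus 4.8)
The plan is to combine the three ingredients named in the statement of the corollary: the norm-equivalence bound \eqref{as.b1}, the operator bound on the corrector $\mathcal N_\t$ from \eqref{Nbound}, and the lower coercivity bound on $a_\t[\M z]$ just established in Proposition~\ref{p.nondegZ} via \eqref{150620.e1}. All three are already proved under the hypotheses \eqref{KA}--\eqref{distance} with $|\t|<r_0$, so the corollary is a short chaining of inequalities; there is no serious obstacle, the only care needed is bookkeeping of the constants.

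Concretely, first I would dispose of the trivial case $\t=0$: then $\N z=0$ (the cell problem \eqref{IliaN} has zero right-hand side when $a_0(z,\widetilde w_0)=0$, which holds since $z\in Z\subset V_0$), so \eqref{Zcoerc} holds vacuously. For $\t\neq0$ I would start from \eqref{Nbound}, which gives $\|\N z\|_0 \le 2L_a\nu_0^{-1}|\t|\,\|z\|_0$, hence $\|\N z\|_0^2 \le 4L_a^2\nu_0^{-2}|\t|^2\,\|z\|_0^2$. Next, apply \eqref{as.b1} to pass from $\|\cdot\|_0$ to $\|\cdot\|_\t$ on the left: $\|\N z\|_\t^2 \le K^2\|\N z\|_0^2 \le 4K^2L_a^2\nu_0^{-2}|\t|^2\,\|z\|_0^2$. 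Finally invoke Proposition~\ref{p.nondegZ}: $\nu_\star|\t|^2\|z\|_0^2 \le a_\t[\M z]$, so $|\t|^2\|z\|_0^2 \le \nu_\star^{-1} a_\t[\M z]$. Substituting this into the previous display yields $\|\N z\|_\t^2 \le 4K^2L_a^2\nu_0^{-2}\nu_\star^{-1}\, a_\t[\M z]$, which is exactly \eqref{Zcoerc} with $\kappa_1 = 2KL_a\nu_0^{-1}\nu_\star^{-1/2}$.

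The only point deserving a sentence of justification is the legality of using \eqref{Nbound} and \eqref{150620.e1} simultaneously: \eqref{Nbound} requires $|\t|\le \tfrac12\nu_0 L_a^{-1}$, while \eqref{150620.e1} requires $|\t|<r_0$; since $r_0 \le \nu_0/(2L_a)$ by definition \eqref{r0}, the hypothesis $|\t|<r_0$ inherited from the corollary's context covers both. I expect no genuine difficulty here; the ``main obstacle'', such as it is, is purely notational — keeping the chain of constants $K$, $L_a$, $\nu_0$, $\nu_\star$ straight so that the square root in $\kappa_1$ lands on $\nu_\star$ and the constant matches the one quoted in \eqref{Zcoerc}.
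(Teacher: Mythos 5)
Your proof is correct and follows exactly the chain the paper intends: pass from $\|\N z\|_\t$ to $\|\N z\|_0$ via \eqref{as.b1}, bound by $|\t|\,\|z\|_0$ via \eqref{Nbound}, and then absorb $|\t|^2\|z\|_0^2$ into $a_\t[\M z]$ via \eqref{150620.e1}, with the constants multiplying out to $\kappa_1^2 = 4K^2L_a^2\nu_0^{-2}\nu_\star^{-1}$. Your side remarks — that $\N z=0$ when $\t=0$, and that $r_0\le\nu_0/(2L_a)$ makes the two hypotheses compatible — are both accurate and exactly the bookkeeping the paper leaves implicit.
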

This leads us to the following theorem. 
 \begin{theorem}\label{thm.trunc1}
	Assume \eqref{KA}--\eqref{distance}, and consider the objects as in Theorem \ref{thm1.all} with $r_1$ as in \eqref{btnormonV+Z}. Then, for each $\t \in \Theta,$	 $|\t| < r_1 $, there exists a unique solution  $v+z \in V_\t^\star \dot{+} Z$ to
\begin{equation}
\label{p.trunc1}
\ep^{-2} a_\t\big(  \M z,\, \M \tilde{z}\big) \,+\, b_\t\big(v + z, \tilde{v} + \tilde{z}\big) \,\,=\,\, \big\langle f,\, \tilde{v} + \tilde{z} \big\rangle, \quad \forall\,\, \tilde{v} +\tilde{z} \in V_\t^\star \dot{+}  Z.
\end{equation}
Furthermore, the following error estimates hold \footnote{The point in estimate \eqref{trunc1.e3} is in bounding the 
$b_\t$-term without $\M$ in the approximation. The $a_\t$-term, already bounded by \eqref{trunc1.e2}, is added to 
\eqref{trunc1.e3} for a convenience of future use. Notice that, compared to \eqref{ik430.ep2}, 
estimate \eqref{trunc1.e3} provides only $O(\ep^2)$ rather than $O(\ep^4)$ error, even for the $b_\t$-term. 
To maintain the higher accuracy, one would need to replace the simplified approximate problem \eqref{p.trunc1} by a slightly 
more elaborate one which we do not pursue here.}:
	\begin{align}
	\label{trunc1.e2}
	\ep^{-2}a_\t\big[u_{\ep,\theta} -(v+ \M z)\big]\,+\,	b_\t\big[ u_{\ep,\theta} -(v + \M z)\big] \,\,\, \le\,\,    C_3\ep^2\|f\|_{*\t}^2, \qquad C_3\, = \,2 C_1 + 12 \kappa_1^2.
	\\
	\label{trunc1.e3}
	\ep^{-2}a_\t\big[u_{\ep,\theta} -(v+ \M z)\big]\,+\,	b_\t\big[ u_{\ep,\theta} - (v+ z)\big] \,\,\le\,\,    C_4\ep^2\|f\|_{*\t}^2, \qquad 
C_4\,=\, 2C_3 +\kappa_1^2, 
	\end{align}
	where $C_1$ is given by \eqref{ik430} and $\kappa_1$ is given by \eqref{Zcoerc}. 
\end{theorem}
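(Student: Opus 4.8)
The plan is to compare the solution $v+\M z$ of the simplified problem \eqref{p.trunc1} with the approximation $v_{\ep,\t}+\M z_{\ep,\t}$ furnished by Theorem \ref{thm1.all}, and then invoke the triangle inequality together with the estimates \eqref{ik430}--\eqref{ik430.ep2} already established there. First I would check well-posedness of \eqref{p.trunc1}: by \eqref{btnormonV+Z} the form $b_\t$ is an equivalent inner product on $V_\t^\star\dot{+}Z$, and by Proposition \ref{p.nondegZ} (via \eqref{150620.e1}) the additional term $\ep^{-2}a_\t[\M z]$ is non-negative and vanishes only when $z=0$, so the left-hand side of \eqref{p.trunc1} is an equivalent inner product on $V_\t^\star\dot{+}Z$, giving existence and uniqueness by Lax--Milgram/Riesz.

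Next I would set $g := (v_{\ep,\t}-v) + (z_{\ep,\t}-z) \in V_\t^\star\dot{+}Z$, writing $g = g_V + g_Z$ with $g_V\in V_\t^\star$, $g_Z\in Z$, and subtract \eqref{p.trunc1} from \eqref{coupledbest}. Using $a_\t(\M z,\cdot)=a_\t(\cdot,\cdot)$ restricted appropriately and $a_\t$ vanishing on $V_\t^\star$, the difference of the left-hand sides, tested against $\tilde v+\tilde z = g_V + g_Z$, produces on one side the quadratic form $\ep^{-2}a_\t[\M g_Z] + b_\t[\, \cdot\,]$ evaluated on suitable combinations, and on the other the ``consistency defect'' coming from the fact that \eqref{coupledbest} carries $\M \tilde z$ while \eqref{p.trunc1} carries $\tilde z$ in the $b_\t$ and $f$ slots. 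That defect is a sum of terms of the shape $b_\t(v_{\ep,\t}+\M z_{\ep,\t},\, \M\tilde z - \tilde z) = b_\t(\cdots, \N \tilde z)$ and $\langle f, \N\tilde z\rangle$. I would bound $b_\t^{1/2}[\N g_Z]$ and $\|\N g_Z\|_\t$ by $\kappa_1\, a_\t^{1/2}[\M g_Z]$ via \eqref{Zcoerc}, and $\|\N z_{\ep,\t}\|_\t$ similarly, so that after Cauchy--Schwarz and absorbing, one obtains $\ep^{-2}a_\t[\M g_Z] + b_\t[g] \le C\,\ep^2\,\|f\|_{*\t}^2$ with $C$ a concrete multiple of $C_1$ and $\kappa_1^2$ (the factor $12$ in $C_3$ and the extra $\kappa_1^2$ in $C_4$ should fall out of the squared-triangle inequalities quoted in the footnote of Section \ref{sec:pf}).

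Finally, combining this with \eqref{ik430} through the inequality $\mathfrak{b}[u_1+u_2]\le 2\mathfrak{b}[u_1]+2\mathfrak{b}[u_2]$ applied to $u_{\ep,\t}-(v+\M z) = \big(u_{\ep,\t}-(v_{\ep,\t}+\M z_{\ep,\t})\big) + \M g$, and using $a_\t^{1/2}[\M g] = a_\t^{1/2}[\M g_Z]$ together with $b_\t^{1/2}[\M g] \le b_\t^{1/2}[g] + b_\t^{1/2}[\N g_Z] \le b_\t^{1/2}[g] + \kappa_1 a_\t^{1/2}[\M g_Z]$, yields \eqref{trunc1.e2}. For \eqref{trunc1.e3} I would keep the $b_\t$-term in the form $b_\t[u_{\ep,\t}-(v+z)]$, split off $b_\t[\M g_Z - g_Z] = b_\t[\N g_Z]\le\kappa_1^2 a_\t[\M g_Z]$ and the already-controlled pieces, which is exactly why $C_4 = 2C_3 + \kappa_1^2$. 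The main obstacle I anticipate is the bookkeeping of the consistency defect: one must carefully track that every occurrence of $\M$ appearing where the simplified problem has the identity contributes only a $\N$-correction that is controlled by $a_\t^{1/2}[\M(\cdot)]$ via \eqref{Zcoerc}, and that the degeneracy factor $|\t|^2$ from \eqref{150620.e1} is never needed explicitly because the $\ep^{-2}$ in front of $a_\t$ already supplies the gain — the estimate is uniform in $\t$ on $|\t|<r_1$ precisely because $\kappa_1$ from \eqref{Zcoerc} is $\t$-independent.
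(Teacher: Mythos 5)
Your proposal follows essentially the same route as the paper: compare the solution of \eqref{p.trunc1} with the solution of \eqref{coupledbest}, subtract the two variational problems, identify the defect as $\N$-correction terms, bound them via \eqref{Zcoerc}, and finish with the squared triangle inequality together with \eqref{ik430}. The well-posedness argument and the observation that $\kappa_1$ from \eqref{Zcoerc} is $\t$-uniform (so the $\ep^{-2}$ supplies the gain) are exactly right.

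There is, however, a bookkeeping slip in your outline for \eqref{trunc1.e3}. The extra $\kappa_1^2$ term comes from $b_\t[\N z]$, \emph{not} from $b_\t[\N g_Z]$: writing $u_{\ep,\t}-(v+z)=\big(u_{\ep,\t}-(v+\M z)\big)+\N z$ and squaring, the new piece is $b_\t[\N z]$ with $z$ the solution of \eqref{p.trunc1}, not the difference $g_Z=z_{\ep,\t}-z$. To estimate it you use $b_\t[\N z]\le\|\N z\|_\t^2\le\kappa_1^2\,a_\t[\M z]$, but this requires an a priori bound on $a_\t[\M z]$ for the simplified problem itself, which your sketch never establishes. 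The paper obtains it by testing \eqref{p.trunc1} with $\tilde v=v$, $\tilde z=z$, yielding $\ep^{-2}a_\t[\M z]\le\tfrac12\|f\|_{*\t}^2$; this is the source of the lone $\kappa_1^2$ in $C_4=2C_3+\kappa_1^2$. Once you insert this a priori estimate and replace $\N g_Z$ by $\N z$, your plan reproduces the paper's proof.
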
 
\begin{proof} For any $\ep>0$ and $\t\in\Theta$, $|\t|\le r_1$, the sesquilinear form 
\begin{equation}\label{Bform}
B(v+z,\tilde{v}+\tilde{z}) : = \ep^{-2} a_\t\big(  \M z, \M \tilde{z}\big) + b_\t(v + z, \tilde{v} + \tilde{z}\big), \quad  v, \tilde{v} \in V_\t^\star,\ z, \tilde{z} \in Z,
\end{equation} is bounded and coercive  in the Hilbert space $(V_\t^\star \dot{+} Z, (\cdot,\cdot)_\t)$ 
(see  \eqref{Mpositive}, \eqref{V+ZCoercive} and \eqref{btnormonV+Z}), and so problem \eqref{p.trunc1} is well-posed.
Furthermore, setting $\tilde{v} = v$ and $\tilde{z} = z$ in \eqref{p.trunc1} and utilising \eqref{btnormonV+Z} gives $B[v+z]
 \le  \| f\|_{*\t} \sqrt{2 b_\t[v + z]}$ from which we can readily deduce
\begin{equation}\label{18.09.20e1}
B[v+z]
 \le 2  \| f\|_{*\t}^2 , \quad \text{and} \quad \ep^{-2} a_\t[ \M z] \, \le \,
\|f\|_{*\t}\sqrt{2 b_\t[v + z]}\,\,-\,\,b_\t[v+z] \,\le \,
\tfrac{1}{2}\| f\|_{*\t}^2.
\end{equation}

{\it Proof of \eqref{trunc1.e2}.} Notice that the left-hand-side of \eqref{trunc1.e2} equals 
$A_{\ep,\t}\big[u_{\ep,\t} - (v+\M z)\big]$ (see \eqref{Aform}) and that Theorem \ref{thm1.all}, 
see \eqref{ik430}, states $A_{\ep,\t}\big[u_{\ep,\t} - (v_{\ep,\t} + \M z_{\ep,\t})\big] \le C_1 \ep^2  \| f\|_{*\t}^2$  for $v_{\ep,\t} + z_{\ep,\t}$ the solution to \eqref{coupledbest}. Thus,  it 
remains to bound $A_{\ep,\t}[ r_v + \M r_z ]$ where $r_v := v_{\ep,\t} - v$ and $r_z := z_{\ep,\t} - z$.
Subtracting  \eqref{p.trunc1} from \eqref{coupledbest} for $\tilde{v} = r_v$ and $\tilde{z}=r_z$ gives 
\[
\ep^{-2}a_\t\left[\M r_z\right]\,+\,
b_\t\big(v_{\ep,\t}+\M z_{\ep,\t},\,\, r_v\,+\, \M r_z\big)\,-\,b_\t\left(v+z,\, r_v+r_z\right)
\,=\,\left\langle f,\, \N r_z \right\rangle, 
\] 
which upon further direct calculation (and noticing $a_\t[\M r_z]=a_\t[r_v+\M r_z]$) yields
\[
A_{\ep,\t}\big[ r_v + \M r_z \big] \,=\, \left\langle f,\, \N r_z \right\rangle \,-\,
b_\t\big(\N z,\, r_v + \M r_z\big) \,-\, b_\t\big(v+ z,\, \N r_z\big).
\]
From this identity, along with  \eqref{Zcoerc}, Cauchy-Schwarz inequality, and \eqref{astructure}, we obtain 
\begin{flalign*}
A_{\ep,\t}\big[ r_v + \M r_z \big] & \le\,\,   \kappa_1 \left( \| f\|_{*\t}\, 
a_\t^{1/2}\left[ \M r_z\right] \,\,+\,\, a_\t^{1/2}\left[\M z\right]\,
b_\t^{1/2}\left[ r_v + \M r_z\right]  \,\,+\,\,
b_\t^{1/2}[ v+z]\, a_\t^{1/2}\left[\M r_z\right] \right)  \\
& \le\,\, \ep\,  \kappa_1 \left( \| f\|_{*\t}\, A_{\ep,\t}^{1/2}\left[ r_v + \M r_z \right] \,\,+\,\, 
B^{1/2}[v+z]\, A_{\ep,\t}^{1/2}\left[ r_v + \M r_z \right] \right).  
\end{flalign*}
(In the last inequality, along with the definitions \eqref{Aform} and \eqref{Bform} for $A_{\ep,\t}$ and $B$ respectively and the fact that 
$a_\t\left[r_v\right]=0$, 
discrete Cauchy-Schwarz inequality was also used.) 
This along with the first inequality in \eqref{18.09.20e1} gives 
$A_{\ep,\t}[ r_v + \M r_z ] \le   6 \kappa_1^2  \ep^2\| f\|_{*\t}^2$,  
and \eqref{trunc1.e2} follows via \eqref{ik430} and the squared triangle inequality. 

{\it Proof of \eqref{trunc1.e3}}. From \eqref{trunc1.e2} we only need showing 
$b_\t[\N z]\,\le \, \tfrac{1}{2} \ep^2 \kappa_1^2 \| f\|_{*\t}^2$ 
and this follows from \eqref{Zcoerc} and 
 the second inequality in \eqref{18.09.20e1}. 
\end{proof}}
\subsection{Case of $a_\t$ with  additional regularity}\label{sec.atreg}
While \eqref{distance} was sufficient for removing $\M$ from $b_\t$ and the right-hand-side (cf. problems \eqref{coupledbest} and \eqref{p.trunc1}), one cannot in general remove $\M$  from $a_\t$. 
However in the majority of examples, Section \ref{sec:examples}, $a_\t$ has an additional regularity in $\t$ which allows one to approximate $a_\t(\M\,z,\,\M\,\tilde z)$ up to quadratic terms in small $\t$ and thereby further simplify problem  \eqref{p.trunc1}.

So we additionally assume here 
that $\theta_0=0$ is an interior point of $\Theta$, and 
$a_\t$ 
satisfies the following ``differentiability'' properties with respect to $\t$ at $\t=0$. 
There exist sesquilinear maps $a'_0: V_0 \times H \rightarrow \CC^n$ and $a_0'': V_0 \times V_0 \rightarrow \mathbb{C}^{n\times n}$, 
i.e. vector-valued and matrix-valued maps respectively,
such that 
\begin{equation}\tag{H4}\label{H4}
\left\{ \hspace{.5em} \begin{aligned}
&\big| a_\t(v,u) - a'_0(v,u)\cdot \t \big| \,\,\le\,\, K_{a'} | \t|^2\, \|v\|_0\,\|u\|_0,  \qquad \forall v  \in V_0, \, \,\forall u\in H,\, \,\forall \t\in\Theta;\\
&\big|a_\t(v,\tilde{v}) - a''_0(v,\tilde{v})\t\cdot \t \big| \,\,\le\,\, K_{a''} |\t|^3\, \|v\|_0\,\|\tilde{v}\|_0,  \qquad \forall v, \tilde{v} \in V_0,\,\,\, \forall \t\in\Theta,
\end{aligned} \right.
\end{equation}
for some 
non-negative 
constants $K_{a'}, K_{a''}$. Notice that \eqref{H4} and \eqref{ass.alip} gives  
$| a'_0(v,u) \cdot \theta| \le \left(L_a | \t|+K_{a'}|\t|^2\right) \|v\|_0\|u\|_0$ for all $\t\in\Theta$. 
Considering 
$\t\to 0$ yields
\begin{equation}\label{a'a''cont}
| a'_0(v,u) \cdot \theta| \le L_a | \t| \|v\|_0\|u\|_0, \\
\qquad \forall v \in V_0,\, \forall u \in H,\, \forall \t\in\mathbb{R}^n.
\end{equation}
Notice that the non-negativity of $a_\t$ implies 
$a'_0(v,\tilde v)=0$, $\forall v,\tilde v\in V_0$. \\
We now demonstrate that assertion \eqref{H4} allows us to approximate $\N$ near $\t=0$ by some $N_\t$ which is linear in $\t$. 
To that end, in problem \eqref{IliaN} defining $\N$, approximate $a_\t$ on its left hand side by $a_0$ and $a_\t$ on 
the right hand side according to \eqref{H4} by $a_0'(v_0,\widetilde w_0)\cdot\t$. As a result, for each $\t\in \mathbb{R}^n$, 
we define  $N_\t: V_0 \rightarrow W_0$ so that $N_\t v$ for $v\in V_0$ is  a solution to 
``linearised'' abstract cell problem 
\begin{equation}
\label{cell:prob2}
a_0( N_\t v , \widetilde{w}_0)\, =\, -\,\,a'_{0}(v, \widetilde{w}_0) \cdot \t, \qquad \forall \widetilde{w}_0 \in W_0.
\end{equation}
The unique solvability of \eqref{cell:prob2} is ensured by \eqref{KA}  and \eqref{a'a''cont}; in particular, one has  
\begin{equation}\label{Nbdd}
\| N_\t v \|_0\, \le \, {L_a}{\nu_0}^{-1}|\t| \,\, \| v\|_0, \quad \forall v \in V_0.
\end{equation}
As the right hand side of \eqref{cell:prob2} is linear in $\t$, $N_\theta v = \theta\cdot N v$, 
where  $N : V_0 \rightarrow [W_0]^n$ is a bounded linear mapping. 
The following proposition establishes closeness of $N_\t$ to $\N$ for small $\t$. 
\begin{proposition}\label{truncN}
Assume \eqref{KA}, \eqref{H4} and  $\t \in \Theta,\,\, |\t| \le \nu_0/(2 L_a)$. Then, the following inequality holds:
\begin{equation}\label{curlNtrunc}
\big\| \N v\,- \, N_\t v\big\|_0 \,\, \le\,\, \kappa_2\, |\t|^2\, \| v\|_0, \quad \forall v\in V_0, \quad 
\mbox{ with } \, \kappa_2 = 
\nu_0^{-1} \left( 2L_a^2 \nu_0^{-1}  + K_{a'}\right).
\end{equation}

\end{proposition}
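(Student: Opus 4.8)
The plan is to estimate $\N v - N_\t v$ by testing the difference of the two defining cell problems \eqref{IliaN} and \eqref{cell:prob2} against $\widetilde w_0 = \N v - N_\t v \in W_0$, and then using the uniform coercivity of $a_\t$ on $W_0$ from Proposition \ref{Wt2coerc}. First I would write down that, for all $\widetilde w_0\in W_0$,
\[
a_\t(\N v,\widetilde w_0) = -\,a_\t(v,\widetilde w_0), \qquad a_0(N_\t v,\widetilde w_0) = -\,a_0'(v,\widetilde w_0)\cdot\t,
\]
hence the difference $d:=\N v - N_\t v$ satisfies
\[
a_\t(d,\widetilde w_0) \,=\, a_\t(N_\t v,\widetilde w_0) - a_0(N_\t v,\widetilde w_0) \,-\, \bigl(a_\t(v,\widetilde w_0) - a_0'(v,\widetilde w_0)\cdot\t\bigr),\qquad\forall\,\widetilde w_0\in W_0.
\]
The two bracketed terms on the right are exactly of the form controlled by \eqref{ass.alip} (applied with $\t_1=\t$, $\t_2=0$, giving $|a_\t(N_\t v,\widetilde w_0) - a_0(N_\t v,\widetilde w_0)|\le L_a|\t|\,\|N_\t v\|_0\,\|\widetilde w_0\|_0$, after also invoking \eqref{as.b1} to pass between $\|\cdot\|_\t$ and $\|\cdot\|_0$) and by the first line of \eqref{H4} (giving $|a_\t(v,\widetilde w_0) - a_0'(v,\widetilde w_0)\cdot\t|\le K_{a'}|\t|^2\,\|v\|_0\,\|\widetilde w_0\|_0$).

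Next I would set $\widetilde w_0 = d$, apply \eqref{coercv0t} (i.e. $\tfrac12\nu_0\|d\|_0^2\le a_\t[d]$, valid since $|\t|\le\nu_0/(2L_a)$), and bound the right-hand side by
\[
\tfrac12\nu_0\,\|d\|_0^2 \,\le\, a_\t[d] \,\le\, \bigl(L_a|\t|\,\|N_\t v\|_0 \,+\, K_{a'}|\t|^2\,\|v\|_0\bigr)\,\|d\|_0,
\]
so that $\|d\|_0 \le 2\nu_0^{-1}\bigl(L_a|\t|\,\|N_\t v\|_0 + K_{a'}|\t|^2\,\|v\|_0\bigr)$. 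Finally I would substitute the bound $\|N_\t v\|_0\le L_a\nu_0^{-1}|\t|\,\|v\|_0$ from \eqref{Nbdd} to get
\[
\|d\|_0 \,\le\, 2\nu_0^{-1}\bigl(L_a^2\nu_0^{-1}|\t|^2 + K_{a'}|\t|^2\bigr)\,\|v\|_0 \,=\, \nu_0^{-1}\bigl(2L_a^2\nu_0^{-1} + 2K_{a'}\bigr)|\t|^2\,\|v\|_0,
\]
which matches the claimed $\kappa_2 = \nu_0^{-1}(2L_a^2\nu_0^{-1} + K_{a'})$ up to the constant in front of $K_{a'}$ — I would double-check the stated $\kappa_2$ against this computation and, if needed, track the constants slightly more carefully (e.g. whether \eqref{as.b1} introduces an extra factor $K$ when converting the $\|\widetilde w_0\|_\t$ arising from \eqref{ass.alip} into $\|\widetilde w_0\|_0$; the cleanest route is to apply \eqref{ass.alip} directly in the $\|\cdot\|_\t$-norm and only convert at the very end).

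I do not anticipate a genuine obstacle here: the argument is a routine "difference of two well-posed variational problems" estimate, and all the needed ingredients (uniform coercivity of $a_\t$ on $W_0$ near $\t=0$, Lipschitz continuity \eqref{ass.alip}, the differentiability bound \eqref{H4}, and the a priori bound \eqref{Nbdd} on $N_\t$) are already in place. The only mildly delicate point is the bookkeeping of norm equivalence constants $K$ from \eqref{as.b1} when shuttling between $\|\cdot\|_\t$ and $\|\cdot\|_0$; since $|\t|$ is bounded, these only affect the numerical value of $\kappa_2$, not the $O(|\t|^2)$ rate, so the structure of the estimate is robust.
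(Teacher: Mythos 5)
Your argument is essentially the same as the paper's: both use the difference of the two cell problems \eqref{IliaN} and \eqref{cell:prob2} tested against $R:=\mathcal{N}_\t v-N_\t v\in W_0$, bound the two mismatch terms by \eqref{ass.alip} (combined with the a priori bound on the corrector) and \eqref{H4}, and close with coercivity on $W_0$. Your instinct about the constant is correct, though: as written you get $\kappa_2'=\nu_0^{-1}\bigl(2L_a^2\nu_0^{-1}+2K_{a'}\bigr)$, i.e.\ an extra factor $2$ on the $K_{a'}$-term, and this is not just sloppy bookkeeping but a genuine consequence of your tactical choice. You place $a_\t$ on the left and invoke the "halved" coercivity $a_\t[\cdot]\ge\tfrac12\nu_0\|\cdot\|_0^2$ from \eqref{coercv0t}; the paper instead expands $a_0[R]$ and uses the full coercivity $a_0[R]\ge\nu_0\|R\|_0^2$ from \eqref{KA} at $\t=0$ (where no factor $\tfrac12$ is lost). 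Each version converts exactly one corrector term from one form to the other via \eqref{ass.alip} and pairs it with the relevant a priori bound -- $\|\mathcal{N}_\t v\|_0\le 2L_a\nu_0^{-1}|\t|\|v\|_0$ in the paper's case, $\|N_\t v\|_0\le L_a\nu_0^{-1}|\t|\|v\|_0$ in yours -- so the $L_a^2$-contributions coincide at $2L_a^2\nu_0^{-1}$ either way; only the $K_{a'}$-contribution doubles under your choice because of the $\tfrac12$ in \eqref{coercv0t}. To hit the stated $\kappa_2$ exactly, swap to the $a_0[R]$ expansion; otherwise your estimate is correct up to that harmless constant.
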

\begin{proof} For $R =\N v - N_\t v \in W_0$, by \eqref{IliaN} and \eqref{cell:prob2} we obtain 
\[
a_0[R]  
\,\,=\,\, a_0\left( \N v, R\right) - a_0\left( N_\t v, R\right)
\,\,=\,\,   a_0\left( \N v, R\right) - a_\t\left( \N v, R\right)    -  a_\t( v, R) \,+\,  a'_{0}(v, R) \cdot \t.
\]
Now, \eqref{ass.alip} and \eqref{Nbound} give  $|   a_0( \N v, R) - a_\t( \N v, R) | \le  2 L_a^2 \nu_0^{-1} |\t|^2 \|  v\|_0 \| R\|_0 $, 
and the first inequality in \eqref{H4}  gives $\big|   a_\t( v, R) -  a'_{0}(v, R) \cdot \t\big| \le  K_{a'} |\t|^2\, \| v\|_0 \|R\|_0$. Therefore $a_0[R] \le \nu_0 \kappa_2 |\t|^2\| v\|_0 \|R\|_0$
which along with \eqref{KA} gives \eqref{curlNtrunc}. 
%
\end{proof}
Now, we are in a position  to further approximate $a_\t\left( \M v,\M \tilde v \right)$ as entering the approximations in e.g. 
Theorem \ref{thm.trunc1}, see \eqref{p.trunc1}. 
To that end, recalling first that $\M = I + \N$ and applying \eqref{IliaN2} 
we observe  that 
\begin{equation}\label{amnorth}
a_\t\left(\M v,\, \M \tilde{v}\right) \,\,=\,\,  
a_\t( v,  \tilde{v})\,-\, a_\t\left( \N v,\, \N  \tilde{v}\right).
\end{equation}
Now, according to \eqref{H4} and \eqref{curlNtrunc} approximate $a_\t( v,  \tilde{v})$ by $a''_0(v,\tilde{v}) \t \cdot \t$, and  
$a_\t( \N v, \N  \tilde{v})$ by $a_0( N_\t v, N_\t \tilde{v})$. 
As a result, $a_\t(\M v, \M \tilde{v})$ is approximated by  the   sesquilinear form 
$a^{\rm h}_\t: V_0 \times V_0 \rightarrow \CC$, $\t \in \mathbb{R}^n$,  given by
\begin{equation}\label{defhom.form}
a^{\rm h}_\t(v,\tilde{v})\,\, : =\,\, 
a''_0\left(v,\tilde{v}\right) \t \cdot \t \,-\, a_0\left(  N_\t v, N_\t \tilde{v}\right) \,\,=\,\, 
a''_0\left(v,\tilde{v}\right) \t \cdot \t \,\,-\, a_0\big( \t \cdot Nv,\, \t \cdot N \tilde{v}\big), \quad \forall v, \tilde{v} \in V_0,
\end{equation}
which is a quadratic form in $\t$. We call $a_\t^h$ a ``homogenised'' form, for reasons to become clear later.  
The following proposition establishes the closeness of this approximation, and a $|\t|^2$-coercivity of $a^h_\t$ on $Z$. 
\begin{proposition}\label{prop.ahom}
		Assume \eqref{KA}--\eqref{H4}. Then, the following inequalities hold:
\begin{gather}
\label{atrunc.maine1}
\Big| a_\t\left(\M v , \M \tilde{v} \right) \,-\, a^h_\t\left(v,\tilde{v}\right) \Big| \,\,\,\le\,\,\, 
\kappa_3\, |\t|^3\, \| v\|_0\, \| \tilde{v} \|_0, \quad \forall v, \tilde{v} \in V_0, \,\, \,\forall \t \in \Theta,\,\, |\t| <  \nu_0/\left(2 L_a\right);\\
\label{ahcoercive}
a^h_\t[z] \,\,\,\ge\,\,\, \nu_\star |\t|^2\, \| z \|_0^2, \quad \forall z \in Z, \ \ \forall \t \in \mathbb{R}^n, 
\end{gather}
where $\kappa_3 =K_{a''} + {\nu_0}^{-1}{L_a}K_{a'} + L_a \kappa_2$, and $\nu_\star$ is given in  \eqref{150620.e1}. 
\end{proposition}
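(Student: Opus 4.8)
The plan is to establish \eqref{atrunc.maine1} by combining the exact identity \eqref{amnorth} with the two approximation results already at hand: the regularity bounds \eqref{H4} for $a_\t$ restricted to $V_0$, and Proposition \ref{truncN} which controls $\|\N v - N_\t v\|_0$. First I would write, using \eqref{amnorth} and the definition \eqref{defhom.form},
\[
a_\t\left(\M v,\M\tilde v\right) - a^{\rm h}_\t(v,\tilde v) \,=\, \Big(a_\t(v,\tilde v) - a''_0(v,\tilde v)\t\cdot\t\Big) \,-\, \Big(a_\t(\N v,\N\tilde v) - a_0(N_\t v, N_\t\tilde v)\Big).
\]
The first bracket is bounded by $K_{a''}|\t|^3\|v\|_0\|\tilde v\|_0$ directly from the second line of \eqref{H4}. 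For the second bracket I would insert intermediate terms: write
\[
a_\t(\N v,\N\tilde v) - a_0(N_\t v, N_\t\tilde v) \,=\, \big(a_\t - a_0\big)(\N v,\N\tilde v) \,+\, a_0(\N v - N_\t v,\,\N\tilde v) \,+\, a_0(N_\t v,\,\N\tilde v - N_\t\tilde v),
\]
and estimate the three pieces in turn: the first via \eqref{ass.alip} together with the $\N$-bound \eqref{Nbound} (giving a factor $L_a|\t|\cdot(2L_a\nu_0^{-1}|\t|)^2$, i.e. $O(|\t|^3)$); the second and third via boundedness of $a_0$ (bounded on $H\times H$ since $a_0\le(\cdot,\cdot)_0$), Proposition \ref{truncN}'s bound $\kappa_2|\t|^2$, and the bounds \eqref{Nbound}, \eqref{Nbdd} on $\N\tilde v$ and $N_\t v$ respectively (each contributing $L_a\kappa_2\nu_0^{-1}|\t|^3$ up to constants, which after using $|\t|\le\nu_0/(2L_a)$ I would absorb to match the stated constant $\kappa_3 = K_{a''} + \nu_0^{-1}L_aK_{a'} + L_a\kappa_2$). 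Keeping careful track of which of $K_{a'}$ versus $K_{a''}$ enters where — the $K_{a'}$ dependence comes in through $\kappa_2$ of \eqref{curlNtrunc} — is the one place where the bookkeeping needs attention, but it is routine.

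For the coercivity \eqref{ahcoercive} I would argue as follows. Fix $z\in Z$ and $\t\in\RR^n$. By homogeneity in $\t$ it suffices to treat $\t$ with $|\t|$ small, since $a^{\rm h}_\t[z]$ is exactly quadratic in $\t$ and $\nu_\star|\t|^2\|z\|_0^2$ is too; so one may assume $|\t| < r_0$. Then \eqref{atrunc.maine1} gives $a^{\rm h}_\t[z] \ge a_\t[\M z] - \kappa_3|\t|^3\|z\|_0^2$, and Proposition \ref{p.nondegZ} (i.e. \eqref{150620.e1}) gives $a_\t[\M z]\ge\nu_\star|\t|^2\|z\|_0^2$; but this leaves a negative $O(|\t|^3)$ remainder, so a naive bound would only yield coercivity with a slightly smaller constant and only for $|\t|$ below some threshold — not the clean statement. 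To get the stated inequality for all $\t\in\RR^n$ with the exact constant $\nu_\star$ from \eqref{150620.e1}, the cleaner route is to revisit the proof of \eqref{150620.e1} itself and observe that the same chain of inequalities applies verbatim to $a^{\rm h}_\t$: namely $a^{\rm h}_\t[z] = a^{\rm h}_\t\big[P_{W_0}(\text{relevant element})\big]$-type decomposition, using that $a^{\rm h}_\t(v_\star,\cdot)=0$ for $v_\star\in V_\star$ (which follows since $a''_0(v_\star,\cdot)=0$ and $N_\t v_\star=0$, both being consequences of $a_0[v_\star]=0$ and the variational definitions), together with the $|\t|^2$ lower bound coming from the linearised cell problem structure. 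The main obstacle I anticipate is precisely this: deciding whether \eqref{ahcoercive} should be derived as a perturbation of \eqref{150620.e1} (easy but with a worse constant / restricted range) or re-proved intrinsically for $a^{\rm h}_\t$ to obtain the sharp constant $\nu_\star$ valid for all $\t$ — and if the latter, isolating exactly which structural facts about $N_\t$ and $a''_0$ replay the argument of Proposition \ref{p.nondegZ}. I would pursue the intrinsic re-proof, since the quadratic-in-$\t$ homogeneity of $a^{\rm h}_\t$ makes "for all $\t\in\RR^n$" essentially free once the small-$\t$ case is settled by mimicking \eqref{150620.e1}.

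Finally I would record that all constants ($\kappa_2$, $\kappa_3$, $\nu_\star$) depend only on the structural constants $K$, $L_a$, $\nu_0$, $K_Z$, $\gamma$, $K_{a'}$, $K_{a''}$ and not on $\ep$, $\t$, $v$, $\tilde v$ or $f$, so that the proposition feeds cleanly into the uniform estimates of the next section.
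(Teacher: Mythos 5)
Your overall structure for \eqref{atrunc.maine1} (start from \eqref{amnorth}, use \eqref{H4}, \eqref{Nbound}--\eqref{Nbdd}, and Proposition \ref{truncN}) is sound in spirit, but the telescoping decomposition
\[
a_\t(\N v,\N\tilde v) - a_0(N_\t v, N_\t\tilde v) = (a_\t - a_0)(\N v,\N\tilde v) + a_0(\N v - N_\t v,\,\N\tilde v) + a_0(N_\t v,\,\N\tilde v - N_\t\tilde v)
\]
does not produce the stated constant $\kappa_3$. Estimating your three pieces in the straightforward way yields a prefactor
$K_{a''} + 4L_a^3\nu_0^{-2} + 3L_a\nu_0^{-1}\kappa_2$, which is strictly larger than $\kappa_3 = K_{a''} + \nu_0^{-1}L_aK_{a'} + L_a\kappa_2$ (the mismatch already appears in the first term, $4L_a^3\nu_0^{-2}$ versus $2L_a^3\nu_0^{-2}$), and all contributions are genuinely of order $|\t|^3$, so there is no higher-order piece to "absorb" via $|\t|\le\nu_0/(2L_a)$. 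The paper avoids this by \emph{not} telescoping through $a_0(\N v,\N\tilde v)$; instead it first applies the defining identities \eqref{IliaN} and \eqref{cell:prob2} (with test function $\N\tilde v$, resp.\ $N_\t\tilde v$) to rewrite $a_\t(\N v,\N\tilde v) = -a_\t(v,\N\tilde v)$ and $a_0(N_\t v,N_\t\tilde v) = -a'_0(v,N_\t\tilde v)\cdot\t$. This moves one argument of each form onto $v\in V_0$, so that the first line of \eqref{H4} can be applied directly, giving the much smaller contributions $K_{a'}L_a\nu_0^{-1}|\t|^3$ and $L_a\kappa_2|\t|^3$ rather than products of three corrector bounds.

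For \eqref{ahcoercive} you correctly identify the candidate perturbative argument — combine \eqref{atrunc.maine1} with \eqref{150620.e1} — but then reject it on the grounds that it would yield a degraded constant on a restricted range. That reasoning is mistaken, and it leads you to propose an "intrinsic re-proof" that you do not carry out and that would not be routine: $P_{W_\t^\star}$ does not commute with the $\M$-decomposition in the way needed to simply replay Proposition \ref{p.nondegZ} for $a^h_\t$. The point you are missing is that because $a^h_\t[z]$ is \emph{exactly} quadratic in $\t$, the ratio $a^h_\t[z] / \bigl(|\t|^2\|z\|_0^2\bigr)$ is independent of $|\t|$ along every fixed direction $\t/|\t|$, and therefore equals its limit as $|\t|\to 0$. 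In that limit the $O(|\t|^3)$ discrepancy from \eqref{atrunc.maine1} (which lets you replace $a^h_\t[z]$ by $a_\t[\M z]$) vanishes, and \eqref{150620.e1} supplies exactly $\nu_\star$, not a smaller constant, nor only on a restricted range. (The hypothesis that $\t_0=0$ is an interior point of $\Theta$, stated at the start of Section \ref{sec.atreg}, is what lets you send $|\t|\to 0$ along an arbitrary direction while remaining inside the domain where \eqref{atrunc.maine1} and \eqref{150620.e1} apply.) So the "easy route" you dismissed is in fact the paper's proof and does deliver the sharp statement.
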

\begin{proof}[Proof of \eqref{atrunc.maine1}.]
From \eqref{amnorth} and \eqref{defhom.form}, 
\begin{flalign*}
a_\t\left(\M v , \M \tilde{v} \right) - a^h_\t\left(v,\tilde{v}\right)\,\,=\, 
\big[\,a_\t\left(v,\tilde v\right)-a''_0\left(v,\tilde{v}\right)\t\cdot\t \,\big] \,\,+ \,\, 
\big[\, a_0\left(  N_\t v, N_\t \tilde{v}\right)-a_\t\left( \N v, \N  \tilde{v}\right) \,\big]. 
\end{flalign*}
Note that the second inequality in \eqref{H4} provides a desired estimate for  the first bracketed term on the right. Let us consider the second term: applying  \eqref{cell:prob2} and \eqref{IliaN}, 
\[
a_0\left(  N_\t v, N_\t \tilde{v}\right)\,-\,a_\t\left( \N v, \N  \tilde{v}\right)\,=\, 
-\,\, a_0'\left(v , N_\t \tilde{v}\right)\cdot\t \,\,+\,\, a_\t\left(v , \N \tilde{v}\right) \,\, = 
\]
\[
\ \ \ \ \ \ \ \ \ \ 
\Big\{a_\t\left(v , N_\t \tilde{v}\right)\,-\,a_0'\left(v , N_\t \tilde{v}\right)\cdot\t \Big\} \,\, + \,\,  
a_\t\big(v , \N \tilde{v}-N_\t \tilde{v}\big). 
\]
By the first inequality in \eqref{H4} and \eqref{Nbdd} we obtain 
\[
\Big\vert a_\t\left(v , N_\t \tilde{v}\right) \,-\, a_0'\left(v , N_\t \tilde{v}\right)\cdot \t \Big\vert \,\,\le\,\,
 K_{a'} |\t|^2 \,\|  v\|_0\, \left\| N_\t \tilde{v} \right\|_0 
\,\le\,\,  {L_a}\,{\nu_0}^{-1} K_{a'} \, | \t|^3 \,\| v\|_0\, \| \tilde{v}\|_0\,.
\]
Also, by \eqref{ass.alip} and \eqref{curlNtrunc} we deduce that 
\[ 
\big\vert a_\t(v ,\, \N \tilde{v}-N_\t \tilde{v})\big\vert \,\,\le\,\,
 L_a |\t|\, \| v\|_0\, 
\big\|  \N \tilde{v} \,-\, N_\t \tilde{v}\big\|_0 \,\,\le\,\,\, L_a\, \kappa_2\, |\t|^3\, \| v\|_0\, \| \tilde{v} \|_0. 
\] 
Combining the above estimates yields \eqref{atrunc.maine1}. \\
{\it Proof of \eqref{ahcoercive}.}	  For fixed $z\neq 0$, $a^{h}_\t[z]$ as defined by \eqref{defhom.form} 
is  quadratic in $\t$, and so for each fixed ``direction'' $\t|\t|^{-1}$ ($\t\neq 0$) 
  the ratio $ a^{h}_\t[z] /  \left(|\t|^2 \| z\|_0^2\right)$ is independent of $|\t|$. Moreover, we recall that $0$ in an interior point  of $\Theta$. So to prove \eqref{ahcoercive}, for a chosen $\t\neq 0$ we bound the ratio via passing to the limit as $|\t|\to 0$ along the corresponding direction and 
successively using  \eqref{atrunc.maine1} and   \eqref{150620.e1},  as follows:  
\[
\frac{a_{\t}^{h}[ z]}{|{\t}|^2\| z\|_0^2}  \,\,=\,\,
\lim_{|\t| \rightarrow 0}\frac{a_{\t}^{h}[ z]}{|{\t}|^2\| z\|_0^2}  
\,\,= \,\, \lim_{|\t|\rightarrow 0}  \frac{a_{\t}[\M z]}{|{\t}|^2\| z\|_0^2} \,\, \ge \,\, \lim_{|\t|\rightarrow 0}  \nu_\star\,=\, \nu_\star.\qedhere
\]
\end{proof}
Now, we are ready to further simplify approximate problem \eqref{p.trunc1}. 
\begin{theorem}\label{thm.maindiscthm}
	Assume \eqref{KA}--\eqref{H4}  and let $\t \in \Theta$, $\,|\t| < r_1$ for $r_1$ as in \eqref{btnormonV+Z}. Then, there exists a unique solution  $v^h+z^h \in V_\t^\star \dot{+} Z$ to
	\begin{equation}
	\label{z3prob}
	\ep^{-2} a^{\rm h}_{\t}\left(z^h,\, \tilde{z}\right) \,\,+\,\, b_\t\left(v^h+z^h,\, \tilde{v}+ \tilde{z}\right)\,\,\, =\,\,\, 
	\left\langle f,\, \tilde{v} +\tilde{z} \right\rangle, \quad \,\, \forall\, \tilde{v} +\tilde{z} \in V_\t^\star \dot{+}  Z.
	\end{equation}
	Furthermore, $v^h + (I + N_\t) z^h$ approximates $u_{\ep,\t}$, the solution to \eqref{p1}, in the following sense: 
	\begin{equation}
	\label{final1}
	\ep^{-2}a_\t\big[u_{\ep,\theta} - \left(v^h + (I+ N_\t) z^h\right)\big]\,+\,	
	b_\t\big[ u_{\ep,\theta} -\left(v^h + (I+N_\t)z^h\right)\big] \,\,\,\le\,\,\,    C_5\,\ep^2\,\|f\|_{*\t}^2,
	\end{equation}
	for $C_5 = 3 C_4 +3K^4 \kappa_2^2 \nu_\star^{-2} + \tfrac{3}{2}K^2 L_a^2 \nu_0^{-2}  \nu_\star^{-1}  + 3K^2 \kappa_3^2 \nu_\star^{-3}$.	 
	Moreover, $v^h +z^h$ approximates $u_{\ep,\t}$ as follows:
	\begin{equation}
	b_\t\big[ u_{\ep,\theta} -(v^h +z^h)\big] \,\,\,\le\,\,\,    C_6\,\ep^2\,\|f\|_{*\t}^2, \quad C_6 = 2C_4 + 2K^2 \kappa_3^2 \nu_\star^{-3}.
 \label{final2}
	\end{equation}
\end{theorem}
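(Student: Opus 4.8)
The plan is to reduce Theorem~\ref{thm.maindiscthm} to Theorem~\ref{thm.trunc1} by a Galerkin-type perturbation argument: inside the approximating problem \eqref{p.trunc1} I replace the singular form $a_\t(\M\,\cdot\,,\M\,\cdot\,)$ on $Z$ by its truncation $a^{\rm h}_\t$ from \eqref{defhom.form} and, in the approximant, the corrector $\N$ by its linearisation $N_\t$, controlling every discrepancy so introduced by Propositions~\ref{truncN} and~\ref{prop.ahom}. Throughout I will use that \eqref{KA} forces $\nu_\t\le1$ (since $a_\t\le(\cdot,\cdot)_\t$ by \eqref{astructure}), so that \eqref{distance} gives $|\t|^2\le\gamma^{-1}$; together with $\nu_\star=\gamma(1-K_Z)/(3K^2)$ and $|\t|<r_1$ this is what lets the surplus powers of $|\t|$ be absorbed into $\nu_\star^{-1}$-type factors inside $C_5,C_6$. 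First I would check well-posedness of \eqref{z3prob}: the form $B^{\rm h}(v+z,\tilde v+\tilde z):=\ep^{-2}a^{\rm h}_\t(z,\tilde z)+b_\t(v+z,\tilde v+\tilde z)$ on $V^\star_\t\dot{+}Z$ is bounded (boundedness of $a^{\rm h}_\t$ on $V_0$ from \eqref{H4} and \eqref{Nbdd}), its $a^{\rm h}_\t$-part is non-negative by \eqref{ahcoercive}, so $B^{\rm h}[v+z]\ge b_\t[v+z]\ge\tfrac12\|v+z\|_\t^2$ by \eqref{btnormonV+Z}; hence $B^{\rm h}$ is coercive and \eqref{z3prob} has a unique solution $(v^h,z^h)$. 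Testing \eqref{z3prob} with $(v^h,z^h)$ and optimising as for \eqref{18.09.20e1} gives $b_\t[v^h+z^h]\le2\|f\|_{*\t}^2$ and $\ep^{-2}a^{\rm h}_\t[z^h]\le\tfrac12\|f\|_{*\t}^2$, whence $\|z^h\|_0^2\le\tfrac{\ep^2}{2\nu_\star|\t|^2}\|f\|_{*\t}^2$ by \eqref{ahcoercive}; the same bound holds for the $z$-component of the \eqref{p.trunc1}-solution via \eqref{18.09.20e1} and \eqref{150620.e1}.

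Next I would estimate the difference between \eqref{p.trunc1} and \eqref{z3prob}. Let $(v,z)$ solve \eqref{p.trunc1} and set $\rho_v:=v-v^h\in V^\star_\t$, $\rho_z:=z-z^h\in Z$. Subtracting \eqref{z3prob} from \eqref{p.trunc1} gives $B^{\rm h}(\rho_v+\rho_z,\tilde v+\tilde z)=\ep^{-2}\big(a^{\rm h}_\t(z,\tilde z)-a_\t(\M z,\M\tilde z)\big)$, whose right-hand side vanishes when $\tilde z=0$ and is bounded by $\ep^{-2}\kappa_3|\t|^3\|z\|_0\|\tilde z\|_0$ by \eqref{atrunc.maine1}. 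Testing with $(\rho_v,0)$ yields $b_\t(\rho_v+\rho_z,\rho_v)=0$, hence $b_\t[\rho_v]\le b_\t[\rho_z]$; testing with $(\rho_v,\rho_z)$ and using $B^{\rm h}[\rho_v+\rho_z]\ge\ep^{-2}\nu_\star|\t|^2\|\rho_z\|_0^2$ together with the bound on $\|z\|_0$ gives $\|\rho_z\|_0\le\tfrac{\kappa_3|\t|}{\nu_\star}\|z\|_0=O(\ep)\|f\|_{*\t}$ and $\ep^{-2}a^{\rm h}_\t[\rho_z]\le B^{\rm h}[\rho_v+\rho_z]\le\tfrac{\kappa_3^2|\t|^2}{2\nu_\star^2}\|f\|_{*\t}^2$. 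In parallel, Proposition~\ref{truncN} and \eqref{as.b1} give $\|(\N-N_\t)z^h\|_\t^2\le K^2\kappa_2^2|\t|^4\|z^h\|_0^2$ and \eqref{Nbound} gives $\|\N z\|_\t^2\le 4K^2L_a^2\nu_0^{-2}|\t|^2\|z\|_0^2$; inserting the a priori bounds on $\|z^h\|_0,\|z\|_0$ shows both are $O(\ep^2\|f\|_{*\t}^2)$ (with extra harmless powers of $|\t|$).

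For the assembly of \eqref{final1} I would use the decomposition
\[
u_{\ep,\t}-\big(v^h+(I+N_\t)z^h\big)\,=\,\big[u_{\ep,\t}-(v+\M z)\big]\,+\,\rho_v\,+\,\big(\M\rho_z+(\N-N_\t)z^h\big),
\]
valid since $\M z-(I+N_\t)z^h=\M\rho_z+(\N-N_\t)z^h$. The first bracket is controlled by \eqref{trunc1.e2}; the $a_\t$-contribution of $\rho_v$ vanishes ($\rho_v\in V^\star_\t$) and its $b_\t$-contribution is $O(\ep^2)$ by the previous step. For the last term the crucial point is the orthogonality \eqref{IliaN2}: since $(\N-N_\t)z^h\in W_0$ while $\M\rho_z$ is $a_\t$-orthogonal to $W_0$, the cross term drops and $a_\t[\M\rho_z+(\N-N_\t)z^h]=a_\t[\M\rho_z]+a_\t[(\N-N_\t)z^h]$, with $a_\t[\M\rho_z]\le a^{\rm h}_\t[\rho_z]+\kappa_3|\t|^3\|\rho_z\|_0^2$ by \eqref{atrunc.maine1} and $a_\t[(\N-N_\t)z^h]\le\|(\N-N_\t)z^h\|_\t^2$, both controlled above; the $b_\t$-parts are handled with \eqref{Mpositive}. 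Collecting all pieces through the squared triangle inequalities and folding the residual factors $|\t|^2\le\gamma^{-1}$, $|\t|<r_1$ into the constant produces \eqref{final1} with the stated $C_5$. For \eqref{final2} I would instead write $u_{\ep,\t}-(v^h+z^h)=\big[u_{\ep,\t}-(v+z)\big]+\rho_v+\rho_z$, bound the first bracket by \eqref{trunc1.e3}, and use $b_\t[\rho_v]\le b_\t[\rho_z]\le K^2\|\rho_z\|_0^2=O(\ep^2)$.

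The hard part will be the $\ep^{-2}$-weighted $a_\t$-term in \eqref{final1}: a crude estimate retains a spurious factor of $|\t|^2$, and only the combination of the $a_\t$-orthogonality \eqref{IliaN2} (removing the cross term between $\M\rho_z$ and the corrector discrepancy $(\N-N_\t)z^h$), the perturbation bound $\|\rho_z\|_0=O(\ep)\|f\|_{*\t}$ (which puts the extra $\ep^2$ into $a^{\rm h}_\t[\rho_z]$), and the quadratic-degeneracy inequality $|\t|^2\le\gamma^{-1}$ with $|\t|<r_1$ — i.e. the full use of hypotheses \eqref{distance} and \eqref{H4} together with \eqref{150620.e1}--\eqref{ahcoercive} — restores the claimed bound and the precise constants $C_5,C_6$. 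Everything else is routine constant bookkeeping.
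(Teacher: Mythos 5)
Your overall strategy — reducing to Theorem \ref{thm.trunc1}, splitting off $\rho_v=v-v^h$, $\rho_z=z-z^h$, and using the $a_\t$-orthogonality \eqref{IliaN2} to kill the cross term between $\mathcal{M}_\t\rho_z$ and $(\mathcal{N}_\t-N_\t)z^h$ — is sound and parallels the paper's, up to a reshuffling of the decomposition (the paper uses \emph{two} different regroupings, one for the $a_\t$-term and one for the $b_\t$-term, so that it can invoke \eqref{trunc1.e3} rather than \eqref{trunc1.e2}; your single decomposition is also workable but would produce a constant built on $C_3$ rather than $C_4$). There is however one genuine gap.

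You obtain only the crude a priori bound
$\|z^h\|_0^2 \le \tfrac{\ep^2}{2\nu_\star|\t|^2}\|f\|_{*\t}^2$
by testing \eqref{z3prob} with $(v^h,z^h)$. That bound is not strong enough for the $\ep^{-2}$-weighted $a_\t$-estimate of the corrector discrepancy: you have
\[
\ep^{-2}a_\t\big[(\mathcal{N}_\t-N_\t)z^h\big]\,\le\,\ep^{-2}\,K^2\kappa_2^2\,|\t|^4\,\|z^h\|_0^2,
\]
and inserting the crude bound yields only $\tfrac{K^2\kappa_2^2}{2\nu_\star}|\t|^2\,\|f\|_{*\t}^2$, which is $O(|\t|^2)$, not $O(\ep^2)$. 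Since $|\t|$ is allowed to range up to $r_1$ independently of $\ep$, folding $|\t|^2\le\gamma^{-1}$ into the constant (as you propose in your closing paragraph) gives an $O(1)$ bound, not the required $O(\ep^2)$ bound. What is actually needed is the sharper estimate $\ep^{-2}|\t|^2\|z^h\|_0\le K\nu_\star^{-1}\|f\|_{*\t}$, i.e.\ $\|z^h\|_0 = O(\ep^2/|\t|^2)\|f\|_{*\t}$. This is the paper's \eqref{100720.e2}, derived by the non-obvious choice of test functions $\tilde z = z^h$ and $\tilde v = -P_{V_\t^\star}z^h$, which makes $\tilde v+\tilde z = P_{W_\t^\star}z^h$ and exploits $b_\t(v^h+z^h,\,P_{W_\t^\star}z^h)=b_\t[P_{W_\t^\star}z^h]\ge 0$ to isolate $\ep^{-2}a^{\rm h}_\t[z^h]$ against a linear rather than square-root bound in $\|z^h\|_0$. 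Without this step, your estimate of the third term fails, and the proof as written does not close.

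Secondary point: you claim $\|\rho_z\|_0\le\tfrac{\kappa_3|\t|}{\nu_\star}\|z\|_0$, which is correct; but note that when this is subsequently squared and fed into the $\ep^{-2}$-weighted terms, you again encounter a potentially troublesome $|\t|$-power from $\|z\|_0^2 \le \tfrac{\ep^2}{2\nu_\star|\t|^2}\|f\|_{*\t}^2$, and the combination $\ep^{-2}\kappa_3^2|\t|^2\nu_\star^{-2}\|z\|_0^2 \le \tfrac{\kappa_3^2}{2\nu_\star^3}\|f\|_{*\t}^2$ happens to be $O(1)$ in both $\ep$ and $|\t|$ only because the powers cancel \emph{exactly} there — so that particular term works out. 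The corrector-discrepancy term is precisely the one where the powers do not cancel with the crude bound, and that is where the refined estimate is indispensable.
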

\begin{proof}
Since $a^{\rm h}_\t$ is  bounded (e.g. via \eqref{defhom.form}, \eqref{H4} and \eqref{Nbdd}) 
and non-negative  on $Z$ (see Proposition \ref{prop.ahom}), by arguing as in the beginning of the proof of Theorem \ref{thm.trunc1}, it follows that \eqref{z3prob} is well-posed and
\begin{equation}\label{zhgoodnear0}
	\ep^{-2} a^{\rm h}_\t[z^h] \, \le \, \tfrac{1}{2} \| f\|_{*\t}^2.
\end{equation}
As a further preparation, we need a more refined estimate for $z^h$. 
To that end, we 
set in \eqref{z3prob} $\tilde{z}=z^h$ and  $\tilde{v}= -\,P_{V_\t^\star} z^h$, and note that 
$\tilde{v} + \tilde{z} = P_{W_\t^\star} z^h$  and 
(as for $v \in V_\t^\star$ and $w\in W_\t^\star$, $b_\t(v,w)= (v,w)_\t = 0$) 
that $b_\t \left(v^h + z^h, P_{W_\t^\star} z^h\right) = b_\t\left[P_{W_\t^\star} z^h\right]$. This  gives
\[
\ep^{-2} a^h_\t[z^h] + b_\t[P_{W_\t^\star} z^h] = \langle f, P_{W_\t^\star}z^h \rangle \,\le\, \| f\|_{*\t} \| P_{W_\t^\star}z^h \|_\t 
 \,\le\, \| f\|_{*\t}\, \|z^h \|_\t \,\le\, K  \| f\|_{*\t}\, \| z^h \|_0. 
\]
Along with \eqref{ahcoercive}  this yields  
\begin{equation}\label{100720.e2}\ep^{-2} |\t|^2 \| z^h \|_0 \,\le\,  {K}{\nu_\star}^{-1}  \| f\|_{*\t}.
\end{equation}

{\it Proof of \eqref{final1}.} With the aim of exploiting \eqref{trunc1.e3}, 
decompose the argument in the square brackets on left-hand-side of \eqref{final1} in two slightly different ways as follows:
\begin{eqnarray*}
u_{\ep,\theta} - \left(v^h + (I+ N_\t) z^h\right)&=&\big[u_{\ep,\theta} - (v + \M  z)\big] \,+ \,
\left[\left(v-v^h\right) +\M \left(z-z^h\right)\right] \,+\, \left( \N z^h - N_\t z^h\right)= \nonumber \\
&&\big[u_{\ep,\theta} - (v +   z)\big] \,\,+ \,\,
\left[\left(v-v^h\right) + \left(z-z^h\right)\right]\,\, -\,\, N_\t z^h. 
\end{eqnarray*}
Applying e.g. a squared triangle inequality to the first of the above decompositions for the $a_\t$-term on the 
left-hand-side of \eqref{final1} and to the second decomposition for the $b_\t$-term results in bounding the whole 
left-hand side of \eqref{final1} 
from above by
\[
3\Bigl(\ep^{-2}a_\t\big[u_{\ep,\theta} -(v+ \M z)\big]+	b_\t\big[ u_{\ep,\theta} - (v+z)\big] \Bigr) 
+  3B\left[\left(v-v^h\right) +\left(z-z^h\right)\right]
+3\ep^{-2} a_\t\big[\N z^h - N_\t z^h \big]+3b_\t[N_\t z^h],
\]
where $v+z$ solves \eqref{p.trunc1} and $B$ is given by \eqref{Bform}. By \eqref{trunc1.e3} the first term is bounded by  
$3C_4 \ep^2 \| f\|_{*\t}^2$.
 By \eqref{as.b1}, \eqref{curlNtrunc} and \eqref{100720.e2}, the third term is  bounded by $3K^4 \kappa_2^2 \nu_\star^{-2} \ep^2 \|f \|_{*\t}^2$. By \eqref{Nbdd}, \eqref{ahcoercive} and \eqref{zhgoodnear0}, the last term is bounded by 
$\tfrac{3}{2}K^2 L_a^2 \nu_0^{-2}  \nu_\star^{-1}  \ep^2 \| f\|_{*\t}^2$. 
  So it remains to bound the second term. By subtracting \eqref{z3prob} from \eqref{p.trunc1} 
	(both with $\tilde{v}=v-v^h$ and $\tilde{z}=z-z^h$)  we deduce that
\[
B\left[v-v^h +z-z^h\right]
= \ep^{-2} a^h_\t(z^h,z-z^h)  - \ep^{-2} a_\t(\M z^h , \M (z-z^h) ).
\]
Now, by sequentially applying  \eqref{atrunc.maine1}, \eqref{150620.e1} and \eqref{100720.e2} we obtain  
\[
\begin{aligned}
B\left[v-v^h +z-z^h\right] & \,\le\, \ep^{-2}  | \t|^3  \kappa_3 \|z^h\|_0 \,\| z-z^h\|_0 \,  \le \, \ep^{-2}| \t|^2  \kappa_3 \|z^h\|_0 \,   \nu_\star^{-1/2 }a_\t^{1/2}\left[\M(z-z^h)\right] \\
& \le \, K \nu_\star^{-3/2} \kappa_3 \| f\|_{*\t}\,  a_\t^{1/2}\left[\M(z-z^h)\right] \,\le\,  \ep K \nu_\star^{-3/2} \kappa_3 \| f\|_{*\t}  B^{1/2}[v-v^h +z-z^h].
\end{aligned}
\] 
Thus 
\begin{equation}\label{280920e1}
 B\left[v-v^h +z-z^h\right] \, \le \, K^2 \kappa_3^2 \nu_\star^{-3} \ep^2 \| f\|_{*\t}^2
\end{equation}
 and \eqref{final1} follows by combining the above bounds.

{\it Proof of \eqref{final2}.} Since the left-hand-side of \eqref{final2} is bounded by 
$2b_\t\big[u_{\ep,\t} - (v+z)\big] + 2b_\t\big[v+z - (v^h+z^h)\big]$, 
the desired inequality immediately follows from \eqref{trunc1.e3} and \eqref{280920e1}. 
\end{proof}
\subsection{Case of continuous $b_\t$}
The last in our hierarchy of approximating 
problems, problem \eqref{z3prob}, has two main advantages: restriction to a smaller 
subspace $V_\t^\star\dot{+}Z$, and replacement of the singular form $a_\t$ by the ``homogenised form'' $a_\t^h$  which is a quadratic form in 
$\t$ (as well as is restricted further to the defect subspace $Z$ only). The dependence of $b_\t$ on 
$\t$ remains so far unspecified, however as we will later see, if it were possible to approximate it for small $\t$ by a 
$\t$-independent $b_0$ (as well as to replace 
$V_\t^\star$ by $\t$-independent $V_\star$) that would provide significant additional benefits for properties of the approximate problem. 
In particular, as we will see in Section \ref{s:resolv}, 
the resulting approximation can be expressed in terms of a solution to an 
abstract version of a two-scale limit operator with important further implications.

To that end, we make here the following additional assumption: $\Theta$ is connected, and $b_\t$ is Lipschitz continuous at $\t=0$ i.e. there exists $L_b\ge 0$ such that
\begin{equation}\label{H5}\tag{H5}
\big|b_\t(v, \tilde{v}) \,-\,b_0( v, \tilde{v} )\big| \,\, \le \,\, L_b |\t|\, \| v\|_0\, \| \tilde{v} \|_\t, \quad \forall v, \,\tilde{v} \in V_0,\,\, \forall\, \t \in \Theta.
\end{equation}

First, we observe that \eqref{H5} implies 
existence of a transfer operator $\mathcal{E}_\t$ which plays an important role by allowing to state the forthcoming approximate problem on $\t$-independent subspace 
$V_\star\dot{+}Z$. 
\begin{lemma}\label{propeth}
Conditions \eqref{contVs} and \eqref{H5} 
imply $\forall\t\in\Theta$ 
 existence of a bijection $\mathcal{E}_\t : V_\star \rightarrow V_\t^\star$ such that 
\begin{gather}
b_\t\left(\mathcal{E}_\t v,\mathcal{E}_\t\tilde{v}\right)\,\,=\,\,b_0( v,\tilde{v}), \quad \forall v,\tilde{v}\in V_\star,  \label{Eprop1} \\
\text{and} \hspace{\textwidth} \nonumber\\
\big\vert b_\t(\mathcal{E}_\t v,  z)\,-\,b_0( v,z)\big\vert \,\, \le \,\, K_b\, |\t| \, \| v\|_0 \, \| z \|_0, \quad \forall v \in V_\star, \forall z \in Z, \label{Eprop2}
\end{gather}
for some constant $K_b \ge0$ independent of $\t$. 
\end{lemma}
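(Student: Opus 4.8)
The plan is to construct $\mathcal{E}_\t$ as a genuine unitary operator between the Hilbert spaces $(V_\star,(\cdot,\cdot)_0)$ and $(V_\t^\star,(\cdot,\cdot)_\t)$; then \eqref{Eprop1} is automatic. Indeed, on $V_\star$ the form $a_0$ vanishes (recall $V_\star\subseteq V_0$ and \eqref{Vkersesa}), and on $V_\t^\star$ the form $a_\t$ vanishes (for $\t\ne0$ because $V_\t^\star=V_\t=\ker a_\t$, for $\t=0$ again since $V_\star\subseteq V_0$), so that $(\cdot,\cdot)_0$ restricted to $V_\star$ equals $b_0$ and $(\cdot,\cdot)_\t$ restricted to $V_\t^\star$ equals $b_\t$; hence any isometric bijection $\mathcal{E}_\t\colon(V_\star,(\cdot,\cdot)_0)\to(V_\t^\star,(\cdot,\cdot)_\t)$ satisfies $b_\t(\mathcal{E}_\t v,\mathcal{E}_\t\tilde v)=(\mathcal{E}_\t v,\mathcal{E}_\t\tilde v)_\t=(v,\tilde v)_0=b_0(v,\tilde v)$. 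The remaining content is thus (a) producing such a unitary for every $\t\in\Theta$, and (b) choosing it near $\t=0$ so that $\|\mathcal{E}_\t v-v\|_\t\le C|\t|\,\|v\|_0$, which will give \eqref{Eprop2}.

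First I would check that the spaces $V_\t^\star$ are mutually isometrically isomorphic. By \eqref{contVs2} and \eqref{as.b1}, for $\t$ close to any fixed $\t_1\in\Theta$ the restriction $P_{V_\t^\star}\big|_{V_{\t_1}^\star}$ deviates from the inclusion by $O(|\t-\t_1|)$, hence is an isomorphism onto $V_\t^\star$ for $|\t-\t_1|$ small; so $\dim V_\t^\star$ is locally constant, and since $\Theta$ is connected it equals $\dim V_\star$ for all $\t$. As closed subspaces of the separable $H$, the spaces $(V_\star,(\cdot,\cdot)_0)$ and $(V_\t^\star,(\cdot,\cdot)_\t)$ are then isometrically isomorphic for every $\t$, so at least one admissible $\mathcal{E}_\t$ exists. (If $L_\star=0$ then $V_\t^\star\equiv V_\star$ and $\mathcal{E}_\t=\mathrm{id}$ works throughout; so assume $L_\star>0$.) For $|\t|$ bounded below by a positive constant $\delta$ (to be fixed as the threshold of the next construction), I would simply fix any unitary $\mathcal{E}_\t$: \eqref{Eprop1} holds as above, and since $\|\mathcal{E}_\t v\|_\t=b_\t^{1/2}[\mathcal{E}_\t v]=b_0^{1/2}[v]=\|v\|_0$ one gets $|b_\t(\mathcal{E}_\t v,z)-b_0(v,z)|\le\|\mathcal{E}_\t v\|_\t\|z\|_\t+\|v\|_0\|z\|_0\le(K+1)\|v\|_0\|z\|_0\le\delta^{-1}(K+1)|\t|\,\|v\|_0\|z\|_0$.

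The substantive case is $|\t|$ small, where I would take $\mathcal{E}_\t$ to be the polar factor of $P_\t:=P_{V_\t^\star}\big|_{V_\star}\colon(V_\star,(\cdot,\cdot)_0)\to(V_\t^\star,(\cdot,\cdot)_\t)$, i.e.\ $\mathcal{E}_\t:=P_\t(P_\t^*P_\t)^{-1/2}$. The driving estimate is that $P_\t^*P_\t$ is $O(|\t|)$-close to the identity on $V_\star$: since $(P_\t^*P_\t v,v)_0=\|P_\t v\|_\t^2=\|v\|_\t^2-\|P_{W_\t^\star}v\|_\t^2$ with $\|P_{W_\t^\star}v\|_\t\le L_\star|\t|\,\|v\|_0$ by \eqref{contVs2}, and $\|v\|_\t^2-\|v\|_0^2=a_\t[v]+(b_\t[v]-b_0[v])$ with $0\le a_\t[v]=a_\t[P_{W_\t^\star}v]\le\|P_{W_\t^\star}v\|_\t^2$ and $|b_\t[v]-b_0[v]|\le L_b|\t|\,\|v\|_0\|v\|_\t$ by \eqref{H5}, one obtains $|(P_\t^*P_\t v,v)_0-\|v\|_0^2|\le C_0|\t|\,\|v\|_0^2$ after using \eqref{as.b1}. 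Hence $\|P_\t^*P_\t-I\|\le C_0|\t|$; for $|\t|$ small this makes $P_\t^*P_\t$ boundedly invertible with $\|(P_\t^*P_\t)^{-1/2}-I\|\le C_1|\t|$ (Lipschitz functional calculus for the self-adjoint positive $P_\t^*P_\t$), makes $P_\t$ bounded below, hence injective with closed range, and --- the step needing genuine care --- makes $P_\t$ surjective: if $u\in V_\t^\star$ is $(\cdot,\cdot)_\t$-orthogonal to $\mathrm{range}\,P_\t$, then by self-adjointness of $P_{V_\t^\star}$ one has $(v,u)_\t=0$ for all $v\in V_\star$, so decomposing $u=P_{V_\star}u+P_{W_0^\star}u$ in $(\cdot,\cdot)_0$ and using $\|P_{W_0^\star}u\|_0\le L_\star|\t|\,\|u\|_\t$ (again \eqref{contVs2}) gives $\|P_{V_\star}u\|_\t\le\|P_{W_0^\star}u\|_\t$ and thus $\|u\|_\t\le 2KL_\star|\t|\,\|u\|_\t$, forcing $u=0$ once $2KL_\star|\t|<1$. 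Then $\mathcal{E}_\t$ is unitary, $\|\mathcal{E}_\t v-v\|_\t\le\|P_\t\big((P_\t^*P_\t)^{-1/2}-I\big)v\|_\t+\|P_\t v-v\|_\t\le(KC_1+L_\star)|\t|\,\|v\|_0$, and \eqref{Eprop2} follows by writing $b_\t(\mathcal{E}_\t v,z)-b_0(v,z)=b_\t(\mathcal{E}_\t v-v,z)+\big(b_\t(v,z)-b_0(v,z)\big)$ and bounding the first term via Cauchy-Schwarz and the closeness estimate, the second via \eqref{H5}. Taking $K_b$ to be the larger of the constants produced in the two regimes completes the proof.

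I expect the surjectivity of $P_\t$ to be the main obstacle: equality of dimensions is not by itself enough in the infinite-dimensional case, so one must use the Lipschitz bound \eqref{contVs2} in both directions --- once for $v\in V_\star$ as $\t\to0$, once for $u\in V_\t^\star$ relative to $W_0^\star$ --- to close the argument; everything else is perturbative bookkeeping organised around \eqref{contVs2} and \eqref{H5}.
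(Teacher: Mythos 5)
Your construction $\mathcal{E}_\t = P_\t(P_\t^*P_\t)^{-1/2}$ is exactly the operator $(Q_\t^*Q_\t)^{1/2}P(0,\t)$ used in Appendix A (they coincide by $P_\t(P_\t^*P_\t)^{-1/2}=(P_\t P_\t^*)^{-1/2}P_\t$), and your proof follows the same strategy: bijectivity of the restricted projection from the Lipschitz estimate \eqref{contVs2}, then $O(|\t|)$-closeness of the polar factor to the identity. The only cosmetic difference is that you deduce the closeness bound from $\|P_\t^*P_\t-I\|=O(|\t|)$ via Lipschitz functional calculus for $x^{-1/2}$, whereas the paper instead uses the algebraic identity $(I+(Q_\t^*Q_\t)^{1/2})(P_{V_\t^\star}v_\star-\mathcal{E}_\t v_\star)=(I-Q_\t^*Q_\t)P_{V_\t^\star}v_\star$; both are sound.
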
 
In most of the relevant examples (Section \ref{sec:examples}),  
operator $\mathcal{E}_\t$ 
will be naturally identified. 
In its abstract form, a proof of the lemma is given in the Appendix A.  Notice that \eqref{Eprop2} is equivalent to 
\begin{equation}\label{Eprop3}
\big\vert b_\t(z, v')-\,b_0\left( z,\mathcal{E}_\t^{-1}v'\right)\big\vert\,\,\le\,\, K_b\, |\t|\, \| v'\|_\t \| z \|_0,\quad \forall v' \in V_\t^\star, \,\forall z \in Z.
\end{equation}
Our aim is to simplify further the last approximate problem \eqref{z3prob} by stating it 
on the $\t$-independent subspace $V_\star \dot{+} Z$ instead of $V_\t^\star \dot{+} Z$, as 
well as approximating $b_\t$ by $b_0$. The former can be achieved via the above transfer 
operator $\mathcal{E}_\t$ by replacing in \eqref{z3prob} $v^h$ and $\tilde v$ 
(both in  $V_\t^\star$) by 
$\mathcal{E}_\t v$ and $\mathcal{E}_\t\tilde v$, with 
$v$ and 
$\tilde v$ now in $V_\star$. For the latter, the hope is to  use \eqref{H5}. 
Properties \eqref{Eprop1} and \eqref{Eprop2} 
allow for 
$\mathcal{E}_\t$ 
to 
be dropped from the $b$-term, but not from the right-hand side. 
As a result, 
the following important theorem providing an approximation to the original problem \eqref{p1} by the new simplified problem holds. 
\begin{theorem}\label{thm.IKunifest}
	Assume \eqref{KA}--\eqref{H5} and consider  $f \in H^*$, $\,\t \in \Theta$, 
	$\,|\t| <r_1$ for $r_1$ as in \eqref{btnormonV+Z}, and	$u_{\ep,\t}$ the solution to   \eqref{p1}, 
	and let $\mathcal{E}_\t: V_\star \to V_\t^\star$ be as in Lemma \ref{propeth} i.e. such that \eqref{Eprop1} and \eqref{Eprop2} hold. 
	Then, there exists a unique  solution $v + z \,\in\, V_\star \dot{+} Z$  to 
\begin{equation}
	\label{IKz3prob88}
	a^{\rm h}_{\t/\ep} 
	(z, \tilde{z}) \,\,+\,\, b_0(v+z, \tilde{v}+ \tilde{z}) \,\,\,=\,\,\, 
	\left\langle f,\, \mathcal{E}_\t\tilde{v} +\tilde{z} \right\rangle, \quad \forall\, \tilde{v} + \tilde{z}\, \in\, V_\star \dot{+}  Z\,,
	\end{equation}
and, there exist  constants $C_7$ and $C_8$, independent of $\ep$, 
$\t$ and $f$, such that
	\begin{eqnarray}
			\label{IKfinal1}
\ep^{-2} a_\t\big[u_{\ep,\theta}  - \big(\mathcal{E}_\t v + (I+N_\t)z\big)\big] \,\,+\,\,
b_\t\big[u_{\ep,\theta}  - \big( \mathcal{E}_\t v + (I+ N_\t) z\big)\big] &\, \le \,& C_7\, \ep^2 \| f\|_{*\t}^2; \\
		b_\t\big[\, u_{\ep,\theta} \,-\,\left(\mathcal{E}_\t v +z\right)\,\big] &\, \le \,&    
		C_8\,\ep^2\|f\|_{*\t}^2.
		\label{IKfinal2}
	\end{eqnarray}
\end{theorem}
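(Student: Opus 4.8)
The plan is to deduce Theorem~\ref{thm.IKunifest} from Theorem~\ref{thm.maindiscthm} by transporting the approximate problem \eqref{z3prob}, posed on $V_\t^\star\dot{+}Z$ with the form $\ep^{-2}a^{\rm h}_\t+b_\t$, onto the $\t$-independent space $V_\star\dot{+}Z$ with the form $\ep^{-2}a^{\rm h}_\t+b_0$, using the transfer operator $\mathcal{E}_\t$ of Lemma~\ref{propeth} as the change of variables. First I would check that \eqref{IKz3prob88} is well-posed: since $a^{\rm h}_\xi$ is quadratic in $\xi$ (see \eqref{defhom.form}) one has $a^{\rm h}_{\t/\ep}=\ep^{-2}a^{\rm h}_\t$, while on $V_0=V_\star\dot{+}Z$ one has $b_0[v+z]=\|v+z\|_0^2$ because $a_0$ vanishes on $V_0$; together with the non-negativity of $a^{\rm h}_\t$ (Proposition~\ref{prop.ahom}) and its boundedness on $V_0$ (in fact $|a^{\rm h}_\t(v,\tilde v)|\lesssim|\t|^2\|v\|_0\|\tilde v\|_0$, by \eqref{H4} and \eqref{Nbdd}) this makes the left-hand side of \eqref{IKz3prob88} a bounded coercive sesquilinear form on $(V_\star\dot{+}Z,(\cdot,\cdot)_0)$, while $\tilde v+\tilde z\mapsto\langle f,\mathcal{E}_\t\tilde v+\tilde z\rangle$ is bounded antilinear since $\mathcal{E}_\t$ is bounded; Lax--Milgram then yields the unique solution $v+z$.

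Next I would compare $v+z$ with the solution $v^h+z^h\in V_\t^\star\dot{+}Z$ of \eqref{z3prob}. Put $p:=\mathcal{E}_\t^{-1}v^h\in V_\star$; by \eqref{Eprop1} and the vanishing of $a_\t$ on $V_\t^\star$, $\mathcal{E}_\t$ is an isometry $(V_\star,\|\cdot\|_0)\to(V_\t^\star,\|\cdot\|_\t)$, so $\|p\|_0=\|v^h\|_\t\lesssim\|f\|_{*\t}$, and I retain from the proof of Theorem~\ref{thm.maindiscthm} the a priori bounds $\ep^{-2}a^{\rm h}_\t[z^h]\le\tfrac12\|f\|_{*\t}^2$ (eq.\ \eqref{zhgoodnear0}) and $\ep^{-2}|\t|^2\|z^h\|_0\lesssim\|f\|_{*\t}$ (eq.\ \eqref{100720.e2}). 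Testing \eqref{z3prob} against $\mathcal{E}_\t\tilde v+\tilde z$, with $\tilde v+\tilde z\in V_\star\dot{+}Z$, makes its right-hand side coincide with that of \eqref{IKz3prob88}, so I subtract the two identities; writing $v^h=\mathcal{E}_\t p$ and invoking \eqref{Eprop1} exactly together with \eqref{Eprop2}, \eqref{Eprop3} and \eqref{H5} to rewrite each $b_\t$-term as the matching $b_0$-term plus a remainder, I obtain, with $\delta_v:=p-v\in V_\star$ and $\delta_z:=z^h-z\in Z$, the identity $\ep^{-2}a^{\rm h}_\t(\delta_z,\tilde z)+b_0(\delta_v+\delta_z,\tilde v+\tilde z)=-E(\tilde v,\tilde z)$ with $|E(\tilde v,\tilde z)|\lesssim|\t|\,(\|p\|_0\|\tilde z\|_0+\|z^h\|_0(\|\tilde v\|_0+\|\tilde z\|_0))$. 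Taking $\tilde v=\delta_v$, $\tilde z=\delta_z$, using $b_0[\delta_v+\delta_z]=\|\delta_v+\delta_z\|_0^2\ge(1-K_Z)(\|\delta_v\|_0^2+\|\delta_z\|_0^2)$ (cf.\ \eqref{VZorth}) and $a^{\rm h}_\t[\delta_z]\ge\nu_\star|\t|^2\|\delta_z\|_0^2$ (eq.\ \eqref{ahcoercive}), and applying Young's inequality --- absorbing $|\t|\|p\|_0\|\delta_z\|_0$ and $|\t|\|z^h\|_0\|\delta_z\|_0$ into the term $\ep^{-2}\nu_\star|\t|^2\|\delta_z\|_0^2$, and, crucially, using \eqref{100720.e2} in the guise $|\t|\|z^h\|_0\lesssim\ep\|f\|_{*\t}$ to control $|\t|\|z^h\|_0\|\delta_v\|_0$ --- I would conclude
\[
\|\delta_v\|_0^2\,+\,\|\delta_z\|_0^2\,+\,\ep^{-2}|\t|^2\|\delta_z\|_0^2\,\,\lesssim\,\,\ep^2\|f\|_{*\t}^2 .
\]

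The estimates \eqref{IKfinal1} and \eqref{IKfinal2} then follow by the triangle inequality. For \eqref{IKfinal1} I write $u_{\ep,\t}-\big(\mathcal{E}_\t v+(I+N_\t)z\big)=\big[u_{\ep,\t}-(v^h+(I+N_\t)z^h)\big]+\big[\mathcal{E}_\t\delta_v+(I+N_\t)\delta_z\big]$; the first bracket is bounded in the $\ep^{-2}a_\t+b_\t$ energy by $C_5\ep^2\|f\|_{*\t}^2$ via \eqref{final1}. For the second bracket, its $b_\t$-energy is $\lesssim\|\mathcal{E}_\t\delta_v\|_\t^2+\|\delta_z\|_\t^2+\|N_\t\delta_z\|_\t^2\lesssim\|\delta_v\|_0^2+K^2\|\delta_z\|_0^2+(KL_a\nu_0^{-1}|\t|)^2\|\delta_z\|_0^2\lesssim\ep^2\|f\|_{*\t}^2$ (using \eqref{Eprop1}, \eqref{as.b1} and \eqref{Nbdd}), while its $a_\t$-energy equals $a_\t[(I+N_\t)\delta_z]$ --- because $a_\t$ annihilates $\mathcal{E}_\t\delta_v\in V_\t^\star$ together with its cross-terms (eq.\ \eqref{Vkersesa}) --- and, writing $I+N_\t=\M-(\N-N_\t)$ and combining \eqref{atrunc.maine1}, the bound $a^{\rm h}_\t[\delta_z]\lesssim|\t|^2\|\delta_z\|_0^2$, \eqref{curlNtrunc}, and the consequences $\ep^{-2}|\t|^2\|\delta_z\|_0^2\lesssim\ep^2\|f\|_{*\t}^2$ and $|\t|^4\|\delta_z\|_0^2\lesssim\ep^4\|f\|_{*\t}^2$ of the last display ($|\t|<r_1<1$), one gets $\ep^{-2}a_\t[(I+N_\t)\delta_z]\lesssim\ep^2\|f\|_{*\t}^2$. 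For \eqref{IKfinal2} I split $u_{\ep,\t}-(\mathcal{E}_\t v+z)=\big[u_{\ep,\t}-(v^h+z^h)\big]+\big[\mathcal{E}_\t\delta_v+\delta_z\big]$, bound the first bracket by $C_6\ep^2\|f\|_{*\t}^2$ via \eqref{final2}, and the second by $b_\t[\mathcal{E}_\t\delta_v+\delta_z]\le2\|\delta_v\|_0^2+2b_\t[\delta_z]\lesssim\ep^2\|f\|_{*\t}^2$.

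The step I expect to be the main obstacle is the comparison estimate of the second paragraph. The discrepancies between $b_\t$ and $b_0$ supplied by \eqref{Eprop2}, \eqref{Eprop3} and \eqref{H5} are only $O(|\t|)$, which on its own is hopeless: it would give only an $O(|\t|^2)$ error, useless for $|\t|$ of order one. What rescues the argument is that every discrepancy term carries a factor of a $Z$-component ($z^h$ or $\delta_z$), which is a priori of size $O(\ep^2/|\t|)$ by \eqref{100720.e2}, so that after a carefully balanced Young's inequality absorbing those cross-terms into the coercive $\ep^{-2}a^{\rm h}_\t$-term (itself of size $\sim\ep^{-2}|\t|^2$) the powers of $|\t|$ cancel and exactly the factor $\ep^2$ is recovered. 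Making this balance work uniformly for all $|\t|<r_1$ --- in particular down to $|\t|$ comparable with or smaller than $\ep$, where \eqref{100720.e2} must be complemented by $\|z^h\|_0\lesssim\ep|\t|^{-1}\|f\|_{*\t}$ coming from \eqref{zhgoodnear0} and \eqref{ahcoercive} --- is the delicate part.
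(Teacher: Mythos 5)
Your proposal is correct and follows essentially the same approach as the paper's proof: reduce to the solution $v^h+z^h$ of the intermediate problem \eqref{z3prob}, use $\mathcal{E}_\t$ together with \eqref{Eprop1}--\eqref{Eprop3} and \eqref{H5} to compare $b_\t$ with $b_0$, and exploit the a priori smallness of the $Z$-component (via $|\t|\,\|z^h\|_0\lesssim\ep\,\|f\|_{*\t}$ from \eqref{zhgoodnear0} and \eqref{ahcoercive}) to make the $O(|\t|)$ discrepancy come out as $O(\ep)$. The paper's bookkeeping differs only cosmetically — it pushes $\mathcal{E}_\t^{-1}$ onto the test function rather than writing $v^h=\mathcal{E}_\t p$, and it derives a priori bounds \eqref{est1234} for the new solution $(v,z)$ of \eqref{IKz3prob88} rather than reusing those for $(v^h,z^h)$ — but the structure, the key cancellations, and the final triangle-inequality assembly are the same.
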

\begin{proof} 
Some parts of the 
	proof are quite similar to those of Theorem \ref{thm.maindiscthm}, so we will be slightly less detailed then. 
	Notice first that since $a_\t^{\rm h}$ given by \eqref{defhom.form} is a quadratic form in $\t$, 
	$\ep^{-2}a_\t^{\rm h}=a^{\rm h}_{\t/\ep}$. 	
Further, since $b_0[\cdot] = \| \cdot \|_0^2$ on $V_\star \dot{+}Z$, 
and 
$a^{\rm h}_\t$ is bounded and non-negative 
on $Z$, 
the sesquilinear form given by the left-hand-side of \eqref{IKz3prob88}  is bounded and coercive on $V_\star \dot{+}  Z$ and hence, as $\mathcal{E}_\t$ is  clearly bounded, problem \eqref{IKz3prob88} is well-posed. Furthermore, taking in  
\eqref{IKz3prob88} $\tilde v=v$ and $\tilde z=z$, and using \eqref{ahcoercive} and \eqref{VZorth2}, 
and occasionally denoting by $C$ a positive constant independent of $\t, \ep$ and $f$ whose precise value may change from line to line, we first obtain 
$\left(\ep^{-2} |\t|^2 + 1\right) \| z\|_0^2+\|v\|^2_0 \le C \|f\|_{*\t}\left(\|z\|_0+\|v\|_0\right)$. 
This immediately bounds $\|v\|_0^2$ and  $\|z\|_0^2$ by $C \,\|f\|_{*\t}^2$, and as a result so also  $\ep^{-2} |\t|^2\|z\|_0^2$. 
We also obtain a refined 
estimate for $z$ analogous to \eqref{100720.e2} by taking in \eqref{IKz3prob88} 
 $\tilde{z}=z$ and  $\tilde{v}= -\,P_{V_\star} z$, concluding that $\ep^{-4} |\t|^4\|z\|_0^2$ is also  
bounded by $C \,\|f\|_{*\t}^2$. 
Combining the above estimates, we obtain 
\begin{equation} \label{est1234} 
\Big(\ep^{-4} |\t|^4 + \ep^{-2} |\t|^2 + 1\Big) \| z\|_0^2\,+\,\|v\|^2_0\,\,\le \,\,C\,
\|f\|_{*\t}^2.
\end{equation}
%
By Theorem \ref{thm.maindiscthm} to prove both \eqref{IKfinal1} and \eqref{IKfinal2} we only need bounding related difference terms:  
\begin{equation}\label{18082e2}
\ep^{-2}a_{\t}\big[ N_\t(z^h-z)\big] \,+\,  b_\t\big[ N_\t(z^h-z)\big]  \,+\,
 \ep^{-2}a_{\t}\big[z^h-z\big] \,+\,  b_\t\big[ v^h+z^h -(\mathcal{E}_\t v +z)\big],  
\end{equation}
where $v^h+z^h$ is the solution to \eqref{z3prob}. 
Now, (recalling $\ep <1$) by \eqref{Nbdd} and \eqref{ahcoercive}, 
\[
\ep^{-2}a_{\t}\big[  N_\t (z^h-z)\big] \,+\, b_{\t}[ N_\t (z^h-z)]  \,\,\le \,\, 
C\,
\ep^{-2}  a^{\rm h}_\t\big[z^h-z\big].
\]
Next, \eqref{amnorth}, 
\eqref{atrunc.maine1} and \eqref{Nbound} first show that  
$\ep^{-2} a_\t\big[z^h-z\big] = \ep^{-2} a_\t\big[\M (z^h-z)\big] + 
\ep^{-2} a_\t\big[\N (z^h-z)\big]$ is bounded by a multiple of 
$\ep^{-2}a^{\rm h}_\t\big[z^h-z\big] +\ep^{-2} |\t|^3 \left\| z^h-z\right\|_0^2+
\ep^{-2} |\t|^2 \left\| z^h-z\right\|_0^2$. 
Then, as $|\t|^3\le \left(|\t|^2+|\t|^4\right)/2$ and bounding the resulting $|\t|^2$-terms via the $a^{\rm h}_\t$-term using 
\eqref{ahcoercive}, we conclude that $\ep^{-2} a_\t\big[z^h-z\big]$  is bounded by a multiple of 
$ 
\ep^{-2}a^{\rm h}_\t\big[z^h-z\big] \,+\, \ep^{-2} |\t|^4 
\Big( \| z^h\|_0^2 \,+\, \|z\|_0^2\Big)$. 
Consequently, via \eqref{100720.e2} and \eqref{est1234}, we appropriately bound \eqref{18082e2} if we bound 
\begin{equation}\label{18.08.20e1}
\ep^{-2} a^h_\t\big[z^h - z\big] \,+\, b_\t\big[ v^h+z^h -\left(\mathcal{E}_\t v +z\right)\big].
\end{equation}
To this end, we shall demonstrate that replacing in \eqref{z3prob} $v^h+z^h$ by 
$\mathcal{E}_\t v +z$ produces a small error on the right-hand side. For $\tilde{v} \in V_\t^\star$ and $\tilde{z} \in Z$, utilising \eqref{Eprop1}, 
 we deduce 
\[
b_\t\big(\mathcal{E}_\t v+z, \tilde{v}+ \tilde{z}\big) \,\,= 
\,\,  
b_0\big( v,\,\mathcal{E}_\t^{-1}\tilde{v}\big) +b_\t\big(\mathcal{E}_\t v,  \tilde{z}\big)+
b_\t\big(z, \tilde{v}+ \tilde{z}\big)
\,\,=\,\,
b_0( v+z,\, \mathcal{E}_\t^{-1}\tilde{v}+\tilde{z})\,\, +\,\, J,
\]
where	$J=\big(b_\t(\mathcal{E}_\t v,  \tilde{z})-b_0( v,\tilde{z}) \big)+\big(b_\t(z, \tilde{v})-b_0( z,\mathcal{E}_\t^{-1}\tilde{v})\big)+\big( b_\t(z,  \tilde{z})
-b_0( z,\tilde{z})\big)$. Thus, via \eqref{IKz3prob88}, 
\begin{equation}
\label{IKz3prob89}
\ep^{-2} a^{\rm h}_{\t}(z, \tilde{z}) \,+\, b_\t(\mathcal{E}_\t v+z, \tilde{v}+ \tilde{z})
\,\, =\,\, \langle f,   \tilde{v} +\tilde{z} \rangle \,+J, \,\,\, \quad \forall\, (\tilde{v}, \tilde{z}) \in V_\t^\star \times  Z.
\end{equation}
We show that $J$ is small. Indeed, \eqref{Eprop2}, \eqref{Eprop3}, \eqref{H5} and estimates \eqref{est1234} provide the following bound: 
	\begin{equation}\label{Jbound}
	|J|\,\le \, C\,\ep \|f\|_{*\t}\Big(\ep^{-2} |\t|^2 \|\tilde{z}\|_0^2\,+\,\|\tilde{v}\|_0^2\Big)^{1/2}, \quad \,\, \forall \tilde{v} \in V_\t^\star,\, \, \forall \tilde{z} \in Z,
	\end{equation}
for some $C>0$ independent of $\t,\, \ep$ and $f$. 
Comparing \eqref{IKz3prob89}  with \eqref{z3prob}	we conclude that
\begin{equation}
\label{IKz3prob899}
\ep^{-2} a^{\rm h}_{\t}\big(z-z^h, \tilde{z}\big) \,+\, 
b_\t\big(\mathcal{E}_\t v+z-(v^h+z^h),\, \tilde{v}+ \tilde{z}\big) \,\, =\,\,J,\quad \  \forall\, (\tilde{v}, \tilde{z}) \in V_\t^\star \times  Z.
\end{equation}
Finally, to bound \eqref{18.08.20e1}, we can set in \eqref{IKz3prob899} $\tilde z=z-z^h$ and $\tilde v= \mathcal{E}_\t v-v^h$, 
and then use \eqref{Jbound} followed by \eqref{ahcoercive}, \eqref{V+ZCoercive} and \eqref{btnormonV+Z}. 
\end{proof}

We end this subsection 
by first recalling that one can produce a global in $\t$  approximation to $u_{\ep,\t}$  by combining  an approximation for $|\t | <r_1$, given by Theorem \ref{thm.trunc1}, \ref{thm.maindiscthm} or \ref{thm.IKunifest}, with the approximation $v_\t$ for $|\t| \ge r_1$ given by Theorem  \ref{thm1.all} (with $1/\nu(r_1)  \le  \gamma^{-1} r_1^{-2}$, cf. \eqref{distance}). 
However, it turns out that the solution $v+z$ to \eqref{IKz3prob88} is well-defined  also 
when $|\t| \ge r_1$, and can be seen to still approximate $u_{\ep,\t}$ up to leading order. 
Such global approximations 
will play a vital role for some of our subsequent constructions in Section \ref{s:resolv}, as well as in various examples of Section \ref{sec:examples}. 
The following important theorem holds. 
\begin{theorem}\label{thm.IKunifest2}
	Assume \eqref{KA}--\eqref{H5} and consider  $f \in H^*$, $\t \in \Theta$,	$u_{\ep,\t}$ the solution to   \eqref{p1}. Then, there exists a unique  solution $v + z \in V_\star \dot{+} Z$  to 	\eqref{IKz3prob88}, 
	and there exist constants $C_{9}$ and $C_{10}$, independent of $\ep$, $\t$ and $f$, such that
	\begin{eqnarray}
	\label{IKfinal3}
		\ep^{-2} a_\t\big[u_{\ep,\theta}  \,-\, \big(\mathcal{E}_\t v + (I+N_\t)z\big)\big] 
		\,\,+\,\, b_\t\big[u_{\ep,\theta} \, -\, \big( \mathcal{E}_\t v + (I+N_\t) z\big)\big]  
		\,&\le& \, C_{9}\, \ep^2\, \| f\|_{*\t}^2,  \\ 
	\label{IKfinal3-2}	
		b_\t\big[ u_{\ep,\theta} \,-\, \left(\mathcal{E}_\t v +z\right)\big]
		\,&\le& \, C_{10}\, \ep^2 \,\| f\|_{*\t}^2.		
	\end{eqnarray}
\end{theorem}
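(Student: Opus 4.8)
The plan is to argue separately on the ``inner'' region $\{\t\in\Theta:|\t|<r_1\}$ and on the complementary ``outer'' region $\Theta_{r_1}=\{\t\in\Theta:|\t|\ge r_1\}$, with $r_1$ as in \eqref{btnormonV+Z}, and then to combine the two. On $\{|\t|<r_1\}$ there is nothing new: \eqref{IKfinal3} and \eqref{IKfinal3-2} are exactly \eqref{IKfinal1} and \eqref{IKfinal2} of Theorem \ref{thm.IKunifest}. The only real content is therefore the outer region, where $|\t|$ is bounded below, $\t\ne 0$ (hence $V_\t=V_\t^\star$), and the spectral gap of $a_\t$ on $W_\t$ is uniformly positive.

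First I would record that problem \eqref{IKz3prob88} is well-posed for \emph{every} $\t\in\Theta$, not only for $|\t|<r_1$: its left-hand side is bounded, and it is coercive on $V_\star\dot{+}Z$ because $b_0$ is an equivalent inner product there by \eqref{VZorth2} and $a^{\rm h}_{\t/\ep}$ is non-negative on $Z$ by \eqref{ahcoercive}. Moreover the a priori bound \eqref{est1234}, obtained in the proof of Theorem \ref{thm.IKunifest} using only \eqref{ahcoercive} (valid for all $\t\in\RR^n$) and the $\t$-independent \eqref{VZorth2}, holds for all $\t\in\Theta$. Combined with $|\t|\ge r_1$, the $\ep^{-4}|\t|^4$-term of \eqref{est1234} gives $\|z\|_0^2\le C\ep^4 r_1^{-4}\|f\|_{*\t}^2$; then \eqref{Nbdd} and the compactness of $\Theta$ (so $|\t|$ is bounded above) give $\|N_\t z\|_0^2\le C\ep^4\|f\|_{*\t}^2$, while \eqref{est1234} also yields $\|v\|_0^2\le C\|f\|_{*\t}^2$.

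Next I would compare $\mathcal{E}_\t v$ with the solution $v_\t\in V_\t=V_\t^\star$ of \eqref{thmcontv.vprob}. Taking $\tilde z=0$ in \eqref{IKz3prob88} and using \eqref{Eprop1} gives $b_\t(\mathcal{E}_\t v,\mathcal{E}_\t\tilde v)=\langle f,\mathcal{E}_\t\tilde v\rangle-b_0(z,\tilde v)$ for all $\tilde v\in V_\star$; since $\mathcal{E}_\t$ is a bijection onto $V_\t^\star=V_\t$, subtracting from this the identity \eqref{thmcontv.vprob} tested against $\mathcal{E}_\t\tilde v$ and then choosing $\tilde v=\mathcal{E}_\t^{-1}(\mathcal{E}_\t v-v_\t)$ — together with the isometry $\|\mathcal{E}_\t\,\cdot\,\|_\t=\|\,\cdot\,\|_0$ on $V_\star$ (a consequence of \eqref{Eprop1}) and the identities $\|\,\cdot\,\|_0^2=b_0[\,\cdot\,]$ on $V_\star$, $\|\,\cdot\,\|_\t^2=b_\t[\,\cdot\,]$ on $V_\t^\star$ — yields $\|\mathcal{E}_\t v-v_\t\|_\t\le\|z\|_0\le C\ep^2\|f\|_{*\t}$. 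On the compact set $\Theta_{r_1}$ the space $V_\t=V_\t^\star$ is Lipschitz in $\t$ by \eqref{contVs}, so the local version of Theorem \ref{thm.contVequiv} (cf. Remark \ref{rem.merelycont}) shows that the uniform gap condition \eqref{bddspec} holds on $\Theta_{r_1}$ with $\nu(r_1)=\inf_{\t\in\Theta_{r_1}}\nu_\t>0$, whence Theorem \ref{thm:contV} gives $\ep^{-2}a_\t[u_{\ep,\t}-v_\t]+b_\t[u_{\ep,\t}-v_\t]\le\ep^2\nu(r_1)^{-1}\|f\|_{*\t}^2$.

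Finally I would assemble the pieces. Writing $u_{\ep,\t}-\big(\mathcal{E}_\t v+(I+N_\t)z\big)=(u_{\ep,\t}-v_\t)+(v_\t-\mathcal{E}_\t v)-z-N_\t z$ and applying a squared triangle inequality, it suffices to bound $A_{\ep,\t}[\,\cdot\,]$ of each of the four summands. The first is $O(\ep^2\|f\|_{*\t}^2)$ by the previous step; the second has zero $a_\t$-part (as $v_\t-\mathcal{E}_\t v\in V_\t^\star$) and $b_\t$-part $\|v_\t-\mathcal{E}_\t v\|_\t^2\le C\ep^4\|f\|_{*\t}^2$; for $z$ and for $N_\t z$ one uses the crude bound $a_\t[\,\cdot\,]\le\|\,\cdot\,\|_\t^2\le K^2\|\,\cdot\,\|_0^2$ (from \eqref{astructure} and \eqref{as.b1}) together with $\|z\|_0^2,\|N_\t z\|_0^2\le C\ep^4\|f\|_{*\t}^2$, so that $\ep^{-2}a_\t[\,\cdot\,]+b_\t[\,\cdot\,]\le C\ep^2\|f\|_{*\t}^2$. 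This proves \eqref{IKfinal3} on $\Theta_{r_1}$, and together with Theorem \ref{thm.IKunifest} it holds on all of $\Theta$; estimate \eqref{IKfinal3-2} follows in the same way using only the $b_\t$-parts. The one genuinely delicate point is making sure the $a_\t$-contributions of the correction terms $z$ and $N_\t z$ stay $O(\ep^2)$ after division by $\ep^2$: this is exactly where the sharper $O(\ep^4)$-smallness of $\|z\|_0$ — coming from the $\ep^{-4}|\t|^4$-term in \eqref{est1234}, i.e. the $P_{V_\star}$-tested refined estimate — is needed, rather than merely the $O(\ep^2)$-smallness.
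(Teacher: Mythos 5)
Your proposal is correct and follows essentially the same route as the paper's own proof: appeal to Theorem \ref{thm.IKunifest} for $|\t|<r_1$, note that the well-posedness of \eqref{IKz3prob88} and the a priori bound \eqref{est1234} persist for all $\t\in\Theta$, use the $\ep^{-4}|\t|^4$-term of \eqref{est1234} to get $\|z\|_0=O(\ep^2)$ when $|\t|\ge r_1$, compare $\mathcal{E}_\t v$ to $v_\t$ by subtracting \eqref{thmcontv.vprob} from the $\tilde z=0$ restriction of \eqref{IKz3prob88}, and finish with Theorem \ref{thm:contV} on $\Theta_{r_1}$. You merely spell out a few steps the paper leaves implicit (the $O(\ep^2)$ smallness of the $z$- and $N_\t z$-contributions and the explicit Cauchy--Schwarz/isometry chain giving $\|\mathcal{E}_\t v-v_\t\|_\t\le\|z\|_0$), with no substantive deviation.
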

\begin{proof} Due to Theorem \ref{thm.IKunifest} we only need to consider the case $|\t| \ge r_1$. Note that the well-posedness of \eqref{IKz3prob88} and estimates \eqref{est1234} presented in the proof of Theorem \ref{thm.IKunifest} remain valid for all $\t \in 
\Theta$. 
In particular, for $|\t| \ge r_1$, 
\eqref{est1234} implies 
\begin{equation}\label{281020e1}
\| z\|_0^2 \le C 
r_1^{-4} \ep^4 \| f\|_{*\t}^2\,,
\end{equation}
and so to prove \eqref{IKfinal3} and \eqref{IKfinal3-2} we only need bounding the difference 
$
	\ep^{-2} a_\t[u_{\ep,\theta}  - \mathcal{E}_\t v] + b_\t[u_{\ep,\theta}  - \mathcal{E}_\t v].
$
Now by recalling Theorem \ref{thm:contV} we see that it remains to bound the difference
 $ 
 \ep^{-2} a_\t[ v_\t - \mathcal{E}_\t v] \, + \, b_\t[v_\t  - \mathcal{E}_\t v] \, =\, b_\t[v_\t  - \mathcal{E}_\t v]$, 
where $v_\t\in V_\t=V_\t^*$ solves \eqref{thmcontv.vprob}. 
 Setting $\tilde{z} = 0$ in \eqref{IKz3prob88}  and utilising \eqref{Eprop1} implies that $\mathcal{E}_\t v \in V_\t^\star$ solves 
 $ 
 b_\t(\mathcal{E}_\t v , \tilde{v}) = \langle f, \tilde{v} \rangle - b_0(z, \mathcal{E}_\t^{-1} \tilde{v})$, 
$\forall \tilde{v} \in V_\t^\star$. 
 Comparing this to  \eqref{thmcontv.vprob} and using \eqref{281020e1} implies  
$ b_\t[v_\t  - \mathcal{E}_\t v]  \le C r_1^{-4} \ep^4 \| f\|_{*\t}^2$, completing the proof.
\end{proof}

\subsection{A strengthening of condition \eqref{KA} 
}
\label{sec.newKA}
In conclusion of this section, we provide a sufficient condition for \eqref{KA} which on the one hand is often simpler to  
verify for a broad class of examples, and on the other hand assures an important additional property of finite dimensionality 
of the defect subspace $Z$.  The latter provides a substantial further simplification, as 
  the singular form $a^h_\t[z]$ that appears in the approximate problems \eqref{z3prob}  and \eqref{IKz3prob88} 
	can then be represented as a finite dimensional 
	matrix (the homogenised tensor). 

Recall \eqref{KA2.1} from Remark \ref{r.oldkafromnewka}, that is  there exists $C >0 $ and a  non-negative sesquilinear form $c$ on $H$, $\|\cdot\|_\t$-compact\footnote{A form $c$ is $\|\cdot\|_\t$-compact if every sequence $\{u_n\}$ in $H$, bounded in $\|\cdot\|_\t$, has a convergent subsequence $\left\{u_{n_k}\right\}$ with respect to $c$, i.e. $c\left[u_{n_k}-u \right]\to 0$ as $k\to \infty$ for some $u$. Notice that, by the uniform equivalence \eqref{as.b1} of the norms $\| \cdot \|_\t$, the  compactness needs only be established for one $\t$ to hold for all $\t$.} for some 
$\t\in \Theta$, such that
\begin{equation}\tag{H1$^\prime$}
\label{KA2}
\|w\|_\t^2\,\,\, \le\,\,\, C a_\t[w] \,\,+\,\, c[w], \quad \forall w \in W_\t, \; \, \forall \t \in \Theta.
\end{equation}
Inequality \eqref{KA2} can be interpreted as condition that the forms $a_\t$ are ``uniformly coercive on $W_\t$ plus 
compact'': $a_\t=\mathfrak{a}_\t+k$, where $\mathfrak{a}_\t[w]:=a_\t[w]+C^{-1}c[w]\ge C^{-1}\|w\|_\t^2$, 
$\forall \t\in W_\t$, are uniformly coercive and 
$k[w]:=-\,C^{-1}c[w]$ is compact. 
The next result follows from standard compactness arguments that we present here for the reader's convenience.
\begin{proposition}
	\label{prop.kaequiv}
Assertion \eqref{KA2} implies \eqref{KA}.
\end{proposition}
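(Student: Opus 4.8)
The plan is to argue by contradiction, mirroring the standard compactness proof that a coercive-plus-compact form has a spectral gap above its kernel. Suppose \eqref{KA2} holds but \eqref{KA} fails. Then there is some $\t\in\Theta$ for which no positive $\nu_\t$ works, so one can extract a sequence $w_n\in W_\t$ with $\|w_n\|_\t=1$ and $a_\t[w_n]\to 0$. The goal is to produce from this a nonzero element of $W_\t$ on which $a_\t$ vanishes, which is absurd since $W_\t\cap V_\t=\{0\}$ by the very definition of $W_\t$ as the orthogonal complement of $V_\t=\ker a_\t$.

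First I would use the $\|\cdot\|_\t$-compactness of $c$: the sequence $\{w_n\}$ is bounded in $\|\cdot\|_\t$, so after passing to a subsequence (not relabelled) it is $c$-Cauchy, i.e. $c[w_n-w_m]\to 0$. Next, I would feed the difference $w_n-w_m$ (which lies in the subspace $W_\t$) into \eqref{KA2}, obtaining
\begin{equation*}
\|w_n-w_m\|_\t^2\,\le\, C\,a_\t[w_n-w_m]\,+\,c[w_n-w_m].
\end{equation*}
The second term goes to zero along the subsequence by the previous step. For the first term, the squared triangle inequality for the non-negative form $a_\t$ gives $a_\t[w_n-w_m]\le 2a_\t[w_n]+2a_\t[w_m]\to 0$. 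Hence $\{w_n\}$ is Cauchy in the Hilbert space $(H,(\cdot,\cdot)_\t)$, so $w_n\to w$ for some $w\in H$; since $W_\t$ is closed, $w\in W_\t$, and $\|w\|_\t=\lim\|w_n\|_\t=1$, so $w\ne 0$. Finally, by boundedness (continuity) of $a_\t$ in the $\|\cdot\|_\t$-topology, $a_\t[w]=\lim_n a_\t[w_n]=0$, so $w\in V_\t$. But $w\in W_\t\cap V_\t=\{0\}$, contradicting $\|w\|_\t=1$. Therefore \eqref{KA} holds with some $\nu_\t>0$ for each $\t$.

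The main obstacle — really the only point requiring a little care — is making sure each invoked property is actually available: that $a_\t$ is bounded (hence continuous) with respect to $\|\cdot\|_\t$, which follows from \eqref{astructure}; that $W_\t$ is closed, already established in the text after \eqref{Vkersesa}; that the $c$-Cauchy extraction is legitimate, which is exactly the definition of $\|\cdot\|_\t$-compactness (the footnote to \eqref{KA2} even notes it suffices to have it for one $\t$); and that the squared triangle inequality $a_\t[u_1+u_2]\le 2a_\t[u_1]+2a_\t[u_2]$ holds for non-negative sesquilinear forms, noted in the footnote in Section \ref{sec:pf}. I would also remark that the argument is entirely pointwise in $\t$, so no uniformity in $\t$ is produced — consistent with the emphasis in the text that \eqref{KA} is genuinely non-uniform — and that the same reasoning localises to any subset of $\Theta$.
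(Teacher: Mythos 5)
Your proof is correct, and it takes a genuinely different route from the one in the paper. Both arguments contradict the existence of a sequence $w_n \in W_\t$ with $a_\t[w_n]/\|w_n\|_\t^2 \to 0$, but the key mechanisms differ. The paper normalises $c[w_n]=1$, deduces boundedness in $\|\cdot\|_\t$ from \eqref{KA2}, extracts a \emph{weakly} convergent subsequence $w_n\rightharpoonup w$, and then needs two non-trivial facts about weak convergence: that $W_\t$ is weakly closed, and that the bounded non-negative form $a_\t$ is weakly lower semi-continuous, so that $a_\t[w]\le\liminf a_\t[w_n]=0$. (It also needs a small argument, via $c[u-w]=0$, to identify the $c$-limit with the weak limit in order to conclude $w\ne 0$.) You instead normalise $\|w_n\|_\t=1$, observe that $w_n-w_m\in W_\t$ can be fed back into \eqref{KA2}, and use the squared triangle inequality $a_\t[w_n-w_m]\le 2a_\t[w_n]+2a_\t[w_m]\to 0$ together with the $c$-Cauchy property to conclude that $\{w_n\}$ is Cauchy \emph{in norm}. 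This upgrades the convergence to strong convergence, after which closedness of $W_\t$, continuity (boundedness) of $a_\t$, and $\|w\|_\t=1$ are all immediate; no lower semi-continuity argument is required, and the identification of the limit comes for free. Your version is more elementary and arguably cleaner; the paper's version is the more standard template for "coercive plus compact" arguments and would still work in settings where one only had weak compactness to hand, but here both are perfectly valid. One small point worth making explicit when you pass from "$c[w_{n_k}-u]\to 0$" to "$\{w_{n_k}\}$ is $c$-Cauchy": this uses the triangle inequality for the seminorm $c^{1/2}[\cdot]$, which is available because $c$ is a non-negative form (as recorded in the footnote in Section 2). That is a one-line addition but should be stated, since the definition of $\|\cdot\|_\t$-compactness only gives convergence to a fixed $u$, not the Cauchy property directly.
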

\begin{proof} Suppose  \eqref{KA} does not hold for some $\t \in \Theta$. Then there exists a sequence $w_n \in W_\theta$  such that $a_\t[w_n] < \tfrac{1}{n} \| w_n \|_\t^2$. 
Notice that \eqref{KA2} implies $c[w_n]>0$ for $n>C$, so we can assume $c[w_n]=1$. 
Then   \eqref{KA2} implies  $w_n$ is bounded in $H$. Consequently, 
up to a discarded subsequence, $\lim_n c[w_n-u]=0$ for some $u\in H$. 
Moreover (possibly up to another subsequence) $w_n$ weakly converges to some $w \in H$. 
The $\|\cdot \|_\t$-compactness of $c$ implies that $c$ is bounded in $H$ (i.e. $c[u] \le C'\|u\|^2_\t$, $\forall u\in H$,  
for some $C'>0$). 
Hence, $\forall\, \tilde u\in H$, $c(u-w, \tilde u)= \lim_n c(w_n-w,\tilde u)- \lim_n c(w_n-u,\tilde u)=0$, and so $c[u-w]=0$. 
Therefore $c[w] = c[u] = \lim_n c[w_n] = 1$. 
We  now demonstrate that $w\in W_\t \cap V_\t = \{0\}$. 
Clearly $w \in W_\t$ since $W_\theta$ is weakly closed (as an orthogonal complement). On the other hand, since $a_{\t}$ is non-negative and bounded in $H$ 
it is 
weakly lower semi-continuous, and therefore, $a_\t[w] \le \liminf_n a_\t[w_n]$ =0, i.e. $w \in V_\theta$.
\end{proof}
One advantage of \eqref{KA2} is that 
it provides a direct means to verify \eqref{KA}. 
We finally turn to the other important implication of \eqref{KA2}, the finite dimensionality of the defect subspace $Z$.  
\begin{proposition}\label{prop:Zfinite}
	Assume \eqref{KA2} and \eqref{contVs}. Then any 
	$Z$ satisfying \eqref{spaceZ}--\eqref{VZorth} is finite dimensional.
\end{proposition}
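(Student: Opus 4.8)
The plan is to show that the closed unit ball of $Z$ with respect to $\|\cdot\|_0$ is precompact in $H$, which forces $\dim Z<\infty$. The mechanism is a contradiction argument combining the transversality \eqref{VZorth} of the direct sum \eqref{spaceZ} with the ``coercive-plus-compact'' structure \eqref{KA2}. Roughly: any $z\in Z$ with $\|z\|_0=1$ must, by \eqref{VZorth}, stay a definite distance away from $V_\star$; since $V_\star=V_0^\star$ and $V_\t^\star$ is Lipschitz in $\t$, for a small fixed $\t_*\in\Theta\setminus\{0\}$ the space $V_{\t_*}=V_{\t_*}^\star$ is also $O(|\t_*|)$-close to $V_\star$, hence $z$ stays away from $V_{\t_*}$; equivalently its $W_{\t_*}$-component $P_{W_{\t_*}}z$ is uniformly bounded below in norm. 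Then \eqref{KA2} forces the compact form $c$ to be bounded below on these projections, uniformly over the unit sphere of $Z$, which is impossible if $Z$ is infinite-dimensional. Recall that \eqref{KA2}$\,\Rightarrow\,$\eqref{KA} by Proposition \ref{prop.kaequiv}, and that $\t_0=0$ is not an isolated point of $\Theta$ is part of the standing hypothesis behind \eqref{contVs}, so $\t_*$ with $|\t_*|$ as small as we wish is available.

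Quantitatively, I would fix $z\in Z$ with $\|z\|_0=1$ and a small $\t_*\in\Theta\setminus\{0\}$, and split $z=P_{V_{\t_*}}z+P_{W_{\t_*}}z$ orthogonally in $(\cdot,\cdot)_{\t_*}$ (note $V_{\t_*}^\star=V_{\t_*}$ and $W_{\t_*}^\star=W_{\t_*}$ since $\t_*\neq0$). Applying \eqref{contVs2} with $(\t_1,\t_2)=(\t_*,0)$ to $v_1=P_{V_{\t_*}}z\in V_{\t_*}^\star$ gives $\mathrm{dist}_0\!\big(P_{V_{\t_*}}z,\,V_\star\big)\le L_\star|\t_*|\,\|P_{V_{\t_*}}z\|_{\t_*}\le KL_\star|\t_*|$, using \eqref{as.b1}; hence $\mathrm{dist}_0(z,V_\star)\le K\|P_{W_{\t_*}}z\|_{\t_*}+KL_\star|\t_*|$. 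On the other hand, \eqref{VZorth} yields $\|z-v_\star\|_0^2\ge(1-K_Z^2)\|z\|_0^2$ for every $v_\star\in V_\star$ (complete the square), i.e.\ $\mathrm{dist}_0(z,V_\star)\ge(1-K_Z^2)^{1/2}$. Combining these and choosing $\t_*$ small enough that $KL_\star|\t_*|\le\tfrac12(1-K_Z^2)^{1/2}$ gives a uniform lower bound $\|P_{W_{\t_*}}z\|_{\t_*}\ge c_0:=\tfrac1{2K}(1-K_Z^2)^{1/2}>0$ for all $z$ on the unit sphere of $(Z,\|\cdot\|_0)$.

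Finally I would apply \eqref{KA2} at $\t=\t_*$ to $w=P_{W_{\t_*}}z\in W_{\t_*}$: $\|P_{W_{\t_*}}z\|_{\t_*}^2\le C\,a_{\t_*}[P_{W_{\t_*}}z]+c[P_{W_{\t_*}}z]$. Since $z\in Z\subset V_0$ we have $a_0[z]=0$, so $a_{\t_*}[z]\le L_a|\t_*|$ by \eqref{ass.alip}, and $a_{\t_*}[P_{W_{\t_*}}z]=a_{\t_*}[z]$ by \eqref{Vkersesa}; shrinking $\t_*$ further so that $CL_a|\t_*|\le\tfrac12 c_0^2$ yields $c[P_{W_{\t_*}}z]\ge\tfrac12 c_0^2$ for every $z$ on the unit sphere of $Z$. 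Now suppose $\dim Z=\infty$ and take $\{z_n\}\subset Z$ orthonormal in $(\cdot,\cdot)_0$. Then $\{P_{W_{\t_*}}z_n\}$ is bounded in $\|\cdot\|_{\t_*}$ (by \eqref{as.b1}), so by the $\|\cdot\|_{\t_*}$-compactness of $c$ it has a $c$-Cauchy subsequence; but for $n\neq m$ the vector $\tfrac1{\sqrt2}(z_n-z_m)$ lies on the unit sphere of $Z$, so $c[P_{W_{\t_*}}(z_n-z_m)]=2\,c\big[P_{W_{\t_*}}\tfrac{z_n-z_m}{\sqrt2}\big]\ge c_0^2$, a contradiction. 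Hence $Z$ is finite-dimensional. The only delicate point is the bookkeeping of the two inner products $(\cdot,\cdot)_0$ and $(\cdot,\cdot)_{\t_*}$ together with the projections onto $V_\star$, $V_{\t_*}$ and $W_{\t_*}$; this is controlled by \eqref{as.b1} and \eqref{contVs2}, provided the successive smallness requirements on $|\t_*|$ are imposed in the correct order.
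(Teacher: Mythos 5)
Your proof is correct and follows essentially the same route as the paper's: for a small fixed $\t_*\neq 0$, establish the uniform bound $c\bigl[P_{W_{\t_*}^\star}z\bigr]\gtrsim\|z\|_0^2$ on $Z$ by combining a transversality-driven lower bound on $\|P_{W_{\t_*}^\star}z\|$ with the Lipschitz smallness of $a_{\t_*}[z]$ (since $a_0[z]=0$) and then \eqref{KA2}, and finally exploit the $\|\cdot\|_\t$-compactness of $c$ to conclude $\dim Z<\infty$. The only departure is cosmetic: you re-derive the lower bound on $\|P_{W_{\t_*}^\star}z\|_0$ directly from \eqref{contVs2} and \eqref{VZorth} (getting the constant $(1-K_Z^2)^{1/2}$), whereas the paper simply invokes the already-established estimate \eqref{V+ZCoercive} to the same effect.
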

\begin{proof}
	Show first that since $c$ is $\|\cdot\|_0$-compact it is sufficient to prove that for 
	some $0\neq \t \in \Theta$ and $\kappa >0$, 
	\begin{equation}
	\label{Zfinite}
	\|z\|_0^2 \,\,\,\le \,\,\, \kappa\, c\big[ P_{W_\t^\star}z\big], \quad \forall z \in Z.
	\end{equation}
Indeed, for a bounded sequence $\{z_n\}$ in $Z$, $\{w_n\}:=\left\{P_{W_\t^\star}z_n\right\}$ is also bounded. 
Hence (up to a subsequence) 
 $c[w_n-u]\to 0$ for some $u\in H$, and so $c[w_n-w_m]\to 0$ as $m,n\to\infty$. 
Then \eqref{Zfinite} implies $\{z_n\}$ is a Cauchy sequence and hence 
 $z_n\to z\in Z$. 
So every bounded sequence in $Z$ has a convergent subsequence and hence $Z$ must be finite-dimensional. 

Let us now prove \eqref{Zfinite}. Fixing $z \in Z$, for small enough $\t\in\Theta\backslash\{0\}$, 
by \eqref{V+ZCoercive} for $v_\t^\star = - P_{V_\t^\star} z$ and $ w_0 =0$, \eqref{as.b1} and \eqref{KA2},  we obtain  
	\begin{equation}
	\label{Zfiniteaddede1}
\tfrac{1}{3}(1-K_Z) \| z\|_0^2 \,\,\le\,\,  \left\| P_{W_\t^\star} z \right\|_0^2\,\,\le\,\, K^2 \left\| P_{W_\t^\star}z \right\|_\t^2 
\,\,\le\,\, K^2\, \big( C a_\t[z] \,+\, c[P_{W_\t^\star}z]\big).
	\end{equation}
	Now 
	note that
 \eqref{ass.alip} gives $a_\t[z] \le L_a |\t|\, \| z\|_0^2$ and hence, for small enough $\t$, 
\eqref{Zfiniteaddede1} implies \eqref{Zfinite}.
\end{proof}

\section{Uniform approximations 
for related operators and 
spectra} 
\label{s:resolv}
In this section, we develop certain approximations for general classes of self-adjoint operators generated by the forms 
$A_{\ep,\t}\,=\,\ep^{-2} a_\t \,+\, b_\t$ and for their spectra, with uniform in $\t\in \Theta$ error estimates as $\ep\to 0$. \\
An abstract setup for wide classes of examples, see Section \ref{sec:examples}, is as follows. 
Let $\mathcal{H}$ be a complex separable Hilbert space with a family of uniformly equivalent inner products $d_\t$  
for each $\t \in \Theta$, i.e. 
\begin{equation}
\label{vs61}
d_{\t_1}[u] \,\,\le\,\, K_d\, d_{\t_2}[u], \quad \forall u \in \mathcal{H},\, \ \ \forall \t_1, \t_2 \in \Theta,  
\quad \text{ for some } K_d>0. 
\end{equation} 
Assume that 
$H$ is a compactly embedded\footnote{ 
The compact embedding assumption, which holds for most of the examples, is introduced for simplifying the exposition. 
In principle, it can be relaxed, cf. Example \ref{sec:nonloc2}. } 
dense subset of $\mathcal{H}$. Furthermore, we assume that 
\begin{equation}
\label{ik2}
d_\t[u] \,\,\le\,\, b_\t[u], \quad \forall u \in H,\, \ \forall \t \in \Theta.
\end{equation} 
Consider the self-adjoint operator $\mathcal{L}_{\ep,\theta}$  in $\mathcal{H}$ with inner product $d_\t$, generated 
according to the standard Friedrichs extension procedure by the (non-negative, closed, densely-defined) sesquilinear form $A_{\ep,\t}$ with the form domain $H$. 
Note that $\mathcal{L}_{\ep,\theta}$ has compact resolvent and therefore has a discrete spectrum which consists of 
the sequence of positive real eigenvalues $\{\lambda_{\ep,\theta}^{(k)}\}_{k\in \NN}$ (which may accumulate only at infinity 
assuming $\mathcal{H}$ is infinite-dimensional) labelled in ascending order and repeated according to multiplicity:
\[
1 \le \lambda_{\ep,\theta}^{(1)} \le \lambda_{\ep,\theta}^{(2)} \le \ldots \le \lambda_{\ep,\theta}^{(k)} \le \lambda_{\ep,\theta}^{(k+1)} \le \ldots
\]
In this section we provide approximations to the operators $\mathcal{L}_{\ep,\theta}$ with corresponding quantitative asymptotic approximations for the eigenvalues $\lambda^{(k)}_{\ep,\t}$, 
for small $\ep >0$, that are uniform in $\t\in \Theta$. 

Throughout this section we use the following notation. For  a linear subset $\mathcal{U}$  of $ \mathcal{H}$,  $\overline{\mathcal{U}}$ denotes the closure of $\mathcal{U}$ in $\mathcal{H}$  and $\mathcal{P}_{\overline{\mathcal{U}}}^\t$ is the orthogonal  projection onto $\overline{\mathcal{U}}$ with respect to  $d_\t$.  
Where appropriate, we use the notation $(\mathcal{V},d
)$ to denote the Hilbert space formed by equipping the vector space 
$\mathcal{V}$ with the inner product $d
$. We denote the spectrum of a linear operator $\mathbf{L}$ by $\text{Sp}\, \mathbf{L}$. 

\subsection{The case of continuous $V_\t$}\label{s.spcontV}
In this subsection we suppose that the assumptions of Theorem \ref{thm:contV} holds.  Consider original problem \eqref{p1}  with the functional $f$ given by 
\begin{equation}
\label{ik3}
\left\l f,\tilde u\right\r  \,\,:=\,\,d_\t\left(g,\tilde u\right), \quad  \forall \tilde u \in H,
\end{equation}
for any  $g\in \mathcal{H}$. 
Notice that by \eqref{ik2} $f\in H^*$, and the solution $u_{\ep,\t}$ to \eqref{p1} is in the domain $ \text{dom}\,\mathcal{L}_{\ep,\theta}\subset H$ of operator $\mathcal{L}_{\ep,\theta}$ 
and $\mathcal{L}_{\ep,\theta}u_{\ep,\t}=g$. 
Then Theorem \ref{thm:contV}, in particular \eqref{errorcontinuouscase2}, along with  \eqref{ik2}, \eqref{ik3} and \eqref{fstar}, for the solution $v_\t$ of the approximate problem  
\eqref{thmcontv.vprob},  imply 
\begin{equation}
\label{ik4}
d_\t[u_{\ep,\t}-v_\t] \,\,\le\,\, \left\| u_{\ep,\t}-v_\t\right\|^2_\t 
\,\,\le\,\, \ep^4\nu^{-2} \| f\|_{*\t}^2 \,\,\le\,\,  \ep^4\,\nu^{-2} d_\t[g], \quad \, \forall \t \in \Theta, 
\ \ \forall g\in\mathcal{H}.
\end{equation}
Let us give an operator-theoretic interpretation of \eqref{ik4}. 
Let $\mathbf{B}_\t$ be the self-adjoint operator in Hilbert space 
$(\overline{V_\t},d_\t)$,  
 generated by the closed positive sesquilinear form $b_\t$ with form domain $V_\theta$.
In particular,    
\begin{equation}
\label{ikddd} d_\t\left(\mathbf{B}_\t v,\tilde{v}\right)\,\,=\,\,b_\t\left( v,\tilde{v}\right),\quad  \forall v\in 
\text{dom}\, \mathbf{B}_\t \subset V_\t, \ \ \forall \tilde{v}\in V_\t, 
\end{equation} 
where $\text{dom}\, \mathbf{B}_\t
$ is the domain of $\mathbf{B}_\t$. 
Then 
the approximate problem \eqref{thmcontv.vprob} 
can be rewritten as 
$ \mathbf{B}_\t v_\t=\mathcal{P}_{\overline{V_\t}}^\t g$,  and so \eqref{ik4} can  
equivalently be re-stated as the following norm-operator estimate: 
\begin{equation}
\label{ik7}	\left\|\,\mathcal{L}_{\ep,\theta}^{-1}\,\,-\,\, \mathbf{B}_\t^{-1}  \mathcal{P}_{\overline{V_\t}}^\t\, \right\|_{(\mathcal{H},d_\t)\rightarrow (\mathcal{H},d_\t)}\,\,\,\le\,\, \,\ep^2\,\nu^{-1}.
\end{equation}
Next we observe that $\mathbf{B}_\t^{-1}  \mathcal{P}_{\overline{V_\t}}^\t $ is compact, non-negative and self-adjoint in $(\mathcal{H},d_\t)$.
Therefore, the 
spectrum of $\mathbf{B}_\t^{-1}  \mathcal{P}_{\overline{V_\t}}^\t $  consists of real non-negative eigenvalues with only a possible accumulation point at zero. Let us put these 
eigenvalues in descending  order:
\[
\alpha_{\theta}^{(1)} \geq\alpha_{\theta}^{(2)} \geq  \alpha_{\theta}^{(3)} \geq \ldots 
\]
Now, the key standard step is in noticing that the operator estimate \eqref{ik7} implies uniform 
estimates for the spectra via the 
 min-max principle (see e.g. \cite{ReeSim}). Namely, uniformly for all 
 $k\in \mathbb{N}$ and $\t \in \Theta$, 
\begin{equation}
\label{ik8}
\bigl| 1/\lambda_{\ep,\theta}^{(k)} \, - \, \alpha_{\theta}^{(k)} \bigr| \,\,\,\le\,\,\, \ep^2\nu^{-1}.
\end{equation}
Finally, we notice that all non-zero eigenvalues of $\mathbf{B}_\t^{-1}  \mathcal{P}_{\overline{V_\t}}^\t$ are the inverses of the eigenvalues of  $\mathbf{B}_\t$ and vice versa. 
Therefore we have the following relations between the eigenvalues $\{\mu_{\theta}^{(k)}\}$  of $\mathbf{B}_\t$ and  the eigenvalues 
$\lambda_{\ep,\theta}^{(k)}$ of $\mathcal{L}_{\ep,\theta}$:
\be\Big| 1/\lambda_{\ep,\theta}^{(k)} \,-\,  1/\mu_{\theta}^{(k)} \Big| 
\,\,\le\,\, \ep^2\nu^{-1}, \quad \ \forall k\in \NN, \ \ \forall \t \in \Theta, \qquad \text{if $ \ \dim  V_\theta\ =\infty$,}\label{ik889.0}
\end{equation}
 or
\begin{equation}\label{ik889}
\Big| 1/\lambda_{\ep,\theta}^{(k)} -  1/\mu_{\theta}^{(k)} \Big| \,\,\le\,\, \ep^2\nu^{-1},\quad 
\Big| 1/\lambda_{\ep,\theta}^{(p)} \Big| \,\,\le\,\, \ep^2\nu^{-1} ,\quad \forall k \le N ,\  \forall p\ge N+1, \ \forall \t \in \Theta, \ \text{if $\dim V_\theta\ =N$. }
\end{equation}
Inequalities \eqref{ik7}, \eqref{ik889.0} and \eqref{ik889} provide the desired 
estimates on the closeness 
of the ``resolvents'' and of the spectra of the exact and approximate operators, $\mathcal{L}_{\ep,\theta}$ and 
$\mathbf{B}_\t$ respectively, uniform in $\t$.

\subsection{The case of discontinuous $V_\theta$ with removable singularities}\label{spectralasymptoticsDiscV}
Here, we suppose that the assumptions of Theorem \ref{thm.maindiscthm} hold, establishing the closeness of the solution $u_{\ep,\t}$ to the original problem \eqref{p1} and of the solution 
$v^h+z^h\in \Vs_\t\,\dot{+}\, Z$ to the approximate problem \eqref{z3prob}. 
We shall follow the pattern of the previous subsection, aiming first at recasting \eqref{z3prob} 
in an operator form. 
To that end, for $\t \in \Theta,$ $\,| \t | < r_1$, $0<\ep<1$,  define on $\Vs_\t\,\dot{+}\, Z$ an inner product $s$ 
by
\begin{equation}
\label{ikspectr200} s(v+z,\tilde{v}+\tilde{z}) \,\,: =\,\,\ep^{-2} a_\t^{\rm h}(z,\tilde{z}) \,+\, b_\t(v+z,\tilde{v}+\tilde{z}), \quad   \forall v,\,\tilde{v}\in \Vs_\t,\, \  \, \forall z,\tilde{z}\in Z.
\end{equation}
As follows from the proof of Theorem \ref{thm.maindiscthm}, 
$V_\t^\star\,\dot{+}\,Z$ endowed with the inner product $s$ is a Hilbert space that is continuously embedded in $H$ and therefore compactly embedded in $\mathcal{H}$. 
Set $\mathcal{H}_\t : = \left(\overline{\Vs_\t\,\dot{+}\, Z}, \,d_\t\right)$  and  
let     $\mathbf{L}_{\ep,\theta}: {\rm dom}\, \mathbf{L}_{\ep,\theta} 
\rightarrow \mathcal{H}_\t$ be the self-adjoint operator in $\mathcal{H}_\t$ 
generated by the closed positive sesquilinear form $s$ with the form domain  $\Vs_\t\,\dot{+}\, Z$. The spectrum  of $\textbf{L}_{\ep,\theta}$ consists of positive isolated eigenvalues (which may only accumulate at infinity if $V_\t^\star\,\dot{+}\,Z$ is infinite-dimensional).  

Consider problem \eqref{p1} with  functional $f$ given by \eqref{ik3}. Then, for the solution to 
the approximate problem \eqref{z3prob}, 
 $v^h+z^h \,=\, \mathbf{L}_{\ep,\theta}^{-1}\,\mathcal{P}_{\mathcal{H}_\t}^\t g$. 
By Theorem \ref{thm.maindiscthm} (see \eqref{final2}) and \eqref{ik2} one has 
\[
d_\t\left[u_{\ep,\t}-(v^h+z^h)\right] \,\,\le \,\, C_6\, \ep^2 d_\t[g], 
\]
	which can be rewritten in the operator form as 
	\begin{equation}
	\label{ik37}	\left\|\,\mathcal{L}_{\ep,\theta}^{-1}\,-\, \mathbf{L}_{\ep,\theta}^{-1}\mathcal{P}_{\mathcal{H}_\t}^\t\,\right\|_{(\mathcal{H},d_\t)\rightarrow (\mathcal{H},d_\t)}\,\,\,\le\,\,\, C_6^{1/2}\ep,  
	\end{equation}
	where $\mathbf{L}_{\ep,\theta}^{-1}\,\mathcal{P}_{\mathcal{H}_\t}^\t$ is a self-adjoint operator in $(\mathcal{H},d_\t)$. 
Arguing then as in the previous subsection we have: 
\begin{theorem}\label{ikthm}
Assume \eqref{KA}--\eqref{H4}. Let $\{ \lambda^{(k)}_{\ep,\theta}\}_{k\in \NN}$ and $\{\Lambda^{(k)}_{\ep,\theta}\}_{k\in \NN}$ be the eigenvalues of the operators 
$\mathcal{L}_{\ep,\theta}$ and $\mathbf{L}_{\ep,\theta}$ respectively. 
Then, for $r_1$ given by \eqref{btnormonV+Z}, 
\begin{gather*}
\Big| 1/\lambda_{\ep,\theta}^{(k)} \,- \, 1/\Lambda_{\ep,\theta}^{(k)} \Big| \,\,\le\,\,  C_6^{1/2}\ep, \quad \forall k\in \NN,\  \,\forall\, \t \in \Theta, \ |\t| < r_1, \quad 
\text{ if $\,\,\dim  \left(\Vs_\t \,\dot{+}\,Z\right)\ =\infty$, } \\
\text{or} \hspace{\textwidth}\\
\Big| 1/\lambda_{\ep,\theta}^{(k)} -  1/\Lambda_{\ep,\theta}^{(k)} \Big| \,\le\, C_6^{1/2} \ep,\ \  
\Big| 1/\lambda_{\ep,\theta}^{(p)} \Big| \le C_6^{1/2} \ep, \ \ \forall k\le N, \, \forall   p\ge N+1,\, \forall \t \in \Theta,\, |\t| < r_1, \ \text{if $\dim \left( \Vs_\t\dot{+} Z\right) =N.$}
\end{gather*}
\end{theorem}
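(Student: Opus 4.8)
The plan is to transcribe the argument of Section \ref{s.spcontV} (the passage from \eqref{ik7} to \eqref{ik889}), now starting from the operator estimate \eqref{ik37}, which is already in hand under the hypotheses \eqref{KA}--\eqref{H4} and for $|\t|<r_1$. First I would record that both operators appearing in \eqref{ik37} are compact, non-negative and self-adjoint on $(\mathcal{H},d_\t)$: for $\mathcal{L}_{\ep,\theta}^{-1}$ this follows from the compact embedding of $H$ into $\mathcal{H}$ and the positivity of the closed form $A_{\ep,\t}$; for $\mathbf{L}_{\ep,\theta}^{-1}\mathcal{P}_{\mathcal{H}_\t}^\t$ it follows because $\mathbf{L}_{\ep,\theta}^{-1}$ is compact and self-adjoint on $\mathcal{H}_\t=\bigl(\overline{\Vs_\t\,\dot{+}\,Z},d_\t\bigr)$ (as $\Vs_\t\,\dot{+}\,Z$ equipped with $s$ is compactly embedded in $\mathcal{H}_\t$), $\mathcal{P}_{\mathcal{H}_\t}^\t$ is an orthogonal projection, and self-adjointness of the product on $(\mathcal{H},d_\t)$ is already noted below \eqref{ik37}.

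Next I would make the spectra explicit. The eigenvalues of $\mathcal{L}_{\ep,\theta}^{-1}$, arranged in descending order and counted with multiplicity, are exactly the reciprocals $1/\lambda_{\ep,\theta}^{(k)}$; there is no extra zero eigenvalue, since $\mathcal{L}_{\ep,\theta}$ is boundedly invertible. For $T:=\mathbf{L}_{\ep,\theta}^{-1}\mathcal{P}_{\mathcal{H}_\t}^\t$, the subspace $\mathcal{H}_\t$ is invariant, on it $T$ acts as $\mathbf{L}_{\ep,\theta}^{-1}$ with eigenvalues $1/\Lambda_{\ep,\theta}^{(k)}$, and on the $d_\t$-orthogonal complement of $\mathcal{H}_\t$ in $\mathcal{H}$ the operator $T$ vanishes identically. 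Hence, if $\dim\bigl(\Vs_\t\,\dot{+}\,Z\bigr)=\infty$, the descending list of eigenvalues of $T$ is precisely $\{1/\Lambda_{\ep,\theta}^{(k)}\}_{k\in\NN}$ (these are strictly positive and accumulate only at $0$, because $\Lambda_{\ep,\theta}^{(k)}\to\infty$), possibly continued by zeros when $\mathcal{H}_\t\neq\mathcal{H}$; so the $k$-th eigenvalue of $T$ is $1/\Lambda_{\ep,\theta}^{(k)}$ for every $k$. If instead $\dim\bigl(\Vs_\t\,\dot{+}\,Z\bigr)=N$, then $T$ has the $N$ positive eigenvalues $1/\Lambda_{\ep,\theta}^{(1)}\ge\cdots\ge1/\Lambda_{\ep,\theta}^{(N)}$ followed by $0$ of infinite multiplicity (as $\mathcal{H}$ is infinite-dimensional), so its $k$-th eigenvalue equals $1/\Lambda_{\ep,\theta}^{(k)}$ for $k\le N$ and $0$ for $k\ge N+1$.

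Finally I would invoke the standard consequence of the min-max principle (see e.g. \cite{ReeSim}): for compact self-adjoint operators $S,T$ on a Hilbert space with $\|S-T\|\le\delta$, the eigenvalues ordered decreasingly and counted with multiplicity satisfy $|s^{(k)}-t^{(k)}|\le\delta$ for all $k$. Taking $S=\mathcal{L}_{\ep,\theta}^{-1}$, $T=\mathbf{L}_{\ep,\theta}^{-1}\mathcal{P}_{\mathcal{H}_\t}^\t$ and $\delta=C_6^{1/2}\ep$ from \eqref{ik37}, and using the eigenvalue identifications above, gives $|1/\lambda_{\ep,\theta}^{(k)}-1/\Lambda_{\ep,\theta}^{(k)}|\le C_6^{1/2}\ep$ for all $k$ in the infinite-dimensional case, and $|1/\lambda_{\ep,\theta}^{(k)}-1/\Lambda_{\ep,\theta}^{(k)}|\le C_6^{1/2}\ep$ for $k\le N$ together with $|1/\lambda_{\ep,\theta}^{(p)}|\le C_6^{1/2}\ep$ for $p\ge N+1$ in the finite-dimensional case. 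Uniformity in $\t$ on $\{|\t|<r_1\}$ is immediate, since the bound $C_6^{1/2}\ep$ in \eqref{ik37}, inherited from \eqref{final2}, is independent of $\t$. The only point requiring real care is the bookkeeping of eigenvalue indices when the zero eigenspace of $T$ is inserted into the decreasing list; since all the $1/\Lambda_{\ep,\theta}^{(k)}$ are strictly positive and accumulate only at $0$, inserting zeros does not displace any of them, and the remainder of the argument is a direct copy of the one already carried out for \eqref{ik7}--\eqref{ik889}.
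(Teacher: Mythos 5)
Your proposal is correct and follows essentially the same route as the paper: the paper's proof of Theorem \ref{ikthm} consists precisely of deriving \eqref{ik37} from \eqref{final2} and \eqref{ik2} and then "arguing as in the previous subsection," i.e. applying the min-max perturbation bound for compact self-adjoint operators to $\mathcal{L}_{\ep,\theta}^{-1}$ and $\mathbf{L}_{\ep,\theta}^{-1}\mathcal{P}_{\mathcal{H}_\t}^\t$, with the zero eigenspace handled exactly as you describe (mirroring \eqref{ik7}--\eqref{ik889}). Your careful bookkeeping of the inserted zero eigenvalues is the only point the paper leaves implicit, and you resolve it correctly.
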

\begin{remark}\label{r.ikth77}
The approximations for the eigenvalues of $\mathcal{L}_{\ep,\t}$ given by Theorem \ref{ikthm} 
for $|\t|<r_2\le r_1$ can be combined with 
the results of Section \ref{s.spcontV} for $|\t|\ge r_2$. 
Indeed, under \eqref{contVs}--\eqref{distance},  the estimates \eqref{ik889.0} and \eqref{ik889} hold  for $|\t| \ge r_2 >0$ with $\nu$ replaced by $\gamma r_2^2$ (as seen directly from 
Theorem  \ref{thm1.all} with 
$\nu(r_2) = \gamma r_2^2$ as implied by \eqref{distance}).	
	
\end{remark}	
\subsection{The case of Lipschitz continuous $b_\t$}\label{s.spbt}
{
Let us now suppose  the assumptions of Theorem \ref{thm.IKunifest2} hold, establishing the closeness 
of the solution $u_{\ep,\t}$ of the original problem \eqref{p1} to the approximations based on the solution 
$v+z\in V_\star\,\dot{+}\,Z=V_0$ of the simplified problem \eqref{IKz3prob88}. 
With the aim of rewriting 
\eqref{IKz3prob88} and the resulting estimate 
\eqref{IKfinal3-2} in an operator form, notice first that the left-hand side of 
\eqref{IKz3prob88} 
 has the following important self-similarity property: 
it 
depends on $\ep$ and $\t$ only via $\t/\ep=:\xi$. 
  For each $\xi \in \mathbb{R}^n$, let  $\mathbb{L}_\xi$ be the  self-adjoint operator in 
	$\mathcal{H}_0 = \left(\overline{V_\star \,\dot{+\,}Z},\,d_0\right)$ generated by the following inner product on $V_\star \,\dot{+}\,Z$: 
\begin{equation}\label{Sform}
\mathbb{S}_\xi(v+z,\tilde{v}+\tilde{z}) \,\,: = \,\, a_\xi^{\rm h}(z,\tilde{z}) \,\,+\,\, b_0(v+z,\tilde{v}+\tilde{z}), \quad   \forall v +z, \ \tilde{v}+ \tilde{z}\in V_\star \,\dot{+}\, Z.
\end{equation}
Similarly to the previous subsections, for any $\xi\in\mathbb{R}^n$, $\mathbb{L}_\xi$ has a compact resolvent and hence a discrete positive spectrum which can only accumulate at infinity. 

If the right-hand-sides of \eqref{p1} is 
given by \eqref{ik3}, 
\eqref{IKfinal3-2} can be recast in operator form as follows. 
Notice that for the right-hand side of \eqref{IKz3prob88}, 
$\big\langle f,\, \mathcal{E}_\t \tilde v+\tilde z\big\rangle = d_\t\big(g,\, \mathcal{E}_\t \tilde v+\tilde z\big) = 
d_\t\big(g,\, G_\t\left(\tilde v+\tilde z\right)\big)$ where the bounded 
operator 
$G_\t:  \mathcal{H}_0 \to \big(\mathcal{H},\,d_\t\big)$
is given by 
$G_\t(v+z)=\mathcal{E}_\t v+z$. 
Then $\big\langle f,\, \mathcal{E}_\t \tilde v+\tilde z\big\rangle =
d_0\left(G_\t^*\,
g, \, \tilde v+\tilde z\right)$ where $G_\t^*: \big(\mathcal{H},\,d_\t\big)\to \mathcal{H}_0$ is the adjoint of $G_\t$ 
and so for the solution of \eqref{IKz3prob88}, 
$v+z=\mathbb{L}^{-1}_{\t/\ep}\,G_\t^*\,
g$. 
For the approximation in \eqref{IKfinal3-2}, $u^{\rm appr}_{\ep,\t}=\mathcal{E}_\t v+z=G_\t(v+z)$, which implies the following 
operator estimate: 
\be
\label{g-theta}
\left\|\, \mathcal{L}_{\ep,\t}^{-1} \,\, -\,\, 
G_\t\, \mathbb{L}_{\t / \ep}^{-1}
G_\t^*
\,\right\|_{(\mathcal{H},d_\t) \rightarrow (\mathcal{H},d_\t)} \,\,\le\,\, C_{10}^{1/2} \,\ep. 
\ee
The spectrum of $\mathcal{L}_{\ep,\t}^{-1}$ is then approximated by that of the self-adjoint operator
$\mathcal{R}_{\ep,\t}^{\rm appr}=G_\t\, \mathbb{L}_{\t / \ep}^{-1} G_\t^*$. 
The latter however is 
quite inexplicit due to the presence of (generally non-isometric) 
operator $G_\t$, 
and can be 
not close to the spectrum of $\mathbb{L}_{\t / \ep}^{-1}$. 
In principle, under an additional mild continuity assumption on $d_\t$ at $\t=0$ similar to \eqref{H5}, 
the approximation can be made slightly more explicit as of a ``matched asymptotics'' type 
using Remark \ref{r.ikth77} with optimally chosen small $r_2(\ep)$, 
although with a degraded rate in $\ep$ (c.f. \cite{ChCo}). 
We avoid here pursuing this further in generality, 
and introduce instead an 
additional assumption which, as we will see, 
often naturally
holds in examples (Section \ref{sec:examples}). 
This assumption makes the spectrum of approximation $\mathcal{R}_{\ep,\t}^{\rm appr}$ 
close to that of $\mathbb{L}_{\t / \ep}^{-1}\mathcal{P}_{\mathcal{H}_0}^0$ 
(which is much more explicit), 
and at the same time maintains the order of the accuracy. 
This is seen by correcting $\mathcal{R}_{\ep,\t}^{\rm appr}$, with a small error, 
and replacing it as a result with an operator approximation which is 
unitarily equivalent to $\mathbb{L}_{\t / \ep}^{-1}\mathcal{P}_{\mathcal{H}_0}^0$. 
Namely, let us suppose  that $\mathcal{E}_\t$ (that satisfies \eqref{Eprop1}--\eqref{Eprop2}) and $d_\t$ also 
satisfy\footnote{It can be seen that the 
existence of such extended $\mathcal{E}_\t$   
would necessitate 
existence $\forall \t \in \Theta$ of such bijections 
$\mathcal{E}_\t:V_\star\to V^\star_\t$ for which, apart from \eqref{Eprop1}--\eqref{Eprop2}, 
$d_\t\left(\mathcal{E}_\t v,\mathcal{E}_\t\tilde v\right) = d_0( v,  \tilde v)$, 
$\forall v,\tilde v \in V_\star$. 
The latter appears 
equivalent to the requirement that the spectrum of $\mathbf{B}_\t$ defined for 
$\t\in\Theta\backslash\{0\}$ by \eqref{ikddd} is independent of $\t$ and coincides with that of $\mathbf{B}_\star$ 
defined by \eqref{ikddd2} below. Together 
with appropriate continuity of the corresponding 
eigenvectors in $\t$, this suffices for \eqref{H6}.}   
\begin{equation}
\tag{H6}
\label{H6}
\begin{aligned}
&\text{ $\mathcal{E}_\t$ extends 
 to a bijection  in $ \mathcal{H}$ such that 
$d_\t\left(\mathcal{E}_\t u,\mathcal{E}_\t\tilde u\right) = d_0( u,  \tilde u), \quad 
\forall u,\tilde u \in \mathcal{H}, \, \,\t \in \Theta,$}\\
& \mathcal{E}_0=I\,\, 
\text{and $\big\|\, \mathcal{E}_{\t_1} \,-\, \mathcal{E}_{\t_2} \,\big\|_{(\mathcal{H},d_0) \rightarrow (\mathcal{H},d_{\t_1})} \,\,\le\,\, K_e\, |\t_1-\t_2|, \quad \forall \t_1,\t_2 \in \Theta, \ \ $ for some $K_e \ge 0$.}
\end{aligned}
\end{equation}

With the help of \eqref{H6}, the issue with the non-isometry of $G_\t$ in \eqref{g-theta} can be 
rectified 
by replacing $\tilde z$ on the right-hand side of \eqref{IKz3prob88} by $\mathcal{E}_\t\tilde z$ as well as $z$ in \eqref{IKfinal3-2} by 
$\mathcal{E}_\t z$. On the one hand, as we will see, this introduces a small additional error in \eqref{IKfinal3-2}, but on the other hand allows to express the amended approximation $\mathcal{E}_\t (v+ z)$ 
in a 
suitable 
operator form. Indeed, $v+z$ now solves the amended \eqref{IKz3prob88} which via \eqref{Sform} and \eqref{H6} reads 
$\mathbb{S}_{\t/\ep}(v+z,\tilde v+\tilde z)=
d_\t\big(g,\,\mathcal{E}_\t(\tilde v+\tilde z)\big)= 
d_0\big(\mathcal{E}_\t^{-1}g,\,\tilde v+\tilde z\big)$. 
Hence $\mathcal{E}_\t (v+ z)=
\mathcal{E}_\t \mathbb{L}_{\t / \ep}^{-1}\mathcal{P}_{\mathcal{H}_0}^0\mathcal{E}_\t^{-1}g$. 
Notice that \eqref{H6} implies that $\mathcal{E}^{-1}_\t$ is the adjoint of $\mathcal{E}_\t$ 
which is a unitary 
map from   $(\mathcal{H}, d_0)$ to $(\mathcal{H},d_\t)$, so the emerging 
approximating operator $\mathcal{E}_\t \mathbb{L}_{\t / \ep}^{-1}\mathcal{P}_{\mathcal{H}_0}^0\mathcal{E}_\t^{-1}$ is 
self-adjoint and non-negative in $(\mathcal{H}, d_\t)$. 
As a result, the following theorem holds.
\begin{theorem}\label{p.unitaryequiv} Assume \eqref{KA}--\eqref{H6}. For all $\t \in \Theta$ and $0<\ep <1$ one has:
	\begin{equation*}\label{splimSe.3}
\left\|\, \mathcal{L}_{\ep,\t}^{-1} \,\, -\,\, 
\mathcal{E}_\t\, \mathbb{L}_{\t / \ep}^{-1}\mathcal{P}_{\mathcal{H}_0}^0\mathcal{E}_\t^{-1}\,\right\|_{(\mathcal{H},d_\t) \rightarrow (\mathcal{H},d_\t)} \,\,\le\,\, C_{11} \,\ep,
 \quad 
C_{11} \,=\, C_{10}^{1/2} \,+\, \frac{1}{2} K_e \nu_\star^{-1/2} \left(\, 1 \,+\, 
\left(\tfrac{ 1+K^2}{2(1-K_Z)}\right)^{1/2} \,\right).
\end{equation*}
\end{theorem}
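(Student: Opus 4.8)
The plan is to upgrade the operator estimate \eqref{g-theta} --- in which the generally non-isometric $G_\t$, $G_\t(v+z)=\mathcal{E}_\t v+z$, still appears --- into one featuring the genuinely isometric $\mathcal{E}_\t$ of \eqref{H6}, at the cost of only an extra $O(\ep)$. Fix $g\in\mathcal{H}$ and take $f$ of the form \eqref{ik3}, so $u_{\ep,\t}=\mathcal{L}_{\ep,\t}^{-1}g$; recall from Theorem \ref{thm.IKunifest2} (estimate \eqref{IKfinal3-2}) together with $\|f\|_{*\t}\le d_\t^{1/2}[g]$ (by \eqref{ik2} and \eqref{fstar}) and \eqref{ik2} that
\[
d_\t\big[\,u_{\ep,\t}-(\mathcal{E}_\t v+z)\,\big]\,\le\,C_{10}\,\ep^2\,d_\t[g],
\]
where $(v,z)\in V_\star\dot{+}Z$ solves \eqref{IKz3prob88}. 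It thus suffices to show $\mathcal{E}_\t v+z$ is $O(\ep\,d_\t^{1/2}[g])$-close, in $d_\t^{1/2}$-norm, to $\mathcal{E}_\t\,\mathbb{L}_{\t/\ep}^{-1}\mathcal{P}_{\mathcal{H}_0}^0\mathcal{E}_\t^{-1}g$, which is self-adjoint non-negative on $(\mathcal{H},d_\t)$ since $\mathbb{L}_{\t/\ep}^{-1}\mathcal{P}_{\mathcal{H}_0}^0$ is self-adjoint non-negative on $(\mathcal{H},d_0)$ and, by \eqref{H6}, $\mathcal{E}_\t$ is unitary from $(\mathcal{H},d_0)$ onto $(\mathcal{H},d_\t)$ with adjoint $\mathcal{E}_\t^{-1}$.

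First I would introduce the \emph{amended} problem: find $\hat v+\hat z\in V_\star\dot{+}Z$ with $\mathbb{S}_{\t/\ep}(\hat v+\hat z,\tilde v+\tilde z)=\langle f,\mathcal{E}_\t(\tilde v+\tilde z)\rangle$ for all $\tilde v+\tilde z\in V_\star\dot{+}Z$, obtained from \eqref{IKz3prob88} by replacing $\tilde z$ by $\mathcal{E}_\t\tilde z$ on the right-hand side ($\mathbb{S}_\xi$ as in \eqref{Sform}). It is well posed, the form being unchanged (coercive on $V_\star\dot{+}Z$ by \eqref{ahcoercive}, $b_0[\cdot]=\|\cdot\|_0^2$ and \eqref{VZorth2}). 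By \eqref{H6} the right-hand side equals $d_\t(g,\mathcal{E}_\t(\tilde v+\tilde z))=d_0(\mathcal{E}_\t^{-1}g,\tilde v+\tilde z)$, whence $\hat v+\hat z=\mathbb{L}_{\t/\ep}^{-1}\mathcal{P}_{\mathcal{H}_0}^0\mathcal{E}_\t^{-1}g$, so that $\mathcal{E}_\t(\hat v+\hat z)=\mathcal{E}_\t\mathbb{L}_{\t/\ep}^{-1}\mathcal{P}_{\mathcal{H}_0}^0\mathcal{E}_\t^{-1}g$ is exactly the target approximation.

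The core step is the bound $d_\t[(\mathcal{E}_\t v+z)-\mathcal{E}_\t(\hat v+\hat z)]\le C'\,\ep^2\,d_\t[g]$. Setting $\delta v:=v-\hat v\in V_\star$, $\delta z:=z-\hat z\in Z$, one has $(\mathcal{E}_\t v+z)-\mathcal{E}_\t(\hat v+\hat z)=\mathcal{E}_\t\delta v+\delta z+(I-\mathcal{E}_\t)\hat z$. Subtracting the amended problem from \eqref{IKz3prob88} and testing with $\tilde v=\delta v$, $\tilde z=\delta z$ gives $a^{\rm h}_{\t/\ep}[\delta z]+\|\delta v+\delta z\|_0^2=\langle f,(I-\mathcal{E}_\t)\delta z\rangle=d_\t(g,(\mathcal{E}_0-\mathcal{E}_\t)\delta z)$, which by \eqref{H6} is $\le K_e|\t|\,d_\t^{1/2}[g]\,\|\delta z\|_0$. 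Combining this with the quadratic coercivity $a^{\rm h}_{\t/\ep}[\delta z]\ge\nu_\star\ep^{-2}|\t|^2\|\delta z\|_0^2$ of \eqref{ahcoercive} (which converts the crude $O(|\t|)$ bound for $\|\delta v+\delta z\|_0$ into an $O(\ep)$ one) and the transversality \eqref{VZorth2}, one obtains $\|\delta v+\delta z\|_0=O(\ep\,d_\t^{1/2}[g])$ and $\|\delta z\|_0=O(\ep\,d_\t^{1/2}[g])$; at $\t=0$, $\mathcal{E}_0=I$ and the two problems coincide. Similarly, testing the amended problem with its own solution and using \eqref{ahcoercive} gives $|\t|\,\|\hat z\|_0\le\tfrac12\nu_\star^{-1/2}\ep\,d_\t^{1/2}[g]$. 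Then $d_\t^{1/2}[\mathcal{E}_\t\delta v]=d_0^{1/2}[\delta v]\le\|\delta v\|_0$, $d_\t^{1/2}[\delta z]\le\|\delta z\|_\t\le K\|\delta z\|_0$ (via \eqref{ik2}, \eqref{as.b1}) and $d_\t^{1/2}[(I-\mathcal{E}_\t)\hat z]\le K_e|\t|\,\|\hat z\|_0$ (via \eqref{H6}) are all $O(\ep\,d_\t^{1/2}[g])$, and summing them yields the bound; keeping the constant sharp requires grouping $\mathcal{E}_\t\delta v+\delta z=G_\t(\delta v+\delta z)$ as a single block (controlling $\|G_\t\cdot\|_{d_\t}$ via \eqref{VZorth2} and \eqref{as.b1}) and invoking the refined a priori estimates for the defect component of $(v,z)$ analogous to \eqref{est1234}--\eqref{100720.e2}, which together reproduce the stated $C_{11}$.

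Assembling, the triangle inequality for $d_\t^{1/2}$ gives $d_\t^{1/2}\big[\mathcal{L}_{\ep,\t}^{-1}g-\mathcal{E}_\t\mathbb{L}_{\t/\ep}^{-1}\mathcal{P}_{\mathcal{H}_0}^0\mathcal{E}_\t^{-1}g\big]\le b_\t^{1/2}[u_{\ep,\t}-(\mathcal{E}_\t v+z)]+d_\t^{1/2}[(\mathcal{E}_\t v+z)-\mathcal{E}_\t(\hat v+\hat z)]\le(C_{10}^{1/2}+C')\,\ep\,d_\t^{1/2}[g]$, and since $g$ is arbitrary this is the asserted operator bound with $C_{11}=C_{10}^{1/2}+C'$. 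The main obstacle I anticipate is the third step: extracting genuine $O(\ep)$ (rather than merely $O(|\t|)$) smallness of the defect components $\delta z$, $\hat z$ by balancing the quadratic degeneracy of $a^{\rm h}$ against the Lipschitz smallness of $\mathcal{E}_\t-I$ from \eqref{H6}, and --- if one insists on the exact constant rather than merely \emph{some} admissible one --- doing the bookkeeping so as not to overcount, by keeping the $V_\star$- and $Z$-components coupled through the transfer operator.
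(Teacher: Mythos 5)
Your proposal is essentially the paper's proof: same starting point (estimate \eqref{IKfinal3-2}), same amended problem with right-hand side $d_\t(g,\mathcal{E}_\t(\tilde v+\tilde z))$ identified with $\mathbb{L}_{\t/\ep}^{-1}\mathcal{P}^0_{\mathcal{H}_0}\mathcal{E}_\t^{-1}g$, same mechanism of trading the Lipschitz smallness of $\mathcal{E}_\t-I$ from \eqref{H6} against the quadratic coercivity \eqref{ahcoercive} to convert $O(|\t|)$ into $O(\ep)$. The only real divergence is in the decomposition: the paper writes $(\mathcal{E}_\t v+z)-\mathcal{E}_\t(\hat v+\hat z)=(I-\mathcal{E}_\t)z+\mathcal{E}_\t\big((v+z)-(\hat v+\hat z)\big)$ and then needs a refined a priori bound on the \emph{original} $z$ (from testing \eqref{IKz3prob88} and using \eqref{VZorth}), which is what produces the $(1+K^2)/(2(1-K_Z))$ factor in the stated $C_{11}$; you instead isolate $(I-\mathcal{E}_\t)\hat z$ and need a refined bound on the \emph{amended} $\hat z$, which after the Fourier-isometry identity $d_0[\mathcal{E}_\t^{-1}g]=d_\t[g]$ gives the cleaner $\tfrac12\nu_\star^{-1/2}$ factor without the $(1+K^2)$ weight. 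Your residual block $\mathcal{E}_\t\delta v+\delta z$ is, after writing $\mathcal{E}_\t\delta v+\delta z=\mathcal{E}_\t(\delta v+\delta z)+(I-\mathcal{E}_\t)\delta z$, an $\mathcal{E}_\t$-image of $\delta v+\delta z$ (bounded via the difference-equation test exactly as the paper's last step) plus an $O(\ep^2)$ remainder, so your route also closes. However, the bookkeeping you sketch does not in fact ``reproduce the stated $C_{11}$'' — your grouping produces a \emph{different} (most likely smaller) explicit constant, and the $G_\t$ ``single-block'' device is not needed and would not yield the paper's value since $d_\t[\mathcal{E}_\t\delta v+\delta z]\neq d_0[\delta v+\delta z]$. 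If the goal is the precise constant in the statement, you should estimate $(I-\mathcal{E}_\t)z$ as in the paper; if any admissible constant suffices, your version is correct and arguably slightly sharper.
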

\begin{proof}
Estimate \eqref{IKfinal3-2} of	Theorem \ref{thm.IKunifest2} informs us via \eqref{ik2} that 
$d_\t\big[ \mathcal{L}_{\ep,\t}^{-1}g - (\mathcal{E}_\t v + z)\big]\le C_{10} \ep^2 d_\t[ g]$, where $v+z$ is the solution to \eqref{IKz3prob88} with functional \eqref{ik3}.  So it remains to bound $  \big(\mathcal{E}_\t v + z\big) - \mathcal{E}_\t \mathbb{L}_{\t / \ep}^{-1}\mathcal{P}_{\mathcal{H}_0}^0\mathcal{E}_\t^{-1}g$. 
	Let  $ v_1+z_1 =\mathbb{L}_{\t / \ep}^{-1}\mathcal{P}_{\mathcal{H}_0}^0 \mathcal{E}^{-1}_\t g$, that is (see  \eqref{Sform} and \eqref{H6}) $v_1+z_1 \in V_\star \dot{+} Z$ solves 
	\begin{equation}\label{v1z1prob}
	\ep^{-2} a_\t^{\rm h}(z_1,\tilde{z}) \,+\, b_0(v_1+z_1,\tilde{v}+\tilde{z}) \,=\, 
	d_\t\big(g,\, \mathcal{E}_\t(\tilde{v} + \tilde{z})\,\big),  
	\quad \forall \, \  \tilde{v} + \tilde{z} \in V_\star \dot{+} Z.
	\end{equation}
	Then 
	\begin{equation}\label{15.09.20e1}
	\big(\mathcal {E}_\t v +z\big) \,-\, \mathcal{E}_\t \mathbb{L}_{\t / \ep}^{-1}\mathcal{P}_{\mathcal{H}_0}^0\mathcal{E}_\t^{-1}g\,=\, \mathcal {E}_\t v +z - \mathcal{E}_\t(v_1+z_1) =   (I- \mathcal{E}_\t ) z +  \mathcal{E}_\t  \big(  v+z -(v_1 +  z_1) \big).
	\end{equation}
	Let us bound $d_\t\big[(I- \mathcal{E}_\t ) z\big] $.  By \eqref{VZorth} one has 
	$2\left(1-K_Z\right)\big(b_0[v]+ b_0[z]\big) \le b_0[v+z]$ which combined with 
	\eqref{IKz3prob88}, \eqref{ahcoercive},	\eqref{ik2}, 
 \eqref{as.b1}   and 
	\eqref{H6} 
	gives 
	\begin{flalign*}
	&\nu_\star\ep^{-2}|\t|^2d_0[z] +2\left(1-K_Z\right) \left(d_0[v]+ K^{-2} d_\t[z]\right) \,\le\, \ep^{-2} a^{\rm h}_\t[z] + b_0[v+z] \,=\, d_\t(g,\mathcal{E}_\t v + z) \\
	&\le\,\, d_\t^{1/2}[g] \left(d_\t^{1/2}[\mathcal{E}_\t v] + d_\t^{1/2}[z]\right)\,=\, 
	d_\t^{1/2}[g] \left(d_0^{1/2}[v] + d_\t^{1/2}[z]\right)\\
	& \le \, \tfrac{1}{8}\left(1-K_Z\right)^{-1} \left(1+  K^{2} \right)d_\t[g]  \,+ \,
	2\left(1-K_Z\right) \Big(d_0[v] \,+\, K^{-2}d_\t[z] \Big).
	\end{flalign*}
	Thus $\nu_\star \ep^{-2} |\t|^2 d_0[z]  \le \tfrac{1}{8}\left(1-K_Z\right)^{-1}(1+K^2) 
	d_\t[g] $  and therefore, via \eqref{H6}, one has  
	\[
	d_\t[(I-\mathcal{E}_\t) z] \,\le\,  K_e^2 |\t|^2 d_0[z] \, \le\,\tfrac{1}{8} K_e^2 \nu_\star^{-1}(1-K_Z)^{-1} (1+K^2) \ep^2 d_\t[g].
	\] 
	It remains to  bound the last term in \eqref{15.09.20e1}. 
	By \eqref{H6} it is equivalent to bounding  $d_0\left[v+z -(v_1 +  z_1)\right]$. 
	Subtracting \eqref{v1z1prob}  from \eqref{IKz3prob88} (with 
	$\tilde{v} = v-v_1$ and $\tilde{z} = z-z_1$) and using \eqref{H6}, \eqref{ik2} and \eqref{ahcoercive} gives 
	\[
	\ep^{-2} a^{\rm h}_\t \left[z-z_1\right] + b_0\left[v +z -( v_1 + z_1)\right] \,=\,
	d_\t\bigl(g, (I-\mathcal{E}_\t) (z-z_1) \bigr) \,\,\le 
	\]
	\[
	\ \ \ \ \ \ \quad \ \  d^{1/2}_\t[g]\,  K_e|\t|\,d^{1/2}_0\left[z-z_1\right] \,\,\,\le \,\,\, 
	K_e \,\nu_\star^{-1/2}\,  d^{1/2}_\t[g]
	\left(a^{\rm h}_\t[z-z_1]\right)^{1/2}.
	\]
	Therefore  $
	b_0\left[ v + z -(v_1 +z_1) \right] \le\tfrac{1}{4} \nu_\star^{-1}\, K_e^2\, \ep^2 \,d_\t[g]$ and so 
 $d_0\left[ v + z -(v_1 +z_1) \right] \le\tfrac{1}{4} \nu_\star^{-1}\, K_e^2 \,\ep^2 d_\t[g].$ Applying finally the triangle inequality completes the proof.  
\end{proof}
Theorem \ref{p.unitaryequiv}, together with the fact that 
$\mathcal{E}_\t: (\mathcal{H}, d_0)\rightarrow (\mathcal{H},d_\t)$ 
is unitary by \eqref{H6}, 
provides the following analogue of Theorem \ref{ikthm}:
\begin{theorem}\label{ikthm2}
	Assume \eqref{KA}--\eqref{H6}. Let $\{ \lambda^{(k)}_{\ep,\theta}\}_{k\in \NN}$ and $\{\lambda^{(k)}_{\xi}\}_{k\in \NN}$ be the eigenvalues of the operators 
	$\mathcal{L}_{\ep,\theta}$ and $\mathbb{L}_{\xi}$ respectively. Then, 
	for some $C_{11}>0$ independent of $\ep$,  $\t$ and $k$, 
	\begin{equation}\label{ik88}
	\left| 1/\lambda_{\ep,\theta}^{(k)} \,- \, 1/\lambda_{\theta/\ep}^{(k)} \right| \,\,\le\,\,  C_{11}\,\ep, \quad \forall k\in \NN,\ \ \forall \t \in \Theta, \quad 
	\text{ if $\ \ \ \dim  \left(V_\star \,\dot{+}\,Z\right)\ =\infty$, }
	\end{equation}
	or
	\begin{equation}\label{ik89}
	\left| 1/\lambda_{\ep,\theta}^{(k)} -  1/\lambda_{\theta/\ep}^{(k)} \right| \,\le\, C_{11}\ep,\ \  \left| 1/\lambda_{\ep,\theta}^{(p)} \right| \le  C_{11}\ep, \quad \forall k\le N , \, \forall   p\ge N+1,\, \forall \t \in \Theta, \ \text{	if $\ \dim  \left(V_\star\dot{+} Z\right)\ =N.$}
	\end{equation}
 \end{theorem}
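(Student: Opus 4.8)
The plan is to reduce the spectral estimates \eqref{ik88}--\eqref{ik89} to the norm-resolvent estimate of Theorem \ref{p.unitaryequiv} via a standard min-max comparison, the only extra ingredient being that the operator appearing there is unitarily equivalent (not merely close in norm) to an operator with explicitly known spectrum. First I would record the observation, already noted in the text, that by \eqref{H6} the map $\mathcal{E}_\t:(\mathcal{H},d_0)\to(\mathcal{H},d_\t)$ is unitary, and $\mathcal{E}_\t^{-1}=\mathcal{E}_\t^*$. Consequently the operator $\mathcal{E}_\t\,\mathbb{L}_{\t/\ep}^{-1}\mathcal{P}_{\mathcal{H}_0}^0\,\mathcal{E}_\t^{-1}$ appearing in Theorem \ref{p.unitaryequiv} is unitarily conjugate, in $(\mathcal{H},d_\t)$, to $\mathbb{L}_{\t/\ep}^{-1}\mathcal{P}_{\mathcal{H}_0}^0$ acting in $(\mathcal{H},d_0)$; hence these two operators have exactly the same spectrum, counting multiplicities. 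Since $\mathbb{L}_{\t/\ep}$ is a positive self-adjoint operator on $\mathcal{H}_0=(\overline{V_\star\dot{+}Z},d_0)$ with compact resolvent, the nonzero eigenvalues of $\mathbb{L}_{\t/\ep}^{-1}\mathcal{P}_{\mathcal{H}_0}^0$ are precisely the reciprocals $1/\lambda^{(k)}_{\t/\ep}$ of the eigenvalues of $\mathbb{L}_{\t/\ep}$, arranged in decreasing order, with a further eigenvalue $0$ of infinite multiplicity coming from the orthogonal complement $\mathcal{H}\ominus\overline{V_\star\dot{+}Z}$ (which is nontrivial exactly when $V_\star\dot{+}Z$ is a proper subspace).

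Next I would invoke Theorem \ref{p.unitaryequiv}: for all $\t\in\Theta$ and $0<\ep<1$,
\begin{equation*}
\bigl\|\,\mathcal{L}_{\ep,\t}^{-1}\,-\,\mathcal{E}_\t\,\mathbb{L}_{\t/\ep}^{-1}\mathcal{P}_{\mathcal{H}_0}^0\,\mathcal{E}_\t^{-1}\,\bigr\|_{(\mathcal{H},d_\t)\to(\mathcal{H},d_\t)}\,\le\,C_{11}\,\ep.
\end{equation*}
Both operators here are bounded, self-adjoint, non-negative and compact in the Hilbert space $(\mathcal{H},d_\t)$. The classical consequence of the min-max principle (see e.g. \cite{ReeSim}) is that if two compact self-adjoint operators differ in norm by at most $\eta$, then their eigenvalue sequences, written in decreasing order and repeated according to multiplicity (and padded with zeros), differ termwise by at most $\eta$. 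Applying this with $\eta=C_{11}\ep$, and identifying the eigenvalue sequence of $\mathcal{L}_{\ep,\t}^{-1}$ with $\{1/\lambda^{(k)}_{\ep,\t}\}_{k\in\NN}$ (again padded by zeros if $\mathcal{H}$ carries a nontrivial part on which $\mathcal{L}_{\ep,\t}$ is not defined, though typically $\mathrm{dom}\,\mathcal{L}_{\ep,\t}$ is dense and the resolvent genuinely compact on all of $\mathcal{H}$), and the eigenvalue sequence of the second operator with that of $\mathbb{L}_{\t/\ep}^{-1}\mathcal{P}^0_{\mathcal{H}_0}$ described above, yields directly the claimed inequalities. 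In the case $\dim(V_\star\dot{+}Z)=\infty$ every term is of the form $1/\lambda^{(k)}_{\t/\ep}$ and one gets \eqref{ik88}; in the case $\dim(V_\star\dot{+}Z)=N$ the sequence of the approximating operator is $1/\lambda^{(1)}_{\t/\ep}\ge\cdots\ge 1/\lambda^{(N)}_{\t/\ep}>0=0=\cdots$, so for $k\le N$ one gets the first inequality in \eqref{ik89} and for $p\ge N+1$ the comparison with the zero eigenvalue gives $|1/\lambda^{(p)}_{\ep,\t}|\le C_{11}\ep$.

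Finally I would note the uniformity: the constant $C_{11}$ in Theorem \ref{p.unitaryequiv} depends only on the structural constants $C_{10},K_e,\nu_\star,K,K_Z$ and not on $\ep$, $\t$ or the index $k$, so the estimates \eqref{ik88}--\eqref{ik89} hold uniformly in $\t\in\Theta$ and $k$, exactly as stated. I do not anticipate a genuine obstacle here: the only mildly delicate point is the bookkeeping of the zero eigenvalues — one must make sure the min-max comparison is applied to eigenvalue sequences of the \emph{same} operator acting on the \emph{same} ambient space $(\mathcal{H},d_\t)$, which is why the unitary-equivalence reformulation of the second operator (rather than working with $\mathbb{L}_{\t/\ep}^{-1}$ on the smaller space $\mathcal{H}_0$ directly) is the right thing to feed into the argument. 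This is precisely the role of hypothesis \eqref{H6}, which upgrades the a priori non-isometric intertwiner $G_\t$ of \eqref{g-theta} to the genuine unitary $\mathcal{E}_\t$.
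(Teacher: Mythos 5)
Your proof is correct and follows essentially the same route as the paper: the paper states Theorem \ref{ikthm2} as an immediate consequence of the operator estimate in Theorem \ref{p.unitaryequiv} together with the unitarity of $\mathcal{E}_\t$ guaranteed by \eqref{H6}, and then appeals (implicitly, by reference to the pattern of Theorem \ref{ikthm} and Section \ref{s.spcontV}) to the min-max comparison of eigenvalues of compact self-adjoint operators, exactly as you do. The only small inaccuracy is your parenthetical about ``padding'' the eigenvalue sequence of $\mathcal{L}_{\ep,\t}^{-1}$ with zeros: since $\mathcal{L}_{\ep,\t}$ has compact resolvent with trivial kernel, $\mathcal{L}_{\ep,\t}^{-1}$ has no zero eigenvalue and no padding is needed on that side; the zeros enter only through $\mathbb{L}_{\t/\ep}^{-1}\mathcal{P}_{\mathcal{H}_0}^0$ when $\mathcal{H}_0$ is a proper subspace.
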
}
We next aim at approximating the collective spectrum 
$\bigcup_{\t \in \Theta} {\rm Sp}\, \mathcal{L}_{\ep,\t}=:{\rm Sp}_\ep$. 
The importance of ${\rm Sp}_\ep$ is due to the fact that in many examples 
(Section \ref{sec:examples}) operators $\mathcal{L}_{\ep,\t}$, $\t\in\Theta$, serve as fibers in a decomposition 
of a 
(transformed) original operator whose spectrum 
is (the closure of)  
${\rm Sp}_\ep$. 
Theorem \ref{ikthm2} provides 
an approximation to ${\rm Sp}_\ep$, however,  this approximation still depends on  
$\ep$. We will now rectify this 
to produce an important {\it $\ep$-independent approximation} of the above collective spectrum. 
\begin{theorem}\label{t.collectivespec}
Assume \eqref{KA}--\eqref{H6}. Then 
\begin{equation}\label{symdist.e1}
{\rm dist}_H \Big(  \bigcup_{\t \in \Theta} {\rm Sp}\, \mathcal{L}_{\ep,\t}^{-1} \cup \{ 0\} , \bigcup_{\xi \in \mathbb{R}^n} {\rm Sp}\, \mathbb{L}^{-1}_\xi \cup \{ 0 \}  \Big)  \,\,\le\,\, C_{12} \,\ep,
\end{equation}
where for non-empty $X,Y\subset\mathbb{R}$, 
${\rm dist}_H(X,Y):=\max\bigl(\sup_{x\in X}{\rm dist}(x,Y),\, \sup_{y\in Y}{\rm dist}(y,X)\bigr)$ is the symmetric Hausdorff distance and $C_{12}$ is a positive constant independent of $\ep$. 
\end{theorem}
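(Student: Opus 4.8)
The plan is to reduce everything to the per-$\t$ operator estimate of Theorem~\ref{p.unitaryequiv} and then deal separately with the residual $\ep$-dependence carried by the family $\{\mathbb{L}_{\t/\ep}\}_{\t\in\Theta}$. First I would turn the operator bound of Theorem~\ref{p.unitaryequiv} into a spectral one. By \eqref{H6} the map $\mathcal{E}_\t$ is unitary from $(\mathcal{H},d_0)$ onto $(\mathcal{H},d_\t)$, so $\mathcal{E}_\t\,\mathbb{L}_{\t/\ep}^{-1}\mathcal{P}_{\mathcal{H}_0}^0\,\mathcal{E}_\t^{-1}$ is compact, self-adjoint and non-negative in $(\mathcal{H},d_\t)$, unitarily equivalent to $\mathbb{L}_{\t/\ep}^{-1}\mathcal{P}_{\mathcal{H}_0}^0$, hence with spectrum ${\rm Sp}\,\mathbb{L}_{\t/\ep}^{-1}\cup\{0\}$. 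Since both operators compared in Theorem~\ref{p.unitaryequiv} are compact self-adjoint in the \emph{same} Hilbert space $(\mathcal{H},d_\t)$, the min--max principle promotes the operator bound $C_{11}\ep$ to ${\rm dist}_H\big({\rm Sp}\,\mathcal{L}_{\ep,\t}^{-1}\cup\{0\},\,{\rm Sp}\,\mathbb{L}_{\t/\ep}^{-1}\cup\{0\}\big)\le C_{11}\ep$ for every $\t\in\Theta$, with $C_{11}$ independent of $\t$. As the symmetric Hausdorff distance between two unions over a parameter is at most the supremum over that parameter of the Hausdorff distances of the corresponding members, passing to $\bigcup_{\t\in\Theta}$ gives ${\rm dist}_H\big(\bigcup_{\t\in\Theta}{\rm Sp}\,\mathcal{L}_{\ep,\t}^{-1}\cup\{0\},\,\bigcup_{\t\in\Theta}{\rm Sp}\,\mathbb{L}_{\t/\ep}^{-1}\cup\{0\}\big)\le C_{11}\ep$. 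It then remains only to estimate ${\rm dist}_H$ between $\bigcup_{\t\in\Theta}{\rm Sp}\,\mathbb{L}_{\t/\ep}^{-1}\cup\{0\}$ and the $\ep$-independent set $\bigcup_{\xi\in\RR^n}{\rm Sp}\,\mathbb{L}_\xi^{-1}\cup\{0\}$.

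Rescaling $\xi=\t/\ep$, the former equals $\bigcup_{\xi\in\ep^{-1}\Theta}{\rm Sp}\,\mathbb{L}_\xi^{-1}\cup\{0\}$, so since $\ep^{-1}\Theta\subset\RR^n$ one of the two one-sided estimates is automatic. For the other I need that every $\mu\in{\rm Sp}\,\mathbb{L}_\xi^{-1}$, $\xi\in\RR^n$, lies within $O(\ep)$ of $\bigcup_{\xi'\in\ep^{-1}\Theta}{\rm Sp}\,\mathbb{L}_{\xi'}^{-1}\cup\{0\}$. Recalling (Section~\ref{sec.atreg}) that $0$ is an interior point of $\Theta$, I fix $r>0$ with $\overline{B_r(0)}\subseteq\Theta$, so that $\ep^{-1}\Theta\supseteq\overline{B_{r/\ep}(0)}$; for $|\xi|\le r/\ep$ the whole set ${\rm Sp}\,\mathbb{L}_\xi^{-1}$ already lies in the target and there is nothing to prove, so I may assume $|\xi|>r/\ep$, i.e.\ $|\xi|^{-1}<\ep/r$.

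The remaining and decisive point is to describe ${\rm Sp}\,\mathbb{L}_\xi^{-1}$ for large $|\xi|$. Let $\mathbf{B}_\star$ be the self-adjoint operator in $(\overline{V_\star},d_0)$ generated by the closed positive form $b_0$ with form domain $V_\star$ (the analogue of \eqref{ikddd} with $\t=0$ and $V_\star$ in place of $V_0$); it has compact resolvent and $\mathbf{B}_\star\ge I$, and $\mathbf{B}_\star^{-1}\mathcal{P}_{\overline{V_\star}}^0$ is a compact self-adjoint operator in $\mathcal{H}_0$ with spectrum ${\rm Sp}\,\mathbf{B}_\star^{-1}\cup\{0\}$. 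The claim is the $\xi$-uniform resolvent bound
\begin{equation*}
\big\|\,\mathbb{L}_\xi^{-1}-\mathbf{B}_\star^{-1}\mathcal{P}_{\overline{V_\star}}^0\,\big\|_{\mathcal{H}_0\to\mathcal{H}_0}\,\le\,2\,\nu_\star^{-1/2}\,|\xi|^{-1},\qquad\forall\,\xi\in\RR^n\setminus\{0\}.
\end{equation*}
To prove it, take $g\in\overline{V_0}$ and write $u=\mathbb{L}_\xi^{-1}g=v+z$ with $v\in V_\star$, $z\in Z$; testing the defining identity $\mathbb{S}_\xi(u,\cdot)=d_0(g,\cdot)$ (see \eqref{Sform}) against $u$, using that $b_0=\|\cdot\|_0^2$ on $V_\star\dot{+}Z$ (as $a_0$ vanishes on $V_0$) and \eqref{ik2}, gives $\mathbb{S}_\xi[u]\le d_0[g]$, whence the $Z$-coercivity \eqref{ahcoercive} of $a_\xi^{\rm h}$ yields $\|z\|_0\le\nu_\star^{-1/2}|\xi|^{-1}d_0^{1/2}[g]$; testing instead against $\varphi\in V_\star$ (where $a_\xi^{\rm h}$ contributes nothing, as its argument is the $Z$-part of $\varphi$, namely $0$) shows $b_0(u,\varphi)=d_0(g,\varphi)$, and comparing with the defining relation $b_0(\mathbf{B}_\star^{-1}\mathcal{P}_{\overline{V_\star}}^0 g,\varphi)=d_0(g,\varphi)$ for $\varphi$ the $V_\star$-component of the difference bounds that component again by $\|z\|_0$; adding the two contributions and using $d_0\le\|\cdot\|_0^2$ gives the claim. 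Since spectra of compact self-adjoint operators are $\|\cdot\|$-stable, this yields ${\rm dist}_H\big({\rm Sp}\,\mathbb{L}_\xi^{-1}\cup\{0\},\,{\rm Sp}\,\mathbf{B}_\star^{-1}\cup\{0\}\big)\le 2\nu_\star^{-1/2}|\xi|^{-1}$ for every $\xi\neq0$. Applying this to our $\xi$ (with $|\xi|^{-1}<\ep/r$) and to $\xi':=(r/\ep)\,\xi/|\xi|\in\ep^{-1}\Theta$ (with $|\xi'|^{-1}=\ep/r$), the triangle inequality gives ${\rm dist}_H\big({\rm Sp}\,\mathbb{L}_\xi^{-1}\cup\{0\},\,{\rm Sp}\,\mathbb{L}_{\xi'}^{-1}\cup\{0\}\big)\le 4\nu_\star^{-1/2}\ep/r$, and ${\rm Sp}\,\mathbb{L}_{\xi'}^{-1}\cup\{0\}$ lies in the target set; combined with the first two paragraphs this proves \eqref{symdist.e1} with $C_{12}=C_{11}+4\nu_\star^{-1/2}/r$.

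The main obstacle is exactly this last step: the collective spectrum $\bigcup_{\xi\in\RR^n}{\rm Sp}\,\mathbb{L}_\xi^{-1}$ genuinely involves arbitrarily large frequencies which $\ep^{-1}\Theta$ never reaches, so one must quantify that as $|\xi|\to\infty$ the eigenvalues of $\mathbb{L}_\xi$ attached to the defect subspace $Z$ escape to $+\infty$ (at rate $|\xi|^2$, via the quadratic $Z$-coercivity of $a_\xi^{\rm h}$) while the remaining ones converge, at rate $|\xi|^{-1}$, to the $\xi$-independent spectrum of $\mathbf{B}_\star$ --- this is what makes the limiting collective spectrum $\ep$-free and lets the tail $|\xi|>r/\ep$ be matched, up to $O(\ep)$, by frequencies already present in $\ep^{-1}\Theta$.
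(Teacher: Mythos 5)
Your proof is correct, and the main structural steps agree with the paper's in the first half: you pass from the per-$\t$ operator bound of Theorem~\ref{p.unitaryequiv} to a Hausdorff estimate of the spectra by min--max and unitary equivalence via~\eqref{H6} (which is in substance what Theorem~\ref{ikthm2} records), then reduce the whole problem to controlling ${\rm Sp}\,\mathbb{L}_\xi^{-1}$ for $\xi\notin\ep^{-1}\Theta$. It is this tail argument where you take a genuinely different route. The paper treats the family $\mathbb{S}_{\tilde\t/\tilde\ep}$, $\tilde\ep=|\xi|^{-1}$, $\tilde\t=\xi/|\xi|\in S^{n-1}$, as a fresh instance of the abstract framework — the ``wiggled'' objects in~\eqref{wigprobl} — with $\widetilde V_{\tilde\t}=V_\star$ independent of $\tilde\t$, so that the uniform gap condition~\eqref{bddspec} holds, and then simply invokes Theorem~\ref{thm:contV} and Section~\ref{s.spcontV} to get ${\rm dist}_H\big({\rm Sp}\,\mathbb{L}_\xi^{-1}\cup\{0\},\,{\rm Sp}\,\mathbf{B}_\star^{-1}\cup\{0\}\big)\le\tilde\nu^{-1}|\xi|^{-2}$; it then connects ${\rm Sp}\,\mathbf{B}_\star^{-1}$ directly to ${\rm Sp}\,\mathcal{L}_{\ep,\t}^{-1}$ for $|\t|=R$ using the identity ${\rm Sp}\,\mathbf{B}_\t^{-1}={\rm Sp}\,\mathbf{B}_\star^{-1}$ and Remark~\ref{r.ikth77}. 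You instead prove by hand the elementary, $\xi$-uniform resolvent bound $\|\mathbb{L}_\xi^{-1}-\mathbf{B}_\star^{-1}\mathcal{P}_{\overline{V_\star}}^0\|\le 2\nu_\star^{-1/2}|\xi|^{-1}$ from the two test-function choices $\tilde u=u$ and $\tilde u\in V_\star$, and then bridge via the intermediate frequency $\xi'=(r/\ep)\,\xi/|\xi|$ on the sphere $|\xi'|=r/\ep\subset\ep^{-1}\Theta$. Your route is more self-contained and avoids re-instantiating the abstract machinery, at the cost of a slightly weaker decay rate $O(|\xi|^{-1})$ versus the paper's $O(|\xi|^{-2})$ — the improvement in the paper comes from the refined estimate of the type~\eqref{100720.e2} obtained by testing against $\tilde v=-P_{V_\t^\star}z$, which you do not use. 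Both rates deliver the required $O(\ep)$ bound, so the difference is harmless here, though the quadratic rate would matter if one wanted the tail contribution to be $O(\ep^2)$ rather than $O(\ep)$.
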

Before proving the theorem, we state its corollary providing 
an $\ep$-independent approximation of the set $\bigcup_{\t \in \Theta} {\rm Sp}\, \mathcal{L}_{\ep,\t}$ in any finite interval.
\begin{corollary}\label{c.collspec}
For every interval $ [a,b] \subset (-\infty,\infty)$   one has 
\begin{equation}
\label{cor67}
{\rm dist}_{[a,b]}\Big( \, \overline{ \bigcup_{\theta \in \Theta} {\rm Sp}\, \mathcal{L}_{\ep,\t} }, \ \  \overline{\bigcup_{\xi \in \mathbb{R}^n} {\rm Sp}\, \mathbb{L}_\xi } \, \Big) \,\,\le\,\, C_{b}\, \ep,
\quad \forall\, \  0<\ep<1,\\
\end{equation}
with a constant $C_b$ independent of $\ep$ and $a$ and 
$C_b\le C\left(1+b^2\right)$ with a $b$-independent $C$. 
In \eqref{cor67},  
${\rm dist}_{[a,b]}(X,Y) := \max\big({\rm dist}([a,b]\cap X,Y), {\rm dist}([a,b]\cap Y,X)\big)$ 
where ${\rm dist}(X,Y):=\sup_{x\in X}{\rm dist}(x,Y)$ is the (non-symmetric) distance, 
and we adopt the convention that ${\rm dist}(\emptyset,A)={\rm dist}(A,\emptyset)
= 0$ for any set $A$. 
In particular, this 
can be interpreted as that 
$\overline{ \bigcup_{\theta \in \Theta} {\rm Sp}\, \mathcal{L}_{\ep,\t} }$ converges when $\ep\to 0$ to $\overline{\bigcup_{\xi \in \mathbb{R}^n} {\rm Sp}\, \mathbb{L}_\xi }$ in the Fell topology (cf. e.g. \cite[p. 142 Corollary 5.1.7]{Be}), with a ``rate'' specified by 
\eqref{cor67}.
\end{corollary}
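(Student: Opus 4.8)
The plan is to obtain Corollary~\ref{c.collspec} from Theorem~\ref{t.collectivespec} by transporting the Hausdorff bound on the \emph{inverse} spectra through the change of variable $t\mapsto 1/t$. Write $\Sigma_\ep:=\overline{\bigcup_{\t\in\Theta}{\rm Sp}\,\mathcal{L}_{\ep,\t}}$ and $\Sigma:=\overline{\bigcup_{\xi\in\RR^n}{\rm Sp}\,\mathbb{L}_\xi}$, and let $\Sigma_\ep^{-1},\Sigma^{-1}$ denote the sets obtained from $\Sigma_\ep,\Sigma$ by applying $t\mapsto1/t$ and adjoining $0$. First I would record three elementary facts. (i) Since $A_{\ep,\t}\ge b_\t\ge d_\t$ by \eqref{ik2}, every eigenvalue of $\mathcal{L}_{\ep,\t}$ lies in $[1,\infty)$; likewise $\mathbb{S}_\xi\ge b_0\ge d_0$ on $V_0=V_\star\dot{+}Z$, so every eigenvalue of $\mathbb{L}_\xi$ lies in $[1,\infty)$. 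Hence $\Sigma_\ep,\Sigma$ are non-empty closed subsets of $[1,\infty)$ and $\Sigma_\ep^{-1},\Sigma^{-1}$ are closed subsets of $[0,1]$. (ii) Since the Hausdorff distance is insensitive to closures, Theorem~\ref{t.collectivespec} reads ${\rm dist}_H\big(\Sigma_\ep^{-1},\Sigma^{-1}\big)\le C_{12}\ep$. (iii) Testing the Rayleigh quotients of $\mathcal{L}_{\ep,0}$ and of $\mathbb{L}_0$ on vectors of $V_0$, on which both $a_0$ and $a^{\rm h}_0$ vanish so that each quotient reduces to $b_0[u]/d_0[u]$, gives $\min\Sigma_\ep\le\lambda^{(1)}_0$ and $\min\Sigma\le\lambda^{(1)}_0$, where $\lambda^{(1)}_0\ge1$ is the smallest eigenvalue of $\mathbb{L}_0$ and depends only on the abstract data.

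Next I would establish the one-sided bound ${\rm dist}\big([a,b]\cap\Sigma_\ep,\Sigma\big)\le C_b\ep$; the reverse one is identical after interchanging $\Sigma$ and $\Sigma_\ep$. We may assume $b\ge1$, since otherwise $[a,b]$ meets neither spectrum; the constant will not depend on $a$. Fix $\lambda\in[a,b]\cap\Sigma_\ep$, so $1/\lambda\in\Sigma_\ep^{-1}\cap[1/b,1]$, and pick, using (ii) and closedness of $\Sigma^{-1}$, a point $s\in\Sigma^{-1}$ with $|1/\lambda-s|\le C_{12}\ep$. If $s\ge1/(2b)$ then $s=1/\mu$ for some $\mu\in\Sigma$ with $1\le\mu\le2b$, and
\[
{\rm dist}(\lambda,\Sigma)\,\le\,|\lambda-\mu|\,=\,\frac{|1/\lambda-s|}{(1/\lambda)\,s}\,\le\,2b^2C_{12}\,\ep .
\]
If instead $s<1/(2b)$, then $C_{12}\ep\ge 1/\lambda-s>1/(2b)$, i.e. $2bC_{12}\ep>1$; combined with (iii) this gives ${\rm dist}(\lambda,\Sigma)\le|\lambda-\min\Sigma|\le\max\!\big(b,\lambda^{(1)}_0\big)\le\big(b+\lambda^{(1)}_0\big)2bC_{12}\,\ep$. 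In both cases ${\rm dist}(\lambda,\Sigma)\le C_b\ep$ with $C_b:=2C_{12}\big(b^2+b\lambda^{(1)}_0\big)$, which is independent of $\ep$ and of $a$, and satisfies $C_b\le C(1+b^2)$ with $C:=2C_{12}\big(1+\lambda^{(1)}_0\big)$ since $b\le1+b^2$. Running the symmetric argument for ${\rm dist}\big([a,b]\cap\Sigma,\Sigma_\ep\big)$, using the other half of (ii) and $\min\Sigma_\ep\le\lambda^{(1)}_0$, and taking the maximum of the two (with the convention that distances involving the empty set are $0$), yields \eqref{cor67}.

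The only delicate point — not really an obstacle — is the regime $s<1/(2b)$: there the naive transfer through $t\mapsto1/t$ fails because this map is not Lipschitz near the origin, but precisely in that regime $\ep$ is forced to be bounded below by a constant multiple of $1/b$, so the a priori bound ${\rm dist}(\cdot,\Sigma)\le\max(b,\lambda^{(1)}_0)$, coming from the uniform two-sided control of $\min\Sigma$ and $\min\Sigma_\ep$ in (iii), is automatically of the claimed size $C_b\ep$. Verifying that one single constant $C_b$, quadratic in $b$, simultaneously controls the generic bi-Lipschitz case and this degenerate case is exactly what pins down the stated dependence $C_b\le C(1+b^2)$ with $b$-independent $C$.
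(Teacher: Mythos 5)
Your proposal is correct and follows essentially the same route as the paper's own proof: transport the Hausdorff bound of Theorem~\ref{t.collectivespec} through $t\mapsto 1/t$, handle the bi-Lipschitz regime directly, and in the degenerate regime observe that $\ep$ is forced to be bounded below by a multiple of $1/b$ so that the crude a priori bound on distances (via $\lambda^{(1)}_0$ and the variational principle) is itself of size $O(b^2\ep)$. The only cosmetic difference is that you split cases on the size of $s$ whereas the paper splits on whether $\ep$ exceeds a threshold $\ep_b\sim(C_{12}b)^{-1}$; these partitions are interchangeable, and both yield $C_b\le C(1+b^2)$.
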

\begin{proof}
Since, for any $\t\in\Theta$ and $\xi\in \mathbb{R}^n$, 
${\rm Sp}\, \mathcal{L}_{\ep,\t}\cup {\rm Sp}\, \mathbb{L}_\xi \subset [1,\infty)$, 
we can set $C_b=0$ for $b< 1$. 
Let $b\ge 1$, and let for some $\t\in\Theta$, $0<\ep<1$ and $k\in\mathbb{N}$, 
$1\le \lambda:=\lambda^{(k)}_{\ep,\t}\in [a,b]\cap {\rm Sp}\, \mathcal{L}_{\ep,\t}$. 
(The case of 
$\lambda:=\lambda^{(k)}_\xi\in [a,b]\cap {\rm Sp}\, \mathbb{L}_\xi$, $\xi\in\mathbb{R}^n$, is considered in a similar way.) 
Then, by \eqref{symdist.e1}, either $1/\lambda\le C_{12}\,\ep$ or for some $\xi\in\mathbb{R}^n$ and 
$l\in\mathbb{N}$, $\big|1/\lambda-1/\mu\big|\le 2\,C_{12}\,\ep$ where $\mu:=\lambda^{(l)}_{\xi}\ge 1$. 
Assuming first the latter, 
$ 
|\lambda-\mu|\,=\,\left|\lambda^{-1}-\mu^{-1}\right|\lambda\mu\,\le\,
2\,C_{12}\,\ep\, b \,\big(|\lambda-\mu|\,+\,b\big)$. 
Therefore, if $\ep<\min\left\{1,\,(4 C_{12}b)^{-1}\right\}=:\ep_b$ then 
$2\,C_{12}\,\ep b< 1/2$ and it follows that $|\lambda-\mu|\le 4 C_{12}b^2\ep$. 
Notice that, as $\lambda\le b$, the former case ($1/\lambda\le C_{12}\ep$) is not possible for $\ep<\ep_b$. 
On the other hand, if $\ep\ge\ep_b$, we notice that as $V_0\neq\{0\}$, $\lambda^{(1)}_0<+\infty$. 
(Similarly, from the variational principle, $\lambda^{(1)}_{\ep,0}\le\lambda^{(1)}_0<+\infty$.) 
So, taking instead $\mu=\lambda^{(1)}_0$ (similarly, $\mu=\lambda^{(1)}_{\ep,0}$), 
$
|\lambda-\mu|\,\le\,\max\left\{b,\lambda^{(1)}_0\right\}\,\le\max\left\{b,\,\lambda^{(1)}_0\right\}\ep_b^{-1}\ep.
$ 
So it would suffice, for $b\ge 1$,  to take 
$C_b=\max\left\{b,\,\lambda^{(1)}_0\right\}\ep_b^{-1}=
\max\left\{b,\lambda_0^{(1)}\right\}\max\left\{4C_{12}b, 1\right\}\ge 4 C_{12}b^2$. 
\end{proof}
\begin{proof}[Proof of Theorem \ref{t.collectivespec}] 
Note that \eqref{symdist.e1} follows if we establish that: 
\begin{gather}
\label{syd.e2} \text{for every } l_\ep \in \bigcup_{\t \in \Theta} {\rm Sp}\, \mathcal{L}_{\ep,\t}^{-1} \cup  \{ 0 \}  \text{ there exists } l_0 \in \bigcup_{\xi \in \mathbb{R}^n} {\rm Sp}\, \mathbb{L}^{-1}_\xi \cup  \{ 0 \}  \ \text{such that}\ | l_\ep - l_0 | \le C_{12} \ep; \\
\label{syd.e3} \text{for every }  l_0 \in \bigcup_{\xi \in \mathbb{R}^n} {\rm Sp}\, \mathbb{L}^{-1}_\xi \cup  \{ 0 \}  \text{ there exists }  l_\ep \in \bigcup_{\t \in \Theta} {\rm Sp}\, \mathcal{L}_{\ep,\t}^{-1} \cup  \{ 0 \}  \ \text{such that}\ | l_0 - l_\ep | \le C_{12} \ep.
\end{gather}
Theorem \ref{ikthm2} implies \eqref{syd.e2}, and also 
implies 
\eqref{syd.e3} for all 
$l_0 \in  \bigcup_{\xi \in \ep^{-1} \Theta} {\rm Sp}\, \mathbb{L}^{-1}_\xi  \cup \{ 0 \}$. 
Therefore, in order to establish \eqref{syd.e3} it remains to consider arbitrary  $l_0 \in {\rm Sp}\, \mathbb{L}^{-1}_\xi$,  $\xi \in \mathbb{R}^n \backslash \ep^{-1} \Theta$. 
Recalling that $\theta=0$ is assumed to be an interior point of $\Theta$, let 
 $0<\ep<1$ and let $\xi\in \mathbb{R}^n\backslash \ep^{-1}\Theta$ i.e. $|\xi|>\ep^{-1}R$ where $R>0$ is radius of the largest closed ball centred at the origin that is contained in $\Theta$. 

So, for small $\ep$, we are interested in approximating the spectrum 
${\rm Sp}\, \mathbb{L}^{-1}_\xi$ for large $|\xi|$. 
The idea is to regard $\tilde\ep=|\xi|^{-1}$ as a new small parameter in  $\mathbb{L}^{-1}_\xi$, 
and to use previous methods and 
results (namely those of Theorem \ref{thm:contV} and Section \ref{s.spcontV}). 
As a result, we will see that the spectrum of $\mathbb{L}^{-1}_\xi$ is approximated by that of an 
operator whose spectrum is in turn 
approximating that of 
$\mathcal{L}_{\ep,\t}^{-1}$. 


To that end, for any 
$\xi\in\mathbb{R}^n\backslash\{0\}$,  
introduce ``wiggled'' objects as follows. Let $\tilde\ep:=|\xi|^{-1}$ and $\tilde\theta:= \xi/|\xi|$, and so 
$\xi=\tilde\t/\tilde\ep$ with $\tilde\ep>0$ and 
$\tilde\t\in\tilde\Theta= {S}^{n-1}$ 
the unit sphere 
in $\mathbb{R}^n$ centered at the origin. 
We also set 
\begin{equation}
\label{wigprobl}
\begin{aligned}
\widetilde{H} = V_\star \dot{+} Z, \quad 
\quad \widetilde{A}_{\tilde{\ep},\tilde{\theta}}[v+z]\,: = \,
\tilde{\ep}^{\,-2} \,a^{\rm h}_{\tilde{\theta}}[z] + b_0[v+z]\,=\,
\mathbb{S}_{\tilde\t/\tilde\ep}[v+z]\,=\,\mathbb{S}_\xi[v+z], \quad \tilde{\theta} \in \widetilde{\Theta},  
\end{aligned}
\end{equation}
i.e., for any $\tilde\t\in\widetilde{\Theta}$, we set 
$\tilde a_{\tilde\t}(v+z,\tilde v+ \tilde z) =a^{\rm h}_{\tilde{\theta}}(z, \tilde z)$,  
$\,\tilde b_{\tilde\t}(v+z,\tilde v+ \tilde z)=b_0(v+z,\tilde v+ \tilde z)$, with respective inner product  
$(v+z,\tilde v+ \tilde z)_{\tilde\t}= a^{\rm h}_{\tilde{\theta}}(z, \tilde z)+b_0(v+z,\tilde v+ \tilde z)$. 
Then $\widetilde V_{\tilde\t}=V_\star$ and 
$\widetilde W_{\tilde\t}=\bigl\{v+z\in V_\star \dot{+} Z \,\,\big\vert \,\, b_0(v+z, \tilde v)=0, \ \forall \tilde v\in V_\star\bigr\}$. 
Now we notice that for the above wiggled objects all 
the assumptions of Theorem \ref{thm:contV} 
hold. In particular, 
\eqref{bddspec} can be seen to hold as follows. 
For $\tilde\t\in\widetilde{\Theta}$ and $w=v+z\in \widetilde W_{\tilde\t}$, via \eqref{ahcoercive}, 
\[
a^{\rm h}_{\tilde{\theta}}[z]\,\,\ge\,\, \nu_\star \,\|z\|_0^2
\,\,=\,\,\nu_\star\,b_0[z]\,\,=\,\,\nu_\star\,b_0\big[(v+z)-v\big]
\,\,\ge\,\,
\nu_\star\,b_0[v+z].
\]
(In the last inequality we used that $w=v+z$ 
and $v$ are orthogonal with respect to $b_0$.) 
As a result, for any $0<\tilde\nu<1$, 
$
\tilde a_{\tilde\t}[w]\,\,=\,\,a^{\rm h}_{\tilde{\theta}}[z]\,\,\ge\,\,
\tilde\nu\,a^{\rm h}_{\tilde{\theta}}[z]\,+\,
(1-\tilde\nu)\,\nu_\star \,b_0[v+z].  
$ 
Hence choosing 
$\tilde\nu:=\nu_\star/(1+\nu_\star)$, implies 
$
\tilde a_{\tilde\t}[w] 
\,\,\ge\,\,\tilde\nu\,\Big( a^{\rm h}_{\tilde{\t}}[z]+b_0[v+z]\,\Big)\,\,=\,\,
\tilde\nu\, \|w\|_{\tilde\t}^2, 
$ 
and therefore \eqref{bddspec} holds with $\nu=\tilde\nu$. 

Next, for the assumptions of Section \ref{s.spcontV}, we set 
$\widetilde{\mathcal{H}}:=\overline{V_\star \dot{+} Z}$ with $d_{\tilde\t}=d_0$, $\forall\tilde\t\in\widetilde\Theta$. 
So  $\widetilde{\mathcal{L}}_{\tilde{\ep},\tilde{\theta}}$ is the self-adjoint operator in  
$\left(\overline{V_\star \dot{+} Z}, d_0\right)$ 
generated by  $\tilde{A}_{\tilde{\ep},\tilde{\theta}}$ with form domain  $V_\star\dot{+} Z$.
Further, 
$\mathbf{\widetilde B}_{\tilde\t}$ becomes in this setting  the $\tilde\t$-independent 
self-adjoint operator $ \mathbf{B}_\star$ in $\left(\overline{V_\star},d_0\right)$ generated by $b_0$ with form domain $V_\star$, i.e. (cf. \eqref{ikddd}), 
\begin{equation}
\label{ikddd2} 
d_0({\mathbf{B}_\star} v,\tilde{v})\,\,=\,\,b_0( v,\tilde{v}),\quad  \forall v\in \text{dom}\, {\mathbf{B}_\star}\subset V_\star , \ \ \forall \tilde{v}\in V_\star. 
\end{equation} 
Recall that Theorem \ref{thm:contV} holds for all $\ep>0$ (see Remark \ref{rem3.2}), 
and hence so are the results of Section \ref{s.spcontV}. 
Consequently,  via \eqref{ik889.0}--\eqref{ik889}, we have 
\begin{equation}\label{iliabc}
{\rm dist}_H \left( {\rm Sp}\, \widetilde{\mathcal{L}}^{-1}_{\tilde{\ep},\tilde{\theta}}\cup \{ 0 \} \,,\, {\rm Sp}\,  \mathbf{B}_\star^{-1}\cup \{ 0 \}\right)\,\, \le \,\,\tilde{\ep}^{2}\, \tilde\nu^{-1}, \quad  \forall\,\,\tilde{\ep}>0, \, \, \forall \,\tilde{\theta} \in {S}^{n-1}.
\end{equation}  
 Notice that, see \eqref{wigprobl},  
$\mathbb{L}^{-1}_{ \xi} = \widetilde{\mathcal{L}}^{-1}_{\tilde\ep,\tilde\t}$, and therefore \eqref{iliabc} implies 
\begin{equation}\label{bdrylimspec}
{\rm dist}_H\left( {\rm Sp}\, \mathbb{L}^{-1}_{ \xi} \cup \{ 0 \} \,,\, {\rm Sp}\,  
\mathbf{B}_\star^{-1}\cup \{ 0 \}\right) \,\,\le\,\,  |\xi|^{-2} \tilde\nu^{-1}, \quad   \forall \xi \in \mathbb{R}^n \backslash  \{ 0 \}. 
\end{equation}
In particular, for every  $l_0 \in {\rm Sp}\, \mathbb{L}^{-1}_\xi$, 
$\xi \in \mathbb{R}^n \backslash \ep^{-1} \Theta$ (hence $|\xi|>\ep^{-1}R$), 
there exists $\mu \in {\rm Sp}\,  \mathbf{B}_\star^{-1}\cup \{ 0 \}$ such that 
\begin{equation}
\label{syd.e4}
\left|\, l_0 \,-\, \mu\, \right| \,\, \le \,\,|\xi|^{-2}\, \tilde\nu^{-1}\,\, < \,\,
 \ep^2\,R^{-2}\, \tilde\nu^{-1}.  
\end{equation}

	On the other hand, from \eqref{ikddd} and \eqref{ikddd2} 
	via  
	\eqref{Eprop1} and \eqref{H6}, 
	for every $\t \in \Theta \backslash \{0\}$,  
	$\mathbf{B}_\t^{-1} = \mathcal{E}_\t  \mathbf{B}_\star^{-1} \mathcal{E}_\t^{-1}$ and thus 
	${\rm Sp}\, \mathbf{B}_\t^{-1} ={\rm Sp}\,  \mathbf{B}_\star^{-1}$.  Consequently, Remark \ref{r.ikth77} implies that
for the above $\mu \in {\rm Sp}\,  \mathbf{B}_\star^{-1}\cup \{ 0 \}={\rm Sp}\,  \mathbf{B}_\t^{-1}\cup \{ 0 \}$ with any chosen $\t\in\Theta$ with $|\t| = R$, 
there exists  $l_\ep \in {\rm Sp}\, \mathcal{L}^{-1}_{\ep,\t} \cup \{0\}$,   such that  
\begin{equation}
\label{syd.e5}
\left| \,\mu \,-\, l_\ep \,\right|  \,\,\le\,\,  \ep^2 \gamma^{-1} R^{-2}.
\end{equation}
Thus, \eqref{syd.e4} and \eqref{syd.e5} imply that \eqref{syd.e3} holds for $l_0 \in \bigcup_{\xi \notin \ep^{-1} \Theta} {\rm Sp}\, \mathbb{L}^{-1}_\xi $ and the proof is complete. 
 \end{proof}

\subsection{Explicit characterisation of the limit collective spectrum 
}
\label{s.charlimsp}
As Corollary \ref{c.collspec} provides an approximation of the collective spectrum 
 for the original problem $\overline{\bigcup_{\t \in \Theta} {\rm Sp}\, \mathcal{L}_{\ep,\t}}$ in terms 
of the ``limit collective spectrum'' 
${\rm Sp}_0:=\overline{{\bigcup_{\xi \in \mathbb{R}^n} {\rm Sp}\, \mathbb{L}_\xi}  }$, 
we aim here at characterising the latter limit spectrum. 
Recall that if $\lambda$ is an eigenvalue of $\mathbb L_{\xi}$ with 
eigenvector $0\neq v+z\in V_\star \dot{+}Z$, then by \eqref{Sform} 
 	\begin{equation}
	\label{lxispec}
	\mathbb{S}_\xi(v+z,\tilde v+\tilde{z}) \,  =\, 
 	a_\xi^{\rm h}(z,\tilde{z}) \,+\, b_0(v+z,\tilde v+\tilde{z}) \,  =\, \lambda\,\, d_0(v+z,\tilde{v} +\tilde{z}), \quad \forall\, \tilde{v} + \tilde{z} \in V_\star \dot{+} Z.  
 	\end{equation} 
	We start with a simple explicit characterisation of ${\rm Sp}_0$ in terms of the spectra of 
	$\mathbf{B}_0$ 
	and $\mathbf{B}_\star$, defined respectively by \eqref{ikddd} for $\t=0$ and \eqref{ikddd2}. 
	Notice that $\mathbf{B}_0$ coincides with $\mathbb{L}_0$ i.e. with $\mathbb{L}_\xi$ for $\xi=0$.  
	\begin{theorem}
	\label{limspecsimple} 
	Let $\lambda_0^{(k)}$ and $\lambda_\star^{(k)}$, $k\ge 1$, be respectively the eigenvalues of $\mathbf{B}_0$ 
	(equivalently of $\mathbb{L}_0$) and $\mathbf{B}_\star$, listed in ascending order accounting for 
	multiplicities. 
	Then $\lambda_0^{(k)}\le\lambda_\star^{(k)}$, 
	$\forall k$ if ${\rm dim} V_\star=\infty$ and 	$\forall k\le N$ if $0\le N:={\rm dim} \,V_\star<\infty$, and 
	(assuming $Z$ nontrivial) 
	\begin{eqnarray}
	\label{valthm}
	{\rm Sp}_0\,\,:=\,\,\overline{{\bigcup_{\xi \in \mathbb{R}^n} {\rm Sp}\, \mathbb{L}_\xi}  }\,\,=\,\,
	\bigcup_{k=1}^\infty \big[ \,\lambda_0^{(k)},\,\lambda_\star^{(k)}\,\big], \ \ \ \ \mbox{if } \,{\rm dim} V_\star=\infty, 
	\ \ \ \ \mbox{or} \\ 
	\label{valthm2}
		{\rm Sp}_0\,\,=\,\,		
		\bigcup_{k=1}^N \big[ \,\lambda_0^{(k)},\,\lambda_\star^{(k)}\,\big]\,
		\bigcup \big[ \,\lambda_0^{(N+1)},\,+\infty\,\big) \ \ \ \ \mbox{if } \,0\le N:={\rm dim} \,V_\star<\infty. 
	\end{eqnarray}
	\end{theorem}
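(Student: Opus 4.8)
The plan is to analyze the operator $\mathbb{L}_\xi$ by decomposing the space $V_\star \dot{+} Z$ in a way that makes the $\xi$-dependence transparent, and then to take closure over all $\xi$. First I would dispose of the trivial ingredients: since $\mathbf{B}_0 = \mathbb{L}_0$ is the restriction of $\mathbb{L}_\xi$ at $\xi = 0$, its eigenvalues $\lambda_0^{(k)}$ are attained; and since $V_\star \subset V_0 = V_\star \dot{+} Z$ with $a_\xi^{\rm h}$ vanishing on $V_\star$, the variational (min--max) characterization immediately gives $\lambda_0^{(k)} \le \lambda_\star^{(k)}$ (the form $\mathbb{S}_\xi$ restricted to the smaller space $V_\star$ is $b_0$, whose operator is $\mathbf{B}_\star$, while on the larger space we minimize over more vectors). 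This also shows each $\lambda_0^{(k)} \le \lambda_\star^{(k)}$, and the ascending-order bookkeeping for $N = \dim V_\star < \infty$ is routine.

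The core step is to show $\bigcup_\xi {\rm Sp}\,\mathbb{L}_\xi$ fills out exactly the claimed union of intervals (before closure). For the ``$\subseteq$'' direction: fix $\xi$ and an eigenvalue $\lambda$ of $\mathbb{L}_\xi$ with eigenvector $v + z$; using $b_0$-orthogonality of $V_\star$ and $Z$ (choosing $Z$ to be the $(\cdot,\cdot)_0$-orthogonal complement of $V_\star$ in $V_0$, which is allowed by Remark~\ref{zorth} and does not affect $\mathbb{L}_\xi$) and the positive-definiteness of $a_\xi^{\rm h}$ on $Z$ (estimate \eqref{ahcoercive}), one separates the $V_\star$-component and the $Z$-component. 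Testing \eqref{lxispec} against $\tilde v \in V_\star$ with $\tilde z = 0$ and against $\tilde z \in Z$ with $\tilde v = 0$ yields a $2\times 2$ block system; eliminating $v$ via $\mathbf{B}_\star$ one gets a Schur-complement eigenvalue problem on $Z$ of the form $\big(a_\xi^{\rm h} + \lambda^2(\lambda - \mathbf{B}_\star)^{-1}\text{-type terms}\big)$, and from there the monotonicity in $\xi$ of the quadratic form $a_\xi^{\rm h}$ (which scales like $|\xi|^2$ and ranges from $0$ at $\xi = 0$ up to $+\infty$) confines $\lambda$ between consecutive $\lambda_0^{(k)}$ and $\lambda_\star^{(k)}$, interlacing fashion. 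For the ``$\supseteq$'' direction: I would use a continuity/connectedness argument. The map $\xi \mapsto \lambda_\xi^{(k)}$ (the $k$-th eigenvalue of $\mathbb{L}_\xi$) is continuous, equals $\lambda_0^{(k)}$ at $\xi = 0$, and as $|\xi| \to \infty$ the form $a_\xi^{\rm h}$ becomes infinitely stiff on $Z$, so $\mathbb{L}_\xi$ converges in the strong resolvent (indeed norm-resolvent on the relevant spectral window) sense to the operator $\mathbf{B}_\star \oplus (\text{part at }+\infty)$ obtained by ``freezing'' $z = 0$; hence $\lambda_\xi^{(k)} \to \lambda_\star^{(k)}$. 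By the intermediate value theorem each $\lambda_\xi^{(k)}$ sweeps the whole interval $[\lambda_0^{(k)}, \lambda_\star^{(k)}]$, and when $\dim V_\star = N$ finite the $(N+1)$-st branch escapes to $+\infty$ giving the half-line $[\lambda_0^{(N+1)}, +\infty)$.

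For the $\dim V_\star = \infty$ case I must also check that the countable union $\bigcup_k [\lambda_0^{(k)}, \lambda_\star^{(k)}]$ is already closed (or compute its closure), which follows since $\lambda_0^{(k)} \to +\infty$ and the intervals are locally finite: in any bounded window only finitely many intervals intersect it, and each is closed. In the finite-$N$ case the half-line is closed and again only finitely many compact intervals meet any bounded set, so no extra limit points appear; thus taking the closure in ${\rm Sp}_0$ changes nothing beyond what is already written, confirming \eqref{valthm}--\eqref{valthm2}.

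The main obstacle I anticipate is making the limit $|\xi| \to \infty$ rigorous enough to conclude $\lambda_\xi^{(k)} \to \lambda_\star^{(k)}$ and, more subtly, that the branches genuinely \emph{fill} the intervals rather than jumping: eigenvalue branches of an operator family can have crossings and exchanges of multiplicity, so the clean ``$k$-th branch runs monotonically from $\lambda_0^{(k)}$ to $\lambda_\star^{(k)}$'' picture needs care. I would handle this by working with the ordered eigenvalues $\lambda_\xi^{(k)}$ (which are continuous but possibly non-smooth), using their monotonicity in the scalar parameter $t = |\xi|^2$ along each fixed direction $\hat\xi = \xi/|\xi|$ — since $t \mapsto a_{t^{1/2}\hat\xi}^{\rm h}$ is linear increasing in $t$ on $Z$, the min--max principle makes each $\lambda^{(k)}$ nondecreasing in $t$ — so the intermediate value theorem applies directly to a monotone continuous function, bypassing any crossing pathology. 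The endpoint values $\lambda_0^{(k)}$ and $\sup_t \lambda^{(k)} = \lambda_\star^{(k)}$ then come from the $t = 0$ restriction and from strong resolvent convergence of the stiffened operators, respectively, and independence of the supremum from the direction $\hat\xi$ is automatic since it is always $\lambda_\star^{(k)}$.
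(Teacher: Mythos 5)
Your proposal is correct and takes essentially the same route as the paper, resting on the two-sided min--max bounds $\lambda_0^{(k)} \le \lambda_\xi^{(k)} \le \lambda_\star^{(k)}$, continuity of the ordered eigenvalues in $\xi$, and the $|\xi|\to\infty$ limit (the Schur-complement detour and the monotonicity-along-rays remark are unnecessary embellishments of the first and third ingredients). The one step worth pinning down is your appeal to strong/norm-resolvent convergence of the stiffened operators: the paper obtains this quantitatively from the rescaled ``wiggled'' argument already used in Theorem~\ref{t.collectivespec}, giving $\left\vert 1/\lambda_\xi^{(k)} - 1/\lambda_\star^{(k)} \right\vert \le |\xi|^{-2}\tilde\nu^{-1}$ (estimate~\eqref{bdrylimspec}), which is the cleanest way to justify exactly what you assert.
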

	\begin{proof}
	For any $\xi\in\mathbb{R}^n$, for the eigenvalue $\lambda_\xi^{(k)}$ of $\mathbb{L}_\xi$, 
	$\lambda_0^{(k)}\le\lambda_\xi^{(k)}$ from the non-negativity of $a^{\rm h}_\xi$ in \eqref{lxispec}. 
	On the other hand, if the domain $V_\star\dot{+}Z$ of form $\mathbb{S}_\xi$ is restricted to $V_\star$ 
	then the form coincides with that determining $\mathbf{B}_\star$. Hence, by the min-max arguments, 
		$\lambda_\xi^{(k)}\le\lambda_\star^{(k)}$, $\forall\xi$ (for $k\le N$ if $N:={\rm dim}\, V_\star<\infty$). 
Notice 
that $\lambda_\xi^{(k)}$ 
continuously depend on $\xi$. 
(This directly follows 
from \eqref{lxispec}, continuous dependence of $a^h_\xi$ on $\xi$ and the min-max arguments.)  
So it remains to show that for large $\xi$, $\lambda_\xi^{(k)}$ is close to $\lambda_\star^{(k)}$ and is 
unbounded for $k=N+1$ when $N={\rm dim} \,V_\star<\infty$. 
This immediately follows from the argument leading to \eqref{bdrylimspec}. 
Namely, \eqref{ik889.0}--\eqref{ik889} implies in that context that 
$\left\vert 1/\lambda_{\xi}^{(k)} -  1/\lambda_\star^{(k)} \right\vert \,\le\, |\xi|^{-2}\tilde\nu^{-1}$ and 
$\left\vert 1/\lambda_{\xi}^{(N+1)} \right\vert \,\le\, |\xi|^{-2}\tilde\nu^{-1}$. 	
	\end{proof}
	Theorem \ref{limspecsimple} implies in particular that the limit collective spectrum has gaps whenever 
	$\lambda_\star^{(k)}<\lambda_0^{(k+1)}$. 
	We next aim at deriving some 
	more explicit 
	expressions for the eigenvalues and eigenvectors of $\mathbb{L}_\xi$. 
	This is useful for both approximating the ``dispersion relations'' $\lambda^{(k)}_{\ep,\t}$ via 
	Theorem \ref{ikthm2}, the associated eigenvectors, and for 
	characterising any gaps in the spectrum more explicitly. 
	For that, 
	it would help if it were possible to 
	eliminate $v$ from \eqref{lxispec} by expressing it in terms of a self-adjoint operator acting on $z$. 
	This is prevented by the coupling term $b_0(z,\tilde v)$, and 
	we aim at overcoming this by first showing that the defect subspace $Z$ (satisfying   \eqref{spaceZ} and \eqref{VZorth}) can always be chosen so that
 	\begin{equation}\label{zvbd}
 	b_0(z, v_\star)\, =\, d_0(z,v_\star), \quad \forall  z \in Z, \ \ 
	\forall v_\star \in V_\star.
 	\end{equation}	
	In most of the specific examples, Section \ref{sec:examples}, \eqref{zvbd} naturally holds. 
	In our 
	abstract setting, 
	\eqref{zvbd} can always be achieved by selecting $Z$ appropriately. 
	To see this, 
	assume without loss of generality that 
 	\begin{equation}
	\label{kdass}
 	d_0[v_0] \, \le\, M_d\, b_0[v_0], \quad \forall v_0 \in V_0=V_\star\dot{+} Z, \quad 0<M_d <1.
 	\end{equation} 
	(This follows from 
	\eqref{ik2} for $\t=0$ 
	by re-defining\footnote{Equivalently, relying instead of \eqref{kdass} on 	\eqref{ik2}, we can modify the sought 
	identity \eqref{zvbd} to 
	$b_0(z, v_\star)\, =\,M_d d_0(z,v_\star)$, $\forall  z \in Z$, 
	$\forall v_\star \in V_\star$, for some $0<M_d<1$; with obvious minor modifications in all the subsequent arguments and formulas.} 
$d_\t$ via multiplying it by any positive constant 
	smaller than 1 which maintains all the previous assumptions, with 
	corresponding rescaling of the spectrum.) 
	
 Aiming at \eqref{zvbd}, let 
$Z'$ 
be the orthogonal complement of $V_\star$ in $V_0$ 
with respect to $b_0=(\cdot,\cdot)_0$ on $V_0$, cf.  
Remark \ref{zorth}. So, for any 
	$z'\in Z'$ and $v_\star\in V_\star$, $b_0(z',v_\star)=0$ and 
	the idea is to construct $Z$ by ``correcting'' $Z'$ via 
adding to $z'$ some $Tz'\in V_\star$, i.e. to seek $Z\ni z=z'+Tz'$. Then \eqref{zvbd} reads 
$b_0(z' +Tz', v_\star)\, =\, d_0(z'+Tz',v_\star)$, which can be restated as a problem for  
$T : Z'\rightarrow V_\star$ as follows: 
\begin{equation}
	\label{tuprobl}
 	b_0(Tz',\tilde{v}) \,-\, d_0(Tz',\tilde{v}) \,\,=\,\, d_0(z',\tilde{v}) \,-\, b_0(z',\tilde{v})
	\,=\,\, d_0(z',\tilde{v}), \quad \forall \tilde{v} \in V_\star.
 	\end{equation}
Notice that problem \eqref{tuprobl} is well-posed on $V_\star$ due to the coercivity 
implied by 
\eqref{kdass}, and so uniquely 
determines	a $\|\cdot\|_0$-bounded injective linear operator 
$T : Z'\rightarrow V_\star$,  
 	and $Z:= (I+ T) Z'$ satisfies  \eqref{zvbd}. 
	Check 
	that $V_0 = V_\star\dot{+}Z$. For any $v_0\in V_0$, seek $v_\star\in V_\star$ and $z\in Z$ i.e. 
$z'\in Z'$ such that for $z=z'+Tz'$, 
$v_0=v_\star+z=\left(v_\star+T z'\right)+ z'$. 
This uniquely determines $z'=P_{Z'} v_0$, $v_\star=P_{V_\star} v_0-T P_{Z'} v_0$ 
and $z=v_0-v_\star$, so $V_0 = V_\star\,\dot{+}\,Z$. 
Furthermore, via \eqref{zvbd} and \eqref{kdass}, for all $z \in Z$ and $v_\star \in V_\star$, 
 	\[
 	\left|(v_\star,z)_0\right|\,=\,|b_0(v_\star,z) |\,=\, 
	|d_0(v_\star,z) |\,\,\le\,\, d_0^{1/2}[v_\star]\, d_0^{1/2}[z]\,\,\le \,\,  
	M_d\,  b_0^{1/2}[v_\star]\, b_0^{1/2}[z]
\,\,=\,\,M_d\,\|v_\star\|_0\,\|z\|_0. 
 	\]	
 	Therefore 
	\eqref{VZorth} hold  with $K_Z=M_d$. 
	Notice that $Z$ is closed (and therefore also weakly closed) linear subspace, as follows e.g. from 
	the closedness of $Z'$ and the boundedness of $T$. 
	\vspace{.1in}
 	
 	Now, by \eqref{zvbd}, the spectral problem \eqref{lxispec} can be rewritten as 
 	\begin{flalign*}
 	a_\xi^{\rm h}(z,\tilde{z})\,+\,b_0(z,\tilde{z})\,+\,b_0(v,\tilde{v})\,+\,d_0(z,\tilde{v})\,+
	\,d_0(v,\tilde{z})\,\,=\,\,\lambda\,\, d_0(v+z,\tilde{v} +\tilde{z}), 
	\quad \forall\, \tilde{v} + \tilde{z} \in V_\star \dot{+} Z, 
 	\end{flalign*} or equivalently,
 	\begin{gather}
 	\label{03.09.20e1} b_0(v,\tilde{v})  \,-\, \lambda d_0(v,\tilde{v})\, =\, (\lambda-1)\, d_0(z,\tilde{v} ), \quad \forall\, \tilde{v}\in V_\star; \\
 	a_\xi^{\rm h}(z,\tilde{z}) \, =\, -\, b_0(z,\tilde{z}) \,+\, \lambda\, d_0(z,\tilde{z}) \,+\, 
	(\lambda-1)\, d_0(v, \tilde{z}), \quad \forall\, \tilde{z} \in  Z. 
 	\label{03.09.20e2}
 	\end{gather}
 	It follows from \eqref{03.09.20e1}  that $v\in {\rm dom}\, \mathbf{B}_\star$ where 
	$\mathbf{B}_\star$ is defined by 
	\eqref{ikddd2}, 
	i.e. is self-adjoint operator in 
	$\overline{V_\star}$, 
	generated by $b_0$ with form domain $V_\star$. 
	Furthermore,  if $\lambda \notin {\rm Sp}\, \mathbf{B}_\star$,  $v=v_\lambda(z)$ is uniquely found from $z$: 
	\begin{equation} 
	\label{vfromz}
	v\,\,=\,\,v_\lambda(z)\,=\, (\lambda-1) \big(\mathbf{B}_\star-\lambda I\big)^{-1} \mathcal{P}_{\overline{V_\star}}^0\,z. 
	\end{equation}
	For such $\lambda$,  \eqref{03.09.20e2} implies that $z\in Z\backslash \{0\}$ solves
 	\begin{equation}
	\label{finallimitspectralproblem}
 	a_\xi^{\rm h}(z,\tilde{z}) \,\,=\,\, \beta_\lambda (z,\tilde{z}),  \quad \forall  \tilde{z} \in Z,
 	\end{equation}
 where, for $\lambda\notin {\rm Sp}\, \mathbf{B}_\star$, $\beta_\lambda : Z\times Z \rightarrow \CC$	is the sesquilinear form
\begin{equation}
\label{betadef}
\beta_\lambda(z,\tilde z)\,\,:=\,\,-\,\,b_0(z,\tilde z)\,+\,\lambda\, d_0(z,\tilde z)\,+\,
(\lambda-1)d_0\left(v_\lambda(z),\tilde z\right), 
\end{equation} 
where $v_\lambda(z)$ is the unique solution of \eqref{03.09.20e1}. 
Show that, for real $\lambda\notin {\rm Sp}\, \mathbf{B}_\star$, 
form $\beta_\lambda$ is 
Hermitian, 
i.e.  $\beta_\lambda(\tilde z,z)=\overline{\beta_\lambda(z,\tilde z)}$ for all $z,\tilde z\in Z$; 
in particular $\beta_\lambda[z]:=\beta_\lambda(z,z)$ is real-valued. 
Indeed, given $z,\tilde z\in Z$, 
by setting in \eqref{03.09.20e1} $z=\tilde z$ and $\tilde v=v_\lambda(z)$ and combing with \eqref{betadef} 
and using the symmetry of $b_0$ and $d_0$, 
\[
\beta_\lambda(z,\tilde z)\,=\,-\,b_0(z,\tilde z)\,+\,\lambda\, d_0(z,\tilde z)\,+\,
b_0\big(v_\lambda(z),v_\lambda(\tilde z)\big)\,-\,
\lambda\,\, d_0\big(v_\lambda(z),v_\lambda(\tilde z)\big)\,\,=\,\,
\overline{\beta_\lambda(\tilde z,z)}.  
\]
For an equivalent operator interpretation of $\beta_\lambda$, from \eqref{betadef} and \eqref{vfromz}, 
\begin{equation}\label{betaform}
\beta_\lambda (z,\tilde{z}) \,\,=\,\, -\,\, b_0(z,\tilde{z}) \,+\,\, d_0\big(\beta(\lambda)z,\,\tilde{z}\big)
\end{equation} 
 for $\beta(\lambda) : \overline{Z} \rightarrow \overline{Z}$ the bounded linear operator
\begin{equation}
\label{6.27-2}
	\beta(\lambda)  \,\,:=\,\,\lambda\, I \,+\, (\lambda-1)^2\, \mathcal{P}_{\overline{Z}}^0\,
	(\mathbf{B}_\star-\lambda I)^{-1} \mathcal{P}_{\overline{V_\star}}^0\,.
\end{equation}

 Now notice that, since $a^{\rm h}_\xi[z]$ is non-negative real for all $\xi$ and $z$,  \eqref{finallimitspectralproblem} immediately implies the 
inclusion
\begin{equation}\label{easyspinc}
\bigcup_{\xi \in \mathbb{R}^n} {\rm Sp}\, \mathbb{L}_\xi \,\subseteq\, \Bigl\{ \lambda \notin {\rm Sp}\, \mathbf{B}_\star : \beta_{\lambda }[z] \ge 0 \text{ for some $0\neq z \in Z$} \Bigr\} \cup {\rm Sp}\, \mathbf{B}_\star.
\end{equation}
In fact we have a stronger assertion, providing the following important characterisation of the limit spectrum in terms of the form $\beta_\lambda$. 
\begin{theorem}\label{thm.limspecrep} The following characterisation of the limit collective spectrum holds:
\begin{equation*}
{\rm Sp}_0\,:=\,
\overline{\bigcup_{\xi \in \mathbb{R}^n} {\rm Sp}\, \mathbb{L}_\xi} \,\, =\,\, \Bigl\{ \lambda \notin {\rm Sp}\, \mathbf{B}_\star : \beta_{\lambda }[z] \ge 0 \text{ for some $0\neq z \in Z$} \Bigr\} \cup {\rm Sp}\, \mathbf{B}_\star.
\end{equation*}
%
\end{theorem}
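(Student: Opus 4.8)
The plan is to establish the two inclusions separately, writing $S_0$ for the right-hand side of the asserted identity. One half of one inclusion is already \eqref{easyspinc}, namely $\bigcup_{\xi\in\RR^n}{\rm Sp}\,\mathbb{L}_\xi\subseteq S_0$; it remains to upgrade this to the closure ${\rm Sp}_0\subseteq S_0$ and to prove the converse $S_0\subseteq{\rm Sp}_0$. Both directions rest on the reduction \eqref{03.09.20e1}--\eqref{finallimitspectralproblem} of the spectral equation for $\mathbb{L}_\xi$ to the defect subspace $Z$, which is legitimate exactly when $\lambda\notin{\rm Sp}\,\mathbf{B}_\star$, since then the $v$-equation \eqref{03.09.20e1} is uniquely solvable via $\mathbf{B}_\star-\lambda I$.

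For ${\rm Sp}_0\subseteq S_0$ I would take $\lambda=\lim_n\lambda_n$ with $\lambda_n\in{\rm Sp}\,\mathbb{L}_{\xi_n}$. If $\lambda\in{\rm Sp}\,\mathbf{B}_\star$ there is nothing to prove, so assume $\lambda\notin{\rm Sp}\,\mathbf{B}_\star$; then $\lambda_n\notin{\rm Sp}\,\mathbf{B}_\star$ for large $n$, and \eqref{finallimitspectralproblem} supplies $z_n\in Z$ with $\|z_n\|_0=1$ and $a^{\rm h}_{\xi_n}(z_n,\tilde z)=\beta_{\lambda_n}(z_n,\tilde z)$ for all $\tilde z\in Z$, in particular $a^{\rm h}_{\xi_n}[z_n]=\beta_{\lambda_n}[z_n]$. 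Two a priori bounds follow, using that $\lambda_n$ eventually lies in a compact set disjoint from ${\rm Sp}\,\mathbf{B}_\star$, so that $\beta_{\lambda_n}$ (read off \eqref{betaform}--\eqref{6.27-2}) is uniformly bounded in $\|\cdot\|_0$: first $\nu_\star|\xi_n|^2\le a^{\rm h}_{\xi_n}[z_n]=\beta_{\lambda_n}[z_n]\le C$ by \eqref{ahcoercive}, so $\xi_n$ is bounded; second $0\le\beta_{\lambda_n}[z_n]=-b_0[z_n]+d_0(\beta(\lambda_n)z_n,z_n)$ gives $b_0[z_n]\le C\,d_0[z_n]$, so $d_0[z_n]\ge 1/C>0$. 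Passing to a subsequence, $\xi_n\to\xi^\star$ and $z_n\rightharpoonup z^\star$ weakly in $H$; by the compact embedding $H\hookrightarrow\mathcal{H}$ we get $z_n\to z^\star$ strongly in $\mathcal{H}$, whence $d_0[z^\star]\ge 1/C>0$ and $z^\star\neq0$ (and $z^\star\in Z$ since $Z$ is weakly closed). Letting $n\to\infty$ in $a^{\rm h}_{\xi_n}(z_n,\tilde z)=\beta_{\lambda_n}(z_n,\tilde z)$, using norm-continuity of $\xi\mapsto a^{\rm h}_\xi$ and of $\lambda\mapsto\beta(\lambda)$ near $\lambda$, weak convergence of $z_n$ in $\|\cdot\|_0$ and strong convergence in $d_0$, yields $a^{\rm h}_{\xi^\star}(z^\star,\tilde z)=\beta_\lambda(z^\star,\tilde z)$ for all $\tilde z\in Z$; in particular $\beta_\lambda[z^\star]=a^{\rm h}_{\xi^\star}[z^\star]\ge0$, so $\lambda\in S_0$.

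For $S_0\subseteq{\rm Sp}_0$, the inclusion ${\rm Sp}\,\mathbf{B}_\star\subseteq{\rm Sp}_0$ is already contained in the proof of Theorem \ref{limspecsimple}: by the estimate behind \eqref{bdrylimspec} one has $\lambda^{(k)}_\xi\to\lambda^{(k)}_\star$ as $|\xi|\to\infty$, and ${\rm Sp}\,\mathbf{B}_\star=\{\lambda^{(k)}_\star\}$ is discrete. For $\lambda\notin{\rm Sp}\,\mathbf{B}_\star$ with $\beta_\lambda[z_0]\ge0$ for some $0\neq z_0\in Z$, let $M_\xi$ be the self-adjoint operator in $(\overline{Z},d_0)$ generated by the form $a^{\rm h}_\xi(z,\tilde z)-\beta_\lambda(z,\tilde z)$ with form domain $Z$ (this form is closed and bounded below, since $a^{\rm h}_\xi+b_0$ defines an equivalent norm on $Z$ while $\beta_\lambda$ differs from $-b_0$ only by a lower-order term). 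The Schur-complement reduction of \eqref{03.09.20e1}--\eqref{03.09.20e2}, valid because $\mathbf{B}_\star-\lambda I$ is boundedly invertible, shows that $\lambda\in{\rm Sp}\,\mathbb{L}_\xi$ if and only if $0\in{\rm Sp}\,M_\xi$. Now $\mu(\xi):=\inf{\rm Sp}\,M_\xi$ depends continuously on $\xi$ (the perturbation $a^{\rm h}_\xi$ being bounded and continuous in $\xi$), satisfies $\mu(0)=-\sup_{0\neq z\in Z}\beta_\lambda[z]/d_0[z]\le0$ by the choice of $z_0$, and by \eqref{ahcoercive} and \eqref{ik2} satisfies $\mu(\xi)\ge\nu_\star|\xi|^2-C_\lambda\to+\infty$. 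Along a ray through the origin the intermediate value theorem produces $\xi_0$ with $\mu(\xi_0)=0$; as ${\rm Sp}\,M_{\xi_0}$ is closed, $0\in{\rm Sp}\,M_{\xi_0}$, hence $\lambda\in{\rm Sp}\,\mathbb{L}_{\xi_0}\subseteq{\rm Sp}_0$, completing the proof.

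The step I expect to be the main obstacle is handling the case of an infinite-dimensional defect subspace $Z$ rigorously. In the first inclusion the delicate point is ruling out loss of mass in the weak limit: the a priori bound $b_0[z_n]\le C\,d_0[z_n]$, forced by non-negativity of $a^{\rm h}$, combined with the compact embedding $H\hookrightarrow\mathcal{H}$, is precisely what guarantees $z^\star\neq0$. In the second inclusion one must make precise the Schur-complement equivalence $\lambda\in{\rm Sp}\,\mathbb{L}_\xi\Leftrightarrow0\in{\rm Sp}\,M_\xi$ and the continuity of $\xi\mapsto\inf{\rm Sp}\,M_\xi$ for the possibly unbounded operator $M_\xi$. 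When \eqref{KA2} holds, Proposition \ref{prop:Zfinite} gives $\dim Z<\infty$, and then both issues reduce to elementary finite-dimensional compactness and matrix-perturbation arguments; the result of Theorem \ref{limspecsimple} can also be used as an independent cross-check of the identity.
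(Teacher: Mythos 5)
Your proposal is correct and in places genuinely reorganises the argument, so let me compare. For the inclusion ${\rm Sp}_0\subseteq S_0$ the paper proves instead that the right-hand side $\mathcal{C}$ is a \emph{closed} set (taking $\lambda_n\in\mathcal{C}$, not eigenvalues, and normalising $d_0[z_n]=1$) and then just invokes \eqref{easyspinc}; you take $\lambda_n\in{\rm Sp}\,\mathbb{L}_{\xi_n}$ directly and also have to control $\xi_n$ (via the bound $\nu_\star|\xi_n|^2\le\beta_{\lambda_n}[z_n]\le C$), which the paper avoids. Both use the same machinery (boundedness of $b_0[z_n]$, compact embedding, weak lower semicontinuity of $b_0$ and $a^{\rm h}_{\xi_0}$, the resolvent identity for $\beta(\lambda_n)$), so this difference is mostly cosmetic. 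The more substantial divergence is in the converse inclusion $S_0\backslash{\rm Sp}\,\mathbf{B}_\star\subseteq{\rm Sp}_0$: the paper fixes a direction $\eta$ and maximises the Rayleigh quotient $k=\sup_{z\neq 0}\beta_\lambda[z]/a^{\rm h}_\eta[z]$, shows the supremum is finite, non-negative and \emph{attained} (again by weak compactness), and then observes that non-negativity of $ka^{\rm h}_\eta-\beta_\lambda$ with a zero at the maximiser forces $ka^{\rm h}_\eta(z_0,\cdot)=\beta_\lambda(z_0,\cdot)$ via \eqref{Vkersesa}, delivering $\xi=k^{1/2}\eta$ in one shot. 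Your alternative --- a Schur-complement operator $M_\xi$ on $\overline Z$ plus an intermediate-value argument for $\mu(\xi)=\inf{\rm Sp}\,M_\xi$ along a ray --- is a genuinely different route and does work, but the continuity of $\mu$ on which it rests is not as immediate as ``the perturbation $a^h_\xi$ is bounded and continuous'' suggests: the perturbation is only bounded relative to $b_0$, not $d_0$, so a direct operator-norm perturbation argument fails for infinite-dimensional $Z$. It does hold, but by essentially the same weak-compactness argument you and the paper already use for the other inclusion (upper semicontinuity is free as an infimum of continuous functions of $\xi$; for lower semicontinuity take $d_0$-normalised near-minimisers $z_n$, note $b_0[z_n]$ is then bounded because $(a^{\rm h}_{\xi_n}-\beta_\lambda)[z_n]\ge b_0[z_n]-\|\beta(\lambda)\|$, pass to a weak limit in $H$ and strong in $\mathcal H$, and use weak lower semicontinuity of $a^{\rm h}_{\xi_0}+b_0$). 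You correctly flag this as the delicate point; the paper's one-ray variational formulation is a bit slicker precisely because it collapses the IVT step and the continuity lemma into a single attainment-of-supremum argument, reusing the very same compactness it already needed.
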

\begin{proof}

{{\it Step 1.} 
	Here we prove
	\begin{equation*}
	\overline{\bigcup_{\xi \in \mathbb{R}^n} {\rm Sp}\, \mathbb{L}_\xi }\,\subseteq\, 
	\mathcal{C} \,: =\,\Bigl\{ \lambda \notin {\rm Sp}\, \mathbf{B}_\star : \beta_{\lambda }[z] \ge 0 \text{ for some $0\neq z \in Z$} \Bigr\} \cup {\rm Sp}\, \mathbf{B}_\star.
	\end{equation*}
By \eqref{easyspinc} it is sufficient to show that	
	$\mathcal{C}$ is closed. Recalling that ${\rm Sp}\, \mathbf{B}_\star$ is closed, let $\lambda_n \in \mathcal{C}$ be such that 
	$\lim_n \lambda_n = \lambda \notin {\rm Sp}\, \mathbf{B}_\star$, and consider $n$ large enough so that  
	${\rm dist} ( \lambda_n , {\rm Sp}\, \mathbf{B}_\star) \ge \delta >0$. Then 
	$\left\| (B-\lambda_n I)^{-1}\right\|_{(V_\star,d_0) \rightarrow (V_\star,d_0)}\leq \delta^{-1}$, and so 
	by \eqref{6.27-2} 
	$\| \beta(\lambda_n) \|_{(\mathcal{H},d_0) \rightarrow (\mathcal{H},d_0)} $ is bounded,  and there exists $z_n \in Z$, $d_0[z_n] =1$ and $\beta_{\lambda_n}[z_n] \ge 0$. 
Via \eqref{betaform} these assertions imply that $b_0[z_n]$ is bounded, and 
so (up to a subsequence) $z_n$ converges weakly  in $H$, and strongly in $\mathcal{H}$, to some $z \in Z$, $d_0[z]=1$ and by weak lower semi-continuity of $b_0$, 
 $b_0[z] \le {\liminf}_n b_0[z_n]=:\underline{\lim}_n b_0[z_n]$.  We show that all these assertions imply  that 
$\beta_\lambda[z] \ge 0$. Indeed, for $\mathcal{R}_{\mu} :=(\mathbf{B}_\star - \mu I)^{-1}$, we calculate via 
\eqref{betadef} and \eqref{vfromz} 
\[
\beta_{\lambda_n}[z_n]  \,= \,-\,\, b_0[z_n] \,+\,\lambda_n\, d_0[z_n]  \,+\, (\lambda_n -1)^2\,
d_0\left( \,\mathcal{R}_{\lambda_n} \mathcal{P}_{\overline{V_\star}}^0 z_n\,,\,z_n\,\right) \,\,=
\]
\[
\ \ \ \ \ \ \ \ 
\, -\,\, b_0[z_n]\, +\,\,\lambda_n\, d_0[z_n] \, +\, (\lambda_n -1)^2\,d_0\left( \,\mathcal{R}_{\lambda} \mathcal{P}_{\overline{V_\star}}^0 z_n\,,\,z_n\,\right) \,+\, (\lambda_n-\lambda)(\lambda_n -1)^2 
d_0\left(\, \mathcal{R}_{\lambda_n} \mathcal{R}_{\lambda} \mathcal{P}_{\overline{V_\star}}^0 z_n\,,\,z_n\,\right),  
\] 
having used in the last equality the standard resolvent identity 
$\mathcal{R}_{\lambda_n} - \mathcal{R}_{\lambda}=(\lambda_n-\lambda)\mathcal{R}_{\lambda_n} \mathcal{R}_{\lambda}$. 
By taking the limit superior (and recalling $\beta_{\lambda_n}[z_n] \ge 0$)  we obtain  
\[
0 \,\le\, \overline{\lim}_n\, \beta_{\lambda_n}[z_n] \,\le \,-\,\, \underline{\lim}_n b_0[z_n] \,+\,
\lambda d_0[z]  \,+\, (\lambda -1)^2d_0
\left(\, \mathcal{R}_{\lambda} \mathcal{P}_{\overline{V_\star}}^0\, z\,,\,z\,\right) \,\, \le 
\]
\[
\ \  \ \ \ \ \ \ \ \ \ \ \ \quad 
- \,\, b_0[z] \,+\,\lambda\, d_0[z] \, +\, (\lambda -1)^2
d_0\left(\, \mathcal{R}_{\lambda} \mathcal{P}_{\overline{V_\star}}^0 \,z\,,\,z\,\right) 
\,=\,\, \beta_\lambda[z].
\]
That is $\beta_\lambda[z]\ge 0$ and since  $0\neq z \in Z$ it follows that $\lambda \in \mathcal{C}$. Hence $\mathcal{C}$ is closed, as required. 
\vspace{.04in} 

{\it Step 2.} Let us now prove
\begin{equation*}
\mathcal{C}
\subseteq \overline{\bigcup_{\xi \in \mathbb{R}^n} {\rm Sp}\, \mathbb{L}_\xi }.
\end{equation*}
We shall consider two cases. First, let 
$\lambda \in {\rm Sp}\, \mathbf{B}_\star\subset [1,+\infty)$. Notice that, by \eqref{bdrylimspec},  for every $\lambda \in {\rm Sp}\, \mathbf{B}_\star={\rm Sp}\, \widetilde{\mathbf{B}}$ and for any $\xi\in\mathbb{R}^n$ such that 
$|\xi|>(\lambda/\tilde\nu)^{1/2}$ there is a $\lambda_\xi \in {\rm Sp}\, \mathbb{L}_{\xi} $  such that
$
\left| \lambda^{-1} - \lambda_\xi^{-1} \right| \le |\xi |^{-2} \tilde\nu^{-1}.
$
Consequently $\lim_{|\xi| \rightarrow \infty} \lambda_\xi = \lambda$, i.e. $\lambda \in \overline{\bigcup_{\xi \in \mathbb{R}^n} {\rm Sp}\, \mathbb{L}_\xi }.$

Now let 
$\lambda \in \mathcal{C} \backslash  {\rm Sp}\, \mathbf{B}_\star$. Then for some 
$0\neq z' \in Z$, 
$\beta_\lambda[z'] \ge 0$. 
Fix $\eta \in \mathbb{R}^n$, $|\eta|=1$, and set 
\[
k \,:=\, \sup_{z\in Z \backslash \{ 0\}} \,\frac{\beta_\lambda[z]}{a^{\rm h}_\eta [z]}\,.
\]
By 
definition \eqref{betaform}--\eqref{6.27-2} of the form $\beta_\lambda$ implying that $\beta_\lambda[z]/\|z\|_0^2$ is bounded, and by  coercivity of $a^{\rm h}_\eta$, 
see \eqref{ahcoercive}, we see that $k$ is finite and, as $\beta_\lambda[z']\ge0$,  $k$ is non-negative. 
We aim at showing that the above supremum is attained by some point $z_0\in Z\backslash\{0\}$. If $k=0$ we can set 
$z_0=z'$. Let $k>0$, and let $z_n \in Z$, $d_0[z_n]=1$, be such that ${\beta_\lambda[z_n]}/{a^{\rm h}_\eta [z_n]}$ 
converges to $k$. Then, for large enough $n$, $\beta_\lambda[z_n]\ge 0$, and similarly to Step 1 we conclude from 
\eqref{betaform}--\eqref{6.27-2} that $b_0[z_n]$ is bounded. 
Then, by arguing further similarly to Step 1, we see that (up to a subsequence) $z_n$ weakly converges in $H$ to some $0\neq z_0 \in Z$ and that
$\beta_\lambda[z_0] \ge \overline{\lim}_n \beta_\lambda[z_n]$.
 Furthermore, since the sesquilinear form $a^{\rm h}_\eta$ is bounded and positive (see Proposition \ref{prop.ahom}) 
it is weakly lower semi-continuous and so one has  $a^{\rm h}_\eta[z_0] \le \underline{\lim}_n  a^{\rm h}_\eta [z_n]$. Thus we obtain 
 \[
k \,\,\,\ge\,\,\, \frac{\beta_\lambda[z_0]}{a^{\rm h}_\eta [z_0]} \,\,\,\ge\,\,\, \frac{ \overline{\lim}_n \beta_\lambda[z_n]}{\underline{\lim}_n  a^{\rm h}_\eta [z_n]}  \,\,\, \ge\,\,\, \overline{\lim}_n   \frac{ \beta_\lambda[z_n]}{ a^{\rm h}_\eta [z_n]} \,\, = \,\, k.
 \]
Therefore $k$ is attained by $z_0$, as desired. 
So the sesquilinear form $\widehat{A}(z,\tilde z):=k a^h_{\eta}(z,\tilde{z})-\beta_\lambda(z,\tilde{z})$ 
is non-negative on $Z$ (i.e. $\widehat{A}[z]\ge 0$, $\forall z\in Z$) and vanishes at $z_0\neq 0$, $A[z_0]=0$. 
Therefore, cf. \eqref{Vkersesa}, $\widehat{A}(z_0,\tilde z)=0$ for all $\tilde z\in Z$, i.e. 
$
\beta_\lambda(z_0,\tilde{z})\, =\, k\, a^{\rm h}_{\eta}(z_0,\tilde{z})\,=\, a^{\rm h}_{k^{1/2}\eta}(z_0,\tilde{z}), \quad \forall \tilde{z} \in Z.
$ 
Hence, cf. \eqref{finallimitspectralproblem}, 
$\lambda \in {\rm Sp}\, \mathbb{L}_{\xi}$ for $\xi={k^{1/2}} \eta$ with, according to \eqref{vfromz},  non-zero eigenvector 
$v_0\,+\,z_0\,=\,(\lambda-1)(\mathbf{B}_\star-\lambda I)^{-1} \mathcal{P}_{\overline{V_\star}}^0 z_0 + z_0$. 
}
\end{proof}
\begin{remark}
\label{remdisprel}
Theorem \ref{thm.limspecrep} assures that $\lambda\in\mathbb{R}\,\backslash\, {\rm Sp}\,\mathbf{B}_\star$ is not in the limit collective 
spectrum ${\rm Sp}_0$, in particular is in its gap, 
if $\beta_\lambda$ is negative-definite i.e. 
$\beta_\lambda[z]<0$ for all $0\ne z\in Z$. 
Notice that 
the above 
also  provides 
a way for describing the limit dispersive relations $\lambda^{(k)}_\xi$ and associated eigenvectors. 
Indeed, fixing $\lambda\notin{\rm Sp}\,\mathbf{B}_\star  $               
and $\eta\in S^{n-1}$, for $\xi =t\eta$ with $t\ge 0$ 
\eqref{finallimitspectralproblem} reads: for some $0\ne z\in Z$,  
$t^2 a_\eta^{\rm h}(z,\tilde{z}) \,=\, \beta_\lambda (z,\tilde{z})$,  $\forall  \tilde{z} \in Z$. 
If for simplicity $Z$ is finite-dimensional, the forms 
$\beta_\lambda$ and $a_\eta^{\rm h}$ are realised by Hermitian matrices 
$B_\lambda$ and $A_\eta^{\rm h}$ 
respectively, and 
so $B_\lambda z= t^2A_\eta^{\rm h}z$. 
As $A_\eta^{\rm h}$ is positive, this is restated as an eigenvalue 
problem: 
$\mathcal{B}_{\lambda,\eta}\hat z:=\left(A_\eta^{\rm h}\right)^{-1/2}B_\lambda\left(A_\eta^{\rm h}\right)^{-1/2} \hat z= t^2\hat z$ for eigenvalues $t^2\ge 0$ and eigenvectors $\hat z=\left(A_\eta^{\rm h}\right)^{1/2}z$. 
Hence, for every non-negative eigenvalue $k\ge 0$ of 
Hermitian matrix 
$\mathcal{B}_{\lambda,\eta}$ with associated eigenvector $\hat z$, 
$\lambda$ is an eigenvalue of $\mathbb{L}_{k^{1/2}\eta}$ with associated eigenvector 
$z_0+v_\lambda(z_0)$ where $z_0=\left(A_\eta^{\rm h}\right)^{-1/2}\hat z$. 
\end{remark}
\subsection{An approximation by a bivariate operator}\label{s.bivariate}

Here we will provide 
representations to the 
approximations in Theorem \ref{splimSe.3} for 
the resolvents $\mathcal{L}_{\ep,\t}^{-1}$  and in Corollary \ref{c.collspec} for the 
collective spectrum of the original operators $\mathcal{L}_{\ep,\t}$ 
in terms of those 
of an 
operator defined on the Bochner space $L^2(\RR^n ; \mathcal{H}_0)$, 
i.e. on a separable Hilbert space-valued functional space with 
$\mathcal{H}_0 = \left(\overline{V_\star \dot{+}Z},\,d_0\right)$. 
This operator, as examples in Section \ref{sec:examples} will illustrate, can be viewed as an abstract version 
of a two-scale limit operator. 
Some basic facts about the 
Bochner spaces, see e.g. \cite{Hytonen}, which are relevant to our purposes 
are collected in Appendix B. 
With the right-hand-side given by \eqref{ik3} for any $g\in\mathcal{H}$, according to Theorem  \ref{splimSe.3} 
the solution $u_{\ep,\t}=\mathcal{L}_{\ep,\t}^{-1}g$ to the original problem \eqref{p1} is approximated by 
${u}^a_{\ep,\t}= 
\mathcal{E}_\t\, \mathbb{L}_{\t / \ep}^{-1}\mathcal{P}_{\mathcal{H}_0}^0\mathcal{E}_\t^{-1}g$. 
Fixing $h:=\mathcal{P}_{\mathcal{H}_0}^0\mathcal{E}_\t^{-1}g\in \mathcal{H}_0$, 
one recalls 
that $v+z:=\mathbb{L}_{\t / \ep}^{-1}h\in \text{dom }\mathbb{L}_{\t / \ep}\subset V_\star\dot{+}Z=V_0$ depends on $\t$ and $\ep$ only via their ratio $\xi:=\theta/\ep\in\mathbb{R}^n$ where according to 
\eqref{defhom.form}  the dependence of $\mathbb{L}_\xi$ on $\xi$ is quadratic. 
The idea is to 
try and represent it via an appropriate (inverse) Fourier transformed problem with a transformed variable $x\in\mathbb{R}^n$ of $\xi$, 
or in other words to regard $\mathbb{L}_\xi$ as a symbol of ($\mathcal{H}_0$-valued) differential operator in $x$.  
To that end, first recall that according to \eqref{Sform} the above $v+z\in V_0$ is the solution to 
\begin{equation}
\label{Linvprobl}
a^{\rm h}_{\xi}(z,\tilde z)\,\,+\,\,b_0(v+z, \tilde v+\tilde z)\,\,=\,\,d_0(\,h\,,\, \tilde v+\tilde z), \quad 
\forall\,\, \tilde v+\tilde z\in V_\star\dot{+}Z; \quad \xi:=\t/\ep, \quad \t\in\Theta. 
\end{equation} 
Regard now $h=h(\t)$, $\t\in\Theta$, as belonging to the Bochner space $L^2(\Theta; \mathcal{H}_0)$ with the standard 
$\RR^n$-Lebesgue measure induced on $\Theta$. By extending $h(\t)$ by zero outside $\Theta$ for the whole of $\RR^n$,
we can assume $h\in L^2(\RR^n; \mathcal{H}_0)$. 
The quadratic dependence of 
$a^h_\xi$ on $\xi$ in \eqref{defhom.form} can be represented as follows: 
\begin{equation}
\label{ahgrad}
a^{\rm h}_\xi(z, \tilde z)\,\,=\,\,\sum_{j,k=1}^n a^{\rm h}_{jk}(z, \tilde z)\,\xi_j\,\xi_k\,\,=\,\,
\sum_{j,k=1}^n a^{\rm h}_{jk}\left(\xi_j z, \xi_k\tilde z\right), 
\ \ \ \ 
\forall z, \tilde z\in Z,\,\, \xi\in\mathbb{R}^n. 
\end{equation} 
Here 
\begin{equation}
\label{ahij}
a^{\rm h}_{jk} \big( {z}, {\tilde{z}} \big)\,\,:=\,\, a_{0\, jk}''( z,\tilde z )\,-\, 
a_{0} ( N^j {z}, N^k {\tilde{z}}), 
\end{equation} 
where 
$N^j:=N_{e^j}=e^j\cdot N$ and  
$e^1,...,e^n$ is the canonical basis in $\mathbb{R}^n$, i.e. $N^j=N_\t$ with $\t=e^j$, cf. \eqref{cell:prob2}. 

Let us now, given $0<\ep<1$, make in \eqref{Linvprobl} a change of variable $\t \rightarrow \xi=\t/\ep\in\RR^n$, and 
 with the aim of formally integrating \eqref{Linvprobl} over $\RR^n$ in $\xi$ and recalling \eqref{ik2} 
assume $v,\tilde v\in L^2\left(\RR^n; V_\star\right)$. (Remind that we regard $V_\star$ and $Z$ as Hilbert spaces with norm $\|\cdot\|^2_0=b_0[\cdot]$.) 
Regarding $h\in L^2(\RR^n; \mathcal{H}_0)$ as arbitrary, and bearing in mind the 
boundedness of $a^{\rm h}_\xi$ in $z, \tilde z\in Z$ as well as its 
quadratic growth 
in $\xi$, we need to take 
$z(\xi),\tilde z(\xi)\in L^2\left(\RR^n; Z\right)$ so that also 
$\xi_jz(\xi),\xi_k\tilde z(\xi)\in L^2\left(\RR^n; Z\right)$, $\forall j,k=1,...,n$. 
In other words, $z$ and $\tilde z$ can be said to belong to weighted Bochner space 
$L^2\left(\RR^n, {\langle\xi\rangle^2}{\rm d}\xi; Z\right)$ with the weight $\langle\xi\rangle^2:=1+|\xi|^2$. 
The resulting problem is to find $v\in L^2\left(\RR^n; V_\star\right)$ and 
$z\in L^2\big(\RR^n, {\langle\xi\rangle^2}{\rm d}\xi; Z\big)$, such that 
\begin{equation}
\label{Lproblint}
\int_{\RR^n}a^{\rm h}_{\xi}\left(z,\tilde z\right){\rm d}\xi\,\,+\,\int_{\RR^n}\,\,b_0(v+z, \tilde v+\tilde z){\rm d}\xi\,= \int_{\RR^n}d_0\big(\,h(\ep\xi), \tilde v+\tilde z\big){\rm d}\xi, \ \ \ 
\forall \tilde v+\tilde z\in L^2\big(\RR^n; V_\star\big)\dot{+}L^2\big(\RR^n, {\langle\xi\rangle^2}{\rm d}\xi; Z\big). 
\end{equation} 
(Notice that the above is obviously a direct sum, since so is $V_\star\dot{+}Z$.) 

We next argue 
that, for arbitrary $h\in \mathbb{H}_0:=L^2(\RR^n; \mathcal{H}_0)$, problem \eqref{Lproblint} is  well-posed on its own right, and the form $\mathbb{A}$ on its left-hand side generates 
a self-adjoint operator $\mathbb{L}$ in Bochner (Hilbert) 
space $\mathbb{H}_0$. 
Indeed, the form is non-negative and has domain 
$\mathbb{D}=L^2\left(\RR^n; V_\star\right)\dot{+}L^2\left(\RR^n, {\langle\xi\rangle^2}{\rm d}\xi;\, Z\right)$ which, see  Proposition \ref{propb1} of Appendix B, is dense in 
$\mathbb{H}_0$ 
and is closed with respect to the form (Proposition \ref{propb2}). 
Since, as implied by \eqref{ik2}, ${\rm Sp}\, \mathbb{L}\subset [1,+\infty)$, for any  
$h\in \mathbb{H}_0$ problem \eqref{Lproblint} 
has a unique 
solution $v+z\in\mathbb{D}$. 
Moreover, given $0<\ep<1$, as shown in Proposition \ref{propb3}, \eqref{Lproblint} holds 
if and only if 
\eqref{Linvprobl} holds for a.e. $\t\in\Theta$. 
Therefore (see Definition \ref{defb4} and Proposition \ref{propb5}), the latter immediately implies that the newly defined operator $\mathbb{L}$ is a direct integral of $\mathbb{L}_\xi$  
which serve as 
its fibers: $\mathbb{L}=\int_{\RR^n}^\oplus \mathbb{L}_\xi \, {\rm d}\xi$.  
Similarly, for the resolvents, $\mathbb{L}^{-1}=\int_{\RR^n}^\oplus \mathbb{L}^{-1}_\xi \, {\rm d}\xi$. 

We now aim at equivalently reformulating problem \eqref{Lproblint} in a Fourier transformed 
setting,  
i.e. for $\left(\check v+u\right):=\mathcal{F}^{-1}(v+z)$ where $v+z$ is the 
solution to \eqref{Lproblint} 
with $h(\ep\xi)$ replaced for a moment by arbitrary $f\in \,\mathbb{H}_0$, 
i.e. 
$v+z=\mathbb{L}^{-1}f$, 
 and $\mathcal{F}^{-1}$ being  
the inverse Fourier transform in the sense of Definition \ref{defb6} for 
$\mathbb{H}=L^2\left(\RR^n; \left(\mathcal{H}, d_0\right)\,\right)$. 
As 
follows from  \eqref{ftdef}, $\mathcal{F}$ restricted to the closed subspace 
$\mathbb{H}_0$ of $\mathbb{H}$ coincides with the 
Fourier transform as given by Definition \ref{defb6} directly for 
$\mathbb{H}_0$. 
Then, 
$\check v+u=\mathcal{F}^{-1}\mathbb{L}^{-1}f= 
\mathcal{F}^{-1}\mathbb{L}^{-1}\mathcal{F} F= \mathcal{L}^{-1}F$, where 
$F:=\mathcal{F}^{-1}f\in \mathbb{H}_0$ and $\mathcal{L}:=\mathcal{F}^{-1}\mathbb{L}\,\mathcal{F}$. 
We will show that $\mathcal{L}$, 
a self-adjoint operator in $\mathbb{H}_0$, is generated by 
a form which is a formal (inverse) Fourier transform of the one in \eqref{Lproblint}. 

To that end, let $u,\tilde u\in H^1\big(\RR^n ; \left(Z, b_0\right)\big)$, see Definition \ref{defb7}, 
and introduce $a^{\rm h}\big( \nabla  u(x), \nabla\tilde{u}(x)\big)$ 
by formally replacing ``symbols'' $\xi_j$ on the right-hand side of \eqref{ahgrad}  by operators 
 $-i\,\partial_{x_j}$, i.e. by their Fourier-transformed counterparts: 
$
a^{\rm h}\big( \nabla  u(x), \nabla\tilde{u}(x)\big)\,:=\,a^h_{-i\nabla}(u,\tilde u)\,:=\,
\sum_{j,k=1}^n a^{\rm h}_{jk}\left(\partial_{x_j} u, \partial_{x_k}\tilde u\right). 
$ 
This motivates considering the 
subspace 
$\check{\mathbb{D}}=H^1(\RR^n ; Z) \dot{+} L^2(\RR^n ;V_\star)$ of 
$\mathbb{H}_0$, 
on which we define the bivariate form
\begin{equation}\label{Q}
Q\big(u+v,\tilde{u} + \tilde{v}\big) \,\,: =\,\,\int_{\RR^n} a^{\rm h}\big( \nabla  u(x), \nabla\tilde{u}(x)\big)  \, {\rm d}x 
\,\,+\,\,  \int_{\RR^n} b_0\Big(u(x) + v(x),\, \tilde{u}(x) + \tilde{v}(x) \Big) \, {\rm d} x, 
\end{equation}
where $u,\tilde{u} \in H^1\big(\RR^n;\left(Z,b_0\right)\big)$ and 
$ v, \tilde{v} \in L^2\big(\RR^n;\left(V_\star,b_0\right)\big).$ 

Lemma \ref{lemft} establishes that the form $Q$ specifies a self-adjoint ``bivariate'' operator $\mathcal{L}$ 
in Hilbert space $\mathbb{H}_0$,  which is a Fourier-transformed counterpart of $\mathbb{L}$, namely 
$\mathcal{L}=\mathcal{F}^{-1}\mathbb{L}\,\mathcal{F}$. 
\begin{remark}
We shall see in the examples that $\mathcal{L}$ often coincides with the  
two-scale limit  operator, e.g. 
for elliptic differential operators with 
high-contrast periodic coefficients, 
Section \ref{e.dp}. 
\end{remark}


Aiming at restating Theorem \ref{p.unitaryequiv} in terms of operator $\mathcal{L}$, 
we first observe that from  Proposition \ref{propb5} (see Definition \ref{defb4}) 
and Lemma \ref{lemft} 
\begin{equation}
\label{lft}
\mathbb{L}_\xi^{-1} f(\xi) \,\,=\,\,  
\big(\mathbb{L}^{-1} f\big)(\xi) \,\,=\,\,
\big(\mathcal{F}\mathcal{L}^{-1}\mathcal{F}^{-1} f \big)(\xi) \quad 
for \,\,a.e.\ \xi, \quad f \in \mathbb{H}_0=L^2(\RR^n ; \mathcal{H}_0).
\end{equation}
Relation \eqref{lft} signifies the important fact that, while the bivariate resolvent $\mathcal{L}^{-1}$ is 
generally not decomposable into a direct integral, its Fourier transformed counterpart 
$\mathcal{F}\mathcal{L}^{-1}\mathcal{F}^{-1}$ is. 
Further, we observe that the orthogonal projectors $\mathcal{P}_{\mathcal{H}_0}^0:(\mathcal{H},d_0) \rightarrow \mathcal{H}_0$ and  $\mathcal{P}:L^2(\RR^n ; (\mathcal{H},d_0)) \rightarrow L^2(\RR^n ; \mathcal{H}_0)$ are related by the identity 
(Proposition \ref{propb9}) 
\begin{equation}
\label{projft}
\mathcal{P}_{\mathcal{H}_0}^0f(\xi) = \big(\mathcal{F}  \mathcal{P}\mathcal{F}^{-1} f \big)(\xi) \quad 
for \,\,a.e.\  \xi, \quad f \in \mathbb{H}=L^2(\RR^n; (\mathcal{H},d_0)). 
\end{equation}
Next, introduce in $\mathbb{H}$ a normalised rescaling operator 
$\Gamma_{\ep} : \mathbb{H} \rightarrow \mathbb{H} $ and its inverse $\Gamma_{\ep}^{-1}$ by 
\begin{equation}
\label{gammaep}
\big(\Gamma_{\ep}f\big)(x)\,\,:=\,\,\ep^{n/2}f(\ep x), \ \ \ \ \ \ \ 
\big(\Gamma^{-1}_{\ep}f\big)(x)\,\,=\,\,\ep^{-n/2}f\left(\ep^{-1} x\right). 
\end{equation} 
Notice that $\Gamma_{\ep}$ and $\Gamma^{-1}_{\ep}$ are 
 unitary operators in $\mathbb{H}$. Then, via \eqref{lft} and \eqref{projft},  
\begin{equation}
\label{lpft}
 \mathbb{L}_{\t/\ep}^{-1} \mathcal{P}_{\mathcal{H}_0}^0 f (\t)\, =\, 
\Big( \Gamma_{\ep}^{-1}  \mathbb{L}^{-1} \mathcal{P}_{\mathcal{H}_0}^0  \Gamma_{\ep} f \Big)(\t)
\, =\, 
\Big( \Gamma_{\ep}^{-1} \mathcal{F} \mathcal{L}^{-1} \mathcal{P} \mathcal{F}^{-1} \Gamma_{\ep} f \Big)(\t), \ \ \ for\,\,  a.e. \  \t, \quad f \in 
L^2\big(\RR^n;(\mathcal{H},d_0)\big).
\end{equation} 
Finally, for reformulating Theorem \ref{p.unitaryequiv}, we recall that it approximates the exact solution 
$u_{\ep,\t}=\mathcal{L}_{\ep,\t}^{-1}g$, where according to \eqref{ik3} $g\in\mathcal{H}$ for any $\t\in\Theta$. 
We now assume $g\in L^2\big(\Theta;\left(\mathcal{H},d_0\right)\big)$, and 
comparing with the approximation 
$u^a_{\ep,\t}=\mathcal{E}_\t \mathbb{L}_{\t / \ep}^{-1}\mathcal{P}_{\mathcal{H}_0}^0\mathcal{E}_\t^{-1}g$ of 
Theorem \ref{p.unitaryequiv}  suggests taking in \eqref{lpft} $f=\chi\mathcal{E}^{-1}g$, where 
$\mathcal{E}$ 
 is given by $f(\t) \mapsto \mathcal{E}_\t f(\t),$ for a.e. $ \t \in \Theta$, and 
$\chi: L^2(\Theta ; (\mathcal{H},d_0)) \rightarrow  L^2(\RR^n ; (\mathcal{H},d_0))  $ 
is the 
operator of extension by zero outside $\Theta$. 
We notice that bounded operator $\mathcal{E}$ acts from $\mathbb{H}_\Theta:=L^2\big(\Theta ; (\mathcal{H},d_0)\big)$ into itself 
as, due to \eqref{H6}, $\mathcal{E}_\t$ is continuous in $\t$ and so is $\t$-(weakly) measurable. 
Further, in combination with \eqref{vs61}, \eqref{H6} assures that $d_\t$ is also continuous in $\t$ and hence $(u,\tilde u)=\int_\Theta d_\t\big(u(\t), \tilde u(\t)\big) {\rm d}\t$ forms an equivalent inner product in $\mathbb{H}_\Theta$. When endowed with such an inner product, we conveniently denote this 
space by $L^2\big(\Theta ; (\mathcal{H},d_\t)\big)$, and notice that because of \eqref{H6} operator $\mathcal{E}$ is 
unitary when viewed as acting from $L^2\big(\Theta;\left(\mathcal{H},d_0\right)\big)$ to 
$L^2\big(\Theta;\left(\mathcal{H},d_\t\right)\big)$. 
Assembling all this together and also noticing that 
$\Gamma_{\ep}^{-1} \mathcal{F}= \mathcal{F}\Gamma_{\ep}$ and $\mathcal{F}^{-1}\Gamma_{\ep}=\Gamma_{\ep}^{-1} \mathcal{F}^{-1}$, 
for the above approximation,  
$
u^a_{\ep,\t}\,=\,\left(\mathcal{E}\, \chi^*\, \mathcal{F}\Gamma_{\ep}\,
\mathcal{L}^{-1}\mathcal{P}\, 
\Gamma_{\ep}^{-1} \mathcal{F}^{-1}\, \chi\,\mathcal{E}^{-1}g\right)(\t), 
$ 
where $\chi^*: L^2(\RR^n ; (\mathcal{H},d_0)) \rightarrow L^2(\Theta ; (\mathcal{H},d_0)) $  
is the operator of restriction from $\RR^n$ to $\Theta$ which is the adjoint of $\chi$. 
As a result, Theorem \ref{p.unitaryequiv} implies the following.
\begin{theorem}\label{thm.bivariate} 
Assume \eqref{KA}--\eqref{H6}. Then, for $0<\ep <1$, one has
	\[
 d_\t^{1/2}\Big[\,\mathcal{L}_{\ep,\t}^{-1}\, g(\t) \,\,-\,\, \big(A_\ep^* \mathcal{L}^{-1}\mathcal{P} A_\ep g\big)(\t)\Big] 
\,\, \le\,\,  C_{11}\,\ep\,  d_\t^{1/2}\big[g(\t)\big] ,	\quad \forall g \in L^2\big(\Theta; (\mathcal{H},d_\t)\big), \quad 
for \,\,a.e.\ \t \in \Theta,
	\]
where  ``connecting operator'' $A_\ep : L^2\big(\Theta ; (\mathcal{H},d_\t)\big) \rightarrow L^2\big(\RR^n ; (\mathcal{H},d_0)\big) $ is the composition 
$A_\ep : = \Gamma_\ep^{-1}\mathcal{F}^{-1} \, \chi\,\mathcal{E}^{-1} $, 
and 
$A_\ep^* : L^2\big(\RR^n ; (\mathcal{H},d_0)\big) \rightarrow L^2\big(\Theta ; (\mathcal{H},d_\t)\big) $ is its adjoint given by  
$A_\ep^* : = \mathcal{E}\, \chi^*\,  \mathcal{F}\Gamma_\ep$. 
Furthermore, 
$A_\ep$ is an $L^2$-isometry and 
the following identities hold:
\begin{equation}
\label{abident}
A_\ep^* A_\ep  = I \quad \text{and } \quad A_\ep A_\ep^* = \Gamma_\ep^{-1}\mathcal{F}^{-1}  \chi_\Theta \mathcal{F}\Gamma_\ep = 
\mathcal{F}^{-1}\Gamma_\ep  \chi_\Theta \Gamma_\ep^{-1}\mathcal{F},
\end{equation} 
where $\chi_\Theta$ is operator of multiplication by the characteristic function of $\Theta$ in 
$L^2\big(\RR^n ; (\mathcal{H},d_0)\big)$. 
\end{theorem}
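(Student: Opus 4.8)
The plan is to obtain Theorem \ref{thm.bivariate} as a direct translation of Theorem \ref{p.unitaryequiv} into the Fourier/Bochner language, using the sequence of identities assembled in the text just above the statement. First I would recall that, by \eqref{ik3} and the preceding discussion, the exact solution is $u_{\ep,\t} = \mathcal{L}_{\ep,\t}^{-1} g(\t)$ and the approximation of Theorem \ref{p.unitaryequiv} is $u^a_{\ep,\t} = \mathcal{E}_\t \, \mathbb{L}_{\t/\ep}^{-1}\mathcal{P}_{\mathcal{H}_0}^0 \mathcal{E}_\t^{-1} g(\t)$, with the pointwise estimate
\[
d_\t^{1/2}\big[\, u_{\ep,\t} - u^a_{\ep,\t}\,\big] \,\le\, C_{11}\,\ep\, d_\t^{1/2}\big[g(\t)\big] .
\]
Then the key computational step is to express $u^a_{\ep,\t}$ in the form $\big(A_\ep^*\mathcal{L}^{-1}\mathcal{P}A_\ep g\big)(\t)$. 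For this I would chain together: (i) the identity \eqref{lpft}, which rewrites $\mathbb{L}_{\t/\ep}^{-1}\mathcal{P}_{\mathcal{H}_0}^0 f(\t)$ as $\big(\Gamma_\ep^{-1}\mathcal{F}\mathcal{L}^{-1}\mathcal{P}\mathcal{F}^{-1}\Gamma_\ep f\big)(\t)$ for a.e.\ $\t$; (ii) the choice $f = \chi\,\mathcal{E}^{-1}g$ so that the pointwise action $f(\t) = \mathcal{E}_\t^{-1} g(\t)$ on $\Theta$ reproduces the $\mathcal{E}_\t^{-1}$ on the right of $u^a_{\ep,\t}$; (iii) applying $\mathcal{E}$ and the restriction $\chi^*$ on the outside to recover the leading $\mathcal{E}_\t$ and to land back in $L^2\big(\Theta;(\mathcal{H},d_\t)\big)$; and (iv) the commutation relations $\Gamma_\ep^{-1}\mathcal{F} = \mathcal{F}\Gamma_\ep$ and $\mathcal{F}^{-1}\Gamma_\ep = \Gamma_\ep^{-1}\mathcal{F}^{-1}$ to regroup the composition as $A_\ep^* \mathcal{L}^{-1}\mathcal{P} A_\ep$ with $A_\ep = \Gamma_\ep^{-1}\mathcal{F}^{-1}\chi\,\mathcal{E}^{-1}$ and $A_\ep^* = \mathcal{E}\,\chi^*\,\mathcal{F}\Gamma_\ep$. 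Combining this identification with the estimate from Theorem \ref{p.unitaryequiv} yields the asserted inequality.

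Next I would verify that $A_\ep^*$ is genuinely the adjoint of $A_\ep$ and that the two are $L^2$-isometries. Here the point is that $A_\ep$ is a composition of four operators each of which is either unitary or a (co-)isometry: $\mathcal{E}^{-1}: L^2\big(\Theta;(\mathcal{H},d_\t)\big)\to L^2\big(\Theta;(\mathcal{H},d_0)\big)$ is unitary by \eqref{H6} (with adjoint $\mathcal{E}$, as noted in the text since $\mathcal{E}_\t^{-1}$ is the $d$-adjoint of $\mathcal{E}_\t$), $\chi$ is the isometric extension-by-zero whose adjoint is the restriction $\chi^*$, $\mathcal{F}^{-1}$ is unitary on $\mathbb{H}=L^2\big(\RR^n;(\mathcal{H},d_0)\big)$ with adjoint $\mathcal{F}$ (Definition \ref{defb6} and the Plancherel property from Appendix B), and $\Gamma_\ep^{-1}$ is unitary on $\mathbb{H}$ with adjoint $\Gamma_\ep$ by a change of variables in \eqref{gammaep}. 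Taking adjoints in reverse order gives $A_\ep^* = \mathcal{E}\,\chi^*\,\mathcal{F}\,\Gamma_\ep$ exactly as stated. The isometry identity $A_\ep^*A_\ep = I$ follows because $\chi^*\chi = I$ on $L^2(\Theta;\cdot)$ and the remaining factors are unitary; the formula for $A_\ep A_\ep^* = \Gamma_\ep^{-1}\mathcal{F}^{-1}\chi\chi^*\mathcal{F}\Gamma_\ep$ follows because $\chi\chi^*$ is precisely the multiplication operator $\chi_\Theta$ by the characteristic function of $\Theta$, after which one uses $\Gamma_\ep^{-1}\mathcal{F}^{-1}\chi_\Theta\mathcal{F}\Gamma_\ep = \mathcal{F}^{-1}\Gamma_\ep\chi_\Theta\Gamma_\ep^{-1}\mathcal{F}$ from the same commutation relations (noting $\mathcal{E}$ drops out since it acts only within $\Theta$ where it is unitary, and $\chi_\Theta$ commutes with $\mathcal{E}$ composed on that domain).

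The one genuinely delicate point, rather than a routine manipulation, is the bookkeeping of function spaces and the precise meaning of ``a.e.\ $\t$'' throughout: one has to be careful that $\mathcal{L}^{-1}\mathcal{P}$ — unlike $\mathbb{L}^{-1}\mathcal{P}$ — is \emph{not} itself decomposable as a direct integral, so the pointwise-in-$\t$ nature of the final estimate is really inherited from \eqref{lpft} and \eqref{lft} (which hold for a.e.\ $\xi$, hence a.e.\ $\t$) combined with the a.e.\ pointwise estimate of Theorem \ref{p.unitaryequiv}, and only after composing with the \emph{bounded} outer operators $\mathcal{E}$, $\chi^*$. I would therefore present the argument by first establishing the operator identity $A_\ep^* \mathcal{L}^{-1}\mathcal{P} A_\ep \,g = \big(\t\mapsto \mathcal{E}_\t\mathbb{L}_{\t/\ep}^{-1}\mathcal{P}_{\mathcal{H}_0}^0\mathcal{E}_\t^{-1} g(\t)\big)$ as an identity in $L^2\big(\Theta;(\mathcal{H},d_\t)\big)$, then reading off the a.e.\ pointwise equality, and finally invoking Theorem \ref{p.unitaryequiv} fiberwise. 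The isometry claims and \eqref{abident} are then recorded as an immediate consequence of the structure of $A_\ep$ as a composition of unitaries with $\chi$. Provided Appendix B supplies the (standard) facts that $\mathcal{F}$ is unitary on the relevant Bochner space and that the rescaling $\Gamma_\ep$ is unitary, no further input is needed and the proof is short.
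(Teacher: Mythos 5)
Your proposal is correct and follows essentially the same route as the paper: Theorem \ref{thm.bivariate} is obtained by combining the a.e.\ pointwise estimate of Theorem \ref{p.unitaryequiv} with the operator identity $u^a_{\ep,\t}=\big(A_\ep^*\mathcal{L}^{-1}\mathcal{P}A_\ep g\big)(\t)$ assembled from \eqref{lft}, \eqref{projft}, \eqref{lpft} and the commutations $\Gamma_\ep^{-1}\mathcal{F}=\mathcal{F}\Gamma_\ep$, exactly as in the paragraph preceding the theorem, and the identities \eqref{abident} follow from $\chi^*\chi=I$, $\chi\chi^*=\chi_\Theta$ as the paper notes in its parenthetical. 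The one small redundancy in your write-up is the commutation remark for $\chi_\Theta$ and $\mathcal{E}$ — the factor $\mathcal{E}^{-1}\mathcal{E}$ in $A_\ep A_\ep^*$ cancels outright without any commutation argument — but this does not affect the correctness of the proof.
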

(Identities \eqref{abident} immediately follow via obvious relations $\chi^*\chi=I$ and 
$\chi\chi^*=\chi_\Theta$.) 
Notice that, by \eqref{abident}, $\,A_\ep A_\ep^*$ is the operator of projection onto 
$
{\ep^{-1}\Theta}$ in the Fourier space. 
Notice also that all the above implies that the approximating operator $A_\ep^* \mathcal{L}^{-1}\mathcal{P} A_\ep$ is self-adjoint in 
$L^2\big(\Theta ; (\mathcal{H},d_\t)\big)$, and is in 
the form of a direct integral along its fibers as given for a.e. $\t\in\Theta$ by Theorem \ref{p.unitaryequiv}. 
\vspace{.08in}

Turning now to approximation of the collective spectrum of $\mathcal{L}_{\ep,\t}$ in terms of that of the 
bivariate operator $\mathcal{L}$, we first observe that because of the unitary equivalence of $\mathcal{L}$ and 
$\mathbb{L}$ (Lemma \ref{lemft}) the spectra of the latter two coincide. 
On the other hand, recall that $\mathbb{L}$ is the direct integral of  $\mathbb{L}_\xi$, $\xi\in\RR^n$, 
and that the eigenvalues $\lambda_k(\xi)$ of $\mathbb{L}_\xi$ 
continuously depend on $\xi$ (cf. the proof of Theorem \ref{limspecsimple}). 
Then, by e.g. Theorem XIII.85 (d) of \cite{ReeSim}, the spectrum of $\mathbb{L}$ is the closure 
of the union of the spectra of $\mathbb{L}_\xi$. As a result, 
$
{\rm Sp}\, \mathcal{L} = \overline{\bigcup_{\xi \in \RR^n} {\rm Sp} \, \mathbb{L}_\xi}
$
and Theorem \ref{limspecsimple}, Theorem \ref{thm.limspecrep} and Corollary \ref{c.collspec} give together the following result.
\begin{theorem}
\label{bivariate.spec}
The spectrum of the bivariate operator $\mathcal{L}$ is described by \eqref{valthm}--\eqref{valthm2}. 
	One also has 
	\be
	\label{spLbeta}
	{\rm Sp}\, \mathcal{L} \,\,=\,\, \Big\{ \lambda \notin {\rm Sp}\, \mathbf{B}_\star :\,\, \beta_{\lambda }[z] \ge 0 \text{ for some $0\neq z \in Z$} \Big\} \cup {\rm Sp}\, \mathbf{B}_\star.
	\ee
Furthermore, for every interval $[a,b] \subset (-\infty,\infty)$ one has 
\be
\label{specestgen}
{\rm dist}_{[a,b]} \Big(\, \overline{\bigcup_{\theta \in \Theta} {\rm Sp}\, \mathcal{L}_{\ep,\t}}  \,,\,\,{\rm Sp}\, \mathcal{L}\Big) \,\,\le\,\, C_b\,\ep, \ \ \ \forall\, 0<\ep<1, 
\ee
with $C_b$ as given in Corollary \ref{c.collspec}. In particular, if $(a,b)$ is a gap in the spectrum of $\mathcal{L}$, i.e. $(a,b) \cap {\rm Sp}\, \mathcal{L} = \emptyset$ then $[a +C_b \ep,b-C_b\ep]$ is in a gap of the collective spectrum $\overline{\bigcup_{\theta \in \Theta} {\rm Sp}\, \mathcal{L}_{\ep,\t}}$ 
when $\ep < (b-a)/(2C_b)$.
\end{theorem}
\begin{remark}
Under additional assumptions on the regularity of $b_\t$ 
in $\t$, one can identify $\mathcal{L}_{\ep,\t}$ and  $\overline{\bigcup_{\theta \in \Theta} {\rm Sp}\, \mathcal{L}_{\ep,\t} }$ respectively as the fibres and  spectrum of a decomposable self-adjoint operator 
$\mathcal{L}_\ep=\int_\Theta^\oplus \mathcal{L}_{\ep,\t} \, {\rm d}\theta$ 
acting in the space $L^2\left(\Theta; \left(\mathcal{H},d_\t
\right)\right)$, see the examples section below.
\end{remark}
We end this section by discussing the possibility of gaps in the  spectrum ${\rm Sp}\, \mathcal{L}$. We know 
from Theorem \ref{bivariate.spec} that an interval $I$ is in a gap of ${\rm Sp}\, \mathcal{L}$ if, and only if, ${\rm Sp}\, \mathbf{B}_\star \cap I = \emptyset$ and for every $\lambda \in I$ the form $\beta_\lambda$ is negative-definite on $Z$. When $Z$ is one-dimensional it is straightforward to verify the existence of such intervals.  Indeed, for such $Z$ and $\lambda\notin{\rm Sp}\, \mathbf{B}_\star$, one has via \eqref{betadef} and \eqref{vfromz} 
 the representation 
\begin{equation} \label{iksign}
\beta_\lambda(z,\tilde{z})\,=\,-\,\, b_0(z,\tilde{z}) \,+\,\lambda\, d_0(z,\tilde{z}) \,+\,(\lambda-1)^{2}\sum_{k} 
\,\frac{d_0(z,\varphi^{(k)})d_0(\varphi^{(k)},\tilde{z})}{\lambda^{(k)}_\star-\lambda}\,, \quad \forall \,z,\tilde{z} \in {Z},
\end{equation}
where 
the eigenvectors $ \varphi^{(k)}$ corresponding to the eigenvalues 
$ \lambda^{(k)}_\star$ 
of $\mathbf{B}_\star$ form 
an orthonormal basis in $\left(\overline{V_\star},\, d_0\right)$. 
Now we can see that  if 
$\lambda^{(n)}_\star$ is an  
eigenvalue with 
$\varphi^{(n)}$ 
not orthogonal to $Z$ then, for $0\ne z\in Z$, $\beta_\lambda[z]$ is positive just to the left, and negative just to the right, of $\lambda^{(n)}_\star$. Consequently, there is some interval to the left (respectively the right) of $\lambda^{(n)}_\star$  in ${\rm Sp}\, \mathcal{L}$ (respectively the  gap).  In general, there maybe infinitely many such eigenvalues and thus there are infinitely many spectral gaps.
This situation occurs, for example, in the high-contrast 
problem studied by V. Zhikov in \cite{Zhi2000,Zhi2005}, wherein $\mathcal{L}^{-1}$ coincides with the homogenised two-scale limit operator resolvent  $(\mathcal{L}_0 + I)^{-1}$ and $\beta_{\lambda-1}$ coincides with the Zhikov $\beta$-function, see Example \ref{e.dp}. 

The situation is more complicated when $Z$ is not one-dimensional. Whilst \eqref{iksign} still holds and 
intervals 
just to the left of 
$\lambda^{(n)}_\star$ with 
$\varphi^{(n)}$ not orthogonal to $Z$ 
are in ${\rm Sp}\, \mathcal{L}$,  
as there will always exist $0\ne z\in Z$ such that $d_0\left(z,\varphi^{(n)}\right)= 0$ it is not necessarily the case that the right of this point is in the gap. 
There may even be no gaps. For example, if ${\rm dim}\, Z >1$, ${\rm dim}\, V_\star =1$  and say $b_0$ coincides with $d_0$ on $Z$, then one can always find a $0\neq z \in Z$ such that  $\beta_\lambda[z]\ge 0$ for all $\lambda \ge 1$; thus ${\rm Sp}\, \mathcal{L} = [1,\infty)$. 
\section{Examples}
\label{sec:examples}
Here we aim at demonstrating the power and versatility of our 
approach 
by applying it to a diverse set of problems. 
We provide examples of 
models that can be reformulated as problems of the type \eqref{p1} and satisfy (some of)  the main assumptions \eqref{KA}--\eqref{H6}. We 
begin 
with the classical and 
high-contrast (two-scale) homogenisation problems, where the 
approach already allows us to obtain some 
substantially 
new results. 
Then we 
study (and obtain more of new results for) various other physically-relevant models, 
each chosen to showcase the relevance and utility of the  main abstract assumptions and results.
\subsection{Uniformly elliptic PDEs with rapidly oscillating periodic coefficients}
\label{e.class}
We begin 
with the classical homogenisation problem for elliptic PDEs 
with 
periodic coefficients. 
Apart from being a natural warm-up for seeing how our general scheme 
works, this example provides us with certain constructions 
useful in some subsequent examples. 
We consider scalar PDEs  
but 
the 
exposition readily extends 
for systems. 
For our purposes in the present example, we restrict 
the application of the above developed general scheme up to 
Section \ref{sec.2dif}, based on assumption \eqref{KA}--\eqref{H4}. 


Consider the following resolvent problem in the whole of $\mathbb{R}^n$: 
\begin{equation}
\label{differentialPDE}
\left\{ \begin{aligned}
& \text{Find $u_\ep \in H^1(\RR^n)$ such that} \\
&-\,\nabla\cdot\Big( A \left(\tfrac{x}{\ep} \right)\nabla u_\ep (x) \Big)\,\,+\,\,  u_\ep (x) \,\,=\,\, F(x), 
\qquad  \text{for \,\,a.e.}\  x \in \RR^n,
\end{aligned} \right.
\end{equation}
for a given $0<\ep<1$, $\,F \in L^2(\RR^n)$, and measurable possibly complex-valued $n\times n$ matrix $A$ that satisfies the following standard conditions of Hermitian symmetry, uniform ellipticity and boundedness: 
\be
\begin{aligned} \label{IKcond}
 \quad A = A^*:=\overline{A^T}, &   \quad  & \gamma_0^{-1} |\eta|^2\,\le\, A(y)\eta \cdot \overline
{\eta}\,\le\, \gamma_0 |\eta|^2 \quad \text{a.e. } y \in \mathbb{R}^n, \, \forall \eta \in \CC^{n}, \ \text{for some constant $\gamma_0 \ge 1$.}
\end{aligned}
\ee
We assume that $A(y)$ is periodic with period $1$ with respect to each variable  $y_j$, $j=1,2,...,n$, 
that is $\Box=[-\frac{1}{2},\frac{1}{2}]^n$ is  the periodicity cell and $\Box^*=[-\pi,\pi]^n$ is the associated 
Bloch-dual cell (the Brillouin zone). Our goal is to construct approximations of $u_\ep $ with ``operator-type'' error bounds (in $L^2(\RR^n)$ and $H^1(\RR^n)$ 
norms) of order $\ep$, i.e. those that linearly depend on $\| F \|_{L^2(\RR^n)}$.  

In this and several 
subsequent examples a key role will be played by Floquet-Bloch-Gelfand transform,  
 which reduces 
a problem like \eqref{differentialPDE} to an equivalent one of the type \eqref{p1} 
 via a decomposition into quasi-periodic functions. 
Namely, 
see e.g. \cite{Gel,Ku,ZhSpectr}, 
Floquet-Bloch-Gelfand transform (which we call for short Gelfand or 
Floquet-Bloch transform) $U :L^2(\RR^n) \rightarrow L^2(\square^* \times \square)$ and its inverse are unitary operators 
that can be defined, for example, as the continuous extensions of the ($L^2$-isometric) mappings
\be
\label{gt1}
UF(\theta,y) \,: =\, (2\pi)^{-n/2}\sum_{m\, \in\, \ZZ^n} F(y+m) e^{-\i\, \theta \cdot (y+m)}, \qquad F \in C^\infty_0(\RR^n),
\ee
\be
\label{gt2}
U^{-1} g (x) \,=\,(2\pi)^{-n/2} \int_{\square^*} g(\t ,\,\{x\}   )\, e^{\i\, \t \cdot x}\, {\rm d}\t , \qquad 
g \in C \big( \square^* ;\, C_{per} (\square)\big). 
\ee
($C_{per} (\square)$ is here the space of functions on $\square$ which allow a $\square$-periodic continuous extension on $\mathbb{R}^n$; $\{x\}\in\square$ denotes a ``fractional part'' of $x\in\mathbb{R}^n$: 
e.g. $\{x\}:=x-m$ for the unique $m\in\mathbb{Z}^n$ such that $x-m\in [-1/2,1/2)^n\subset\square$.)

Fix $0<\ep <1$ and  
apply to \eqref{differentialPDE} 
first 
the normalised (unitary)  rescaling operator $\Gamma_\ep : L^2(\RR^n) \longrightarrow L^2(\RR^n)$,  
$(\Gamma_\ep F)(y) :=  \ep^{n/2}F(\ep y)$, and then the 
Gelfand transform $U$. 
Then, via the properties of Gelfand transform, 
for a.e. $\theta \in \Box^*$,  
$u_{\ep,\theta}(\cdot) := U\Gamma_\ep u_\ep(\theta, \cdot)$ belongs to $H^1_{per}(\Box)$ the 
Hilbert space of functions in $H^1(\Box)$ admitting a locally-$H^1$  $\square$-periodic extension on $\mathbb{R}^n$, 
 and solves
\begin{equation}
\label{differentialPDE1}
-\,e^{-\i\, \theta \cdot y}\,\ep^{-2}\,\nabla\cdot\Big( A \left( y\right)\nabla\big( e^{\i \theta \cdot y} u_{\ep,\theta}(y)\,\big)\, \Big) \,\,+\,\, u_{\ep,\theta}(y) \,\,=\,\, F_{\ep,\theta}(y), \qquad  \text{a.e.}\  y \in \Box,
\end{equation}
where $F_{\ep,\theta}(\cdot) : = U\Gamma_\ep F(\theta, \cdot) \in L^2(\Box)$. The standard 
equivalent weak formulation of \eqref{differentialPDE1} is: 
\begin{equation}
\label{difformIK}
\left\{ \begin{aligned}
& \text{ For a.e. $\t\in\square^*$, find $u_{\ep,\theta} \in H^1_{per}(\Box)$ the solution to} \\
&\ep^{-2} \int_\Box A(\nabla+\i\t)u_{\ep,\theta}\cdot \overline{(\nabla+\i\t)\tilde{u}} \,\,+\,\int_\Box u_{\ep,\theta}\,\overline{\tilde{u}} \,\,=\,\, \int_\Box F_{\ep,\theta}\,\overline{\tilde{u}}\,, \qquad \forall \tilde{u} \in H^1_{per}(\Box).
\end{aligned} \right.
\end{equation}
 Problem \eqref{difformIK}  is of the type \eqref{p1}\footnote{Strictly speaking, given $F\in L^2\left(\RR^n\right)$, 
\eqref{difformIK} is for {\it almost every}  
$\t\in\square^*$ while \eqref{p1} is {\it for all} $\t\in\Theta$. 
There is no issue here, as the forms $a_\t$ and $b_\t$ are well-defined in \eqref{difform} {\it for all} $\t$. 
It is also not to matter for our purposes that we regard the periodicity and dual cell's $\square$ and $\square^*$ 
as closed cubes rather than tori.}. 
Indeed, choosing Hilbert space $H=H^1_{per}(\Box)$,  it can be restated as: 
\begin{equation}
\label{difform}
\mbox{Find } u_{\ep,\t}\in H\,\,\mbox{ such that }
\ep^{-2}\, a_\t\left(u_{\ep,\theta}\,,\tilde{u}\right) \,+\, b_\t\left(u_{\ep,\theta}\,,\tilde{u}\right)
\,\, =\,\,\l f,\, \tilde{u}\r, \qquad \forall \tilde{u} \in H, \,\, a.e.\, \t \in \Theta,
\end{equation}
for  $\Theta=\Box^*$,  
$\,\,\left\l f, \tilde u \right\r \,: =\,   \int_\Box F_{\ep,\theta}\, \overline{\tilde u}\,$, 
\begin{equation}
\label{cforma}
\begin{aligned}
a_\t(u,\tilde{u}) \,:=\, \int_\Box A(\nabla+\i\t)u\cdot \overline{(\nabla+\i\t)\tilde{u}}\,,
 \qquad \text{and} \qquad  b_\t(u,\tilde{u})\,:=\, \int_\Box u\,\,\overline{\tilde{u}}\,
,  \qquad u,\tilde{u} \in H^1_{per}(\Box).
\end{aligned}
\end{equation}
Recall that, according to \eqref{astructure}, for the inner products in $H$, 
$(u,\tilde u)_\t:=a_\t (u,\tilde u)+b_\t (u,\tilde u)$. 
Assumption \eqref{as.b1} then easily follows: 
for $u\in H$, 
via assumptions \eqref{IKcond} on the coefficients $A$, 
\begin{equation}
\label{2.1classhom}
a_{\t_1}[u]\le\gamma_0\left\|\left(\nabla+\i\t_1\right)u\right\|^2_{L^2(\Box)}\le
2\gamma_0\left\|\left(\nabla+\i\t_2\right)u\right\|^2_{L^2(\Box)}+
2\gamma_0\left|\t_1-\t_2\right|^2\|u\|^2_{L^2(\Box)}\le 
2\gamma_0^2a_{\t_2}[u]+8\pi^2 n\gamma_0\|u\|^2_{L^2(\Box)}, 
\end{equation}
and 
\eqref{as.b1} holds e.g. with $K=\left(2\gamma_0^2+8\pi^2n\gamma_0+1\right)^{1/2}$. 
 It is also straightforward to show 
\eqref{ass.alip}. 
%

Let us next determine the spaces $V_\theta$ and $W_\theta$, see \eqref{spaceV} and \eqref{2.6-w}. 
Notice that for the form $a_\t$ 
\be\label{IKaest}
a_\t[u] \,\ge\, \gamma_0^{-1}\int_\Box \big| (\nabla+\i\t) u\big|^2\,\,\ge\,\, \gamma_0^{-1}|\t|^2 \int_\Box | u|^2, \qquad \forall u \in H^1_{per}(\Box),\ \forall \t  \in \Box^*. 
\ee
(The last inequality follows 
via expansion into the Fourier series on $\Box$: 
for $u\in H$, 
$u(y)=\sum_{m\in\ZZ^n}c(m)e^{2\pi\i\, m\cdot y}$, and $|2\pi m +\t|\ge|\t|$, $\forall\t\in\Theta$, 
$m\in\ZZ^n$.) Therefore, from \eqref{spaceV} and \eqref{2.6-w} via \eqref{IKaest} and \eqref{cforma},  
\be
\label{vwclasshom}
\begin{aligned}
V_\theta = \left\{
\begin{array}{lr}
\{ 0 \}, & \theta \neq 0, \\[5pt]
{\rm Span} (\mathbf{e}), & \theta = 0,
\end{array} 
\right. & \qquad & W_\theta = \left\{
\begin{array}{lr}
H^1_{per}(\Box), & \theta \neq 0, \\[5pt]
H^1_{per, 0} \,: =\,\big\{ w\in H^1_{per}(\Box) \,\, \big| \,\, \int_\Box w = 0 \big\}, & \theta = 0,  
\end{array} 
\right. 
\end{aligned}
\ee
where $\mathbf{e}=1$ is the constant unity function on $\Box$. 
We 
see that $V_\t$ is discontinuous with respect to $\t$  
at the origin,  so 
 we are in the context of Sections \ref{section:discV} and \ref{sec.2dif}. Now  let us demonstrate that the 
related main assumptions \eqref{KA}--\eqref{H4} 
hold. 

\textbullet\, Hypothesis 
\eqref{KA} follows from 
noticing  that the stronger assertion \eqref{KA2.1} (see Proposition \ref{prop.kaequiv}) trivially holds with $C=1$ upon choosing  ${c}= b_\t$ (notice 
that $b_\t$ is here 
$\t$-independent, see \eqref{cforma}, and $b_\t$ is 
$\|\cdot\|_\t$-compact by the Rellich compactness theorem).

\textbullet \, Assumption  \eqref{contVs} holds trivially for $V_\star = \{ 0\}$ with $L_\star = 0$;  see  Remark \ref{constV}.

\textbullet\, Let us show that hypothesis \eqref{distance} holds with $\gamma = \left(n\pi^2+\gamma_0\right)^{-1}$. 
Indeed, for $\t\ne 0$ from \eqref{IKaest}, 
$ 
\|u\|_\t^2
\,=\,a_\t[u] \,+\,b_\t[u]\,
\le\, \big(1+ \gamma_0|\t|^{-2}\big)\,a_\t[u]\,, 
$ 
and consequently (noticing that $|\t|^2\le n\pi^2$ )
\[\nu_\theta\,\,:=\, \inf_{w \in W_\t \backslash \{ 0 \}}\, \frac{a_\t[w]}{\|w\|_\t^2}\,\,\ge\,\, 
\big(1+ \gamma_0|\t|^{-2}\big)^{-1}\,\,=\,|\t|^2\left(|\t|^2+\gamma_0\right)^{-1}\,\,\ge\,\,|\t|^2\left(n\pi^2+\gamma_0\right)^{-1}.  
\]
Notice also that, for any $r>0$, Theorem \ref{thm:contV} (with $\Theta=\Box^*$ replaced by 
$ \Theta \cap \{ |\t | \ge r \}$, cf. Remark \ref{rem3.2}) implies 
(as $V_\t = \{ 0 \}$ for $\t \neq 0$; 
and from \eqref{fstar} and \eqref{cforma} 
we have $\|f\|_{*\t}\le \big\|U\Gamma_\ep F(\theta, \cdot) \big\|_{L^2(\Box)}$) 
\begin{equation}\label{coutside}
\ep^{-2} \gamma_0^{-1} \big\| (\nabla + \i \t) u_{\ep,\t} \big\|^2_{L^2(\Box)} \,+\, 
\big\| u_{\ep,\t}\big\|^2_{L^2(\Box)} \,\le\, \ep^2 \left(n \pi^2 + \gamma_0\right) |r|^{-2} 
\big\| U\Gamma_\ep F(\theta, \cdot)\big\|^2_{L^2(\Box)}, \quad \mbox{a.e. }\,  \t \in \Box^*,\, |\t| \ge |r|.
\end{equation}

\textbullet \, Assumption \eqref{H4} is obviously satisfied with $K_{a'} = \gamma_0$, $K_{a''}=0$ and 
\begin{equation}
\label{IKa'}
\begin{aligned}
& a'_{0}(v,  u) \cdot \t: =\i\int_\square   A\, \t v \cdot \overline{ \nabla  u} 
,\qquad a''_{0}(v,  \tilde{v})\t \cdot \t:= \int_\square A\,\t\cdot\t\, v\, \overline{ \tilde{v}}.
\end{aligned}
\end{equation}


As \eqref{KA}--\eqref{H4} hold we can apply our general theory and, in particular, 
Theorem \ref{thm.maindiscthm}. 
Let us 
specify the objects appearing therein. In the present setting $V^\star_\t = \{ 0 \}=:V_\star$ for all 
$\theta\in\Box$,  and 
the space $Z$ in \eqref{spaceZ} is simply the one-dimensional space  $V_0 = {\rm Span} (\mathbf{e})$. 
Therefore, in the notation of Theorem \ref{thm.maindiscthm}, $v^h =0$,  $z^h=z_{\ep,\t}\textbf{e}$, where $z_{\ep,\t}\in \CC$,  and  \eqref{z3prob} becomes  a simple 
algebraic equation
\begin{equation}
	\label{IKz3prob}
	\ep^{-2}\, a^{\rm h}_{\t}(\textbf{e}, \textbf{e})z_{\ep,\t} \,\,+\,\,
	b_\t(\textbf{e}, \textbf{e})\, z_{\ep,\t} \,\,=\,\, \l f, \textbf{e} \r\,.
	\end{equation}
Let us rewrite the coefficients of this equation in more explicit terms. Clearly, $\langle f, \mathbf{e} 
\rangle = \int_\square U\Gamma_\ep F(\theta, y)  \, {\rm d}y$ and $b_\t(\textbf{e}, \textbf{e})=1$. 
As for the first coefficient, recalling \eqref{defhom.form} and \eqref{cell:prob2}:
 \begin{equation}
\label{ahom7.1}
a^{\rm h}_{\t}(\textbf{e}, \textbf{e})\,\,=\,\,a''_0(\textbf{e},\textbf{e})\, \t \cdot \t \,-\, 
a_0( N_\t\textbf{e} , N_\t \textbf{e})\,\,
 =\,\,a''_0(\textbf{e},\textbf{e})\, \t \cdot \t\,+\,a_0'\left( \textbf{e},\,N_\t\textbf{e}\right)\cdot\t,
\end{equation} 
where $N_\t\textbf{e}\in H^1_{per,0}(\Box)$ solves (via \eqref{cell:prob2} and \eqref{IKa'}, and recalling 
$A=A^*$)
\begin{equation}
\label{cell:IKprob22}
a_0( N_\t\textbf{e} , w) \,=\, -\,\,
a_0'\left( \textbf{e},\,w\right)\cdot\t\,=\,-\,
\i\,\t\cdot \int_\Box \overline{\,A\nabla w}\,, \qquad \forall w \in H^1_{per,0}(\Box), \,\,\, \forall \t \in \RR^n.
\end{equation}
As a result, $N_\t\textbf{e}=\i\,\t\cdot\ourN$ where $\ourN\in H^1_{per,0}\left(\Box;\,\CC^n \right)$ solves 
\begin{equation}
\label{IKclasscor1}
\int_{\Box} A\Big( \nabla (\t\cdot \ourN) + \t \Big) \cdot \overline{\nabla w} \,=\, 0\,, \qquad \forall w \in H^1_{per,0}(\Box),\ \, \forall \t \in \RR^n.
\end{equation}
Thus $\ourN$ is the classical corrector, see e.g. \cite{JKO}.
\begin{remark} If we introduce the components of $\ourN$, $\ourN=( \ourN^1, \ldots ,  \ourN^n)^T$, then \eqref{IKclasscor1} can be equivalently rewritten in a more traditional form: denoting $e^1,\ldots,e^n$ is the canonical basis in $\mathbb{R}^n$, 
\begin{equation}
\label{IKclasscor}
\left\{ \begin{aligned}
& \text{  For $j=1,\ldots, n$, find $\ourN^{j} \in H^1_{per,0}(\Box)$ such that} \\
&\int_{\Box} A\bigl( \nabla \ourN^{j} + e^j \bigr) \cdot \overline{\nabla w }= 0 , \qquad \forall w \in H^1_{per,0}(\Box). 
\end{aligned} \right.
\end{equation}
\end{remark}
Now we express  $a^{\rm h}_{\t}(\textbf{e}, \textbf{e})$ in terms of $\ourN$: via \eqref{ahom7.1} and \eqref{IKa'} and 
using $\mbox{Im}\left(a^{\rm h}_{\t}(\textbf{e}, \textbf{e})\right)=0$, 
\[
 a^{\rm h}_{\t}(\textbf{e}, \textbf{e})\,=\, a''_0(\textbf{e},\textbf{e})\, \t \cdot \t\,+\,
a_0'(\textbf{e}, N_\t\textbf{e})\cdot\t\,=\, 
\int_\Box  A \,  \theta\cdot   \theta \,+\,\i\int_\Box  A\,\t\cdot\overline{\nabla N_\t\textbf{e}}\,=\,
\int_\Box  A \,  \theta\cdot   \theta \,+\int_\Box  A\,\nabla (\theta \cdot \ourN)\cdot   \theta. 
\] 
Thus we can represent $a^{\rm h}_{\t}(\textbf{e}, \textbf{e})$ as 
\be \label{IKahom55} a^{\rm h}_{\t}(\textbf{e}, \textbf{e})\,\,=\,\,  A^{\rm hom} \theta \cdot \theta,
\ee
 where $A^{\rm hom}$ is the classical homogenised matrix with components
\be\label{IKcoef}
 A^{\rm hom}_{ij}\,\, :=\,\, \int_\Box\Big( A_{ij}+\sum_{k=1}^n A_{ik}  \partial_{k}\ourN^{j}\Big)\,, \qquad  i,j \in \{1,\ldots,n\}.
 \ee
Matrix $A^{\rm hom}$ is well-known to be positive definite (which can also directly be seen from \eqref{IKahom55}, \eqref{ahcoercive}) and Hermitian (which can be checked using \eqref{IKcoef} and \eqref{IKclasscor}). 

Putting all of this together, we conclude from \eqref{IKz3prob}
\begin{equation}\label{classicalzsol}
z_{\ep,\t} 
 \,\,=\,\, \frac{\int_\square U\Gamma_\ep F(\theta, y)  \, {\rm d}y}{\ep^{-2}A^{\rm hom}\t\cdot\t \,\,+\, 1}\,.
\end{equation}
Now from Theorem \ref{thm.maindiscthm} and \eqref{coutside}, 
we readily deduce the following result 
(abbreviating $z_{\ep,\t}\,\mathbf{e}$ to $z_{\ep,\t}$). 
\begin{proposition}
\label{thm.IKeclassexample} 
Let $u_{\ep,\theta}\in H^1_{per}(\Box)$ solve \eqref{differentialPDE1} and   $z_{\ep,\t}$ be given by 
\eqref{classicalzsol}.  
Then for some $r_1>0$ 
and $\chi$ the characteristic function for the ball of radius $r_1$, 
\begin{gather}
	\label{eq.IKmaindiscthm}
\begin{aligned}
\ep^{-2}\Bigl\| (\nabla+\i \t) \Bigl( u_{\ep,\theta} \, -\, \chi(\t) \left(1 + \i\,\t \cdot \ourN\right) z_{\ep,\t}\Bigr)\Bigr\|^2_{L^2(\Box)}  \,\, +\,\,	\bigl\| u_{\ep,\theta}  \,-\,\chi(\t) \left(1 + \i\,\t \cdot \ourN\right) 	z_{\ep,\t}\bigr\|^2_{L^2(\Box)} \hspace{1.7 cm}\\  \le\,\,  \ep^2\,  c_0^2\,\bigl\|U\Gamma_\ep F(\theta, \cdot) \bigr\|^2_{L^2(\Box)}, 
\end{aligned} \\
\label{IKL2}
\big\| u_{\ep,\theta} \, -\,   \chi(\t)	z_{\ep,\t} \big\|_{L^2(\Box)} \,\le\,\,  \ep\, c_1\,\big\| U\Gamma_\ep F(\theta, \cdot) \big\|_{L^2(\Box)},
\end{gather}
for some positive constants $c_0, c_1$ independent of $\ep$, $\t$ and $F$. 
\end{proposition}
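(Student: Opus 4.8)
\textbf{Proof plan for Proposition \ref{thm.IKeclassexample}.}

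The plan is to simply specialise Theorem \ref{thm.maindiscthm} to the objects already identified in this subsection, and then glue together the ``near-origin'' and ``away-from-origin'' regimes. First I would fix $r_1$ to be the quantity from \eqref{btnormonV+Z} (which is positive since all of \eqref{KA}--\eqref{H4} have been verified above, with explicit constants $K$, $L_a$, $\nu_0$, $\gamma$, $K_Z=0$, $L_\star=0$), and take $\chi$ to be the characteristic function of the ball $\{|\t|<r_1\}$. For $|\t|<r_1$, the approximating problem \eqref{z3prob} reduces to the scalar equation \eqref{IKz3prob}, whose solution is $v^h=0$, $z^h=z_{\ep,\t}\mathbf{e}$ with $z_{\ep,\t}$ given by \eqref{classicalzsol}; and $N_\t z^h = (N_\t\mathbf{e})\,z_{\ep,\t} = \i\,(\t\cdot\ourN)\,z_{\ep,\t}$, so $v^h+(I+N_\t)z^h = (1+\i\,\t\cdot\ourN)\,z_{\ep,\t}$. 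Hence estimate \eqref{final1} of Theorem \ref{thm.maindiscthm}, together with the bound $\|f\|_{*\t}\le\|U\Gamma_\ep F(\t,\cdot)\|_{L^2(\Box)}$ (noted above, from \eqref{fstar} and \eqref{cforma}) and the explicit form \eqref{cforma} of $a_\t$ and $b_\t$, gives exactly \eqref{eq.IKmaindiscthm} restricted to $|\t|<r_1$; likewise \eqref{final2} gives the $L^2$ bound \eqref{IKL2} there (using $z^h = z_{\ep,\t}$, $v^h=0$).

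For $|\t|\ge r_1$ the approximation is identically zero (because $\chi(\t)=0$), so the left-hand sides of \eqref{eq.IKmaindiscthm} and \eqref{IKL2} become $\ep^{-2}\gamma_0^{-1}\|(\nabla+\i\t)u_{\ep,\t}\|^2_{L^2(\Box)}+\|u_{\ep,\t}\|^2_{L^2(\Box)}$ and $\|u_{\ep,\t}\|_{L^2(\Box)}$ respectively, up to the $\gamma_0$ factor. These are controlled directly by \eqref{coutside} (applied with $r=r_1$), which is precisely the ``away-from-the-origin'' bound furnished by Theorem \ref{thm:contV} in the uniform-gap region. Combining the two regimes and absorbing all the (finitely many, $\ep$- and $\t$- and $F$-independent) constants into single constants $c_0$, $c_1$ yields the two stated inequalities for all $\t\in\Box^*$.

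There is essentially no serious obstacle here: the example is a ``warm-up'', and all of \eqref{KA}--\eqref{H4} have been checked with explicit constants, so the proof is a bookkeeping exercise in matching the abstract quantities ($V_\star=\{0\}$, $Z=\mathrm{Span}(\mathbf e)$, $\M\mathbf e = (1+\i\,\t\cdot\ourN)\mathbf e$, $a^{\rm h}_\t(\mathbf e,\mathbf e)=A^{\rm hom}\t\cdot\t$) to the hypotheses of Theorems \ref{thm.maindiscthm} and \ref{thm:contV}. The only point requiring a little care is the transition at $|\t|=r_1$: one must make sure that the ``outside'' estimate \eqref{coutside} is stated with the same $r_1$ and that its right-hand side has the same structure ($\ep^2$ times $\|U\Gamma_\ep F(\t,\cdot)\|^2_{L^2(\Box)}$) as \eqref{final1}--\eqref{final2}, so that a single constant covers both pieces; this is immediate from the displayed form of \eqref{coutside}. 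I would also remark (though it is not strictly needed for the statement) that one could instead invoke Theorem \ref{thm.IKunifest2} to obtain a genuinely global-in-$\t$ approximation without the cutoff $\chi$, but since \eqref{H5} has not been discussed in this subsection the cutoff formulation via Theorem \ref{thm.maindiscthm} plus \eqref{coutside} is the natural route.
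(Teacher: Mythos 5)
Your proof is correct and follows exactly the route the paper takes: specialise Theorem \ref{thm.maindiscthm} to $V_\star=\{0\}$, $Z=\mathrm{Span}(\mathbf e)$, $z^h=z_{\ep,\t}\mathbf e$, $N_\t\mathbf e=\i\,\t\cdot\ourN$ for $|\t|<r_1$, and use \eqref{coutside} (with $r=r_1$) for $|\t|\ge r_1$. The bookkeeping is right — in particular the observation that \eqref{final2} gives the $L^2$ estimate directly while \eqref{final1} gives the $H^1$-type estimate after absorbing the ellipticity constant $\gamma_0$ from $a_\t[u]\ge\gamma_0^{-1}\|(\nabla+\i\t)u\|^2_{L^2(\Box)}$ into $c_0$ — and the remark that one could avoid the cutoff by invoking Theorem \ref{thm.IKunifest2} but that this would require \eqref{H5} is accurate (and is precisely why the paper keeps the cutoff formulation in this warm-up example).
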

We now show that, via the inverse Gelfand and scaling transforms, 
\eqref{eq.IKmaindiscthm} and \eqref{IKL2} 
provide respectively the desired  $H^1$ and $L^2$ 
estimates for certain approximations of  $u_\ep$, the solution to \eqref{differentialPDE}. 
To this end, recall that according to \eqref{IKL2} $\chi\, z_{\ep,\theta}$ serves as an approximation to the transformation 
$u_{\ep,\t}$ of the original solution $u_\ep$, 
where $u_\ep=\Gamma^{-1}_\ep U^{-1}u_{\ep,\t}$. 
So set the (inverse) transformed approximation 
\be
\label{IKappr45} 
u_\ep^{(0)} \,\,:=\,\, \Gamma^{-1}_\ep U^{-1} \,\chi\, z_{\ep,\theta}.   
\ee
As $\chi(\t)z_{\ep,\t}$ does not depend on $y$, by \eqref{gt2} and \eqref{classicalzsol}, 
$u_\ep^{(0)}\in C^\infty(\RR^n)\cap H^1(\RR^n)$ and $\nabla u_\ep^{(0)}\in L^\infty(\RR^n)$. 
Next, for the ``corrector'' term in \eqref{eq.IKmaindiscthm}, by the properties of the Gelfand transform, cf. \eqref{gt2}, 
\begin{eqnarray}
\label{7.19-2}
\Gamma^{-1}_\ep U^{-1}\chi\, \i\,\t \cdot \ourN  	\, z_{\ep,\t}\,\,=\,\, 
\Gamma^{-1}_\ep\left(\ourN\cdot U^{-1}\,\i\,\t\,\chi\, z_{\ep,\t}\right)\,\,=\,\,
\Gamma^{-1}_\ep\left(\ourN\cdot \nabla\,U^{-1}\,\chi\, z_{\ep,\t}\right)\,\,= \nonumber \\ 
\,\, \ \ \ \ \ \ 
 \,\,(\tilde\Gamma^{-1}_\ep\ourN)\cdot   \left(\Gamma^{-1}_\ep \nabla U^{-1}\chi 	z_{\ep,\t}\right)
\,\,=\,\, 
\ep\,(\tilde\Gamma^{-1}_\ep\ourN)\cdot  \nabla \left(\Gamma^{-1}_\ep U^{-1}\chi 	z_{\ep,\t}\right)
\,=\, \ep\left(\tilde\Gamma^{-1}_\ep\ourN\right)\cdot  \nabla u^{(0)}_\ep,
\end{eqnarray} 
where $\left(\tilde\Gamma_\ep^{-1} f\right)(x):=f(x/\ep)$ denotes ``ordinary'' rescaling. 
[In \eqref{7.19-2} we have used sequentially that: $\ourN$ does not depend on $\t$ and 
$U^{-1}\big(f(y)g\big)=f(y)U^{-1}g$,  
$\,U^{-1}\big(i\t\,f(\t)\big)=\nabla\left(U^{-1}f\right)$, 
$\,\Gamma_\ep^{-1}(fg)=\left(\tilde\Gamma_\ep^{-1}f\right)\Gamma_\ep^{-1}g$, 
and $\Gamma_\ep^{-1}(\nabla f)=\ep\nabla\left(\Gamma_\ep^{-1}f\right)$.]
As a result \eqref{eq.IKmaindiscthm} and \eqref{IKL2}, upon application of the $L^2$-unitary inverse 
Gelfand transform $U^{-1}$ and inverse rescaling $\Gamma_\ep^{-1}$ 
(and noticing that 
$\Gamma_\ep^{-1}U^{-1}\big(\,(\nabla+i\t)\,f\big)=\ep\,\nabla\left(\Gamma_\ep^{-1}U^{-1}f\right)\,$), 
lead to the following.
\begin{theorem}
\label{IK77}
Let $u_\ep$ solve \eqref{differentialPDE} and $u_\ep^{(0)}$
be given by  \eqref{IKappr45} with $z_{\ep,\t}$ specified by \eqref{classicalzsol}. Then 

\begin{gather}
\label{IKH1est}
\Bigl\Vert u_\ep \,-\, \Bigl(u_\ep^{(0)}+ \ep\big(\tilde\Gamma^{-1}_\ep\ourN\big)\cdot  \nabla u^{(0)}_\ep\Bigr)\,  \Bigr\Vert_{H^1(\mathbb{R}^n)} \,\le\,\, \ep\, c_0\, \Vert F \Vert_{L^2(\mathbb{R}^n)}, \\
\label{IKL2est}
\big\Vert u_\ep - u_\ep^{(0)}  \big\Vert_{L^2(\mathbb{R}^n)} \,\,\le\,\, \ep\, c_1\,\Vert F \Vert_{L^2(\mathbb{R}^n)}.
\end{gather}
\end{theorem}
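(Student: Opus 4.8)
The statement to be proved is Theorem~\ref{IK77}, namely that the inverse Gelfand and rescaling transforms convert the cell-wise estimates \eqref{eq.IKmaindiscthm} and \eqref{IKL2} of Proposition~\ref{thm.IKeclassexample} into global $H^1(\mathbb{R}^n)$ and $L^2(\mathbb{R}^n)$ estimates for the solution $u_\ep$ of \eqref{differentialPDE}. The plan is to exploit three facts that have essentially been assembled already: (i) $u_\ep = \Gamma_\ep^{-1} U^{-1} u_{\ep,\t}$, where $u_{\ep,\t}$ solves \eqref{differentialPDE1}; (ii) both $U$ and $\Gamma_\ep$ are $L^2$-unitary, so $L^2(\square^*\times\square)$-norms of transformed quantities equal $L^2(\mathbb{R}^n)$-norms of the originals, and in particular $\|U\Gamma_\ep F\|_{L^2(\square^*\times\square)} = \|F\|_{L^2(\mathbb{R}^n)}$; and (iii) the algebraic identity \eqref{7.19-2} relating the corrector term in the transformed picture to $\ep(\tilde\Gamma_\ep^{-1}\ourN)\cdot\nabla u_\ep^{(0)}$ in physical variables.

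First I would establish the $L^2$ estimate \eqref{IKL2est}, which is the simpler one. Applying the unitary operator $\Gamma_\ep^{-1}U^{-1}$ to the difference $u_{\ep,\t} - \chi(\t) z_{\ep,\t}$ gives exactly $u_\ep - u_\ep^{(0)}$ by definitions \eqref{IKappr45} and fact (i). Squaring \eqref{IKL2}, integrating over $\t\in\square^*$, and using that $\t\mapsto \|u_{\ep,\t}-\chi(\t)z_{\ep,\t}\|_{L^2(\square)}^2$ integrates (via unitarity of $U$) to $\|u_\ep-u_\ep^{(0)}\|_{L^2(\mathbb{R}^n)}^2$ and that $\int_{\square^*}\|U\Gamma_\ep F(\t,\cdot)\|_{L^2(\square)}^2\,{\rm d}\t = \|F\|_{L^2(\mathbb{R}^n)}^2$, yields \eqref{IKL2est} with the same constant $c_1$. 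A small technical point to address: \eqref{IKL2} as stated in Proposition~\ref{thm.IKeclassexample} holds for a.e.\ $\t$, which is exactly what is needed for the integration, and the measurability in $\t$ of all the objects (including $z_{\ep,\t}$, which is given by the explicit formula \eqref{classicalzsol}) is clear.

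For the $H^1$ estimate \eqref{IKH1est}, the approach is the same but one must track the gradient term carefully. The key observation, already recorded in the parenthetical remark after \eqref{7.19-2}, is that $\Gamma_\ep^{-1}U^{-1}\big((\nabla+\i\t)f\big) = \ep\nabla(\Gamma_\ep^{-1}U^{-1}f)$: the factor $\ep^{-2}$ in front of the $a_\t$-term in \eqref{eq.IKmaindiscthm} is exactly compensated by the two factors of $\ep$ picked up (one from each gradient) under the inverse rescaling, so that $\ep^{-2}\|(\nabla+\i\t)(\cdot)\|_{L^2(\square)}^2$ transforms into $\|\nabla(\cdot)\|_{L^2(\mathbb{R}^n)}^2$ after integrating over $\t$. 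Combined with the $L^2$-part of \eqref{eq.IKmaindiscthm}, which transforms into the $L^2(\mathbb{R}^n)$-norm of the same difference, this gives control of the full $H^1(\mathbb{R}^n)$-norm of $u_\ep - \Gamma_\ep^{-1}U^{-1}\big(\chi(\t)(1+\i\t\cdot\ourN)z_{\ep,\t}\big)$. Finally I would invoke \eqref{7.19-2} to identify $\Gamma_\ep^{-1}U^{-1}\big(\chi\,\i\,\t\cdot\ourN\,z_{\ep,\t}\big) = \ep(\tilde\Gamma_\ep^{-1}\ourN)\cdot\nabla u_\ep^{(0)}$, so that the approximation becomes precisely $u_\ep^{(0)} + \ep(\tilde\Gamma_\ep^{-1}\ourN)\cdot\nabla u_\ep^{(0)}$, and \eqref{IKH1est} follows with constant $c_0$.

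\textbf{Main obstacle.} The genuinely delicate point is the justification of the chain of manipulations in \eqref{7.19-2}, specifically the interchanges $U^{-1}(f(y)g) = f(y)U^{-1}g$, $U^{-1}(\i\t f(\t)) = \nabla(U^{-1}f)$, and the commutation of $\ourN$ (which is $y$-periodic and independent of $\t$) with $\Gamma_\ep^{-1}$ and $U^{-1}$, all of which are a priori formal identities on dense subspaces that must be extended by continuity/boundedness to the relevant $L^2$ and $H^1$ classes. One must check that $u_\ep^{(0)}\in H^1(\mathbb{R}^n)$ with $\nabla u_\ep^{(0)}\in L^\infty$ (this follows from \eqref{gt2} and the explicit formula \eqref{classicalzsol}, since $\chi(\t)z_{\ep,\t}$ is a smooth compactly supported multiplier in $\t$), so that $(\tilde\Gamma_\ep^{-1}\ourN)\cdot\nabla u_\ep^{(0)}$ makes sense in $L^2$ and even in $H^1$ given $\ourN\in H^1_{per}$; the boundedness of multiplication by $\tilde\Gamma_\ep^{-1}\ourN$ on the relevant space is the standard periodic-corrector estimate. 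None of this is conceptually hard, but it is where the care lies; the rest is bookkeeping with unitary transforms.
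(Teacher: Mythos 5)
Your proof follows exactly the paper's route: integrate the fibre-wise estimates \eqref{eq.IKmaindiscthm} and \eqref{IKL2} in $\t$, use the $L^2$-unitarity of $U$ and $\Gamma_\ep$ together with the commutation identity $\Gamma_\ep^{-1}U^{-1}\big((\nabla+\i\t)f\big)=\ep\,\nabla\big(\Gamma_\ep^{-1}U^{-1}f\big)$ to convert the $\ep^{-2}\|(\nabla+\i\t)\cdot\|^2$-term into $\|\nabla\cdot\|^2_{L^2(\mathbb{R}^n)}$, and invoke \eqref{7.19-2} to identify the transformed corrector term with $\ep(\tilde\Gamma_\ep^{-1}\ourN)\cdot\nabla u_\ep^{(0)}$. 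This matches the paper's argument (stated tersely in the sentence immediately before Theorem~\ref{IK77}); your additional remarks on verifying the density/continuity extensions and the regularity of $u_\ep^{(0)}$ are correct housekeeping but not a divergence in method.
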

The above theorem already provides constructive approximations of the solution to \eqref{differentialPDE}, however it is customary to relate these 
to the solution of the 
corresponding 
{\it homogenised equation}. We now provide this link. 
For the homogenised differential operator applied to $u^{(0)}_\ep$, 
by the standard properties of the scaling and Gelfand transformations 
together with  \eqref{IKappr45} and the fact that $\chi z_{\ep,\t}$ is independent of $y$,
\[
-\,\nabla\cdot\bigl( A^{\rm hom} \nabla u^{(0)}_\ep\bigr) \,=\, \Gamma_\ep^{-1} U^{-1} 
\Big(\,-\, \ep^{-2} (\nabla + \i\, \t) \cdot  A^{\rm hom} (\nabla + \i\, \t) \Big)U  \Gamma_\ep u^{(0)}_\ep\,=\,
\Gamma_\ep^{-1} U^{-1} 
\left(\ep^{-2}\t \cdot  A^{\rm hom} \t \right)\chi z_{\ep,\t}. 
\]
%
%
This together with 
\eqref{classicalzsol} 
implies that $u^{(0)}_\ep$ solves 
$ 
-\,\nabla\cdot\big( A^{\rm hom} \nabla {u^{(0)}_\ep} \big) + u^{(0)}_\ep\,\,=\,\,  
\Gamma_\ep^{-1} U^{-1}  \chi  
\int_\square U\,\Gamma_\ep F(\theta, y)  \, {\rm d}y.
$ 
Now notice that the standard Fourier transform\footnote{The conventional Fourier transform in $L^2(\RR^n)$  
is here specified by
	\[
	\left(\mathcal{F}g\right)(\theta) : = (2\pi)^{-n/2}\int_{\mathbb{R}^n} e^{-\i \theta \cdot y}g(y)\,dy, \qquad g \in 
	L^2(\RR^n)\cap L^1(\RR^n).
	\]} $\mathcal{F}$ in $L^2\left(\RR^n\right)$ and the Gelfand transform, as 
	follows from \eqref{gt1} 
	and \eqref{gt2}, are related by: 
	$\int_\square U g(\theta, y)  \, {\rm d}y = \mathcal{F} g(\t)$, $g \in L^2(\RR^n)$, $\t \in \square^*$, and $U^{-1} (h \otimes \mathbf{e}) = \mathcal{F}^{-1} h$ for $h \in L^2(\RR^n)$ with support in $\square^*$.
Consequently, we determine that $u^{(0)}_\ep$ is the solution to 
\begin{equation}\label{Sep}
-\,\nabla\cdot\big( A^{\rm hom} \nabla {u^{(0)}_\ep} \big) \,+\, u^{(0)}_\ep\,=\, \mathcal{S}_\ep F,
\end{equation}
for the smoothing operator
 $\mathcal{S}_\ep : L^2(\RR^n) \rightarrow C^\infty(\RR^n) \cap H^1(\RR^n) \cap W^{1,\infty}(\RR^n)$ given by
\be
\label{7.22-2}
 \mathcal{S}_\ep F\,\,\, =\,\,\, 
 \Gamma^{-1}_\ep \mathcal{F}^{-1} \chi \mathcal{F}\Gamma_\ep \,F\,\, =\,\, 
\mathcal{F}^{-1} ( \tilde\Gamma_\ep \chi ) \mathcal{F}\,F.  
\ee 
(In the latter equality we have used that $\Gamma^{-1}_\ep \mathcal{F}^{-1}= \mathcal{F}^{-1}\Gamma_\ep$, 
$\mathcal{F}\Gamma_\ep=\Gamma_\ep^{-1}\mathcal{F}$ and 
$\Gamma_\ep \chi \Gamma_\ep^{-1}g=( \tilde\Gamma_\ep \chi )g\,$.) 
%
%
%
%
Let ${u}\in H^2(\RR^n)$ be the solution to the classical homogenised equation
\begin{equation}
\label{homeq}
-\nabla\cdot\big( A^{\rm hom} \nabla {u}(x) \big) \,+\, {u}(x) \,=\, F(x), \qquad x \in \RR^n. 
\end{equation}
Applying $\mathcal{S}_\ep$ to \eqref{homeq} and using the standard properties of 
Fourier transform, it is easy to see 
that $u^{(0)}_\ep$ solving \eqref{Sep} and ${u}$ 
are related by 
$u^{(0)}_\ep = \mathcal{S}_\ep u$. 
Further, let us show that one has the inequality
\be
\label{7.23-2}
\big\| u^{(0)}_\ep - u \big\|_{H^1(\RR^n)} \,\,\le\,\,\, \ep\, r^{-1}_1 \gamma_0\, \| F\|_{L^2(\RR^n)}.
\ee
Indeed, by the Plancherel identity, \eqref{7.22-2},  and \eqref{homeq}, 
\be
\label{7.23-3}
\big\| u^{(0)}_\ep - u \big\|_{H^1(\RR^n)} \,=\, 
\Big\| (1+|\xi|^2)^{1/2}\big(\tilde\Gamma_\ep \chi-1\big)\mathcal{F}u(\xi)\Big\|_{L^2(\RR^n)}\,=\, 
\Big\| \frac{(1+|\xi|^2)^{1/2}}{A^{\rm hom}\xi\cdot\xi+1}
\left(\tilde\Gamma_\ep \chi-1\right)\mathcal{F}F(\xi)\Big\|_{L^2(\RR^n)}. 
\ee
Noticing that $\left\vert\tilde\Gamma_\ep \chi-1\right\vert$ vanishes for $|\xi|<\ep^{-1}r_1$ and equals -1 otherwise, and recalling 
that \eqref{IKcond} implies  $A^{\rm hom}\xi\cdot\xi\ge \gamma_0^{-1}|\xi|^2$ (see e.g. \cite{JKO}) 
leads to \eqref{7.23-2}. 

Combining \eqref{7.23-2} with inequalities \eqref{IKH1est} and \eqref{IKL2est} provides the following result.
\begin{proposition}
\label{homeqest}
 Let $u_\ep$ solve \eqref{differentialPDE} and $u$ solve \eqref{homeq}. Then 
	\begin{gather}
	\label{IKH1est...}
	\Bigl\Vert\, u_\ep \,-\, \Bigl(u+ \ep\big(\tilde\Gamma^{-1}_\ep\ourN\big)\cdot  \nabla 
	\mathcal{S}_\ep u\Bigr)\,\Bigr\Vert_{H^1(\mathbb{R}^n)} \,\,\le\,\,\, \ep\, \bigl(c_0+ r_1^{-1} \gamma_0\bigr)\, \Vert F \Vert_{L^2(\mathbb{R}^n)}, \\
	\label{IKL2est...}
	\big\Vert u_\ep - u  \big\Vert_{L^2(\mathbb{R}^n)} \,\,\le\,\,\, \ep\, 
	\bigl(c_1+ r_1^{-1} \gamma_0\bigr)\,\Vert F \Vert_{L^2(\mathbb{R}^n)}.
	\end{gather}
\end{proposition}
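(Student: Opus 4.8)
\textbf{Proof plan for Proposition \ref{homeqest}.}

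The plan is to combine the already-established approximation results, namely Theorem \ref{IK77} (which approximates $u_\ep$ in terms of the intermediate function $u^{(0)}_\ep$) and the auxiliary estimate \eqref{7.23-2} (which controls the distance between $u^{(0)}_\ep$ and the true homogenised solution $u$), via the triangle inequality. Before doing so, one must clear up a bookkeeping issue: the corrector term in \eqref{IKH1est} is written with $\nabla u^{(0)}_\ep$, whereas the target estimate \eqref{IKH1est...} uses $\nabla\mathcal{S}_\ep u$. Since we have identified $u^{(0)}_\ep = \mathcal{S}_\ep u$ (as noted in the paragraph preceding the proposition, by applying $\mathcal{S}_\ep$ to the homogenised equation \eqref{homeq} and using that Fourier multipliers commute), the two corrector terms are literally the same, so no extra work is needed there.

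For the $L^2$ bound \eqref{IKL2est...}, I would write
\[
\big\Vert u_\ep - u\big\Vert_{L^2(\RR^n)} \,\le\, \big\Vert u_\ep - u^{(0)}_\ep\big\Vert_{L^2(\RR^n)} + \big\Vert u^{(0)}_\ep - u\big\Vert_{L^2(\RR^n)},
\]
bounding the first term by \eqref{IKL2est} (giving $\ep\,c_1\Vert F\Vert_{L^2}$) and the second by \eqref{7.23-2}, whose $H^1$-norm dominates the $L^2$-norm (giving $\ep\,r_1^{-1}\gamma_0\Vert F\Vert_{L^2}$); adding the two constants yields $c_1 + r_1^{-1}\gamma_0$. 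For the $H^1$ bound \eqref{IKH1est...}, I would similarly split
\[
\Bigl\Vert u_\ep - \bigl(u + \ep(\tilde\Gamma^{-1}_\ep\ourN)\cdot\nabla\mathcal{S}_\ep u\bigr)\Bigr\Vert_{H^1(\RR^n)} \,\le\, \Bigl\Vert u_\ep - \bigl(u^{(0)}_\ep + \ep(\tilde\Gamma^{-1}_\ep\ourN)\cdot\nabla u^{(0)}_\ep\bigr)\Bigr\Vert_{H^1(\RR^n)} + \big\Vert u^{(0)}_\ep - u\big\Vert_{H^1(\RR^n)},
\]
where the first term uses $u^{(0)}_\ep = \mathcal{S}_\ep u$ and is bounded by \eqref{IKH1est} with constant $c_0$, and the second is bounded by \eqref{7.23-2} with constant $r_1^{-1}\gamma_0$, giving the claimed $c_0 + r_1^{-1}\gamma_0$.

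There is no real obstacle here — the proposition is essentially a corollary assembling pieces that are all in place, so the only thing to be careful about is that the corrector term appearing in Theorem \ref{IK77} matches the one in the proposition statement, which is exactly the content of the identification $u^{(0)}_\ep = \mathcal{S}_\ep u$. I would state this identification explicitly at the start of the proof (it was derived in the discussion immediately preceding the proposition) and then the two triangle-inequality estimates above complete the argument.
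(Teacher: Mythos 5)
Your proof is correct and follows the paper's approach exactly: the paper's one-line proof is "Combining \eqref{7.23-2} with inequalities \eqref{IKH1est} and \eqref{IKL2est} provides the following result," and you have carried out precisely this combination, correctly identifying the needed ingredient $u^{(0)}_\ep = \mathcal{S}_\ep u$ (established just before the proposition) so that the corrector term in \eqref{IKH1est} matches that in \eqref{IKH1est...}. The two triangle-inequality decompositions you give are exactly what is needed and the constants add up as claimed.
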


The result 
\eqref{IKL2est...} was obtained 
in \cite{BiSu}, 
\cite{ZhL2}. The estimate of type 
\eqref{IKH1est...} was  obtained in \cite{ZhH1}, \cite{ZhPasH1} although with different smoothing operator and in \cite{BSh1} with the same smoothing operator $\mathcal{S}_\ep$. 
Finally note 
that the  operator and spectral 
results of Section \ref{s:resolv} are also 
applicable for this example, which we do not pursue here.  
In contrast, in the following high-contrast example 
the 
results of Section \ref{s:resolv} 
play key 
role. 

\subsection{High-contrast elliptic PDE with 
periodic coefficients}\label{e.dp}
Here we  
apply full powers of our method and  obtain as an outcome 
what we expect to be 
new results for 
 high-contrast  elliptic PDEs with periodic coefficients: two-scale $L^2$-resolvent estimates,  
$H^1$ energy estimates, and estimates on convergence of the spectra. 
Indeed the present example 
formed one of key motivations for the general approach developed in this work. 
We comment 
that the approach below could be extended to a wider class of 
PDE systems, cf. \cite{IVKVPS13}, in particular would be essentially the same for the analogous 
high-contrast problems of linear 
elasticity thereby impoving some results of \cite{ChKiVeZu23}\footnote{The only substantive difference  in the case of linear elasticity is that we would need to replace the extension Proposition \ref{prp.zhiext} with analogous extension property in linear elasticity, see Proposition \ref{prp.zhiextelast} below.}.
We assume here $n > 1$, 
and following the example of \cite{Zhi2005} focus on the simplest 
model, with resolvent problem: 
\begin{equation}
\label{dp.differentialPDEdp}
\left\{ \begin{aligned}
& \text{Find $u_\ep \in H^1(\RR^n)$ such that} \\
&-\,\nabla\cdot\bigl( A_\ep \left(\tfrac{x}{\ep} \right) \nabla u_\ep(x)\, \bigr) \,\,+\,\, u_\ep(x) \,\,=\,\, F(x), \qquad x \in \RR^n,
\end{aligned} \right.
\end{equation}
for a given $F \in L^2(\RR^n)$ and $\Box$-periodic coefficients $A_\ep$ of the form
\be
\label{aepdp}
A_\ep(y) = \left\{ 
\begin{array}{lr}
1  & y \in \Box \backslash B, \\[5pt]
\ep^2  & y \in B.
\end{array} \right.
\ee
The boundary of inclusion 
$B$ is assumed 
Lipschitz, 
$\overline{B} \subset \left(-\tfrac{1}{2},\tfrac{1}{2}\right)^n$ for simplicity, and hence the ``matrix'' (periodically extended $\square \backslash B$) is connected. 
We emphasise that our approach works without change for general measurable (i.e. with no regularity assumptions) 
complex Hermitian matrix-valued $A_\ep(y)=A_1(y)+\ep^2A_2(y)$ where 
$A_1$ and $A_2$ are 
supported in $\square\backslash B$ and $\overline{B}$ respectively and obey \eqref{IKcond}.

Following the steps in Example \ref{e.class}, 
for $0<\ep <1$ after 
application of the rescaling $\Gamma_\ep$ and the Gelfand transform ${U}$, we observe 
that $u_{\ep, \t} (\cdot) : = U \Gamma_\ep u_\ep(\theta, \cdot)\in H^1_{per}(\Box)$ for a.e. $\t \in \Theta : = \Box^*$, and solves 
\begin{equation}\label{dp.pde}
-\,e^{-\i \theta \cdot y}\ep^{-2}\nabla\cdot\Bigl( A_\ep \left( y\right)\nabla\left( e^{\i \theta \cdot y} u_{\ep,\theta}(y)\,\right)\,
\Bigr) \,\,+\,\, u_{\ep,\theta}(y) \,\,=\,\, U\Gamma_\ep F(\t,y), \qquad  \text{a.e.}\  y \in \Box. 
\end{equation}
This has the equivalent weak formulation
\begin{equation}
\label{dp.varp}
\begin{aligned}
\ep^{-2} \int_{\Box \backslash B} (\nabla + \i \t) u_{\ep,\t} \cdot\, \overline{(\nabla + \i \t) \tilde{u}} \,\,\,\,+\, 
\int_{B}  (\nabla + \i \t) u_{\ep,\t} \cdot\, \overline{(\nabla + \i \t) \tilde{u}} \,\,\,+ 
\int_{\Box} u_{\ep,\t}\, \overline{\tilde{u}} \,\,\, =\,\,\, \l f, \tilde{u}\r ,  \hspace{1.7cm}\\ 
\forall \tilde{u} \in H^1_{per}(\Box), \hspace{0.7cm}  \text{ where }
\end{aligned}
\end{equation}
\begin{equation}
\label{fdp}
\ \l f, \tilde{u}\r \,\,:=\,\, \int_\Box \,  U\Gamma_\ep F(\t,y)\,\, \overline{\tilde{u}(y)} \,\, {\rm d}y.
\end{equation} 
 We note \eqref{dp.varp}  is a problem of the form \eqref{p1} with $\ H :=H^1_{per}(\Box)$, $ \ \Theta : = \Box^*$,  
\begin{equation}
\label{abcdp}
\begin{aligned} 
&a_\t(u,\tilde{u}) := \int_{\Box \backslash B} (\nabla + \i \t) u \cdot \overline{(\nabla + \i \t) \tilde{u}}\,, \quad  \text{and} \quad     b_\t(u,\tilde{u}) : =  
\int_{B} (\nabla + \i \t) u \cdot \overline{(\nabla + \i \t) \tilde{u}}\,\,+ \int_\square u\,\,\overline{\tilde{u}}\,. 
\end{aligned}
\end{equation}
Then the same argument as in Example \ref{e.class}, cf. \eqref{2.1classhom}, assures 
that assumptions \eqref{as.b1} and \eqref{ass.alip} hold.
Next, for determining the subspaces $V_\t$ but also for some later purposes, we notice that 
 the assumptions on the `soft' phase $B$ ensure the following extension result (see e.g. \cite[Proposition 4.3]{Zhi2005}).
\begin{proposition}\label{prp.zhiext}
	There exists an extension operator $E : H^1(\Box \backslash B) \rightarrow H^1(\Box)$ 
i.e. such that 
$Eu|_{\Box \backslash B} = u$, and such that with a constant $C_E>0$, 	
	$\| Eu \|_{H^1(\Box)} \,\,\,\le\,\,\, C_E \| u \|_{H^1(\Box \backslash B)}$ 
for all $\,u \in H^1(\Box \backslash B)$, 
	and
	\begin{equation}\label{ZhExtension}
	\int_\Box |\nabla Eu|^2 \,\,\,\,\le\,\,\,\, C_E^2 \int_{\Box \backslash B} |\nabla u|^2, \quad 
	\forall \,u \in H^1(\Box \backslash B). 
	\end{equation}	
\end{proposition}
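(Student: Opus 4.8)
\textbf{Plan for proving Proposition \ref{prp.zhiext}.} This is a classical extension result for periodic perforated domains, and the plan is to reduce it to a local construction on a single cell followed by a periodicity argument. First I would exploit the assumption that $\partial B$ is Lipschitz and that $\overline{B}\subset(-\tfrac12,\tfrac12)^n$, which places the inclusion strictly inside the open cube so that the perforated matrix $\square\setminus B$ has a Lipschitz boundary away from $\partial\square$; this lets me invoke the standard Stein (or Calder\'on) $H^1$-extension theorem for Lipschitz domains, producing a bounded linear operator $E_0:H^1(\square\setminus B)\to H^1(\mathbb{R}^n)$ with $E_0u|_{\square\setminus B}=u$ and $\|E_0u\|_{H^1(\square)}\le C\|u\|_{H^1(\square\setminus B)}$. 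Restricting the image back to $\square$ gives the operator $E$ on the first line of the statement, with the constant $C_E$ depending only on $B$.

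The second, sharper inequality \eqref{ZhExtension} — controlling the full gradient $\int_\square|\nabla Eu|^2$ purely by the matrix gradient energy $\int_{\square\setminus B}|\nabla u|^2$, with no lower-order $L^2$ term — requires an extra step, since the raw extension bound involves $\|u\|_{H^1(\square\setminus B)}^2=\int_{\square\setminus B}(|\nabla u|^2+|u|^2)$. The standard trick is a subtraction-of-constant / Poincar\'e argument: for $u\in H^1(\square\setminus B)$ with mean value $\bar u:=|\square\setminus B|^{-1}\int_{\square\setminus B}u$, apply the generic extension to $u-\bar u$ and set $Eu:=E_0(u-\bar u)+\bar u$ (constants extend to constants). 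Then $\nabla Eu=\nabla E_0(u-\bar u)$, and $\|E_0(u-\bar u)\|_{H^1(\square)}^2\le C\|u-\bar u\|_{H^1(\square\setminus B)}^2=C\big(\int_{\square\setminus B}|\nabla u|^2+\|u-\bar u\|_{L^2(\square\setminus B)}^2\big)$; the Poincar\'e–Wirtinger inequality on the connected Lipschitz domain $\square\setminus B$ bounds $\|u-\bar u\|_{L^2(\square\setminus B)}^2\le C_P\int_{\square\setminus B}|\nabla u|^2$, which collapses the right-hand side to a multiple of $\int_{\square\setminus B}|\nabla u|^2$ and yields \eqref{ZhExtension}. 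One then checks that this modified $E$ is still linear (the map $u\mapsto\bar u$ is linear) and still bounded $H^1(\square\setminus B)\to H^1(\square)$, absorbing all constants into a single $C_E$.

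The periodicity-compatibility is essentially automatic here and I would mention it only briefly: since $\overline{B}$ sits strictly inside $(-\tfrac12,\tfrac12)^n$, a neighbourhood of $\partial\square$ lies entirely in the matrix, so if $u$ arises as the restriction to $\square\setminus B$ of a function whose extension is to be glued periodically across cell faces, the extension $E$ can be taken to act as the identity near $\partial\square$ (by a cutoff supported near $B$), guaranteeing that the periodic structure of $H^1_{per}(\square)$ is respected — but strictly speaking the proposition as stated only asserts an extension into $H^1(\square)$, so this remark is optional. The main obstacle, such as it is, is purely bookkeeping: making sure the Poincar\'e step is applied to $\square\setminus B$ (which is connected and Lipschitz by hypothesis, so Poincar\'e–Wirtinger does hold) rather than to $\square$, and tracking that no $L^2$ norm of $u$ survives on the right-hand side of \eqref{ZhExtension}. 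No genuinely hard analysis is involved; the result is a packaging of Stein extension plus Poincar\'e, and I would cite \cite[Proposition 4.3]{Zhi2005} (or the standard references therein) for the underlying extension theorem and present the constant-subtraction argument explicitly.
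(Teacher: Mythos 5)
Your proof is correct and matches the route the paper implicitly takes: the paper simply cites \cite[Proposition 4.3]{Zhi2005} for this scalar statement, but the elasticity analogue Proposition \ref{prp.zhiextelast} is proved explicitly in the paper by exactly the same scheme you use --- a bounded Sobolev extension composed with a subtraction of the kernel of the gradient-type seminorm, followed by the appropriate Poincar\'{e}-type inequality (rigid body motions and Korn in the vector case, constants and Poincar\'{e}--Wirtinger in your scalar case) to discard the zero-order term. Your argument is therefore the scalar specialisation of the paper's own construction, and it is sound: $\square\setminus B$ is connected and Lipschitz because $\overline{B}\subset(-\tfrac12,\tfrac12)^n$ with Lipschitz $\partial B$ and $n\ge2$, so both the Stein/Calder\'{o}n extension and the Poincar\'{e}--Wirtinger inequality apply, and setting $Eu:=E_0(u-\bar u)+\bar u$ kills the $L^2$ term in \eqref{ZhExtension} exactly as you describe.
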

Using Proposition \ref{prp.zhiext} we obtain 
\begin{align}
a_\t[u]\,=\,
 \int_{\Box \backslash B} \big\vert (\nabla + \i \t) u\big\vert^2  & =  \int_{\Box \backslash B} 
\left\vert \nabla (e^{\i \t \cdot y}  u)\right\vert^2 \,\ge\, 
C_E^{-2} \int_\Box \bigl\vert \nabla E \left(e^{\i \t \cdot y}  u\right)\bigr\vert^2 \,\ge\, 
C_E^{-2}|\, \t |^2 \int_\Box \left\vert E \left(e^{\i \t \cdot y}  u\right)\right\vert^2 
 \nonumber \\ \label{dpH1} & \ge\,\, C_E^{-2}|\, \t |^2 \int_{\Box \backslash B}|u|^2\,, \qquad  \forall u \in H^1_{per}(\Box \backslash B), \,\,\forall \t \in \Theta,
\end{align}
where the first inequality holds by \eqref{ZhExtension}, the second (c.f. \eqref{IKaest}) by  expanding 
$e^{-\i \t \cdot y}E \left(e^{\i \t \cdot y}  u\right)\in H^1_{per}(\Box)$ in 
Fourier series in $\Box$, 
and the last from the extension property $E \vert_{\Box \backslash B} = I$. Therefore, 
via \eqref{spaceV},
\begin{equation}\label{hcV}
V_\theta = \left\{
\begin{array}{lr}
H^1_0(B), & \theta \neq 0, \\[5pt]
\Big\{ v \in H^1_{per}(\Box) \, \Big\vert \, \text{$v$ is constant in $\Box \backslash B$} \Big\} \,\,=\,\, \CC\overset{\cdot}{+} H^1_0(B), & \theta = 0, 
\end{array} 
\right.  \end{equation}
where elements of $H^1_0(B)$ are understood as those of $H^1_{per}(\Box)$ which are identically zero in 
$\Box \backslash B$. 
Clearly, $V_\t$ is discontinuous with respect to $\t$ 
at the origin. 
Spaces $W_\t$ can be determined via \eqref{2.6-w} and \eqref{astructure} with \eqref{abcdp}, 
although their explicit form is not of importance for our purposes. 

Now, proceeding as in Example \ref{e.class}, let us demonstrate that the main assumptions \eqref{KA}--\eqref{H6} hold. 

\textbullet\, To prove  \eqref{KA} we shall demonstrate that the stronger assertion  \eqref{KA2} holds for $C=C_E^{2} $  and 
$c(u,\tilde{u}) : = C_E^2 \int_{\Box \backslash B} u \overline{\tilde{u}}$, with the latter clearly being $\|\cdot\|_\t$-compact by the Rellich theorem. 
For fixed $\t \in \Theta$ and $u\in H$, we find that $v:= u - e^{-\i\t\cdot y}E\left(e^{\i\t\cdot y}u\right) \in H^1_0(B) \subseteq V_\t$ satisfies
\[
\| u-v\|_\t^2\,=\,
\| u-v\|_{L^2(\Box )}^2\,+\,\| (\nabla +\i \t)(u-v) \|_{L^2(\Box)}^2  
\]
\begin{equation}
\label{h1prdp}
 \ \ \ \ \ \ \ \ \  
=\,\,\, \left\| E \left(e^{\i \t \cdot y} u\right) \right\|_{H^1(\Box)}^2 \,\,\,\le\,\,\, C_E^2\,\,  \| e^{\i \t \cdot y} u \|_{H^1(\Box \backslash B)}^2 
\,\,=\,\, C_E^2  \int_{\Box \backslash B} \Big(\big| (\nabla + \i \t) u \big|^2 \,+\, |u|^2\Big).
\end{equation}
Since, for any $w\in W_\t$ and $v\in V_\t$, 
$\|w\|_{\t} \le \| w- v\|_{\t}$,   
the above inequality with $u=w$ implies that
\begin{equation}\label{dpNewKA}
\begin{split}
\| w\|_\t^2  \,\,\le\,\, C_E^2\bigg( a_\t[w] + \int_{\Box \backslash B} |w|^2 \bigg), \qquad \forall w\in W_\t, \ \forall \t \in \Theta, 
\end{split}
\end{equation}
which establishes \eqref{KA2}. 

\textbullet \, The validity of  \eqref{contVs} is immediate for $V_\star = H^1_0(B)$ and $L_\star = 0$;  see  Remark \ref{constV}. Furthermore, one can choose the defect subspace $Z = {\rm Span}\, \{ \mathbf{e}\}$ where $ \mathbf{e} \in H^1_{per}(\square)$ is 
the constant unity: indeed, $V_0 = H^1_0(B) \dot{+} \CC$ implying \eqref{spaceZ}, and 
 \eqref{VZorth} holds with $K_Z = |B|^{1/2} <1$: for $\phi \in H^1_0(B)$, 
\begin{equation}
\label{4.17-dp}
\left|(\phi,\mathbf{e})_0\right| \,=\, \left|\int_B \phi\right| \,\,\le\,\,   | B |^{1/2} \left( \int_B |\phi|^2 \right)^{1/2} \le 
\,\,| B |^{1/2} \|\phi\|_0 \,\,=\,\, |B|^{1/2} \| \phi \|_0 \| \mathbf{e} \|_0\,. 
\end{equation}

\textbullet\, Assumption \eqref{distance} holds with $\gamma = C_E^{-2}(\pi^2 n + C_E^2)^{-1}$. Indeed, \eqref{dpNewKA} and \eqref{dpH1} imply for $w\in W_\t$ and $\t\neq 0$, 
$
\| w \|_\t^2 
\le C_E^2 \big(1 + C_E^2 |\t|^{-2}\big) a_\t[w],
$
and consequently
\[
\nu_\theta\,\,=\,\, \inf_{w \in W_\t \backslash \{ 0 \}} \frac{a_\t[w]}{\|w\|_\t^2}\,\,\,\,\ge\,\,\,  
C_E^{-2} \big(1+ C_E^2|\t|^{-2}\big)^{-1}\,\,\,\ge\,\,\, |\t|^2\, C_E^{-2}\big(\pi^2 n + C_E^2\big)^{-1}.
\]

\textbullet \, Assumption \eqref{H4} is obviously satisfied, cf. \eqref{IKa'}, with  $K_{a'} =1$ , $K_{a''}=0$ and 
\begin{equation}
\label{dp.rega}
\begin{aligned}
& a'_{0}(v, u) \cdot \t\,\,: =\,\, \i  \int_{\Box \backslash B}   \t v \cdot \overline{ \nabla {u}} ,\qquad 
a''_{0}(v, \tilde{v})\,\t \cdot \t\,\,:=\,\, |\t|^2 \int_{\Box \backslash B} v\, \overline{\tilde{v}}\,.
\end{aligned}
\end{equation}
Now let us calculate $a^{\rm h}_\t[\mathbf{e}]$. Recalling \eqref{defhom.form}, \eqref{cell:prob2} and \eqref{dp.rega}, 
we obtain: 
\begin{equation}\label{dp.a'2}
a^{\rm h}_{\t}[\mathbf{e}]\,\,=\,\,a''_0[\mathbf{e}] \t \cdot \t \,+\, a_0'\left( \mathbf{e}, N_\t\mathbf{e}\right) \cdot \t 
\,\,=\, \int_{\Box \backslash B} |\t|^2 \,+\, \i \int_{\Box \backslash B} \t \cdot\overline{\nabla \left( N_\t \mathbf{e}\right)},
\end{equation} 
where $N_\t\mathbf{e}\in W_0$ solves (see \eqref{cell:prob2})
\begin{equation}
\label{dp.cell1}
\int_{\Box \backslash B }  \nabla\left(N_\t\textbf{e}\right) \cdot \overline{\nabla w} \,\,=\,\, -\,\i \int_{\Box \backslash B} \t\cdot \overline{ \nabla w} , \qquad \forall w \in W_0, \,\,\, \forall \t \in \RR^n.
\end{equation}
It is clear (since $V_0 = \CC \dot{+} H^1_0(B)$ and $H^1_{per}(\square) = V_0 \oplus W_0$) that the above equality holds 
in fact for test functions $\phi \in H^1_{per}(\square)$. Therefore, 
\begin{equation}
\label{dp.correctorproblem}\begin{aligned}
& N_\t\mathbf{e}\,\,=\,\, \i \,\t \cdot \big(\ourN_1^{\rm pd},\ldots,\ourN_n^{{\rm pd}}\big)^T \ \ \text{in $\,\Box\backslash B$, where real-valued  } 
\text{  $\ourN_j^{{\rm pd}} \in H^1_{per}(\Box \backslash B)$, $j=1,...,n$, solve}  \\ 
& 
\int_{\Box \backslash B}  \nabla \ourN_j^{{\rm pd}}\cdot\, \overline{\nabla \phi }\,\,=\, - \int_{\Box \backslash B}  e^j \cdot \overline{\nabla \phi } , \qquad \forall \phi \in H^1_{per}(\Box \backslash B),
\end{aligned} 
\end{equation}
where $e^1,\ldots,e^n$ is the canonical basis in $\mathbb{R}^n$. Thus 
$\ourN^{\rm pd}  = (\ourN_1^{\rm pd},\ldots,\ourN_n^{\rm pd})$ is (up to a 
constant) the perforated domain corrector, see e.g. \cite[Section 3.1]{JKO}.
As a result, via  \eqref{dp.a'2} and \eqref{dp.correctorproblem}, $a^{\rm h}_{\t}[\mathbf{e}]$ is 
\be \label{dp.ahom} 
a^{\rm h}_{\t}[\mathbf{e}]\,\,=\,\,  A^{\rm hom}_{\rm pd} \theta \cdot \theta,
\ee
where $A^{\rm hom}_{\rm pd}$ is the perforated domain homogenised matrix with components
\be\label{dp.coef}
\{A^{\rm hom}_{\rm pd}\}_{ij} \,\,=\,\, \int_{\Box \backslash B} \delta_{ij} \,+ \int_{\Box \backslash B}  \partial_{i}\ourN_{j}^{\rm pd}, \qquad  i,j \in \{1,\ldots,n\}.
\ee
Matrix $A^{\rm hom}_{\rm pd}$ is well-known to be positive definite  and symmetric  (see e.g. \cite[Section 3.1]{JKO}); these can 
also be seen directly, respectively via \eqref{dp.ahom} and \eqref{ahcoercive}, and  \eqref{dp.coef} and \eqref{dp.correctorproblem} 
with $\phi=\ourN_{i}$.

\textbullet \, Assumption \eqref{H5} is immediate with 
$L_b = 1$: indeed, via \eqref{abcdp}, 
\[
\bigl| b_\t(u, \tilde{u}) - b_0 (u, \tilde{u})  \bigr|\,\,=\,\,  
\left|  \int_{B}  \i\, \t u \cdot \overline{(\nabla + \i \t) \tilde{u}} \,\,\,+   \int_{B} \nabla  u \cdot \overline{ \i\, \t  \tilde{u}}\, \right|  
\]
\[
\ \ \ \ \ \ \ \ 
\,\,\le\,\,
|\t| \Big( \| u \|_{L^2(B)} \| (\nabla + \i \t)\tilde{u} \|_{L^2(B)} \,+\,  \| \nabla u \|_{L^2(B)}  \| \tilde{u} \|_{L^2(B)} \Big)
\,\,\le\,\, |\t|\, \|u\|_0 \,\|\tilde{u}\|_\t, \qquad \forall u, \tilde{u} \in H^1_{per}(\square).
\]
We now need to construct operator $\mathcal{E}_\t: H^1_0(B)\rightarrow H^1_0(B)$ satisfying the conditions in Lemma \ref{propeth}. 
For \eqref{Eprop1}, notice that for $\phi, \tilde{\phi} \in H^1_0(B)$,
\[
b_0(  \phi,  \tilde{\phi})\,:=\,
\int_B \nabla \phi \cdot \overline  {\nabla \tilde{\phi}} + \int_\square  \phi \overline{ \tilde{\phi}} = 
\int_B (\nabla + \i \t) e^{-\i \t \cdot y}\phi \,\cdot\, \overline{(\nabla + \i \t) e^{-\i \t \cdot y}\tilde{\phi} } \,+ \int_\square e^{-\i \t \cdot y} \phi \overline{e^{-\i \t \cdot y}\tilde{\phi}}= 
 b_\t\left(e^{-\i \t \cdot y}\phi,e^{-\i \t \cdot y}\tilde{\phi}\right).
\]
So to satisfy \eqref{Eprop1}  
we can take as  $\mathcal{E}_\t$ multiplication by $e^{-\i\, \t \cdot y}$ in $V_\star=H^1_0(B)$.  
One can then also readily verify \eqref{Eprop2} with $K_b = \sqrt{|B|(1+n/4)}$ as follows:  
\[
\Big| b_\t\big(e^{-\i\,\t\cdot y} \phi,\mathbf{e}\big)  \,-\, b_0(\phi,\mathbf{e}) \Big| \,\,=\,\,
  \left| \,\int_B e^{-\i\,\t\cdot y}\nabla  \phi\cdot \overline{ \i\, \t } \,\, + \int_B  \big(e^{-\i\, \t \cdot y} -1\big) \phi \right| 
	\ \ 
	\le\,\,\, |\t| \,\| \nabla \phi\|_{L^2(B)} |B|^{1/2} \,\,\,+ 
	\]
	\[
	\ \ \ \ \ 
	\sqrt{|B|n/4}\,|\t|\|\, \phi\|_{L^2(B)}  
\,\, \le\,\, \sqrt{|B|(1+n/4)}\, |\t|\,  \|\phi\|_0 \,\,=\,\, \sqrt{|B|(1+n/4)}\, |\t| \, \|\phi\|_0 \| \mathbf{e} \|_0, \quad \  \forall \phi \in H^1_0(B).
\]

\textbullet\, Finally, for 
\eqref{H6} set 
$\mathcal{H} = L^2(\square)$ with $\t$-independent $d_\t=d_0$ the standard $L^2(\square)$ inner product. 
There are several possibilities for extending $\mathcal{E}_\t$ from $H_0^1(B)$ to $L^2(\square)$, which we discuss 
in the following remark. 
\begin{remark}
\label{ecalextns}
To maintain the isometry 
in \eqref{H6}, the sought extension of $\mathcal{E}_\t$ from $H_0^1(B)$ to $L^2(\square)$  has to remain the multiplication by $e^{-\i \t \cdot y}$ 
for functions from $L^2(B)$ i.e. supported in the inclusion $B$. 
Hence the extension can be specified by a 
unitary map on the orthogonal complement, i.e. 
$\mathcal{E}_\t: L^2(\square\backslash B)\to L^2(\square\backslash B)$ 
which are $\t$-Lipschitz with $\mathcal{E}_0=I$ to obey \eqref{H6}. 
This can be done in numerous ways, and 
all of the choices would make our relevant abstract results from Section \ref{s:resolv} applicable. 
Of these choices, the simplest 
one seems to extend $\mathcal{E}_\t$ 
as the multiplication by $e^{-\i \t \cdot y}$ throughout: 
$\mathcal{E}^{(1)}_\t u = e^{-\i \t \cdot y}u$, $\forall u\in L^2(\square)$. 
However another natural possibility appears to 
set $\mathcal{E}_\t$ as an identity 
(i.e. multiplication by unity) in $\square\backslash B$: 
$\mathcal{E}^{(2)}_\t u = (1-\chi_B)u+ \chi_B e^{-\i \t \cdot y}u$, where 
$\chi_B$ denotes the characteristic function of $B$.  
In Section \ref{interpol77} below, we will compare the results for both of these choices. 
Notice that for both choices, \eqref{H6} readily holds. 
Indeed, $H=H^1_{per}(\square)$ is compactly embedded into and dense in $\mathcal{H}$ and \eqref{ik2} is satisfied, 
and the inequality in \eqref{H6} clearly 
holds with $K_e=\sqrt{n}$.
\end{remark}

%
As \eqref{KA}--\eqref{H6} are fulfilled we can apply 
the results of Sections \ref{section:discV} - \ref{s:resolv}, and we 
detail below implications of the relevant approximation theorems for the present example. 
\subsubsection{Application of Theorem \ref{thm.IKunifest2}}
We begin with specifying the approximations given in Theorem \ref{thm.IKunifest2}. Therein,  $V_\star = H^1_0(B)$ and $Z ={\rm Span}\, \{ \mathbf{e}\}$ 
(recalling $\mathbf{e}(y) \equiv 1$) and consequently, $v = v_{\ep,\t}\in H^1_0(B)$,  $z = c_{\ep,\t} \mathbf{e}$, $c_{\ep,\t} \in \CC$, and problem \eqref{IKz3prob88}, 
via \eqref{dp.ahom}, \eqref{abcdp} and \eqref{fdp}, specialises to 
	\begin{equation}\label{dplimp1}
\begin{aligned}
\ep^{-2} \left(A^{\rm hom}_{\rm pd}\t \cdot \t\right) c_{\ep,\t}  \overline{\tilde{c}}\,\, + 
\int_B \nabla v_{\ep,\t}\cdot \overline{\nabla \phi} \,\,+ \int_\square (v_{\ep,\t}+c_{\ep,\t} ) \overline{(\phi + \tilde{c})}  
\,\, =\, \int_\square U \Gamma_\ep F(\t ,y) 
\overline{\bigl(e^{-\i \t \cdot y} \phi(y) + \tilde{c}\bigr)} \, {\rm d}y, \hspace{.5cm} \\ \qquad \forall\, \phi \in H^1_0(B) , \,\, \ \forall \tilde{c} \in \CC.
\end{aligned}
\end{equation} 
This can equivalently be re-written as 
\begin{equation}\label{zdp}
\left\{ \ \begin{aligned}
 \Big( \ep^{-2} A^{\rm hom}_{\rm pd}\t \cdot \t  +1\Big) c_{\ep,\t} \,\, + \int_B v_{\ep,\t}(y) \, {\rm d}y\,\,  & = \int_\square U \Gamma_\ep F(\t ,y) \, {\rm d}y, \qquad \t \in \square^*;\\
 -\,\Delta v_{\ep,\t}(y)\,  + v_{\ep,\t}(y)\,+ c_{\ep,\t}\,\, &=\,\, e^{\i \t \cdot y} \,U\Gamma_\ep F(\t,y), \qquad y \in B, 
\ \ \t \in \square^*.
\end{aligned} \right.
\end{equation}
Applying Theorem \ref{thm.IKunifest2} 
and noticing that, by \eqref{fstar} and \eqref{fdp}, \eqref{abcdp},  
$\|f\|_{*\t}\le \big\|U\Gamma_\ep F(\t,\cdot)\big\|_{L^2(\square)}$, 
we conclude that  inequalities \eqref{IKfinal3} and 
\eqref{IKfinal3-2} imply the following. 
\begin{proposition}
	\label{prop:dp}
	Let $u_{\ep,\t}$ solve  \eqref{dp.pde} and $c_{\ep,\t}$, $v_{\ep,\t}$ solve \eqref{zdp}.	Then 
	\be
	\label{dp.mainest1}
	\begin{aligned}
		\ep^{-2}\int_{\Box \backslash B}  \Big\vert\big(\nabla+\i \t\big) \Big( u_{\ep,\t}(y)  - 
		\bigl(1 + \i\,\t \cdot \ourN^{\rm pd}(y) \bigr) c_{\ep,\t}\Big)\Big\vert^2 {\rm d}y  \,\,\,  +\,\int_{\Box \backslash B}  
		\bigl|u_{\ep,\theta}(y) - 
		\big(1 &+ \i\,\t  \cdot \ourN^{\rm pd}(y) \big)  c_{\ep,\t}\bigr|^2  {\rm d}y\hspace{.05\linewidth} \\&  
		\le\,\,\,  C_9\,\ep^2 \int_\square \big|U\Gamma_\Ep F(\t,y)\big|^2  {\rm d}y,
	\end{aligned}
	\ee
	\be\label{dp.mainest2}
	\int_{B}  \bigl\vert\big(\nabla+\i \t\big) \bigl( u_{\ep,\t}(y)  - 
		\bigl(	c_{\ep,\t} +e^{-\i \t \cdot y}v_{\ep,\t} \bigr) \bigr)\bigr\vert^2 {\rm d}y  \,  + 
	\int_\square  \bigl| u_{\ep,\theta}(y)  -\, \left( 	c_{\ep,\t} +e^{-\i \t \cdot y}v_{\ep,\t}\right) {\rm d}y  \bigr|^2  \,\,\le\,  
	\,C_{10}\,\ep^2\int_\square |U\Gamma_\Ep F(\t,y)|^2  {\rm d}y. \hspace{.35\linewidth}
	\ee
\end{proposition}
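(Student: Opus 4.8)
\textbf{Proof proposal for Proposition \ref{prop:dp}.}

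The plan is to verify that Proposition \ref{prop:dp} is the verbatim specialisation of Theorem \ref{thm.IKunifest2} to the present high-contrast example, so the only work is translating the abstract objects into their concrete incarnations and checking that all hypotheses \eqref{KA}--\eqref{H5} (and, where required by auxiliary results, \eqref{H6}) have already been secured. First I would record that the reduction performed above shows \eqref{dp.varp}--\eqref{fdp} is an instance of \eqref{p1} with $H=H^1_{per}(\Box)$, $\Theta=\Box^*$, forms \eqref{abcdp}, right-hand side \eqref{fdp}, and that the bulk of the section has already established \eqref{KA2} (hence \eqref{KA}), \eqref{contVs} with $V_\star=H^1_0(B)$, $Z={\rm Span}\{\mathbf{e}\}$, \eqref{distance}, \eqref{H4} with $a_0',a_0''$ given by \eqref{dp.rega}, and \eqref{H5}. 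Thus Theorem \ref{thm.IKunifest2} applies directly: there is a unique $v+z\in V_\star\,\dot{+}\,Z$ solving \eqref{IKz3prob88}, which with the present data is exactly \eqref{dplimp1}; I would note that \eqref{dplimp1} and \eqref{zdp} are equivalent by testing separately against $\phi\in H^1_0(B)$ and against $\tilde c\in\CC$ (the $\tilde c$-component gives the first line of \eqref{zdp}; the $\phi$-component, after integration by parts in $B$ using $\phi|_{\partial B}=0$, gives the second line, recalling that $\mathcal{E}_\t$ is multiplication by $e^{-\i\t\cdot y}$ so $\langle f,\mathcal{E}_\t\phi\rangle=\int_\square U\Gamma_\ep F\,\overline{e^{-\i\t\cdot y}\phi}$).

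Next I would identify the approximating fields. The transfer operator is $\mathcal{E}_\t v_{\ep,\t}=e^{-\i\t\cdot y}v_{\ep,\t}$, and the linearised corrector $N_\t$ acting on $Z$ is computed in \eqref{dp.correctorproblem}: $N_\t\mathbf{e}=\i\,\t\cdot\ourN^{\rm pd}$ on $\Box\backslash B$ (it is supported in $W_0$, hence vanishes where it needs to), so $(I+N_\t)z=(1+\i\,\t\cdot\ourN^{\rm pd})c_{\ep,\t}$ on $\Box\backslash B$. Therefore the abstract approximation $\mathcal{E}_\t v+(I+N_\t)z$ of Theorem \ref{thm.IKunifest2} becomes, on $\Box\backslash B$, the field $(1+\i\,\t\cdot\ourN^{\rm pd})c_{\ep,\t}$ and, on $B$, the field $e^{-\i\t\cdot y}v_{\ep,\t}+c_{\ep,\t}$ (since $\ourN^{\rm pd}$ is the perforated corrector supported away from $B$, and $v_{\ep,\t}\in H^1_0(B)$). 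Then estimate \eqref{IKfinal3} of Theorem \ref{thm.IKunifest2} reads $\ep^{-2}a_\t[u_{\ep,\t}-(\mathcal{E}_\t v+(I+N_\t)z)]+b_\t[\cdots]\le C_9\ep^2\|f\|_{*\t}^2$; recalling \eqref{abcdp}, the $a_\t$-term is precisely $\ep^{-2}\int_{\Box\backslash B}|(\nabla+\i\t)(u_{\ep,\t}-(1+\i\,\t\cdot\ourN^{\rm pd})c_{\ep,\t})|^2$, and since $b_\t[\cdot]\ge\int_{\Box\backslash B}|\cdot|^2$ by \eqref{abcdp}, dropping the $B$-part of the $b_\t$-norm and using $\|f\|_{*\t}\le\|U\Gamma_\ep F(\t,\cdot)\|_{L^2(\square)}$ (which follows from \eqref{fstar}, \eqref{fdp}, \eqref{abcdp} and the uniform norm equivalence) yields exactly \eqref{dp.mainest1}. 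Similarly \eqref{IKfinal3-2}, $b_\t[u_{\ep,\t}-(\mathcal{E}_\t v+z)]\le C_{10}\ep^2\|f\|_{*\t}^2$, expands via \eqref{abcdp} into $\int_B|(\nabla+\i\t)(u_{\ep,\t}-(c_{\ep,\t}+e^{-\i\t\cdot y}v_{\ep,\t}))|^2+\int_\square|u_{\ep,\t}-(c_{\ep,\t}+e^{-\i\t\cdot y}v_{\ep,\t})|^2$, giving \eqref{dp.mainest2}.

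The only genuinely delicate point — the ``main obstacle'' — is bookkeeping: making sure the abstract approximant restricts correctly to the two phases and that the corrector $N_\t\mathbf{e}$ has the claimed form. In particular one must check that $N_\t\mathbf{e}$, defined abstractly by \eqref{cell:prob2} as an element of $W_0\subset H^1_{per}(\Box)$, is really given on $\Box\backslash B$ by the perforated corrector of \eqref{dp.correctorproblem}: this is the step carried out around \eqref{dp.cell1}--\eqref{dp.correctorproblem}, where one observes that the cell identity, a priori posed for test functions in $W_0$, extends to all $\phi\in H^1_{per}(\Box)$ because $a_0'(\mathbf{e},\cdot)$ annihilates $V_0=\CC\,\dot{+}\,H^1_0(B)$ (the constant part trivially, the $H^1_0(B)$ part because $a_0'$ only sees $\Box\backslash B$), so by the direct decomposition $H^1_{per}(\Box)=V_0\oplus W_0$ it suffices to test on $W_0$. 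Everything else is a direct substitution, and no new analysis beyond what precedes the statement is needed; I would close by simply citing Theorem \ref{thm.IKunifest2} together with the hypotheses verified above.
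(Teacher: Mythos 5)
Your proposal is correct and follows the same route as the paper: Proposition \ref{prop:dp} is indeed just the literal specialisation of Theorem \ref{thm.IKunifest2} to the present setting, with the hypotheses verified earlier in the section and $\|f\|_{*\t}\le \|U\Gamma_\ep F(\t,\cdot)\|_{L^2(\square)}$. The bookkeeping you carry out — identifying $\mathcal{E}_\t$, $N_\t\mathbf{e}$, the phases of the approximant, and checking that the cell problem in \eqref{cell:prob2} extends to test functions in $V_0$ because $a_0'(\mathbf{e},\cdot)$ and $a_0(\cdot,\cdot)$ both vanish when tested against $\CC\dot{+}H^1_0(B)$ — is exactly what the paper leaves implicit.

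One small inaccuracy worth flagging: your claim that $\mathcal{E}_\t v + (I+N_\t)z$ restricted to $B$ equals $e^{-\i\t\cdot y}v_{\ep,\t}+c_{\ep,\t}$ is not correct. The corrector $N_\t\mathbf{e}\in W_0\subset H^1_{per}(\Box)$ is a genuine $\Box$-periodic function whose restriction to $\Box\backslash B$ is $\i\t\cdot\ourN^{\rm pd}$, but whose values on $B$ are generically nonzero (indeed $\ourN^{\rm pd}$ is only defined on $\Box\backslash B$, so it cannot be ``supported away from $B$'' in the sense you invoke). Fortunately the slip is inconsequential: \eqref{dp.mainest1} is obtained from \eqref{IKfinal3} by keeping the $a_\t$-term (which only sees $\Box\backslash B$) and dropping the $B$-portion of $b_\t$, so only the $\Box\backslash B$ restriction of the approximant enters; and \eqref{dp.mainest2} uses the simpler approximant $\mathcal{E}_\t v+z$ from \eqref{IKfinal3-2}, with no $N_\t$ at all. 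So the conclusion stands as you state it.
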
 
Inequalities  \eqref{dp.mainest1} and \eqref{dp.mainest2} provide   $L^2$ estimates for the corresponding approximations of $u_\ep$, the solution  to \eqref{dp.differentialPDEdp}, and its gradient. 
Indeed, 
applying the inverse transforms to the approximation $c_{\ep,\theta} \mathbf{e}+e^{-\i \t \cdot y}v_{\ep,\t}$ to 
$u_{\ep,\theta}=U\Gamma_\ep u_\ep$ in \eqref{dp.mainest2}, set 
\be\label{dp.app1} 
u_\ep^{(0)} \,\,:=\,\, \Gamma^{-1}_\ep U^{-1} c_{\ep,\theta} \mathbf{e}  , \qquad 
v_{\ep}^{(0)} \,\,: =\,\, \Gamma_\ep^{-1}  U^{-1} e^{-\i \t \cdot y} v_{\ep,\t}. 
\ee
Note that, cf. \eqref{IKappr45} and \eqref{7.19-2}, 
as $c_{\ep,\t}\mathbf{e}$ is $y$-independent $u_\ep^{(0)}$ is smooth and 
\[
\Gamma^{-1}_\ep U^{-1}  \big(1 + \i\,\t \cdot \ourN^{\rm pd}\big) c_{\ep,\t} \mathbf{e}
\,=\,\,u_\ep^{(0)}+ \ep\left(\tilde\Gamma^{-1}_\ep\ourN^{\rm pd}\right)\cdot  \nabla u^{(0)}_\ep \quad 
\text{in $\,\RR^n\, \backslash\,  \overline{B_\ep}$, where  $ B_\ep \,:= \bigcup_{m\in \ZZ^n} \ep (B+m)$.} 
\]
Then  inequalities \eqref{dp.mainest1}, \eqref{dp.mainest2}, 
via the $L_2$-unitarity of the above inverse transform $\Gamma_\ep^{-1}U^{-1}$, 
  lead to the following theorem. 
\begin{theorem}
	\label{IK77-2}
	Let $u_\ep$ solve \eqref{dp.differentialPDEdp} and $u_\ep^{(0)}$,  $v_\ep^{(0)}$ be as in \eqref{dp.app1} 
	where $c_{\ep,\t}$, $v_{\ep,\t}$ solve \eqref{zdp}. Then 
	there exist  positive constants $c_0$ and $c_1$ independent of $\ep$  and of $F\in L^2(\mathbb{R}^n)$, such that
	\begin{gather}
	\label{dp.H1est}
	\bigl\Vert u_\ep \,-\,\bigl( u_\ep^{(0)}\,+\,\ep\,\ourN^{\rm pd}\left(\tfrac{\cdot}{\ep}\right)\cdot  \nabla u^{(0)}_\ep\bigr)   \bigr\Vert_{H^1(\RR^n\backslash \overline{B_\ep} )}
	\,\,+\,\,
	\ep \bigl\Vert u_\ep \,-\,\bigl(u_\ep^{(0)} \,+\, v^{(0)}_\ep
	\,\bigr)   \bigr\Vert_{H^1({B_\ep} )}
	\,\,\,\le\,\,\, c_0\,\ep\, \Vert F \Vert_{L^2(\mathbb{R}^n)}, \\
	\label{dp.L2est}
	\bigl\Vert u_\ep \,-\,\bigl(u_\ep^{(0)} \,+\, v^{(0)}_\ep
	\,\bigr)   \bigr\Vert_{L^2(\mathbb{R}^n)} \,\,\,\le\,\,\, c_1\,\ep\, \Vert F \Vert_{L^2(\mathbb{R}^n)}.
\end{gather}
\end{theorem}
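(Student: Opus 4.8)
The statement is essentially a transfer result: we already have, from Theorem \ref{thm.IKunifest2} applied to the present example (this is precisely Proposition \ref{prop:dp}), the two estimates \eqref{dp.mainest1} and \eqref{dp.mainest2} on the transformed and rescaled level, i.e.\ in $H^1_{per}(\square)$ for the fibre solution $u_{\ep,\t}=U\Gamma_\ep u_\ep(\t,\cdot)$. So the only work is to push these fibre estimates back through the (unitary) inverse Gelfand transform $U^{-1}$ and the (unitary) inverse rescaling $\Gamma_\ep^{-1}$, track what the approximating expressions become under these maps, and reassemble the two regions $B_\ep$ and $\RR^n\setminus\overline{B_\ep}$ into global $L^2$ and (weighted) $H^1$ norms. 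The plan is to proceed exactly as was done for the classical homogenisation example in Theorem \ref{IK77}, with the extra bookkeeping that here there are two distinct approximations on the two phases.

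\textbf{Step 1: identify the inverse-transformed approximations.} Set $u^{(0)}_\ep$ and $v^{(0)}_\ep$ as in \eqref{dp.app1}. Since $c_{\ep,\t}\mathbf e$ is $y$-independent, $u^{(0)}_\ep = \Gamma_\ep^{-1}U^{-1}(c_{\ep,\t}\mathbf e)$ is smooth (by \eqref{gt2}), and the computation in \eqref{7.19-2} applies verbatim, giving
\[
\Gamma^{-1}_\ep U^{-1}\big(1 + \i\,\t \cdot \ourN^{\rm pd}\big) c_{\ep,\t}\mathbf e \,=\, u_\ep^{(0)}+ \ep\big(\tilde\Gamma^{-1}_\ep\ourN^{\rm pd}\big)\cdot  \nabla u^{(0)}_\ep \quad\text{in }\RR^n\setminus\overline{B_\ep},
\]
because $\ourN^{\rm pd}$ is $\t$-independent and supported on the matrix phase, so the identities $U^{-1}(f(y)g(\t))=f(y)U^{-1}g$, $U^{-1}(\i\t f(\t))=\nabla U^{-1}f$, $\Gamma_\ep^{-1}(fg)=(\tilde\Gamma_\ep^{-1}f)\Gamma_\ep^{-1}g$ and $\Gamma_\ep^{-1}(\nabla f)=\ep\nabla(\Gamma_\ep^{-1}f)$ chain together as in Example \ref{e.class}. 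On the inclusion phase, the approximation $c_{\ep,\t}\mathbf e + e^{-\i\t\cdot y}v_{\ep,\t}$ maps under $\Gamma_\ep^{-1}U^{-1}$ precisely to $u^{(0)}_\ep+v^{(0)}_\ep$.

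\textbf{Step 2: transport the norms.} Recall $\Gamma_\ep^{-1}U^{-1}(\,(\nabla+\i\t)f\,)=\ep\,\nabla(\Gamma_\ep^{-1}U^{-1}f)$, and that both $U^{-1}$ and $\Gamma_\ep^{-1}$ are $L^2$-unitary, with $U^{-1}$ carrying $L^2(\square^*\times\square)$-functions supported (in $y$) over the matrix cell / inclusion cell to $L^2$-functions over $\RR^n\setminus\overline{B_\ep}$ / $B_\ep$ respectively (this is the standard compatibility of the Gelfand transform with periodic partitions of the unit cell). Hence the left-hand side of \eqref{dp.mainest1}, after integrating over $\t\in\square^*$ and applying Plancherel for $U$ and $\Gamma_\ep$, becomes $\big\Vert u_\ep-(u^{(0)}_\ep+\ep\,\ourN^{\rm pd}(\cdot/\ep)\cdot\nabla u^{(0)}_\ep)\big\Vert^2_{H^1(\RR^n\setminus\overline{B_\ep})}$; and the left-hand side of \eqref{dp.mainest2} becomes $\ep^2\big\Vert u_\ep-(u^{(0)}_\ep+v^{(0)}_\ep)\big\Vert^2_{H^1(B_\ep)}+\big\Vert u_\ep-(u^{(0)}_\ep+v^{(0)}_\ep)\big\Vert^2_{L^2(\RR^n)}$ (the $\ep^{-2}$ prefactor on the $B$-gradient term in \eqref{dp.mainest2} combines with the $\ep^2$ from $\Gamma_\ep^{-1}U^{-1}$ on $(\nabla+\i\t)$ to give the stated $\ep$-weighted $H^1(B_\ep)$ contribution, while the $L^2$ part survives unscaled). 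On the right, $\big\Vert U\Gamma_\ep F\big\Vert_{L^2(\square^*\times\square)}=\Vert F\Vert_{L^2(\RR^n)}$. Collecting: \eqref{dp.mainest1} yields the first summand of \eqref{dp.H1est} with $c_0^2 \le C C_9$, \eqref{dp.mainest2} yields both the second summand of \eqref{dp.H1est} (with $c_0^2\le C C_{10}$) and \eqref{dp.L2est} (with $c_1^2\le C_{10}$), and adding the two $H^1$ pieces gives \eqref{dp.H1est}. The constants are $\ep$- and $F$-independent because $C_9,C_{10}$ are.

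\textbf{Main obstacle.} None of this is deep; the one point requiring a little care is the precise way the two local estimates \eqref{dp.mainest1}--\eqref{dp.mainest2} match across the interface $\partial B_\ep$ so that they can be \emph{added} into a genuine global $H^1(\RR^n)$ bound in \eqref{dp.H1est} — i.e.\ checking that the two approximating expressions and $u_\ep$ all lie in $H^1$ of the whole space (so that the global $H^1$-seminorm really is the sum of the matrix- and inclusion-seminorms) and that the interface traces are consistent. Since $u_\ep\in H^1(\RR^n)$ by construction, $u^{(0)}_\ep\in C^\infty\cap H^1$, and $v^{(0)}_\ep$ extended by zero outside $B_\ep$ lies in $H^1$ because $v_{\ep,\t}\in H^1_0(B)$, this is routine; the only subtlety is that the \emph{corrector} approximation $u^{(0)}_\ep+\ep(\tilde\Gamma_\ep^{-1}\ourN^{\rm pd})\cdot\nabla u^{(0)}_\ep$ is only defined and only estimated on $\RR^n\setminus\overline{B_\ep}$, which is exactly why the first norm in \eqref{dp.H1est} is taken over $\RR^n\setminus\overline{B_\ep}$ and not over $\RR^n$ — so no cross-interface gluing is actually needed there, and the two terms in \eqref{dp.H1est} are simply two separate, independently-derived estimates written side by side. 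With that understood, the proof is the bookkeeping of Steps 1--2 above.
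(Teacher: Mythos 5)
Your proof follows the same route as the paper: apply the inverse Gelfand and rescaling transforms to the fibre estimates of Proposition \ref{prop:dp}, compute what the approximants become (via the identity \eqref{7.19-2} on $\RR^n\setminus\overline{B_\ep}$), and reassemble using $L^2$-unitarity. The plan, the treatment of the two phases as two separate estimates, and the final conclusion are all correct.

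One arithmetic slip in Step 2: you claim there is an ``$\ep^{-2}$ prefactor on the $B$-gradient term in \eqref{dp.mainest2}'' that combines with the $\ep^2$ from $\Gamma_\ep^{-1}U^{-1}$. In fact \eqref{dp.mainest2} carries \emph{no} $\ep^{-2}$ prefactor on its gradient term — the $\ep^{-2}$ sits only on the $\Box\setminus B$-gradient in \eqref{dp.mainest1} — and if it did, the combination $\ep^{-2}\cdot\ep^2=1$ would produce an \emph{unweighted} $H^1(B_\ep)$ term, contradicting the $\ep$-weighted expression you (correctly) wrote down. The right bookkeeping is simpler than your parenthetical suggests: the $(\nabla+\i\t)$ inside $\int_B$ in \eqref{dp.mainest2} acquires a single factor $\ep$ under $\Gamma_\ep^{-1}U^{-1}$ with nothing to cancel it, which directly yields the $\ep$-weighted $H^1(B_\ep)$ bound; while in \eqref{dp.mainest1} the $\ep^{-2}$ does cancel the $\ep^2$ from the transform, giving the unweighted $H^1(\RR^n\setminus\overline{B_\ep})$ bound. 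The conclusion you state is nonetheless correct.
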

An estimate resembling 
the $L^2$-estimate \eqref{dp.L2est} was derived first in \cite{ChCo}, by different means. 
However it was based (in our notation) 
on an earlier abstract estimate \eqref{final2} with 
the approximating problem \eqref{z3prob} 
(for $\langle f, \phi \rangle = \int_\square f \overline{\phi}$), 
 i.e. with $\t$-dependent form $b_\t$. 
The latter prevents expressing the approximation in terms of a solution of the explicit 
$\ep$-independent two-scale limit problem. 
In contrast, our approximation in both \eqref{dp.L2est} and the new (energy norm) $H^1$-estimate \eqref{dp.H1est} 
is expressible in terms of an appropriate solution to the ($\ep$-independent) 
two-scale limit problem, 
see Remark \ref{H1-2slp} below. 
Moreover, our $L^2$ approximations like in 
\eqref{dp.L2est} can be refined further (see the next subsection) 
to have an improved operator structure. 
That is in terms of 
self-adjoint approximating operators  \eqref{dpcompe3} containing the resolvent of the two-scale limit problem 
sandwiched by $L^2$-isometric and ``asymptotically unitary''  two-scale connecting operators 
(which are in turn key for new 
spectral estimates in Section \ref{specestdp}). 

Remark also that most recent work \cite{BonDueGlo25}, which obtained some new quantitative (as well as qualitative) 
results for {\it stochastic} high-contrast models of the present type, has made some interesting observations 
pertinent also to the periodic scenario considered here. In particular, Remark 1.7(b) of \cite{BonDueGlo25} 
develops an approximation in terms of an {\it $\ep$-dependent} operator capable of delivering $H^1$-type estimates 
akin to \eqref{dp.H1est}. Those approximations' main advantage is that they do not rely on the Floquet-Bloch transform,  
which makes them adjustable 
to the random setting. However, even in the periodic case, making such approximations 
expressible in terms of the ($\ep$-independent) two-scale limit problem and ultimately 
applicable for error estimates on the limit spectrum would still require a number of non-trivial steps. For the latter, one way would be essentially to follow again 
our general approach in 
Sections \ref{section:discV}--\ref{s:resolv}, although possibly 
in a somewhat simplified manner. 

\subsubsection{Explicit approximation via the two-scale limit operator and an associated two-scale 
connecting operator}
\label{interpol77}
We recall, see e.g. \cite{Zhi2005,IVKVPS13},  that for problem \eqref{dp.differentialPDEdp} the following 
property of 
two-scale (pseudo-)resolvent convergence is held. 
If $F_\ep\in L^2(\RR^n)$ weakly or strongly two-scale converges 
to $f_0\in L^2(\RR^n \times \square)$ then $u_\ep$ the solution of \eqref{dp.differentialPDEdp} 
(with $F= F_\ep$) respectively 
weakly or strongly two-scale converges to 
$u_0(x,y) = u(x) + v(x,y)$  the solution to the two-scale 
limit resolvent problem  $\left(\mathcal{L}_0+I\right) u_0= \mathcal{P} f_0$. 
Here $\mathcal{L}_0$ is a self-adjoint  \textit{two-scale operator} in Hilbert space 
$L^2\big(\RR^n ;\, \mathbb{C}\,\dot{+}\,L^2(B)\big)\,=\,L^2(\RR^n) \,\dot{+}\, L^2\big(\RR^n ; L^2(B)\big)$ 
which is a closed subspace of 
$L^2\big(\RR^n ; L^2(\square)\big)=L^2\big(\RR^n\times \square\big)$. 
Operator $\mathcal{L}_0$ is 
generated by the \textit{two-scale  form}
\be
\label{q02sc}
\begin{aligned}
Q_0(u+v,\,\phi+\psi) \,\,=\,\,	\int_{\RR^n} A^{\rm hom}_{\rm pd} \nabla u(x) \cdot \overline{\nabla \phi(x)} \, {\rm d}x \,+\, \int_{\RR^n}\int_B \nabla_y v(x,y) \cdot \overline{\nabla_y \psi(x,y)} \,\, {\rm d}y\,{\rm d}x, 
\end{aligned}
\ee
for $u,\phi \in H^1(\RR^n), v,\psi \in L^2\left(\RR^n ; H^1_0(B)\right),$
with the dense form domain $H^1\left(\RR^n\right) \,\dot{+}\, L^2\big(\RR^n ; H^1_0(B)\big)$, and  
$\mathcal{P}: L^2\left(\RR^n\times \square\right) \rightarrow L^2\left(\RR^n\right) \,\dot{+}\, 
L^2\big(\RR^n ; L^2(B)\big)$ is the orthogonal projection or simply
\be
\label{p-cal-dp}
\mathcal{P}g(x,y) =  \left\{ 
\begin{array}{lcr}
	g(x,y) & & x\in \RR^n, y \in B \\
	{|\square \backslash B|^{-1}} \int_{\square \backslash B} g(x,y') \, {\rm d}y'  & &  x\in \RR^n, y \in \square \backslash B.
\end{array}
\right.
\ee
Now we observe that the above objects are precisely those that appeared in Section \ref{s.bivariate} when specialised to the present example. 
Indeed, recall that 
$\mathcal{H}=L^2(\square)$  with inner product $d_\t(u,{\tilde{u}})  = \int_\square u \,\overline{\tilde{u}}$, 
and notice  via \eqref{abcdp} that \eqref{zvbd} holds. 
Further, according to Section \ref{s.bivariate}, 
$\mathcal{H}_0 :=\overline{Z\dot{+}V_\star} =  \CC \,\dot{+}\, L^2(B)$, and
\begin{align*}
&\mathbb{H}\,=\,L^2\big(\RR^n ; (\mathcal{H},d_0)\big) = L^2\left(\RR^n \times \square\right), \qquad 
\mathbb{H}_0\,=\,L^2(\RR^n ; \mathcal{H}_0) = L^2(\RR^n) \,\dot{+}\, L^2(\RR^n ; L^2(B)), \\
&\text{and}  \quad
\check{\mathbb{D}}\,=\,H^1(\RR^n;Z) \,\dot{+}\, L^2(\RR^n;V_\star) \,=\, H^1(\RR^n) \,\dot{+}\, L^2\big(\RR^n ; H^1_0(B)\big),
\end{align*}
 all equipped with the standard norms. Therefore, comparing the above two-scale form $Q_0$ and the  bivariate form $Q$ (see \eqref{Q}) and recalling \eqref{dp.ahom} we find that simply 
$Q(u+v,\,\phi+\psi) \,=\, Q_0(u+v,\,\phi+\psi)  \,+\, (u+v,\,\phi+\psi)_{L^2(\RR^n \times \square)}$. 
So  for  the  abstract bivariate operator $\mathcal{L}$ generated by $Q$ 
as introduced in Section \ref{s.bivariate}, $\mathcal{L}=\mathcal{L}_0+I $. 
We notice also from \eqref{q02sc} that 
for $g\in\mathbb{H}$ the two-scale limit problem 
$\left(\mathcal{L}_0+I\right)u_0=\mathcal{L}u_0=\mathcal{P}g$ 
can be written as quite an explicit system: 
find such 
$u_0=u+v\in\check{\mathbb{D}}$ that 
\be 
\label{2sclimprob}
\left\{ \ \begin{aligned}
-\,\,\text{div}_x\left(\,A^{\rm hom}_{\rm pd}\,\nabla_x u(x)\right)\,\,+\,\,u(x)\,\,+\,\,\int_\square v(x,y)\,{\rm d}y
  \,\,\,& =\,\,\, \int_\square g(x,y)\,{\rm d}y, \qquad x \in \RR^n;\\
 -\,\,\Delta_y v(x,y)\,\,  +\,\, u(x)\,\,+\,\,v(x,y) \,\,\, &=\,\,\, g(x,y), \qquad y \in B.
\end{aligned} \right.
\ee
As a result, Theorem \ref{thm.bivariate} via a routine specialisation to the present setting 
(for both choices of $\mathcal{E}_\t$ discussed in Remark \ref{ecalextns}) 
yields the following. 
\begin{theorem}
	\label{thm.bivariatehc} 
For $0<\ep <1$ 
and self-adjoint operators $\mathcal{L}_{\ep,\t}$ specified in $\mathcal{H}=L^2(\square)$ by forms in \eqref{dp.varp}, 
	\[
	\Big\Vert\mathcal{L}_{\ep,\t}^{-1} g(\t) \,-\, \Big(A_\ep^* \left(\mathcal{L}_0+I\right)^{-1}\mathcal{P} A_\ep g\Big)(\t)\Big\Vert_{L^2(\square)}  \,\,\le\,\, 
	C_{11}\,\ep\,
	  \big\Vert g(\t)\big\Vert_{L^2(\square)} \, ,	\quad \forall g \in L^2(\square^* \times \square), \quad a.e.\ \t \in \square^*,
	\]
where 
	$A_\ep : L^2(\square^* \times \square) \rightarrow L^2(\RR^n \times \square) $, $A_\ep=\Gamma_\ep^{-1}\mathcal{F}^{-1} \, \chi\,\mathcal{E}^{-1}$,   
	and its adjoint 
	$A_\ep^* : L^2(\RR^n \times \square ) \rightarrow L^2(\square^*\times\square )$, 
	$A_\ep^*=
	\mathcal{E}\, \chi^*\, \mathcal{F}\,\Gamma_\ep$, 
 are given by the continuous extensions of: for $y\in B$,  
\begin{equation}
\label{Aep}
 A_\ep g(x,y) 
 \,\,=\,\, (2\pi)^{-n/2} \ep^{-\,n/2} \int_{\square^*}  e^{\i\, \t \cdot y}g(\t,y) 
e^{\i \,\t\, \cdot\, \frac{x}{\ep}}\, {\rm d} \t, \quad x\in \mathbb{R}^n, \ y\in B,  
 \end{equation}
 \begin{equation}
\label{Aepstar}
 A_\ep^* h(\t,y) \,\,=\,\,(2\pi)^{-\,n/2} \ep^{-\,n/2} e^{-\i\, \t \cdot y}\int_{\RR^n} h(x,y) 
e^{-\i \,\frac{\t}{\ep}\, \cdot\, x} \, {\rm d}x, \quad \t\in \square^*, \ y\in B;  
 \end{equation} 
for the choice $\mathcal{E}_\t=\mathcal{E}^{(1)}_\t$ 
(see Remark \ref{ecalextns}),  
\eqref{Aep} and \eqref{Aepstar} hold for $y\in\square$;  
if $\mathcal{E}_\t=\mathcal{E}^{(2)}_\t$, 
then for $y\in\square\backslash B$ 
the exponential factors $e^{\i\,\t\cdot y }$ and 
$e^{-\i\,\t\cdot y }$ in \eqref{Aep} and \eqref{Aepstar}  respectively have to be dropped. 
\end{theorem}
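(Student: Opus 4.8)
The statement to be proved, Theorem~\ref{thm.bivariatehc}, is obtained by specialising the abstract Theorem~\ref{thm.bivariate} to the present high-contrast example, so the strategy is essentially one of identification: show that the abstract objects $\mathcal{H}$, $\mathcal{H}_0$, $\mathbb{H}$, $\mathbb{H}_0$, $\check{\mathbb{D}}$, the bivariate operator $\mathcal{L}$, the connecting operator $A_\ep$ and its adjoint $A_\ep^*$ all coincide (up to the explicit identifications made in the text) with the concrete objects of the two-scale homogenisation of \eqref{dp.differentialPDEdp}. The bulk of the verification---that assumptions \eqref{KA}--\eqref{H6} hold and that $\mathcal{L}=\mathcal{L}_0+I$---has already been carried out in the preceding subsection, so only two genuinely new tasks remain: deriving the explicit kernel representations \eqref{Aep}--\eqref{Aepstar} for $A_\ep$ and $A_\ep^*$, and recording how these change between the two choices $\mathcal{E}^{(1)}_\t$ and $\mathcal{E}^{(2)}_\t$ of the transfer operator from Remark~\ref{ecalextns}.

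First I would invoke Theorem~\ref{thm.bivariate} verbatim: since \eqref{KA}--\eqref{H6} have been checked, the abstract estimate $d_\t^{1/2}\bigl[\mathcal{L}_{\ep,\t}^{-1}g(\t)-(A_\ep^*\mathcal{L}^{-1}\mathcal{P}A_\ep g)(\t)\bigr]\le C_{11}\ep\,d_\t^{1/2}[g(\t)]$ holds, where here $d_\t=d_0$ is the plain $L^2(\square)$ inner product. Using $\mathcal{L}=\mathcal{L}_0+I$ (established just above via the comparison of $Q$ with $Q_0$, see \eqref{Q} and \eqref{q02sc}) immediately turns $\mathcal{L}^{-1}$ into $(\mathcal{L}_0+I)^{-1}$, giving the displayed inequality of Theorem~\ref{thm.bivariatehc}. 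The isometry property $A_\ep^*A_\ep=I$ and the remaining structural assertions then come directly from \eqref{abident}.

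The main computational step is to unwind the composition $A_\ep=\Gamma_\ep^{-1}\mathcal{F}^{-1}\chi\,\mathcal{E}^{-1}$ on a function $g\in L^2(\square^*\times\square)$. I would apply the four factors in turn: $\mathcal{E}^{-1}$ acts fibrewise as multiplication by $e^{\i\,\t\cdot y}$ (for $y\in B$, and also on $\square\backslash B$ in the case $\mathcal{E}^{(1)}_\t$, whereas for $\mathcal{E}^{(2)}_\t$ it is the identity on $\square\backslash B$), since $\mathcal{E}_\t^{-1}$ is the adjoint/inverse of the multiplication-by-$e^{-\i\,\t\cdot y}$ operator built in the verification of \eqref{Eprop1}; the extension-by-zero $\chi$ just enlarges the $\t$-domain from $\square^*$ to $\RR^n$; the inverse Fourier transform $\mathcal{F}^{-1}$ in the $\t$-variable produces $(2\pi)^{-n/2}\int_{\square^*}e^{\i\,\t\cdot y}g(\t,y)e^{\i\,\t\cdot x}\,{\rm d}\t$; and finally the inverse rescaling $\Gamma_\ep^{-1}$, $(\Gamma_\ep^{-1}f)(x)=\ep^{-n/2}f(x/\ep)$, introduces the factor $\ep^{-n/2}$ and replaces $x$ by $x/\ep$ in the oscillatory exponential. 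This yields \eqref{Aep}; the formula \eqref{Aepstar} for $A_\ep^*=\mathcal{E}\,\chi^*\,\mathcal{F}\,\Gamma_\ep$ is obtained by the mirror computation (using that $\chi^*$ restricts from $\RR^n$ back to $\square^*$, $\mathcal{F}$ is the forward transform, and $\mathcal{E}$ multiplies by $e^{-\i\,\t\cdot y}$), or alternatively by direct verification that the kernel of $A_\ep^*$ is the conjugate transpose of that of $A_\ep$ against the $L^2(\RR^n\times\square)$ and $L^2(\square^*\times\square)$ inner products. The $\mathcal{E}^{(1)}_\t$ versus $\mathcal{E}^{(2)}_\t$ dichotomy then simply propagates through these formulas: for $y\in\square\backslash B$ the factor $e^{\pm\i\,\t\cdot y}$ is present for $\mathcal{E}^{(1)}_\t$ and absent for $\mathcal{E}^{(2)}_\t$, while for $y\in B$ both choices agree.

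I do not expect any serious obstacle here---the result is essentially a bookkeeping exercise---but the one point requiring a little care is the justification that the formal manipulations on $C_0^\infty$ (or Schwartz-class) representatives extend by density/continuity to all of $L^2$, i.e.\ that \eqref{Aep}--\eqref{Aepstar} should be read as \emph{the continuous extensions} of the stated integral formulas, exactly as in the treatment of the Gelfand transform in \eqref{gt1}--\eqref{gt2}. I would phrase the proof accordingly, noting that $\Gamma_\ep^{\pm1}$, $\chi$, $\chi^*$, $\mathcal{E}^{\pm1}_\t$ (and hence $\mathcal{E}^{\pm1}$) and $\mathcal{F}^{\pm1}$ are all bounded on the relevant $L^2$ spaces, so the composition is bounded and the pointwise-in-$\t$ kernel identities hold a.e.
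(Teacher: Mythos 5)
Your proposal is correct and follows essentially the same route as the paper: the preceding text in the paper already carries out the identifications $\mathcal{H}=L^2(\square)$, $\mathcal{H}_0=\CC\dot{+}L^2(B)$, $\mathcal{L}=\mathcal{L}_0+I$ etc., states that Theorem~\ref{thm.bivariate} then applies directly, and the kernel formulas \eqref{Aep}--\eqref{Aepstar} together with the $\mathcal{E}^{(1)}_\t$ versus $\mathcal{E}^{(2)}_\t$ dichotomy are precisely the unwinding of $A_\ep=\Gamma_\ep^{-1}\mathcal{F}^{-1}\chi\,\mathcal{E}^{-1}$ that you describe (this calculation appears explicitly in the subsequent proof of Theorem~\ref{thm.2scOpRes}). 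Your additional note about reading \eqref{Aep}--\eqref{Aepstar} as continuous extensions from a dense class is exactly the intended reading and matches the paper's phrasing.
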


On the basis of the above theorem we can construct 
an approximation for 
the original resolvent problem 
\eqref{dp.differentialPDEdp} in terms of that for the two-scale limit problem \eqref{2sclimprob},  
as follows.   
Denote by 
$\mathcal{L}_\ep\,=\,-\,\nabla\cdot\bigl( A_\ep \left(\tfrac{x}{\ep} \right) \nabla \cdot \bigr) $ the 
non-negative self-adjoint operator defined in a standard way in Hilbert space $L^2\left(\mathbb{R}^n\right)$, so
 for the solution of  \eqref{dp.differentialPDEdp}  $u_\ep=\left(\mathcal{L}_\ep+I\right)^{-1}F$. 
The following theorem holds. 
\begin{theorem}\label{thm.2scOpRes}
Let $\mathcal{L}_\ep$ and $\mathcal{L}_0$ be respectively the original and the two-scale limit operators as described 
above, and $\mathcal{P}$ be the projector given by \eqref{p-cal-dp}. 
Then with some constant $C$, 
	for all $0<\ep<1$ one has 
	\begin{equation}
		\label{dpcompe3}
		\bigl\Vert \left(\mathcal{L}_\ep+I\right)^{-1} \,-\,  
		\mathcal{J}_\ep^* \left(\mathcal{L}_0+I\right)^{-1} \mathcal{P} \mathcal{J}_\ep \bigr\Vert_{L^2(\RR^n) \rightarrow L^2(\RR^n)} \,\,\le\,\, C\, \ep.  
	\end{equation}
Here $\mathcal{J}_\ep: L^2\left(\RR^n\right) \rightarrow L^2\left(\RR^n \times \square\right)$ is an $L^2$-isometry, and 
	for both choices of the transfer operator $\mathcal{E}_\t$ (Remark \ref{ecalextns}) 
 $\mathcal{J}_\ep=T_\ep\,\mathcal{I}_\ep$ where 
	$T_\ep: L^2\left(\RR^n \times \square\right) \rightarrow L^2\left(\RR^n \times \square\right)$ is a unitary ``translation'' operator  
	such that for $y\in B$, $T_\ep f(x,y)=f(x+\ep y, y)$. 
	Operator		$\mathcal{I}_\ep: L^2(\RR^n) \rightarrow L^2(\RR^n \times \square)$,  
 which we call 
	``the two-scale interpolation operator'' (see Remark \ref{RemShann} below),
	is a bounded operator given by the composition 
	\be
	\label{2ScInterp}
	\mathcal{I}_\ep:\,\,=\,\,\Gamma_\ep^{-1}\mathcal{F}^{-1} \, \chi\,
	U\, \Gamma_\ep\, \,\,=\,\, T_\ep^{-1}A_\ep\, U\, \Gamma_\ep.
	\ee
	In \eqref{2ScInterp}, $\Gamma_\ep: \, F(x)\mapsto \ep^{n/2}F(\ep x)$ is the $L^2$-unitary rescaling operator and 
	$\Gamma_\ep^{-1}: \, f(x,y)\mapsto \ep^{-n/2}f\left(\ep^{-1} x,\,y\right)$ is its inverse in $x$; 
	$\,\,U:F(x)\mapsto g(\t,y)$ is the Floquet-Bloch-Gelfand transform, see \eqref{gt1};  
	$\,\,\chi:L^2\left(\square^*\times\square\right)\rightarrow L^2\left(\RR^n\times\square\right)$ is the extension by zero outside 
	$\square^*$ in the first variable, and 
	$\mathcal{F}^{-1}:g(\xi,y)\mapsto f(x,y)$ is the inverse Fourier transform also in the first variable. 
	
	$\mathcal{I}_\ep$ is 
	an $L^2$-isometry and the continuous extension of 
	\begin{equation}
	\label{7.54-1}
	\left(\mathcal{I}_\ep F\right)(x,y) \,\,=\,\,  \sum_{m\in \ZZ^n} F\big(\ep y + \ep m\big)\,\,  
	{\rm Sinc}\left( \frac{x}{\ep}  - m - y\right) \,, \ \ \ F\in C_0^\infty\left(\mathbb{R}^n\right), 
	\end{equation}
	where ${\rm Sinc}(z)$, $z\in\mathbb{R}^n$,  is the ($n$-dimensional normalised) sinc-function: 
\[
{\rm Sinc}(z)\,\,:=\,\,\prod_{j=1}^n\,\left\{\begin{array}{cll}\frac{\sin\left(\pi z_j\right)}{\pi z_j}&, \ &z_j\neq 0 \\
1&, \ \ &z_j=0, 
\end{array}\right.
\ \ \ z\in \mathbb{R}^n. 
\]
$\mathcal{J}_\ep^*=\mathcal{I}_\ep^*\,T_\ep^{-1}$ is the adjoint of $\mathcal{J}_\ep$, where 
$\mathcal{I}_\ep^*: L^2(\RR^n \times \square) \rightarrow L^2(\RR^n) $ is the adjoint of $\mathcal{I}_\ep$ given by   
$\mathcal{I}_\ep^*=\Gamma_\ep^{-1} U^{-1}
\, \chi^*\, \mathcal{F}\,\Gamma_\ep $  (where the adjoint $\chi^*$ of $\chi$ is the restriction from $\mathbb{R}^n$ to $\square^*$ in 
the first variable). Operator $\mathcal{I}_\ep^*$ is the continuous extension of 
\begin{equation}
\label{7.54-2}
\mathcal{I}_\ep^* u_0(x) \,\,=\,\,  \ep^{-n}\, \int_{\RR^n} 
u_0\left(s, \left\{ \frac{x}{\ep} \right\}\right) 
\,\,  
	{\rm Sinc}\left( \frac{x}{\ep}\,-\, \frac{s}{\ep}\right)
\,{\rm d}s\,, 
\end{equation}	
where $\{ p \}$ 
is the fractional part of $p\in\mathbb{R}^n$ 
($\{p\}:=p-m$ for the unique $m\in\mathbb{Z}^n$ such that $p-m\in [-1/2,1/2)^n\subset\square$). 
The ranges of $\mathcal{I}_\ep$ and $\mathcal{J}_\ep$ consist of all functions $f(x,y)\in L^2(\RR^n \times \square)$ whose Fourier transform in $x$ is supported in 
$[-\pi/\ep,\pi/\ep]^n$ for a.e. $y\in\Box$. Moreover, 
\begin{equation}
\label{calttstar}
\mathcal{I}_\ep^* \mathcal{I}_\ep \,=\,\mathcal{J}_\ep^* \mathcal{J}_\ep \,=\,  I, \quad \text{and} \quad 
\mathcal{I}_\ep \mathcal{I}_\ep^* \,=\, \mathcal{J}_\ep \mathcal{J}_\ep^* \,=\,\mathcal{S}_{\ep} 
\,\rightarrow I \text{ strongly},
\end{equation} 
where $\mathcal{S}_\ep$ is the smoothing operator as given by \eqref{7.22-2} (with $\chi$ replaced by the characteristic function of ${\square^*}$) 
applied to the first variable, i.e. 
\be
\label{se2sc}
  \mathcal{S}_\ep\,\,\,=\,\,\,\mathcal{F}^{-1}\,\chi_{\ep^{-1}\square^*}\,\mathcal{F}, 
\ee
where $\chi_{\ep^{-1}\square^*}$ is multiplication (in the first variable) by characteristic function of 
$\ep^{-1}\square^*$. 
\end{theorem}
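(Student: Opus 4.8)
\textbf{Proof of Theorem \ref{thm.2scOpRes} (plan).}

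The plan is to deduce \eqref{dpcompe3} from the abstract bivariate estimate of Theorem \ref{thm.bivariatehc} by transporting it from the Gelfand-transformed, fibered setting back to the original $L^2(\mathbb{R}^n)$ setting. First I would observe that $\mathcal{L}_\ep+I$ is spectrally/unitarily related to the decomposable operator $\mathcal{L}_\ep^{\rm tr}=\int_{\square^*}^\oplus \mathcal{L}_{\ep,\t}\,{\rm d}\t$ acting in $L^2(\square^*\times\square)$ via the composition $U\Gamma_\ep$ of the unitary rescaling $\Gamma_\ep$ and the unitary Gelfand transform $U$; concretely $\left(\mathcal{L}_\ep+I\right)^{-1}=\Gamma_\ep^{-1}U^{-1}\big(\mathcal{L}_\ep^{\rm tr}\big)^{-1}U\Gamma_\ep$, where $\big(\mathcal{L}_\ep^{\rm tr}\big)^{-1}$ acts fiberwise as $\mathcal{L}_{\ep,\t}^{-1}$ for a.e. $\t$ (this is exactly the content of the reduction \eqref{differentialPDE}$\to$\eqref{difformIK}, specialised to the high-contrast coefficients \eqref{aepdp}). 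Since $U\Gamma_\ep$ is $L^2$-unitary, the operator norm on $L^2(\mathbb{R}^n)$ of the difference in \eqref{dpcompe3} equals the operator norm on $L^2(\square^*\times\square)$ of $\big(\mathcal{L}_\ep^{\rm tr}\big)^{-1}-(U\Gamma_\ep)\,\mathcal{J}_\ep^*(\mathcal{L}_0+I)^{-1}\mathcal{P}\mathcal{J}_\ep\,(U\Gamma_\ep)^{-1}$.

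The main step is then to identify the conjugated approximating operator with the fibered approximation $A_\ep^*(\mathcal{L}_0+I)^{-1}\mathcal{P}A_\ep$ from Theorem \ref{thm.bivariatehc}. From the definitions \eqref{2ScInterp} and the statement $\mathcal{J}_\ep=T_\ep\mathcal{I}_\ep$ together with $\mathcal{I}_\ep=T_\ep^{-1}A_\ep U\Gamma_\ep$, one reads off $\mathcal{J}_\ep=A_\ep\,U\Gamma_\ep$ and hence $\mathcal{J}_\ep^*=(U\Gamma_\ep)^{-1}A_\ep^*$ (using unitarity of $U\Gamma_\ep$). Substituting, $(U\Gamma_\ep)\,\mathcal{J}_\ep^*(\mathcal{L}_0+I)^{-1}\mathcal{P}\mathcal{J}_\ep\,(U\Gamma_\ep)^{-1}=A_\ep^*(\mathcal{L}_0+I)^{-1}\mathcal{P}A_\ep$, so the norm in \eqref{dpcompe3} coincides with $\big\|\big(\mathcal{L}_\ep^{\rm tr}\big)^{-1}-A_\ep^*(\mathcal{L}_0+I)^{-1}\mathcal{P}A_\ep\big\|_{L^2(\square^*\times\square)\to L^2(\square^*\times\square)}$. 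Now the fiberwise bound of Theorem \ref{thm.bivariatehc}, namely $\big\|\mathcal{L}_{\ep,\t}^{-1}g(\t)-\big(A_\ep^*(\mathcal{L}_0+I)^{-1}\mathcal{P}A_\ep g\big)(\t)\big\|_{L^2(\square)}\le C_{11}\ep\,\|g(\t)\|_{L^2(\square)}$ for a.e. $\t$, upon squaring and integrating over $\t\in\square^*$ gives precisely the $L^2(\square^*\times\square)$ operator-norm bound by $C_{11}\ep$, since $\big(\mathcal{L}_\ep^{\rm tr}\big)^{-1}$ is the direct integral of the $\mathcal{L}_{\ep,\t}^{-1}$ and $\|g\|^2_{L^2(\square^*\times\square)}=\int_{\square^*}\|g(\t)\|^2_{L^2(\square)}\,{\rm d}\t$. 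This yields \eqref{dpcompe3} with $C=C_{11}$.

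The remaining, more computational part is to verify the asserted explicit forms of $\mathcal{I}_\ep$, $\mathcal{I}_\ep^*$, $\mathcal{J}_\ep$, $\mathcal{J}_\ep^*$, their isometry, the description of their ranges, and the identities \eqref{calttstar}. For the isometry I would note that $\Gamma_\ep,U$ are unitary, $\chi$ is an isometry with $\chi^*\chi=I$ and $\chi\chi^*=\chi_{\square^*}$, and $\mathcal{F}^{-1}$ is unitary, so $\mathcal{I}_\ep=\Gamma_\ep^{-1}\mathcal{F}^{-1}\chi\,U\Gamma_\ep$ satisfies $\mathcal{I}_\ep^*\mathcal{I}_\ep=(\Gamma_\ep^{-1}U^{-1})\chi^*\chi(U\Gamma_\ep)=I$ and $\mathcal{I}_\ep\mathcal{I}_\ep^*=\Gamma_\ep^{-1}\mathcal{F}^{-1}\chi_{\square^*}\mathcal{F}\Gamma_\ep=\mathcal{F}^{-1}\chi_{\ep^{-1}\square^*}\mathcal{F}=\mathcal{S}_\ep$ acting in the $x$-variable, which tends to $I$ strongly as $\ep\to0$; the factor $T_\ep$ is unitary ($T_\ep^{-1}f(x,y)=f(x-\ep y,y)$ for $y\in B$, identity elsewhere), so $\mathcal{J}_\ep$ inherits the isometry and $\mathcal{J}_\ep\mathcal{J}_\ep^*=T_\ep\mathcal{S}_\ep T_\ep^{-1}=\mathcal{S}_\ep$ since $\mathcal{S}_\ep$ acts only in $x$ and commutes with the $y$-dependent shift $T_\ep$ (the shift in $x$ by $\ep y$ commutes with the Fourier multiplier $\chi_{\ep^{-1}\square^*}$ in $x$, up to a phase that cancels). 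The integral kernels \eqref{7.54-1}, \eqref{7.54-2} follow by a direct computation: applying \eqref{gt1} then the scaled inverse Fourier transform over the truncated frequency domain $\square^*$ produces the Dirichlet-kernel/${\rm Sinc}$ sampling series, exactly the two-scale Whittaker--Shannon formula, and the range characterisation is immediate from the presence of the frequency cutoff $\chi$. I expect the main obstacle to be purely bookkeeping: tracking the phase factors $e^{\pm\i\t\cdot y}$ that distinguish the two admissible extensions $\mathcal{E}_\t^{(1)},\mathcal{E}_\t^{(2)}$ of the transfer operator (Remark \ref{ecalextns}) and confirming that the translation operator $T_\ep$ absorbs the difference so that $\mathcal{J}_\ep$, unlike $A_\ep$, is the same for both choices; this requires carefully composing $T_\ep^{-1}$ with $A_\ep$ and checking that in the matrix $\square\backslash B$ the exponential in \eqref{Aep} is either present (choice $\mathcal{E}^{(1)}$) or absent (choice $\mathcal{E}^{(2)}$) in just the way needed for $T_\ep^{-1}A_\ep$ to coincide in both cases.
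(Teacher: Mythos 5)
Your proposal is correct and follows essentially the same route as the paper: conjugate by the unitary $U\Gamma_\ep$, read off $\mathcal{J}_\ep = A_\ep U\Gamma_\ep$ so that the approximating operator becomes $A_\ep^*(\mathcal{L}_0+I)^{-1}\mathcal{P}A_\ep$, and then integrate the fiberwise estimate of Theorem \ref{thm.bivariatehc} over $\t\in\square^*$ to get the operator-norm bound, with the explicit kernels and isometry/range properties following from \eqref{abident}, \eqref{gt1}--\eqref{gt2} and the definition of $\chi$. One small slip near the end: it is $\mathcal{I}_\ep = T_\ep^{-1}A_\ep U\Gamma_\ep = \Gamma_\ep^{-1}\mathcal{F}^{-1}\chi\,U\Gamma_\ep$ that is independent of the choice of $\mathcal{E}_\t$ (not $\mathcal{J}_\ep$, which inherits the choice-dependence from $A_\ep$ through $T_\ep$), though this does not affect the validity of the argument.
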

\begin{proof}
For any $F\in\mathbb{R}^n$,  let $u_\ep=\left(\mathcal{L}_\ep+I\right)^{-1}F$ be 
the solution of  \eqref{dp.differentialPDEdp}. 
Set $g=U\Gamma_\ep F\in L^2\left(\square^*\times\square\right)$, 
and 
 observe via \eqref{fdp} and \eqref{ik3} that for  $u_{\ep,\t}=U\Gamma_\ep u_\ep$ is the solution 
of \eqref{dp.pde}, $u_{\ep,\t}=\mathcal{L}^{-1}_{\ep,\t}g$. 
Combining this all implies 
$
\mathcal{L}_{\ep,\t}^{-1} =  U \Gamma_\ep (\mathcal{L}_\ep+I)^{-1}  \Gamma_\ep^{-1} U^{-1}.
$
Due to the $L_2$-unitarity of $U$ and $\Gamma_\ep$, Theorem \ref{thm.bivariatehc} implies that 
\eqref{dpcompe3} holds with 
\be 
\label{jepdef}
\mathcal{J}_\ep\,\,=\,\,A_\ep U\Gamma_\ep\,\,=\,\, \Gamma_\ep^{-1}\mathcal{F}^{-1} \, \chi\,\mathcal{E}^{-1}\,U\, \Gamma_\ep\,, 
\ee
which is an $L^2$-isometry from  
$L^2(\RR^n)$ to $L^2\left(\RR^n \times \square\right)$ as a composition of $L^2$-norm preserving operators. 

As $\mathcal{E}$ is multiplication by $e^{-\i\t\cdot y}$ (or identity for $y\in\square\backslash B$), its 
inverse $\mathcal{E}^{-1}$ is simply the multiplication by $e^{\i\t\cdot y}$ or identity. 
Then, commuting it with (scaled) inverse Fourier transform, 
$\Gamma_\ep^{-1}\mathcal{F}^{-1} \, \chi\,\mathcal{E}^{-1}= T_\ep\Gamma_\ep^{-1}\mathcal{F}^{-1} \, \chi$ 
where $T_\ep f(x,y)=f(x+\ep y, y)$ (or identity). 
Clearly $T_\ep$ is unitary in $L^2\left(\RR^n \times \square\right)$, and from the above 
$\mathcal{J}_\ep=T_\ep\,\mathcal{I}_\ep$ where 
$\mathcal{I}_\ep:=\Gamma_\ep^{-1}\mathcal{F}^{-1} \, \chi\, U\, \Gamma_\ep$ is an $L^2$-isometry from  
$L^2(\RR^n)$ to $L^2(\RR^n \times \square)$. 
Hence, at a dense subspace, e.g. $C_0^\infty(\mathbb{R}^n)\ni F$, 
combining this with \eqref{Aep}, \eqref{gt1} 
and \eqref{gammaep} we obtain 
\[
\mathcal{I}_\ep F(x,y)\,:=\,T_\ep^{-1}A_\ep\, U\, \Gamma_\ep F(x,y)\,=\,\, (2\pi )^{-\,n}   \sum_{m\in \ZZ^n} F(\ep y + \ep m)  
	\int_{\square^*}   e^{\i\,  \t\, \cdot\, \left( \frac{x}{\ep}  - m - y\right)}\, {\rm d} \t, 
\]
which yields \eqref{7.54-1}. 
Similarly, combining \eqref{Aepstar} with \eqref{gt2} and \eqref{gammaep} gives 
\[
\mathcal{I}_\ep^* u_0(x) \,=\, \Gamma_\ep^{-1} U^{-1} A_\ep^*T_\ep u_0(x)\,=\,  (2\pi\ep)^{-\,n} \int_{\RR^n} 
u_0\left(s, \left\{ \tfrac{x}{\ep} \right\}\right) \left( 
\int_{\square^*}e^{\i\,\t\,\cdot\,\left(\frac{x}{\ep}- \frac{s}{\ep}\right)} \,  {\rm d}\t \right){\rm d}s, 
\]
yielding \eqref{7.54-2}. 
Finally, \eqref{calttstar} 
follows via \eqref{2ScInterp}, 
\eqref{abident} and \eqref{7.22-2}; 
and the strong convergence of $\mathcal{S}_\ep$ to the unity operator $I$ directly follows from \eqref{se2sc}. 
\end{proof}

We expect the two-scale approximations of type \eqref{dpcompe3} to be of a more general interest and wider 
application potential, 
so will discuss below some related aspects in more detail. 
Operator $\mathcal{J}_\ep$ plays in \eqref{dpcompe3} a key role of $L^2$-isometrically 
(and due to \eqref{calttstar} ``asymptotically unitarily'') 
converting, for a fixed $\ep>0$, any input function $F(x)$ from $L^2(\mathbb{R}^n)$ into corresponding two-scale 
function $\mathcal{J}_\ep F(x,y)$ in $L^2(\RR^n \times \square)$. 
The latter serves in turn as the input $g(x,y)$ for the two-scale limit problem \eqref{2sclimprob}, whose solution 
$u_0(x,y)=u(x)+v(x,y)$ is converted by the adjoint 
$\mathcal{J}_\ep^*$ back into a function of $x$. 
The whole point is that such a procedure delivers an approximate self-adjoint solution operator, which is the resolvent of the two-scale limit operator preceded by the 
projection operator $\mathcal{P}$ and flanked by the connecting operator $\mathcal{J}_\ep$ and its adjoint, 
delivering the operator-normed error estimate \eqref{dpcompe3}. 
Operator $\mathcal{J}_\ep$ is a composition of a problem specific (unitary) translation operator $T_\ep$ with  
a more generic $L^2$-isometric operator $\mathcal{I}_\ep$ defined by \eqref{2ScInterp}. 
Operator $\mathcal{I}_\ep$ was introduced, in an equivalent form, in \cite{Well2009} under the name of 
``periodic two-scale transform'' 
as a convenient tool for establishing various 
two-scale convergence and compactness 
results in periodic homogenisation. In our context here its additional power comes from the 
fact that, in combination 
with the above translation operator $T_\ep$, 
naturally emerging as a specialisation of our  generic approach, it 
is capable of {\it quantifying} for the two-scale 
convergence by 
providing two-scale type operator approximations with tight error bounds. 
\begin{remark}
\label{RemShann}
Interestingly, explicit representation \eqref{7.54-1} of  $\mathcal{I}_\ep$ appears to be a natural two-scale version of the classical  Whittaker–Shannon interpolation formula, see e.g. \cite{Higgins} for a 
review. 
In this respect, operator $\mathcal{I}_\ep$ can be viewed as a 
two-scale interpolation operator. 
Indeed for regular enough $F$, given $y\in\Box$, for every $x$ with the ``phase'' $y$ i.e. $x=\ep y+\ep l$ for $l\in \mathbb{Z}^n$ 
\eqref{7.54-1} implies $\mathcal{I}_\ep F(x,y)=F(x)$. So, for a chosen $y$,
$\mathcal{I}_\ep F(x,y)$ simply reads off the values of F at all the points with the phase $y$, i.e. 
on the shifted $\ep$-periodic lattice 
$\ep y+\ep\mathbb{Z}^n$, 
 smoothly interpolating  in between for 
other $x\in\mathbb{R}^n$.  
In particular, the following 
is 
implied by the classical 
Whittaker-Kotelnikov-Nyquist-Shannon sampling theorem (see e.g. \cite{Higgins}). -- If the right hand side $F$ is itself a two-scale function, i.e. 
$F_\ep(x)=\Phi(x,x/\ep)$ where $\Phi(x,y)$
is sufficiently regular, $\square$-periodic in $y$ and its Fourier transform in $x$ is uniformly for a.e. $y$ compactly supported in an origin-centred cube $Q$ of size $2R$, 
i.e. $Q=[-R,R]^n$, then for all 
 $0<\ep\le\pi R^{-1}$, $\big(\mathcal{I}_\ep F_\ep\big)(x,y)=\Phi(x, y)$.  
On the other hand it is seen 
from \eqref{2ScInterp} that, for any $F\in L^2\left(\RR^n\right)$, 
$\left(\mathcal{I}_\ep F\right)(x,y)$ 
automatically has the above property of uniformly compact support of the $x$-Fourier transforms with $R=\pi/\ep$. 
This property is inherited by the input $g(x,y)=\mathcal{J}_\ep F(x,y)=T_\ep\left(\mathcal{I}_\ep F\right)(x,y)$ 
of the two-scale limit problem \eqref{2sclimprob} and then in turn by 
its solution $u_0(x,y)$, and further by $u_{0\, \ep}(x,y):=T_\ep^{-1}u_0(x,y)=u_0(x-\ep y,y)$ 
(or $u_{0 \,\ep}(x,y)=u_{0}(x,y)$ for $y\notin B$ in the case of  
$\mathcal{E}_\t=\mathcal{E}_\t^{(2)}$, 
Remark \ref{ecalextns}). 
It then follows from noticing that  \eqref{7.54-2} is a convolution of $u_0$ (with respect to its first variable) with the rescaled Sinc-function whose 
Fourier transform is the characteristic function of $\ep^{-1}\square^*$,  
that 
$\mathcal{J}_\ep^* u_{0}(x)=\mathcal{I}_\ep^* u_{0 \,\ep}(x)=u_{0\, \ep}\big(x,\, \{x/\ep\}\big)= 
u_0\big(\ep [x/\ep],\, \{x/\ep\}\big)$ (where $[p]:=p-\{p\}$ is the entire part of $p$), or just 
$\mathcal{J}_\ep^* u_{0}(x)=u_{0}\big(x,\, \{x/\ep\}\big)$ for 
$\mathcal{E}_\t=\mathcal{E}_\t^{(2)}$ and 
$\{x/\ep\}\notin B$. 

The above two-scale interpolation and sampling theorem properties of $\mathcal{I}_\ep$ are in fact 
encoded in its operator representation \eqref{2ScInterp}. 
Indeed, 
for sufficiently regular $g\in L^2(\RR\times\square)$, let $\mathcal{R}_\ep$ denote the mapping 
$g(x,y)\mapsto g\big(x,\{x/\ep\}\big)$. Observe from the inversion formula \eqref{gt2} for Gelfand transform that 
$U^{-1}=\mathcal{R}_1\mathcal{F}^{-1}\chi$, implying $\mathcal{R}_1\mathcal{F}^{-1}\chi U=I$. 
Next, since $\Gamma_\ep^{-1}\mathcal{R}_1=\mathcal{R}_\ep\Gamma_\ep^{-1}$, in combination with \eqref{2ScInterp} 
this yields 
\be
\label{2scIntOp0}
\mathcal{R}_\ep\mathcal{I}_\ep\,\,=\,\,I,
\ee
 i.e. (for sufficiently regular $F\in L^2(\RR)$) 
$\mathcal{I}_\ep F\big(x,\{x/\ep\}\big)=F(x)$ which is the above  
two-scale interpolation property of $\mathcal{I}_\ep$: if 
$x=\ep l+\ep y$ for $l\in\ZZ$ and $y\in\square$, 
then  simply $\mathcal{I}_\ep F\big(x,y\big)=F(x)$. 
Further, combining \eqref{2scIntOp0} with the second identity in \eqref{calttstar}, one obtains 
$\mathcal{I}_\ep^*=\mathcal{R}_\ep\mathcal{S}_\ep$. Applying $\mathcal{I}_\ep$ to both sides and then 
using \eqref{calttstar} again results in  
$\mathcal{I}_\ep\mathcal{R}_\ep\mathcal{S}_\ep={S}_\ep$, which recovers the two-scale Whittaker-Shannon sampling theorem. 
Namely, if $F_\ep(x)=\Phi(x,\,x/\ep)$ (so $F_\ep=\mathcal{R}_\ep\Phi$)
where $\Phi(x,y)\in L^2\left(\RR\times\square\right)$ 
is $\square$-periodic in $y$ 
and for every $y$ its Fourier transform in $x$ is supported within a bounded cube $[-R,R]^n$, 
then 
for $\ep\le\pi/R$ simply 
$\mathcal{S}_\ep\Phi=\Phi$ and 
$\left(\mathcal{I}_\ep F_\ep\right)(x,y)=\Phi(x,y)$. 


\end{remark}
\begin{remark} 
\label{ShannVsUnf}
In contrast to the classical homogenisation (Example \ref{e.class} above), a 
connecting operator $\mathcal{J}_\ep$ 
is necessary in \eqref{dpcompe3} for recasting any input $F$ as 
a two-scale function $g(x,y)$, to serve 
as the input for the two-scale limit problem \eqref{2sclimprob}. 
Naively setting $g(x,y)=\left(\tilde{\mathcal{J}}_\ep F\right)(x,y):=F(x)$ for all $y$, 
 the adjoint operator 
is $\tilde{\mathcal{J}}_\ep^*u_0(x)=\int_\square u_0(x,y)dy$ so for fixed $F$ the self-adjointness preserving approximation 
$u^{\rm appr}_\ep=\tilde{\mathcal{J}}_\ep^*u_0$ with $u_0$ solving \eqref{2sclimprob} is $\ep$-independent.  
Hence $u^{\rm appr}_\ep$ cannot approximate the exact solutions $u_\ep$ when they are genuinely two-scale (i.e. when $u_0(x,y)$ solving 
\eqref{2sclimprob} with ``correct'' two-scale $g=\mathcal{J}_\ep F$ remains $y$-dependent, e.g. for $F$ with a 
compactly supported Fourier transform). 
By similar arguments, 
an even more naive output approximation $u^{\rm appr}_\ep=u_0(x,x/\ep)$ in conjunction with the input $g(x,y)=F(x)$, 
apart from losing the self-adjointness, could not deliver an estimate like \eqref{dpcompe3} either. 

It could be of interest to compare our 
connecting operator $\mathcal{J}_\ep$ 
with any 
other candidate 
operators of a similar nature. One of these is the periodic unfolding operator see e.g. \cite{CDG},  
which has in fact been successfully used for establishing operator-type error estimates, 
although in 
classical homogenisation problems, 
see e.g. \cite{griso}. 
Denoting by $\mathcal{T}_\ep: L^2(\RR^n) \rightarrow L^2(\RR^n \times \square)$ the $L^2$-isometric unfolding operator, for sufficiently regular $F$ we have 
$\left(\mathcal{T}_\ep F\right)(x,y):=F\big(\ep\,[x/\ep]\,+\,\ep y\big)$. 
This can be compared with our $\mathcal{J}_\ep=T_\ep\mathcal{I}_\ep$ when 
$\mathcal{E}_\t=\mathcal{E}_\t^{(1)}$ 
(Remark \ref{ecalextns}),  
in which case $T_\ep f(x,y)=f(x+\ep y,y)$ for all $y\in\square$ and so 
from \eqref{7.54-1}
\be
\label{I-ep-mod}
\left(\mathcal{J}_\ep F\right)(x,y) \,\,=\,\,  \sum_{m\in \ZZ^n} F\big(\ep y + \ep m\big)\,\,  
	{\rm Sinc}\left(\, \frac{x}{\ep}  \,-\, m\,\right), \ \ \ y\in \square. 
\ee
One can see that, for regular enough $F$, both $\mathcal{T}_\ep$ and 
$\mathcal{J}_\ep$ produce on the same $\ep$-periodic lattice 
$\ep\, \mathbb{Z}^n$ exactly the same values $F(x+\ep y)$, however interpolate between those in different ways. 
Namely, while $\mathcal{T}_\ep$ simply extends the latter value for the whole of the related $\ep$-cell $x\in \ep l+\ep\square$ in piecewise constant way, 
$\mathcal{J}_\ep$ 
smoothly interpolates between the above points according to \eqref{I-ep-mod}. 
We briefly discuss here some further similarities and differences between $\mathcal{J}_\ep$ 
and $\mathcal{T}_\ep$, 
postponing a more detailed discussion for another study.  
Notice that for regular enough $F$ the unfolding operator can be written in a form akin to \eqref{I-ep-mod}. Namely, 
	$\left(\mathcal{T}_\ep F\right)(x,y) \,\,=\,\,  \sum_{m\in \ZZ^n} F\big(\ep y + \ep m\big)\,\,  
	\chi_\square\left( \frac{x}{\ep}  - m\right)$,  
where $\chi_\square$ is the characteristic function of the periodicity cell $\,\square$. 
Comparing then with \eqref{jepdef}, one observes that $\mathcal{T}_\ep$ has the following operator form
\be
	\label{2ScInterpUF}
	\mathcal{T}_\ep\,\,\,=\,\,\Gamma_\ep^{-1}\mathcal{F}^{-1} \, {\rm Sn }\,\,\mathcal{E}^{-1}\, U\, \Gamma_\ep\,, 
	\ee
where ${\rm Sn}: L^2\left(\square^*\times\square\right)\rightarrow L^2\left(\RR^n\times\square\right)$, replacing the 
extension operator $\chi$ in \eqref{jepdef},  is the operator of 
``$\,\square^*$-periodisation'' in the first variable followed by multiplication by the ${\rm Sinc}$ function. Namely, 
\[ 
\big({\rm Sn}\, g\big)(\xi,y) \,\,=\,\,  
{\rm Sinc}\left(\tfrac{\xi}{2\pi}\right)\,g\left(2\pi\left\{\tfrac{\xi}{2\pi}\right\},\,y\right).
\] 
It can be seen that, $\forall x\in\mathbb{R}^n$, $\sum_{l\in \ZZ^n} {\rm Sinc }^2(x+l)=1$, which implies that operator 
${\rm Sn}$ is an $L^2$-isometry (and hence so is $\mathcal{T}_\ep$, as a composition \eqref{2ScInterpUF}); 
and in particular ${\rm Sn}^*\,{\rm Sn}=I$. 

As we have seen, 
our new two-scale 
connecting operator $\mathcal{J}_\ep$ 
delivers 
a desired approximation with an 
operator norm 
error estimate 
\eqref{dpcompe3}. 
However one can show from 
the above 
using the structure of the two-scale limit operator (and in fact of more general bivariate 
operators) 
that 
the approximations based on $\mathcal{J}_\ep$ and on the unfolding operator $\mathcal{T}_\ep$ 
(i.e. when  $\mathcal{J}_\ep$ is replaced in \eqref{dpcompe3} by $\mathcal{T}_\ep$) 
are $\ep^2$-close\footnote{In the notation of Section \ref{s:resolv}, the difference of the two approximating operators on 
the left hand side of \eqref{TvsIep2} is
$
\Gamma_\ep^{-1}U^{-1}\mathcal{E}\Big[
\chi^*\Gamma_\ep^{-1}\mathbb{L}^{-1}\mathcal{P}\Gamma_\ep\chi\,-\,
{\rm Sn}^*\Gamma_\ep^{-1}\mathbb{L}^{-1}\mathcal{P}\Gamma_\ep{\rm Sn}
\Big]\mathcal{E}^{-1}U\Gamma_\ep
$. 
 Operator $\mathbb{L}^{-1}\mathcal{P}$ is direct integral of $
\mathbb{L}_\xi^{-1}\mathcal{P}^0_{\mathcal{H}_0}$, 
$\xi\in\mathbb{R}^n$, 
and one can show from its special structure that its ``symbol'' stabilises for large $\xi$, namely 
$\mathbb{L}_\xi^{-1}\mathcal{P}^0_{\mathcal{H}_0}=A_0+R(\xi)$ where $A_0$ is $\xi$-independent and 
$\|R(\xi)\|_{\mathcal{H}\to\mathcal{H}}\le C/\left(1+|\xi|^2\right)$. 
Then the parts corresponding to $A_0$ are seen to cancel, and the remaining parts via some further estimates 
 yield \eqref{TvsIep2}.}, 
 i.e. 
\be
\label{TvsIep2}
\bigl\Vert\,\mathcal{J}_\ep^* \left(\mathcal{L}_0+I\right)^{-1} \mathcal{P} \mathcal{J}_\ep F \,-\,  
		\mathcal{T}_\ep^* \left(\mathcal{L}_0+I\right)^{-1} \mathcal{P} \mathcal{T}_\ep F \bigr\Vert_{L^2(\RR^n)}\,\,\,\le\,\,\,
		C\,\ep^2\,\|F\|_{L^2\left(\RR^n\right)}, \quad \forall F\in L^2\left(\RR^n\right). 
\ee
(We remark that the above estimate holds despite $\mathcal{J}_\ep$ and $\mathcal{T}_\ep$ not being 
$L_2$-close to each other.) 
Estimate \eqref{TvsIep2} 
 implies that both approximations give operator estimate \eqref{dpcompe3}, and the underlying reasoning suggests a 
possibility for constructing 
similar approximations based on other extension operators with properties similar to those of $\chi$ and ${\rm Sn}$ for a broader  
class of examples. 
Still, we believe that our new two-scale connecting operator $\mathcal{J}_\ep$ appears here most naturally. 
Indeed, 
the extension operator $\chi$ (being the prototype of $\mathcal{J}_\ep$) naturally appears in the abstract 
setting of Theorem \ref{thm.bivariate} for arbitrary $\Theta$, while there seem no natural prototypes for ${\rm Sn}$. 

Remark finally that, like the unfolding operator, the new two-scale connecting operator 
$\mathcal{J}_\ep$ provides an equivalence link 
between two-scale convergence \cite{Ng,All} and ``conventional'' convergence: 
one can show that $F_\ep\in L^2\left(\RR^n\right)$ weakly (resp strongly) two-scale converges to 
$f_0\in L^2\left(\RR^n\times \square\right)$ if and only if $\,\mathcal{J}_\ep F_\ep$ weakly (resp strongly) 
converges to $f_0$ in $L^2\left(\RR^n\times \square\right)$. 
\end{remark}
One potential disadvantage of 
$\mathcal{J}_\ep$ specifically for the choice  $\mathcal{E}_\t=\mathcal{E}_\t^{(1)}$ 
(Remark \ref{ecalextns}), suffered in fact also by 
$\mathcal{T}_\ep$,  
is that for a given $x$ e.g.  
$x\in \ep\,\mathbb{Z}^n$, even for smooth $F(x)$ it produces a discontinuity in $y$  
on the boundary of $\square$ in the $\square$-periodic extension 
of $(\mathcal{J}_\ep F)(x,y)=F(x+\ep y)$. 
Moreover, for the related solution $u_0(x,y)=u(x)+v(x,y)$  of the two-scale limit problem \eqref{2sclimprob},  
the resulting  approximation in \eqref{dpcompe3} 
$u_\ep^{\rm appr}(x)=\mathcal{J}_\ep^*u_0(x)=u_0\big(\ep[x/\ep],\,\left\{x/\ep\right\}\big)$ is not in $H^1$ 
due to the piecewise constant dependence on the first variable. 
This appears not to pose a problem for the $L^2$ estimates like \eqref{dpcompe3}, however would cause issues in 
adopting such $\mathcal{J}_\ep$ (as well as $\mathcal{T}_\ep$) for $H^1$ estimates like 
\eqref{dp.H1est}. 
Notice however that the two-scale interpolation operator $\mathcal{I}_\ep$ does preserve the $H^1$ property. 
Indeed, it follows from its definition \eqref{2ScInterp} and the key properties of the Floquet-Bloch-Gelfand transform $U$ 
that if $u\in H^1\left(\mathbb{R}^n\right)$ then $\mathcal{I}_\ep u\in H^1\left(\mathbb{R}^n\times \square_{\rm per}\right)$ 
with $\square_{\rm per}$ denoting the periodicity torus, 
and 
$\big(\mathcal{I}_\ep\nabla u\big)(x,y)=\left(\nabla_x +\ep^{-1}\nabla_y\right)\big(\mathcal{I}_\ep u\big)(x,y)$, see also \cite{Well2009}. 
So, if we choose 
$\mathcal{E}_\t=\mathcal{E}^{(2)}_\t$ 
in Remark \ref{ecalextns}, i.e. 
with no translation in $\mathcal{J}_\ep$ outside the inclusions, the resulting approximations $J_\ep^*u_0$ would remain in 
$H^1$ at least 
separately 
in the matrix phase $M_\ep =\RR^n\backslash 
\overline{ 
B_\ep}$ and in 
the inclusion phase 
$B_\ep=\bigcup_{m\in \ZZ^n} \ep (B+m)$. 
Notice that this is exactly what would be consistent with the reduced 
$H^1$ estimates like \eqref{dp.H1est}; 
cf. also Remark \ref{H1-2slp} below. 

Notice that, for both choices of $\mathcal{J}_\ep$, in the inclusion phase i.e. for $y\in B$ it has to be the same and as in
 \eqref{I-ep-mod}. 
As per 
Remark \ref{RemShann}, for regular enough 
inputs $F(x)$ 
for $x=x_l=\ep l$, $l\in \ZZ^n$, 
this yields 
$\big(\mathcal{J}_\ep F\big)(x,y)=F(x+\ep y)$. 
The $\ep y$ shift in the argument of $F$ appears natural, as 
in the two-scale limit problem \eqref{2sclimprob} 
$x$ and $y$ are regarded as independent variables and given $u(x)$ the 
equation for $v(x,y)$ would have to be solved on the inclusion $B$ for every fixed $x$ with the right hand side 
$\big(\mathcal{J}_\ep F\big)(x,y)$. 
The role of the shift is also exposed in 
the resulting approximation 
$u_\ep^{\rm appr}(x)=\mathcal{J}_\ep^* u_0(x)=u_0\big(\ep[x/\ep],\,\left\{x/\ep\right\}\big)$ where 
$u_0(x,y)=u(x)+v(x,y)$ is the solution of the two-scale limit 
problem \eqref{2sclimprob}. Notice that a priori estimates for \eqref{2sclimprob} provide no 
control for (supported on the inclusion $B$ only) $\nabla_x v(x,y)$, which issue is happily dealt with 
by ``freezing'' in $u_\ep^{\rm appr}$ 
the $x$-variable of $v$ 
 on every isolated inclusion to $\ep[x/\ep]$. 
\begin{remark}
\label{H1-2slp}
Using the above tools, 
one can see that the approximation  $u_\ep^{\rm appr}=u_\ep^{(0)}+v_\ep^{(0)}$ in Theorem \ref{IK77-2}, entering in particular 
the new $H^1$ estimate \eqref{dp.H1est}, can also be expressed in an operator form in term of a solution  to the two-scale limit 
problem (c.f. \eqref{g-theta} in the abstract setting). 
Namely, as can be seen via \eqref{dp.app1} and \eqref{dplimp1}, 
$u_\ep^{\rm appr}(x)=u(x)+v\big(\ep[x/\ep],\,x/\ep\big)$, where $u_0(x,y)=u(x)+v(x,y)$ solves 
$
Q_0\big(u+v,\,\phi\,+\,\psi\big)\,+\,\big(u+v,\,\phi\,+\,\psi\big)_{L^2\left(\mathbb{R}^n\times\square\right)} 
\,\,=\,\,\langle \,f,\,\phi\,+\,\psi\,\rangle.  
$
Here $Q_0$ is the two-scale form \eqref{q02sc}, and $f$ is anti-linear continuous functional on 
$\mathbb{H}_0=L^2\left(\mathbb{R}^n\right)\dot{+}L^2\left(\mathbb{R}^n\times B     \right)$ 
specified by 
$
\langle \,f,\,\phi\,+\,\psi\,\rangle\,=\,
\int_{\mathbb{R}^n}\int_\square \left(\mathcal{I}_\ep F\right)(x,y)
\overline{\big[
\,{\phi(x)}+
{\psi(x-\ep y,y)}\big]}{\rm d}x\,{\rm d}y\,$. 
Then
$ \langle \,f,\,\phi\,+\,\psi\,\rangle=\left(\mathcal{G}_\ep F, \,\phi\,+\,\psi\right)_{\mathbb{H}_0}$, 
with an explicit bounded (although, in contrast to $\mathcal{I}_\ep$ and $\mathcal{J}_\ep$, not isometric) operator 
$\mathcal{G}_\ep: L^2\left(\mathbb{R}^n\right) \to {\mathbb{H}_0}$. 
Namely, $\mathcal{G}_\ep =\widehat T_\ep^*\mathcal{P}\mathcal{I}_\ep$ where projector $\mathcal{P}$ is given 
by \eqref{p-cal-dp} and $\widehat T_\ep^*:\mathbb{H}_0\to\mathbb{H}_0$ is the 
adjoint to 
the translation operator applied only to the 
$\psi$-term: 
$\widehat T_\ep\big(\phi(x)+\psi(x,y)\big)= \phi(x)+\psi(x-\ep y, y)$. 
Via a more explicit calculation, 
for the resulting connecting operator $\mathcal{G}_\ep$, 
\begin{equation}
\label{geph1}
\left(\mathcal{G}_\ep F\right)(x,y)\,\,=\,\,
\chi_{\square\backslash B}(y)\,|\square\backslash B|^{-1}\left\{
\left(\mathcal{S}_\ep F\right)(x)\,-\,\int_B\mathcal{I}_\ep F\big(x+\ep y',y'\big){\rm d}y'\right\}\,\,+\,\,
\chi_B(y)\,\mathcal{I}_\ep F(x+\ep y,y), 
\end{equation}
where $\mathcal{S}_\ep$ is the smoothing operator defined by \eqref{7.22-2} with $\chi=\chi_{\square^*}$, 
and in fact $\left(\mathcal{S}_\ep F\right)(x)=\int_\square \mathcal{I}_\ep F(x,y){\rm d}y$. 
As a result, $u_0=\left(\mathcal{L}_0+I\right)^{-1}\mathcal{G}_\ep F$, i.e. is the solution to the two-scale 
limit problem \eqref{2sclimprob} with the right hand side $g=\mathcal{G}_\ep F$. 
Comparing \eqref{geph1} with the coupled system \eqref{2sclimprob}, we observe that the actual right hand sides in the latter are: 
$\int_\square g(x,y){\rm d}y=\left(\mathcal{S}_\ep F\right)(x)$ and (for $y\in B$) 
$g(x,y)=\mathcal{I}_\ep F(x+\ep y,y)$. 
(Similarly to Example \ref{e.class}, 
cf. \eqref{7.23-2}--\eqref{7.23-3}, one can probably remove  $\mathcal{S}_\ep$ in the above.) 
We finally observe that $u_\ep^{\rm appr}=\mathcal{G}_\ep^*u_0$ where $\mathcal{G}_\ep^*$ is the adjoint of $\mathcal{G}_\ep$. 
So the approximation  in Theorem \ref{IK77-2} is provided by a self-adjoint operator: 
$u_\ep^{(0)}+v_\ep^{(0)}=\mathcal{G}_\ep^*\left(\mathcal{L}_0+I\right)^{-1}\mathcal{G}_\ep F$. 
\end{remark}

\subsubsection{Error estimates on the rate of convergence of spectral characteristics} 
\label{specestdp}
The following important 
results on the rate of convergence 
of the spectrum hold by specialising our general Theorem \ref{bivariate.spec} to the present example. 
(Notice that the resolvent estimate \eqref{dpcompe3} is  insufficient for this in its own, 
as $\mathcal{J}_\ep$ are isometric but only ``asymptotically'' unitary, 
with the gap in effect closed in the general 
Theorem \ref{t.collectivespec}.) 
We emphasise that the following theorem and other results of this subsection hold, with obvious changes, 
for arbitrary complex Hermitian matrix coefficients as mentioned below \eqref{aepdp}. 
It seems for the chosen simplest model \eqref{aepdp}, as well as for some of its generalisations with real 
(possibly matrix) valued coefficients, the estimate \eqref{specest} below as well as some of the following results can be improved, which we postpone  
to a separate investigation. 
\begin{theorem}
\label{EVestDP}
For every real $b$ there exists a non-negative constant $C(b)$ 
growing at most quadratically as $b\to+\infty$, such that 
	for every interval $[a,b] \subset (-\infty,\infty)$ one has 
	\begin{equation}
	\label{specest}
	{\rm dist}_{[a,b]} \Big( {\rm Sp}\,  \mathcal{L}_\ep,{\rm Sp}\, \mathcal{L}_0\Big) \,\,\,\le \,\,\,C(b)\,\ep,  \ \ \ \ \forall \ \ 0<\ep<1. 
	\end{equation}
	Further, for the spectrum of the above two-scale limit operator $\mathcal{L}_0$ with infinitely many gaps, 
	\be
	\label{limspdp}
	{\rm Sp}\, \mathcal{L}_0 \,\,=\,\,
	\bigcup_{m=1}^\infty \big[\,\mu_m\,,\,\lambda_m\,\big]
	\,\,=\,\, \Big\{ \lambda \notin {\rm Sp}\, \left(-\Delta_{H^1_0(B)}\right) \,:\, \beta_{\rm B}(\lambda) \,\ge\, 0  \Big\} \,\cup\, 
	{\rm Sp}\,\left(-\Delta_{H^1_0(B)}\right).
	\ee
Here $-\Delta_{H^1_0(B)}$ is the Dirichlet Laplacian on the inclusion $B$ and 
$\lambda_m$ are its eigenvalues in ascending order; 
$\mu_m$ are eigenvalues, in ascending order, of ``electrostatic problem'': find $0\ne c_m+\psi_m\in 
\CC\dot{+}H_0^1(B)$ such that 
$\int_B\nabla\psi_m\cdot\nabla\tilde\psi=\mu_m\int_B\big(c_m+\psi_m\big)\big(\tilde c+\tilde\psi\big)$, 
$\forall\,  \tilde c+\tilde\psi\in \CC\dot{+}H_0^1(B)$, 
i.e. $\mu_1=0$ and for $m\ge 2$, $\mu_m>0$ are such that 
$-\Delta\psi_m=\mu_m\left(\psi_m-|B|^{-1}\int_B\psi_m\right)$ for $\psi_m\ne 0$. 
Further, 
$
\beta_{\rm B}(\lambda) := \lambda + \lambda^2 \int_B \Big( -\Delta_{H^1_0(B)} \,-\, \lambda\,\Big)^{-1} \mathbf{e} 
$ 
is the $\beta$-function associated with $B$ 
introduced by Zhikov, see e.g. \cite{Zhi2000,Zhi2005}. 
	In particular, when $(a,b)$ is a gap in  ${\rm Sp}\, \mathcal{L}_0$  then 
	$\big[a +\,C(b)\, \ep,\, b\,-\,C(b)\,\ep\big]$ is in a gap of  $\,{\rm Sp}\, \mathcal{L}_\ep$ when $\ep \,<\, (b-a)/(2C(b))$.
\end{theorem}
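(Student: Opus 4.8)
The plan is to simply assemble Theorem \ref{EVestDP} from the abstract machinery already in place, after identifying the abstract ingredients with their concrete counterparts for the model \eqref{dp.differentialPDEdp}. First I would invoke the verification, carried out above, that the forms $a_\t$, $b_\t$ in \eqref{abcdp} together with $d_\t=d_0=\langle\cdot,\cdot\rangle_{L^2(\square)}$ satisfy all of \eqref{KA}--\eqref{H6}, so that the conclusions of Section \ref{s:resolv} apply. The operator $\mathcal{L}_\ep$ on $L^2(\RR^n)$ is, after rescaling and Gelfand transform, the direct integral $\int_{\square^*}^\oplus\mathcal{L}_{\ep,\t}\,{\rm d}\t$ of the fibre operators $\mathcal{L}_{\ep,\t}$ generated by $A_{\ep,\t}=\ep^{-2}a_\t+b_\t$; hence $\mathrm{Sp}\,\mathcal{L}_\ep=\overline{\bigcup_{\t\in\square^*}\mathrm{Sp}\,\mathcal{L}_{\ep,\t}}$. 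In Section \ref{interpol77} it was shown that the abstract bivariate operator $\mathcal{L}$ of Section \ref{s.bivariate} coincides here with $\mathcal{L}_0+I$, where $\mathcal{L}_0$ is the two-scale limit operator generated by \eqref{q02sc}. Therefore \eqref{specestgen} of Theorem \ref{bivariate.spec}, with a harmless shift of the spectral variable by $1$, gives at once
\[
{\rm dist}_{[a,b]}\big({\rm Sp}\,\mathcal{L}_\ep,{\rm Sp}\,\mathcal{L}_0\big)\,\le\,C_b\,\ep,\qquad 0<\ep<1,
\]
with $C_b$ the constant of Corollary \ref{c.collspec}, which grows at most quadratically in $b$; this is \eqref{specest} with $C(b):=C_{b+1}$.

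Next I would establish the explicit description \eqref{limspdp} of ${\rm Sp}\,\mathcal{L}_0$. By Theorem \ref{bivariate.spec}, ${\rm Sp}(\mathcal{L}_0+I)=\{\lambda\notin{\rm Sp}\,\mathbf{B}_\star:\beta_\lambda[z]\ge0\text{ for some }0\ne z\in Z\}\cup{\rm Sp}\,\mathbf{B}_\star$. Here $Z={\rm Span}\{\mathbf{e}\}$ is one-dimensional and $V_\star=H^1_0(B)$, and $\mathbf{B}_\star$ is the operator on $L^2(B)$ generated by $b_0(\phi,\tilde\phi)=\int_B\nabla\phi\cdot\overline{\nabla\tilde\phi}+\int_B\phi\overline{\tilde\phi}$ on $H^1_0(B)$, i.e. $\mathbf{B}_\star=-\Delta_{H^1_0(B)}+I$, so ${\rm Sp}\,\mathbf{B}_\star=\{\lambda_m+1\}_m$ with $\lambda_m$ the Dirichlet eigenvalues of $B$. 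One checks that \eqref{zvbd} holds in this example (it was noted that $b_0$ and $d_0$ agree on the relevant coupling, since for $\phi\in H^1_0(B)$ and $\mathbf e$ constant, $b_0(\phi,\mathbf e)=\int_B\phi=d_0(\phi,\mathbf e)$), so the form $\beta_\lambda$ of \eqref{betadef}--\eqref{6.27-2} is available directly. Computing $\beta_\lambda[\mathbf{e}]$ from \eqref{iksign} with $Z$ one-dimensional, using $d_0[\mathbf e]=|\square|=1$ and $v_\lambda(\mathbf e)=(\lambda-1)(\mathbf B_\star-\lambda)^{-1}\mathcal P^0_{\overline{V_\star}}\mathbf e=(\lambda-1)(-\Delta_{H^1_0(B)}-(\lambda-1))^{-1}\mathbf e$ (noting $\mathcal P^0_{\overline{V_\star}}\mathbf e$ is the $L^2(B)$-part of $\mathbf e$, i.e. $\mathbf e|_B$), one gets, after the shift $\lambda\mapsto\lambda+1$, exactly $\beta_\lambda[\mathbf e]=\beta_{\rm B}(\lambda)=\lambda+\lambda^2\int_B(-\Delta_{H^1_0(B)}-\lambda)^{-1}\mathbf e$, the Zhikov $\beta$-function. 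Translating the resolvent set condition back by $1$ yields the right-hand description in \eqref{limspdp}. For the band-gap form $\bigcup_m[\mu_m,\lambda_m]$, I would apply Theorem \ref{limspecsimple}: ${\rm Sp}\,\mathcal{L}_0=\bigcup_k[\lambda^{(k)}_0-1,\lambda^{(k)}_\star-1]$ (with the shift), where $\lambda^{(k)}_\star-1=\lambda_m$ are the Dirichlet eigenvalues and $\lambda^{(k)}_0$ are the eigenvalues of $\mathbf B_0$ on $\CC\dot+H^1_0(B)$, which after subtracting $1$ are precisely the eigenvalues $\mu_m$ of the stated electrostatic problem; the monotonicity $\mu_m\le\lambda_m$ is the inequality $\lambda_0^{(k)}\le\lambda_\star^{(k)}$ of Theorem \ref{limspecsimple}.

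Finally, the gap-opening statement is immediate: if $(a,b)\cap{\rm Sp}\,\mathcal{L}_0=\emptyset$, then applying \eqref{specest} on $[a,b]$ forces ${\rm dist}([a,b]\cap{\rm Sp}\,\mathcal{L}_\ep,{\rm Sp}\,\mathcal{L}_0)\le C(b)\ep$, so no point of ${\rm Sp}\,\mathcal{L}_\ep$ can lie in $[a+C(b)\ep,\,b-C(b)\ep]$ once $C(b)\ep<(b-a)/2$, which is the last sentence. The main obstacle, such as it is, is bookkeeping rather than conceptual: one has to carry the $+1$ spectral shift consistently through the identifications $\mathcal{L}=\mathcal{L}_0+I$, $\mathbf B_\star=-\Delta_{H^1_0(B)}+I$, and the passage from the abstract $\beta_\lambda$ to Zhikov's $\beta_{\rm B}$, and to check that the constant $C_b$ of Corollary \ref{c.collspec}, which is $\le C(1+b^2)$, indeed transfers to an at-most-quadratically growing $C(b)$ after the shift; everything else is a direct specialisation of Theorems \ref{limspecsimple}, \ref{thm.limspecrep}, \ref{bivariate.spec} and Corollary \ref{c.collspec}, whose hypotheses \eqref{KA}--\eqref{H6} were already verified for this example.
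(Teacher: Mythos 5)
Your proposal is correct and follows essentially the same route as the paper: specialise Theorem \ref{bivariate.spec} (hence Theorems \ref{limspecsimple}, \ref{thm.limspecrep} and Corollary \ref{c.collspec}) to the present model via the identifications $\overline{\bigcup_\theta{\rm Sp}\,\mathcal{L}_{\ep,\theta}}={\rm Sp}(\mathcal{L}_\ep+I)$, $\mathcal{L}=\mathcal{L}_0+I$, $\mathbf{B}_\star=-\Delta_{H^1_0(B)}+I$, $\mathbf{B}_0-I$ the electrostatic operator, and $\beta_\lambda[\mathbf{e}]=\beta_{\rm B}(\lambda-1)$, then carry the unit spectral shift through. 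A minor remark: your bookkeeping $C(b)=C_{b+1}$ is the correct direction of the shift (the paper writes $C_{b-1}$, which appears to be a slip), though both versions give the stated at-most-quadratic growth.
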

\begin{corollary}
\label{HempLien-rate}
For the Hempel-Lienau \cite{HeLi} operator $\mathcal{B}_\delta u=\,-\,\nabla_y\cdot\big(B_{\delta}(y)\nabla_y u\,\big)$, 
equivalent to $\mathcal{L}_\ep$ with $\delta=\ep^2$ 
and 
$B_\delta(y)=\delta^{-1}A_{\delta^{1/2}}(y)$ $\square$-periodic and equal to $1$ in the inclusions and 
$\delta^{-1}$ in the matrix, 
\be
\label{HempLien-est}
{\rm dist}_{[a,b]} \Big( {\rm Sp}\,  \mathcal{B}_\delta\,,\,{\rm Sp}\, \mathcal{L}_0\Big) \,\,\,\le \,\,\,
C(b)\,\delta^{1/2},  \ \ \ \ \forall \ \ 
0<\delta<1. 
\ee
\end{corollary}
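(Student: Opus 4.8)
\textbf{Proof plan for Corollary \ref{HempLien-rate}.}
The plan is to reduce the statement for $\mathcal{B}_\delta$ to the already-proved spectral estimate \eqref{specest} for $\mathcal{L}_\ep$ by exploiting the exact spectral equivalence between the two operators. First I would make precise the claim, alluded to in the introduction, that $\mathcal{B}_\delta$ and $\mathcal{L}_\ep$ with $\ep=\delta^{1/2}$ are unitarily equivalent: applying the change of variables $y=x/\ep$ to $\mathcal{L}_\ep u=-\nabla_x\cdot(A_\ep(x/\ep)\nabla_x u)$ and using that $A_\ep(\cdot)=A_{\ep^2}(\cdot)$ has value $1$ on the matrix and $\ep^2$ on the inclusions, one finds that the rescaled operator equals $\ep^2\,\big(-\nabla_y\cdot(B_\delta(y)\nabla_y\cdot)\big)=\ep^2\mathcal{B}_\delta$, since $B_\delta(y)=\delta^{-1}A_{\delta^{1/2}}(y)$ is exactly $A_\ep(y)$ divided by $\ep^2$. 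Hence the $L^2(\RR^n)$-unitary dilation $(\mathcal{D}_\ep u)(y):=\ep^{n/2}u(\ep y)$ satisfies $\mathcal{D}_\ep\,\mathcal{L}_\ep\,\mathcal{D}_\ep^{-1}=\ep^2\mathcal{B}_\delta$, and therefore
\begin{equation}
\label{HL-spectrel}
{\rm Sp}\,\mathcal{L}_\ep\,=\,\ep^2\,{\rm Sp}\,\mathcal{B}_\delta\,=\,\delta\,{\rm Sp}\,\mathcal{B}_\delta.
\end{equation}

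Next I would translate the $[a,b]$-localised Hausdorff distance in \eqref{specest} under the scaling \eqref{HL-spectrel}. Fix $[a,b]$ and set $\ep=\delta^{1/2}$. Applying Theorem \ref{EVestDP} on the rescaled interval $[\delta a,\delta b]\subset(-\infty,\infty)$ gives
\begin{equation}
\label{HL-rescaled}
{\rm dist}_{[\delta a,\delta b]}\big({\rm Sp}\,\mathcal{L}_\ep,\ {\rm Sp}\,\mathcal{L}_0\big)\,\le\,C(\delta b)\,\delta^{1/2}.
\end{equation}
Now I would use the elementary homogeneity of the ingredients: for any $t>0$, ${\rm dist}_{[ta,tb]}(tX,tY)=t\,{\rm dist}_{[a,b]}(X,Y)$, which follows directly from the definitions of ${\rm dist}$, ${\rm dist}_{[a,b]}$ in Corollary \ref{c.collspec} together with the fact that intersection commutes with dilation. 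Combined with \eqref{HL-spectrel} (i.e. ${\rm Sp}\,\mathcal{L}_\ep=\delta\,{\rm Sp}\,\mathcal{B}_\delta$), and provided one also checks that ${\rm Sp}\,\mathcal{L}_0$ is comparably related to a ``$\delta$-scaled limit'' — here the cleaner route is to instead run Theorem \ref{EVestDP} directly on $[a,b]$ for $\mathcal{B}_\delta$'s own two-scale limit, noting that the two-scale limit operator is scale-invariant in the appropriate normalised variables, so $\mathcal{L}_0$ plays the limiting role for $\mathcal{B}_\delta$ as well. Carrying this through, \eqref{HL-rescaled} divided by $\delta$ yields ${\rm dist}_{[a,b]}\big({\rm Sp}\,\mathcal{B}_\delta,{\rm Sp}\,\mathcal{L}_0\big)\le \delta^{-1}C(\delta b)\,\delta^{1/2}=\delta^{-1/2}C(\delta b)$.

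The final step is to absorb the $\delta$-dependence of $C(\delta b)$ into a clean bound. Since Theorem \ref{EVestDP} guarantees $C(b)$ grows at most quadratically as $b\to+\infty$, for $0<\delta<1$ we have $C(\delta b)\le C(\max\{b,1\})=:C(b)$ (monotonicity being harmless after replacing $C$ by its non-decreasing majorant), so in fact $\delta^{-1}C(\delta b)\le \delta^{-1}C(b)$; however the cleaner accounting is obtained by not rescaling the spectral window at all but instead noting that ${\rm Sp}\,\mathcal{B}_\delta=\delta^{-1}{\rm Sp}\,\mathcal{L}_{\delta^{1/2}}$ and applying \eqref{specest} with the target interval $[\delta a,\delta b]$, whose right endpoint $\delta b<b$ keeps the constant bounded by $C(b)$, giving ${\rm dist}_{[\delta a,\delta b]}(\cdot)\le C(b)\delta^{1/2}$ and hence, after dividing by $\delta$ and using homogeneity, ${\rm dist}_{[a,b]}\big({\rm Sp}\,\mathcal{B}_\delta,{\rm Sp}\,\mathcal{L}_0\big)\le C(b)\delta^{-1/2}$ — which is \emph{not} \eqref{HempLien-est}. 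This sign of the exponent is the main obstacle: to land on the stated $\delta^{1/2}$ rate one must rescale the \emph{limit} operator as well, i.e. observe that $\mathcal{B}_\delta$ converges (in the two-scale sense, with the $\ep$ there being $\delta^{1/2}$) not to $\mathcal{L}_0$ but to $\delta^{-1}$ times a limit operator unitarily equivalent to $\mathcal{L}_0$ in the $1$-periodic variables — so that the physically meaningful comparison is between ${\rm Sp}\,\mathcal{B}_\delta$ and ${\rm Sp}\,\mathcal{L}_0$ at the \emph{same} (order-one) energy scale, and \eqref{specest} applied verbatim on $[a,b]$ with $\ep=\delta^{1/2}$ already gives \eqref{HempLien-est}. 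The careful point to get right is therefore the bookkeeping of which operator is ``the'' limit at which energy scale; once that is pinned down, \eqref{HempLien-est} is an immediate transcription of Theorem \ref{EVestDP} with $\ep=\delta^{1/2}$, and the at-most-quadratic growth of $C(\cdot)$ supplies the claimed constant $C(b)$.
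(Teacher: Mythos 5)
The central computation at the start of your argument is wrong, and this wrecks the bookkeeping that follows. You claim $\mathcal{D}_\ep\,\mathcal{L}_\ep\,\mathcal{D}_\ep^{-1}=\ep^2\mathcal{B}_\delta$, hence ${\rm Sp}\,\mathcal{L}_\ep=\delta\,{\rm Sp}\,\mathcal{B}_\delta$. That is off by a factor $\ep^{-4}$ inside the bracket (hence $\delta^{-2}$ on the spectrum). Redo the rescaling carefully: with $v(y)=u(\ep y)$ one has $\big(\nabla_x u\big)(\ep y)=\ep^{-1}\nabla_y v(y)$ and an additional factor $\ep^{-1}$ from $\nabla_x\cdot$, so
\[
\big(\mathcal{L}_\ep u\big)(\ep y)\,=\,-\,\ep^{-2}\,\nabla_y\cdot\big(A_\ep(y)\,\nabla_y v(y)\big)\,=\,-\,\nabla_y\cdot\big(\ep^{-2}A_\ep(y)\,\nabla_y v(y)\big)\,=\,-\,\nabla_y\cdot\big(B_\delta(y)\,\nabla_y v(y)\big)\,=\,\big(\mathcal{B}_\delta v\big)(y),
\]
since $B_\delta=\delta^{-1}A_{\delta^{1/2}}=\ep^{-2}A_\ep$. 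The $\ep^{-2}$ from the change of variables \emph{exactly cancels} the $\ep^{2}$ relating $A_\ep$ to $B_\delta$; there is no residual scaling. Thus $\mathcal{D}_\ep\mathcal{L}_\ep\mathcal{D}_\ep^{-1}=\mathcal{B}_\delta$ and ${\rm Sp}\,\mathcal{B}_\delta={\rm Sp}\,\mathcal{L}_\ep$, spectra identical, window unchanged. The corollary is then a one-line transcription of \eqref{specest}: with $\ep=\delta^{1/2}\in(0,1)$,
\[
{\rm dist}_{[a,b]}\big({\rm Sp}\,\mathcal{B}_\delta,\,{\rm Sp}\,\mathcal{L}_0\big)\,=\,{\rm dist}_{[a,b]}\big({\rm Sp}\,\mathcal{L}_{\delta^{1/2}},\,{\rm Sp}\,\mathcal{L}_0\big)\,\le\,C(b)\,\delta^{1/2}.
\]

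Your second half correctly senses that your first half can't be right --- you yourself derive $\delta^{-1/2}$, the wrong sign of exponent --- but the attempted repair does not identify the actual error. Talking about ``rescaling the limit operator as well'' and ``$\mathcal{B}_\delta$ converging to $\delta^{-1}$ times a limit operator unitarily equivalent to $\mathcal{L}_0$'' is not a fix: $\mathcal{B}_\delta$ is unitarily equivalent to $\mathcal{L}_\ep$ at the same energy scale, period, so there is nothing to rescale. Your closing sentence does land on the right conclusion (``an immediate transcription of Theorem~\ref{EVestDP} with $\ep=\delta^{1/2}$''), but only by fiat, after an erroneous calculation and a hand-waving pivot. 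You should go back and correct the change-of-variables, at which point the whole detour through interval rescaling, homogeneity of ${\rm dist}_{[a,b]}$, and ``energy scales'' disappears.
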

Theorem \ref{EVestDP} 
follows from Theorem \ref{bivariate.spec} upon 
the following specialisations for the present example: 
$ 
 \overline{\bigcup_{\theta \in \Theta} {\rm Sp} \, \mathcal{L}_{\ep,\t}}\,=\,
{\rm Sp}\, \left(\mathcal{L}_\ep+I\right)$, 
$\mathcal{L}=\mathcal{L}_0+I$, 
$\mathbf{B}_\star \,=\, -\,\Delta_{H^1_0(B)}+ I$, 
$\mathbf{B}_0-I=\mathbb{L}_0-I$ is the electrostatic operator, 
$C(b)=C_{b-1}$, 
and 
$\beta_\lambda[\mathbf{e}] \, = \,\beta_{\rm B}(\lambda - 1)$, 
see \eqref{betaform}--\eqref{6.27-2}. 
Further, 
\eqref{iksign} specialises to\footnote{Remark that, for $B=B_a$ a ball of radius $a<1/2$, 
$\beta_B(\lambda)$ is found explicitly in terms of trigonometric or Bessel functions. 
In particular, for $n=3$, 
$\beta_B(\lambda)=\lambda\left(1-4\pi a^3/3\right)+4\pi a\left(1-a\lambda^{1/2}\mbox{cotan}\left(\lambda^{1/2}a\right) \right)$, see e.g. \cite{BaKaSm} p. 419. } 
\be
\label{beta-zhikov}
\beta_B(\lambda)\,\,=\,\,\lambda\,+\,\lambda^2\, \sum_{m=1}^\infty\,
\frac{\left\vert\left\langle\phi_m\right\rangle\right\vert^2}{\lambda_m\,-\,\lambda}, 
\ee
where 
$\left\langle\phi_m\right\rangle:=\int_B\phi_m(y){\rm d}y$
and $\phi_m$ are 
$L^2$-orthonormal 
eigenfunctions of  
$-\Delta_{H^1_0(B)}$ 
corresponding to $\lambda_m$. 
This implies, cf. \cite{Zhi2000,Zhi2005}, that the spectrum of the limit operator $\mathcal{L}_0$ typically has infinitely many gaps. 
\vspace{.04in} 

It was shown in \cite{HeLi} and \cite{Zhi2005} that the Floquet-Bloch spectrum of the original 
operator $\mathcal{L}_\ep$ or $\mathcal{B}_\delta$ converges to that of $\mathcal{L}_0$ in the sense of Hausdorff. 
The estimates \eqref{specest} or \eqref{HempLien-est} provide a 
result on the uniform rate of this convergence. 
They, as well as \eqref{dpcompe3}, can be compared with 
recent results of \cite{Lipton2017} and \cite{ChErKi,ChKiVeZu23}. 
In \cite{Lipton2017}, for scalar two- and three-dimensional cases, 
robust quantitative estimates on both band gap opening and passband persistence near $\lambda_m$ with $\left\langle\phi_m\right\rangle\ne 0$ were obtained for contrasts sufficiently high  in terms of the inclusions' shape and geometry.  
The method in \cite{Lipton2017} is based on decomposition of a solution operator with a subsequent analysis of 
related quasi-periodic resonances by tools of layer potential theory. 
The resulting estimates akin to \eqref{HempLien-est} appear sharper (of order $\delta$ rather than $\delta^{1/2}$, and with  
more explicit values for $C(b)$) although restricted to the vicinities of $\lambda_m$ with $\left\langle\phi_m\right\rangle\ne 0$. 
The tools of the potential theory rely in \cite{Lipton2017} on the coefficients being constant both inside and outside the inclusions 
(and on the inclusions' boundaries to have a higher regularity than Lipschitz). 
We re-emphasise that 
our approach applies without change to problems with no regularity assumption on the generally complex-valued Hermitian matrix coefficients and inclusions with Lipschitz boundaries (as well as, with minimal modifications, to vector problems like elasticity, see footnote above \eqref{dp.differentialPDEdp}). 
As a result, it is in particular capable of obtaining in all those cases uniform quantitative estimates on the rate of 
convergence of the Floquet-Bloch spectra like \eqref{HempLien-est}. 
In \cite{ChErKi,ChKiVeZu23}  
estimates similar to \eqref{dpcompe3} and \eqref{specest}, including some of improved order $\ep^2$, have been obtained in terms of certain $\ep$-dependent approximate operators $\mathcal{L}_\ep^{app}$, while our 
estimates are 
{\it directly in terms the $\ep$-independent two-scale limit operator} $\mathcal{L}_0$. Moreover, in contrast to \cite{ChErKi,ChKiVeZu23}, 
our approach again does not 
require any regularity restrictions on the coefficients (and is also less restrictive on the regularity of the 
inclusion boundaries). 
\begin{remark} 
\label{EFestDP}
The general spectral results of Section \ref{s:resolv} imply also certain 
estimates for convergence of dispersion relations $\lambda_{\ep,\t}^{(k)}$ and related eigenfunctions, 
in the present example of 
the Bloch waves. 
Not attempting here a 
detailed investigation, 
remark that Theorems \ref{splimSe.3} and \ref{ikthm2} in combination with general methods of 
e.g. \cite{VishLus} imply the following. Let $\xi\in\mathbb{R}^n$, $k\in\mathbb{N}$ and let $\lambda^{(k)}_\xi$ be (for simplicity) a simple eigenvalue of 
$\mathbb{L}_\xi-I$ with an associated eigenfunction $\psi_\xi^{(k)}\in \CC\dot{+}H^1_0(B)$. 
Then there exist $0<\ep_0\le 1$ and $\delta>0$ such that 
$\forall\, 0<\ep<\ep_0$ and for $\t=\ep\xi\in\square^*$ 
the eigenvalue $\lambda^{(k)}_{\ep,\t}$ of $\mathcal{L}_{\ep,\t}-I$ is single and the only one 
in the $\delta$-neighbourhood of $\lambda^{(k)}_\xi$ 
with an associated eigenfunction $\varphi^{(k)}_{\ep,\t}\in H^1_{per}(\square)$ such that 
\be
\label{eigfunest}
\left\vert\,\lambda^{(k)}_{\ep,\t}\,\,-\,\,\lambda_\xi^{(k)}\,\right\vert\,\,\le\,\,C\,\ep, \quad \quad 
\left\|\,\varphi^{(k)}_{\ep,\t}\,\,-\,\,\psi_\xi^{(k)}\,\right\|_{L^2(\square)}\,\,\le\,\,C\,\ep,
\end{equation}
with a constant $C>0$ independent of $\ep$ (and for the first inequality also of $\xi$, and so for the second 
one at least if $k=1$). 
As $\,e^{\i\t\cdot y}\varphi^{(k)}_{\ep,\t}(y)$ is a $\t$-quasiperiodic Bloch wave associated with the original (rescaled) operator, 
\eqref{eigfunest} implies two-scale ($x=\ep y$) approximation of the latter by 
$\,e^{\i\t\cdot y}\psi_{\t/\ep}^{(k)}(y)=e^{\i\xi\cdot x}\psi_{\xi}^{(k)}(x/\ep)$, where $\psi_{\xi}^{(k)}(y)$ is explicitly found from the two-scale limit 
problem. 
Namely, cf. Remark \ref{remdisprel}, either $\lambda_\xi^{(k)}\in {\rm Sp}\,\left(-\Delta_{H^1_0(B)}\right) $,  
or 
$\beta_B\left(\lambda_\xi^{(k)}\right)=A^{\rm hom}_{\rm pd}\xi\cdot\xi$ and 
$\psi_{\xi}^{(k)}=c\mathbf{e}+\lambda c \left( -\Delta_{H^1_0(B)} - \lambda\,\right)^{-1} \mathbf{e}$ 
for $c\in\CC\backslash\{0\}$. 
\end{remark}

The first uniform estimate in \eqref{eigfunest} implies also the following 
new 
asymptotics 
of the integrated density of states, in the notation of \cite{Friedlander} and \cite{Selden}, $m_\tau(\lambda)$ of operator 
$\mathcal{B}_\delta$ as $\tau=\delta^{-1}=\ep^{-2}\to\infty$, uniformly valid with exception of small neighbourhoods of 
the limit spectral bands 
(and for operators with general Hermitian matrix coefficients, see below 
\eqref{aepdp}): 
\begin{corollary}
\label{denstates}
Let $k=1$ or $k> 1$ and $\lambda_{k-1}<\mu_k<\lambda_k<\mu_{k+1}$, 
and let $C$ be the constant in the first inequality in \eqref{eigfunest}. 
Then, for 
$\mu_k+2C\tau^{-1/2}<\lambda<\lambda_k-2C\tau^{-1/2}$, $\beta_B(\lambda)> 0$ and
\be 
\label{densstatasym}
m_\tau(\lambda)\,\,=\,\,k\,-\,1\,+\,\,(2\pi)^{-n}\,\omega_n\tau^{\,-n/2}\frac{\,\,\,\left[\beta_B(\lambda)\right]^{n/2}}
{\left({\rm det} A^{\rm hom}_{\rm pd}\right)^{1/2}}
\Big\{\,1\,\,+\,\,R_\tau(\lambda)\Big\}, \ \ \mbox{ where } \ 
\ee
$\omega_n$ is the volume of the unit ball in $\mathbb{R}^n$, and 
\be
\label{densstatest}
\left\vert R_\tau(\lambda)\right\vert\,\,\le\,\,
C_k\,\frac{\tau^{-1/2}}
{\left(\lambda-\mu_k\right)
\left(\lambda_k-\lambda\right)} 
\ \ \mbox{with a constant 
$C_k$ independent of $\tau
\ge 1$ and $\lambda$}. 
\ee
\end{corollary}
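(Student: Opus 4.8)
\textbf{Proof proposal for Corollary \ref{denstates}.}

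The plan is to express $m_\tau(\lambda)$ in terms of the Floquet–Bloch dispersion relations of the rescaled operator and then use the uniform eigenvalue estimate in \eqref{eigfunest} together with the explicit description of the limit spectrum in Theorem \ref{EVestDP}. First I would recall the standard Floquet representation of the integrated density of states: since $\mathcal{B}_\delta$ is unitarily equivalent (via the rescaling) to $\mathcal{L}_\ep$ with $\delta=\ep^2$, and the latter decomposes into fibres $\mathcal{L}_{\ep,\t}$, $\t\in\square^*$, one has
\begin{equation*}
m_\tau(\lambda) \,\,=\,\, (2\pi)^{-n}\,\big|\,\{\,\t\in\square^* \,:\, \lambda_{\ep,\t}^{(j)}\le\lambda+1 \ \text{for some }j\,\}\,\big| \qquad\text{(counted with multiplicity)},
\end{equation*}
i.e. $m_\tau(\lambda)=(2\pi)^{-n}\sum_{j\ge1}\big|\{\t\in\square^*:\lambda_{\ep,\t}^{(j)}\le\lambda+1\}\big|$, with $\ep=\tau^{-1/2}$. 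For $\lambda$ strictly inside the $k$-th limit gap, i.e. $\mu_k<\lambda<\lambda_k$ (shifting to the notation of $\mathcal{L}_0$, whose spectrum starts at $1$, one works with $\lambda+1$), the first uniform estimate in \eqref{eigfunest}, $|\lambda_{\ep,\t}^{(j)}-\lambda_{\t/\ep}^{(j)}|\le C\ep$, shows that for $j\le k-1$ the whole $j$-th band $\{\lambda_{\t/\ep}^{(j)}:\t\in\square^*\}$ lies below $\lambda$ (so contributes the full measure $(2\pi)^n$), while for $j\ge k+1$ it lies above (contributing nothing), provided $\lambda$ is at distance $>C\ep$ from $\mu_k$ and $\lambda_k$; this accounts for the leading term $k-1$ in \eqref{densstatasym}.

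Next I would isolate the $k$-th band. By Theorem \ref{EVestDP} and Remark \ref{remdisprel}, for $\mu_k<\lambda+1<\lambda_k$ the equation $\lambda+1=\lambda_\xi^{(k)}$ is equivalent to $\beta_B(\lambda)=A^{\rm hom}_{\rm pd}\,\xi\cdot\xi$, so the set $\{\xi\in\RR^n:\lambda_\xi^{(k)}\le\lambda+1\}$ is exactly the ellipsoid $\{A^{\rm hom}_{\rm pd}\xi\cdot\xi\le\beta_B(\lambda)\}$ (using that $\xi\mapsto\lambda_\xi^{(k)}$ is increasing along rays, which follows from the nonnegativity and quadratic growth of $a^{\rm h}_\xi$). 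Its Lebesgue measure is $\omega_n\,[\beta_B(\lambda)]^{n/2}/(\det A^{\rm hom}_{\rm pd})^{1/2}$. Passing back from $\xi$ to $\t=\ep\xi$ multiplies the measure by $\ep^n=\tau^{-n/2}$, which produces the stated main term $(2\pi)^{-n}\omega_n\tau^{-n/2}[\beta_B(\lambda)]^{n/2}(\det A^{\rm hom}_{\rm pd})^{-1/2}$ — note that for $\ep$ small the ellipsoid $\ep^{-1}\square^*\supset\{A^{\rm hom}_{\rm pd}\xi\cdot\xi\le\beta_B(\lambda)\}$ since $\beta_B(\lambda)$ stays bounded on the relevant $\lambda$-range, so no boundary truncation occurs. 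The error $R_\tau(\lambda)$ then comes from comparing $\{\t:\lambda_{\ep,\t}^{(k)}\le\lambda+1\}$ with $\ep\{\xi:\lambda_\xi^{(k)}\le\lambda+1\}$: by \eqref{eigfunest} the symmetric difference is contained in the set where $\lambda_\xi^{(k)}$ lies within $C\ep$ of $\lambda+1$, which is an annular shell around the ellipsoid of thickness controlled by $C\ep/|\nabla_\xi\lambda_\xi^{(k)}|$ on that shell; estimating $|\nabla_\xi\lambda_\xi^{(k)}|$ from below by a quantity proportional to $\beta_B'(\lambda)^{-1}$ (hence to $(\lambda-\mu_k)(\lambda_k-\lambda)$ up to constants, since $\beta_B$ has a simple pole at $\lambda_k$ and vanishes at $\mu_k$) gives the bound $|R_\tau(\lambda)|\le C_k\tau^{-1/2}/[(\lambda-\mu_k)(\lambda_k-\lambda)]$ of \eqref{densstatest}.

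The main obstacle I anticipate is the error term: one needs a lower bound on $|\nabla_\xi\lambda_\xi^{(k)}|$ that is uniform on the relevant range of $\lambda$ and degenerates in a controlled (namely, algebraically) way as $\lambda\to\mu_k^+$ or $\lambda\to\lambda_k^-$. This requires differentiating the implicit relation $\beta_B(\lambda_\xi^{(k)}-1)=A^{\rm hom}_{\rm pd}\xi\cdot\xi$ and bounding $\beta_B'$ near its zero $\mu_k$ and its pole $\lambda_k$ — e.g. via the representation \eqref{beta-zhikov}, from which $\beta_B'(\lambda)>0$ on $(\mu_k,\lambda_k)$ with $\beta_B'(\lambda)\sim c/(\lambda_k-\lambda)^2$ near $\lambda_k$ and $\beta_B'(\mu_k)>0$ — and then combining with the measure estimate for the annular shell. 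Care is also needed to ensure the constant $C_k$ absorbs the dependence on the (fixed) spectral window $(\mu_k,\lambda_k)$ while remaining independent of $\tau$ and of $\lambda$ in that window; this is where one invokes that $C$ in \eqref{eigfunest} is $\xi$-independent. The remaining steps — the Floquet formula for $m_\tau$, the identification of the ellipsoid, and the $k-1$ count — are routine given Theorems \ref{EVestDP}, \ref{ikthm2} and Remark \ref{remdisprel}.
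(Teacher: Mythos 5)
Your proposal follows the same overall route as the paper: identify $m_\tau(\lambda)=k-1+(2\pi)^{-n}\,{\rm meas}\,S_\ep(\lambda)$ with $S_\ep(\lambda)=\{\t\in\square^*:\lambda^{(k)}_{\ep,\t}\le\lambda\}$, recognise the limit level set $\{\xi:\lambda^{(k)}_\xi\le\lambda\}=\{A^{\rm hom}_{\rm pd}\xi\cdot\xi\le\beta_B(\lambda)\}$ as an ellipsoid with explicit volume, transfer from $\xi$ to $\t=\ep\xi$, use \eqref{eigfunest} to compare $S_\ep(\lambda)$ to the ellipsoid, and bound the relative error via $\beta_B'/\beta_B$. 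The one technical variation is in how the error shell is measured: you invoke a coarea/gradient argument with thickness $\sim C\ep/|\nabla_\xi\lambda^{(k)}_\xi|$, while the paper sandwiches $E_\ep(\lambda-C\ep)\subset S_\ep(\lambda)\subset E_\ep(\lambda+C\ep)$ and Taylor-expands $[\beta_B(\lambda\pm C\ep)/\beta_B(\lambda)]^{n/2}$, which avoids needing explicit surface-area control and is cleaner. Since $\nabla_\xi\lambda^{(k)}_\xi$ and $\beta_B'$ are related through the implicit dispersion relation, the two computations are equivalent once the details are worked out; both reduce to the bound $\beta_B'(\lambda)/\beta_B(\lambda)\lesssim 1/[(\lambda-\mu_k)(\lambda_k-\lambda)]$.

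There is a genuine gap, however: your argument as written only produces \eqref{densstatasym}--\eqref{densstatest} for $\ep=\tau^{-1/2}$ small enough (so that in particular the ellipsoid lies inside $\square^*$ and the band-counting inclusions hold), whereas the corollary is stated uniformly in $\tau\ge1$. The paper devotes its last paragraph precisely to the intermediate range where $\ep$ is not below the geometric threshold $\min\{\pi(\lambda^{\rm h}_{\rm min})^{1/2}\beta_B^{-1/2}(\lambda),\,(\mu_{k+1}-\lambda_k)/C\}$ yet $\lambda$ still lies in the stated window; there one must show directly that the claimed bound on $|R_\tau(\lambda)|$ holds with a uniform constant $C_k$, which is not automatic since $\tau^{-1/2}/[(\lambda-\mu_k)(\lambda_k-\lambda)]$ need not be large there. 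Your proposal omits this case entirely. A related inaccuracy: you claim ``$\beta_B(\lambda)$ stays bounded on the relevant $\lambda$-range, so no boundary truncation occurs''. This is false --- $\beta_B(\lambda)$ has a simple pole at $\lambda_k$ and the hypothesis lets $\lambda$ approach $\lambda_k-2C\tau^{-1/2}$, so $\beta_B(\lambda)\sim\tau^{1/2}$ is allowed. The no-truncation claim does hold, but for the subtler reason that the ellipsoid has diameter $\sim\beta_B(\lambda)^{1/2}\lesssim\ep^{-1/2}$ while $\ep^{-1}\square^*$ has diameter $\sim\ep^{-1}$, and $\ep^{-1/2}\ll\ep^{-1}$ for small $\ep$; when $\ep$ is not small enough for the constants to cooperate one again falls into the intermediate case mentioned above.
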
 
\begin{proof}
Let $\mu_k<\lambda<\lambda_k$. Then, with $\lambda_{0}:=0=\mu_1$, from e.g. \eqref{limspdp} and \eqref{beta-zhikov}, 
$\beta_B(\lambda)$ is smooth on 
$(\lambda_{k-1},\lambda_k)$, $\lambda_k$ is a single eigenvalue with $\langle\phi_k\rangle\ne 0$, 
$\beta_B'(\lambda)>0$, $\beta_B(\mu_k)=0$, and $\beta_B(\lambda)>0$. 
Let $C$ be the constant in \eqref{eigfunest}, $\lambda^{\rm h}_{\rm min}>0$ the minimal eigenvalue of 
$A^{\rm hom}_{\rm pd}$, and let 
$\ep<\ep_0:={\rm min}\left\{(\lambda_k-\lambda)/(2C), (\lambda-\mu_k)/(2C), \pi\left(\lambda^{\rm h}_{\rm min}\right)^{1/2}\beta_B^{-1/2}(\lambda),\,
\left(\mu_{k+1}-\lambda_k\right)/C,\,1\right\}$. 
Notice that, from variational arguments, $\lambda_{\ep,\t}^{(k-1)}\le\lambda_{k-1}<\mu_k$, 
and for $\ep<\ep_0$, $\lambda_{\ep,\t}^{(k+1)}\ge \lambda_{\t/\ep}^{(k+1)}-C\ep\ge\mu_{k+1}-C\ep>\lambda_k$. 
Therefore, from the definition of the integrated density of states for $\tau=\ep^{-2}$, 
$m_\tau(\lambda)=k-1+(2\pi)^{-n}{\rm meas}\,S_\ep(\lambda)$ where $S_\ep(\lambda):= 
\left\{\t\in\square^*\,\big\vert \,\lambda^{(k)}_{\ep,\t}\le \lambda\right\}$. 
Consider set $E_\ep(\lambda)=\left\{\t\in\mathbb{R}^n\big\vert \lambda^{(k)}_{\t/\ep}\le\lambda\right\}$. 
As, from \eqref{finallimitspectralproblem}, $\beta_B\left(\lambda_\xi^{(k)}\right)=A^{\rm hom}_{\rm pd}\xi\cdot\xi$, 
we observe that 
$E_\ep(\lambda)=\left\{\t\in\mathbb{R}^n\big\vert A^{\rm hom}_{\rm pd}\t\cdot\t \le\ep^2\beta_B(\lambda)\right\}$  
is an ellipsoid with   
${\rm meas}\,E_\ep(\lambda)=\omega_n\ep^n\left[\beta_B(\lambda)\right]^{n/2} 
\left({\rm det} A^{\rm hom}_{\rm pd}\right)^{-1/2}$, and  $
E_\ep(\lambda)\subset\square^*$ for $\ep<\ep_0$. 
Notice that if $\t\in E_\ep(\lambda-C\ep)\subset E_\ep(\lambda)$ then 
$\lambda^{(k)}_{\t/\ep}\le \lambda-C\ep$, and 
hence via \eqref{eigfunest}, 
$\lambda^{(k)}_{\ep,\t}\le \lambda^{(k)}_{\t/\ep}+C\ep\le\lambda$ and so $\t\in S_\ep(\lambda)$. 
Hence 
$E_\ep(\lambda-C\ep)\subset S_\ep(\lambda)$, and by a similar argument 
$S_\ep(\lambda)\subset E_\ep(\lambda+C\ep)$, 
and therefore  
\be 
\label{sepbeta}
\omega_n\ep^{n}\frac{\,\,\,\left[\beta_B(\lambda-C\ep)\right]^{n/2}}
{\left({\rm det} A^{\rm hom}_{\rm pd}\right)^{1/2}}\,\,\le\,\,\mbox{meas}\,S_\ep(\lambda)\,\,\le\,\,
\omega_n\ep^{n}\frac{\,\,\,\left[\beta_B(\lambda+C\ep)\right]^{n/2}}{\left({\rm det} A^{\rm hom}_{\rm pd}\right)^{1/2}}. 
\ee 
Now 
\[
\left[\frac{\beta_B(\lambda\pm C\ep)}{\beta_B(\lambda)}\right]^{n/2}\,\,=\,\,
1\,\,\pm\,\,\ep\, C\frac{n}{2}
\left[\frac{\beta_B\left(\lambda\pm \xi_\pm\right)}{\beta_B(\lambda)}\right]^{n/2}
\frac{\beta_B'\left( \lambda\pm \xi_\pm\right))}{\beta_B(\lambda\pm \xi_\pm)}\,, 
\]
where $0<\xi_\pm<C\ep$. 
It follows from $\beta_B(\mu_k)=0$, $\beta'_B(\mu_k)>0$ and  \eqref{beta-zhikov} that for some $c_1, c_2, c_3 
>0$ and all 
$\lambda\in\left(\mu_k,\lambda_k\right)$, 
$c_1\left(\lambda-\mu_k\right)/\left(\lambda_k-\lambda\right)\le\beta_B(\lambda)\le 
c_2\left(\lambda-\mu_k\right)/\left(\lambda_k-\lambda\right)$ and 
$\beta'_B(\lambda)/\beta_B(\lambda)\le 
c_3/\left(\left(\lambda-\mu_k\right)\left(\lambda_k-\lambda\right)\right)$. 
Hence $\beta_B(\lambda+\xi_+)\le\beta_B(\lambda+C\ep)\le 
c_2\left(\lambda+C\ep-\mu_k\right)/\left(\lambda_k-\lambda-C\ep\right)$ and 
${\beta_B'\left( \lambda\pm \xi_\pm\right))}/{\beta_B(\lambda\pm \xi_\pm)}\le 
c_3/\left(\left(\lambda\pm \xi_\pm-\mu_k\right)\left(\lambda_k-\lambda\mp\xi_\pm\right)\right)$. 
Notice that, for $\ep<\ep_0$, 
$\left(\lambda+C\ep-\mu_k\right)\le \tfrac{3}{2} \left(\lambda-\mu_k\right)$ and 
$\left(\lambda_k-\lambda-C\ep\right)\ge\tfrac{1}{2}\left(\lambda_k-\lambda\right)$. 
Hence $\beta_B(\lambda+\xi_+)/\beta_B(\lambda)\le 3c_2/c_1$, and clearly 
$\beta_B(\lambda-\xi_-)/\beta_B(\lambda)\le 1$. 
Similarly, as $0<\xi_\pm<C\ep$, 
$\left(\lambda\pm \xi_\pm-\mu_k\right)\left(\lambda_k-\lambda\mp\xi_\pm\right)\ge 
\left(\lambda-\mu_k\right)\left(\lambda_k-\lambda \right)/2$, 
and hence 
${\beta_B'\left( \lambda\pm \xi_\pm\right))}/{\beta_B(\lambda\pm \xi_\pm)}\le 
2c_3/\left(\left(\lambda -\mu_k\right)\left(\lambda_k-\lambda \right)\right)$ 
which implies \eqref{densstatest}. 

It remains to show \eqref{densstatest} for any $\ep=\tau^{-1/2}$ such that 
${\rm min}\left\{ \pi\left(\lambda^{\rm h}_{\rm min}\right)^{1/2}\beta_B^{-1/2}(\lambda),\, 
\left(\mu_{k+1}-\lambda_k\right)/C\right\} \le \ep < 
{\rm min}\left\{(\lambda_k-\lambda)/(2C), (\lambda-\mu_k)/(2C), 1 \right\}$. 
Denote by $c$ some constants independent of $\lambda$ and $\ep$ (which can change values).  
If $\ep\ge \pi\left(\lambda^{\rm h}_{\rm min}\right)^{1/2}\beta_B^{-1/2}(\lambda) $ then since $m_\tau(\lambda)\le c$, 
$1+R_\tau(\lambda)=c\big[m_\tau(\lambda)-k+1\big]\ep^{-n}\left[\beta_B(\lambda)\right]^{-n/2}\le 
c$ and hence $R_\tau(\lambda)\le c\le c\,\ep\left(\lambda-\mu_k\right)^{1/2}\left(\lambda_k-\lambda\right)^{-1/2}$ and \eqref{densstatest} follows. 
Similarly, if $\ep\ge(\mu_{k+1}-\lambda_k)/C$, then (recalling $\lambda-\mu_k\ge 2C\ep\ge c$), 
$\big[m_\tau(\lambda)-k+1\big]\ep^{-n}\left[\beta_B(\lambda)\right]^{-n/2}\le 
c\,\ep^{-n}\left(\lambda-\mu_k\right)^{-n/2}\left(\lambda_k-\lambda\right)^{n/2}\le c\,\ep^{-3n/2}\le c$ 
again implying \eqref{densstatest}. 
\end{proof}
Notice that \eqref{densstatasym}--\eqref{densstatest} provides a uniform asymptotic approximation to $m_\tau(\lambda)$ 
as long as $\lambda_k-\lambda\gg\tau^{-1/2}$ and $\lambda-\mu_k\gg\tau^{-1/2}$, i.e. with exception of 
small neighbourhoods of 
both ends of the limit passbands $\left[\mu_k,\lambda_k\right]$. 
On the other hand, developing similar arguments for more elementary estimate \eqref{ik889.0} rather than \eqref{eigfunest}, 
valid here for $\t\ne 0$ with $\lambda^{(k)}_{\ep,\t}$ replaced by $\lambda^{(k)}_{\ep,\t}+1$, $\mu_\t^{(k)}=\lambda_k+1$ 
 and $\nu=\gamma|\t|^2$, one arrives at a lower bound: 
$ 
m_\tau(\lambda)\,\le\,k\,-\,1\,+\,\,(2\pi)^{-n}\omega_n\gamma^{-n/2}(1+\lambda_k)^{n}\tau^{-n/2}\left(\lambda_k-\lambda\right)^{-n/2} 
$ 
for $\lambda_k-\lambda\ge \pi^{-2}\gamma^{-1}\left(1+\lambda_k\right)^2\tau^{-1}$, 
agreeing 
with \cite{Friedlander, Selden} and demonstrating concentration of the spectral density near $\lambda=\lambda_k$. 
When $\lambda-\mu_k=O\left(\tau^{-1/2}\right)$, the 
inequalities in \eqref{sepbeta} lead to upper and 
lower (for $\lambda>\mu_k+C\tau^{-1/2}$) bounds for $m_\tau(\lambda)$. 
\begin{remark}
\label{class-hom-validity}
Again without attempting here a detailed investigation, notice 
that the uniformity of the ``non-classical'' spectral approximations \eqref{eigfunest} provides 
also certain information 
on ranges of both validity and failure of 
classical homogenisation spectral approximations for the model \eqref{th-hc} with an increasingly high contrast 
$\tau=\delta^{-1}$. 
Indeed, for a fixed 
$\delta>0$, 
the first 
Floquet-Bloch 
eigenvalue $\Lambda^{(1)}_{\delta,\t}$ of operator $\mathcal{A}_{\delta}u=\delta\,\mathcal{B}_{\delta} u=-\,\nabla_y\cdot\big(a_{\delta}(y)\nabla_y u\,\big)$ 
where $a_\delta$ is $\square$-periodic with $a_\delta(y)=1$ outside inclusion $B$ and $a_\delta(y)=\delta$ in $B$, 
is known to be approximated for small $\t$ by $A^{\rm hom}_\delta\t\cdot\t$ 
where $A^{\rm hom}_\delta$ is the homogenised matrix for $\mathcal{A}_{\delta}$. 
Namely, see e.g. \cite{BiSu06} Proposition 4.2, 
\be 
\label{estclass}
\left\vert \Lambda^{(1)}_{\delta,\t}-A^{\rm hom}_\delta\t\cdot\t\right\vert\le C(\delta)|\t|^4,  
\ \ \ \forall\,\t\in\square^*.
\ee 
On the other hand, $\Lambda^{(1)}_{\delta,\t}=\delta\, \lambda_{\delta^{1/2},\t}^{(1)}$ is approximated 
via \eqref{eigfunest} differently,  
by $\delta\lambda_\xi^{(1)}$: 
$\left\vert\Lambda^{(1)}_{\delta,\t}-
\delta \lambda_{\t/\delta^{1/2}}^{(1)}\right\vert \le C \delta^{3/2}$. 
Now, as $\beta_B\left(\lambda_\xi^{(1)}\right)=A^{\rm hom}_{\rm pd}\xi\cdot\xi$ and 
$\beta_B(\lambda)=\lambda+O\left(\lambda^2\right)$ when $\lambda\to 0$, 
one can see that 
$c_1|\xi|^4/\left(1+|\xi|^2\right)\le \left\vert\lambda_\xi^{(1)}-A^{\rm hom}_{\rm pd}\xi\cdot\xi\right\vert  
\le c_2|\xi|^4/\left(1+|\xi|^2\right)$, for all $\xi\in\RR^n$. 
Finally, by a perturbation analysis in $\delta$, one can see that 
$\left\vert A^{\rm hom}_\delta\t\cdot\t-A^{\rm hom}_{\rm pd}\t\cdot\t\right\vert\le c_3\delta|\t|^2$. 
Combining the above, we obtain 
\be
\label{lamahom} 
c_4\left[\frac{|\t|^4}{\delta+|\t|^2}\,-\,\delta^{3/2}\,-\,\delta|\t|^2
\right]\,\,\le\,\,
\left\vert \Lambda^{(1)}_{\delta,\t}-A^{\rm hom}_\delta\t\cdot\t\right\vert\,\,\le\,\,
c_5\left[ \frac{|\t|^4}{\delta+|\t|^2}\,+\,\delta^{3/2}\,+\,\delta|\t|^2
\right], \ \ \forall \t\in\square^*, \ \forall 0<\delta<1. 
\ee
Now if $|\t|\sim\delta^{1/2}$ i.e. $\xi=\t\delta^{-1/2}\sim 1$, the above implies for small enough $\delta$ that 
$\left\vert \Lambda^{(1)}_{\delta,\t}-A^{\rm hom}_\delta\t\cdot\t\right\vert\ge c_6(\xi)\delta$  
and so the classical 
approximation $A^{\rm hom}_\delta\t\cdot\t=O(|\t|^2)=O(\delta)$ fails. 
In  fact, setting $|\t|=\delta^{1/2}$ in the left inequality in \eqref{lamahom} implies 
that $C(\delta)$ in \eqref{estclass} blows up as $\delta\to 0$, at least 
as $O\left(\delta^{-1}\right)$.

On the other hand,
if $|\t|\ll\delta^{1/2}$,  
the right inequality in \eqref{lamahom} ensures the relative smallness of the error 
in the classical approximation as long as 
$\delta^{3/4}\ll|\t|\ll\delta^{1/2}\ll 1$. 
Hence the classical approximation, while failing when $|\t|\sim\delta^{1/2}$, remains valid ``just before'' that i.e. as long as $|\t|\ll\delta^{1/2}$.
For the related approximating Bloch wave 
to be of a two-scale form in \eqref{th-hc} (with $x=\ep y$), 
$U_{\delta,\t}=e^{\i\t\cdot y}\psi_{\t/\delta^{1/2}}^{(1)}(y)=
e^{\i\ep^{-1}\t\cdot x}\psi_{\t/\delta^{1/2}}^{(1)}(x/\ep)$ 
one needs $k:=\ep^{-1}\t=O(1)$ i.e. $\ep\sim\t$, which implies that
$\ep^2\ll \delta\ll\ep^{4/3}$. 
In this case also $\psi_{\t/\delta^{1/2}}^{(1)}(y)\approx \psi_{0}^{(1)}(y)$ which is a constant. 
So, to the leading order, $U_{\delta,\t}\approx ce^{\i k\cdot x}$ with $c\in \CC$, which is the (homogenised) plane wave 
with $A^{\rm hom}_\delta k\cdot k\approx\Lambda^{(1)}_{\delta,\t}/\ep^2=\rho\omega^2$.  

Notice finally that when $|\t|\gg\delta^{1/2}$, in our approximation based on \eqref{eigfunest} $\xi=\t\delta^{-1/2}$ is large and 
it follows from \eqref{beta-zhikov} that 
$\lambda_\xi^{(1)}=\lambda_1-\lambda_1^2\left\vert\left\langle\phi_1\right\rangle\right\vert^2/\left(A^{\rm hom}_{\rm pd}\xi\cdot\xi\right)+O\left(|\xi|^{-4}\right)$. 
Then, via \eqref{eigfunest}, one can conclude that 
$\Lambda^{(1)}_{\delta,\t}=\delta\lambda_1-\delta^2 
\lambda_1^2\left\vert\left\langle\phi_1\right\rangle\right\vert^2/\left(A^{\rm hom}_{\rm pd}\t\cdot\t\right)
+O\left(\delta^3|\t|^{-4}+\delta^{3/2}\right)$ which provides a valid two-term approximation for 
$\Lambda^{(1)}_{\delta,\t}$ at least as long as $\delta^{1/2}\ll|\t|\ll\delta^{1/4}$. 
(In fact, the present argument applies for any $\Lambda^{(k)}_{\delta,\t}$ with associated $\left\langle\phi_k\right\rangle\ne 0$.) 
On the other hand, our estimate \eqref{ik889.0} implies that, for the leading-order approximation, 
$\left\vert \Lambda^{(1)}_{\delta,\t}-\delta\lambda_1\right\vert\le c_7\delta^2|\t|^{-2}$. 
When $\t$ is fixed, \cite{Ammari2006, Ammari2009} constructed for both scalar and elastic high-contrast inclusions
a full asymptotic expansion in small $\delta$ for (in our notation) 
$\Lambda^{(k)}_{\delta,\t}$, 
both for $\t\ne 0$ and $\t=0$ although with a non-uniformity near $\t=0$. Further \cite{Lipton2011, Lipton2022, Lipton2022b}, for 
scalar, elastic and full electromagnetic problems,  constructed convergent 
power series in $\delta$ for $\Lambda^{(k)}_{\delta,\t}$ with a radius of convergence uniform with respect to $\t\in\square^*$, although 
again with a loss of uniformity of the actual truncated approximations  near $\t=0$. 
In this respect, our uniform approximation 
\eqref{eigfunest} may be viewed as bridging and matching to the leading 
order 
the approximations of classical homogenisation 
near $\t=0$ and those in the above references for fixed $\t\ne 0$. 
More detailed analysis of this all 
may deserve a separate investigation. 

\end{remark} 

\subsection{Stiff periodic inclusion problem} 
\label{e:idp}
Here we provide an example for which the form $a_\t$ has $\t$-regular associated spaces $V_\theta$, that is the assumption \eqref{bddspec} of Theorem  \ref{thm:contV} of Section \ref{s:uniforma} holds. 
Such an example is the 
`inverted high-contrast' model: the case where the roles of the isolated inclusions and connected matrix sets $B$ and $\square \backslash B$ respectively in \eqref{aepdp} of Example \ref{e.dp} above are swapped. 
In fact, as we will see, this results in an approximation of the original problem, with error bounds, in terms of a limit problem which in contrast to the previous two examples is not 
homogenised or two-scale but is 
with an ``infinite'' contrast inclusions 
 (e.g. with rigid inclusions in case of linear elasticity). 
We will consider here a slightly less general elliptic systems\footnote{This could be routinely extended to the case of linear elasticity for example, by  appropriate modifications in the ellipticity conditions \eqref{ivass} and in the related extension operator, cf. Example \ref{e.pdelast} below.} with a fixed period and high 
contrast $\delta^{-1}$: find $u_\delta \in H^1(\RR^n;\,\CC^m)$, $n\ge 1$, $m\ge 1$, solving 
\begin{equation}
	\label{inv-hc-prob}
	-\,\nabla\cdot\Big(A^{(\delta)}\left(y\right)\nabla u_\delta(y) \Big) \,\,+\,\, u_\delta(y)\,\, =\,\, F(y), \quad 
	y \in \RR^n. 
	\end{equation}
	Here $A^{(\delta)}(y) \,=\, \delta^{-1}\chi_{B}(y)A_1(y)\, +\, \big(1- \chi_B(y)\big) A_2(y)$, $\chi_B$ the characteristic function of $B$,  
$F \in L^2\left(\RR^n;\,\CC^m\right)$; 
$A_1$ and $A_2$ are Hermitian $\square$-periodic tensor-valued bounded coefficients 
satisfying with some 
$\gamma_0\ge 1$, 
\begin{equation}\label{ivass}
\begin{aligned}
\gamma_0^{-1} \int_{ B} | \nabla \phi|^2 & \,\,\le\,\, \int_{B}  A_1 \nabla \phi : \overline{\nabla \phi}  \,\,&\le&\,\,\,\, \gamma_0 \int_{B} | \nabla \phi|^2, \quad &\forall\,\phi& \in H^1(B;\, \CC^m), \\
\gamma_0^{-1} \int_{\square \backslash B} | \nabla \phi|^2 & \,\,\le\,\, \int_{\square \backslash B}  A_2 \nabla \phi : \overline{\nabla \phi} \,\, &\le&\,\,
 \,\,\gamma_0 \int_{\square \backslash B} | \nabla \phi|^2,  \quad &\forall\,\phi& \in H^1(\square \backslash B;\, \CC^m). 
\end{aligned}
\end{equation}

In this setting, upon applying the 
Gelfand transform, \eqref{inv-hc-prob} reduces to \eqref{p1} with: $\ep=\delta^{1/2}$, 
$H = H^1_{per}\left(\square;\, \CC^m\right)$, 
$\Theta = \square^\star : = [-\pi,\pi]^n$, 
$$
\begin{aligned}
a_\t(u,\tilde u) = \int_{B} A_1 \big(\nabla u +\i\t \otimes u \big) : \overline{\big(\nabla \tilde u+\i\t\otimes\tilde u\big) }, & \ & 
b_\t(u,\tilde u) =  \int_{\square \backslash B} A_2 \big(\nabla u+\i\t\otimes u\big) : \overline{\big(\nabla\tilde u+\i\t\otimes\tilde u\big)} +  
\int_\square u\cdot\overline{\tilde u}. 
\end{aligned}
$$
Now, 
for each $\theta \in\square^\star$ and $u\in H$, one has
\[
a_\t[u] \,\,=\,\, \int_B A_1 \nabla \big( e^{\i \t \cdot y} u\big) : \overline{\nabla \big( e^{\i \t \cdot y} u\big) } \,\,\ge\,\, 
\gamma_0^{-1} \int_B \big\vert \nabla \left( e^{\i \t \cdot y} u \right) \big\vert^2
\]
whence, assuming for simplicity $B$ connected, 
\[
V_\theta \,=\, \Big\{ v \in H^1_{per}\big(\square;\,\CC^m\big) \, \,\Big\vert \, \, \text{$v(y) = e^{-i\t\cdot y} c$, $\,y \in B$, for some constant $c \in \CC^m$} \Big\},
\]
and $W_\t$ is the orthogonal complement of $V_\t$ in $H=H^1_{per} (\square ;\, \CC^m)$ with respect to the inner product $a_\t + b_\t$. 
Let us now show that \eqref{KA} holds uniformly on $\square^\star$, i.e. condition \eqref{bddspec} is satisfied. 
\begin{proposition}
	\label{vcontinverdp} There exists a constant $\nu >0$ independent of $\t\in\Theta=\square^\star$ such that
\[
\nu \big( \,a_\t[w] \,+\, b_\t[w]\,\big)\,\,\,\le\,\,\, a_\t[w], \qquad \forall w \in W_\t, \ \ \ \forall\,\t\in\Theta. 
\]	
\end{proposition}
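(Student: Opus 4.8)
The plan is to verify the uniform spectral gap condition \eqref{bddspec} directly, and then invoke Theorem \ref{thm.contVequiv} only as a sanity check (though here a direct argument is cleanest). The key structural fact is that, unlike the high-contrast example of Section \ref{e.dp}, here the singular form $a_\t$ is supported on the \emph{inclusion} $B$, and elements of $V_\t$ are of the form $e^{-\i\t\cdot y}c$ on $B$ with $c\in\CC^m$ constant. Since $B$ is a fixed (Lipschitz, compactly contained) subset, the relevant ``distance from the kernel'' is controlled by a \emph{Poincar\'e--Wirtinger inequality on $B$}, whose constant does not depend on $\t$. This is the crucial difference from Section \ref{e.dp}: there the Poincar\'e inequality would have to be applied on the connected matrix $\square\backslash B$ (a torus minus a hole), where the relevant constant degenerates as the quasimomentum $\t\to0$; here it is applied on the bounded inclusion where no such degeneracy occurs.

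Concretely, first I would fix $\t\in\square^\star$ and $w\in W_\t$, and reduce to the ``untwisted'' function $\tilde w(y):=e^{\i\t\cdot y}w(y)\in H^1_{\mathrm{loc}}$, so that $a_\t[w]=\int_B A_1\nabla\tilde w:\overline{\nabla\tilde w}\ge\gamma_0^{-1}\int_B|\nabla\tilde w|^2$ by \eqref{ivass}. The plan is to show $\|w\|_\t^2=a_\t[w]+b_\t[w]\le C\,a_\t[w]$ with $C$ independent of $\t$. Bounding $b_\t[w]$: the matrix part $\int_{\square\backslash B}A_2(\nabla w+\i\t\otimes w):\overline{(\cdots)}\le\gamma_0\int_{\square\backslash B}|\nabla\tilde w|^2$ is not directly controlled by $a_\t[w]$, so one needs an extension-type or global elliptic estimate: since $\tilde w\in H^1_{\mathrm{loc}}$ is $\square$-periodic, $\int_{\square\backslash B}|\nabla\tilde w|^2\le\int_{\square}|\nabla\tilde w|^2$. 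The remaining term $\int_\square|w|^2=\int_\square|\tilde w|^2$ must then also be bounded. Here is where one uses that $w\in W_\t$: the orthogonality $(w,v)_\t=0$ for all $v\in V_\t$, in particular for $v=e^{-\i\t\cdot y}c$, forces a cancellation. The natural move is to observe that $\inf_{v\in V_\t}\|w-v\|_\t=\|w\|_\t$ (since $w\perp V_\t$), so it suffices to produce \emph{one} $v\in V_\t$ with $\|w-v\|_\t^2\le C\,a_\t[w]$; taking $v=e^{-\i\t\cdot y}c_0$ with $c_0:=|B|^{-1}\int_B\tilde w$ the mean of $\tilde w$ over $B$, we get $\|w-v\|_\t^2=\|\tilde w-c_0\|_{H^1(\square)}^2+\gamma_0$-comparable $\nabla$-terms, and the Poincar\'e--Wirtinger inequality on $B$ plus a Sobolev extension $H^1(\square\backslash B)\hookrightarrow H^1(\square)$ (or rather: $\|\tilde w-c_0\|_{L^2(\square)}\lesssim\|\tilde w-c_0\|_{L^2(B)}+\|\nabla\tilde w\|_{L^2(\square\backslash B)}\lesssim\|\nabla\tilde w\|_{L^2(B)}+\|\nabla\tilde w\|_{L^2(\square)}$, using connectedness of $\square\backslash B$ as a periodic set and a trace/Poincar\'e chain) closes the estimate. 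All constants here are geometric and $\t$-independent.

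The main obstacle I anticipate is controlling $\|\tilde w\|_{L^2(\square)}$ (equivalently $\|w\|_{L^2(\square)}$) purely in terms of $\int_B|\nabla\tilde w|^2$ and the $W_\t$-orthogonality. On $B$ this is the standard Poincar\'e--Wirtinger inequality once one subtracts the mean; extending the control to the matrix $\square\backslash B$ and then to all of $\square$ requires a Poincar\'e-type inequality on the periodic connected set $\square\backslash B$ relating $\|\tilde w - c_0\|_{L^2(\square\backslash B)}$ to $\|\nabla\tilde w\|_{L^2(\square\backslash B)}$ plus the ``boundary data'' on $\partial B$. This is exactly where one must be careful, because such an inequality \emph{does} involve the quasimomentum $\t$ in general (it is precisely the mechanism of the gap collapse in Section \ref{e.dp}); the point of the present example is that here it is the \emph{inclusion} set $B$ that carries the constraint, and $B$ being a fixed bounded domain with $\overline B\subset(-\tfrac12,\tfrac12)^n$, the inequality on $\square\backslash B$ is the classical Poincar\'e inequality for $H^1$ functions with prescribed trace on $\partial B$, whose constant is purely geometric. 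Once this geometric Poincar\'e inequality is invoked (it can be quoted, e.g. as a consequence of compact embedding and a contradiction argument on the fixed domain $\square\backslash B$), assembling the chain of inequalities $\|w\|_\t^2 = \|w-v\|_\t^2 \le C\big(\|\nabla\tilde w\|_{L^2(B)}^2 + \|\nabla\tilde w\|_{L^2(\square\backslash B)}^2\big)$ and then bounding the second term: the subtlety is that $\int_{\square\backslash B}|\nabla\tilde w|^2$ is \emph{not} directly $\le C\,a_\t[w]=C\gamma_0^{-1}\int_B|\nabla\tilde w|^2$, so in fact one cannot expect the crude bound above — instead, I expect $\|w\|_\t^2$ contains the matrix-gradient energy as a genuinely new term, which means Proposition \ref{vcontinverdp} as literally stated (with $a_\t[w]$ alone on the right) must be proved by a \emph{compactness/contradiction argument}: suppose $w_k\in W_{\t_k}$, $\|w_k\|_{\t_k}=1$, $a_{\t_k}[w_k]\to0$; by \eqref{as.b1} and Rellich a subsequence converges, the limit $w$ lies in $V_{\t_0}$ (since $a_{\t_0}[w]\le\liminf a_{\t_k}[w_k]=0$, using Lipschitz continuity \eqref{ass.alip}) \emph{and} in $\overline{W_{\t_0}}$, hence $w=0$; but then $b_{\t_k}[w_k]\to0$ forces $\|w_k\|_{\t_k}\to0$, contradicting $\|w_k\|_{\t_k}=1$ — \emph{provided} one can show $b_{\t_k}[w_k]\to0$, which follows because $a_{\t_k}[w_k]+b_{\t_k}[w_k]=1$ and $a_{\t_k}[w_k]\to0$ is not enough; rather, the contradiction is obtained directly from $w_k\to w=0$ strongly in $L^2$ combined with the fact that $w_k$ almost minimizes the distance to $V_{\t_k}$, using the mean-subtraction trick above to show $w_k$ is actually small in $H^1(B)$ and then propagating via the geometric Poincar\'e inequality. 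I would carry out this contradiction argument as the heart of the proof, treating the extension/Poincar\'e ingredients on the fixed geometry as standard.
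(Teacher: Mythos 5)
Your proposal correctly identifies the role of the Poincar\'e--Wirtinger inequality on the fixed inclusion $B$ as the source of $\t$-independence, but the decisive structural ingredient is missing and, as you yourself partly recognise, the argument as sketched does not close. The gap has two related parts.

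First, the candidate competitor $v=e^{-\i\t\cdot y}c_0$ taken on all of $\square$ is not an element of $V_\t$: for $\t\neq 0$, $e^{-\i\t\cdot y}$ is not $\square$-periodic, so this $v\notin H^1_{per}(\square;\CC^m)$. The definition of $V_\t$ only constrains the restriction to $B$; one must supply a $\square$-periodic continuation off $B$, and it is the \emph{choice} of that continuation that carries the whole estimate. The paper's competitor is
$v := w - e^{-\i\t\cdot y}E\bigl(e^{\i\t\cdot y}w - |B|^{-1}\textstyle\int_B e^{\i\t\cdot y'}w(y')\,{\rm d}y'\bigr)$,
where $E : H^1(B)\to H^1_0(\square)$ is a bounded Sobolev extension from the \emph{inclusion} into $\square$, acting component-wise (possible since $\overline B\subset(-\tfrac12,\tfrac12)^n$). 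On $B$ this $v$ reduces to $e^{-\i\t\cdot y}c_0$, and since $E(\cdots)$ vanishes on $\partial\square$, $v\in H^1_{per}(\square;\CC^m)$, so indeed $v\in V_\t$.

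Second, your own computation shows why the direct bound cannot close as written: the term $\int_{\square\backslash B}|\nabla\tilde w|^2$ lives in $b_\t[w]$, not $a_\t[w]$. The paper's construction avoids this. The error $w-v = e^{-\i\t\cdot y}E(e^{\i\t\cdot y}w - c_0)$ has its \emph{entire} $\|\cdot\|_\t$-norm — the gradient energy on both $B$ and $\square\backslash B$ together with the $L^2(\square)$ term — controlled by $\|E(e^{\i\t\cdot y}w - c_0)\|^2_{H^1(\square)}\le C_E^2\|e^{\i\t\cdot y}w - c_0\|^2_{H^1(B)}$, which in turn is $\lesssim\|\nabla(e^{\i\t\cdot y}w)\|^2_{L^2(B)}\lesssim a_\t[w]$ by Poincar\'e--Wirtinger on $B$ and \eqref{ivass}. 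The extension you invoke ($H^1(\square\backslash B)\hookrightarrow H^1(\square)$, as in Proposition \ref{prp.zhiext}) runs in the opposite direction and reintroduces the uncontrollable matrix gradient. This direction-reversal is the essential difference from Section \ref{e.dp}: there the singular form lives on $\square\backslash B$ and one extends from the matrix; here $a_\t$ lives on $B$ and one must extend from the inclusion.

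The compactness fallback you sketch does not rescue the argument. As you note, $a_{\t_k}[w_k]\to0$ with $\|w_k\|_{\t_k}=1$ simply forces $b_{\t_k}[w_k]\to1$, yielding no contradiction. Your mean-subtraction step controls $w_k$ only on $B$, and ``propagating via a geometric Poincar\'e inequality'' across $\square\backslash B$ would again require bounding $\|\nabla w_k\|_{L^2(\square\backslash B)}$ by $a_{\t_k}[w_k]$ — precisely what is unavailable. A compactness route could be made to work by verifying the Lipschitz continuity \eqref{VtLip} for this $V_\t$ and invoking Theorem \ref{thm.contVequiv}, but the paper's constructive argument is shorter and gives the explicit constant $\nu=(\gamma_0 C_E C_B)^{-2}$.
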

\begin{proof}
Let $E:H^1(B) \rightarrow H_0^1(\square)$ be a Sobolev extension, cf. Proposition \ref{prp.zhiext}, 
and for any fixed $u \in H^1_{per}(\square;\,\CC^m)$ and $\t \in \square^*$, consider 
$v = u - e^{-\i\t\cdot y} E \big( e^{\i\t\cdot y} u - |B|^{-1}\int_B e^{\i\t\cdot y'} u(y') \, {\rm d}y'\big)$ 
with the extension $E$ acting component-wise. Clearly $v \in V_\t$ and we readily estimate 
\[
\|u\,-\,v\|_\t^2\,\,=\,\,
a_\t[u - v ]+b_\t[u-v] \,\,\le\,\, \gamma_0 \left\|  E \left( e^{\i\t\cdot y} u - |B|^{-1}
\int_B e^{\i\t\cdot y'} u(y') \, {\rm d}y'\right) \right\|_{H^1(\square)}^2  \,\,\,\le 
\]
\[
 \gamma_0 C_E^2 \left\|   e^{\i\t\cdot y} u - |B|^{-1}\int_B e^{\i\t\cdot y'} u(y') \, {\rm d}y' \right\|_{H^1(B)}^2\,\,
\le\,\, \gamma_0\, C_E^2\, C_B^2 \Big\|\,  \nabla \left(  e^{\i\t\cdot y} u\right)\, \Big\|_{L^2(B)}^2 \,\,\le\,\, \gamma_0^2\,\, C_E^2\,\, C_B^2\,\, a_\t[u], 
\]
where $C_E$ and $C_B$, respectively, are the $H^1$-operator norm of $E$ and the  Poincar\'{e}-Wirtinger  (Poincar\'{e} inequality with mean) constant for domain $B$. Hence,
 for $u=w\in W_\t$, $\|w\|_\t^2\,\le\, \|w-v\|_\t^2\,\le\, \gamma_0^2\, C_E^2\, C_B^2\, a_\t[w]$, and 
 the desired inequality holds with $\nu = \big(\gamma_0\, C_E\, C_B\big)^{-2}$.
\end{proof}
Hence, 
for these class of problems, the 
approximation 
is given by Theorem \ref{thm:contV}. 
Employing like in the previous examples the inverse Gelfand transform, 
the following result holds.
\begin{theorem}
	For any fixed $0<\delta
	<1$ let $F \in L^2\left(\RR^n;\,\CC^m\right)$ and $u_\delta \in H^1(\RR^n;\,\CC^m)$ solve  the elliptic PDE system 
	\eqref{inv-hc-prob}. 
	Consider $v(\t,\cdot) \in V_\theta$ the (unique) solution 
	to
	\[
	\int_{\square \backslash B} A_2(y) \big(\nabla v(\t,y)+\i\t\otimes v(\t,y)\big)  : \overline{\big(\nabla \phi(y)+\i\t\otimes \phi(y)\big)} \, {\rm d}y \,+\,  \int_\square v(\theta,y) \cdot \overline{\phi(y)}  \, {\rm d}y \,=\, \int_\square   U 
	F(\theta,y)\cdot \overline{\phi(y)} \, {\rm d}y, 
	\]
for all $\phi \in V_\theta$, a.e. $ \t \in \square^\star$. Then, for the approximation 
$u^{(0)}:= 
U^{-1} v$, 
inequality \eqref{errorcontinuouscase2} implies the following: 
\begin{equation}
\label{inv-dp-est}
\gamma_0^{-1} \int_{\RR^n}   \big|	\nabla u_{\delta} \,-\, \nabla u^{(0)} \big|^2  \,\,+\,\, 
\int_{\RR^n} \big|u_\delta \,-\, u^{(0)} \big|^2 \,\,\, \le\,\,\, \nu^{-2} \delta^2 \int_{\RR^n} |F|^2.
\end{equation}
	The limit  $u^{(0)}\in \Big\{u\in H^1\left(\RR^n\right): 
	\,\nabla u=0 \,\,\,{\rm in } \,\,B_1:= \bigcup_{l \in \ZZ^n} (B+l)\Big\}=:H^{(0)}$
	is the solution of the infinite-contrast stiff inclusion problem, cf. e.g. \cite{JKO} \S 3.2, in weak form: 
	\[
	\int_{\RR^n\backslash B_1} A_2(y)\,\nabla u^{(0)}\cdot\nabla \widetilde u\,+\,
	\int_{\RR^n}u^{(0)}\,\widetilde u\,\,=\,\,\int_{\RR^n}F\,\widetilde u, \ \ \ \ \ \ 
	\forall\, \widetilde u\in H^{(0)}. 
	\]
	As $\nabla u^{(0)}$
	vanishes in the inclusion set $B_1$, 
	one in particular has 
	$  \int_{B_1}   \left|	\nabla u_{\delta} \right|^2   \,\le\, \gamma_0 \nu^{-2} \delta^2 \int_{\RR^n} |F|^2$. 
\end{theorem}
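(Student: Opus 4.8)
The plan is to apply Theorem~\ref{thm:contV} after verifying its hypothesis \eqref{bddspec}, which is exactly the content of Proposition~\ref{vcontinverdp}, and then to transfer the abstract estimate \eqref{errorcontinuouscase2} back to the original (untransformed) setting via the inverse Gelfand transform, as was done in Examples~\ref{e.class} and~\ref{e.dp}. First I would record that, after applying the Floquet--Bloch--Gelfand transform~$U$ to \eqref{inv-hc-prob} with $\delta = \ep^2$, the problem takes the abstract form \eqref{p1} with the stated $H$, $\Theta$, $a_\t$, $b_\t$, and with right-hand side $\langle f, \tilde u\rangle = \int_\square UF(\theta, y)\cdot\overline{\tilde u(y)}\,{\rm d}y$; in particular $\|f\|_{*\t} \le \|UF(\theta,\cdot)\|_{L^2(\square)}$ by \eqref{fstar} and the structure of $b_\t$. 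Then Proposition~\ref{vcontinverdp} furnishes the uniform gap constant $\nu = (\gamma_0 C_E C_B)^{-2}$, so Theorem~\ref{thm:contV} applies with $v_\theta \in V_\t$ the solution of the restricted problem \eqref{thmcontv.vprob}, which in the present setting is precisely the weak problem stated in the theorem for $v(\t,\cdot)$.

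Next I would invoke \eqref{errorcontinuouscase2}: $\|u_{\ep,\theta} - v_\theta\|_\t^2 \le \ep^4\nu^{-2}\|f\|_{*\t}^2 \le \ep^4 \nu^{-2}\|UF(\theta,\cdot)\|_{L^2(\square)}^2$. Since $\ep^2 = \delta$, and since $\|\cdot\|_\t^2 = a_\t[\cdot] + b_\t[\cdot] \ge \gamma_0^{-1}\int_{B}|\nabla(e^{\i\t\cdot y}(\cdot))|^2 + \gamma_0^{-1}\int_{\square\backslash B}|\nabla(e^{\i\t\cdot y}(\cdot))|^2 + \int_\square|\cdot|^2$, this controls both a gradient term and an $L^2$ term on the cell. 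I would then apply the inverse rescaling and inverse Gelfand transform to pass from the cell estimate to an estimate on $\RR^n$: setting $u^{(0)} = U^{-1}v$ (no $\ep$-rescaling is needed here since the period is fixed, i.e.\ $\ep = \delta^{1/2}$ enters only through the coefficient), the $L^2$-unitarity of $U$ together with the identity $U^{-1}\big((\nabla + \i\t)f\big) = \nabla(U^{-1}f)$ (used in the same way as in \eqref{7.19-2} and around Theorem~\ref{IK77}) converts $\int_{\square^*}\|u_{\ep,\theta} - v_\theta\|_\t^2\,{\rm d}\t$ into the left-hand side of \eqref{inv-dp-est}, yielding $\gamma_0^{-1}\int_{\RR^n}|\nabla u_\delta - \nabla u^{(0)}|^2 + \int_{\RR^n}|u_\delta - u^{(0)}|^2 \le \nu^{-2}\delta^2\int_{\RR^n}|F|^2$.

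It then remains to identify the limit. Since $v(\t,\cdot) \in V_\t$ means $v(\t,y) = e^{-\i\t\cdot y}c(\t)$ on $B$ for a constant vector $c(\t) \in \CC^m$, the function $e^{\i\t\cdot y}v(\t,y)$ is constant on $B$, hence $\nabla(U^{-1}v) = 0$ on the inclusion set $B_1 = \bigcup_{l\in\ZZ^n}(B+l)$ by the quasiperiodicity property of $U^{-1}$; thus $u^{(0)} \in H^{(0)} := \{u \in H^1(\RR^n) : \nabla u = 0 \text{ in } B_1\}$. To see that $u^{(0)}$ solves the stated infinite-contrast problem, I would observe that for test functions $\phi \in V_\t$ the term $\int_B A_1(\nabla v + \i\t\otimes v):\overline{(\nabla\phi + \i\t\otimes\phi)}$ vanishes identically (both $e^{\i\t\cdot y}v$ and $e^{\i\t\cdot y}\phi$ are constant on $B$), so \eqref{thmcontv.vprob} reduces to the $b_\t$-part tested against $V_\t$; applying $U^{-1}$ and using that the fibre spaces $V_\t$ are exactly the Gelfand fibres of $H^{(0)}$ yields the weak formulation $\int_{\RR^n\backslash B_1} A_2\nabla u^{(0)}\cdot\nabla\widetilde u + \int_{\RR^n}u^{(0)}\widetilde u = \int_{\RR^n}F\widetilde u$ for all $\widetilde u \in H^{(0)}$. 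The final a priori bound $\int_{B_1}|\nabla u_\delta|^2 \le \gamma_0\nu^{-2}\delta^2\int_{\RR^n}|F|^2$ is then immediate from \eqref{inv-dp-est} and $\nabla u^{(0)} = 0$ on $B_1$.

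The only genuinely delicate point I anticipate is the bookkeeping in transferring the fibrewise cell estimate to $\RR^n$ and, relatedly, the precise matching of the Gelfand fibres $V_\t$ with the subspace $H^{(0)}$ (i.e.\ that a function of $\RR^n$ lies in $H^{(0)}$ iff almost every Gelfand fibre lies in $V_\t$, which requires a small argument about quasiperiodic extensions of cell-constant functions); everything else is routine and parallels Examples~\ref{e.class} and~\ref{e.dp}. The use of the extension operator $E$ and the Poincar\'e--Wirtinger inequality is already packaged inside Proposition~\ref{vcontinverdp}, so no further work is needed there.
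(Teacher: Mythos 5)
Your proposal is correct and follows essentially the same route as the paper: applying the Gelfand transform (with no spatial rescaling, since the period is fixed), invoking Proposition~\ref{vcontinverdp} to obtain the uniform gap constant $\nu = (\gamma_0 C_E C_B)^{-2}$, applying Theorem~\ref{thm:contV} fibrewise and transferring back via $U^{-1}$, and then identifying the limit problem by noting that $a_\t$ vanishes identically on $V_\t\times V_\t$. The "delicate point" you flag — matching the Gelfand fibres $V_\t$ with $H^{(0)}$ via the commutation $U\nabla = (\nabla+\i\t)U$ — is indeed routine and exactly what the paper tacitly relies on.
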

To our 
knowledge, this 
result is not found in previous 
literature. 
It also 
implies 
an approximation with error estimates 
for the Floquet-Bloch spectrum of the high-contrast problem (see  Section \ref{s.spcontV}) in terms of that for the limit stiff problem. 
Estimate \eqref{inv-dp-est} can in fact be proved by applying Theorem \ref{thm:contV} {\it directly} to 
problem \eqref{inv-hc-prob} i.e. without applying Gelfand transform. 
Moreover, the periodicity assumption can be relaxed, requiring instead for the (for simplicity identical) inclusions to be uniformly separated.  
\begin{remark}
If, for $\ep=\delta^{1/2}$, one makes in \eqref{inv-hc-prob} change of variable $x=\ep y$, i.e. applies scaling transformation 
$\Gamma_\ep^{-1}$ 
then $\hat u_\ep:=\Gamma_{\ep}^{-1}u_{\ep^2}$ is the 
solution of the following ``inverted'' high-contrast $\ep$-periodic problem equivalent to \eqref{inv-hc-prob}: 
$-\,\nabla_x\cdot\big(A_\ep\left(x/\ep\right)\nabla_x \hat u_\ep \big) \,+\, \hat u_\ep\,\, =\,\hat F$, 
where $A_\ep(y) \,=\, \chi_{B}(y)A_1(y)\, +\, \ep^2\big(1- \chi_B(y)\big) A_2(y)$, 
and $\hat F=\Gamma_\ep^{-1} F$. 
Then \eqref{inv-dp-est} transforms into an equivalent estimate for $\hat u_\ep$, with 
corresponding approximation 
$\hat u^{(0)}_\ep=\Gamma_\ep^{-1} u^{(0)}$.  
In dimension $n=1$, the inverted high-contrast model is equivalent to the `direct' one. 
The quantitative homogenisation of the  scalar ($m=1$) one-dimensional case 
was studied, by different means,  in 
\cite{ChChCo, ChKi}. 
\end{remark} 


\subsection{Periodic inclusions with imperfect interfaces}
\label{sec:impint}
Here we give example of 
a model with `weakly bonded' imperfect interfaces (rather than with a high contrast). 
For homogenisation of problems with imperfect interfaces, including the spectral problems, in various asymptotic 
regimes see e.g. 
recent works \cite{Donato21}, \cite{Assier} and \cite{Avila.J-24} and further references therein. 
Here we apply our approach to obtain new operator error estimates for such kind of problems. 
As we will see, this fits our general scheme with the resulting two-scale limit problem of a ``two-phase'' 
macroscopic type. 
In this case, 
the space $V_\t$ has a removable singularity at the origin but $V_\t$ is not piece-wise constant, i.e. \eqref{contVs} holds but the conditions of Remark \ref{constV} do not\footnote{Another such example, in the context of linear elasticity, is in Section \ref{e.pdelast} below.}. 

Let 
the reference inclusion $B\subset\square$ be as in Example \ref{e.dp}, $B_\ep = \bigcup_{m\in\ZZ^n}\ep (B+m)$ the set 
of associated $\ep$-periodic inclusions 
and its complement $M_\ep :=\RR^n \backslash \overline{B_\ep}$ the connected matrix, and 
let $n_\ep$ be the 
unit normal to  the interface $I_\ep=\partial B_\ep$ exterior to the inclusions. For 
$0<\ep <1$, consider the resolvent problem: 
Given $F\in L^2(\RR^n)$, find in the matrix and in the inclusions  
 $u_1^\ep$ and $u_2^\ep$ respectively, such that 
\begin{equation}
\label{impintclass}
\begin{aligned}
-\, \Delta\, u_1^\ep \,+\, u_1^\ep \,\,=\,\, F \ \text{in $M_\ep$}, \quad  - \,\Delta\, u_2^\ep \,+\, u_2^\ep \,\,=\,\, F \ \text{in $B_\ep$}, 
\quad \partial_{n_\ep} u_1^\ep \,=\, \partial_{n_\ep} u_2^\ep \,=\, \ep\, \left(u_2^\ep \,-\, u_1^\ep\right) \ \text{on $I_\ep$},
\end{aligned}
\end{equation}
where $\partial_{n_\ep}$ denotes the normal derivative. 
The problem admits equivalent variational formulation: Find $u_\ep \in W_\ep$ where (with a slight abuse of notation) $W_\ep := L^2(\RR^n) \cap  H^1(M_\ep)  \cap H^1(B_\ep)$, such that
\begin{equation}\label{p.ic}
\int_{M_\ep} \nabla u_\ep \cdot \overline{\nabla \phi}\,\, + \int_{B_\ep} \nabla u_\ep \cdot \overline{\nabla \phi}\,\, + 
\ep \int_{I_\ep} \left[ u_\ep\right]_\ep \overline{\left[{\phi}\right]_\ep}\,\, + \int_{\RR^n}  u_\ep\, \overline{\phi} \,\,\,=\,\, 
\int_{\RR^n} F\, \overline{\phi}, \quad \forall \phi \in W_\ep. 
\end{equation}
In \eqref{p.ic} 
$[u]_\ep$ denotes the jump in $u$ across $I_\ep$, i.e. $[u]_\ep := T_\ep^+ u - T_\ep^- u$ where 
$T^+_\ep : H^1\left(M_\ep\right) \rightarrow L^2\left(I_\ep\right)$ and 
$T^-_\ep : H^1\left(B_\ep\right) \rightarrow L^2\left(I_\ep\right)$ are the trace operators. 

We take our usual approach and restate problem \eqref{p.ic} in the form \eqref{p1} via the transforms $\Gamma_\Ep$ and $U$ (see Example \ref{e.class}). 
Then $u_{\ep,\t} := U \Gamma_\ep u_\ep(\t,\cdot)$  is the solution to
	$\ep^{-2} a_\t \left(u_{\ep,\t}, \tilde{u}\right)\, +\, b_\t\left(u_{\ep,\t},\tilde{u}\right)  \,\,=\,\, \l f,\tilde{u} \r$, 
	$\forall \tilde{u} \in H$, a.e. $\theta \in \Theta$,
where  $u_{\ep,\t}\in H =  L^2(\Box) \cap H^1_{per}(\Box\backslash B) \cap H^1(B)$,  $\Theta := \Box^*$, $\l f, \tilde{u} \r := (U \Gamma_\ep F(\t,\cdot),\tilde{u})$ with $(\cdot,\cdot)$ the standard $L^2(\Box)$ inner product, 
\begin{equation}
\label{abimpint}
\begin{aligned}
a_\t[u] \,\,=\,\, \int_{\Box \backslash B} \big| (\nabla + \i \t ) u \big|^2 &\,+\,  
\int_{B} \big| (\nabla + \i \t ) u \big|^2, \quad \text{and } \quad b_\t[u] \,=\, b[u]\,\, :=\,\, \int_{\partial{B}} \big|\, [u]\, \big|^2\,+\, 
\int_\Box |u|^2,
\end{aligned}
\end{equation}
where  
 $[u]$ denotes the jump in $u$ across the interface $\partial B$.  

Let us 
show that all our general assumptions hold. It is clear, cf. \eqref{2.1classhom}, that the basic assumptions  \eqref{as.b1} and \eqref{ass.alip}  hold.  Next, 
denoting $\chi_B$ the characteristic function of the inclusion $B$, observe that 
\begin{equation}
\label{vwimpint}
V_\t \,=\, \left\{ 
\begin{array}{lr}
{\rm Span}\,  \left\{e^{-\i \t \cdot y}\chi_B\right\} & \t \neq 0, \\ 
\vspace{-.08in} \\ 
 {\rm Span}\, \left\{  \mathbf{e}, \, \chi_B \right\}  & \t = 0,
\end{array}
\right. \quad 
W_\t \,=\, \left\{ 
\begin{array}{lr}
\Big\{ w \in H \, \big| \, \int_{\partial B} [w] e^{\i \t \cdot y}  \,= \, \int_B we^{\i \t \cdot y}  \Big\}  & \t \neq 0, \\ 
\vspace{-.08in} \\
\Big\{ w \in H \, \big| \, \int_{\partial B} [w]  \,=\, \int_{ B} w \text{ and }  \int_{\Box }  w =0    \Big\}  & \t = 0.
\end{array}
\right.
\end{equation}
Therefore $V_\t$ has a discontinuity at $\t=0$ and 
varies with 
$\t$ in $\Box^* \backslash \{ 0 \}$. 
Now show 
\eqref{KA}--\eqref{H6}. 

\textbullet \, {\it Proof of \eqref{KA}.} 
Standard arguments show that
there exists a constant $C_{B}>0$ such that
\begin{equation}\label{ic.h1'e1}
b[\phi] \,\,\le\,\, C_{B} \left( a_0[\phi] \,+\, \bigg|  \int_{\Box \backslash B} \phi \bigg|^2 \,+\,  
\bigg|  \int_{B} \phi \,-\, \int_{\partial B} [\phi] \bigg|^2 \right), \quad \forall \phi \in L^2(\square) \cap H^1(
\square \backslash B) \cap H^1(B).
\end{equation}
Indeed, if $\phi_n$ with $b\left[\phi_n\right]=1$ are such that \eqref{ic.h1'e1} is violated with $C_B$ replaced by $n$, then $a_0\left[\phi_n\right]\to 0$ and so 
$\left\{\phi_n\right\}$ are bounded in $H^1(\square \backslash B)$ and $H^1(B)$. So, up to a subsequence, $\left\{\phi_n\right\}$ converges  
$H^1$-weakly and  $L^2$-strongly to some $\phi_0$. Then $b\left[\phi_0\right]= 1$  by the $L^2$-compactness of the trace operators, 
and by the weak lower-semicontinuity $a_0\left[\phi_0\right]= 0$ 
so $\phi_0\in V_0$ i.e. 
$\phi_0=c_1 +c_2\chi_B$. On the other hand, by the compactness, for $\phi=\phi_0$ both other terms on the right hand side of \eqref{ic.h1'e1} 
are zero, which implies $c_1=c_2=0$ i.e. 
$b\left[\phi_0\right]= 0$ which is a contradiction. 

We then show that \eqref{KA2} holds for $c(u,\tilde{u}) =\, C_{B}\, |\Box \backslash B| \int_{\Box \backslash B} u\,  \overline{\tilde{u}}$ and $C = C_{B}+1$. Indeed for fixed $\t \in \Theta$ and  $w \in W_\t$, see \eqref{vwimpint}, $\int_{B} w\,e^{\i \t \cdot y} \, - \int_{\partial B} \left[w\,e^{\i \t \cdot y}\right]  =0$ 
and so \eqref{ic.h1'e1} for $\phi = e^{\i \t \cdot y} w$ gives
\begin{equation}\label{ic.h1proof}
\begin{aligned}
b[w] & \,\,=\,\, b\left[e^{\i \t \cdot y} w\right]\,\,\le\,\, C_{B} \left( a_0\left[e^{\i \t \cdot y} w\right] \,+\, \bigg|  \int_{\Box \backslash B}  e^{\i \t \cdot y} w \bigg|^2\right) \,\,\le\,\,  C_{B}  \left( a_\t[w] \,+\,  |\Box \backslash  B|  \int_{\Box \backslash B} | w |^2 \right).
\end{aligned}
\end{equation}
Hence \eqref{KA2} holds (as form $c$ is clearly $\|\cdot\|_\t$-compact), and so \eqref{KA} holds too by Proposition \ref{prop.kaequiv}.

\textbullet \,  {\it Proof of \eqref{contVs}.} We set $V_\star={\rm Span}\, \left\{\chi_B\right\}$, and so 
$V^\star_\t : = {\rm Span}\, \left\{e^{-\i \t \cdot y}\chi_0\right\}$ for all $\t \in \Theta= \Box^*$. 
Then, 
for given $V^\star_{\t_1} \ni v_1 = c_1 e^{-\i \t_1 \cdot y}\chi_B $, $c_1\in \CC$,  set $v_2 =c_1 e^{-\i \t_2 \cdot y}\chi_B \in V_{\t_2}^\star$ and so 
$\left| v_1(y) - v_2(y) \right| \le\,\left(n^{1/2}/2\right) \left| \t_1 - \t_2\right|\, \left| v_1(y) \right|$, $y \in \Box$. Therefore, since for 
$i=1,2$, 
$a_{\t_i}\left[ v_i\right]=0$ and $\left[v_i\right] = \,-\,v_i$,   
\[
\bigl\| v_1 - v_2\bigr\|_{\t_2}^2 \,\,=\,\, \left|\t_1-\t_2\right|^2 \int_\square \left| v_1 \right|^2 \,\,+\, 
\int_{\partial{B}} \left| v_1 - v_2\right|^2 \,\,+\, \int_\square \left|v_1 - v_2\right|^2\,\,\,\le\,\, \left(\frac{n}{4}+1\right) \left|\t_1-\t_2\right|^2\, 
\left\| v_1 \right\|^2_{\t_1},
\]
and so \eqref{contVs} holds 
with $L_\star = \left({n}/{4}+1\right)^{1/2}$. Furthermore, 
one can naturally choose  $Z = {\rm Span}\, \{\mathbf{e}\}$ so that  \eqref{spaceZ} and \eqref{VZorth} 
hold with $K_Z=|B|/\big(|B|+|\partial B|\big)^{1/2} < 1$).


\textbullet \, Assumption \eqref{distance} 
follows by applying \eqref{ic.h1proof} and then \eqref{dpH1}, with e.g. 
$\gamma =\left(1+C_{B}\right)^{-1}\left(n\pi^2  + |\Box \backslash B| C_E^2\right)^{-1} $. 

\textbullet \, Assumption \eqref{H4} is obviously satisfied, see \eqref{abimpint}, with  $K_{a'}=1, K_{a''}=0$, and 
\begin{equation}
\label{impint.a'}
\begin{aligned}
& a'_{0}(v, u) \cdot \t\,\, :=\,\,
 \int_{\Box \backslash B}  \i\, \t v \cdot \overline{ \nabla u} \,+\,  \int_{B}  \i\, \t v \cdot \overline{ \nabla u},
\quad \text{and} \quad 
& a''_{0}\,\left(v, \tilde{v}\right)\,\t \cdot \t\,\,:=\,\, |\t|^2 \int_{\Box } v\, \overline{\tilde{v}}.
\end{aligned}
\end{equation}

\textbullet \, Assumption \eqref{H5} trivially holds with $L_b =0$ since $b_\t$ is independent of $\t$.  Furthermore, it is clear that
for $\mathcal{E}_\t$ defined as multiplication on $V_\star$ by $e^{-\i\, \t\cdot y}$ 
 \eqref{Eprop1} and \eqref{Eprop2} hold with $K_b = n^{1/2}/2$.  

\textbullet \,  Finally, \eqref{H6} also 
holds  with $\mathcal{H} = L^2(\square)$, $d_\t$ the standard $L^2$-inner product, 
$\mathcal{E}_\t$ 
multiplication by $e^{-\i \t\cdot y}$ on $L^2(\square)$\footnote{We choose this for simplicity: 
 other extensions are also possible, cf. Remark \ref{ecalextns}.}
and $K_e = n^{1/2}/2$. Notice that the above choice of 
$Z={\rm Span}\, (\mathbf{e})$ satisfies \eqref{zvbd}.

As all the main assumptions \eqref{KA}--\eqref{H6} hold, our general results are applicable to the present example, in particular Theorem \ref{thm.IKunifest2} and Theorem \ref{ikthm2}. We will illustrate below the spectral results related to the latter, leaving it to the reader specialising any of our other general results to the present setting. 

Notice first that for the solution $u_\ep$ to \eqref{impintclass}, equivalently \eqref{p.ic}, $u_\ep=\left(\mathcal{L}_\ep+I\right)^{-1}F$, where $\mathcal{L}_\ep$ is 
the self-adjoint operator in $L^2(\RR^n)$, with  standard inner product, which is generated by the form 
\[
Q_\ep\left(u,\,\tilde u\right)\,\,=\,\,\int_{M_\ep} \nabla u\cdot\overline{\nabla \tilde u} \,\,+\, 
\int_{B_\ep} \nabla u\cdot\overline{\nabla \tilde u}\,\,\,+\, \ep \int_{I_\ep} \left[ u_\ep\right]\,\overline{\left[ \tilde u_\ep\right]}, 
\]
with the form domain $W_\ep$. We are interested in  the spectrum  ${\rm Sp}\, \mathcal{L}_\ep$   of $\mathcal{L}_\ep$. 

  Upon  applying the unitary rescaling $\Gamma_\ep$ and the Gelfand transform $U$, it follows that the spectrum ${\rm Sp}\, \mathcal{L}_\ep$ is equal to 
	$\overline{\bigcup_{\t \in \square^*}\mathbf{L}_{\ep,\t} }$ where $\mathbf{L}_{\ep,\t} $ is  the self-adjoint operator generated in $L^2(\square)$ by the form
 \[
q_{\ep,\t}\left(u,\,\tilde u\right)\,\,=\,\,
 \ep^{-2} \left(  \int_{\Box \backslash B}  (\nabla + \i \t) u \cdot\overline{\left(\nabla+\i \t \right) \tilde u}\,\,\,  +\,  
\int_{B} (\nabla + \i \t) u \cdot\overline{\left(\nabla+\i \t \right) \tilde u} \right)\,\,\,+\,
\int_{\partial B}[u]\,\,\overline{\left[\tilde u\right]} 
 \]
 with the form domain $H = L^2(\square) \cap H^1_{per}( \square \backslash B) \cap H^1(B)$.  The spectrum of $\mathbf{L}_{\ep,\t}$ consists of countably many nonnegative real eigenvalues $\{ \lambda_{\ep,\t}^{(k)}\}_{k \in \NN}$ labelled in the increasing order accounting for their multiplicity. The functions $E^{(k)}_\ep(\t) : = \lambda_{\ep,\t}^{(k)}$, $\t\in\square^*$, are the spectral band functions of $\mathcal{L}_\ep$. 
 

Theorem \ref{ikthm2} (with $\mathcal{L}_{\ep,\t}=\mathbf{L}_{\ep,\t}+I$) 
provides the asymptotics of 
$E^{(k)}_\ep$  in terms of the eigenvalues of  $\mathbb{L}_{\t / \ep} $, which in turn describes  the approximation of ${\rm Sp} \, \mathcal{L}_\ep$ by 
 $\overline{\bigcup_{\xi \in \ep^{-1} \square^*} {\rm Sp}\, \left(\mathbb{L}_\xi-I\right)}$ or, via Corollary \ref{c.collspec}, by  $\overline{\bigcup_{\xi \in \RR^n } {\rm Sp}\, \left(\mathbb{L}_\xi-I\right)}$. Finally, we have the characterisation of $\overline{\bigcup_{\xi \in \RR^n } {\rm Sp}\, \mathbb{L}_\xi}$ given in Theorems 
\ref{limspecsimple} or \ref{thm.limspecrep} thus completing our asymptotic analysis with error 
estimates for  
${\rm Sp}\, \mathcal{L}_\ep$ and its spectral band functions. 

Now, we follow the above steps with more detail. First we need to specify the limit operator $\mathbb{L}_\xi$ given by  the form \eqref{Sform}. 
To  determine the homogenised form $a^{\rm h}_\t$, defined by \eqref{defhom.form}, we first need the corrector $N_\t=\t\cdot N$, as specified by \eqref{cell:prob2}, on $Z$. 
For this, one can see from \eqref{cell:prob2} and \eqref{impint.a'}, cf. \eqref{dp.cell1}, that (up to an element of $V_0$) 
$\mathbf{e} \mapsto  (N\mathbf{e})(y) =  \i\,\left(1-\chi_B\right) \ourN^{\rm pd}(y) - \i\, y \chi_B $ 
 where $\ourN^{\rm pd}$ solves \eqref{dp.correctorproblem}. As a result $a^{\rm h}_\t$, which is fully determined 
on $Z$ by $a^{\rm h}_\t [\mathbf{e}]$, 
is found, cf. \eqref{dp.a'2}, to be in the form 
$a^{\rm h}_\t [\mathbf{e}] = A^{\rm hom}_{\rm pd} \t \cdot \t$ where $A^{\rm hom}_{\rm pd}$ is the perforated domain homogenised matrix 
  given by \eqref{dp.coef} in Example \ref{e.dp}. 
Further,  
we have $V_\star + Z \,=\, \overline{V_\star + Z} \,=\, \mathcal{H}_0 = \big\{ c_1 \,+\,c_2\,\chi_B  \,\, \big| \,\, c_1,c_2 \in \CC \big\}$. 
Putting 
this together, 
form \eqref{Sform} specialises to 
\[
\begin{aligned}
& \mathbb{S}_\xi \big(c_1+c_2 \chi_B \,,\, \tilde{c}_1 + \tilde{c}_2 \chi_B\big) \,\,=\,\, 
M_\xi\left( \begin{matrix}c_1 \\ c_2  \end{matrix} \right) \cdot  \overline{\left( \begin{matrix} \tilde{c}_1 \\ \tilde{c}_2  \end{matrix} \right)}, \quad \text{ where } M_\xi \, =\,\left( \begin{matrix}
	A^{\rm hom}_{\rm pd} \xi \cdot \xi + 1 & |B| \\ |B| & |\partial  B| + |B| 
\end{matrix} \right), \\
&d_0\big(c_1+c_2 \chi_B ,\, \tilde{c}_1 + \tilde{c}_2 \chi_B\big) \,\,=\,\, D\left( \begin{matrix}c_1 \\ c_2  \end{matrix} \right) \cdot  \overline{\left( \begin{matrix} \tilde{c}_1 \\ \tilde{c}_2  \end{matrix} \right)}, \quad \text{ where } D \, =\,\left( \begin{matrix}
1 & |B| \\ |B| &  |B| 
\end{matrix} \right). 
\end{aligned}
 \]
As a result, see \eqref{lxispec}, the eigenvalues of $\mathbb{L}_\xi$ are the solutions of the generalised 
eigenvalue problem: 
$ 
 M_\xi c \,\,=\,\, \lambda\, D c$ 
for some non-trivial $c \in \CC^2$. 
Hence $\lambda$ are roots of the polynomial ${\rm det} \left( M_\xi -\lambda D\right)$,  
however we shall determine $\lambda$ by using the representation
\eqref{finallimitspectralproblem} with $\beta_\lambda$ given by \eqref{betaform}--\eqref{6.27-2}.  
We note that $V_\star = \overline{V_\star} = {\rm Span}\, \left\{ \chi_B \right\}$,  
$b_0[\chi_B] = |\partial B| + |B|$ and 
$d_0[\chi_B] = |B|$. Therefore, the operator $\mathbf{B}_\star$ (defined in \eqref{ikddd2}, i.e. as the operator in $\overline{V_\star}$ with inner product $d_0$ generated by $b_0$ with form domain $V_\star$) is simply the multiplication by $1 + |\partial B | / |B|$. 
Hence ${\rm Sp}\, \mathbf{B}_\star = \{ 1 + \mu_0 \}$, where $\mu_0 := |\partial B| / |B|$.
Further, $\overline{Z} = Z = {\rm Span}\, ( \mathbf{e} )$, $b_0[\mathbf{e}] =  d_0[\mathbf{e}] =1$, $\mathcal{P}^0_{\overline{V_\star}}\, \mathbf{e} =  \chi_B$,  
$(\mathbf{B}_\star-\lambda I)^{-1}$ is multiplication by $(1 + \mu_0 - \lambda)^{-1}$ 
and $\mathcal{P}^0_{\overline{Z}}\chi_B=|B|\mathbf{e}$. 
Thus, via \eqref{6.27-2}, $d_0\big(\beta(\lambda) \mathbf{e},\,\mathbf{e}\big) = \lambda + (\lambda-1)^2 (1 + \mu_0 - \lambda)^{-1}|B|$ and so 
(see \eqref{betaform}) 
\be
\label{fimpint}
\beta_\lambda[\mathbf{e}] \,\,=\,\, \Phi(\lambda - 1), \quad \text{for}\quad \Phi(\mu) \,\,:=\,\,
 \frac{\mu}{\mu_0 - \mu} \Big(\mu_ 0 \,\,-\,\, \mu \big(1 \,-\, |B|\big)\,\Big). 
\ee
Then, from \eqref{finallimitspectralproblem}, for each $\xi$ the eigenvalues $\lambda$ of $\mathbb{L}_\xi$ are the two solutions of  the dispersion relation
\begin{equation}
\label{22.12.20e1}
A^{\rm hom}_{\rm pd} \xi \cdot \xi  \,\,=\,\, \Phi(\lambda - 1),
\end{equation}
i.e. $\lambda - 1$ are the roots of quadratic polynomial 
$p(\mu) = |\square \backslash B| \mu^2 - \mu\big(A^{\rm hom} \xi \cdot \xi + \mu_0\big) + \mu_0 A^{\rm hom} \xi \cdot \xi$.  
(Note that for every $\xi\in\mathbb{R}^n$, $p(\mu)$ has two distinct nonnegative roots, and 
$p(\lambda-1) = |B|^{-1} {\rm det} \left( M_\xi -\lambda D\right)$.) 

Theorem \ref{ikthm2}, wherein 
$\mathcal{L}_{\ep,\t}= \mathbf{L}_{\ep,\t}+I$, now implies the following for the spectral bands of $\mathcal{L}_\ep$:
\begin{theorem}	
Let $\left\{ \lambda^{(k)}_{\ep,\theta}\right\}_{k\in \NN}$ be the eigenvalues of $\mathbf{L}_{\ep,\t}$, and $1 \le \Lambda^{(1)}_\xi < \Lambda^{(2)}_\xi$  the eigenvalues of $\mathbb{L}_\xi$ i.e. 
the roots of \eqref{22.12.20e1}. Then, there exists a positive constant $C$ independent of $\ep$, $\t$ and $k$ such that
 	\begin{gather}
	\label{est1.eigrotballs}
 	\Big| 1/\big(\lambda_{\ep,\theta}^{(k)}+1\big) \,-\,  1/\Lambda_{\theta / \ep}^{(k)}  \Big|\,\, \le\,\, C\ep, \ k = 1, 2,  \quad \text{and} \quad    
	\Big| 1/\big(\lambda_{\ep,\theta}^{(k)} +1\big)\Big| \,\,\le\,\, C\ep, \quad \forall  k \ge 3, \qquad \forall \t \in \square^*.
 	\end{gather}
 \end{theorem}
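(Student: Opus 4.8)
The plan is to apply the already-established abstract spectral result, Theorem~\ref{ikthm2}, to the present example by feeding it the ingredients that have just been verified. First I would recall the setup: we have checked that the main assumptions \eqref{KA}--\eqref{H6} hold for the variational formulation of \eqref{p.ic} with $H = L^2(\square) \cap H^1_{per}(\square \backslash B) \cap H^1(B)$, forms $a_\t$, $b_\t$ given by \eqref{abimpint}, ambient space $\mathcal{H} = L^2(\square)$, and the transfer operator $\mathcal{E}_\t$ acting as multiplication by $e^{-\i \t \cdot y}$. The self-adjoint operator $\mathcal{L}_{\ep,\t}$ generated by $\ep^{-2}a_\t + b_\t$ in $\mathcal{H}$ is precisely $\mathbf{L}_{\ep,\t} + I$, since $d_\t$ is the standard $L^2(\square)$ inner product and $b_\t$ contributes the extra $\int_\square |u|^2$ term absorbed into the shift. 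So the eigenvalues of $\mathcal{L}_{\ep,\t}$ are $\{\lambda^{(k)}_{\ep,\t} + 1\}_{k \in \NN}$.

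Next I would identify the limit operator $\mathbb{L}_\xi$ and its spectrum. The computations in the body of this subsection already do this: one determines the corrector $N_\t = \t \cdot N$ via \eqref{cell:prob2} and \eqref{impint.a'}, finds the homogenised form $a^{\rm h}_\t[\mathbf{e}] = A^{\rm hom}_{\rm pd}\t \cdot \t$ with $A^{\rm hom}_{\rm pd}$ the perforated-domain homogenised matrix \eqref{dp.coef}, and reduces $\mathbb{L}_\xi$ to the generalised matrix eigenvalue problem $M_\xi c = \lambda D c$ on $\mathcal{H}_0 = \{c_1 + c_2 \chi_B\}$. Using the $\beta$-function representation \eqref{betaform}--\eqref{6.27-2}, with $\mathbf{B}_\star$ multiplication by $1 + \mu_0$ (where $\mu_0 = |\partial B|/|B|$), one obtains $\beta_\lambda[\mathbf{e}] = \Phi(\lambda - 1)$ as in \eqref{fimpint}, and hence via \eqref{finallimitspectralproblem} the two eigenvalues $\Lambda^{(1)}_\xi < \Lambda^{(2)}_\xi$ of $\mathbb{L}_\xi$ are exactly the roots of the dispersion relation \eqref{22.12.20e1}. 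Since $\dim(V_\star \dot{+} Z) = 2 = N$, we are in the finite-dimensional case of Theorem~\ref{ikthm2}, i.e.\ the second alternative \eqref{ik89}.

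Then the proof is essentially an invocation: Theorem~\ref{ikthm2} with $\mathcal{L}_{\ep,\t} = \mathbf{L}_{\ep,\t} + I$ and $\mathbb{L}_\xi$ as above gives, for $N = 2$, that $|1/(\lambda^{(k)}_{\ep,\t} + 1) - 1/\Lambda^{(k)}_{\t/\ep}| \le C_{11}\ep$ for $k = 1, 2$ and $|1/(\lambda^{(k)}_{\ep,\t} + 1)| \le C_{11}\ep$ for all $k \ge 3$, uniformly in $\t \in \square^* = \Theta$; relabelling $C_{11}$ as $C$ yields \eqref{est1.eigrotballs}. The only genuine content beyond citing the abstract theorem is the book-keeping already done in the text: confirming $\dim V_\star = 1$, $\dim Z = 1$, that $Z = {\rm Span}(\mathbf{e})$ satisfies \eqref{zvbd}, and that the eigenvalues $\Lambda^{(k)}_\xi$ really are the roots of \eqref{22.12.20e1} (this requires noting $p(\lambda - 1) = |B|^{-1}\det(M_\xi - \lambda D)$ and that $p$ has two distinct nonnegative roots for every $\xi$, so that $k$-labelling by increasing order is consistent between the matrix problem and the $\beta$-function description).

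I do not anticipate a real obstacle here — the statement is a corollary of the machinery. If anything, the mild care-point is ensuring that the eigenvalue ordering is compatible: Theorem~\ref{ikthm2} orders both $\{\lambda^{(k)}_{\ep,\t}\}$ and $\{\lambda^{(k)}_\xi\}$ ascending with multiplicity, and one must observe that for the two-dimensional $\mathbb{L}_\xi$ the pair $\{\Lambda^{(1)}_\xi, \Lambda^{(2)}_\xi\}$ exhausts ${\rm Sp}\,\mathbb{L}_\xi$ and both lie in $[1,\infty)$, so no spurious matching issue arises. Everything else is a direct substitution into the already-proven abstract estimate.
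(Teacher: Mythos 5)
Your proposal is correct and matches the paper's own argument exactly: the paper prefaces the theorem with the single sentence ``Theorem \ref{ikthm2}, wherein $\mathcal{L}_{\ep,\t}= \mathbf{L}_{\ep,\t}+I$, now implies the following,'' relying on precisely the verifications of \eqref{KA}--\eqref{H6}, the identification $\dim(V_\star\dot{+}Z)=2$, and the computation of $\mathbb{L}_\xi$ via \eqref{22.12.20e1} that you cite. Your extra remark on ordering compatibility is a reasonable sanity check but adds nothing beyond what the abstract theorem already guarantees.
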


Now, we can apply Theorem \ref{limspecsimple} to approximate ${\rm Sp}\, \mathcal{L}_\ep$. 
 Indeed from the above, in the notation of Theorem \ref{limspecsimple},  
$N={\rm dim}\,V_\star=1$ and $\lambda_\star^{(1)}=1+\mu_0$. 
Further, for the two eigenvalues of $\mathbf{B}_0=\mathbb{L}_0$, setting $\xi=0$ in \eqref{22.12.20e1} 
we see from \eqref{fimpint} that $\lambda_0^{(1)}=1$ and 
$\lambda_0^{(2)}=1+\mu_0 / (1 - |B|) = 1+\mu_0 + \mu_1$ where $\mu_1 = |\partial B| / |\square \backslash B|>0$. 
Hence, from \eqref{valthm2},  
$\overline{\bigcup_{\xi \in \RR^n} {\rm Sp}\, ( \mathbb{L}_\xi - I)} \,=\,  \left[0,\,\mu_0\right] \,\cup\, 
\left[\mu_0+\mu_1,\,+\infty\right)$, 
%
i.e. $\left(\mu_0, \,\mu_0+\mu_1\right)$ is a gap in the limit collective spectrum $\overline{\bigcup_{\xi \in \RR^n} {\rm Sp}\, ( \mathbb{L}_\xi - I)}$. 
Combining this with 
Corollary \ref{c.collspec} 
provides the following results on the structure of ${\rm Sp}\, \mathcal{L}_\ep$:
\begin{theorem}\label{5.3.21e1}
	For every interval $ [a,b] \subset (-\infty,\infty)$  there exists $C(b)\geq 0$, such that
	\begin{flalign*}
		{\rm dist}_{[a,b]}\Big({\rm Sp}\, \mathcal{L}_{\ep}\,,\,[0,\mu_0] \,\cup\, [\mu_0+\mu_1,\,+\infty) \Big) \,\,\,\le\,\,\, C(b) \ep, \quad \forall\, 0<\ep<1.
	\end{flalign*}
In particular, if  $\ep < \mu_1/(2C(b))$ then 
$\bigl[\mu_0 +C(b) \ep,\,\mu_0+\mu_1-C(b)\ep\bigr]$  is in a gap in the spectrum ${\rm Sp}\, \mathcal{L}_{\ep}$. 
\end{theorem}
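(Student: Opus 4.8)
\textbf{Proof plan for Theorem \ref{5.3.21e1}.}
The plan is to derive this theorem as a direct specialisation of Corollary \ref{c.collspec} (equivalently the collective-spectrum part of Theorem \ref{bivariate.spec}), using the machinery already assembled in this subsection. First I would recall that ${\rm Sp}\,\mathcal{L}_\ep = \overline{\bigcup_{\t\in\square^*} {\rm Sp}\, \mathbf{L}_{\ep,\t}}$, which was noted above via the rescaling $\Gamma_\ep$ and the Gelfand transform $U$; equivalently, with $\mathcal{L}_{\ep,\t} := \mathbf{L}_{\ep,\t} + I$ the operator generated by the form on the left of \eqref{p1} for the present $a_\t, b_\t, d_\t$ (see \eqref{abimpint}), one has ${\rm Sp}\, \mathcal{L}_\ep + 1 = \overline{\bigcup_{\t\in\square^*} {\rm Sp}\, \mathcal{L}_{\ep,\t}}$. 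Since all of \eqref{KA}--\eqref{H6} were verified above for this example, Corollary \ref{c.collspec} applies and gives, for every $[a',b'] \subset (-\infty,\infty)$,
\[
{\rm dist}_{[a',b']}\Big(\, \overline{\bigcup_{\t\in\square^*}{\rm Sp}\,\mathcal{L}_{\ep,\t}}\,,\,\, \overline{\bigcup_{\xi\in\RR^n}{\rm Sp}\,\mathbb{L}_\xi}\,\Big) \,\le\, C_{b'}\,\ep, \qquad \forall\, 0<\ep<1,
\]
with $C_{b'}$ as in that corollary.

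The second step is to identify the limit collective spectrum $\overline{\bigcup_{\xi\in\RR^n}{\rm Sp}\,\mathbb{L}_\xi}$ explicitly. This was essentially done in the paragraph preceding the theorem: using the $2\times 2$ matrices $M_\xi$ and $D$, the characterisation \eqref{finallimitspectralproblem} with $\beta_\lambda[\mathbf{e}] = \Phi(\lambda-1)$ from \eqref{fimpint}, and Theorem \ref{limspecsimple} with ${\rm dim}\, V_\star = N = 1$, $\lambda_\star^{(1)} = 1+\mu_0$, $\lambda_0^{(1)}=1$, $\lambda_0^{(2)}=1+\mu_0+\mu_1$, one gets from \eqref{valthm2} that $\overline{\bigcup_{\xi\in\RR^n}{\rm Sp}\,(\mathbb{L}_\xi - I)} = [0,\mu_0]\cup[\mu_0+\mu_1,\infty)$. (Alternatively one can invoke Theorem \ref{thm.limspecrep} together with the sign analysis of $\Phi$ on $(-\infty,\mu_0)$ and $(\mu_0,\infty)$.) Here I would just need to check that $\lambda_0^{(1)} \le \lambda_0^{(2)}$ and $\lambda_0^{(1)} \le \lambda_\star^{(1)} \le \lambda_0^{(2)}$, i.e. $0 \le \mu_0 \le \mu_0+\mu_1$, which is trivial since $\mu_0,\mu_1>0$; and that $Z$ is nontrivial so that \eqref{valthm2} (rather than a degenerate case) applies — true here, $Z = {\rm Span}(\mathbf{e})$.

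The third step is purely a shift of the spectral variable by $1$. Applying the display above with $[a',b'] = [a+1,b+1]$ and subtracting $1$ everywhere, and setting $C(b) := C_{b+1}$ (which is $\ge 0$ and, by Corollary \ref{c.collspec}, grows at most like $C(1+(b+1)^2)$ as $b\to\infty$), one obtains
\[
{\rm dist}_{[a,b]}\Big({\rm Sp}\,\mathcal{L}_\ep\,,\,[0,\mu_0]\cup[\mu_0+\mu_1,\infty)\Big) \,\le\, C(b)\,\ep, \qquad \forall\,0<\ep<1,
\]
since ${\rm dist}_{[a,b]}$ is translation-invariant under a simultaneous shift of both sets and the interval. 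The final clause about the gap $[\mu_0+C(b)\ep,\,\mu_0+\mu_1-C(b)\ep]$ lying in a gap of ${\rm Sp}\,\mathcal{L}_\ep$ follows exactly as in the last assertions of Corollary \ref{c.collspec} and Theorem \ref{bivariate.spec}: if some point $\lambda\in{\rm Sp}\,\mathcal{L}_\ep$ lay in $(\mu_0+C(b)\ep,\,\mu_0+\mu_1-C(b)\ep)$, its distance to $[0,\mu_0]\cup[\mu_0+\mu_1,\infty)$ would exceed $C(b)\ep$, contradicting the estimate, provided $\ep < \mu_1/(2C(b))$ so that the shrunk interval is nonempty.

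I do not expect a genuine obstacle here: the theorem is a corollary of already-proven abstract results plus the explicit $2\times 2$ computation done just above. The only points requiring a line of care are (i) the bookkeeping of the $+1$ shift relating $\mathcal{L}_\ep$, $\mathbf{L}_{\ep,\t}$, $\mathcal{L}_{\ep,\t}$ and $\mathbb{L}_\xi$ vs. $\mathbb{L}_\xi - I$, and (ii) confirming that the present example indeed satisfies all hypotheses \eqref{KA}--\eqref{H6} invoked by Corollary \ref{c.collspec} — but both were dealt with in the bulleted verification earlier in this subsection, so the proof is essentially an assembly of those pieces.
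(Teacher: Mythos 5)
Your proposal is correct and follows essentially the same route as the paper: the paper also obtains Theorem \ref{5.3.21e1} by combining the explicit identification of $\overline{\bigcup_{\xi\in\RR^n}{\rm Sp}\,(\mathbb{L}_\xi-I)}=[0,\mu_0]\cup[\mu_0+\mu_1,\infty)$ (via $\lambda_0^{(1)}=1$, $\lambda_\star^{(1)}=1+\mu_0$, $\lambda_0^{(2)}=1+\mu_0+\mu_1$ and \eqref{valthm2}) with Corollary \ref{c.collspec}, plus the shift by $1$ between $\mathcal{L}_\ep$ and $\mathcal{L}_{\ep,\t}=\mathbf{L}_{\ep,\t}+I$. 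Your bookkeeping of $C(b)=C_{b+1}$ and the final gap argument match the paper's treatment.
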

We finish this example by observing that, routinely specialising 
the constructions of Section \ref{s.bivariate} leads to the associated 
 bivariate operator 
of the form $\mathcal{L} = \mathcal{L}_0 + I$, where $\mathcal{L}_0$ is the operator in 
$L^2\left(\RR^n\right) \,\dot{+}\, L^2\big(\RR^n;\, {\rm Span}\,( \chi_B)\big)$, equipped with the standard $L^2\left(\RR^n \times \square\right)$ inner product,  generated by the form
\be
\label{2scformimpint}
\mathbb{Q}\big( u + v\chi_B,\, \tilde{u} + \tilde{v} \chi_B\big) \,\,\,: =\,
\int_{\RR^n} A^{\rm hom}_{\rm pd} \nabla u(x) \cdot\overline{ \nabla \tilde{u}(x)} \, {\rm d}x \,\,+\,\,  
|\partial B|\int_{\RR^n} v(x)\, \overline{\tilde{v}(x)}\, {\rm d}x,
\ee
with the form domain $H^1\left(\RR^n\right) \,\dot{+}\, L^2\left(\RR^n;\, {\rm Span}\,\left( \chi_B\right)\right)$. 
Adjusting the derivation leading to Theorem \ref{thm.2scOpRes} for the present example, we have the following result.

\begin{theorem}
	For $0<\ep<1$ one has, with an $\ep$-independent constant $C$,  
	\begin{equation}
		\label{dpcompe3-2}
		\left\|\, \left(\mathcal{L}_\ep+I\right)^{-1} \,\,-\,\,  
		\mathcal{J}_\ep^* \left(\mathcal{L}_0+I\right)^{-1} \mathcal{P} \mathcal{J}_\ep \,\right\|_{L^2(\RR^n) \rightarrow L^2(\RR^n)} \,\,\,\le\,\,\, C\, \ep,
	\end{equation}
where $\mathcal{J}_\ep=T_\ep\mathcal{I}_\ep$, $\mathcal{J}_\ep^*=\mathcal{I}_\ep^*T_\ep^{-1}$, with 
translation operator $T_\ep f(x,y)=f(x+\ep y, y)$, and  
the two-scale interpolation operator $\mathcal{I}_\ep$ and its adjoint $\mathcal{I}_\ep^*$ given by 
\eqref{2ScInterp}--\eqref{7.54-1} and \eqref{7.54-2} respectively;  
and \newline
$\mathcal{P}: L^2\left(\RR^n \times \square\right) \rightarrow L^2\left(\RR^n\right) \dot{+} L^2\big(\RR^n;\, {\rm Span}\,( \chi_B)\big)$ is the orthogonal projection. 
\end{theorem}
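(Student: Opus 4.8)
The plan is to mirror exactly the derivation that produced Theorem~\ref{thm.2scOpRes} in the high-contrast example of Section~\ref{e.dp}, the only substantive change being the replacement of the homogenised two-scale operator $\mathcal{L}_0$ of \eqref{q02sc} by the present ``two-phase'' operator generated by the form $\mathbb{Q}$ in \eqref{2scformimpint}. First I would verify that all the objects appearing in the abstract Theorem~\ref{thm.bivariate} specialise correctly: since we have already checked in this subsection that \eqref{KA}--\eqref{H6} hold for the forms \eqref{abimpint} with $V_\star = {\rm Span}\{\chi_B\}$, $Z = {\rm Span}\{\mathbf{e}\}$, $\mathcal{H} = L^2(\square)$ with $d_\t = d_0$ the standard inner product, and $\mathcal{E}_\t$ multiplication by $e^{-\i\t\cdot y}$, the abstract resolvent estimate of Theorem~\ref{thm.bivariate} is available. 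I would then compute the bivariate operator $\mathcal{L}$ from \eqref{Q}: here $\overline{V_\star\dot{+}Z} = \mathcal{H}_0 = \{c_1 + c_2\chi_B\}$, the homogenised form on $Z$ is $a^{\rm h}_\t[\mathbf{e}] = A^{\rm hom}_{\rm pd}\t\cdot\t$ (already identified above via the corrector $\ourN^{\rm pd}$ solving \eqref{dp.correctorproblem}), and $b_0$ restricted to $\mathcal{H}_0$ contributes the $|\partial B|\int v\overline{\tilde v}$ term together with the $L^2$ mass term. Comparing with \eqref{2scformimpint} gives $\mathcal{L} = \mathcal{L}_0 + I$ with $\mathcal{L}_0$ generated by $\mathbb{Q}$, exactly as claimed.

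Next I would unwind the abstract connecting operator $A_\ep = \Gamma_\ep^{-1}\mathcal{F}^{-1}\chi\,\mathcal{E}^{-1}$ of Theorem~\ref{thm.bivariate} back through the Floquet-Bloch and rescaling transforms, precisely as in the proof of Theorem~\ref{thm.2scOpRes}. Since $\mathcal{L}_{\ep,\t}^{-1} = U\Gamma_\ep(\mathcal{L}_\ep+I)^{-1}\Gamma_\ep^{-1}U^{-1}$ and $U,\Gamma_\ep$ are $L^2$-unitary, conjugating the estimate of Theorem~\ref{thm.bivariate} by these transforms yields \eqref{dpcompe3-2} with $\mathcal{J}_\ep = A_\ep U\Gamma_\ep = \Gamma_\ep^{-1}\mathcal{F}^{-1}\chi\,\mathcal{E}^{-1}U\Gamma_\ep$. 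Because $\mathcal{E}^{-1}$ is multiplication by $e^{\i\t\cdot y}$ on all of $L^2(\square)$, commuting it past the scaled inverse Fourier transform factors out the unitary translation $T_\ep f(x,y) = f(x+\ep y,y)$, leaving $\mathcal{J}_\ep = T_\ep\mathcal{I}_\ep$ with $\mathcal{I}_\ep = \Gamma_\ep^{-1}\mathcal{F}^{-1}\chi\,U\Gamma_\ep$ the two-scale interpolation operator of \eqref{2ScInterp}; the explicit kernels \eqref{7.54-1} and \eqref{7.54-2}, and the isometry/projector identities \eqref{calttstar}, are then inherited verbatim from Theorem~\ref{thm.2scOpRes} since they depend only on $\mathcal{I}_\ep$ and $T_\ep$, not on the particular limit operator. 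The projector $\mathcal{P}$ onto $L^2(\RR^n)\dot{+}L^2(\RR^n;{\rm Span}(\chi_B))$ replaces \eqref{p-cal-dp}, but enters in exactly the same structural position.

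The one point that needs genuine (if brief) attention, rather than pure transcription, is confirming that the present choice of $\mathcal{E}_\t$ as multiplication by $e^{-\i\t\cdot y}$ throughout $L^2(\square)$ — rather than the ``identity outside $B$'' variant $\mathcal{E}^{(2)}_\t$ of Remark~\ref{ecalextns} — is the one that produces precisely the stated $\mathcal{J}_\ep = T_\ep\mathcal{I}_\ep$ with $T_\ep f(x,y) = f(x+\ep y,y)$ for \emph{all} $y\in\square$; with the other choice the translation would act only on the inclusion phase and the formula for $\mathcal{J}_\ep$ outside $B$ would differ, though the estimate \eqref{dpcompe3-2} would still hold. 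So I would simply note that we here take $\mathcal{E}_\t = \mathcal{E}^{(1)}_\t$ (as already stipulated in the verification of \eqref{H6} above), whence the translation is global in $y$ and the conjugation computation goes through identically to the high-contrast case.

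The main obstacle, such as it is, is purely bookkeeping: one must be careful that the form $\mathbb{Q}$ in \eqref{2scformimpint} is genuinely $Q - (\cdot,\cdot)_{L^2(\RR^n\times\square)}$ for the abstract bivariate form $Q$ of \eqref{Q} specialised here, i.e. that the mass term in $b_0$ on $\mathcal{H}_0$ reproduces the standard $L^2(\RR^n\times\square)$ inner product after the direct-integral identification — this uses $\|c_1 + c_2\chi_B\|^2_{L^2(\square)} = |c_1|^2|\square\setminus B| + |c_1+c_2|^2|B|$ matching $d_0$, together with the fact that $b_0[\chi_B] - d_0[\chi_B] = |\partial B|$ accounts for the interface term. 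Once this identification is spelled out, Theorem~\ref{thm.bivariate}, and with it \eqref{dpcompe3-2}, follows with no further work, and I would regard the proof as essentially a one-paragraph specialisation. I expect no serious analytical difficulty; the only risk is a sign or normalisation slip in the exponential factors when commuting $\mathcal{E}^{-1}$ past $\mathcal{F}^{-1}\Gamma_\ep^{-1}$, which is checked against \eqref{Aep}--\eqref{Aepstar}.
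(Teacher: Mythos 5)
Your proposal is correct and follows essentially the same route the paper intends: the paper's entire "proof" of this theorem is the single phrase ``Adjusting the derivation leading to Theorem~\ref{thm.2scOpRes} for the present example,'' and you have spelled out precisely that adjustment, including the correct identification of $\mathcal{L}=\mathcal{L}_0+I$ from the form \eqref{2scformimpint} (via $b_0[\chi_B]-d_0[\chi_B]=|\partial B|$), the applicability of Theorem~\ref{thm.bivariate} after the \eqref{KA}--\eqref{H6} verifications already carried out in Section~\ref{sec:impint}, and the conjugation by $U\Gamma_\ep$ that produces $\mathcal{J}_\ep=T_\ep\mathcal{I}_\ep$ with global translation owing to the choice $\mathcal{E}_\t=\mathcal{E}_\t^{(1)}$ stipulated there. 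No gaps.
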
.
\begin{remark}
Notice that, as follows from \eqref{2scformimpint}, for $g \in L^2(\RR^n \times \square)$, $(\mathcal{L}_0 + I)^{-1} \mathcal{P} g= u(x) + v(x) \chi_B(y)$ where $(u,v) \in H^1(\RR^n) \times L^2(\RR^n)$ solve the coupled system
\be
\label{2phlimsys}
\left\{\ \begin{aligned}
- \,\nabla\cdot\, A^{\rm hom}_{\rm dp} \nabla u(x)  \,+\, u(x)\, +\, |B| v(x) \,\,=\, \int_\square g(x,y) \, {\rm d}y, \quad x \in \RR^n ;\\
|B| u(x) \,+\, \big( |\partial B| \,+\, |B|\big) v(x)\,\, =\, \int_B g(x,y) \, {\rm d}y,  \quad x \in \RR^n.
\end{aligned} \right.
\ee
We remark 
that \eqref{2phlimsys} is the 
two-scale limit system for the original problem \eqref{impintclass}. 
Notice that it appears to be of a ``two-phase'' 
type: the limit behaviour is characterised by the pair of 
macroscopic functions $u(x)$ and $v(x)$, $x\in\mathbb{R}^n$, in the matrix and 
inclusion phases respectively. 
Furthermore, Theorem \ref{5.3.21e1} immediately implies (cf. Theorem \ref{bivariate.spec}) 
the following estimate on the closeness of the spectra of the original and the limit problems: 
	${\rm dist}_{[a,b]}\big({\rm Sp}\, \mathcal{L}_\ep \,, {\rm Sp}\, \mathcal{L}_0 \big) \,\,\le\,\, C(b) \ep$.
\end{remark}

\subsection{A problem with 
concentrated perturbations 
}
\label{sec:concpert}
In this section we demonstrate that the parameter $\t$ does not necessarily have to come from the 
Floquet-Bloch-Gelfand  transform. 
Still, in this example, hypotheses \eqref{KA}--\eqref{distance} are valid and an appropriate minor modification of 
the general scheme leads to meaningful approximations with error bounds.

Let $F\in L^2(\mathbb{R}^3)$, $0<\ep<1$, $0\le\delta\le 1/2$, and $B_r(x)$ denote the ball of radius $r$ centred at $x$, with $B_r(0)$ denoted by $B_r$. Consider the problem, in the weak form,
\begin{equation}
\label{IKs}
\left\{ \begin{aligned}
& \text{ Find $U_{\ep,\delta} \in H^1(\mathbb{R}^3)$ the solution to} \\
& \int_{\mathbb{R}^3} \nabla U_{\ep,\delta}\cdot \overline{\nabla\Phi} \,\,+\,\,\ep^{-2}\delta^{-1}\sum_{j\in \mathbb{Z}^3} \int_{B_{\delta\ep}(\ep j)}U_{\ep,\delta}\,\overline{ \Phi}\,\,+\,\int_{\mathbb{R}^3}  U_{\ep,\delta}\, \overline{\Phi} \,\,\, =\,\, \int_{\mathbb{R}^3} F\, \overline{\Phi} , \qquad \forall \Phi \in H^1(\mathbb{R}^3),
\end{aligned} \right.
\end{equation}
where  
for 
$\delta=0$ the singular term (i.e. the second term in 
\eqref{IKs}) is regarded absent. 
Related problems with different scalings for ``concentrated perturbations'' 
were considered, for example,  in \cite{GoNa92,Na93}.

Our aim is to construct, for small $\ep$, approximations to the solution $U_{\ep,\delta}$ which would be uniform in $\delta$. 
The idea here is to reduce problem \eqref{IKs} to the general form \eqref{p1} by regarding $\delta$ as another component in the abstract parameter $\t$. 
Namely, let 
$\theta=(k,\delta)
\in \Theta=\square^\star\times[0,1/2]\subset\mathbb{R}^4$, where 
$k
\in \square^\star=[-\pi,\pi]^3$ is the usual Floquet-Bloch quasiperiodicity parameter. 
Then, 
as in the preceding examples, after rescaling and application of Gelfand transform,  we arrive at 
equivalent problem: 
\begin{equation}
\label{IKs4}
\left\{ \begin{aligned}
& \text{ Find $u_{\ep,\theta} \in H^1_{per}(\Box)$, $\Box=[-1/2,1/2]^3$, the solution to} \\
& \ep^{-2}\int_{\Box} (\nabla+ik) u_{\ep,\theta}\cdot \overline{(\nabla+ik)\phi} \,\,\,+\,\ep^{-2}\delta^{-1} \int_{B_{\delta}}u_{\ep,\theta}\,\overline{ \phi}
\,\,\,+\,\int_{\Box}  u_{\ep,\theta} \,\overline{\phi} \,\,\, =\,\, \int_{\Box} g_{\ep,k} \,\overline{\phi} , \quad \forall \phi \in H^1_{per}(\Box), 
\end{aligned} \right.
\end{equation}
where 
$g_{\ep,k} = {U} \Gamma_{\ep}F(k ,\cdot)$.  
Thus 
\eqref{IKs4} is of the form \eqref{p1} with
$H=H^1_{per}(\Box)$, 
$ \langle f,\tilde{u}\rangle= \int_{\Box} g_{\ep,k} \overline{\tilde{u}}$,
\begin{equation}
\label{conc-mass-ab}
a_\theta\left(u,\tilde{u}\right)\,\,=\,\,\int_{\Box} (\nabla+ik) u\cdot \overline{(\nabla+ik)\tilde{u}} \,\,+\,\,
\delta^{-1} \int_{B_{\delta}}u\,\overline{\tilde{u}}, \quad \text{ and} \quad b_\t\left(u,\tilde{u}\right) \,=\, \int_{\square} u \,\overline{\tilde{u}}.
\end{equation}
To check \eqref{as.b1} notice that, by  H\"older inequality and standard Sobolev embeddings, with some $c_0 >0$  
\begin{equation}\label{GNSineq}
\delta^{-1}\int_{B_\delta }|\phi|^2\,\,\le\,\, \delta^{-1} \Big(\int_\square |\phi|^6\Big)^{1/3} \left| B_\delta\right|^{2/3} \,\,=\,\, 
\left(\tfrac{4}{3}\pi\right)^{2/3} \,\delta\, \|\phi\|_{L^6(\square)}^2\,\,\le\,\, c_0\, \delta\, \|\phi\|_{H^1(\square)}^2, \quad 
\forall\delta>0, \ \forall \phi \in H^1_{per}(\square).
\end{equation}
This, together with the arguments as in Section \ref{e.class}, cf. \eqref{2.1classhom}, 
implies that 
\eqref{as.b1} holds. Further, as $\delta\le |\t|$ inequality \eqref{GNSineq} also implies that $a_\t$ is  Lipschitz in $\t$ at the origin, i.e. 
\eqref{ass.alip} is satisfied 
for $\t_1 = 0$, $\t_2\in\Theta$. 
Notice however that \eqref{ass.alip} does not hold globally on $\Theta$ (it can be shown by estimates similar to \eqref{GNSineq} that $a_\t$ is 
merely $\tfrac{2}{3}$-H\"{o}lder continuous in $\delta$ and hence in $\t$), and that \eqref{H4} fails to hold for similar reasons. 
Still, we can proceed here with our general method in its relevant parts. 

First notice that as follows from \eqref{conc-mass-ab} the  spaces $V_\theta$ and $W_\theta$ are  as in the classical setting (Example \ref{e.class}, 
see \eqref{vwclasshom}, with $\mathbf{e}$ denoting the identical unity function):
\[
\begin{aligned}
V_\theta = \left\{
\begin{array}{lr}
\{ 0 \}, & \theta \neq 0, \\[5pt]
{\rm Span} (\mathbf{e}), & \theta = 0,
\end{array} 
\right. & \qquad & W_\theta = \left\{
\begin{array}{lr}
H^1_{per}(\Box), & \theta \neq 0, \\[5pt]
H^1_{per, 0} : =\Big\{ u\in H^1_{per}(\Box) \,\,\, \Big\vert \,\, \int_\Box u = 0 \Big\}, & \theta = 0.
\end{array} 
\right.
\end{aligned}
\]
Observe next that the key condition \eqref{KA}  holds: this can be seen by noting \eqref{KA2} is obviously valid with $C=1$ and $c[u]=\int_{\square} |u|^2$. 
Hypothesis \eqref{contVs} is trivially satisfied (with $V^\star_\t=\{0\}$ and $L_\star=0$), and 
we will prove at the end of the subsection that \eqref{distance} also holds. 

In applying our abstract results as based on hypotheses \eqref{KA}--\eqref{distance}, certain care needs to be exercised as some of these results may be based on 
global version of \eqref{ass.alip} while in the present example the latter is 
assured when $\t_1=0$ (in fact when $\t_1=(k_1,\delta_1)$ with $\delta_1=0$). 
Fortunately, thanks to Remark \ref{rem.s3.1},  
Theorem \ref{lmm1} is applicable 
and states that $u_{\ep,\t}$ is approximated  when $|\t| < \nu_0 / (2L_a)$ by $\M v_0$, where $v_0 \in V_0$ is the solution to \eqref{w1prob0}. 
On the other hand, 
 due to \eqref{distance},  for each $r>0$ \eqref{bddspec} is satisfied on $\Theta_r := \{ \t \in \Theta : | \t | \ge r\}$ with $\nu = \gamma r^2$. Therefore,  Theorem \ref{thm:contV} applies when $\t \in \Theta_r$  (see Remark \ref{rem3.2}), and states in this setting that the solution $u_{\ep,\t}$ to 
\eqref{IKs4} satisfies \eqref{errorcontinuouscase} and \eqref{errorcontinuouscase2} with $v_\t=0$ and $\nu = \gamma r^2$. 

Specialising \eqref{IliaN2} and \eqref{IliaN} to the present example, $v_0=z_{\ep,\t}\,\mathbf{e}$ for some $z_{\ep,\t}\in\CC$, 
$\M v_0=v_0+\N v_0 = z_{\ep,\t}(\mathbf{e}+\ourN_\t) $ where 
$\ourN_\t:=\N\mathbf{e} \in H^1_{per,0}$ is the unique solution to 
\begin{equation}
	\label{IKs15}
	a_\t( \ourN_\t, \widetilde{w}_0)
	\,=\,-\,\,\delta^{-1} \int_{B_{\delta}}\overline{\widetilde{w}_0}, \qquad \forall \widetilde{w}_0 \in H^1_{per,0}. 
\end{equation} 
Equation \eqref{w1prob0} reduces then to an algebraic equation for $z_{\ep,\t}$ as follows. Setting $\tilde v=\tilde z\mathbf{e}$, $\tilde z\in\CC$, 
and e.g. using \eqref{amnorth}, one obtains: 
$
a_\t\left(\M v_0,\M\tilde v\right)\,=\,\Big( |k|^2\,+\,4\pi\delta^2/3 \,-\, a_\t[\ourN_\t]\Big)z_{\ep,\t}\,\overline{\tilde z}$, 
$\,\,\, b_\t\left(\M v_0,\M\tilde v\right)\,=\,\Big( 1\,+\, \|\ourN_\t\|^2_{L^2(\square)}\Big)z_{\ep,\t}\,\overline{\tilde z}$, 
$\,\,\, \left\langle f, \M\tilde v\right\rangle\,=\,  \left(\int_{\Box} g_{\ep,k}(y) \overline{\left(1+\ourN_\t(y)\right)} {d} y\right)\overline{\tilde z}$. 
Then \eqref{w1prob0} results in  
\begin{equation}\label{SVz}
	z_{\ep,\t} \,\,=\,\, \frac{  \int_{\Box}  g_{\ep,k}(y) \overline{(1+\ourN_\t(y))} \, {\rm d} y}
	{\ep^{-2} \Big( |k|^2+4\pi\delta^2/3 - a_\t[\ourN_\t]\Big) \,+\, 
	1\,+\, \|\ourN_\t\|^2_{L^2(\square)}}\,, 
\end{equation}
where  $\ourN_\t \in H^1_{per,0}$ is the solution to the ``cell problem'' \eqref{IKs15}. 
(Notice that the bracketed term in the denominator coincides with $a_\t\left[\M\mathbf{e}\right]$ which via \eqref{distance} is bounded from below 
by e.g. $\gamma|\t|^2$.) 
 
The above approximates $u_{\ep,\t}$ when $\t=(k,\delta) \in \Theta,$ $|\t| < \nu_0/(2L_a)$. 
Namely, for 
\[
A_{\ep,\t}\big(u,\tilde u\big)\,:=\,\ep^{-2}a_\theta\left(u,\tilde{u}\right)+b_\t\left(u,\tilde{u}\right)\,=\,
\ep^{-2}\left(
\int_{\Box} (\nabla+ik) u\cdot \overline{(\nabla+ik)\tilde{u}} \,+\,
\delta^{-1}\int_{B_{\delta}}u\,\overline{\tilde{u}}\right)\,+\, \int_{\square} u \,\overline{\tilde{u}}, 
\]
\eqref{ik43000}--\eqref{s4ep2bd} imply: 
\begin{equation}
\label{thm42strange}
A_{\ep,\t}\Big[u_{\ep,\t}\,-\,z_{\ep,\t}(\mathbf{e}+\ourN_\t)  \Big]\,\le\,c_1\ep^2\left\Vert g_{\ep,k}\right\Vert_{L^2(\Box)}^2, \quad \text{and} \quad 
\big\Vert u_{\ep,\theta} \,-\,z_{\ep,\t}(\mathbf{e}+\ourN_\t)\big\Vert_{L^2(\square)} \,\le\,   c_1 \ep^2 
\left\Vert g_{\ep,k}\right\Vert_{L^2(\Box)}, \ \ \forall \, |\t|<\,\tfrac{\nu_0}{2L_a},
\end{equation} 
with a constant $c_1>0$ independent of $\ep$, $\t=(k,\delta)$ and $F$ 
(recall $g_{\ep,k}:=U\Gamma_\ep F(k,\cdot)$). 
In principle this, together with  
\eqref{errorcontinuouscase}--\eqref{errorcontinuouscase2} for $|\t|\ge r_0 = \nu_0/(2L_a)$,  
 after the inverse Gelfand and scaling transforms provides us with an approximation to the 
solution $U_{\ep,\t}$ of \eqref{IKs} for small $\ep$, uniform with respect to both $F$ and $\delta$. 
This is quite inexplicit 
as requires in particular solving the cell problem \eqref{IKs15} for ranges of $k$ and $\delta$. 
However, we can construct a more explicit further approximation of \eqref{SVz} as follows.  

First notice that $\ourN_\t$ is small for small $\delta$. 
Indeed, by \eqref{IKs15} with $\tilde w_0=\ourN_\t$,  $a_\t\left[\ourN_\t\right] \,=\,\big| \delta^{-1}\int_{B_{\delta}}  \ourN_\t \big|$. 
On the other hand, arguing similarly to \eqref{GNSineq} and employing 
 the Poincar\'{e}-Wirtinger inequality we observe that 
\begin{equation}\label{SVPoinMean}
\Big| \delta^{-1}\int_{B_{\delta}}  \phi_0 \Big| \,\,\le\,\, c_2 \,\delta^{3/2}\, \left\| \nabla \phi_0\right\|_{L^2(\square)}, \quad \ \ \ \forall \phi_0 \in H^1(\square), \,\, \int_\square \phi_0 =0,
\end{equation}
for some $c_2 > 0$. Thus, by sequentially using \eqref{SVPoinMean}, \eqref{coercv0t} and \eqref{conc-mass-ab}  one has  
$a_\t\left[\ourN_\t\right] \,\le\, 2\,c_2^2\,\nu_0^{-1} \delta^3 $, and recalling \eqref{C2} also 
$\|\ourN_\t\|^2_{L^2(\square)}  \,\le\, 4\,c_2^2\,K^2\nu_0^{-2} \delta^3$.

So estimating, in terms of both 
$|\t|=\left(\delta^2+|k|^2\right)^{1/2}$ and $\ep$, the error of neglecting in \eqref{SVz} all the terms containing $\ourN_\t$,  
and recalling \eqref{distance} for bounding from below the denominator of \eqref{SVz}, 
we obtain  
\begin{equation}\label{SVz2}
\Big| z_{\ep,\t} \,-\, c_{\ep,\t}\Big| \,\,\le\,\, c_3\, 
\left[
\frac{\ep^{-2}|\t|^3}{\left(\ep^{-2}|\t|^2+1\right)^2}
\,+\,
\frac{
|\t|^{3/2}}{\ep^{-2}|\t|^2+1}
\right]
\left\| g_{\ep,k} \right\|_{L^2(\square)}, \quad \text{where}  \quad 	
c_{\ep,\t}\,\, =\,\, \frac{  \int_{\Box} g_{\ep,k}(y) \, {\rm d} y}{\ep^{-2} \big( |k|^2+4\pi\delta^2/3  \big) \,+ \,1}\,, 
\end{equation}
 and $c_3$ is some positive constant independent of $\ep$, $\delta$, $k$ and $F$. 
Now replace in \eqref{thm42strange} the approximation $z_{\ep,\t}(\mathbf{e}+\ourN_\t)$ by $c_{\ep,\t}\mathbf{e}$. As a result, for the first estimate, 
\begin{equation}
\label{innerest}
A_{\ep,\t}\big[u_{\ep,\t}\,-\,c_{\ep,\t}\mathbf{e}  \big]\,\,\le\,\,\,3\, A_{\ep,\t}\big[u_{\ep,\t}\,-\,z_{\ep,\t}(\mathbf{e}+\ourN_\t)  \big]\,+\,
3 \,A_{\ep,\t}\big[\left(z_{\ep,\t}-c_{\ep,\t}\right)\mathbf{e}  \big]\,+\,
3 \,A_{\ep,\t}\big[z_{\ep,\t}\ourN_\t  \big]. 
\end{equation}
With the first term on the right hand side bounded by \eqref{thm42strange}, for the second term via \eqref{SVz2} 
\[
A_{\ep,\t}\big[\left(z_{\ep,\t}-c_{\ep,\t}\right)\mathbf{e}  \big]\,\,\,=\,\,\,
\big\vert z_{\ep,\t}-c_{\ep,\t}\big\vert^2\left[\ep^{-2}\left(|k|^2+\frac{4}{3}\pi\delta^2\right)+1\right]\,\,\,\le
 \ \ \ \ \ \ \ \ \ \ \ \ \ \ \ \ \ \ \ \ \ \ \ \ \ \ \ \ \ \ \ \ \quad \ \ \ \ 
\]
\[
\ \ \ \ \ \ \ \
\frac{8}{3}\pi c_3^2\,
\left[
\frac{\ep^{-4}|\t|^6}{\left(\ep^{-2}|\t|^2+1\right)^3}
\,\,+\,\,
\frac{|\t|^3}{\ep^{-2}|\t|^2+1}
\right]
\left\| g_{\ep,k} \right\|^2_{L^2(\square)}
\,\,\le\,\,
c_4\ep^2\left\| g_{\ep,k} \right\|^2_{L^2(\square)}, 
\]
with a constant $c_4$ 
independent of $\ep$, $\t$ and $F$. 
(In the last inequality we used that for $0\le t:=|\t|/\ep<+\infty$, $t^6/(1+t^2)^3<1$, $t^2/(1+t^2)<1$, and 
that $|\t|$ is bounded.) 
Finally, for the last term in \eqref{innerest}, via \eqref{SVz} together with the above estimates for $a_\t\left[\ourN_\t\right]$ and 
$\left\|\ourN_\t\right\|_{L^2(\square)}$, 
\[
A_{\ep,\t}\big[z_{\ep,\t}\ourN_\t  \big]\,=\,|z_{\ep,\t}|^2\left(\ep^{-2}a_\t\left[\ourN_\t\right]+\left\|\ourN_\t\right\|^2_{L^2(\square)}\right)\,\le\,
c_5\,\frac{\left\| g_{\ep,k} \right\|^2_{L^2(\square)}}{\Big(\ep^{-2}|\t|^2\,\,+\,\,1\Big)^2}\left(\ep^{-2}\delta^3+\delta^3\right)\,\le\,
c_6\,\ep\left\| g_{\ep,k} \right\|^2_{L^2(\square)},
\]
with 
constants $c_5$ and $c_6$ independent of $\ep$, $\t$ and $F$ (having used in the last inequality the boundedness of $t^3/(1+t^2)^2$, 
$0\le t<+\infty$). 
Combining the above we bound \eqref{innerest}, for $|\t|<r_0:=\nu_0/(2L_a)$, by a constant times $\ep\left\| g_{\ep,k} \right\|^2_{L^2(\square)}$. 
On the other hand, for $\t\ge r_0$, it immediately follows from \eqref{SVz2} 
that $\big|c_{\ep,\t}\big| \le  r_0^{-2} \ep^2 \| g_{\ep,k} \|_{L^2(\square)}$ and as a result 
$A_{\ep,\t}\left[c_{\ep,\t}\mathbf{e}\right]=\left|c_{\ep,\t}\right|^2\left(\ep^{-2}a_{\t}\left[\mathbf{e}\right]+1\right)$ is bounded 
by a constant times $\ep^2\left\| g_{\ep,k} \right\|^2_{L^2(\square)}$. 
Also, for $|\t|\ge r_0$,  by 
\eqref{errorcontinuouscase} and 
\eqref{distance} $ A_{\ep,\t}\left[u_{\ep,\t}\right] \le \gamma^{-1} r_0^{-2}  \ep^2 \|  g_{\ep,k} \|_{L^2(\square)}^2$. 
As a result, the left hand side of \eqref{innerest} is bounded by a constant times $\ep^2\left\| g_{\ep,k} \right\|^2_{L^2(\square)}$ for $|\t|\ge r_0$. 

Repeating the above arguments for the second estimate in \eqref{thm42strange} with the approximation $z_{\ep,\t}(\mathbf{e}+\ourN_\t)$ again replaced 
by $c_{\ep,\t}\mathbf{e}$, we observe that the corresponding estimate is dominated by the term analogous to the second term on the right hand side of \eqref{innerest}. 
Namely, recalling \eqref{SVz2}, 
\[
\big\|\left(z_{\ep,\t}-c_{\ep,\t}\right)\mathbf{e}  \big\|_{L^2(\square)}\,\,=\,\,
\big\vert z_{\ep,\t}-c_{\ep,\t}\big\vert\,\,\le\,\,
c_3\, 
\left[  
\frac{\ep^{-2}|\t|^3}{\left(\ep^{-2}|\t|^2+1\right)^2}
\,+\,
\frac{
|\t|^{3/2}}{\ep^{-2}|\t|^2+1}
\right]
\left\| g_{\ep,k} \right\|_{L^2(\square)}
\,\,\le\,\,
c_7\,\ep\left\| g_{\ep,k} \right\|_{L^2(\square)}, 
\]
with $c_7>0$ independent of $\ep$, $\t$ and $F$. 

Combining all the above estimates we deduce that 
 \[ 
\ep^{-2}\left(  \int_\square \big\vert (\nabla+ \i k) \left(u_{\ep,\theta} -c_{\ep,\t}\mathbf{e} \right) \big\vert^2 \,+\, 
\delta^{-1}\int_{B_\delta} \big\vert u_{\ep,\theta} -c_{\ep,\t}\mathbf{e} \big\vert^2\right)  \,\, +\,\, 
\big\Vert u_{\ep,\theta} -c_{\ep,\t}\mathbf{e}\big\Vert_{L^2(\square)}^2 \,\,\,\le\,\,\,   c_8\, \ep\,
\left\Vert g_{\ep,k}\right\Vert_{L^2(\Box)}^2, 
\]
\begin{equation}
\label{eststgt}
 \text{and} \hspace{3cm} \big\Vert u_{\ep,\theta} \,\,-\,\,c_{\ep,\t}\mathbf{e}\big\Vert_{L^2(\square)} \,\,\,\le\,\,\,   c_8\, \ep 
\left\Vert g_{\ep,k}\right\Vert_{L^2(\Box)}, \quad \quad g_{\ep,k}=U\Gamma_\ep F(k,\cdot), \quad \quad \forall \t \in \Theta, \hspace{4cm}
\end{equation} 
for some constant $c_8>0$ independent of $\ep$, $\t=(k,\delta)$ and $F$. 
Comparing the above estimates with \eqref{thm42strange}, we observe that replacing the approximation $z_{\ep,\t}(\mathbf{e}+\ourN_\t)$ by the simplified 
one $c_{\ep,\t}\mathbf{e}$ results in a ``one power of $\ep$'' loss in the accuracy. 
In terms of Section \ref{sect4.1}, $c_{\ep,\t}\mathbf{e}$ can be seen to solve a modification of problem 
\eqref{w1prob0} with operator $\M$ replaced by identity. 
The point is that the above established smallness of the corrector $\ourN_\t$ in $\delta$ 
(of order $\delta^{3/2}$) allows to achieve \eqref{eststgt}. 

Now, arguing as in Example \ref{e.class} (cf. \eqref{SVz2} with \eqref{classicalzsol} leading to \eqref{Sep} ), we deduce that the 
approximation ${U}_{\ep,\delta}^{(0)} = \Gamma_\ep^{-1} U^{-1} c_{\ep,\t}\mathbf{e}$ to the exact solution 
${U}_{\ep,\delta} = \Gamma_\ep^{-1} U^{-1} u_{\ep,\t}$ of 
\eqref{IKs} 
solves 
 \begin{equation}\label{SVlim}
 \left(-\,\Delta\,\, +\,\frac{4}{3}\pi\,\frac{\delta^2}{\ep^2}\,\,+\,1\right){U}_{\ep,\delta}^{(0)}\,\,=\,\, \mathcal{S}_\ep F, \quad  \text{in} \ \mathbb{R}^3, 
 \end{equation}
with $\mathcal{S}_\ep$ given by \eqref{7.22-2} where $\chi$ stands for the characteristic function of $\square^*$. 
Further, estimates \eqref{eststgt} imply similar estimates for ${U}_{\ep,\delta}^{(0)}$, cf. \eqref{IKH1est}--\eqref{IKL2est}, 
in particular 
 \begin{equation}\label{SVLimest2}
\begin{aligned}
\left\Vert U_{\ep,\delta}  \,-\,   U_{\ep,\delta}^{(0)} \right\Vert_{H^1(\RR^3)}  \,\,\le\,\,   c_8\, \ep^{1/2}\,
\|F\|_{L^2(\RR^3)}, \quad \text{and} \quad 
\left\Vert U_{\ep,\delta}  \,-\,  
U_{\ep,\delta}^{(0)} \right\Vert_{L^2(\RR^3)}  \,\,\le\,\,   c_8\, \ep\,\|F\|_{L^2(\RR^3)}. 
\end{aligned}
\end{equation}
Finally, as \eqref{7.22-2} implies (cf. \eqref{7.23-3} leading to \eqref{7.23-2}) that 
$\| \mathcal{S}_\ep F -F \|_{H^{-1}(\RR^3)} \le  \ep \pi^{-1} \| F\|_{L^2(\RR^3)}$, it follows that the estimates analogous to \eqref{SVLimest2} remain valid if 
 $  \mathcal{S}_\ep$ is removed in \eqref{SVlim}. We collect all of the above observations to state the following theorem.   
\begin{theorem}
	\label{IKVolcompareclassicalh1}
Let $U_{\ep,\delta}$ solve \eqref{IKs} and, for each $\alpha \in [0,\infty)$, let  $U_{\alpha}\in H^1(\mathbb{R}^3)$ solve 
\[\left(-\,\Delta \,\,+\,\,\frac{4}{3}\pi\, \alpha^2\,\,+1\,\,\right)U_{\alpha}\,\,=\,\,F \quad \text{in} \ \mathbb{R}^3. 
\]
Then there exists a positive constant $c$ independent of $\ep$, $\delta$ and $F$ such that 
\begin{equation}\label{SVfinalEst}
\left\Vert U_{\ep,\delta} \,-\, U_{\delta/\ep} \right\Vert_{H^1(\mathbb{R}^3)}\,\, \le\,\,c\, \ep^{1/2}\,
 \Vert F \Vert_{L^2(\mathbb{R}^3)},\quad \text{and} \quad   
\left\Vert U_{\ep,\delta} \,-\, U_{\delta/\ep} \right\Vert_{L_2(\mathbb{R}^3)}\,\, \le\,\,c\, \ep\,\Vert F \Vert_{L^2(\mathbb{R}^3)}, 
\end{equation}
for all $0<\ep<1$, 
$0 \le \delta \le 1  / 2$ and $F\in L^2\left(\RR^3\right)$.
\end{theorem}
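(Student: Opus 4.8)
The plan is to package everything established in the subsection into two clean steps: first transfer the uniform-in-$(k,\delta)$ estimates \eqref{eststgt} from the transformed problem \eqref{IKs4} back to the original problem \eqref{IKs} via the $L^2$-unitary maps $\Gamma_\ep$ and $U$, and then compare the resulting $\ep$-dependent limit operator $-\Delta + \tfrac{4}{3}\pi\,(\delta/\ep)^2 + 1$ with the genuine $\ep$-independent family $-\Delta + \tfrac{4}{3}\pi\,\alpha^2 + 1$ evaluated at $\alpha = \delta/\ep$, dealing with the smoothing operator $\mathcal{S}_\ep$ as an $O(\ep)$ perturbation.

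First I would fix $F \in L^2(\RR^3)$, $0<\ep<1$, $0\le\delta\le 1/2$, set $g_{\ep,k} = U\Gamma_\ep F(k,\cdot)$, and recall that $u_{\ep,\t} = U\Gamma_\ep U_{\ep,\delta}(k,\cdot)$ solves \eqref{IKs4} while, by definition, $U_{\ep,\delta}^{(0)} := \Gamma_\ep^{-1}U^{-1} c_{\ep,\t}\mathbf{e}$. Since $\Gamma_\ep$ is $L^2$-unitary and $U$ is unitary onto $L^2(\square^*\times\square)$ and intertwines $(\nabla + \i k)$ on $\square$ with $\nabla$ on $\RR^3$ (after rescaling, a factor $\ep$ appears), applying $\Gamma_\ep^{-1}U^{-1}$ to \eqref{eststgt} and integrating over $k\in\square^*$ yields $\|U_{\ep,\delta} - U_{\ep,\delta}^{(0)}\|_{H^1(\RR^3)} \le c\,\ep^{1/2}\|F\|_{L^2}$ and $\|U_{\ep,\delta} - U_{\ep,\delta}^{(0)}\|_{L^2(\RR^3)} \le c\,\ep\|F\|_{L^2}$; this is exactly the content of \eqref{SVLimest2}. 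The one point worth spelling out is that the singular term $\ep^{-2}\delta^{-1}\int_{B_\delta}|\cdot|^2$ on $\square$ transforms, cell by cell, into the sum over $j\in\ZZ^3$ of $\ep^{-2}\delta^{-1}\int_{B_{\delta\ep}(\ep j)}|\cdot|^2$ appearing in \eqref{IKs}, so the transformed energy really is the original one.

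Next I would identify $U_{\ep,\delta}^{(0)}$ with $\mathcal{S}_\ep U_{\delta/\ep}$, where $U_\alpha$ is the solution of the constant-coefficient equation in the statement with $\alpha = \delta/\ep$: this follows because $c_{\ep,\t}$ in \eqref{SVz2} is precisely the Fourier symbol $\bigl(|k|^2\ep^{-2} + \tfrac{4}{3}\pi\,\delta^2\ep^{-2} + 1\bigr)^{-1}\widehat F$ truncated to $\square^*$ in the rescaled variable, i.e. $U_{\ep,\delta}^{(0)}$ solves \eqref{SVlim} and applying $\mathcal{S}_\ep$ to the equation for $U_{\delta/\ep}$ gives the same equation with right-hand side $\mathcal{S}_\ep F$. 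Then $U_{\ep,\delta}^{(0)} - U_{\delta/\ep} = \mathcal{S}_\ep U_{\delta/\ep} - U_{\delta/\ep}$, and by the Plancherel identity this difference is controlled exactly as in the passage from \eqref{7.23-3} to \eqref{7.23-2}: the multiplier $(\tilde\Gamma_\ep\chi - 1)$ is supported in $|\xi|\ge \pi/\ep$, where $\bigl(|\xi|^2 + \tfrac{4}{3}\pi\alpha^2 + 1\bigr)^{-1}(1+|\xi|^2)^{1/2} \le C\ep$ uniformly in $\alpha\ge 0$, giving $\|\mathcal{S}_\ep U_{\delta/\ep} - U_{\delta/\ep}\|_{H^1(\RR^3)} \le C\ep\|F\|_{L^2}$; in fact the symbol bound is uniform in $\alpha$, which is the key to the uniformity in $\delta$.

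Combining the two bounds by the triangle inequality gives \eqref{SVfinalEst}, with the $H^1$-estimate carrying the weaker rate $\ep^{1/2}$ coming from \eqref{eststgt} and the $L^2$-estimate retaining rate $\ep$. The main obstacle is not in the final assembly, which is essentially bookkeeping, but in justifying the uniform-in-$\delta$ estimates \eqref{eststgt} that feed it — and those rest on the two non-obvious facts already established in the subsection: that $a_\t$ is only $\tfrac{2}{3}$-H\"older (not Lipschitz) in $\delta$ so that Remark \ref{rem.s3.1} and the local form of Theorem \ref{lmm1} must be invoked rather than the global machinery, and that the corrector $\mathcal{N}_\t\mathbf{e}$ is of size $O(\delta^{3/2})$ by the Sobolev/Poincar\'e--Wirtinger estimate \eqref{SVPoinMean}, which is precisely what allows the replacement of $z_{\ep,\t}(\mathbf{e}+\mathcal{N}_\t)$ by $c_{\ep,\t}\mathbf{e}$ at the cost of only one power of $\ep^{1/2}$. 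Granting those, the theorem is a direct consequence of transferring estimates through unitary maps and a one-line Fourier-multiplier bound uniform in the auxiliary parameter $\alpha$.
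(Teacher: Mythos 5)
Your argument follows the paper's route essentially exactly: transfer \eqref{eststgt} through the unitary maps $\Gamma_\ep^{-1}U^{-1}$ to obtain \eqref{SVLimest2}, then remove $\mathcal{S}_\ep$ by a Fourier-multiplier bound that is uniform in $\alpha=\delta/\ep$ (the paper phrases this as $\|\mathcal{S}_\ep F - F\|_{H^{-1}} \le \ep\pi^{-1}\|F\|_{L^2}$ plus coercivity, but that is the same computation). Your observation that the extra term $\tfrac{4}{3}\pi\alpha^2$ only helps the symbol bound correctly accounts for the uniformity in $\delta$, and the assembly by triangle inequality matches the paper.
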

\begin{remark}
Notice that  estimates \eqref{SVfinalEst}, uniform in both $\ep$ and $\delta$, hold in particular for the `critical scaling' $\delta = O(\ep)$. 
For example, for 
$\delta=\ep$  Theorem \ref{IKVolcompareclassicalh1} states that $u_\ep$, the solution to the concentrated perturbation problem 
\[
 \int_{\mathbb{R}^3} \nabla u_{\ep}\cdot \overline{\nabla\phi} \,\,+\,\,\ep^{-3}\sum_{j\in \mathbb{Z}^3} \int_{B_{\ep^2}(\ep j)}u_{\ep}\overline{ \phi}
\,\,+\,\,\int_{\mathbb{R}^3}  u_{\ep}\overline{\phi} \,\, =\,\, \int_{\mathbb{R}^3} F \overline{\phi} , \qquad \forall \phi \in H^1(\mathbb{R}^3),
\] 
i.e. with $\ep$-periodic inclusions of size $\ep^2$ and of ``density'' $\ep^{-3}$, 
is approximated with operator-type error estimates 
by $u_0 $ the solution to the $\ep$-independent 
averaged 
problem
$ 
\big(-\,\Delta \,\,+\,\, \mu \,\,+\,\,1\big)u_0\,\,=\,\,F$ in 
$\mathbb{R}^3$, 
where $\mu = 4\pi / 3$. 
Indeed, \eqref{SVfinalEst} gives 
$ 
\Vert u_{\ep} \,-\, u_0 \Vert_{H^1(\mathbb{R}^3)} \,\,\le\,\, c\,\ep^{1/2} \Vert F \Vert_{L^2(\mathbb{R}^3)}$, 
$\Vert u_{\ep} \,-\, u_0 \Vert_{L_2(\mathbb{R}^3)} \,\,\le\,\, c\,\ep \Vert F \Vert_{L^2(\mathbb{R}^3)}$. 
\end{remark}
We conclude this example with the proof of hypothesis \eqref{distance}. 
\begin{proof}
For proving \eqref{distance} it is sufficient to show that 
\begin{equation*}
\| (\nabla + \i k) u\|_{L^2(\square)}^2 \,\,+\,\, \delta^{-1} \int_{B_{\delta}} | u|^2 \,\, \ge\,\, 
\tilde\gamma \, \bigl(|k|^2 \,+\,\delta^2\bigr)\| u \|^2_{L^2(\square)}, \quad \forall u \in H^1_{per}(\square),\, \ \  \forall \,(k,\delta) \in \Theta,
\end{equation*}
for some $\tilde\gamma>0$. Clearly, cf. e.g. \eqref{IKaest} and from the triangle inequality, 
\[
\| (\nabla + \i k) u\|_{L^2(\square)} \,\ge \,  |k|\, \| u \|_{L^2(\square)}, \quad \text{and}\quad  
\| (\nabla + \i k) u \|_{L^2(\square)} \, +\, |k|\| u \|_{L^2(\square)}  \,\ge \,\| \nabla u \|_{L^2(\square)} , \quad   \forall u \in H^1_{per}(\square), \, \  \forall k \in \square^*.
\]
Combining these implies $3\,\| (\nabla + \i k) u\|_{L^2(\square)}\,\ge\, |k|\, \| u \|_{L^2(\square)}+\| \nabla u \|_{L^2(\square)}$, and 
so it suffices to show that 
\begin{equation}\label{SV.bddspec}
	\| \nabla u\|_{L^2(\square)}^2 \,+\, \delta^{-1} \int_{B_{\delta}} | u|^2 \,\, \ge \,\,c\,\delta^2\,\| u \|^2_{L^2(\square)}, \quad \forall u \in H^1_{per}(\square),\quad  \forall \delta \in (0, 1/2],
\end{equation}
with some 
$c>0$. 

To prove \eqref{SV.bddspec}, 
for $u = c+u_0 $  with $c =\int_\square u$ and so $\int_\square u_0 =0$, using \eqref{SVPoinMean} we obtain  
\begin{flalign*}
&\delta^{-1}\int_{B_{\delta}} | u|^2 \,\,\ge\,\, \frac{4}{3} \pi\, \delta^2|c|^2 \,\,+\,\,
 2\,\delta^{-1}\, {\rm Re} \int_{B_{\delta}}  u_0 \,\overline{c} \,\,  \ge\,\, 
\frac{4}{3} \pi \delta^2|c|^2   -\, 2\,c_2\delta^{3/2}|c| \, \| \nabla u \|_{L^2(\square)}\,\,\ge \\ 
\quad & \frac{2}{3} \pi \delta^2|c|^2  \, -\, \frac{3\,c_2^2}{2\pi} \,\delta\,  \| \nabla u \|_{L^2(\square)}^2  
\,\,=\,\, \frac{2}{3} \pi \delta^2\left(|c|^2 \,+\, \| \nabla u \|_{L^2(\square)}^2\right) \,-\,  
\left( \frac{2}{3} \pi \delta^2 \,+\, \frac{3\,c_2^2}{2\pi} \delta \right) \| \nabla u \|_{L^2(\square)}^2. 
\end{flalign*}
This implies  (as $\delta \le 1/2$) that 
\[
\left(\frac{\pi}{6} \,+\, \frac{3\,c_2^2}{4\,\pi}\right)  \| \nabla u \|_{L^2(\square)}^2 \,\,+\,\, 
\delta^{-1} \int_{B_{\delta}} | u|^2 \,\,\,\ge\,\,\, \frac{2}{3}\, \pi\, \delta^2
\left(\left\vert\int_\square u\right\vert^2 \,+\, \| \nabla u \|_{L^2(\square)}^2\right), \quad \forall u \in H^1(\square).
\]  
Then, after application of the Poincar\'{e}-Wirtinger inequality, one arrives at \eqref{SV.bddspec}. 
\end{proof}

\subsection{An example  with a `partial' high-contrast}
\label{e.pdelast}
Here we consider an 
example of a high-contrast linear elasticity problem with a 
`partial degeneracy' in the inclusions. 
Consider the following resolvent problem:
\begin{equation}
\label{elastres}
\left\{ 
\begin{aligned}
& \text{Find $u_\ep \in \left[H^1(\RR^3)\right]^3$ such that} \\
& -\nabla\cdot\, \sigma_\ep(u_\ep) \,+\, u_\ep \,\,=\,\, F \in [L^2(\RR^3)]^3,
\end{aligned}
\right.
\end{equation}
where the matrix is assumed stiff but the $\ep$-periodic inclusions are stiff in compression but soft in shear. 
Namely, for 
 the stress-strain constitutive relation, 
\[
\sigma_\ep(u_\ep)(x) \,\,=\,\, \lambda\left(\tfrac{x}{\ep}\right) \big(\nabla\cdot u_\ep\big) I \,\, +\,\, 2\mu_\ep\left(\tfrac{x}{\ep}\right) e(u_\ep), \ 
x \in \RR^3, \qquad e(u)= \tfrac{1}{2}\big( \nabla u + \nabla u^T \big),
\]
with $\square$-periodic Lam\'{e} coefficients ($\square=[-1/2,1/2]^3$) of the form
\[
\begin{aligned}
\lambda(y) \,\,=\,\, \left\{ \begin{array}{lr}
\lambda_1(y), & y \in \square \backslash B, \\[5pt]
\lambda_2(y), & y \in B,
\end{array} \right. & \qquad &\mu_\ep(y) = \left\{ \begin{array}{lr}
\mu_1(y), & y \in \square \backslash B, \\[5pt]
\ep^2 \mu_2(y), & y \in B.
\end{array} \right.
\end{aligned}
\]
Here as before  the reference inclusion set $B$ is assumed to have Lipschitz boundary,  
$\overline{B} \subset (-\tfrac{1}{2},\tfrac{1}{2})^n$ and 
so the periodic matrix 
$\left(\square \backslash \overline{B}\right)+\mathbb{Z}^3$ is connected, 
and the measurable coefficients $\lambda_i$, $\mu_i$, $i=1,2$, are 
uniformly positive and bounded. 
Now, we proceed as in the above examples, to find that $u_{\ep,\theta}
= U\Gamma_\ep u_\ep(\theta,\cdot)$ solves \eqref{p1} where: 
$H = \left[H^1_{per}(\square)\right]^3$, $\Theta = [-\pi,\pi]^3$, 
$\l f, \tilde{u} \r = \int_\square  U \Gamma_\ep F(\theta,y) \cdot\overline{\tilde{u}(y)} \, {\rm d} y$, and
\begin{eqnarray}
\label{atpd}
	a_\t\big[u\big]  &=& \int_{\square \backslash B} \Big(\lambda_1\big| \nabla\cdot  u + \i\, \t \cdot u\big|^2 \,+\, 2\mu_1 \big| e(u) + \i \,\t \odot u\big|^2 \Big) 
	\,\,+\,\, \int_{B}\lambda_2\, \big| \nabla\cdot u + \i\, \t \cdot u\big|^2,   \\
	\label{btpd}
	b_\t\big[u\big] &=&  \int_{B}  2\mu_2 \big| e(u) + \i\, \t \odot u\big|^2 \,\,+\, \int_\square |u|^2, \quad \quad 
	\t \odot u : = \tfrac{1}{2}\big(\t \otimes u + u \otimes \t\big).
\end{eqnarray}
Let us now check 
the main abstract assumptions. 
We begin by recalling the elasticity theory variant of extension Proposition \ref{prp.zhiext}, whose proof we shall provide for the reader's convenience.

\begin{proposition}\label{prp.zhiextelast}
There exists an extension operator $E : \left[H^1(\Box \backslash B)\right]^3 \rightarrow \left[H^1(\Box)\right]^3$ 
such that: 
$Eu|_{\Box \backslash B} = u$ 
and for some constant $C_E>0$ independent of $u$, 
 $\| Eu \|_{\left[H^1(\Box)\right]^3} \,\,\le\,\, C_E \| u \|_{\left[H^1(\Box \backslash B)\right]^3}$ and 
\begin{equation}\label{ZhExtensionelast}
	\int_\Box \big| e(Eu)\big|^2 \,\,\,\le\,\,\, C_E^2 \int_{\Box \backslash B} \big| e(u)\big|^2. 
\end{equation}	
\end{proposition}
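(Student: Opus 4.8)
The assertion is the elasticity analogue of the scalar extension result (Proposition \ref{prp.zhiext}), so the plan is to reduce it to a purely local statement on the reference cell $\Box$ and then invoke a standard extension theorem from linear elasticity together with Korn's inequality. First I would note that, because $\partial B$ is Lipschitz and $\overline{B}\subset(-\tfrac12,\tfrac12)^3$, one can pick a bounded Lipschitz neighbourhood $\Omega$ with $\overline{B}\subset\Omega\subset\overline\Omega\subset(-\tfrac12,\tfrac12)^3$. The classical Sobolev extension theorem on the Lipschitz domain $\Omega\setminus B$ provides a bounded linear operator $\widetilde E:[H^1(\Omega\setminus B)]^3\to[H^1(\Omega)]^3$ with $\widetilde Eu|_{\Omega\setminus B}=u$ and $\|\widetilde Eu\|_{[H^1(\Omega)]^3}\le c\,\|u\|_{[H^1(\Omega\setminus B)]^3}$; gluing $\widetilde Eu$ (on $B$) back to the original $u$ (on $\Box\setminus B$, and using a cut-off near $\partial\Omega$ supported in $\Omega\setminus B$ to match smoothly) yields the desired $E:[H^1(\Box\setminus B)]^3\to[H^1(\Box)]^3$ with $Eu|_{\Box\setminus B}=u$ and the $H^1$-operator bound $\|Eu\|_{[H^1(\Box)]^3}\le C_E\|u\|_{[H^1(\Box\setminus B)]^3}$.

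The only genuinely new point is \eqref{ZhExtensionelast}, i.e. controlling $\int_\Box|e(Eu)|^2$ by $\int_{\Box\setminus B}|e(u)|^2$ \emph{alone}, with no zeroth-order term. The plan is a standard compactness/contradiction argument combined with quotienting out rigid motions. Denote by $\mathcal R$ the finite-dimensional space of infinitesimal rigid displacements on $\Box$ (i.e. $u(y)=a+b\wedge y$), on which $e(\cdot)$ vanishes. Suppose \eqref{ZhExtensionelast} fails: then there is a sequence $u_n\in[H^1(\Box\setminus B)]^3$ with $\int_\Box|e(Eu_n)|^2=1$ but $\int_{\Box\setminus B}|e(u_n)|^2\to0$. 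By subtracting a suitable rigid motion $\rho_n\in\mathcal R$ (and using that $E$ commutes with restriction, so $E(u_n-\rho_n|_{\Box\setminus B})=Eu_n-E(\rho_n|_{\Box\setminus B})$, with $E(\rho_n|_{\Box\setminus B})$ differing from $\rho_n$ only on $B$) one normalizes $\|u_n-\rho_n\|_{[H^1(\Box\setminus B)]^3}$; a Korn-type inequality on the Lipschitz domain $\Box\setminus B$ gives that $\|u_n-\rho_n\|_{[H^1(\Box\setminus B)]^3}$ is controlled by $\int_{\Box\setminus B}|e(u_n)|^2$ plus the size of $u_n-\rho_n$ in a complement of $\mathcal R$, which can be arranged to be bounded, so the sequence $u_n-\rho_n$ is bounded in $[H^1(\Box\setminus B)]^3$ and $e(u_n-\rho_n)\to0$ in $L^2$. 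Passing to a weakly convergent subsequence, the limit $u_\infty$ satisfies $e(u_\infty)=0$ on $\Box\setminus B$, hence $u_\infty=\rho_\infty|_{\Box\setminus B}$ for some $\rho_\infty\in\mathcal R$; by the normalization one arranges $u_\infty=0$. Then $u_n-\rho_n\to0$ strongly in $[H^1(\Box\setminus B)]^3$ (Korn again, since $e(u_n-\rho_n)\to0$ strongly), so $E(u_n-\rho_n)\to0$ strongly in $[H^1(\Box)]^3$, whence $e(E(u_n-\rho_n))\to0$ in $L^2(\Box)$; but $e(E(u_n-\rho_n))=e(Eu_n)-e(E\rho_n|_{\Box\setminus B})$ and on $\Box\setminus B$ the second term is $e(\rho_n)=0$, so one needs a little care — the point is that $e(E(\rho_n|_{\Box\setminus B}))$ is supported in $\overline B$ and bounded independently of $n$ since the sequence was normalized, so after a further rigid-motion/compactness step one concludes $\int_\Box|e(Eu_n)|^2\to0$, contradicting $\int_\Box|e(Eu_n)|^2=1$. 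This establishes \eqref{ZhExtensionelast} with some $C_E>0$.

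The main obstacle I anticipate is the bookkeeping around rigid motions: unlike the scalar case, where one subtracts constants, here $\mathcal R$ is three-dimensional ($\dim\mathcal R=6$ in $\mathbb R^3$), $E$ does not preserve $\mathcal R$ (it only extends the restriction of a rigid motion, which need not be rigid on $B$), and one must check that the discrepancy $E(\rho|_{\Box\setminus B})-\rho$, supported on $B$, does not spoil either the normalization or the final contradiction. The clean way to handle this is to first prove the scaling-invariant Korn inequality on $\Box\setminus B$ (for $u$ with, say, $\int_{\Box\setminus B}u=0$ and $\int_{\Box\setminus B}(\nabla u-\nabla u^T)=0$, one has $\|u\|_{[H^1(\Box\setminus B)]^3}\le C\,\|e(u)\|_{L^2(\Box\setminus B)}$), apply it \emph{before} extending, and only then extend; since $E$ is $H^1$-bounded, $\|e(Eu)\|_{L^2(\Box)}\le\|Eu\|_{[H^1(\Box)]^3}\le C_E\|u\|_{[H^1(\Box\setminus B)]^3}\le C_E C\,\|e(u)\|_{L^2(\Box\setminus B)}$, which is exactly \eqref{ZhExtensionelast}. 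This bypasses the contradiction argument entirely, at the cost of invoking the (standard) second Korn inequality on the Lipschitz domain $\Box\setminus B$; I would present the proof in this direct form. The rest — the construction of $E$ and the operator-norm bound — is routine and identical in structure to the scalar case.
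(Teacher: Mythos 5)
Your ``clean'' direct chain
\[
\|e(Eu)\|_{L^2(\Box)}\,\le\,\|Eu\|_{[H^1(\Box)]^3}\,\le\, C_E\|u\|_{[H^1(\Box\setminus B)]^3}\,\le\, C_E C\,\|e(u)\|_{L^2(\Box\setminus B)}
\]
invokes the second Korn inequality in its normalised form, which holds only for $u$ with vanishing mean and mean skew gradient; it fails for arbitrary $u$ (take $u$ a nonzero rigid motion, for which the right side vanishes). For general $u$ you would subtract a rigid displacement $\rho\in\mathcal R$, apply the chain to $u-\rho$, and add $\rho$ back. This is precisely where the argument breaks: with $E$ the standard component-wise Sobolev extension that your construction produces, $e(E\rho)$ need not vanish on $B$, since the Sobolev extension of $\rho|_{\Box\setminus B}$ into $B$ is generally not a rigid motion. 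Consequently \eqref{ZhExtensionelast} is actually \emph{false} for such an $E$: take $u=\rho$ a rotation $d\times y$ with $e(E\rho)\neq 0$ on $B$ (which occurs for generic Stein/Jones-type extensions, as these do not preserve affine maps); then $\int_{\Box\setminus B}|e(u)|^2=0$ while $\int_\Box|e(Eu)|^2>0$. The same obstruction defeats your contradiction argument: the rigid part $\rho_n$ converges (after normalisation) to a rigid $\rho_\infty$ with $\|e(E\rho_\infty)\|_{L^2(\Box)}=1$, and there is no contradiction to be had — this is not a bookkeeping issue but evidence that the operator itself must be changed.

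The paper's resolution is to redesign $E$ so that rigid displacements are extended \emph{as} rigid displacements. Let $R$ denote the $[H^1(\Box\setminus B)]^3$-orthogonal projection onto the (six-dimensional) space $\mathcal R$ of infinitesimal rigid displacements, and $P$ the standard component-wise Sobolev extension with $\|Pw\|_{[H^1(\Box)]^3}\le C_P\|w\|_{[H^1(\Box\setminus B)]^3}$. Set $Eu:=Ru+P(u-Ru)$, where $Ru=c+d\times y$ is evaluated by the same closed-form expression for all $y\in\Box$. Then $e(Ru)\equiv 0$ on all of $\Box$, so $e(Eu)=e(P(u-Ru))$ and
\[
\|e(Eu)\|_{L^2(\Box)}\,\le\,\|P(u-Ru)\|_{[H^1(\Box)]^3}\,\le\, C_P\|u-Ru\|_{[H^1(\Box\setminus B)]^3}\,\le\, C_P C_K\,\|e(u)\|_{L^2(\Box\setminus B)},
\]
using the Korn inequality $\|u-Ru\|_{[H^1(\Box\setminus B)]^3}\le C_K\|e(u)\|_{L^2(\Box\setminus B)}$ for the component of $u$ orthogonal to $\mathcal R$. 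The $H^1$-operator bound follows from $\|Ru\|_{[H^1(\Box\setminus B)]^3}\le\|u\|_{[H^1(\Box\setminus B)]^3}$ together with norm equivalence on the finite-dimensional space $\mathcal R$ (so that $\|Ru\|_{[H^1(\Box)]^3}$ is comparable to $\|Ru\|_{[H^1(\Box\setminus B)]^3}$). Your toolkit — Sobolev extension, Korn's inequality, projection onto $\mathcal R$ — is the right one, and your observation that $E$ ``does not preserve $\mathcal R$'' is the crux of the matter, but it must be fixed by modifying the definition of $E$ rather than handled as a side condition to check.
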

\begin{proof}
For fixed $u \in \left[H^1(\square \backslash B)\right]^3$  let $Ru=c + d \times y$ 
be the $[H^1(\square \backslash B)]^3$-projection of $u$ onto 
the subspace $\mathcal{R}=\left\{\tilde c + \tilde d \times y\,\big|\,\tilde c, \tilde d \in \CC^3\right\}$ of rigid body motions of $\square \backslash B$ 
(so in particular $e(Ru)=0$ and $\|Ru\|_{\left[H^1(\square \backslash B)\right]^3}\le \|u\|_{\left[H^1(\square \backslash B)\right]^3}$). 
Recall Korn inequality in the following form:
\begin{equation}
\label{kornineq}
\big\| u -\, Ru \big\|_{\left[H^1(\square \backslash B)\right]^3} \,\,\,\le\,\,\, 
C_K\, \big\| \,e(u)\, \big\|_{\left[L^2(\square \backslash B)\right]^{3\times3}}
\end{equation} 
for some $C_K>0$ independent of $u$. Let $P:[H^1(\square \backslash B)]^3 \rightarrow [H^1(B)]^3$ be the standard Sobolev extension 
(applied component-wise), i.e. $P u = u$ in $\square \backslash B$ and  there exists $C_P>0$ such that 
\begin{equation}\label{15.04.21}
	\| Pu \|_{\left[H^1(\square)\right]^3} \,\,\,\le\,\,\, C_P\, \| u \|_{ \left[H^1(\square \backslash B)\right]^3}, \qquad \forall 
	u \in \left[H^1(\square \backslash B)\right]^3.
\end{equation}
We construct $E$ as follows: $Eu := Ru + P(u - Ru)$, where $Ru=c + d \times y$ for all $y\in\square$. 
As $\mathcal{R}$ is finite-dimensional and a direct sum of `translational' ($d=0$) and 
`rotational' ($c=0$) subspaces, one can see that 
$c_1\|Ru\|_{\left[H^1(\square)\right]^3}\le |c|^2+|d|^2\le c_2 \|Ru\|_{\left[H^1(\square \backslash B)\right]^3}$ with positive constants $c_1$ and $c_2$ 
independent of $u$. 
It is then straightforward to check via \eqref{15.04.21} and \eqref{kornineq} 
that all the stated properties of $E$ hold.
\end{proof}
Now, the above uniform positivity assumptions on the Lam\'{e} coefficients imply that
\[
a_\t[u]\,\,\, \ge\,\,\, C\, 
\left( \int_\square \big| \nabla\cdot u + \i\, \t \cdot u \big|^2 \,\,+\, 
\int_{\square \backslash B} \big| e(u) +\, \i \,\t\odot u\big|^2 \right), \quad \forall u \in [H^1_{per}(\square)]^3,
\]
for some positive constant $C$. Next, by Proposition \ref{prp.zhiextelast}, and arguing as in \eqref{dpH1}, we obtain 
\begin{flalign*}
\int_{\square \backslash B} \big| e(u) +\, \i\, \t\odot u\big|^2 & \,\,=\, 
\int_{\square\backslash B} \big| e\left(e^{\i \t\cdot y} u\right)\big|^2 \,\,\ge\,\, 
C_E^{-2}  \int_{\square} \big| e\left(E\left(e^{\i \t\cdot y} u\right)\right)\big|^2 \,\,\ge\,\,
\frac{1}{2} C_E^{-2}  \int_{\square} \big| \nabla \left(E\left(e^{\i \t\cdot y} u\right)\right)\big|^2 \\
& \ge\,\,\frac{1}{2} C_E^{-2}\,|\t|^2  \int_{\square} \big| E\left(e^{\i \t\cdot y} u\right)\big|^2 \,\,\ge\,\,
\frac{1}{2} C_E^{-2}\,\,|\t|^2  \int_{\square \backslash B} |  u|^2,
\end{flalign*}
where the second and third inequalities  follow from 
decomposing $\square$-periodic $e^{-\i \t\cdot y}E\left(e^{\i \t\cdot y} u\right)$ into Fourier series. 
Consequently, one has 
\begin{equation}
\label{PDelast:nondeg}
a_\t[u] \,\,\,\ge\,\,\, C\, \left( \int_\square \big| \nabla\cdot u + \i\, \t \cdot u \big|^2 \,\,+\,\,
\frac{1}{2}\, C_E^{-2}\, |\t|^2 \int_{\square \backslash B} | u|^2 \right), \quad \forall u \in \left[H^1_{per}(\square)\right]^3.
\end{equation}
Thus, from \eqref{atpd} and \eqref{PDelast:nondeg}, the space $V_\theta$  
is
\begin{equation}
	\label{aaspaceV}
	\begin{aligned}
		V_\theta = \left\{ \begin{array}{lr}
			\left\{ v \in \left[H^1_0(B)\right]^3 \,\,\, \big| \, \text{ $\nabla\cdot v + \i\, \t \cdot v=0$ in $B$}\right\}, & \theta \neq 0, \\[5pt]
			\left\{ v \in \left[H^1_{per }(\square)\right]^3 \, \, \big| \, \text{$v$ constant in $\square \backslash B$ and 
			$\nabla\cdot v=0$ in $B$}\right\}, & \theta = 0.
		\end{array}
		\right.
	\end{aligned}
\end{equation}
As before, we regard $H^1_0(B)$ as a subspace of $H^1_{per}(\square)$ by extending by zero into $\square \backslash B$. 
For  proving the key spectral gap condition \eqref{KA}  we  shall be using the following `Sobolev Modification' lemma.
\begin{proposition}\label{SobModprop} 
	There exists linear operator $M : \left[H^1(\square)\right]^3 \rightarrow \left[H^1(\square)\right]^3$ and a positive constant $C_M$ such that
	\begin{equation*}
		\begin{aligned}
			& (i)\ \ \text{ $Mu = u$ in $\square \backslash B$;} \\
			& (ii)\ \ \text{$\nabla\cdot Mu = \nabla\cdot u$ in $B$;} \\
			& (iii)\ \ \Vert Mu \Vert_{\left[H^1(\square)\right]^3}^2 \,\,\le\,\, 
			C_M\,  \Big( \| u \|_{\left[L^2(\square \backslash B)\right]^3}^2 \,+\,  \Vert e(u) \Vert_{\left[L^2(\square \backslash B)\right]^{3\times 3}}^2\,+\,
			\| \nabla\cdot u\|_{L^2(B)}^2 \Big),  \quad \forall u \in \left[H^1(\square)\right]^3.
		\end{aligned}
	\end{equation*}
\end{proposition}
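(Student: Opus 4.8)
The plan is to construct the operator $M$ explicitly by correcting $u$ only inside the inclusion $B$, in such a way that the divergence is preserved. First I would fix $u\in[H^1(\square)]^3$ and let $Ru=c+d\times y$ be its $[H^1(\square\backslash B)]^3$-projection onto the rigid-body-motions subspace $\mathcal{R}$, exactly as in the proof of Proposition \ref{prp.zhiextelast}. The idea is that $v:=u-Ru$ satisfies Korn's inequality \eqref{kornineq}, so $\|v\|_{[H^1(\square\backslash B)]^3}\le C_K\|e(u)\|_{[L^2(\square\backslash B)]^{3\times3}}$. Now I would define $Mu$ piecewise: $Mu=u$ on $\square\backslash B$, and on $B$ I would take $Mu=Ru+w$, where $w\in[H^1_0(B)]^3$ is a divergence-controlled extension of the boundary data of $v$ on $\partial B$. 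More precisely, $w$ should match the trace of $v$ on $\partial B$ (so that $Mu$ is continuous across $\partial B$, hence in $[H^1(\square)]^3$) and satisfy $\nabla\cdot w=\nabla\cdot u-\nabla\cdot(Ru)=\nabla\cdot u$ in $B$ (using $\nabla\cdot(Ru)=0$), which gives property $(ii)$.

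The key existence step is a Bogovskii-type solvability result: on the Lipschitz domain $B$, for a given $g\in L^2(B)$ and boundary data $\phi\in[H^{1/2}(\partial B)]^3$ satisfying the compatibility condition $\int_B g=\int_{\partial B}\phi\cdot n$, there is a solution $w\in[H^1(B)]^3$ to $\nabla\cdot w=g$ in $B$, $w=\phi$ on $\partial B$, with
\[
\|w\|_{[H^1(B)]^3}\,\le\, C_B\big(\|g\|_{L^2(B)}+\|\phi\|_{[H^{1/2}(\partial B)]^3}\big),
\]
where $C_B$ depends only on $B$. I would apply this with $g=\nabla\cdot u|_B$ and $\phi=v|_{\partial B}$; the compatibility condition holds because $\int_B\nabla\cdot u=\int_{\partial B}u\cdot n=\int_{\partial B}(Ru+v)\cdot n=\int_B\nabla\cdot(Ru)+\int_{\partial B}v\cdot n=\int_{\partial B}v\cdot n$. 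Then I would estimate $\|\phi\|_{[H^{1/2}(\partial B)]^3}\le C\|v\|_{[H^1(\square\backslash B)]^3}$ by the trace theorem, and bound the latter by $\|e(u)\|_{[L^2(\square\backslash B)]^{3\times3}}$ via Korn \eqref{kornineq}.

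Finally I would assemble the bound $(iii)$. We have $\|Mu\|^2_{[H^1(\square)]^3}\le\|u\|^2_{[H^1(\square\backslash B)]^3}+\|Ru\|^2_{[H^1(B)]^3}+\|w\|^2_{[H^1(B)]^3}$ (with a harmless factor of $2$ from the cross terms and continuity across $\partial B$). The term $\|Ru\|^2_{[H^1(B)]^3}$ is controlled by $|c|^2+|d|^2$ which, by finite-dimensionality of $\mathcal{R}$ and the projection bound, is $\lesssim\|Ru\|^2_{[H^1(\square\backslash B)]^3}\le\|u\|^2_{[H^1(\square\backslash B)]^3}$; the term $\|w\|^2_{[H^1(B)]^3}$ is controlled by $\|\nabla\cdot u\|^2_{L^2(B)}+\|e(u)\|^2_{[L^2(\square\backslash B)]^{3\times3}}$ by the previous paragraph; and by a further application of Korn and the Poincar\'e inequality one reduces $\|u\|^2_{[H^1(\square\backslash B)]^3}$ to $\|u\|^2_{[L^2(\square\backslash B)]^3}+\|e(u)\|^2_{[L^2(\square\backslash B)]^{3\times3}}$ (splitting $u=Ru+(u-Ru)$ and noting $\|Ru\|_{L^2}\sim|c|^2+|d|^2$ is itself $\lesssim\|u\|_{L^2(\square\backslash B)}$). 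Collecting everything yields $(iii)$ with a constant $C_M$ depending only on $B$. The main obstacle is the divergence-with-boundary-data solvability estimate with a constant depending only on $B$: this is standard (Bogovskii operator plus a harmonic-type lifting of the boundary trace) but must be stated carefully, and one has to make sure the compatibility condition is arranged correctly so that the correction $w$ is genuinely in $[H^1(B)]^3$, i.e. that $Mu$ does not jump across $\partial B$.
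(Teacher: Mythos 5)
Your proof is correct, but it takes a different route from the paper's. The paper first applies the \emph{standard Sobolev extension} $P: [H^1(\square\backslash B)]^3\to[H^1(\square)]^3$ of \eqref{15.04.21}, so that $Pu$ automatically matches $u$ on $\partial B$, and then observes that $u-Pu\in[H^1_0(B)]^3$, hence $\nabla\cdot(u-Pu)\in L^2_0(B)$ (zero mean); it therefore only needs the Bogovskii solvability result \eqref{divinverse} in the \emph{homogeneous Dirichlet} class $[H^1_0(B)]^3 \to L^2_0(B)$. Setting $Mu = U\big(\nabla\cdot(u-Pu)\big) + Pu$ then gives (i)--(iii) directly, with Korn's second inequality on $\square\backslash B$ used only at the very end to reduce $\|u\|_{[H^1(\square\backslash B)]^3}$ to $\|u\|_{[L^2]^3}+\|e(u)\|_{[L^2]^{3\times3}}$. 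Your approach instead extends only the rigid-body part $Ru$ into $B$ and then must solve a divergence problem with \emph{inhomogeneous boundary data} (matching the trace of $v=u-Ru$ on $\partial B$). That is a genuinely stronger lemma: it requires both a trace lifting into $[H^1(B)]^3$ and a Bogovskii correction, and crucially a compatibility condition between the prescribed divergence and the prescribed boundary flux, which you correctly verify using $\nabla\cdot(Ru)=0$ and the divergence theorem. The paper's decomposition $u = Pu + (u-Pu)$ sidesteps this: the trace matching is handled for free by $P$, the compatibility condition is automatic since $u-Pu\in H^1_0(B)$ has zero boundary flux, and only the simplest form of Bogovskii is needed. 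Your route also forces you to carry the rigid-body projection and Korn \eqref{kornineq} through the whole argument, whereas the paper invokes Korn only once at the end. Both proofs are valid and deliver the same estimate; the paper's is more economical in the lemmas it invokes, yours is arguably more geometric in that the corrector $w$ is built entirely inside $B$ rather than as a difference of two global fields.
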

\begin{proof}

It is well know that the divergence operator  ${\rm div}:\, [H^1_0(B)]^3 \rightarrow L^2_0(B) : = \{ f \in [L^2(B)]^3 \, | \, \int_{B} f =0 \}$ is surjective,  see for example \cite[ Chapter 1, Section 2.1]{Lady}. Moreover, 
\begin{equation}
	\label{divinverse}
	\left\{ \begin{aligned}
		& \text{there exists a linear map $U:L^2_0(B) \rightarrow [H^1_0(B)]^3$ such that 
			for each $f \in L^2_0(B)$ 
			}
		\\
		& \nabla\cdot\, Uf = f, \quad \text{and} \quad \Vert Uf \Vert_{\left[H^1(B)\right]^3} \,\,\le\,\, C_B \Vert f \Vert_{\left[L^2(B)\right]^3} \quad \text{for some $C_B>0$ independent of $f$.}
	\end{aligned} \right.
\end{equation}
Let us now construct $M$. For fixed $u \in \left[H^1(\square )\right]^3$, 
$u-Pu \in [H^1_0(B)]^3$ for $P$ as in \eqref{15.04.21} and so $\nabla\cdot( u-Pu) \in L^2_0(B)$.
Now let $f =\nabla\cdot( u -Pu)$ 
and set $M u=  Uf + Pu$ (where $Uf$ is continuously extended by zero into $\square \backslash B$). Now, the desired properties (i) and (ii) immediately follow by construction. For property (iii), 
via \eqref{15.04.21}, \eqref{divinverse} and inequality 
$\Vert \nabla\cdot (Pu) \Vert_{L^2(B)}\le\sqrt{3} \Vert \nabla (Pu) \Vert_{L^2(B)}$ 
 we obtain  
\begin{flalign*}
	\Vert M u\Vert_{\left[H^1(\square)\right]^3}\,\,\, & \le\,\,\,  
	\Vert Pu \Vert_{\left[H^1(\square)\right]^3}\,+\, \Vert Uf \Vert_{\left[H^1(B)\right]^3}\,\, \le\,\, 
	C_P\,\Vert u \Vert_{\left[H^1(\square \backslash B)\right]^3} \,+\, C_B\, \Vert \nabla\cdot (u - Pu) \Vert_{L^2(B)} \\
	& \le\,\,\, C_P\left(1 + \sqrt{3}C_B\right) \Vert u \Vert_{\left[H^1(\square \backslash B)\right]^3} \,\,+\, C_B\, \Vert \nabla\cdot\, u  \Vert_{L^2(B)}. 
\end{flalign*}
Then  (iii)  follows from 
the Korn's second inequality 
in $\square \backslash B$.
\end{proof}

Now,  one can readily check that the main assumptions of the article hold by arguing as in the previous examples (in particular Example \ref{e.dp}).  We sketch the details below.

First, \eqref{as.b1} and \eqref{ass.alip} hold by essentially the same arguments as in the previous examples, cf. e.g.  \eqref{2.1classhom}.

\textbullet\, {\it Proof of \eqref{KA}}. 
We argue that \eqref{KA2} holds  with $c[w]$ chosen as a constant times $\int_{\square \backslash B} |w|^2$ (which is $\|\cdot\|_\t$-compact via Korn inequality). 
Indeed, for given $u \in \left[H^1_{per}(\square)\right]^3$, it follows by properties of $M$ that $v : = u - e^{-\i \t\cdot y}M(e^{\i \t \cdot y}u)$ belongs to $V_\t$. 
Hence, for any $w\in W_\t$ and $v$ chosen as above for $u=w$, we have $\|w\|_\t\le\|w-v\|_\t$ and arguing similarly to \eqref{h1prdp} we readily see via  Proposition \ref{SobModprop} that
\eqref{KA2} holds. 

\textbullet\, {\it Proof of \eqref{contVs}}. It is straightforward to show that \eqref{contVs} holds for 
\begin{equation}\label{PDvstar}
V^\star_\t \,\,: =\,\, \left\{  v \in \left[H^1_0(B)\right]^3 \,\,\, \big| \,\,\, \nabla\cdot\, v + \i\, \t \cdot v = 0 \text{ in } B\right\}.
\end{equation}
 Indeed, for each $v_1 \in V^\star_{\t_1}$  it is sufficient to consider $v_2 = e^{\i (\t_1 - \t_2) \cdot y}\, v_1\in V^\star_{\t_2}$.

\textbullet \, {\it Proof of \eqref{distance}} follows from combining \eqref{KA2} where  $c[w] = k\int_{\square \backslash B} |w|^2$ with some $k>0$, 
and \eqref{PDelast:nondeg}. 

\textbullet \, {\it Proof of \eqref{H4}}. This is immediate  in the present setting with
\[
a_0'(v,u) \cdot \t \,\,=\,\, 
\int_{\square \backslash B} \lambda_1 \i\,  \t \cdot v\,\, \overline{\nabla\cdot\, u} \,\,\,+  
\int_{\square \backslash B} 2\mu_1 \i\, \t \odot v : \overline{e(u)} \,\,\,+ 
\int_B \lambda_2\, \i\, \t \cdot v\,\,\overline{\nabla\cdot\, u} 
\]
and
\[a_0''(v,v) \t \cdot \t \,\,=\,\, 
\int_{\square \backslash B} \lambda_1 |\t \cdot v|^2 \,\,\,+ \int_{\square \backslash B}  2\mu_1 | \t \odot v|^2 \,\,\,+ \int_B \lambda_2 |\t \cdot v|^2. 
\]

Next we can choose the defect space $Z$  to be the $3$-dimensional vector space of constant functions. 
Indeed, from \eqref{aaspaceV} and \eqref{PDvstar},  
with $V_\star:=V^\star_0$ \eqref{spaceZ} clearly holds, and \eqref{VZorth} holds with $K_Z=0$: 
\be
\label{kzpd}
\left(v_\star, z\right)_0\,\,=\,b_0\left(v_\star, z\right)\,\,=\,\int_B\,v_\star\cdot \overline{z}\,\,=
\,\int_B\,\nabla\cdot\big(\left(\overline{z}\cdot y\right)v_\star\big)\,\,=\,\,0, \quad \forall \,v_\star\in V_\star, \,\,\forall\,z\in Z. 
\ee
It then routinely follows via a derivation similar to that leading to \eqref{dp.ahom} that 
\[
a^{\rm h}_\t(z,\tilde{z})\,\, =\,\, A^{\rm hom}_p z \odot \t \,:\, \overline{ \tilde{z} \odot \t}, \quad \,\, \forall z,\, \tilde{z} \in Z , \quad \forall \t \in \RR^3,
\] 
where $A^{\rm hom}_{p}$ is the homogenised tensor for the natural analog 
of perforated elastic domain  for the present example. 
Namely, $A^{\rm hom}_{p}$ correspond to the periodic matrix-inclusion composite with $\lambda=\lambda_1$ and $\mu=\mu_1$ in the matrix $\square\backslash B$, and 
  with $\lambda=\lambda_2$ but 
zero shear modulus $\mu=0$ in the inclusion $B$. 

\textbullet\, The proof of \eqref{H5} is immediate from \eqref{btpd};   
and \eqref{H6} holds for $\mathcal{H} = \left[L^2(\square)\right]^3$, $d_\t$  the standard ($\t$-independent) 
$[L^2(\square)]^3$ inner product and $\mathcal{E}_\t$ (for example)  multiplication by $e^{-\i\t\cdot y}$. In this setting, we observe that the bivariate operator (see Section \ref{s.bivariate}) is the (shifted for the identity operator) two-scale homogenised limit operator, found in \cite{Co}, and therefore its spectrum is the semi-axis $[1,+\infty)$.
\vspace{.07in}

In the rest of this section, 
we shall specify the 
approximation given by general Theorem \ref{thm.IKunifest2} 
and provide some new results 
for the present example. Estimate \eqref{IKfinal3-2} 
in particular implies
		\[
	\left\| u_{\ep,\t} \,-\, \left(z + e^{-\i \t \cdot y} v\right) \right\|_{\left[L^2(\square)\right]^3} \,\,\,\le\,\,\,  
	C_{10}^{1/2}\, \ep\, \| U\Gamma_\ep F(\t, \cdot) \|_{\left[L^2(\square)\right]^3}\,,
	\]
	where $z+v \in Z \,\dot{+}\, V_\star=V_0$ solves  \eqref{IKz3prob88}. Therefore, exploiting as in the earlier examples the $L^2$-unitarity of the scaling and 
	Gelfand transforms, one has 	
	\[
	\big\| u_{\ep} \,-\,\left( \mathbf{u}_\ep  + \mathbf{v}_\ep\right) \big\|_{[L^2(\RR^3)]^3} \,\,\,\le\,\,\, C_{10}^{1/2}\, \ep\, \| F\|_{\left[L^2(\RR^3)\right]^3}\,,
	\]
	where $\mathbf{u}_\ep = \Gamma_\ep^{-1} U^{-1} z $ and $\mathbf{v}_\ep = \Gamma_\ep^{-1} U^{-1} e^{-\i\t\cdot y} v$. Let us determine the problems that $\mathbf{u}_\ep$ and $\mathbf{v}_\ep$ solve. 	First note, see \eqref{kzpd}, that $V_\star$ and $Z$ are  orthogonal with respect to $b_0$ and so problem \eqref{IKz3prob88} decouples. 
	Thus, routinely specialising to the present example,  $z \in Z$ solves
\begin{equation}\label{pdzprob}
A^{\rm hom}_p  z \odot \tfrac{\t}{\ep} \,:\, \overline{\tilde{z} \odot \tfrac{\t}{\ep}} \,\,+\,\, 
z \cdot \overline{\tilde{z}} \,\,\,=\,\,\, 
\big(U \Gamma_\ep F(\t,\cdot)\,,\, \tilde{z}\big)_{\left[L^2(\square)\right]^3}, \quad \forall \tilde{z} \in Z,
\end{equation}
and $v \in V_\star$ solves
\begin{equation}\label{pdVprob}
	\int_B 2\mu_2\, e(v) : \overline{e(\tilde{v})} \,\,+\, \int_B v \cdot \overline{\tilde{v}}  \,\,\,=\,\, 
	\int_B e^{\i \t \cdot y} U \Gamma_\ep F(\t,y)\cdot\overline{\tilde{v}(y)} \, {\rm d}y,  \quad \forall \tilde{v} \in V_\star.
\end{equation}
Now, similarly to Example \ref{e.class}, we take the inverse Gelfand and scaling transforms in  \eqref{pdzprob} to find  that 
$\mathbf{u}_\ep \in \left[H^1(\RR^3)\right]^3$ solves, cf. \eqref{Sep}, 
$ 
-\, \nabla\cdot\, \sigma_0(\mathbf{u}_\ep) \,\,+\,\, \mathbf{u}_\ep\,\,\,=\,\,\, \mathcal{S}_\ep F$  
in $\mathbb{R}^3$, 
$\sigma_0(u) = A^{\rm hom}_p e(u)$,  
for the smoothing operator $\mathcal{S}_\ep = \mathcal{F}^{-1} \chi\big(\tfrac{\cdot}{\ep}\big) \mathcal{F}$, where 
$\chi$ is the characteristic function of $\square^*$. Furthermore, as we saw in Example \ref{e.class}, 
cf. \eqref{7.23-2}--\eqref{7.23-3}, 
one can remove $\mathcal{S}_\ep$ for $F\in [L^2(\RR^3)]^3$. Namely, one has \newline 
$ 
\big\| \mathbf{u}_\ep - u_0 \big\|_{\left[L^2(\RR^3)\right]^3} \,\le\, \ep \pi^{-1} \gamma_0 \| F\|_{\left[L^2(\RR^3)\right]^3}$, 
where  $u_0 \in [H^1(\RR^3)]^3$ solves the homogenised equation 
\begin{equation}\label{pdhomp}
- \,\nabla\cdot\, \sigma_0(u_0)\,\, +\,\,u_0\,\,\,=\,\,\, F \quad \text{in $\mathbb{R}^3$},
\end{equation}
and $\gamma_0>0$ is a strong ellipticity constant of $A^{\rm hom}_p$ (i.e. 
$ A^{\rm hom}_p z \odot \xi : \overline{ {z} \odot \xi}\ge\gamma_0^{-1}|z|\,|\xi|$, $\forall z, \xi\in \CC^3$). 

Let us now turn to $\mathbf{v}_\ep$. By \eqref{pdVprob} and the properties of the Gelfand transform we conclude that $\mathbf{v}_\ep$ belongs to 
$V_\ep : = \left\{ u \in \left[H^1(\RR^3)\right]^3 \,\, \big| \, u = 0 \text{ in } \RR^3 \backslash B_\ep \text{ and } 
\nabla\cdot\, u = 0 \text{ in } B_\ep \right\}$, $B_\ep = \bigcup_{m \in \mathbb{Z}^3} \ep( B+m)$.
Moreover, 
re-writing  \eqref{pdVprob} in terms of $\mathcal{V}=e^{-\i\t\cdot y}v$ and $\widetilde{\mathcal{V}}=e^{-\i\t\cdot y}\tilde{v}$ 
and then applying the inverse transforms $U^{-1}$ and $\Gamma_\ep^{-1}$, we 
deduce that $\mathbf{v}_\ep$ solves
\begin{equation}\label{pdVprob2}
	\ep^2\int_{B_\ep} 2\,\mu_2\left(\tfrac{x}{\ep}\right)\, e(\mathbf{v}_\ep) \,:\, \overline{e(\tilde{v})}\,\,\, +\, 
	\int_{B_\ep} \mathbf{v}_\ep \cdot \overline{\tilde{\mathbf{v}}}  \,\,\,=\,\, 
	\int_{B_\ep} F\cdot \overline{\tilde{\mathbf{v}}}   \quad \forall\, \tilde{\mathbf{v}} \in V_\ep,
\end{equation}
which is nothing but a collection of Stokes problems on each inclusion $\ep(B+m)$ of $B_\ep$.
In general,  $\mathbf{v}_\ep$ is not negligible and  the solution $u_\ep$ to \eqref{elastres} is approximated  up to leading order by $u_0 + \mathbf{v}_\ep$. 
However, if $F$ does not rapidly vary over $B_\ep$ (for simplicity if $\| F\|_{[H^1(F_0^\ep)]^3}$ is  bounded) we can see that
 $\mathbf{v}_\ep$ is $\ep$ small in $L^2$-norm. Indeed as, cf. \eqref{kzpd}, 
$\int_{\ep(B+m)} \tilde{v} = 0$ for each $m$, one has
 \[
 \big\| \mathbf{v}_\ep \big\|_{\left[L^2(\ep(B+m))\right]^3} \,\,\,\le\,\,\, 
\left\| \, F \,-\, \frac{1}{|\ep(B+m)|}\int_{\ep(B+m)}F\,  \right\|_{\big[L^2(\ep(B+m))\big]^3}\,\,\,\le\,\,\,   
\ep\, C_{B}\, \big\| \nabla F \big\|_{\big[L^2(\ep(B+m))\big]^{3\times 3}} 
 \]
 where $C_{B}$ is the Poincar\'{e}-Wirtinger constant for $B$.

Putting all this together gives the following approximation results.
\begin{theorem}\label{PDapprox}
For $F \in \left[L^2(\RR^3)\right]^3$, the solution $u_\ep$ to   \eqref{elastres}, the solution  $u_0$ to the  
homogenised system \eqref{pdhomp} 
and the solution $\mathbf{v}_\ep$ to the inclusion Stokes problems \eqref{pdVprob2} satisfy the estimate
\[
\big\|u_\ep \,-\, \left(u_0+\mathbf{v}_\ep\right)\big\|_{\left[L^2(\RR^3)\right]^3} \,\,\,\le\,\,\, C\, \ep\, \big\| \,F\, \big\|_{\left[L^2(\RR^3)\right]^3}  
\]
for the constant $C = {C_{10}^{1/2}} + \pi^{-1} \gamma_0$ 
independent of $\ep$ and $F$.
If additionally $F \in \left[H^1(\RR^3)\right]^3$, then 
\[
\big\|u_\ep \,-\, u_0\,\big\|_{\left[L^2(\RR^3)\right]^3} \,\,\,\le\,\,\, C\, \ep\, \big\| \,F\, \big\|_{\left[H^1(\RR^3)\right]^3}
\]
with $C = {C_{10}^{1/2}} + \pi^{-1} \gamma_0 + C_{B}$ independent of $\ep$ and $F$.
\end{theorem}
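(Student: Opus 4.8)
The plan is to invoke the already-established abstract estimate \eqref{IKfinal3-2} of Theorem \ref{thm.IKunifest2}, specialised to the present elasticity setting, and then transport it back to physical space by the inverse Gelfand and scaling transforms, exactly as was done in Examples \ref{e.class}--\ref{e.dp}. First I would recall that all of \eqref{KA}--\eqref{H6} have been verified above for this model (with $H=[H^1_{per}(\square)]^3$, $V_\star=V^\star_0$ from \eqref{PDvstar}, $Z$ the three-dimensional space of constants, $\mathcal{H}=[L^2(\square)]^3$ with $d_\t$ the standard inner product, and $\mathcal{E}_\t$ multiplication by $e^{-\i\t\cdot y}$), so Theorem \ref{thm.IKunifest2} applies. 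Writing $f$ as in \eqref{ik3} with $g=U\Gamma_\ep F(\t,\cdot)$ and using $\|f\|_{*\t}\le\|U\Gamma_\ep F(\t,\cdot)\|_{[L^2(\square)]^3}$ (from \eqref{atpd}--\eqref{btpd}), estimate \eqref{IKfinal3-2} gives, for $u_{\ep,\t}$ the Gelfand-transformed solution and $z+v\in Z\dot{+}V_\star$ the solution of \eqref{IKz3prob88},
\[
\big\| u_{\ep,\t}-(z+e^{-\i\t\cdot y}v)\big\|_{[L^2(\square)]^3}\,\le\, C_{10}^{1/2}\,\ep\,\|U\Gamma_\ep F(\t,\cdot)\|_{[L^2(\square)]^3}.
\]
Integrating over $\t\in\square^*$ and using the $L^2$-unitarity of $U$ and $\Gamma_\ep$ yields $\|u_\ep-(\mathbf{u}_\ep+\mathbf{v}_\ep)\|_{[L^2(\RR^3)]^3}\le C_{10}^{1/2}\ep\|F\|_{[L^2(\RR^3)]^3}$, where $\mathbf{u}_\ep=\Gamma_\ep^{-1}U^{-1}z$ and $\mathbf{v}_\ep=\Gamma_\ep^{-1}U^{-1}e^{-\i\t\cdot y}v$.

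Next I would identify the problems solved by $\mathbf{u}_\ep$ and $\mathbf{v}_\ep$. Since \eqref{kzpd} shows $V_\star$ and $Z$ are $b_0$-orthogonal, problem \eqref{IKz3prob88} decouples into \eqref{pdzprob} for $z\in Z$ and \eqref{pdVprob} for $v\in V_\star$. Applying $\Gamma_\ep^{-1}U^{-1}$ to \eqref{pdzprob} and using the relation between Fourier and Gelfand transforms recorded in Example \ref{e.class}, I get that $\mathbf{u}_\ep$ solves $-\nabla\cdot\sigma_0(\mathbf{u}_\ep)+\mathbf{u}_\ep=\mathcal{S}_\ep F$ with $\sigma_0(u)=A^{\rm hom}_p e(u)$ and $\mathcal{S}_\ep=\mathcal{F}^{-1}\chi(\tfrac{\cdot}{\ep})\mathcal{F}$; then the argument leading to \eqref{7.23-2}--\eqref{7.23-3}, using the strong ellipticity constant $\gamma_0$ of $A^{\rm hom}_p$, removes the smoothing operator at the cost $\|\mathbf{u}_\ep-u_0\|_{[L^2(\RR^3)]^3}\le\ep\pi^{-1}\gamma_0\|F\|_{[L^2(\RR^3)]^3}$, where $u_0$ solves \eqref{pdhomp}. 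Transporting \eqref{pdVprob} back similarly (rewriting in terms of $\mathcal{V}=e^{-\i\t\cdot y}v$) identifies $\mathbf{v}_\ep$ as the solution of the collection of inclusion Stokes problems \eqref{pdVprob2}, with $\mathbf{v}_\ep\in V_\ep$. Combining the two displayed estimates by the triangle inequality gives the first assertion with $C=C_{10}^{1/2}+\pi^{-1}\gamma_0$.

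For the second assertion I would show $\mathbf{v}_\ep$ is itself $O(\ep)$ in $L^2$ when $F\in[H^1(\RR^3)]^3$. The key point is that, because of the divergence-free and zero-boundary constraints defining $V_\ep$, one has $\int_{\ep(B+m)}\tilde{v}=0$ for each test function (this is exactly \eqref{kzpd} rescaled), so in the weak formulation \eqref{pdVprob2} with $\tilde{\mathbf{v}}=\mathbf{v}_\ep$ the right-hand side $\int_{\ep(B+m)}F\cdot\overline{\mathbf{v}_\ep}$ equals $\int_{\ep(B+m)}(F-\fint_{\ep(B+m)}F)\cdot\overline{\mathbf{v}_\ep}$; testing and using coercivity of the $\ep$-independent $L^2$-part together with the Poincaré--Wirtinger inequality on $B$ (scaled) bounds $\|\mathbf{v}_\ep\|_{[L^2(\ep(B+m))]^3}$ by $\ep C_B\|\nabla F\|_{[L^2(\ep(B+m))]^{3\times 3}}$; summing over $m$ gives $\|\mathbf{v}_\ep\|_{[L^2(\RR^3)]^3}\le\ep C_B\|\nabla F\|_{[L^2(\RR^3)]^{3\times 3}}$. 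Adding this to the first estimate gives the bound with $C=C_{10}^{1/2}+\pi^{-1}\gamma_0+C_B$. The main obstacle — though it is really bookkeeping rather than a genuine difficulty — is keeping the decoupling and the inverse-transform identifications straight: verifying that the inverse Gelfand/scaling transform turns \eqref{pdzprob} into a constant-coefficient homogenised PDE (requiring $a^{\rm h}_\t(z,\tilde z)=A^{\rm hom}_p z\odot\t:\overline{\tilde z\odot\t}$, which follows as in the derivation of \eqref{dp.ahom}) and \eqref{pdVprob} into the cell-by-cell Stokes problems \eqref{pdVprob2}, and then confirming the zero-average property that makes the Poincaré estimate available. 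Everything else is a routine assembly of results already proved in the excerpt.
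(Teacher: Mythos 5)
Your proposal follows the paper's own argument essentially step for step: specialise \eqref{IKfinal3-2}, transfer back by the inverse Gelfand and scaling transforms, use the $b_0$-orthogonality of $V_\star$ and $Z$ from \eqref{kzpd} to decouple \eqref{IKz3prob88} into \eqref{pdzprob} and \eqref{pdVprob}, identify $\mathbf{u}_\ep$ with the solution of the $\mathcal{S}_\ep$-smoothed homogenised system (and remove $\mathcal{S}_\ep$ at cost $\ep\pi^{-1}\gamma_0$), identify $\mathbf{v}_\ep$ with the Stokes problems \eqref{pdVprob2}, and finally exploit the zero-average constraint plus the scaled Poincar\'{e}--Wirtinger inequality to absorb $\mathbf{v}_\ep$ when $F\in[H^1]^3$. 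This is precisely what the paper does, with the same intermediate objects and the same constants.
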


\subsection{Schr\"{o}dinger equation with a `strong' periodic magnetic field}
\label{magnschrod}
Up until now, in the examples, whenever $\t$ was the quasi-periodicity parameter the space $V_\theta$ has either  been continuous in $\t$ or possessed an isolated discontinuity at the origin $\theta =0$. 
Here, we give a simple one-dimensional example demonstrating that  for certain physically motivated models 
it is possible to have isolated discontinuities appear at non-zero points in the $\theta$-space. 
For a given periodic magnetic field $A:C^1_{per}[0,1] \rightarrow \RR$, $\int_0^1 A(y)\,{\rm d}y \notin 2\pi \mathbb{Z}$,  a uniformly positive periodic potential $V \in L_{per}^\infty(0,1)$ and $F \in L^2(\mathbb{R})$ 
consider the solution $u_\ep$ to the one-dimensional Magnetic-Schr\"{o}dinger equation
\[
 -\,\,  \left( \frac{d}{d x} \,-\,  \frac{\i}{\ep} A\left(\frac{x}{\ep}\right)\, \right)^2 u_\ep \,\,+\,\, 
V\left(\frac{x}{\ep}\right)\,u_\ep \,\,\,=\,\,\, F, 
\qquad x \in \RR,
\] 
with small parameter $0< \ep <1$. After the spatial rescaling $\Gamma_\ep$ and the Gelfand transform $U$, we find that $u_{\ep,\t}:= U \Gamma_\ep u_\ep(\t,\cdot)$ is the $[0,1]$-periodic solution to 
\[
- \,\, \ep^{-2}\left( \frac{d}{d y} \,+\, \i\, \t \, -\,  \i\, A(y)\right)^2 u_{\ep,\t}(y) \,\,+\,\, V(y)\,u_{\ep,\t}(y) \,\,\,=\,\,\, U\Gamma_\ep F(\t,y), \quad 
a.e. \,  \ \t \in \Theta=[-\pi, \pi].
\]
The variational  form 
of this problem is of type \eqref{p1}  for $H = H^1_{per}(0,1)$, 
$\l f, \tilde{u} \r = \int_0^1 U \Gamma_\ep F(\t,y)\,\overline{\tilde{u}(y)} \, {\rm d}y$, 
%
%
$$
a_\t[u] \,\,= \int_0^1 \Big|u'(y)\,+\, \i\, \t u(y) \,-\, \i\, A(y) u(y)\, \Big|^2\,{\rm d}y, \qquad 
b_\t[u] \,\equiv\, b_0[u]\,\, =\,\, \int_0^1 V(y)\, \big|u(y)\big|^2\,{\rm d}y, \qquad u \in H^1_{per}(0,1).
$$
Upon observing that 
$$
a_\t[u] \,\,=\,\, \int_0^1 \left| \frac{d}{dy}\left(u(y)\exp \left[\i \t y -\i \int_0^yA(y'){\rm d}y'\right] \right)\right|^2 \, {\rm d}y, 
$$
we can readily see that
\[
\begin{aligned}
V_\theta \,\,=\,\, \left\{ 
\begin{array}{lr}
\{ 0 \} & \theta \neq \theta_0, \\[5pt]
\Big\{ c\,\exp\Big[ -\,\i\,\Big(\,\t_0 y -\int_0^yA(y'){\rm d}y'\,\Big)\Big] \,\, \Big| \,\, c \in \CC\, \Big\} & \theta = \theta_0,
\end{array}
\right.
\end{aligned}
\]
where the unique $\t_0\in (-\pi,\pi]\backslash \{0\} $ equals $\int_0^1 A(y){\rm d}y$ up to an integer multiple of $2\pi$. 
That is, $V_\theta$ is 
trivial except its 
isolated discontinuity point $\t_0$ determined by the mean value of the magnetic field $A$. Clearly, $\t_0$ can take any value in $\Theta$.
Moreover, one can readily check that the assumptions \eqref{KA2}-\eqref{H6} hold with straightforward details left to the reader.

\subsection{A non-local example/differential-difference equation}
\label{sec:nonloc}
Here, we provide an example where the dependence of $a_\t$ on the Floquet-Bloch parameter $\t$ does not have to be quadratic (and not even polynomial). 
We consider for this a simple model of a nonlocal operator. For recent results on operator estimates in homogenisation of other classes of nonlocal operators see \cite{PSSZh-2023}.

Let $u_\ep \in H^1(\RR)$ be the solution to one-dimensional problem 
\begin{equation}
\label{nonlocalp}
	\begin{aligned}
	\int_{\RR} A\left(\tfrac{x}{\ep}\right)  u'_\ep\,\,  \overline{\tilde{u}'}\,\,\, +\,\, \ep^{-2}	
	\int_{\RR} D\left(\tfrac{x}{\ep}\right) \big(  u_\ep(x + \ep) - u_\ep(x) \big)\,
	\overline{\big(  \tilde{u}(x + \ep) - \tilde{u}(x) \big)} \, {\rm d} x \,\,\,+\, 	
	\int_{\RR}  u_\ep\,\overline{\tilde{u}}\,\,\, =\, 	\int_{\RR} F\,\overline{\tilde{u}},  \\
	\forall \tilde{u} \in H^1(\RR),
	\end{aligned}
\end{equation}
for given $F\in L^2(\RR)$ and uniformly positive bounded $1$-periodic functions $A(y)$ and $D(y)$.  
We apply the usual scaling and Gelfand transforms, noticing via \eqref{gt1} that for a function with a shifted argument $\hat u(x)=u(x + \ep)$ one has 
$U \Gamma_\ep \hat u (\t,y)=e^{i\t}U \Gamma_\ep u (\t,y)$. 
As a result, 
we determine that $u_{\ep,\t} : = U \Gamma_\ep u_\ep (\t,\cdot) \in H^1_{per}(\square)$ solves a problem of the form \eqref{p1}
for $H = H^1_{per}(\square)$, $\square = [0,1]$, $\Theta = [-\pi,\pi]$, 
$\langle f , \tilde{u} \rangle =	\int_{\square} U \Gamma_\ep F(\t ,y)\,\overline{\tilde{u}}(y) \, {\rm d}y$, with the sesquilinear forms
\begin{equation}\label{nonlocalforms}
	\begin{aligned}
a_\t(u,\,\tilde u)\,\, =\,\, 	\int_{\square} A(y) \big(u'(y) +\, \i\, \t u(y)\big) \, \overline{ \big( \tilde u'(y) +\, \i\, \t \tilde u(y)\big)}\,{\rm d}y\,\, +  
\int_\square	 D(y)  \big|1 - e^{\i \t}\big|^2 u(y)\,\overline{\tilde u(y)}  \, {\rm d} y\,, \\ 
\quad \text{and} \quad 
b_\t(u,\,\tilde u)\,\, =\,\, 	\int_{\square}  u(y)\,\overline{\tilde u(y)}\,{\rm d}y.
	\end{aligned}
\end{equation}
Here, as in the classical homogenisation Example \ref{e.class}, we have 
\[
\begin{aligned}
	V_\theta = \left\{
	\begin{array}{lr}
		\{ 0 \}, & \theta \neq 0, \\[5pt]
		{\rm Span} (\mathbf{e}), & \theta = 0,
	\end{array} 
	\right. & \qquad & 
	W_\theta = \left\{
	\begin{array}{lr}
		H^1_{per}(\Box), & \theta \neq 0, \\[5pt]
		H^1_{per, 0} : =\left\{ u\in H^1_{per}(\Box) \,\, \big| \,\, \int_\Box u = 0 \right\}, & \theta = 0.
	\end{array} 
	\right.,
\end{aligned}
\]
Further, we can show 
with the 
same reasoning as in Example \ref{e.class} 
that \eqref{KA2}, \eqref{contVs} and \eqref{distance} all hold. In particular, we have $V_\star = \{ 0 \}$ and $Z = {\rm Span} ( \bf{e} )$. Next, 
since $1-e^{\i\t}=-\i\,\t+O(\t^2)$ as $\t\to 0$,   
it is clear that 
 \eqref{H4} holds with  
the forms 
	\[
	a_0'(v,u) \cdot \t \,\,=\,\, \i \int_\square A\,\t v \, \overline{ u'}, \qquad 
	a_0''(v,\tilde{v})\t \cdot \t \,\,=\,\,|\t|^2 \int_\square A\,  v \,\overline{\tilde{v}} \,\,+\,\, |\t|^2 \int_\square D\, v\, \overline{\tilde{v}} , \quad 
	\text{	$v, \tilde{v} \in V_0$, $u \in H^1_{per}(\square)$.}
	\]
As a result, Theorem \ref{thm.maindiscthm} is applicable. Arguing again as  in Example \ref{e.class}, 
since $a_0(u,\tilde u)=\int_\square A u'\,\overline{\tilde u'}$ 
we readily verify that for the corrector $N_\t$ defined by \eqref{cell:IKprob22}, $N_\t \bf{e}=\i\,\t\ourN$ where 
$\ourN$ is the solution to the classical corrector problem \eqref{IKclasscor} (for $n=1$), and  applying \eqref{ahom7.1} 
\[
a^h_{\xi}[\mathbf{e}] \,\,=\,\, \left(\,\left\langle A^{-1} \right\rangle^{-1}\,+\,\, \langle D \rangle\, \right) | \xi|^2,  \qquad \xi \in \RR,    
\]
where 
$\langle h \rangle := \int_\square h(y)\,{\rm d}y$. 
Following further the pattern of Example \ref{e.class}, we observe that the 
solution $u_\ep$ to the original problem \eqref{nonlocalp} is approximated 
in terms of the following homogenised problem, cf. \eqref{homeq}: 
\[
		-\,\,\left(\left\langle A^{-1} \right\rangle^{-1}\,\,+\,\, \langle D\rangle \right)u''\,\, +\,\, u \,\,=\,\,F. 
\]
As a result, estimates directly analogous to those in Proposition \ref{homeqest} hold, with one further refinement namely with the possibility of 
removing in the present example the smoothing operator $\mathcal{S}_\ep$ in the analogue of \eqref{IKH1est...}. 
(This follows from noticing  that in the present one-dimensional case $N'(y)$ is bounded, and then establishing the $L^2$-smallness of 
$N'(x/\ep)\big(\,(\mathcal{S}_\ep-I)u\big)'(x)$ via an argument similar to \eqref{7.23-3}.) 
Consequently,  
 the following estimates are satisfied, with some constant $C$ independent of $\ep$ and $F$: 
	\begin{gather*}
		\left\Vert u_\ep \,-\,\, \Big(u\,+\, \ep\ourN\left(\tfrac{\cdot}{\ep}\right)  u'\Big)\,  \right\Vert_{H^1(\mathbb{R})} \,\,\le\,\, \ep\, C\, 
		\big\Vert\, F\, \big\Vert_{L^2(\mathbb{R})}, \qquad \text{ and } \qquad 
		\big\Vert \,u_\ep \,-\, u\,  \big\Vert_{L^2(\mathbb{R})} \,\,\le\,\, \ep\, C\,\big\Vert \,F\,\big\Vert_{L^2(\mathbb{R})}. 
	\end{gather*}

\subsection[Difference equation]{A difference equation\footnote{The authors are grateful to Prof. Igor $\rm{Vel\check{c}i\acute{c}}$ (University of Zagreb) for bringing this example to their attention. }}
\label{sec:nonloc2}
Here we provide an example where our general theory remains applicable while the defect space $Z$ is infinite dimensional. 
Consistently with Proposition \ref{prop:Zfinite}, this can only happen when the weaker hypothesis 
\eqref{KA} holds in 
the absence of its stronger version \eqref{KA2}. 
We consider the  problem resembling \eqref{nonlocalp} but with the term involving the derivatives 
dropped. Namely, let  $u_\ep \in L^2(\RR)$ be the solution to the difference equation 
\begin{equation}
\label{nonlocalp2}
	\ep^{-2}	
	\int_{\RR} D\left(\tfrac{x}{\ep}\right) \big(  u_\ep(x + \ep) - u_\ep(x) \big)\,
	\overline{\big(  \tilde{u}(x + \ep) - \tilde{u}(x) \big)} \, {\rm d} x \,\,\,+\, 	
	\int_{\RR}  u_\ep\,\overline{\tilde{u}}\,\,\, =\, 	\int_{\RR} F\,\,\overline{\tilde{u}}, \,\, \,\,\,
	\forall \tilde{u} \in L^2(\RR),
\end{equation}
for given $F\in L^2(\RR)$ and   $D\in L^\infty_{per}(0,1)$ satisfying  
\[
D(y) \,\,\,\ge\,\,\, m, \quad \,\,\, \forall y\in\RR, 
 \]
 with some positive constant $m$.  Arguing as in the last example we arrive at 
 a problem of the form \eqref{p1}:
\begin{equation}\label{nonlocal3}
\ep^{-2} a_\t\left(u_{\ep,\theta} ,\tilde{u}\right) \,\,+\,\, b_\t \left(u_{\ep,\theta},\tilde{u}\right) \,\,\,=\,\,\, \l f,\tilde{u}\r, \quad \forall \tilde{u} \in H, 
\end{equation}
for $H = L^2(\square)$, $\square = [0,1]$, $\Theta = [-\pi,\pi]$, 
$\langle f , \tilde{u} \rangle =	\int_{\square} U \Gamma_\ep F(\t ,y)\,\overline{\tilde{u}}(y) \, {\rm d}y$, with the sesquilinear forms
$$
a_\t(u,\,\tilde u)\,\, = 
\int_\square	 D(y)  \big|1 - e^{\i \t}\big|^2 u(y)\,\overline{\tilde u(y)}  \, {\rm d} y=4\sin^2(\t/2)\int_\square	 D(y) \,  u(y)\,\overline{\tilde u(y)}  \, {\rm d} y\,, $$ 
$$ \text{and} \quad 
b_\t(u,\,\tilde u)\,\, =\,\, 	\int_{\square}  u(y)\,\overline{\tilde u(y)}\,{\rm d}y.
$$
Hence we have 
\[
\begin{aligned}
	V_\theta = \left\{
	\begin{array}{lr}
		\{ 0 \}, & \theta \neq 0, \\[5pt]
		L^2(\square), & \theta = 0,
	\end{array} 
	\right. & \qquad & 
	W_\theta = \left\{
	\begin{array}{lr}
		L^2(\square), & \theta \neq 0, \\[5pt]
		\{ 0 \}, & \theta = 0.
	\end{array} 
	\right.
\end{aligned}
\]
Obviously \eqref{KA} holds, with for example $\nu_\t= \frac{4m}{4m+1}\sin^2(\t/2)$ for $\t\neq 0$ and $\nu_0$ 
any positive number. Further \eqref{contVs} trivially holds with $V_\star = \{ 0 \}$, and so  $Z=L^2(\Box)$. 
Elementary estimates then show that \eqref{distance} holds with $\gamma=\frac{4m}{\pi^2(4m+1)}$, as well as   
 \eqref{H4} holds with  
	\[
	a_0'\left(u,\tilde{u}\right) \cdot \t \,\,=\,\,0, \qquad 
	a_0''(u,\tilde{u})\t \cdot \t \,\,=\,\,|\t|^2 \int_\square D\, u\, \overline{\tilde{u}} , \quad 
	\text{	$u,\, \tilde{u}  \in L^2(\square)$.}
	\]
As a result, $N_\t$ is zero (since $W_0=0$) and   Theorem \ref{thm.maindiscthm} is applicable with 
\[
a^{\rm h}_{\t}\left(z,\tilde{z}\right) \,\,=\,\, | \t|^2 \int_\Box D(y)\, z(y)\,\overline{\tilde{z}(y)}\,\,{\rm d} y  ,  \qquad \forall z,\,\tilde{z}\in L^2(\Box). 
\]
Notice next 
that \eqref{H5} is obviously valid, since $b_\t$ does not depend on $\t$. 
Moreover, as $V_\t^\star=V_\star=\{0\}$ are trivial so are $\mathcal{E}_\t:V_\star\rightarrow V_\t^\star$, and 
Theorem \ref{thm.IKunifest} coincides with Theorem \ref{thm.maindiscthm}.

We next adjust this example to the framework of Section \ref{s:resolv}.  
First we observe that the right hand side of \eqref{nonlocal3} has the form as in \eqref{ik3}, with 
$\mathcal{H}=L^2(\Box)$, $d_\t(u,\tilde{u})=\int_\Box u\,\overline{\tilde{u}}$ and $g=U \Gamma_\ep F(\t,\cdot)$. 
At the beginning of Section \ref{s:resolv},
we made an assumption of 
compactness of embedding of $H$ into $\mathcal{H}$ for the purpose of investigation of the spectra. 
In the present example however $H$ coincides with $\mathcal{H}$, 
and therefore there is no  compactness of the embedding. 
Nevertheless hypothesis \eqref{H6} trivially holds with $\mathcal{E}_\t$ chosen to be the identity operator, 
and inspecting the proof of Theorem \ref{p.unitaryequiv} shows that it does not actually require on its own the 
embedding compactness. 
As a result, 
the theorem 
takes the form of the following inequality 
	\begin{equation}\label{splimSe.56}
\left\|\, \mathcal{L}_{\ep,\t}^{-1} \,\, -\,\, 
 \mathbb{L}_{\t / \ep}^{-1}\,\right\|_{L^2(\Box) \rightarrow L^2(\Box)} \,\,\,\le\,\,\, C \ep,\, \ \ \  
0<\ep<1, \ \ \forall\,
\t \in [-\pi,\pi],
\end{equation}
where  $\mathcal{L}_{\ep,\t}$ is the self-adjoint operator in $L^2(\Box)$,  generated by the form on the left hand side of \eqref{nonlocal3} and 
$\mathbb{L}_{\xi}$ is the self-adjoint operator generated by the form
$$    
| \xi|^2 \int_\Box D(y) z(y)\overline{\tilde{z}(y)}\,{\rm d} y  \,\,+\,\,
\int_\Box  z(y)\,\overline{\tilde{z}(y)}\,{\rm d} y ,  \qquad 
z,\,\tilde{z}\in L^2(\Box),\,\,\,\, \xi \in \RR. 
$$
The simplification in \eqref{splimSe.56} compared to Theorem \ref{p.unitaryequiv} is due to the choice of 
$\mathcal{E}_\t$ 
the identity operator, 
and to $\mathcal{H}_0:=\overline{V_0}=L^2(\Box)$ (and hence  the projector 
in Theorem \ref{p.unitaryequiv} is also the identity.) 

Now Theorem \ref{thm.bivariate} takes the form\footnote{Theorem \ref{thm.bivariate} relies on Appendix B, which in turn uses the assumption of compactness of embedding. In particular this assumption allowed us to pick up a special basis in both $V_0$ and 
$\mathcal{H}_0$, see the proof of Proposition \ref{propb1}.  However the results of Appendix B are still valid in the present setup 
(in fact their proofs are simpler) since $Z=V_0=\mathcal{H}_0=\mathcal{H}$, $b_0=d_0$ and any basis in $V_0$ will serve the purpose.}
	\[
 \left\|\,\mathcal{L}_{\ep,\t}^{-1} \,\,-\,\, 
\big(A_\ep^* \mathcal{L}^{-1} A_\ep\big)(\t)\,\right\|_{L^2(\Box) \rightarrow L^2(\Box)}
\,\,\, \le\,\,  C\,\ep\,  ,	\quad  \quad 
for \,\,a.e. 
\ \t \in [-\pi,\pi]. 
	\]
Here $A_\ep^* \mathcal{L}^{-1} A_\ep$ is 
a self-adjoint 
operator in $L^2\left(\mathbb{R}; L^2(\Box)\right)$ decomposable into a direct integral in $\t$; the bivariate (two-scale) limit operator 
$\mathcal{L}$ is an unbounded self-adjoint operator in $L^2\left(\mathbb{R}\times\Box\right)$  generated by
\be
\label{2sc-NL}  
\int_{\mathbb{R}} \int_{\Box} D(y)\,
\tfrac{\partial u}{\partial x} (x,y)\,
\overline{\tfrac{\partial \tilde u}{\partial x}(x,y)}\,{\rm d} y\,{\rm d} x \,\, +\,
\int_{\mathbb{R}} \int_{\Box}  u (x,y)\,\overline{\tilde{u}(x,y)}\,{\rm d} y\,{\rm d} x \, ,  \qquad 
u,\,\tilde{u}\in H^1\left(\mathbb{R};L^2(\Box)\right)\, ;
\ee 
$A_\ep : L^2\big(\Theta \,; L^2(\Box)\big) \rightarrow L^2\big(\mathbb{R}\, ; L^2(\Box)\big) $ is 
as in Theorem \ref{thm.bivariate} (for $n=1$ and $\mathcal{E}=I$), i.e. 
$A_\ep : = \Gamma_\ep^{-1}\mathcal{F}^{-1} \, \chi $ where 
$\chi: L^2\big(\Theta \,;\, L^2(\Box)\big) \rightarrow  L^2\big(\mathbb{R} \,;\, L^2(\Box)\big)  $ 
is the 
extension by zero outside $\Theta$. 
We notice that, while the original problem \eqref{nonlocalp2} is nonlocal,  the limit problem \eqref{2sc-NL} is local (although two-scale).

Further, following the pattern of Subsection \ref{interpol77} we observe that the analog of Theorem \ref{thm.2scOpRes} for the self-adjoint operator $\mathcal{L_\ep}$ in $L^2(\mathbb{R})$ 
corresponding to the original problem, i.e. 
generated by the form on the left hand side of   \eqref{nonlocalp2}, is:  
\begin{theorem}\label{thm.2scOpResnonloc}
	For $0<\ep<1$ one has 
	\begin{equation}
		\label{dpcompe3nl}
		\bigl\Vert \,\mathcal{L}_\ep^{-1} \,\,-\,\,  
		\mathcal{I}_\ep^* \mathcal{L}^{-1}  \mathcal{I}_\ep\, \bigr\Vert_{L^2(\mathbb{R}) \rightarrow L^2(\mathbb{R})} \,\,\le\,\, C\, \ep
	\end{equation}
for some positive constant $C$ independent of $\ep$.	Here  $\mathcal{I}_\ep: L^2(\mathbb{R}) \rightarrow L^2(\mathbb{R} \times \square)$ is the 
two-scale interpolation operator defined (for $n=1$) by \eqref{2ScInterp}, 
	which is 
	an $L^2$-isometry and the continuous extension of \eqref{7.54-1} for $n=1$; 
$\mathcal{I}_\ep^*: L^2(\mathbb{R} \times \square) \rightarrow L^2(\mathbb{R}) $ is the adjoint of $\mathcal{I}_\ep$ given by  
\eqref{7.54-2} for $n=1$. 
 
\end{theorem}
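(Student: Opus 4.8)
The plan is to derive Theorem~\ref{thm.2scOpResnonloc} as a specialisation of the general machinery, exactly mirroring the argument that produced Theorem~\ref{thm.2scOpRes} in Subsection~\ref{interpol77}, but with the simplifications afforded by $\mathcal{E}_\t=I$, $Z=V_0=\mathcal{H}_0=\mathcal{H}=L^2(\square)$ and $b_0=d_0$. First I would record that the original operator $\mathcal{L}_\ep$ in $L^2(\RR)$ is unitarily equivalent, via the rescaling $\Gamma_\ep$ followed by the Floquet--Bloch--Gelfand transform $U$, to the direct integral of the fibre operators $\mathcal{L}_{\ep,\t}$ defined by the form on the left-hand side of \eqref{nonlocal3}; concretely, $\mathcal{L}_{\ep,\t}^{-1}=U\Gamma_\ep\,\mathcal{L}_\ep^{-1}\,\Gamma_\ep^{-1}U^{-1}$ acting fibrewise, because for $g=U\Gamma_\ep F$ the transform $u_{\ep,\t}=U\Gamma_\ep u_\ep$ of the solution of \eqref{nonlocalp2} solves \eqref{nonlocal3} with right-hand side \eqref{ik3}. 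This is the analogue of the opening paragraph of the proof of Theorem~\ref{thm.2scOpRes}.

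Next I would invoke the already-established fibrewise estimate \eqref{splimSe.56} (the specialised form of Theorem~\ref{p.unitaryequiv}), which reads $\bigl\|\mathcal{L}_{\ep,\t}^{-1}-\mathbb{L}_{\t/\ep}^{-1}\bigr\|_{L^2(\square)\to L^2(\square)}\le C\ep$ uniformly in $\t\in[-\pi,\pi]$, together with the bivariate reformulation already stated in this subsection, namely $\bigl\|\mathcal{L}_{\ep,\t}^{-1}-\bigl(A_\ep^*\mathcal{L}^{-1}A_\ep\bigr)(\t)\bigr\|_{L^2(\square)\to L^2(\square)}\le C\ep$ for a.e.\ $\t$, where $\mathcal{L}$ is the two-scale operator generated by \eqref{2sc-NL} and $A_\ep=\Gamma_\ep^{-1}\mathcal{F}^{-1}\chi$. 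Because $U$ and $\Gamma_\ep$ are $L^2$-unitary, conjugating the fibrewise bivariate estimate by $\Gamma_\ep^{-1}U^{-1}$ on the left and $U\Gamma_\ep$ on the right turns it into an estimate in $L^2(\RR)$ for the difference of $\mathcal{L}_\ep^{-1}$ and $\,(\Gamma_\ep^{-1}U^{-1})A_\ep^*\mathcal{L}^{-1}A_\ep(U\Gamma_\ep)$; setting $\mathcal{I}_\ep:=A_\ep U\Gamma_\ep=\Gamma_\ep^{-1}\mathcal{F}^{-1}\chi\,U\,\Gamma_\ep$ (which is precisely \eqref{2ScInterp} for $n=1$, and an $L^2$-isometry as a composition of $L^2$-norm-preserving maps) this is exactly \eqref{dpcompe3nl}, with $\mathcal{I}_\ep^*=\Gamma_\ep^{-1}U^{-1}\chi^*\mathcal{F}\Gamma_\ep$ the adjoint.

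It then remains to identify $\mathcal{I}_\ep$ and $\mathcal{I}_\ep^*$ explicitly. For $\mathcal{I}_\ep$ I would restrict to a dense subspace such as $C_0^\infty(\RR)$ and substitute the defining formula \eqref{gt1} for $U$ and $(\Gamma_\ep F)(y)=\ep^{1/2}F(\ep y)$, so that $\bigl(U\Gamma_\ep F\bigr)(\t,y)=(2\pi)^{-1/2}\ep^{1/2}\sum_{m\in\ZZ}F(\ep y+\ep m)e^{-\i\t(y+m)}$; applying $\chi$ (extension by zero off $\square^*=[-\pi,\pi]$) and then the scaled inverse Fourier transform $\Gamma_\ep^{-1}\mathcal{F}^{-1}$ in the first variable produces $\sum_{m\in\ZZ}F(\ep y+\ep m)(2\pi)^{-1}\int_{[-\pi,\pi]}e^{\i\t(x/\ep-m-y)}\,{\rm d}\t$, and evaluating the elementary integral yields the one-dimensional sinc sum \eqref{7.54-1}. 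The same substitution with \eqref{gt2} for $U^{-1}$ in $\mathcal{I}_\ep^*=\Gamma_\ep^{-1}U^{-1}\chi^*\mathcal{F}\Gamma_\ep$ gives \eqref{7.54-2} for $n=1$. I do not expect any genuine obstacle here: the single parameter $\t$, the triviality of the corrector $N_\t$ (since $W_0=\{0\}$), and the identity $\mathcal{E}_\t=I$ make every ingredient already available, so the main work is bookkeeping. The one point needing a word of care is that $H$ does not embed compactly into $\mathcal{H}$ here, but, as already noted in the text, Theorem~\ref{p.unitaryequiv} and the results of Appendix~B do not actually use this compactness, so \eqref{splimSe.56} and the bivariate identity hold as stated; the mildest subtlety is that the fibre operators $\mathcal{L}_{\ep,\t}$ have purely essential (not discrete) spectrum, but this is irrelevant to the resolvent estimate \eqref{dpcompe3nl}, which is all that is claimed.
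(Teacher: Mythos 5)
Your proposal is correct and follows essentially the same route the paper intends: the paper explicitly says the result is obtained "following the pattern of Subsection \ref{interpol77}", and your argument is precisely a careful rendering of that pattern — unitary equivalence of $\mathcal{L}_\ep^{-1}$ with the direct integral of $\mathcal{L}_{\ep,\t}^{-1}$ via $U\Gamma_\ep$, conjugation of the bivariate fibrewise estimate (which here reduces $\mathcal{J}_\ep$ to $\mathcal{I}_\ep=A_\ep U\Gamma_\ep$ since $\mathcal{E}_\t=I$), and the same substitution of \eqref{gt1}, \eqref{gt2} into \eqref{2ScInterp} to recover the one-dimensional sinc formulas. Your remark about the failure of compact embedding and its irrelevance to the resolvent estimate matches the paper's footnote, so the proof is complete and essentially identical in method.
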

\begin{remark}\label{lastrem} 
Comparing \eqref{dpcompe3} with \eqref{dpcompe3nl}, we observe that the structure of the approximating operator in the latter  corresponds to that in the former with 
$\mathcal{J}_\ep=\mathcal{I}_\ep$, i.e. with no translation operator $T_\ep$ (or by formally 
assigning it to be unity). This is an implications of our ability to choose in the present example $\mathcal{E}_\t$ as the identity operator. 
(That is indeed in contrast with \eqref{dpcompe3}, where $\mathcal{E}_\t$ could not have been taken the identity operator in the 
inclusions for the genuine $\t$-dependence of $b_\theta$ in \eqref{abcdp}.) 

Estimate \eqref{dpcompe3nl}  allows a particularly simple specialisation 
for 
 a class of two-scale right hand sides $F_\ep(x)=\Phi(x,\,x/\ep)$. 
Namely, let $F_\ep(x)=\Phi(x,\,x/\ep)$ 
where $\Phi(x,y)\in L^2\left(\RR\times\square\right)$ is $\square$-periodic in $y$ 
and for every $y$ its Fourier transform in $x$ is supported within a bounded segment $[-R,R]$, cf. Remark \ref{RemShann}. 
Hence, by the Nyquist-Shannon sampling theorem,   
for $\ep\le\pi/R$ simply $\mathcal{I}_\ep F_\ep(x)=\Phi(x,y)$, and 
as $\mathcal{L}^{-1}$ commutes with 
$\mathcal{S}_\ep$, for the solution 
$u=\mathcal{L}^{-1}\Phi$ of the two-scale limit problem $\mathcal{I}_\ep^*u(x)=u\big(x,\,x/\ep\big)$. 
As a result, for the exact solution $u_\ep=\mathcal{L}_\ep^{-1}F_\ep$, \eqref{dpcompe3nl} yields 
\[
\left\Vert \,u_\ep\,\,-\,u\left(x,\,\tfrac{x}{\ep}\right)\,\right\Vert_{L^2(\RR)}\,\,\,\le\,\,\,C\,\ep 
\left\Vert\Phi\left(x,\,\tfrac{x}{\ep}\right)\right\Vert_{L^2(\RR)}, 
\]
with a constant $C$ independent of $\ep$ and $\Phi$. 

\end{remark}

\section*{Declarations}
There are no competing interests.

\section*{Data Availability}  
 No datasets were generated or analysed during the current study

\appendix


\section*{Appendix A}\label{appa}
\setcounter{section}{1}
\renewcommand{\theequation}{\Alph{section}}
\setcounter{equation}{0}
\setcounter{theorem}{0}
\renewcommand{\theequation}{\mbox{\thesection.\arabic{equation}}}
We prove here Lemma \ref{propeth}, i.e. the existence of $\mathcal{E}_\t$ satisfying \eqref{Eprop1} and \eqref{Eprop2} (or equivalently \eqref{Eprop3}). We shall prove it in the following form:
\begin{proposition}\label{prop.Eexists} 
	Assume \eqref{contVs} and \eqref{H5}. Then, there exists a bijection $\mathcal{E}_\t : V_\star \rightarrow V_\t^\star$ such that
	\begin{gather}
	b_\t(  \mathcal{E}_\t  v_\star, \mathcal{E}_\t \tilde{v}_\star)=b_0(v_\star,\tilde{v}_\star), \quad \forall v_\star,\tilde{v}_\star\in V_\star; \label{Eunitary} \\
	\text{and there exists a constant $K'_b \ge 0$ such that } \nonumber  \hspace{\linewidth}\\
	\| \mathcal{E}_\t v_\star - v_\star \|_\t  \le K'_b |\t| \| v_\star\|_0, \quad  \forall v_\star \in V_\star. \label{Elip}
	\end{gather}
\end{proposition}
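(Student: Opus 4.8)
\textbf{Proof plan for Proposition \ref{prop.Eexists}.}
The plan is to construct $\mathcal{E}_\t$ as a composition $\mathcal{E}_\t = P_\t \circ R_\t$ of two naturally-occurring maps: first a ``rough'' map $R_\t : V_\star \to H$ supplied by the Lipschitz continuity hypothesis \eqref{contVs}, followed by a ``polar-decomposition'' correction $P_\t$ that restores the exact isometry \eqref{Eunitary} with respect to $b_\t$. First I would use \eqref{contVs2} to see that the orthogonal projection $\Pi_\t := P_{V_\t^\star}|_{V_\star} : V_\star \to V_\t^\star$ (orthogonal with respect to $(\cdot,\cdot)_\t$) satisfies $\|\Pi_\t v_\star - v_\star\|_\t = \|P_{W_\t^\star} v_\star\|_\t \le L_\star |\t| \|v_\star\|_0$, so $\Pi_\t$ is $|\t|$-close to the identity on $V_\star$ and in particular, for $|\t|$ small, is injective with closed range; one checks it is in fact bijective onto $V_\t^\star$ for small $|\t|$ by a Neumann-series / codimension argument, and extends to all $\t \in \Theta$ after noting $\Theta$ is compact and connected (this is where the connectedness in \eqref{H5} is used, together with a continuity/covering argument to patch local constructions into a global one). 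This gives $R_\t := \Pi_\t$ as a candidate, already satisfying \eqref{Elip}-type bounds, but it is only an \emph{approximate} $b_\t$-isometry onto $V_\t^\star$: by \eqref{H5}, $|b_\t(\Pi_\t v_\star, \Pi_\t \tilde v_\star) - b_0(v_\star, \tilde v_\star)| \le C|\t|\, \|v_\star\|_0\|\tilde v_\star\|_0$ once the closeness of $\Pi_\t v_\star$ to $v_\star$ is inserted.

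The second step is to correct $R_\t$ to an exact isometry. Consider on $V_\star$ the two inner products $b_0(\cdot,\cdot)$ and $G_\t(\cdot,\cdot) := b_\t(\Pi_\t \cdot, \Pi_\t \cdot)$; by the previous paragraph $\|G_\t - b_0\| \le C|\t|$ as bilinear forms on $(V_\star, b_0)$, so for $|\t|$ small there is a bounded positive self-adjoint operator $S_\t$ on $(V_\star, b_0)$ with $G_\t(v,\tilde v) = b_0(S_\t v, \tilde v)$ and $\|S_\t - I\| \le C|\t|$. Then $S_\t^{-1/2}$ is well-defined via the functional calculus, $\|S_\t^{-1/2} - I\| \le C'|\t|$, and I would set $\mathcal{E}_\t := \Pi_\t \circ S_\t^{-1/2}$. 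A direct computation gives $b_\t(\mathcal{E}_\t v, \mathcal{E}_\t \tilde v) = G_\t(S_\t^{-1/2} v, S_\t^{-1/2}\tilde v) = b_0(S_\t S_\t^{-1/2} v, S_\t^{-1/2}\tilde v) = b_0(v, \tilde v)$, which is \eqref{Eunitary}; bijectivity onto $V_\t^\star$ follows since both factors are bijections for small $|\t|$. For \eqref{Elip}, write $\mathcal{E}_\t v_\star - v_\star = (\Pi_\t - I)v_\star + \Pi_\t(S_\t^{-1/2} - I)v_\star$ and bound each term by $C|\t|\|v_\star\|_0$ using \eqref{contVs2}, \eqref{as.b1} and $\|S_\t^{-1/2}-I\|\le C'|\t|$.

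For $\t$ away from a fixed small neighbourhood of the origin, the construction must be globalised. Here I would cover the compact connected set $\Theta$ by finitely many balls on each of which a variant of the above (now centred at the ball's centre $\t_i$ rather than at $0$, using local Lipschitz continuity of $V_\t^\star$ in $\t$ from \eqref{contVs} restricted to that ball, which is away from the singularity and hence genuine) produces local isometries, and then compose along a chain of overlapping balls connecting $0$ to the given $\t$ — the composition of $b$-isometries being a $b$-isometry, and the resulting map being independent of the chain because any two such isometries $V_\star \to V_\t^\star$ differ by a $b_\t$-unitary of $V_\t^\star$, but we only need \emph{existence}, so path-independence is not even required. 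The main obstacle I anticipate is precisely this globalisation: making sure the codimension/Fredholm index argument that gives surjectivity of $\Pi_{\t,\t_i}$ onto $V_\t^\star$ is uniform over each ball, and that the finitely many local Lipschitz constants combine into a single $K'_b$ with the correct $|\t|$ scaling; away from $\t=0$ the subspaces $V_\t^\star = V_\t$ are Lipschitz by \eqref{contVs}, so this is a routine but slightly delicate compactness-and-covering bookkeeping exercise rather than a conceptual difficulty. The $|\t|$-linear (rather than merely bounded) estimate \eqref{Elip} near $\t=0$ is the only place the precise rate matters, and that is handled entirely by the first two paragraphs.
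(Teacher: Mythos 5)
Your construction near $\t=0$ is, modulo notation, the paper's own: you take $\mathcal{E}_\t = A\,S_\t^{-1/2}$ with $A := P_{V_\t^\star}|_{V_\star}$ and $S_\t = A^*A$ (adjoint taken between $(V_\star,b_0)$ and $(V_\t^\star,b_\t)$), while the paper writes $\mathcal{E}_\t = (Q_\t^*Q_\t)^{1/2}A$ with $Q_\t:=A^{-1}$; since $Q_\t^*Q_\t=(AA^*)^{-1}$, the two agree by the identity $A(A^*A)^{-1/2}=(AA^*)^{-1/2}A$ for bijective $A$. Both use \eqref{H5} to measure the deviation of $A$ from a $b$-isometry and correct it by the appropriate operator square root, and both bound the correction by $O(|\t|)$ via \eqref{contVs2}, so the near-origin step is sound. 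One caution: the bijectivity of $A$ onto $V_\t^\star$ is where your sketch is vague ("Neumann-series / codimension argument"); in infinite dimensions a codimension count does not suffice, and one needs the closed-range lower bound together with an orthogonality contradiction for surjectivity, as in Proposition~\ref{PVbi}.

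Where you genuinely diverge is the globalisation, which you declare to be your main anticipated obstacle but which turns out not to be an obstacle at all. You correctly note that away from $\t=0$ only a bounded (not $|\t|$-linear) estimate is needed, but you overlook that this bound is automatic for \emph{any} $b$-isometry: by \eqref{Eunitary}, $\|\mathcal{E}_\t v_\star\|_\t=\|v_\star\|_0$, and then the triangle inequality and \eqref{as.b1} give $\|\mathcal{E}_\t v_\star - v_\star\|_\t \le (1+K)\|v_\star\|_0 \le \tfrac{1+K}{r}\,|\t|\,\|v_\star\|_0$ whenever $|\t|\ge r$. Thus the only global thing to verify is the existence of \emph{some} $b_\t$-isometry $V_\star\to V_\t^\star$ for each $\t$, which follows because Proposition~\ref{PVbi} makes the Hilbert dimension of $V_\t^\star$ locally constant in $\t$ and $\Theta$ is connected. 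There is no need to chain local isometries along a finite cover, to track local Lipschitz constants, or to worry about path-independence: your covering argument would eventually work (a composition of $b$-isometries is a $b$-isometry, and the bounded estimate is then inherited from the isometry property anyway), but the simple triangle-inequality observation renders the entire chaining machinery unnecessary.
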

Notice that combining \eqref{H5} with \eqref{Elip} implies \eqref{Eprop2} and \eqref{Eprop3} 
(for $K_b = L_b + K K'_b$). 

Before proving this proposition, let us first demonstrate  that under \eqref{contVs} the subspace $V^\star_{\t_1}$ is isomorphic to $V_{\t_2}^\star$ when $\t_1$ and $\t_2$ are close. 
\begin{proposition}\label{PVbi}
	Assume \eqref{contVs}. Then, for $\t_1,\t_2\in\Theta$, $P({\t_1,\t_2}): V_{\t_1}^\star \rightarrow V_{\t_2}^\star$, given by $v \mapsto P_{V_{\t_2}^\star}  v$, is a bijection when $K L_\star |\t_1 - \t_2| < 1$.
\end{proposition}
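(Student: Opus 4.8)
The plan is to show that $P(\t_1,\t_2)$ is a linear bijection by exhibiting an explicit inverse-like construction via the projection $P(\t_2,\t_1): V^\star_{\t_2}\to V^\star_{\t_1}$ going the other way, and then proving that the compositions $P(\t_2,\t_1)P(\t_1,\t_2)$ and $P(\t_1,\t_2)P(\t_2,\t_1)$ are close to the identity in operator norm when $KL_\star|\t_1-\t_2|$ is small, hence invertible by a Neumann series argument. Linearity is obvious since $P_{V^\star_{\t_2}}$ is linear, so the real content is injectivity and surjectivity.

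First I would estimate $\|P(\t_1,\t_2)v - v\|_{\t_2}$ for $v\in V^\star_{\t_1}$. By definition $P(\t_1,\t_2)v = P_{V^\star_{\t_2}}v$, so $v - P(\t_1,\t_2)v = P_{W^\star_{\t_2}}v$, and \eqref{contVs2} gives directly $\|P_{W^\star_{\t_2}}v\|_{\t_2}\le L_\star|\t_1-\t_2|\,\|v\|_{\t_1}$. Using \eqref{as.b1} to pass between $\|\cdot\|_{\t_1}$ and $\|\cdot\|_{\t_2}$ norms where needed, one controls how far $P(\t_1,\t_2)$ is from an isometry. Then for $v\in V^\star_{\t_1}$, writing $w := P(\t_1,\t_2)v \in V^\star_{\t_2}$ and applying the same bound again, $P(\t_2,\t_1)w = w - P_{W^\star_{\t_1}}w$ with $\|P_{W^\star_{\t_1}}w\|_{\t_1}\le L_\star|\t_1-\t_2|\,\|w\|_{\t_2}$. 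Combining, $v - P(\t_2,\t_1)P(\t_1,\t_2)v = P_{W^\star_{\t_1}}w + (\text{a term in } V^\star_{\t_1})$; a short computation shows $\|v - P(\t_2,\t_1)P(\t_1,\t_2)v\|_{\t_1}\le c\,L_\star|\t_1-\t_2|\cdot KL_\star|\t_1-\t_2|\,\|v\|_{\t_1}$ or similar, and in any case is bounded by a quantity strictly less than $\|v\|_{\t_1}$ once $KL_\star|\t_1-\t_2|<1$. Hence $P(\t_2,\t_1)P(\t_1,\t_2): V^\star_{\t_1}\to V^\star_{\t_1}$ is invertible (Neumann series), which forces $P(\t_1,\t_2)$ to be injective; symmetrically $P(\t_1,\t_2)P(\t_2,\t_1)$ is invertible on $V^\star_{\t_2}$, which forces $P(\t_1,\t_2)$ to be surjective. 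Therefore $P(\t_1,\t_2)$ is a bijection.

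The main obstacle I anticipate is bookkeeping the constants carefully: one must be attentive that the relevant norm for $v\in V^\star_{\t_1}$ is $\|\cdot\|_{\t_1}$ but the projection $P_{W^\star_{\t_2}}$ and the bound \eqref{contVs2} naturally produce $\|\cdot\|_{\t_2}$, so every step needs a factor of $K$ from \eqref{as.b1}, and the threshold condition $KL_\star|\t_1-\t_2|<1$ must genuinely suffice to make the contraction estimate work — one may have to be slightly more clever, e.g. observing that $\|P_{W^\star_{\t_1}}w\|_{\t_1}\le \|w - \tilde v\|_{\t_1}$ for any $\tilde v\in V^\star_{\t_1}$ and choosing $\tilde v$ judiciously, rather than crudely bounding. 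But this is routine once one writes out the orthogonal decompositions $v = P_{V^\star_{\t_2}}v + P_{W^\star_{\t_2}}v$ at each stage; no genuinely hard analysis is involved, and the result is a clean finite-dimensional-flavoured perturbation argument even though the spaces may be infinite-dimensional.
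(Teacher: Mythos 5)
Your approach is correct but genuinely different from the paper's. The paper argues in two pieces: first, by the Pythagorean identity $\|P_{V^\star_{\t_2}}v_1\|_{\t_2}^2 = \|v_1\|_{\t_2}^2 - \|P_{W^\star_{\t_2}}v_1\|_{\t_2}^2$ together with \eqref{contVs2} and \eqref{as.b1} it gets a quantitative lower bound $\|P_{V^\star_{\t_2}}v_1\|_{\t_2}^2 \ge (1-(KL_\star|\t_1-\t_2|)^2)\|v_1\|_{\t_2}^2$, giving injectivity \emph{and} closed range in one stroke; second, surjectivity is obtained by contradiction: if a nonzero $v\in V^\star_{\t_2}$ were orthogonal to the range, then also $(v, P_{V^\star_{\t_1}}v)_{\t_2}=0$, and expanding $\|v\|_{\t_2}^2 = (v, P_{V^\star_{\t_1}}v)_{\t_2} + (v, P_{W^\star_{\t_1}}v)_{\t_2}$ yields $\|v\|_{\t_2}^2 \le KL_\star|\t_1-\t_2|\,\|v\|_{\t_2}^2 < \|v\|_{\t_2}^2$, a contradiction. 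Your Neumann-series route bypasses the closed-range step and the orthogonality-to-the-range trick, at the cost of invoking invertibility of near-identity operators; it reaches the same threshold, since the clean computation is $v - P(\t_2,\t_1)P(\t_1,\t_2)v = P_{V^\star_{\t_1}}(v - P_{V^\star_{\t_2}}v) = P_{V^\star_{\t_1}}P_{W^\star_{\t_2}}v$, whose $\|\cdot\|_{\t_1}$-norm is at most $K\|P_{W^\star_{\t_2}}v\|_{\t_2} \le KL_\star|\t_1-\t_2|\,\|v\|_{\t_1}$. Two small points: your displayed identity ``$v - P(\t_2,\t_1)P(\t_1,\t_2)v = P_{W^\star_{\t_1}}w + (\text{a term in } V^\star_{\t_1})$'' is off --- the left-hand side already lies wholly in $V^\star_{\t_1}$ and equals $P_{V^\star_{\t_1}}P_{W^\star_{\t_2}}v$ as above --- and the resulting contraction constant is linear, not quadratic, in $|\t_1-\t_2|$; neither affects the conclusion, since $KL_\star|\t_1-\t_2|<1$ is precisely what the Neumann series needs. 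You should also note explicitly that $V^\star_{\t}$ is closed in $H$ (hence complete) so that the Neumann series lemma applies.
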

\begin{proof} Fix $\t_1, \t_2 \in \Theta$, $K L_\star |\t_1 - \t_2| <1$, and 	$v_1  \in V_{\t_1}^\star$. 
By \eqref{contVs2} and \eqref{as.b1} we have  $  \| P_{W_{\t_2}^\star} v_1 \|_{\t_2} \le K L_\star |\t_1-\t_2|\, \| v_1 \|_{\t_2}$  and so 
	\begin{equation}\label{22.09.20e1}
	\| P_{V_{\t_2}^\star} v_1 \|_{\t_2}^2 \,\,=\,\, \| v_1 \|_{\t_2}^2  - \| P_{W_{\t_2}^\star} v_1 \|_{\t_2}^2  
	\,\,\ge\,\, \left(1 \,-\, \left( K L_\star |\t_1 - \t_2| \right)^2\right) \| v_1 \|_{\t_2}^2. 
	\end{equation}
This implies $P({\t_1,\t_2})$ is injective and has a closed range. It remains to prove that $ P_{V_{\t_2}^\star} V_{\t_1}^\star$ is not a proper subset of $V_{\t_2}^\star$. Suppose there exists $0 \neq v \in V_{\t_2}^\star$ such that $ v$ is orthogonal to  $ P_{V_{\t_2}^\star} V_{\t_1}^\star$ with respect to $(\cdot,\cdot)_{\t_2}$.  
Then, 
 $\big(v, P_{V_{\t_1}^\star} v\big)_{\t_2} = \big(v, P_{V_{\t_2}^\star}P_{V_{\t_1}^\star} v\big)_{\t_2} = 0$.
 Consequently,  we compute 
	\[
	\| v\|_{\t_2}^2 \,=\, \big(v , P_{V_{\t_1}^\star} v\big)_{\t_2} + \big( v, P_{W_{\t_1}^\star} v\big)_{\t_2} 
\, =\, 
	\big( v, P_{W_{\t_1}^\star} v\big)_{\t_2} \,\,\le \,\, \| v \|_{\t_2} \| P_{W_{\t_1}^\star} v\|_{\t_2} \, 
	\le \, K L_\star |\t_1-\t_2| \| v \|_{\t_2}^2,
	\]
where we have used  \eqref{as.b1} and \eqref{contVs2} in the last inequality.
This leads to the contradiction $\| v\|_{\t_2} = 0$ for $K L_\star |\t_1 - \t_2|  <1$. Hence $P_{V_{\t_2}^\star} V_{\t_1}^\star = V_{\t_2}^\star$. 
\end{proof}

\begin{proof}[Proof of Proposition \ref{prop.Eexists}]
	As $H$ is separable, the dimension (i.e. any basis) of $V_\t^\star$ is at most countable. Moreover, it
follows from Proposition \ref{PVbi} that the dimension of $V_\t^\star$ is independent of $\t$ for close enough $\t_1$ and $\t_2$. Since $\Theta$ is assumed connected, $V_\t^\star$ and $V_\star$ are isomorphic for any $\t\in\Theta$, and in particular one can always find a $\mathcal{E}_\t$ which satisfies \eqref{Eunitary}. Moreover it is clear that, for any such $\mathcal{E}_\t$, and for any chosen $r>0$ \eqref{Elip} holds for $|\t| \ge r >0$ 
(with $K_b'$ replaced by $(1+K)/r$).  As such, we need only establishing \eqref{Elip} for the case 
$K L_\star |\t| <1/\sqrt{2}$, for which we construct below $\mathcal{E}_\t$ in a particular way.

	Consider the Hilbert spaces $(V_\star,b_0)$ and $(V_\t^\star, b_\t)$, and let $Q_\t : (V_\t^\star, b_\t) \rightarrow (V_\star,b_0)$ be the  inverse of $P(0,\t)$, which is bounded (see \eqref{22.09.20e1}),
	and let $Q_\t^* : (V_\star, b_0) \rightarrow (V_\t^\star,b_\t)$ be the adjoint of $Q_\t$.  
	Noticing that $Q_\t^*Q_\t: (V_\t^\star, b_\t) \rightarrow (V_\t^\star, b_\t)$ is bounded, self-adjoint and non-negative, 
	set $\mathcal{E}_\t = (Q_\t^* Q_\t)^{1/2} P(0,\t)$ and note that \eqref{Eunitary} holds: 
	\[
	b_\t(\mathcal{E}_\t v_\star,  \mathcal{E}_\t \tilde v_\star)= (\mathcal{E}_\t v_\star,  \mathcal{E}_\t \tilde v_\star)_\t= 
	\big((Q_\t^* Q_\t)^{1/2} P(0,\t) v_\star,\,  (Q_\t^* Q_\t)^{1/2} P(0,\t) \tilde v_\star\big)_\t \ \,= \,
	\]
	\[
	\ \ \ \ 
	\big((Q_\t^* Q_\t) P(0,\t) v_\star, \,   P(0,\t) \tilde v_\star\big)_\t\,=\, 
	\big(Q_\t P(0,\t) v_\star,\,  Q_\t  P(0,\t) \tilde v_\star\big)_0 = 
	( v_\star,   \tilde v_\star)_0\,=\,b_0( v_\star,   \tilde v_\star). 
	\]
	
	Let us prove \eqref{Elip}.	For arbitrary $v_\star \in V_\star$, $v_\t \in V_\t$, recall that 
	$\|v_\t\|_\t^2 = b_\t[v_\t]$, $\| v_\star \|_0^2 = b_0[v_\star]$. Now, since $(Q_\t^* Q_\t)^{1/2}$ is non-negative on $(V_\t^\star, b_\t)$ one has $\| v_\t \|_\t \le\| (I + (Q_\t^* Q_\t)^{1/2}) v_\t\|_\t$, which (upon setting $v_\t = P_{V^\star_\t}v_\star - \mathcal{E}_\t v_\star$) gives
\begin{flalign*}
\| P_{V^\star_\t}v_\star - \mathcal{E}_\t v_\star\|_\t  \le 
\| (I+(Q_\t^* Q_\t)^{1/2})\left(P_{V^\star_\t}v_\star - \mathcal{E}_\t v_\star\right) \|_\t	= 
	\| (I-Q_\t^* Q_\t) P_{V^\star_\t} v_\star \|_\t	=\| P_{V^\star_\t} v_\star - Q_\t^* v_\star  \|_\t,
	 \, \forall v_\star \in V_\star.
\end{flalign*}	
Combining the above inequality with \eqref{contVs2} gives
\begin{flalign}
		\label{9.03.21}
	\| v_\star - \mathcal{E}_\t v_\star\|_\t   
	= \| P_{W^\star_\t} v_\star +\left(P_{V^\star_\t} v_\star - \mathcal{E}_\t v_\star\right)\|_\t 
	\le L_\star |\t| \, \|v_\star\|_0 +  \| P_{V^\star_\t} v_\star  - Q_\t^* v_\star \|_\t,
	\quad \forall v_\star \in V_\star.
\end{flalign}	
It remains to estimate the difference $ P_{V^\star_\t}-Q^*_\t$ on $V_\star$. 
For this note that, for any $v_\t\in V_\t^*$, 
\begin{flalign*}
b_\t\big(P_{V^\star_\t}v_\star - Q^*_\t v_\star, v_\t \big) & \,=\,\, b_\t(v_\star, v_\t) \,-\, b_0 \left(v_\star,Q_\t v_\t \right) \\ 
&=\,\, b_\t\left(v_\star,v_\t - Q_\t v_\t\right) \,+\, b_\t\left(v_\star, Q_\t v_\t \right) \, -\,  
b_0\left(v_\star, Q_\t v_\t \right), \quad \forall v_\star \in V_\star, \, \forall v_\t \in V^\star_\t. 
\end{flalign*}
To estimate  the first term on the right, we use $P_{V_\t^\star} Q_\t v_\t=v_\t$ implying  $	Q_\t v_\t  - v_\t = P_{W_\t^\star} Q_\t v_\t$, and \eqref{contVs2}. To bound the difference of the remaining two terms  we use \eqref{H5}. Hence, one has
$
|b_\t(P_{V^\star_\t}v_\star - Q^*_\t v_\star, v_\t )| \le \left(K^2 L_\star + L_b \right) |\t|  
\| v_\star \|_0 \| Q_\t v_\t\|_\t$, $\forall v_\star \in V_\star$, $\forall v_\t \in V^\star_\t
$. 
Now the last inequality and the bound $\| Q_\t v_\t \|_\t \le \sqrt{2} \| v_\t \|_\t$ (see \eqref{22.09.20e1} for $KL_\star |\t| \le 1/\sqrt{2}$)  give 
$
 \| P_{V^\star_\t} v_\star  - Q_\t^* v_\star \|_\t \le \sqrt{2}\left(K^2 L_\star + L_b \right)|\t|\, \| v_\star \|_0$, 
$\forall v_\star \in V_\star
$,  
which along with \eqref{9.03.21} implies  \eqref{Elip} with $K_b'=\sqrt{2}\left(K^2 L_\star + L_b \right)$. 
\end{proof}

\section*{Appendix B}\label{appb}
\setcounter{section}{2}
\setcounter{equation}{0}
\setcounter{theorem}{0}
\renewcommand{\theequation}{\mbox{\thesection.\arabic{equation}}}

We provide here some basic facts from theory of Bochner spaces, see e.g. \cite{ReeSim1,Hytonen}, as relevant and specialised 
to our setting in Section \ref{s.bivariate}, as well as justify some accompanying facts specific to our context. The latter follow quite standard arguments, but are still sketched here for the reader's convenience. 

Let $\mathcal{H}$ be a separable complex Hilbert space with inner product $(\cdot,\cdot)$ and associated 
norm $\|\cdot\|$. 
Bochner space $L^2(\RR^n;\mathcal{H})=:\mathbb{H}$ consists of all (Lebesgue measure zero equivalence classes of) weakly-measurable\footnote{i.e. $\forall\tilde u\in \mathcal{H}$, 
$\mathbb{R}^n\ni \xi \mapsto \big(u(\xi),\tilde u\big)\in \mathbb{C}$ is 
Lebesgue measurable} 
 maps 
$u:\RR^n\rightarrow \mathcal{H}$, 
such that 
$\|u\|^2_\mathbb{H}\,:=\,\int_{\RR^n}\|u(\xi)\|^2{\rm d}\xi<\infty$. 
The latter defines the norm $\|u\|_\mathbb{H}$ in $\mathbb{H}$. 
With associated inner product 
$
(u,\tilde u)_\mathbb{H}\,\,:=\,\int_{\RR^n}
\big(\,u(\xi)\,,\,\tilde u(\xi)\,\big)\,{\rm d}\xi,
$ 
 $\mathbb{H}$ is known to become a (separable) Hilbert space. 
Similarly are defined 
$L^2(\Theta;\mathcal{H})$ for any measurable $\Theta\subset\RR^n$, with induced Lebesgue measure. 
Weighted Bochner spaces like $L^2\left(\mathbb{R}^n,\langle\xi\rangle^2{\rm d}\xi;\, \mathcal{H}\right)$ 
are defined in obvious way. 

\vspace{.04in}

Let us now show that the domain $\mathbb{D}$ of the form on the left-hand side of 
\eqref{Lproblint} is dense in $L^2(\RR^n;\mathcal{H}_0)$.
\begin{proposition}
\label{propb1}
$\mathbb{D}\,:=\,L^2\left(\RR^n; \left(V_\star, b_0\right)\right)\dot{+}
L^2\left(\RR^n, {\langle\xi\rangle^2}{\rm d}\xi;\, \left(Z, b_0\right)\right)$ is dense in 
$\mathbb{H}_0:=L^2(\RR^n;\left(\mathcal{H}_0, d_0\right))$. 
\end{proposition}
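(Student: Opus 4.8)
The plan is to exploit the compact embedding $H \hookrightarrow \mathcal{H}$ together with the direct-sum structure $V_0 = V_\star \dot{+} Z$ to reduce the density question to a classical one about simple (step) functions in Bochner spaces. First I would observe that $\mathcal{H}_0 = \overline{V_0}$, the closure of $V_0 = V_\star \dot{+} Z$ in $\mathcal{H}$; since $H$ embeds compactly in $\mathcal{H}$, the space $V_0$ (with $b_0 = (\cdot,\cdot)_0$) embeds compactly in $\mathcal{H}_0$, and $V_\star$, $Z$ are closed subspaces of $V_0$. The transversality condition \eqref{VZorth2} ensures the sum $V_\star \dot{+} Z$ is a genuine (topological) direct sum, so that $\|v_\star\|_0 + \|z\|_0 \le C \|v_\star + z\|_0$ for all $v_\star \in V_\star$, $z \in Z$.

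The key technical point is a choice of a convenient orthonormal basis. Using the compact embedding of $V_0$ into $\mathcal{H}_0$, I would pick an orthonormal basis $\{e_j\}$ of $\mathcal{H}_0$ (with respect to $d_0$) consisting of eigenvectors of the (compact, self-adjoint, non-negative) operator associated with the embedding — equivalently eigenvectors of $\mathbf{B}_0^{-1}\mathcal{P}^0_{\overline{V_0}}$ — so that each $e_j \in V_0$ and $\{e_j\}$ is simultaneously orthogonal in $(V_0, b_0)$. Then finite linear combinations of the $e_j$ with coefficients that are scalar step functions (finitely-valued, supported on sets of finite measure) are dense in $\mathbb{H}_0 = L^2(\RR^n;\mathcal{H}_0)$ by the standard density of simple functions in Bochner $L^2$-spaces. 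Such a finite combination $\sum_{j} \phi_j(\xi) e_j$ with $\phi_j$ step functions lies in $L^2\big(\RR^n; (V_0,b_0)\big)$, and since each $e_j \in V_0 = V_\star \dot{+} Z$, writing $e_j = e_j^\star + e_j^Z$ exhibits it as an element of $L^2(\RR^n;(V_\star,b_0)) \dot{+} L^2(\RR^n;(Z,b_0))$; because $\phi_j$ is bounded with finite-measure support, the $Z$-component is automatically in the weighted space $L^2\big(\RR^n,\langle\xi\rangle^2\,{\rm d}\xi;(Z,b_0)\big)$ as well. Hence such combinations lie in $\mathbb{D}$.

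Putting this together: given $u \in \mathbb{H}_0$ and $\varepsilon > 0$, approximate $u$ within $\varepsilon$ in the $\mathbb{H}_0$-norm by a finite combination $\sum_j \phi_j e_j$ of the above form; this combination belongs to $\mathbb{D}$, which proves density. The main obstacle — and the only step requiring genuine care — is justifying the existence of the basis $\{e_j\}$ that is simultaneously orthonormal in $\mathcal{H}_0$ and orthogonal in $(V_0,b_0)$; this is where the compact embedding hypothesis is essential (it fails without compactness, cf. Section \ref{sec:nonloc2}, where one instead takes $V_0 = Z = \mathcal{H}_0$ and $b_0 = d_0$, so any basis works). Everything else is routine: the transversality bound \eqref{VZorth2} handling the direct sum, and the classical density of simple functions in Bochner spaces.
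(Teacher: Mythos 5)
Your approach is essentially the same as the paper's: both rely on the spectral basis $\{\psi^{(k)}\}$ of $\mathbb{L}_0$, which the compact embedding makes simultaneously orthogonal in $(V_0,b_0)$ and in $\mathcal{H}_0$, and both deduce density by truncating the basis expansion and the $\xi$-support. You merely compress the paper's two-stage argument (first $L^2(\RR^n;V_0)$ dense in $\mathbb{H}_0$, then the weighted space dense in that) into one step by invoking density of simple functions.

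One small but real slip: step functions ``supported on sets of finite measure'' need not produce an element of the weighted space $L^2\big(\RR^n,\langle\xi\rangle^2\,{\rm d}\xi; (Z,b_0)\big)$, because a set of finite Lebesgue measure can have infinite $\langle\xi\rangle^2\,{\rm d}\xi$-measure (take, e.g., $\bigcup_{k\ge1}[k,\,k+k^{-3}]$, of total length $\sum k^{-3}<\infty$ but with $\int\langle\xi\rangle^2\,{\rm d}\xi\gtrsim\sum k^{-1}=\infty$). You need the stronger requirement that the supports be bounded; this is exactly what the paper's truncation by balls $\chi_{B_N}$ enforces. With ``finite measure'' replaced by ``bounded'' (which costs nothing for density), your argument goes through.
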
 
\begin{proof}
Show first that $\mathbb{V}_0:=L^2\left(\RR^n;\left(V_0, b_0\right)\right)$, where $V_0=V_\star\dot{+}Z$,  
is dense in $\mathbb{H}_0$. 
Both $V_0$ and $\mathcal{H}_0=\left(\overline{V_0}, d_0\right)$ are separable, and $V_0$ is compactly embedded into 
$\mathcal{H}_0$. 
Let $\lambda_0^{(k)}$, $k=1,2,...$, be the eigenvalues of $\mathbb{L}_0$, i.e. of $\mathbb{L}_\xi$ for $\xi=0$. 
Since the domain of $\mathbb{L}_0$ is in $V_0$, the associated eigenfunctions 
$\psi^{(k)}\in V_0\subset\mathcal{H}_0$ 
can be chosen to form 
an orthogonal basis in both $V_0$ and $\mathcal{H}_0$. 
Let $u\in\mathbb{H}_0$. Then, decomposing along this basis, for a.e. $\xi\in\RR^n$, 
$u(\xi)=\sum_{k=1}^\infty c_k(\xi)\psi^{(k)}$. 
For the truncated sums, $u^N(\xi):=\sum_{k=1}^N c_k(\xi)\psi^{(k)}$, clearly $u^N\to u$ in $\mathbb{H}_0$. 
On the other hand, since $b_0\left(\psi^{(k)},\psi^{(l)}\right)=
\delta_{kl}\lambda_0^{(k)}d_0\left[\psi^{(k)}\right]$ 
(where $\delta_{kl}$ denotes the 
Kroneker symbol), $u^N\in \mathbb{V}_0$ 
for any finite $N$.  
Hence $\mathbb{V}_0$ is dense in $\mathbb{H}_0$. 
Next we argue that $L^2\left(\RR^n,\langle\xi\rangle^2{\rm d}\xi;\, V_0\right)$ is in turn dense in $\mathbb{V}_0$. 
Indeed, 
given $u\in\mathbb{V}_0$ with associated $u(\xi)\in V_0$ for a.e. $\xi$, we construct $u^N$ by  
setting $u^N(\xi):=\chi_{B_N}(\xi)u(\xi)$, where $\chi_{B_N}$ is the characteristic function of ball of radius 
$N$ in $\RR^n$ centered at the origin. Then 
$u^N\in L^2\left(\RR^n,\langle\xi\rangle^2{\rm d}\xi;\, V_0\right)$ and $u^N\to u$ in $\mathbb{V}_0$ as $N\to\infty$, 
hence the stated density. 
Combining the above two density statements, we conclude that 
$L^2\left(\RR^n,\langle\xi\rangle^2{\rm d}\xi; V_0\right)$ is dense in $\mathbb{H}_0$. 
Finally, we 
observe that $L^2\left(\RR^n,\langle\xi\rangle^2{\rm d}\xi; V_0\right)$ is a subset of 
$L^2\left(\RR^n; V_\star\right)\dot{+}L^2\left(\RR^n, {\langle\xi\rangle^2}{\rm d}\xi; Z\right)$. 
Hence the latter is dense in $\mathbb{H}_0$, as required. 
\end{proof}

\begin{proposition}
\label{propb2}
The form $\mathbb{A}$ defined by the left-hand side of \eqref{Lproblint} with domain $\mathbb{D}$ is closed. 
\end{proposition}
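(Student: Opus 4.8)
The plan is to show that $\mathbb{A}$ is closed by exhibiting the form norm $\|u\|_{\mathbb{A}}^2 := \mathbb{A}[u] = \mathbb{A}[u] $ (recall $\mathbb{A}[u] = \int_{\RR^n}a^{\rm h}_\xi[z]\,{\rm d}\xi + \int_{\RR^n}b_0[v+z]\,{\rm d}\xi$ for $u=v+z\in\mathbb{D}$) together with the ambient $\mathbb{H}_0$-norm as \emph{equivalent} to the natural Hilbert-space norm on $\mathbb{D}$, namely
\begin{equation}
\label{Bappb-equiv}
\|u\|_{\mathbb{D}}^2\,\,:=\,\,\int_{\RR^n}b_0[v(\xi)]\,{\rm d}\xi\,\,+\,\,\int_{\RR^n}\langle\xi\rangle^2\,b_0[z(\xi)]\,{\rm d}\xi,
\qquad u=v+z,
\end{equation}
under which $\mathbb{D}$ is visibly complete (it is a direct sum of two weighted Bochner spaces, each complete). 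Closedness of $\mathbb{A}$ is then equivalent to the statement that $\left(\mathbb{A}[\cdot]+\|\cdot\|_{\mathbb{H}_0}^2\right)^{1/2}$ is equivalent to $\|\cdot\|_{\mathbb{D}}$ on $\mathbb{D}$.

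First I would record the pointwise estimates that make this work. The key inputs are: $b_0[\cdot]=\|\cdot\|_0^2$ on $V_0=V_\star\dot{+}Z$, so $b_0[v(\xi)]+b_0[z(\xi)]$ is comparable to $b_0[v(\xi)+z(\xi)]$ via the transversality bound \eqref{VZorth2} (one direction) and the triangle inequality (the other); the $|\xi|^2$-coercivity $a^{\rm h}_\xi[z]\ge\nu_\star|\xi|^2\|z\|_0^2$ from \eqref{ahcoercive}; and the boundedness of $a^{\rm h}_\xi$ quadratically in $\xi$, i.e. $a^{\rm h}_\xi[z]\le C|\xi|^2\|z\|_0^2$, which follows from the explicit formula \eqref{defhom.form} together with \eqref{H4} and \eqref{Nbdd}. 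Finally $d_0[u_0]\le M_d\,b_0[u_0]$ on $V_0$ (as used around \eqref{kdass}), so the $\mathbb{H}_0$-norm is dominated by the $b_0$-norm. Integrating these over $\xi\in\RR^n$ and combining, one gets constants $c_1,c_2>0$ with $c_1\|u\|_{\mathbb{D}}^2\le \mathbb{A}[u]+\|u\|_{\mathbb{H}_0}^2\le c_2\|u\|_{\mathbb{D}}^2$ for all $u\in\mathbb{D}$; the lower bound uses $\nu_\star|\xi|^2\le \nu_\star\langle\xi\rangle^2$ to absorb the weight on the $z$-part and uses $b_0[v+z]\gtrsim b_0[v]$ to recover the unweighted $v$-part, while the $\|z\|_0^2$ (no weight) contribution is bounded by $\langle\xi\rangle^2\|z\|_0^2$ for free.

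With the norm equivalence in hand, closedness is routine: if $u_k=v_k+z_k\in\mathbb{D}$ with $u_k\to u$ in $\mathbb{H}_0$ and $\mathbb{A}[u_k-u_m]\to0$, then $\{u_k\}$ is Cauchy in $\|\cdot\|_{\mathbb{D}}$ by the equivalence, hence converges in $\mathbb{D}$ (which is complete) to some $\tilde u=\tilde v+\tilde z$; convergence in $\mathbb{D}$ implies convergence in $\mathbb{H}_0$ (again by the equivalence, or directly from $d_0\le M_d b_0$), so $\tilde u=u\in\mathbb{D}$, and $\mathbb{A}[u_k-u]\to0$. I would also spell out that $\mathbb{D}$, endowed with $\|\cdot\|_{\mathbb{D}}$, is genuinely a Hilbert space — it is the orthogonal direct sum $L^2(\RR^n;(V_\star,b_0))\oplus L^2(\RR^n,\langle\xi\rangle^2{\rm d}\xi;(Z,b_0))$, each summand being a standard (weighted) Bochner space over a Hilbert space and therefore complete. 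I do not anticipate a genuine obstacle here; the only mild point requiring care is the interplay of the direct-sum structure with the transversality constant $K_Z$ when passing between $b_0[v]+b_0[z]$ and $b_0[v+z]$ — one must use \eqref{VZorth2} rather than orthogonality since $V_\star$ and $Z$ need not be $b_0$-orthogonal — but this is exactly the estimate already exploited repeatedly in Sections \ref{sec.2dif} and \ref{s:resolv}, so it is immediate.
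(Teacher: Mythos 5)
Your proposal is correct and takes essentially the same approach as the paper: the paper's proof takes a Cauchy sequence directly, bounds $\mathbb{A}[u_m-u_l]$ from below via \eqref{ahcoercive} and \eqref{kappa0} to show that the $v$- and $z$-components are Cauchy in the respective (weighted) Bochner spaces, and concludes by completeness of those spaces and the implicit upper bound on $a^h_\xi$ to pass back to $\mathbb{A}[u_m-u]\to 0$. Your reformulation as a two-sided norm equivalence between the form norm and $\|\cdot\|_{\mathbb{D}}$ is simply a cleaner way of packaging the same estimates; the key inputs (coercivity \eqref{ahcoercive}, transversality via \eqref{VZorth2}/\eqref{kappa0}, quadratic boundedness of $a^h_\xi$, and $d_0\le b_0$) are identical.
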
 
\begin{proof}
Let $\left\{u_m\right\}_{m=1}^\infty\subset\mathbb{D}$, 
$u_m(\xi)=v_m(\xi)+z_m(\xi)$, be a Cauchy sequence with respect to $\mathbb{A}$, 
i.e. $\mathbb{A}\left[u_m-u_l\right]\to 0$ as $m,l\to\infty$. 
It follows from \eqref{ahcoercive} and \eqref{kappa0} that 
\[
\mathbb{A}\left[u_m-u_l\right]\,\ge\, 
\nu_\star\int_{\mathbb{R}^n}\,|\xi|^2\,b_0\big[z_m(\xi)-z_l(\xi)\big]{\rm d}\xi\,\,+\,
\left(1-K_Z\right)\int_{\mathbb{R}^n}
\Big(\,b_0\big[z_m(\xi)-z_l(\xi)\big]\,+\,b_0\big[v_m(\xi)-v_l(\xi)\big]\Big)\,{\rm d}\xi.
\]
Hence $\left\{v_m\right\}$ and $\left\{z_m\right\}$ are 
 Cauchy sequences in, respectively, $L^2\left(\mathbb{R}^n; V_\star\right)$ and 
$L^2\left(\mathbb{R}^n, \langle\xi\rangle^2; Z\right)$. 
From the basic theory of Bochner spaces both of these spaces are 
 complete, and hence there exist $v\in L^2\left(\mathbb{R}^n; V_\star\right)$ and 
$z\in L^2\left(\mathbb{R}^n, \langle\xi\rangle^2; Z\right)$ such that 
$
\int_{\mathbb{R}^n}
\left\{\left(1+|\xi|^2\right)b_0\left[z_m(\xi)-z(\xi)\right]+b_0\left[v_m(\xi)-v_l(\xi)\right]\right\}{\rm d}\xi\rightarrow 0$.  
For $u:=v+z\in \mathbb{D}$ this 
implies that $\mathbb{A}\left[u_m-u\right]\to 0$, which 
completes the proof. 
\end{proof} 

\begin{proposition}
\label{propb3}
Let $h\in L^2\left(\Theta;\mathcal{H}_0\right)$ and regard it as an element of 
$L^2\left(\mathbb{R}^n;\mathcal{H}_0\right)$ by setting $h(\t)=0$ for $\t\notin\Theta$.  
Let $0<\ep<1$ and set $\xi=\t/\ep$. Then, for a.e. $\xi\in\mathbb{R}^n$, the unique solutions 
$v(\xi)+z(\xi)$ to \eqref{Lproblint} and \eqref{Linvprobl} coincide. 
\end{proposition}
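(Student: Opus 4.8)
The plan is to show that \eqref{Lproblint} is precisely the ``integrated'' version of the fibrewise problems \eqref{Linvprobl} (after the change of variables $\t=\ep\xi$), and then invoke the standard fact that a Bochner-space identity against all admissible test functions forces the pointwise (a.e.) identity of the integrands. First I would fix $0<\ep<1$ and note that, since $h\in L^2(\Theta;\mathcal{H}_0)$ extended by zero belongs to $L^2(\mathbb{R}^n;\mathcal{H}_0)$, the rescaled function $\xi\mapsto h(\ep\xi)$ belongs to $L^2(\mathbb{R}^n;\mathcal{H}_0)$ as well (this is just a change of variables in the Lebesgue integral, with Jacobian $\ep^n$). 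Hence by Proposition \ref{propb2} and the fact that ${\rm Sp}\,\mathbb{L}\subset[1,\infty)$, problem \eqref{Lproblint} with right-hand side $h(\ep\xi)$ has a unique solution $v+z\in\mathbb{D}$; call it $w=v+z$. On the other hand, for a.e.\ fixed $\xi$, \eqref{Linvprobl} (with $\t=\ep\xi$) is well-posed in $V_\star\dot{+}Z$ by the coercivity estimates \eqref{ahcoercive} and \eqref{kappa0}, so its solution $\tilde w(\xi)=\tilde v(\xi)+\tilde z(\xi)$ is well-defined; moreover one checks, using the uniform (in $\xi$) bounds from \eqref{ahcoercive}, \eqref{kappa0} together with $\|h(\ep\xi)\|_{\mathcal{H}_0}\in L^2\cap L^2(\langle\xi\rangle^2)$ on the relevant weighted space (here one uses that $\xi\in\ep^{-1}\Theta$ is bounded by $\ep^{-1}\,{\rm diam}\,\Theta$, so the weight $\langle\xi\rangle^2$ is harmless on the support), that $\xi\mapsto\tilde w(\xi)$ lies in $\mathbb{D}$. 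The measurability of $\xi\mapsto\tilde w(\xi)$ follows from continuous (indeed Lipschitz) dependence of the solution of \eqref{Linvprobl} on $\xi$ via the forms' dependence on $\xi$, exactly as in the proof of Theorem \ref{limspecsimple}.

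The core step is then to verify that $\tilde w$ \emph{solves} \eqref{Lproblint}. Given any test function $\tilde v+\tilde z\in\mathbb{D}$, I would integrate the fibrewise identity \eqref{Linvprobl} (evaluated at $\tilde w(\xi)$ against the test pair $\tilde v(\xi)+\tilde z(\xi)\in V_\star\dot{+}Z$) over $\xi\in\mathbb{R}^n$; the integrability needed to interchange the (finite) sum in \eqref{ahgrad} with the integral and to justify that every term is in $L^1(\mathbb{R}^n)$ comes from Cauchy--Schwarz in $\xi$ using the membership of $\tilde w,\tilde v,\tilde z$ and $h(\ep\,\cdot)$ in the appropriate (weighted) Bochner spaces, together with the boundedness of $a^{\rm h}_{jk}$ on $Z$ and the quadratic growth in $\xi$ absorbed by the weight $\langle\xi\rangle^2$. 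This shows $\tilde w$ satisfies \eqref{Lproblint}. By uniqueness of the solution of \eqref{Lproblint} in $\mathbb{D}$, $\tilde w=w$ as elements of $\mathbb{D}$, i.e.\ for a.e.\ $\xi$ one has $\tilde w(\xi)=w(\xi)=v(\xi)+z(\xi)$; translating back via $\t=\ep\xi$ gives the claim that the a.e.\ solution of \eqref{Linvprobl} coincides with $v(\xi)+z(\xi)$.

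There is one direction to be careful about: a priori \eqref{Lproblint} only gives the identity \emph{tested against all} $\tilde v+\tilde z\in\mathbb{D}$, not the a.e.\ fibrewise identity \eqref{Linvprobl}. To pass from the former to the latter I would use the standard localisation argument: for fixed $\tilde v_0+\tilde z_0\in V_\star\dot{+}Z$ and any ball $B\subset\mathbb{R}^n$, take the test function $\chi_B(\xi)(\tilde v_0+\tilde z_0)$ in \eqref{Lproblint}; since $B$ is arbitrary this yields that the scalar function $\xi\mapsto[\text{LHS of \eqref{Linvprobl} at }w(\xi)\text{ against }\tilde v_0+\tilde z_0]-d_0(h(\ep\xi),G_\t(\tilde v_0+\tilde z_0))$ vanishes a.e., for each fixed $\tilde v_0+\tilde z_0$. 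Running this over a countable dense set of test pairs in $V_\star\dot{+}Z$ (which exists by separability) and taking the countable union of the corresponding null sets, we obtain a single full-measure set of $\xi$ on which $w(\xi)$ solves \eqref{Linvprobl}; by uniqueness of the fibrewise solution this forces $w(\xi)=\tilde w(\xi)$ a.e. The main obstacle I expect is purely bookkeeping: getting the measurability of $\xi\mapsto\tilde w(\xi)$ and the Fubini-type interchange airtight, i.e.\ carefully tracking which weighted Bochner space each quantity lives in so that every integral appearing is absolutely convergent; the algebraic content (matching the two weak formulations term by term via \eqref{ahgrad}--\eqref{ahij}) is routine.
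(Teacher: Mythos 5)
Your main argument runs in the opposite direction from the paper's: you start with the fibrewise solutions of \eqref{Linvprobl}, check that they assemble into a measurable, square-integrable element of $\mathbb{D}$, then integrate the fibrewise identities over $\xi$ to show the assembled field solves \eqref{Lproblint}, and finish by uniqueness in $\mathbb{D}$. The paper instead takes the global solution $v+z$ of \eqref{Lproblint} as given, tests against separable test fields $\varphi(\xi)\,(\tilde v_j+\tilde z_j)$ with $\varphi\in C_0^\infty(\mathbb{R}^n)$ and $\{\tilde v_j+\tilde z_j\}$ a countable dense subset of $V_\star\dot{+}Z$, and localizes to recover \eqref{Linvprobl} a.e.\ -- the same fundamental-lemma/countable-density argument you sketch in your third paragraph -- and then dismisses the converse as immediate from uniqueness. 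Both routes are correct. The paper's route is leaner: it never needs to verify a priori that the fibrewise solution map is weakly measurable or lies in $\mathbb{D}$, since the global solution already does. Your route, by contrast, forces you to do the measurability bookkeeping up front (invoking the strong continuity of $\mathbb{L}_\xi^{-1}$ in $\xi$ together with the measurability of $h(\ep\cdot)$) and to establish the uniform fibrewise energy estimates before Fubini can be applied; the compensation is that it makes explicit why the fibrewise solutions are square-integrable and where the weight $\langle\xi\rangle^2$ on $Z$ comes from. Note, however, that once your second paragraph is carried through, the third paragraph's localization argument is redundant: you already know $w=\tilde w$ in $\mathbb{D}$, and $\tilde w(\xi)$ solves \eqref{Linvprobl} a.e.\ by construction, so $w(\xi)$ does too. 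Keeping both is a belt-and-suspenders duplication, not an error.
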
 
\begin{proof}
Let $v+z\in\mathbb{D}$ be the solution to \eqref{Lproblint}. 
Let $\tilde v_j+\tilde z_j\in V_\star\dot{+}Z$, $j=1,2,...$, form a dense set in 
$\left(V_\star\dot{+}Z,\, b_0\right)$. For any $j$, consider arbitrary 
$\varphi(\xi)\in C_0^\infty\left(\mathbb{R}^n\right)$ and set 
$\tilde v(\xi)=\tilde v_j\varphi(\xi)$ and $\tilde z(\xi)=\tilde z_j\varphi(\xi)$. Then from \eqref{Lproblint} 
\begin{equation}
\label{b3pf1}
\int_{\RR^n}\Big[\,a^{\rm h}_{\xi}\big(z(\xi),\tilde z_j\big) + 
b_0\big(v(\xi)+z(\xi),\, \tilde v_j+\tilde z_j\big)\,\,-\,\,
d_0\big(\,h(\ep\xi), \tilde v_j+\tilde z_j\big)\,\Big]\,\overline{\varphi(\xi)}\,{\rm d}\xi\,\,
=\,\,0, \ \ \ 
\forall \varphi\in C_0^\infty\left(\mathbb{R}^n\right). 
\end{equation}
Because of the density of 
$C_0^\infty\left(\mathbb{R}^n\right)$, 
the square bracket in \eqref{b3pf1} must vanish for a.e. $\xi\in\mathbb{R}^n$, for all 
$j\ge 1$. 
Finally, because of the density of $\left\{v_j+z_j\right\}$ in $V_\star\dot{+}Z$ (and hence 
also in $\mathcal{H}_0$), 
the above square bracket 
must vanish for a.e. $\xi\in\mathbb{R}^n$ for 
all $\tilde v+\tilde z\in V_\star\dot{+}Z$ which is equivalent to \eqref{Linvprobl}. 
The converse statement 
trivially follows from the uniqueness of the solutions to \eqref{Linvprobl} and \eqref{Lproblint}. 
\end{proof} 

We next recall the notion of a direct integral of operators and specialise it to the context of 
Section \ref{s.bivariate}. 
\begin{definition} \label{defb4} 
Let $\left(\mathcal{H}_0, d_0\right)$ be a complex Hilbert space and $\mathbb{L}$ be a self-adjoint operator in Bochner space $\mathbb{H}_0:=L^2\big(\mathbb{R}^n;\mathcal{H}_0\big)$. Let $\mathbb{L}_\xi$, 
$\xi\in\,\mathbb{R}^n$, be a family of self-adjoint operators in $\mathcal{H}_0$ with their 
spectra (say) contained in $[1,+\infty)$ and which are 
weakly-measurable\footnote{i.e. $\forall g,\tilde u\in \mathcal{H}_0$,  
$\xi\mapsto d_0\left(\mathbb{L}_\xi^{-1}g,\tilde u\right)$ is Lebesgue-measurable 
as a map from $\mathbb{R}^n$ to $\mathbb{C}$} in $\xi$. 
We say that $\mathbb{L}$ is a direct integral of $\mathbb{L}_\xi$ over $\xi\in\mathbb{R}^n$, 
denoted $\mathbb{L}=\int_{\RR^n}^\oplus \mathbb{L}_\xi \, {\rm d}\xi$, and $\mathbb{L}_\xi$ 
are fibers of $\mathbb{L}$, if 
\begin{enumerate}
\item[(i)] $u\in \mathbb{H}_0$ is in the domain ${\rm dom}\,\mathbb{L}$ of $\mathbb{L}$,  if 
and only if $u(\xi)\in {\rm dom}\,\mathbb{L}_\xi$ for a.e. $\xi\in\mathbb{R}^n$ and 
$\int_{\mathbb{R}^n}d_0\big[\mathbb{L}_\xi u(\xi)\big]\,{\rm d}\xi<+\infty$; 
\item[(ii)] $\forall u\in {\rm dom}\,\mathbb{L}$, 
$\big(\mathbb{L}u\big)(\xi)\,=\,\mathbb{L}_\xi \big(u(\xi)\big), \ \ \ \mbox{ for a.e. } \ 
\xi\in\mathbb{R}^n$. 
\end{enumerate}
\end{definition} 
\begin{proposition}
\label{propb5}
For $\mathbb{L}_\xi$, $\xi\in\mathbb{R}^n$, and $\mathbb{L}$ as defined in Sections 
\ref{s.spbt} and \ref{s.bivariate} respectively, 
\[
\mathbb{L}\,=\,\int_{\RR^n}^\oplus \mathbb{L}_\xi \, {\rm d}\xi, \ \ \ \mbox{and } \ \ 
\mathbb{L}^{-1}\,=\,\int_{\RR^n}^\oplus \mathbb{L}_\xi^{-1}\, {\rm d}\xi. 
\]
\end{proposition}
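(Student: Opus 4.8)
The plan is to verify the two assertions of Proposition \ref{propb5} by carefully unpacking the definitions of $\mathbb{L}$ and $\mathbb{L}_\xi$ and checking conditions (i) and (ii) of Definition \ref{defb4}, together with the measurability hypothesis. First I would record what needs checking: the operators $\mathbb{L}_\xi$ for $\xi\in\mathbb{R}^n$ are self-adjoint in $\mathcal{H}_0$ with spectra in $[1,+\infty)$ (both facts are established in Section \ref{s.spbt}), so only weak measurability in $\xi$ needs attention before the direct-integral statement even makes sense. Measurability follows from the fact that $a^{\rm h}_\xi$ depends polynomially (quadratically) on $\xi$, see \eqref{ahgrad}, so for fixed $g,\tilde u\in\mathcal{H}_0$ the map $\xi\mapsto d_0\big(\mathbb{L}_\xi^{-1}g,\tilde u\big)$ is continuous — indeed one can bound $\|\mathbb{L}_{\xi_1}^{-1}-\mathbb{L}_{\xi_2}^{-1}\|$ using the resolvent identity and the coercivity \eqref{ahcoercive}, exactly as in the proof of continuity of $\lambda_\xi^{(k)}$ in Theorem \ref{limspecsimple}.

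Next I would prove the statement $\mathbb{L}=\int_{\RR^n}^\oplus \mathbb{L}_\xi\,{\rm d}\xi$. The operator $\mathbb{L}$ is, by construction in Section \ref{s.bivariate}, the self-adjoint operator generated by the closed non-negative form $\mathbb{A}$ on $\mathbb{D}$ (closedness and density are Propositions \ref{propb2} and \ref{propb1}). The key bridge is Proposition \ref{propb3}: taking there $\ep=1$ (so $\xi=\t$) and $h$ supported on all of $\mathbb{R}^n$ — the proof of Proposition \ref{propb3} in fact works verbatim for $h\in L^2(\mathbb{R}^n;\mathcal{H}_0)$ with no restriction to $\Theta$ — one gets that for any $h\in\mathbb{H}_0$, the solution $v+z=\mathbb{L}^{-1}h$ satisfies \eqref{Linvprobl} for a.e. $\xi$, i.e. $(v+z)(\xi)=\mathbb{L}_\xi^{-1}h(\xi)$ for a.e. $\xi$. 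This is precisely the statement $\mathbb{L}^{-1}=\int_{\RR^n}^\oplus\mathbb{L}_\xi^{-1}\,{\rm d}\xi$ in resolvent form, which is the second displayed identity of the proposition. From here the first identity $\mathbb{L}=\int_{\RR^n}^\oplus\mathbb{L}_\xi\,{\rm d}\xi$ is a standard consequence: $u\in{\rm dom}\,\mathbb{L}$ iff $u=\mathbb{L}^{-1}h$ for some $h\in\mathbb{H}_0$, and then $h(\xi)=\mathbb{L}_\xi u(\xi)$ for a.e. $\xi$ with $\int\|h(\xi)\|^2\,{\rm d}\xi<\infty$, giving both (i) and (ii); conversely if $u(\xi)\in{\rm dom}\,\mathbb{L}_\xi$ a.e. with $\xi\mapsto\mathbb{L}_\xi u(\xi)$ in $\mathbb{H}_0$, set $h(\xi):=\mathbb{L}_\xi u(\xi)$ and check $u=\mathbb{L}^{-1}h$ by testing against $\mathbb{D}$ and invoking the a.e. identity \eqref{Linvprobl} again.

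I expect the main obstacle to be a careful justification of the measurability bookkeeping — i.e. that $\xi\mapsto u(\xi)$ inheriting weak measurability from $u\in\mathbb{H}_0$ is compatible with $\xi\mapsto\mathbb{L}_\xi u(\xi)$ being weakly measurable, and that the pointwise-a.e. identities obtained from Proposition \ref{propb3} can be upgraded to the ``for all $u$'' statements in Definition \ref{defb4} without measurable-selection subtleties. This is handled by exploiting separability of $\mathcal{H}_0$ and the eigenbasis $\{\psi^{(k)}\}$ of $\mathbb{L}_0$ already used in the proof of Proposition \ref{propb1}: decompose $u(\xi)=\sum_k c_k(\xi)\psi^{(k)}$ with scalar measurable coefficients, reduce all measurability claims to scalar-valued functions, and pass to the limit in finite truncations. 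A second, more routine point to be careful about is that Proposition \ref{propb3} as stated involves the rescaling $\xi=\t/\ep$ and the zero-extension of $h$ off $\Theta$; I would either restate and reprove it for $\ep=1$ and arbitrary $h\in\mathbb{H}_0$ (the proof is identical, using density of $C_0^\infty(\mathbb{R}^n)$ and of $\{\tilde v_j+\tilde z_j\}$), or simply observe that its proof never actually used either feature. With these in hand the proposition follows, and I would close by noting that it is exactly what is needed for the identity \eqref{lft} and hence for Theorem \ref{thm.bivariate}.
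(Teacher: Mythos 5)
Your proposal is correct and follows essentially the same route as the paper's proof: both reduce the direct-integral identities to the localization mechanism of Proposition \ref{propb3} (integrated weak form implies pointwise-a.e.\ weak form), after first establishing weak measurability via continuity of $\xi\mapsto\mathbb{L}_\xi^{-1}g$. The only structural difference is the order: the paper proves $\mathbb{L}=\int^\oplus\mathbb{L}_\xi\,\mathrm{d}\xi$ directly (starting from the definition of $\mathrm{dom}\,\mathbb{L}$ via the weak identity with a right-hand side $H\in\mathbb{H}_0$, then applying the Proposition \ref{propb3} argument) and deduces the resolvent identity afterward, while you prove the resolvent identity first and derive the operator identity from it. Both orders are valid, and both hinge on the same observation you make explicit — that the proof of Proposition \ref{propb3} never actually uses the restriction to $\Theta$ or the scaling $\xi=\t/\ep$. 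You are also slightly more explicit than the paper about the converse inclusion needed for condition (i) of Definition \ref{defb4} and the attendant weak-measurability bookkeeping for $\xi\mapsto\mathbb{L}_\xi u(\xi)$; the paper's proof is terser here and leaves this implicit. Your fallback via the eigenbasis $\{\psi^{(k)}\}$ of $\mathbb{L}_0$ and scalar truncations is the natural way to close that gap, and it mirrors what was already set up in the proof of Proposition \ref{propb1}.
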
 
\begin{proof}
First, $\mathbb{L}_\xi$ are weakly-measurable as for any $g\in \mathcal{H}_0$, 
$d_0\left[ \mathbb{L}_\xi^{-1}g\right]$ is continuous in $\xi$. 
As a brief sketch for proving the latter, consider $\xi_1,\xi_2\in\RR^n$ with associated $u_j=\mathbb{L}_{\xi_j}^{-1}g$, 
$j=1,2$.  Then, via \eqref{Sform},  $\mathbb{S}_{\xi_j}(u_j, u_1-u_2)=d_0(g,u_1-u_2)$, $j=1,2$. 
Hence, subtracting, $S_{\xi_1}[u_1-u_2]=S_{\xi_2}(u_2,u_1-u_2)-S_{\xi_1}(u_2,u_1-u_2)$. 
When $\xi_2\to\xi_1$, the latter difference form becomes small, see \eqref{Sform}. 
Hence, via standard arguments, $d_0[u_1-u_2]\le S_{\xi_1}[u_1-u_2]\to 0$ as $\xi_2\to\xi_1$, as required. 
The rest of the proof essentially follows that of Proposition \ref{propb3}. 
By definition, $v+z\in \mathbb{D}$ is in ${\rm dom}\,\mathbb{L}$ if there exists 
$H\in\mathbb{H}_0$ such that 
\begin{equation}
\label{b4pf1}
\mathbb{A}\big(v+z,\,\tilde v +\tilde z\big)\,\,=\,\,
\int_{\mathbb{R}^n}\,d_0\big(H(\xi),\,\tilde v +\tilde z\big) \, {\rm d}\xi, \ \ 
\ \forall \tilde v +\tilde z\in \mathbb{D},  
\end{equation}
where $\mathbb{A}$ is the form on the left-hand side of \eqref{Lproblint}. 
Arguing then as in the proof of Proposition \ref{propb3}, we conclude that for a.e. 
$\xi\in\mathbb{R}^n$, $\forall\, \tilde v+\tilde z\in V_\star\dot{+}Z$, 
$\,\,\mathbb{S}_\xi\big(v(\xi)+z(\xi),\,\tilde v +\tilde z\big)=
d_0\big(H(\xi),\tilde v +\tilde z\big)$.  
The latter implies both conditions in Definition \ref{defb4}, so $\mathbb{L}$ is 
the direct integral of $\mathbb{L}_\xi$. 
For the inverses, condition $(i)$ in Definition \ref{defb4} trivially follows from the 
well-posedness of \eqref{Lproblint}. 
Also, if $H\in\mathbb{H}_0$ and $ u=\mathbb{L}^{-1}H$ then $(ii)$ implies 
$\mathbb{L}_\xi^{-1}\big(H(\xi)\big)=u(\xi)= 
\big(\mathbb{L}^{-1}H \big)(\xi)$ for  a.e. 
$\xi\in\mathbb{R}^n$, as required. 
\end{proof}
Fourier transform $\mathcal{F}$ is known to be a well-defined unitary operator in a Bochner space 
$L^2\left(\mathbb{R}^n; \mathcal{H}\right)$, together with its inverse $\mathcal{F}^{-1}$: 
\begin{definition}
\label{defb6}
Given $u\in L^2\left(\mathbb{R}^n; \mathcal{H}\right)=:\mathbb{H}$, $\mathcal{F}u=:\hat u$ and 
$\mathcal{F}^{-1}u=:\check u$ are such elements of $\mathbb{H}$ that, 
 respectively for a.e. $\xi\in\mathbb{R}^n$ and a.e. $x\in\mathbb{R}^n$, 
\begin{equation}
\label{ftdef}
\big(\hat u(\xi),\tilde u\big)\,=\,
(2\pi)^{-n/2}\int_{\mathbb{R}^n}e^{-{\rm i}x\cdot\xi}\big(u(x),\tilde u\big){\rm d}x, \ \ \ 
\big(\check u(x),\tilde u\big)\,=\,
(2\pi)^{-n/2}\int_{\mathbb{R}^n}e^{{\rm i}x\cdot\xi}\big(u(\xi),\tilde u\big){\rm d}\xi, \ \ \ 
\forall \tilde u\in \mathcal{H}. 
\end{equation} 
[The above integrals denote conventional (inverse) Fourier transforms 
in $L^2\left(\mathbb{R}^n\right)\ni \big(u(\cdot),\tilde u\big)$.] 
It is straightforward to check that 
the above $\hat u$ and $\check u$ are (uniquely) well-defined, 
and $\mathcal{F}$ and $\mathcal{F}^{-1}$ are unitary in $\mathbb{H}$ inverses of each other, 
with Plancherel theorem held: 
\begin{equation}
\label{planch}
\big( u,\,\tilde u\big)_{\mathbb{H}}\,\,=\,\,\big(\mathcal{F} u,\,\mathcal{F}\tilde u\big)_{\mathbb{H}}\,\,=\,\,
\big(\mathcal{F}^{-1} u,\,\mathcal{F}^{-1}\tilde u\big)_{\mathbb{H}}, \ \ \ \forall u,\tilde u\in\mathbb{H}. 
\end{equation}
\end{definition}
If $\mathcal{H}_0$ is a closed subspace of $\mathcal{H}$, then 
$\mathbb{H}_0:= L^2\left(\mathbb{R}^n; \left(\mathcal{H}_0, (\cdot,\cdot)\,\right)\right)$ 
is invariant under $\mathcal{F}$ whose restriction to $\mathbb{H}_0$ coincides with the Fourier transform directly defined on  
$\mathbb{H}_0$.

Bochner Sobolev space $H^1\left(\mathbb{R}^n; \mathcal{H}\right)$ can be defined in two equivalent ways, 
see e.g. \cite{ReeSim1} and \cite{Hytonen}: via generalised derivatives or via Fourier transform. 
Adopting the former, 
\begin{definition}
\label{defb7}
It is said that $u\in L^2\left(\mathbb{R}^n; \mathcal{H}\right)$ has a (first order) $L^2-$generalised derivative 
$\partial_{x_j}u\in L^2\left(\mathbb{R}^n; \mathcal{H}\right)$, $j=1,2,...,n$, if for all 
$\varphi\in C_0^\infty\left(\mathbb{R}^n\right)$ and all $\tilde u\in \mathcal{H}$, 
$
\int_{\mathbb{R}^n}\big( u(x),\tilde u\big)\partial_{x_j}\varphi(x){\rm d}x = 
-\int_{\mathbb{R}^n}\big( \partial_{x_j}u(x),\tilde u\big)\varphi(x){\rm d}x  
$. 
$H^1\left(\mathbb{R}^n; \mathcal{H}\right)$ is the space of all $u\in L^2\left(\mathbb{R}^n; \mathcal{H}\right)$ 
having all the first-order $L^2-$generalised derivatives. 
\end{definition}
It is known that $H^1\left(\mathbb{R}^n; \mathcal{H}\right)$ is a separable Hilbert space with inner 
product 
\[
\big(u,\,\tilde u\big)_{H^1\left(\mathbb{R}^n; \mathcal{H}\right)}\,\,:=\,\int_{\RR^n}\Big[\, 
\big(u(x),\tilde u(x)\,\big)\,+\,
\sum_{j=1}^n\Big(\,\partial_{x_j}u(x)\,,\,\partial_{x_j}\tilde u(x)\Big)\,\Big]\,{\rm d}x.
\]
Finally, $u\in H^1\left(\mathbb{R}^n; \mathcal{H}\right)$ if and only if 
$u\in L^2\left(\mathbb{R}^n; \mathcal{H}\right)$ and 
$\mathcal{F}u\in L^2\Big(\mathbb{R}^n,\langle\xi\rangle^2{\rm d}\xi;\, \mathcal{H}\Big)$. 
For $u\in H^1\left(\mathbb{R}^n; \mathcal{H}\right)$, the gradient 
$\nabla u \in \Big(L^2\left(\mathbb{R}^n; H\right)\Big)^n$ is defined in a standard way, 
and 
$\mathcal{F}(\nabla u)(\xi)\,=\,{\rm i}\xi\,\mathcal{F}(u)(\xi)$. 

\begin{lemma}
\label{lemft}
Form $Q$ given by \eqref{Q}  on domain $\check{\mathbb{D}}=
H^1\left(\RR^n ; \left(Z,b_0\right)\right) \dot{+} 
L^2\left(\RR^n ;\left(V_\star, b_0\right)\right)$ determines a self-adjoint operator $\mathcal{L}$ in Hilbert space 
$\mathbb{H}_0=L^2\left(\RR^n; \mathcal{H}_0\right)$, $\mathcal{H}_0=\left(\overline{V_0},\, d_0\right)$, 
$V_0=V_\star\dot{+}Z$. In fact, 
$\mathcal{L}=\mathcal{F}^{-1}\mathbb{L}\,\mathcal{F}$, 
where $\mathcal{F}$ is the Fourier transform in $\mathbb{H}_0$. 
\end{lemma}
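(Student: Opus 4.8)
\textbf{Proof plan for Lemma \ref{lemft}.}

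The plan is to transport everything through the Fourier transform $\mathcal{F}$ on $\mathbb{H}_0$. First I would recall from Definition \ref{defb6} (and the remark immediately after it) that $\mathcal{F}$ restricts to a unitary operator on $\mathbb{H}_0=L^2(\RR^n;\mathcal{H}_0)$, and from Definition \ref{defb7} (and the Fourier characterisation of $H^1$ recorded afterwards) that $\mathcal{F}$ maps $H^1(\RR^n;(Z,b_0))$ unitarily onto $L^2(\RR^n,\langle\xi\rangle^2{\rm d}\xi;(Z,b_0))$ and maps $L^2(\RR^n;(V_\star,b_0))$ onto itself, with $\mathcal{F}(\partial_{x_j}u)(\xi)=\i\xi_j\mathcal{F}(u)(\xi)$. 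Hence $\mathcal{F}$ carries $\check{\mathbb{D}}=H^1(\RR^n;(Z,b_0))\dot{+}L^2(\RR^n;(V_\star,b_0))$ bijectively onto $\mathbb{D}=L^2(\RR^n;(V_\star,b_0))\dot{+}L^2(\RR^n,\langle\xi\rangle^2{\rm d}\xi;(Z,b_0))$, which by Proposition \ref{propb1} is dense in $\mathbb{H}_0$ and by Proposition \ref{propb2} is closed with respect to the form $\mathbb{A}$ of \eqref{Lproblint}.

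Next I would verify the identity $Q(u,\tilde u)=\mathbb{A}(\mathcal{F}u,\mathcal{F}\tilde u)$ for all $u,\tilde u\in\check{\mathbb{D}}$. For the $b_0$-terms this is just Plancherel's identity \eqref{planch} applied componentwise (noting $b_0=\|\cdot\|_0^2$ on $V_0$ and the orthogonality of the direct sum). For the gradient term, writing $u=u^Z+u^{V}$ with $u^Z\in H^1(\RR^n;Z)$, one has $a^{\rm h}(\nabla u^Z,\nabla\tilde u^Z)=\sum_{j,k}a^{\rm h}_{jk}(\partial_{x_j}u^Z,\partial_{x_k}\tilde u^Z)$ by the definition of $a^{\rm h}(\nabla\cdot,\nabla\cdot)$ given just before \eqref{Q}, and applying Plancherel together with $\mathcal{F}(\partial_{x_j}u^Z)(\xi)=\i\xi_j\mathcal{F}(u^Z)(\xi)$ converts this into $\int_{\RR^n}\sum_{j,k}a^{\rm h}_{jk}(\xi_j\,\widehat{u^Z}(\xi),\xi_k\,\widehat{\tilde u^Z}(\xi))\,{\rm d}\xi=\int_{\RR^n}a^{\rm h}_\xi(\widehat{u^Z}(\xi),\widehat{\tilde u^Z}(\xi))\,{\rm d}\xi$, which is precisely the corresponding term in $\mathbb{A}$ by \eqref{ahgrad}--\eqref{ahij}. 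A small care-point here is interchanging the integral $\int a^{\rm h}_{jk}$ with the Plancherel pairing; this is legitimate because $a^{\rm h}_{jk}$ is a bounded sesquilinear form on $Z$ (bounded via \eqref{defhom.form}, \eqref{H4}, \eqref{Nbdd}), so each term $a^{\rm h}_{jk}(\xi_j\hat u,\xi_k\hat{\tilde u})$ is controlled by $|\xi|^2\|\hat u\|_0\|\hat{\tilde u}\|_0$, integrable on $\RR^n$ since $\hat u,\hat{\tilde u}\in L^2(\RR^n,\langle\xi\rangle^2{\rm d}\xi;Z)$.

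With these two facts in hand the conclusion follows by a general principle: if $\mathcal{F}$ is unitary and maps the form domain of $\mathbb{A}$ onto the form domain of $Q$ intertwining the two closed, densely defined, non-negative forms, then the associated self-adjoint operators are related by $\mathcal{L}=\mathcal{F}^{-1}\mathbb{L}\,\mathcal{F}$; equivalently one can check directly from the definitions that $u\in{\rm dom}\,\mathcal{L}$ with $\mathcal{L}u=g$ iff $Q(u,\tilde u)=(g,\tilde u)_{\mathbb{H}_0}$ for all $\tilde u\in\check{\mathbb{D}}$, and by the above this is equivalent to $\mathbb{A}(\mathcal{F}u,\mathcal{F}\tilde u)=(\mathcal{F}g,\mathcal{F}\tilde u)_{\mathbb{H}_0}$ for all $\mathcal{F}\tilde u\in\mathbb{D}$, i.e. $\mathcal{F}u\in{\rm dom}\,\mathbb{L}$ with $\mathbb{L}\mathcal{F}u=\mathcal{F}g$. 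Self-adjointness of $\mathcal{L}$ is then inherited from that of $\mathbb{L}$ (itself a consequence of $\mathbb{A}$ being closed, densely defined and non-negative, by the Friedrichs construction) via the unitary conjugation. I do not expect a genuine obstacle here; the only mildly delicate point is the careful bookkeeping of the unitary identifications of the weighted Bochner spaces underlying $\check{\mathbb{D}}$ and $\mathbb{D}$ and the Fubini/Plancherel interchange in the gradient term, which is why I would state those explicitly rather than leave them implicit.
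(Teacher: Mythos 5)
Your plan follows the same route as the paper, and the overall architecture (domain correspondence under $\mathcal{F}$, a Plancherel-type form identity, abstract transport of self-adjointness by unitary conjugation) is correct. However, there is a genuine gap in the way you invoke Plancherel: identity \eqref{planch} is Plancherel for the ambient $d_0$-inner product on $\mathcal{H}$, whereas the integrands in $Q$ and $\mathbb{A}$ involve the \emph{different} sesquilinear forms $b_0$ and $a^{\rm h}_{jk}$. It is not ``just \eqref{planch} applied componentwise'' that yields $\int_{\RR^n} b_0\big(u(x),\tilde u(x)\big)\,{\rm d}x=\int_{\RR^n} b_0\big(\hat u(\xi),\hat{\tilde u}(\xi)\big)\,{\rm d}\xi$: one first needs to know that the Fourier transform $\mathcal{F}_b$ on $L^2\big(\RR^n;(V_0,b_0)\big)$ coincides a.e.\ with the restriction of $\mathcal{F}$ (which is defined relative to $d_0$). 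The same issue is hidden in your use of the $H^1$ Fourier characterisation to identify $\mathcal{F}\,\check{\mathbb{D}}$ with $\mathbb{D}$: that characterisation is stated for the Fourier transform of $L^2(\RR^n;(Z,b_0))$, not for the $d_0$-Fourier transform you then feed into $\mathbb{L}$. Likewise, for the gradient term one must verify that the bounded operator $T_{jk}:(Z,b_0)\to(Z,b_0)$ representing $a^{\rm h}_{jk}$ commutes fibre-wise with $\mathcal{F}$ before the form-Plancherel identity $\int a^{\rm h}_{jk}(\partial_{x_j}u,\partial_{x_k}\tilde u)\,{\rm d}x=\int a^{\rm h}_{jk}(\xi_j\hat u,\xi_k\hat{\tilde u})\,{\rm d}\xi$ can be asserted.

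These two facts — that the Fourier transforms defined relative to the distinct Hilbert structures $b_0$ and $d_0$ agree on $\mathbb{V}_0$, and that the constant-in-$\xi$ operator $T_{jk}$ commutes with $\mathcal{F}$ — are exactly steps 1 and 3 of the paper's proof (the latter via the resolvent-style computation in \eqref{ttstar}); they constitute the nontrivial technical content of the lemma. Your closing remark that the remaining work is ``mildly delicate bookkeeping'' is accurate in spirit but underestimates it: without explicitly proving $\mathcal{F}_b=\mathcal{F}$ and $\mathcal{F}T_{jk}=T_{jk}\mathcal{F}$ a.e., both the domain identification $\mathcal{F}\,\check{\mathbb{D}}=\mathbb{D}$ and the form identity $Q=\mathbb{A}\circ(\mathcal{F}\times\mathcal{F})$ are not yet established. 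The rest of your argument — the density of $\mathbb{D}$, closedness of $\mathbb{A}$, the formula $\mathcal{F}(\partial_{x_j}u)(\xi)=\i\xi_j\mathcal{F}u(\xi)$, and the unitary-equivalence principle for form-generated self-adjoint operators — is sound and matches the paper.
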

\begin{proof}
Since $\mathcal{F}$ is a unitary operator in $\mathbb{H}_0$, see \eqref{planch}, it suffices to 
show that $\check{\mathbb{D}}=\mathcal{F}^{-1}\mathbb{D}$ and 
\begin{equation}
\label{ftforms}
Q\big(u+v,\tilde{u} + \tilde{v}\big) \,\,=\,\,\mathbb{A}\big(\,\mathcal{F}^{-1}(u+v),\, 
\mathcal{F}^{-1}\left(\tilde{u} + \tilde{v}\right)\big), \quad \forall\,\, u+v,\,\, \tilde u+\tilde v\in \mathbb{D}, 
\end{equation} 
where $\mathbb{A}$ is the form on the left-hand side of \eqref{Lproblint}. 
\vspace{.04in}

1. Notice that by properties of the Fourier transform 
 $\check{\mathbb{D}}=\mathcal{F}_b^{-1}\mathbb{D}$
where $\mathcal{F}_b$ is the Fourier transform, according to Definition \ref{defb6}, for Bochner space 
$\mathbb{V}_0:=L^2\left(\mathbb{R}^n; \left(V_0, b_0\right)\right)$, 
$V_0=V_\star\dot{+}Z$. 
So the first assertion follows as soon as we show that $\mathcal{F}_bv=\mathcal{F}v$, $\forall v\in\mathbb{V}_0$. 
From \eqref{ik2}, 
for Hilbert space $\mathcal{V}_0:=(V_0,b_0)$, there exists a bounded linear map 
$T:\mathcal{V} \rightarrow \mathcal{V}$ such that 
$d_0(v,\tilde v)=b_0(Tv,\tilde v)=b_0(v,T^*\tilde v)$, $\forall v,\tilde v\in V_0$, where $T^*$ is the 
adjoint of $T$ in $\mathcal{V}_0$. 
From \eqref{ftdef}, 
for a.e. $\xi$,  
$\big(\mathcal{F} u(\xi), \tilde u\big)=\mathcal{F}_c \big(u(\cdot), \tilde u\big)(\xi)$ where 
$\mathcal{F}_c$ is the conventional Fourier transform in $L^2\left(\mathbb{R}^n\right)$. 
So, for any 
$v\in\mathbb{V}_0\subset\mathbb{H}_0$ and $v'\in V_0\subset\mathcal{H}_0$, for a.e. $\xi$, 
\[
d_0\big(\mathcal{F}v(\xi),v'\big)=\mathcal{F}_c d_0\big(v(\cdot),v'\big)(\xi)= 
\mathcal{F}_c b_0\big(v(\cdot),T^*v'\big)(\xi)= 
b_0\big(\mathcal{F}_b v(\xi),T^*v'\big)=
d_0\big(\mathcal{F}_bv(\xi),v'\big), \ \ \forall v'\in V_0. 
\]
As $V_0$ is dense in $\mathcal{H}_0$, the latter implies $\mathcal{F}_bv=\mathcal{F}v$ for a.e. $\xi$, as required. 
\vspace{.04in}

2. For proving \eqref{ftforms}, 
it would suffice to show that a variant 
of Plancherel theorem holds for all the forms entering \eqref{ahgrad}--\eqref{Lproblint}. 
Namely, $\forall\, v+z$, $\tilde v+\tilde z\in \mathbb{V}_0$, 
\begin{equation}
\label{plancher}
\int_{\mathbb{R}^n} \mathfrak{b}\big(v(\xi)+z(\xi),\, \tilde v(\xi)+\tilde z(\xi)\big)\,{\rm d}\xi\,\,=\,\,
\int_{\mathbb{R}^n} \mathfrak{b}\big(\mathcal{F}^{-1}v(x)+\mathcal{F}^{-1}z(x),\, 
\mathcal{F}^{-1}\tilde v(x)+\mathcal{F}^{-1}\tilde z(x)\big)\,{\rm d} x, 
\end{equation} 
for $\mathfrak{b}=
b_0$ and $\mathfrak{b}=a^h_{jk}$, $j,k=1,...,n$. 
(For 
$\mathfrak{b}=a^h_{jk}$ one has to set 
$v=\tilde v=0$.) 
For $\mathfrak{b}=b_0$, by \eqref{planch}, 
$ 
\int_{\mathbb{R}^n} {b}_0\big(v(\xi)+z(\xi),\, \tilde v(\xi)+\tilde z(\xi)\big)\,{\rm d}\xi\,\,=\,\,
\int_{\mathbb{R}^n} {b}_0\big(\mathcal{F}_b^{-1}v(x)+\mathcal{F}_b^{-1}z(x),\, 
\mathcal{F}_b^{-1}\tilde v(x)+\mathcal{F}_b^{-1}\tilde z(x)\big)\,{\rm d} x. 
$ 
Hence \eqref{plancher} immediately follows from $\mathcal{F}_bv=\mathcal{F}v$, $\forall v\in\mathbb{V}_0$, shown above.  
\vspace{.04in}

3. To prove \eqref{plancher} for $\mathfrak{b}=a^h_{jk}$ for any fixed $1\le j,k\le n$, we notice first that the form 
$a^h_{jk}$ is bounded in terms of $b_0$: 
with some 
$C>0$, 
$ 
\left\vert a^h_{jk}(z,\tilde z)\right\vert \le C\, b_0^{1/2}[z]\, b_0^{1/2}[\tilde z]$, 
$\forall z,\tilde z\in Z$. 
(This follows e.g. from \eqref{ahij}, \eqref{H4} and \eqref{Nbdd}.) 
Therefore, 
for $\mathcal{Z}:=(Z,b_0)$, there exists a 
bounded linear map $T_{jk}:\mathcal{Z} \rightarrow \mathcal{Z}$ such that 
$a^h_{jk}(z,\tilde z)=b_0(T_{jk}z,\tilde z)$, $\forall z,\tilde z\in Z$. 
Let now  $z,\tilde z\in \mathbb{Z}:=L^2\left(\mathbb{R}^n;\mathcal{Z}\right)$. Then, using the boundedness of 
$a^h_{jk}$ and $T_{jk}$, 
the isometry of $\mathcal{F}_b$, and the above established identity $\mathcal{F}_b z= \mathcal{F} z$, $\forall z\in \mathbb{Z}\subset\mathbb{V}_0\subset\mathbb{H}_0$, 
\begin{equation}
\label{vs1}
\int_{\mathbb{R}^n} a^h_{jk}\big(z(x),\tilde z(x)\big)\,{\rm d}x\,\,=\,\,
\int_{\mathbb{R}^n} b_0\big(T_{jk}z(x),\tilde z(x)\big)\,{\rm d}x\,\,=\,\, 
\int_{\mathbb{R}^n} b_0\Big(\mathcal{F}\left(T_{jk}z\right)(\xi),\,\mathcal{F}z(\xi)\Big)\,{\rm d}\xi. 
\end{equation}
Notice 
that $\mathcal{F} z(\xi)= \mathcal{F}_b z(\xi)\in Z$ for a.e. $\xi$ (as 
follows 
from 
\eqref{ftdef}). 
Next show that, for a.e. $\xi$, 
$\mathcal{F}\left(T_{jk}z\right)(\xi)= T_{jk}\left(\mathcal{F} z(\xi)\right)$. 
Applying \eqref{ftdef} 
for $\mathcal{F}_b$, $u=T_{jk}z$, and 
$\tilde u=z'\in Z$ 
implies 
$b_0\big(\mathcal{F}_bT_{jk} z(\xi), z'\big)=\mathcal{F}_c b_0\big(T_{jk} z(\cdot), z'\big)(\xi)$.  
Then, with $T^*_{jk}$ denoting the adjoint of $T_{jk}$, 
applying \eqref{ftdef} again, 
\begin{equation}
\label{ttstar}
b_0\big(\mathcal{F}T_{jk} z(\xi), z'\big)\,\,=\,\,\mathcal{F}_c b_0\big( z(\cdot), T_{jk}^*z'\big)(\xi)\,\,=\,\, 
b_0\left(\mathcal{F} z(\xi), T_{jk}^*z'\right)\,\,=\,\,b_0\big(T_{jk}\mathcal{F} z(\xi), z'\big). 
\end{equation}
Since the above holds for arbitrary $z'\in Z$, we conclude that 
$\mathcal{F}\left(T_{jk} z\right)(\xi)= T_{jk}\left(\mathcal{F} z(\xi)\right)$, as desired. 
Employing the latter in \eqref{vs1} and then using again the definition of $T_{jk}$: 
$ 
\int_{\mathbb{R}^n} a^h_{jk}\big(z(x),\tilde z(x)\big)\,{\rm d}x\,=\,
\int_{\mathbb{R}^n} b_0\Big(T_{jk}\mathcal{F}z(\xi),\,\mathcal{F}\tilde z(\xi)\Big)\,{\rm d}\xi\,=\,
\int_{\mathbb{R}^n} a^h_{jk}\big(\mathcal{F}z(\xi),\mathcal{F}\tilde z(\xi)\big)\,{\rm d}\xi, 
$ 
as required.  
\end{proof} 

\begin{proposition}
\label{propb9}
Let $\mathcal{P}_{\mathcal{H}_0}^0:(\mathcal{H},d_0) \rightarrow \mathcal{H}_0$ and  
$\mathcal{P}:L^2\big(\RR^n ; (\mathcal{H},d_0)\big) \rightarrow L^2(\RR^n ; \mathcal{H}_0)$ be the orthogonal projections 
on the respective subspaces. Then
\begin{equation}
\label{b9statm}
\mathcal{P}_{\mathcal{H}_0}^0f(\xi) = \big(\mathcal{F}  \mathcal{P}\mathcal{F}^{-1} f \big)(\xi) \quad for\,\, a.e.\  \xi, \quad f \in L^2(\RR^n; (\mathcal{H},d_0)).
\end{equation} 
\end{proposition}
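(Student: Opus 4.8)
The plan is to establish \eqref{b9statm} by reducing it to an elementary fibre-wise statement: the Fourier transform $\mathcal{F}$ on the Bochner space $\mathbb{H}=L^2(\RR^n;(\mathcal{H},d_0))$ commutes with pointwise (in $\xi$) application of the constant orthogonal projector $\mathcal{P}_{\mathcal{H}_0}^0$. Concretely, the decomposable operator $\mathcal{P}$ is, in the language of Proposition \ref{propb5} and Definition \ref{defb4}, simply the direct integral $\int_{\RR^n}^\oplus \mathcal{P}_{\mathcal{H}_0}^0\,{\rm d}\xi$, since by definition $\big(\mathcal{P}f\big)(\xi)=\mathcal{P}_{\mathcal{H}_0}^0\big(f(\xi)\big)$ for a.e.\ $\xi$ and all $f\in\mathbb{H}$ (this is how the orthogonal projection onto $L^2(\RR^n;\mathcal{H}_0)\subset L^2(\RR^n;(\mathcal{H},d_0))$ acts). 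So \eqref{b9statm} asserts precisely that $\mathcal{F}\mathcal{P}\mathcal{F}^{-1}$ is again this same decomposable projector, i.e.\ that the Fourier transform intertwines $\mathcal{P}$ with itself.

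First I would record the key fibre-wise identity: for any $u\in\mathbb{H}$ and any fixed $\tilde u\in\mathcal{H}$, formula \eqref{ftdef} gives $\big(\mathcal{F}u(\xi),\tilde u\big)=\mathcal{F}_c\big(u(\cdot),\tilde u\big)(\xi)$ for a.e.\ $\xi$, where $\mathcal{F}_c$ is the scalar Fourier transform on $L^2(\RR^n)$ and $(\cdot,\cdot)=d_0$. Next, since $\mathcal{P}_{\mathcal{H}_0}^0$ is a bounded self-adjoint operator on $(\mathcal{H},d_0)$, one has $\big(\mathcal{P}_{\mathcal{H}_0}^0 u(x),\tilde u\big)=\big(u(x),\mathcal{P}_{\mathcal{H}_0}^0\tilde u\big)$ pointwise, so for $g:=\mathcal{F}^{-1}f\in\mathbb{H}$,
\[
\big(\mathcal{F}\mathcal{P}g(\xi),\tilde u\big)=\mathcal{F}_c\big(\mathcal{P}_{\mathcal{H}_0}^0 g(\cdot),\tilde u\big)(\xi)=\mathcal{F}_c\big(g(\cdot),\mathcal{P}_{\mathcal{H}_0}^0\tilde u\big)(\xi)=\big(\mathcal{F}g(\xi),\mathcal{P}_{\mathcal{H}_0}^0\tilde u\big)=\big(\mathcal{P}_{\mathcal{H}_0}^0\mathcal{F}g(\xi),\tilde u\big)
\]
for a.e.\ $\xi$. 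Since this holds for all $\tilde u$ in the separable space $\mathcal{H}$ (choosing a countable dense set to make "a.e.\ $\xi$" uniform over $\tilde u$), we deduce $\mathcal{F}\mathcal{P}g(\xi)=\mathcal{P}_{\mathcal{H}_0}^0\big(\mathcal{F}g(\xi)\big)$ a.e.; and since $\mathcal{F}g=\mathcal{F}\mathcal{F}^{-1}f=f$, the right-hand side is $\mathcal{P}_{\mathcal{H}_0}^0 f(\xi)$, which is \eqref{b9statm}. This is exactly the pattern of the computation in Step 3 of the proof of Lemma \ref{lemft} (see \eqref{ttstar}), applied with the bounded operator $T_{jk}$ replaced by the projector $\mathcal{P}_{\mathcal{H}_0}^0$.

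There is no serious obstacle here; the only points requiring a word of care are (i) justifying that the countable family of null sets arising from different test vectors $\tilde u$ can be taken as a single null set, which follows from separability of $\mathcal{H}$ together with continuity of both sides of the identity in $\tilde u$ (both sides are bounded linear functionals of $\tilde u$ with norms controlled by $\|\mathcal{F}\mathcal{P}g(\xi)\|$ and $\|\mathcal{P}_{\mathcal{H}_0}^0\mathcal{F}g(\xi)\|$ respectively), and (ii) the measurability of the fibre maps, which is immediate since $\mathcal{P}_{\mathcal{H}_0}^0$ is a fixed bounded operator. Everything else is the routine interplay between the Bochner Fourier transform of Definition \ref{defb6} and pointwise application of a bounded self-adjoint operator, so the proof is short.
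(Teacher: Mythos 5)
Your proof is correct and follows essentially the same route as the paper's: the paper first checks that $\mathcal{P}$ acts fibre-wise as $\mathcal{P}_{\mathcal{H}_0}^0$, then invokes the computation of \eqref{ttstar} with $T_{jk}$ replaced by the self-adjoint $\mathcal{P}_{\mathcal{H}_0}^0$, $b_0$ by $d_0$, and $z$ by $g$ — exactly the chain of identities you spell out (with the added helpful remark about uniformizing the null sets via a countable dense set in $\mathcal{H}$).
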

\begin{proof}
Given $f\in \mathbb{H}:=L^2\big(\RR^n ; (\mathcal{H},d_0)\big)$ let $g=\mathcal{F}^{-1}f\in 
\mathbb{H}$ and notice first that, for a.e. $x$, $(\mathcal{P}g)(x)=\mathcal{P}_{\mathcal{H}_0}^0g(x)$. 
Indeed, $h=\mathcal{P}g\in L^2(\RR^n ; \mathcal{H}_0)=:\mathbb{H}_0$ is such that, for any $\tilde h\in \mathbb{H}_0$, 
$
\int_{\mathbb{R}^n} d_0\big(g(x)-h(x),\,\tilde h(x)\big){\rm d}x=0,  
$
and the latter obviously holds for $h(x)=\mathcal{P}_{\mathcal{H}_0}^0g(x)$ for a.e. $x$. 
So \eqref{b9statm} is equivalent to 
$\mathcal{P}_{\mathcal{H}_0}^0\mathcal{F}g(\xi) = \big(\mathcal{F}  \mathcal{P}_{\mathcal{H}_0}^0 g(\cdot) \big)(\xi)$, 
for a.e. $\xi$. The latter can be proved by the argument identical to \eqref{ttstar}, 
with $T_{jk}$ replaced by (self-adjoint) $\mathcal{P}_{\mathcal{H}_0}^0$, $b_0$ by $d_0$, 
$z$ by $g$ and $z'$ by $g'\in\mathcal{H}$. 
\end{proof}



\end{document}